\newcommand{\LegendreP}[3]{P_{#1}^{#2}{\hspace*{-2mm}\left( #3 \right)}} 
\newcommand{\LegendreQ}[3]{Q_{#1}^{#2}{\hspace*{-2mm}\left(#3\right)}}
\newcommand{\hyperball}[2]{B_{#1}^{\mathbb{H}^2}{\hspace*{-1mm}\left(#2\right)}} 
\newcommand{\genball}[2]{B_{#1}{\hspace*{-0.5mm}\left(#2\right)}} 
\newcommand{\proj}[2]{\pi_{\mathbb{#1}_k}\hspace{-0.5mm}(#2)} 
\DeclareMathOperator*{\Res}{Res} 
\DeclareMathOperator*{\diag}{diag}
\DeclareMathOperator*{\offdiag}{o-diag}
\DeclareMathOperator{\arcosh}{arcosh} 
\DeclareMathOperator{\arsinh}{arsinh}
\newcommand{\I}{\mathrm{I}} 
\newcommand{\Iso}{\mathrm{Iso}} 
\newcommand{\vol}{\mathrm{\,vol}} 
\newcommand{\volume}{\mathrm{vol}} 
\newcommand{\Id}{\mathrm{Id}} 
\newcommand{\diverg}{\mathrm{div}}
\newcommand{\ml}{\mathrm{ml}} 
\theoremstyle{plain}
\newtheorem{theorem}{Theorem}[chapter] 
\newtheorem*{theorem*}{Theorem}  
\newtheorem{lemma}[theorem]{Lemma} 
\newtheorem{corollary}[theorem]{Corollary} 
\newtheorem{proposition}[theorem]{Proposition}
\newtheorem{example}[theorem]{Example}
\theoremstyle{definition}
\newtheorem{definition}[theorem]{Definition}
\theoremstyle{remark}
\newtheorem*{remark}{Remark}
\begin{document}

    \frontmatter
        \maketitle
        \selectlanguage{english}
\begin{abstract}

In this thesis we deal with spectral invariants for polygons and closed orbisurfaces of constant Gaussian curvature. In each case our method is to study the heat kernel and the asymptotic expansion of the heat trace as $t\searrow0$.

 First we investigate hyperbolic polygons, i.e. relatively compact domains in the hyperbolic plane with piecewise geodesic boundary. We compute the asymptotic expansion of the heat trace as $t\searrow 0$ associated to the Dirichlet Laplacian of any hyperbolic polygon, and we obtain explicit formulas for all heat invariants. Our approach to the asymptotic expansion is based on the so-called \emph{principle of not feeling the boundary} due to M. Kac (\citep{Kac}). Analogous results for Euclidean and spherical polygons were known before and are published in \citep{VanDenBerg} and \citep{Watson}, respectively. By comparing the heat invariants for Euclidean, spherical and hyperbolic polygons and by scaling the metric appropriately, we unify these results and deduce the heat invariants for arbitrary polygons. Here, the term polygon refers to any relatively compact domain with piecewise geodesic boundary contained in a complete Riemannian manifold of constant Gaussian curvature. It turns out that the heat invariants provide much information about a polygon, if the curvature does not vanish. For example, then the multiset of all angles which are not equal to $\pi$ and the Euler characteristic of a polygon are spectral invariants.

Furthermore, we compute the asymptotic expansion of the heat trace for any closed Riemannian orbisurface of constant curvature as $t\searrow 0$, and obtain explicit formulas for all heat invariants. To solve this problem, we study two examples of spherical orbisurfaces thoroughly and show how their heat traces are related to each other. This relation together with some observations from \citep{Watson} allow us to compute all heat invariants for arbitrary Riemannian orbisurfaces of constant Gaussian curvature. If the curvature does not vanish, then it is possible to detect interesting information about the topology and the singular set of an orbisurface from the heat invariants. For example, we prove that if two orientable orbisurfaces with the same  curvature $\kappa\neq 0$ are isospectral, then they must be homeomorphic. This result was previously known for $\kappa=-1$ but it was proven differently, namely by using the Selberg trace formula for the wave kernel and Weyl's asymptotic law (\citep{Strohmaier}).

We show that the heat invariants for polygons and orbisurfaces are closely related to each other. For instance, we prove that the contribution of an interior angle $\frac{\pi}{k}$, $k\in\mathbb{N}_{\geq 2}$, to the heat invariants for a polygon is the same as the contribution of a dihedral point of isotropy order $2k$ to the heat invariants for an orbisurface, if the polygon and the orbisurface have the same curvature. This fact together with the formulas from \citep{Watson} provide an alternative proof for the contribution of an interior angle $\frac{\pi}{k}$, $k\in\mathbb{N}_{\geq 2}$, to the heat invariants for a polygon.
 
\end{abstract}

\cleardoublepage

\selectlanguage{ngerman}
\begin{abstract}

In dieser Arbeit besch\"aftigen wir uns mit Spektralinvarianten von Polygonen und geschlossenen Orbifl\"achen konstanter Gau{\ss}kr\"ummung. Unsere Methode ist es jeweils den W\"armeleitungskern und die asymptotische Entwicklung der W\"armespur f\"ur $t\searrow 0$ zu untersuchen.

Als erstes untersuchen wir hyperbolische Polygone, d.h. relativ kompakte Gebiete in der hyperbolischen Ebene mit st\"uckweise geod\"atischem Rand. Wir berechnen die asymptotische Entwicklung der W\"armespur f\"ur $t\searrow 0$ bez\"uglich des Dirichlet-Laplace Operators eines beliebigen hyperbolischen Polygons, und wir erhalten explizite Formeln f\"ur alle W\"armeinvarianten. Unsere Herangehensweise an die asymptotische Entwicklung basiert auf dem so genannten \emph{principle of not feeling the boundary} von M. Kac (\citep{Kac}). Analoge Resultate f\"ur euklidische und sph\"arische Polygone waren vorher bekannt und wurden in \citep{VanDenBerg} bzw. \citep{Watson} ver\"offentlicht. Indem wir die W\"armeinvarianten f\"ur euklidische, sph\"arische und hyperbolische Polygone miteinander vergleichen und die Metrik geeignet skalieren, vereinheitlichen wir diese Resultate und leiten die W\"armeinvarianten f\"ur beliebige Polygone her. Hierbei ist mit dem Begriff Polygon ein relativ kompaktes Gebiet mit st\"uckweise geod\"atischem Rand in einer vollst\"andigen Riemann'schen Mannigfaltigkeit konstanter Gau{\ss}kr\"ummung gemeint. Es stellt sich heraus, dass die W\"armeinvarianten viele Informationen \"uber ein Polygon liefern, falls die Kr\"ummung nicht verschwindet. Zum Beispiel sind dann die Multimenge aller Winkel, die ungleich $\pi$ sind, und die Euler-Charakteristik eines Polygons Spektralinvarianten.

Au{\ss}erdem berechnen wir die asymptotische Entwicklung der W\"armespur von geschlossenen Riemann'schen Orbifl\"achen konstanter Kr\"ummung f\"ur $t\searrow 0$ und erhalten explizite Formeln f\"ur alle W\"armeinvarianten. Um dieses Problem zu l\"osen, untersuchen wir ausf\"uhrlich  zwei Beispiele sph\"arischer Orbifl\"achen und zeigen, wie ihre W\"armespuren in Relation zueinander stehen. Diese Relation zusammen mit einigen Beobachtungen aus dem Artikel \citep{Watson} erm\"oglichen es uns die W\"armeinvarianten beliebiger Riemann'scher Orbifl\"achen konstanter Gau{\ss}kr\"ummung zu berechnen. Falls die Kr\"ummung nicht verschwindet, so kann man interessante Informationen aus den W\"armeinvarianten \"uber die Topologie und die singul\"are Menge einer Orbifl\"ache ermitteln. Beispielsweise zeigen wir, dass falls zwei orientierbare Orbifl\"achen mit derselben Kr\"ummung $\kappa\neq 0$ isospektral sind, sie dann auch hom\"oomorph sein m\"ussen. Dieses Resultat war vorher bekannt f\"ur $\kappa=-1$, wurde jedoch auf andere Weise bewiesen, n\"amlich mittels der Selberg-Spurformel f\"ur den Wellenkern und der Weyl'schen Eigenwertasymptotik (\citep{Strohmaier}).

Wir zeigen, dass die W\"armeinvarianten f\"ur Polygone und Orbifl\"achen eng miteinander verwandt sind. Zum Beispiel beweisen wir, dass der Beitrag eines Innenwinkels $\frac{\pi}{k}$, $k\in\mathbb{N}_{\geq 2}$, zu den W\"armeinvarianten eines Polygons derselbe ist wie der Beitrag eines Diederpunktes mit Isotropieordnung $2k$ zu den W\"armeinvarianten einer Orbifl\"ache, falls das Polygon und die Orbifl\"ache dieselbe Kr\"ummung besitzen. Diese Tatsache gemeinsam mit den Formeln aus \citep{Watson} liefert einen alternativen Beweis f\"ur den Beitrag eines Innenwinkels $\frac{\pi}{k}$, $k\in\mathbb{N}_{\geq 2}$, zu den W\"armeinvarianten eines Polygons.
\end{abstract}
\cleardoublepage
        \selectlanguage{english} 
        \cleardoublepage


\chapter*{Acknowledgements}

I would like to thank my supervisor Prof. Dorothee Sch\"uth for her constant support, scientific criticism, and faith in me. I benefited a lot from her mathematical knowledge and experience; without her guidance this work would not exist.

I would also like to thank my colleagues at the Humboldt-Universit\"at zu Berlin for the pleasant working atmosphere and the helpful discussions on mathematics.

Last but not least I am deeply grateful to my family for all the support and the extra love that they gave me. In particular my parents, G\"ul\c{s}eher and Cengiz, my brother Imran, and my wife Magdalena. This project would not have entered my life without the encouragement of my wife. She shared both the joyful and the seemingly hopeless moments during my research - thank you!

\thispagestyle{empty}
\upshape\cleardoublepage

        \tableofcontents    
   \mainmatter
        \chapter{Introduction}

Let $M$ be a two-dimensional smooth connected Riemannian manifold and $\Omega\subset M$ be a non-empty and relatively compact domain. Further, let $\Delta_{\Omega}$ denote the Dirichlet Laplacian for $\Omega$. Then the spectrum of $\Delta_{\Omega}$ consists of a sequence of non-negative eigenvalues without a finite accumulation point and each eigenvalue has a finite multiplicity. We enumerate them according to their multiplicities as follows:
\begin{align}
 0\leq\lambda_1<\lambda_2\leq \lambda_3\leq...\nearrow \infty. \label{eigenvalues}
\end{align}

The spectrum of $\Delta_{\Omega}$ and the geometry of the domain $\Omega$ are closely connected. Historically, this became popularised through a seminal article with the catchy title ``Can one hear the shape of a drum?'' written by M. Kac half a century ago (\citep{Kac}). In his article, M. Kac interprets a domain physically as a drum and the eigenvalues of $\Delta_{\Omega}$ as the pure tones of the drum, if it is fixed along its boundary. On the one hand, the spectrum of the Dirichlet Laplacian is uniquely determined by the shape, or rather by the geometry of the domain. On the other hand, it is possible to deduce geometric properties of the domain if one only knows the sequence \eqref{eigenvalues}. Such properties, i.e. those which are determined by the spectrum, are called \emph{spectral invariants} and are principal objects of study in the realm of inverse spectral geometry. One well-known method to obtain spectral invariants of $\Omega$ is to compute the asymptotic expansion of the heat trace, which lies at the heart of our investigations in this thesis.

The heat trace of $\Omega$ is the function
\begin{align}
\label{heat trace}
Z_{\Omega}(t)=\sum\limits_{i=1}^{\infty} e^{- \lambda_i t}, \quad t>0.
\end{align}
Obviously, the heat trace only depends on the spectrum of $\Delta_{\Omega}$. It is remarkable that, if the boundary $\partial\Omega$ of the domain is non-empty and smooth, then the heat trace has an asymptotic expansion of the form
\begin{align}
\label{heat asymptotics}
Z_{\Omega}(t)\overset{t\downarrow 0}{\sim} \frac{1}{4\pi t}\sum\limits_{k=0}^{\infty} a_k t^{\frac{k}{2}},
\end{align}
where the coefficients $a_k$ in the asymptotic expansion, the so-called \emph{heat invariants}, correspond to geometric properties of $\Omega$. For example, if $\kappa$ denotes the Gaussian curvature and $\kappa_g$ the geodesic curvature, then the first three heat invariants are given as
\begin{align*}
a_0 &= \int\limits_{\Omega} 1\, dA = \vert \Omega \vert, \qquad a_1 = -\frac{\sqrt{\pi}}{2} \int\limits_{\partial \Omega} 1\, ds = -\frac{\sqrt{\pi}}{2}  \vert \partial \Omega \vert , \\
a_2 &= \frac{1}{3}\left( \int\limits_{\Omega}\kappa\, dA + \int\limits_{\partial \Omega} \kappa_g \, ds \right) = \frac{2\pi}{3} \chi\left( \Omega \right),
\end{align*}
where  $\vert \Omega \vert,\, \vert \partial\Omega \vert $ and $\chi(\Omega)$ denote the area of the domain, the length of its boundary and its Euler characteristic, respectively. Thus the area, the perimeter and the Euler characteristic are determined by the spectrum of $\Delta_{\Omega}$. More generally, it is known that any heat invariant can be written as a sum, $a_k = i_k + b_k$, where $i_k$ is given as an integral over $\Omega$ of some polynomial in the Gaussian curvature and its covariant derivatives, whereas $b_k$ is given as an integral over $\partial \Omega$ of some polynomial in the geodesic curvature and its derivatives. Nevertheless almost all of these polynomials, and hence the heat invariants, are still unknown. Using computer calculations, L. Smith has obtained the first seven heat invariants for smooth domains in the Euclidean plane (\cite{Smith}), and, more generally, the first five heat invariants for any smooth domain can be found in \cite{GilkeyBranson}.

In this thesis we first aim to study the asymptotic expansion of the heat trace for \emph{hyperbolic polygons}, i.e. relatively compact domains in the hyperbolic plane with piecewise geodesic boundary. At first sight, the behaviour of the heat trace for hyperbolic polygons as $t\searrow 0$ is similar as for domains with smooth boundary. It has an asymptotic expansion as in \eqref{heat asymptotics} and the coefficients in the asymptotic expansion, still called heat invariants, reflect geometric properties of the domain. However, the above formulas for the heat invariants are generally not valid anymore for domains whose boundary is not smooth. We will prove that the heat trace for hyperbolic polygons has an asymptotic expansion of the form
\begin{align*}
Z_{\Omega}(t) \overset{t\downarrow 0}{\sim} \frac{\vert \Omega \vert}{4\pi t} - \frac{\vert \partial\Omega \vert}{8\sqrt{\pi t}} + \sum\limits_{k=0}^{\infty}\left( i_k + b_k t^{\frac{1}{2}}+ \nu_k \right)t^k,
\end{align*}
where we also obtain explicit formulas for all coefficients (see Theorem \ref{theorem:AsymptoticExpansionHeatTraceHyperbolPolygon}). Qualitatively, $i_k$ and $b_k$ can be written as described above for domains with smooth boundary. But now there is also a contribution to the heat invariants from the vertices of the polygon, which we abbreviate by $\nu_k$.

The analogous problem was solved for Euclidean polygons in \citep{VanDenBerg} and for spherical polygons in \citep{Watson}. The immediate question is how the heat invariants depend on the Gaussian curvature of the polygon? We will answer this question by deriving the asymptotic expansion of the heat trace for polygons in arbitrary two-dimensional complete Riemannian manifolds of constant curvature (see Corollary \ref{corollary:HeatAsymptoticConstantCurvaturePolygon}). We want to emphasise that we not only generalise the ambient space, but also our definition of the term polygon is more general than the standard notion (see Definition \ref{definition:PolygonGeneral}). It turns out that the heat invariants for polygons can be written as polynomials in the Gaussian curvature with coefficients depending on all angles of the polygon. Later on we use orbifold theory as a vehicle to explain this behaviour more deeply in the case of polygons with interior angles of $\frac{\pi}{k}$, $k\in\mathbb{N}_{\geq 2}$.

Through the heat invariants we will deduce some interesting geometric consequences for a polygon of constant curvature when its spectrum is given. In particular we will prove the following:

\begin{itemize}
\item[$\bullet$] It is possible to detect the volume, perimeter and curvature of a polygon from its spectrum (see Corollary \ref{corollary:SpectralInvariantsVolumePerimeterCurvature}).
\item[$\bullet$] If the curvature of a polygon is nonzero, then the multiset of all its angles which are not equal to $\pi$, as well as its Euler characteristic are spectral invariants (see Theorem \ref{theorem:SpectralInvariantsAnglesEulerCharacteristic}).
\item[$\bullet$] A polygon $\Omega$ with at least one angle not equal to $\pi$ and a smooth domain $D$ can never be \emph{isospectral}, i.e. they can not have the same spectrum, if $\chi(\Omega)\geq \chi(D)$ (see Corollary \ref{corollary:PolygonNotIsospectralSmoothDomain} (ii)). 
\item[$\bullet$] We prove that hyperbolic and spherical triangles are determined up to isometry by their spectrum within the class of all polygons (see Corollary \ref{corollary:IsospectralTriangle})
\item[$\bullet$] Regular hyperbolic and spherical polygons are determined up to isometry by their spectrum within the class of all hyperbolic and spherical polygons. Moreover, the spectrum determines whether hyperbolic or spherical polygons are convex (see Corollary \ref{corollary:ExistencePolygonsSpectrallyDetermined}).
\item[$\bullet$] A simply connected Euclidean polygon can never be isospectral to a two-dimensional compact manifold with nonempty smooth boundary. Moreover, a polygon with zero curvature can not be isospectral to a smooth domain in the Euclidean plane (see Corollary \ref{corollary:SmoothDomainsSimplyConnectedPolygonalDomainsNeverIsospectral}).
\end{itemize}

In connection with the situation described above, we also study the heat invariants for closed two-dimensional Riemannian orbifolds of constant curvature. Orbifolds can be regarded as a natural generalisation of manifolds, and as such the class of orbifolds naturally possesses a greater variety than that of manifolds. Roughly speaking, an orbifold consists of a regular manifold possibly with an additional set of various singular points, where everything is related to each other through an orbifold structure. For example, consider a hyperbolic polygon $\Omega$ whose angles are equal to rational multiples of $\pi$. Such a polygon can be captured accurately as a closed orbifold, such that the regular part is equal to $\Omega$, the singular set is equal to $\partial\Omega$ and all geometric properties are described properly within the orbifold structure. Over the last 15 years, orbifolds were studied intensively by spectral geometers and many objects, ideas and tools known from manifold theory were translated into the realm of orbifolds (see \citep{Gordon08}, \citep{Gordon12}). 

Suppose $\mathcal{O}$ is any Riemannian orbisurface, i.e. a two-dimensional closed Riemannian orbifold, and let $\Delta_{\mathcal{O}}$ be its Laplacian. Again, the spectrum of $\Delta_{\mathcal{O}}$ consists of a non-negative sequence of eigenvalues without a finite accumulation point. All eigenvalues have a finite multiplicity and we list them like in \eqref{eigenvalues} as $0=\mu_1\leq \mu_2\leq...\nearrow \infty$. The heat trace $Z_{\mathcal{O}}(t):=\sum_{i=1}^{\infty}e^{-\mu_i t}$ of the orbifold has an asymptotic expansion as $t\searrow 0$ exactly as in \eqref{heat asymptotics}. The coefficients in this asymptotic expansion, the heat invariants, contain geometric information about the orbifold. We will compute the asymptotic expansion of $Z_{\mathcal{O}}(t)$ as $t\searrow 0$ for any orbisurface of constant curvature. Thereby we obtain explicit formulas for all heat invariants in terms of the curvature and the singular points of the orbifold (see Theorem \ref{theorem:OrbifoldAsymptotikKonstanteKruemmung}).

It is well-known that two-dimensional orbifolds can have three types of singular points, called \emph{mirror points}, \emph{dihedral points}, and \emph{cone points}. The heat invariants provide information about the singular set and the topology of the orbisurface. We will show in particular the following: 
\begin{itemize}
\item[$\bullet$]  The spectrum of any orbisurface $\mathcal{O}$ with constant nonzero curvature fixes the value of the sum $M+2N$, where $M$ denotes the number of all dihedral points and $N$ denotes the number of all cone points of $\mathcal{O}$.  Furthermore, the spectrum determines the multiset $\{m_1,...,m_M, n_1,n_1,...,n_N,n_N\}$, where $2m_1,...,2m_M$ denote the orders of the dihedral points and $n_1,...,n_N$ denote the orders of the cone points of $\mathcal{O}$ (see Corollary \ref{corollary:SpectralInvariantsOrbisurfaces} (iii)).
\item[$\bullet$]  If two orientable orbisurfaces with constant curvature $\kappa\neq 0$ are isospectral, then the orbifolds have the same Euler characteristic, and the underlying topological spaces are homeomorphic (see Corollary \ref{corollary:OrientableOrbisurfacesSameUnderlyingSpace}).  This generalises a result of \citep{Strohmaier} in which the same result was proven in case of constant curvature $\kappa=-1$ using the Selberg trace formula for the wave kernel.
\item[$\bullet$] Within various classes of orbifolds, the spectrum determines the singular set and the Euler characteristic of the orbifold as well as the Euler characteristic of the underlying topological space (see Corollary \ref{corollary:IsospectralOrbifoldsNoConeNoDihedral} and Corollary \ref{corollary:SpectrumOfNonOrientableOrbisurfaces}).
\end{itemize}

\bigskip
The contents of this thesis are structured as follows. In the second chapter, we provide the requisite background in spectral geometry and about Legendre functions (more precisely, so-called associated Legendre functions). Then we investigate and solve a class of integrals involving the associated Legendre functions of the second kind. The solutions of these integrals seem to be unknown to date and some of the solutions will be used in the subsequent chapter in order to compute the Green's function and heat kernel associated to an arbitrary hyperbolic wedge.

The third chapter is the main part of this thesis. In the first section, we obtain explicit formulas for the Green's function and the heat kernel of a hyperbolic wedge by solving the corresponding boundary value problems. These formulas are used in the second section in order to compute the asymptotic expansion of the heat trace for any hyperbolic polygon. Explicit formulas for all heat invariants are given as well. In the third section, we use the heat invariants for Euclidean polygons (from \citep{VanDenBerg}) and spherical polygons (from \citep{Watson}) to compute the heat invariants for polygons of arbitrary constant curvature. We then draw some conclusions from the heat invariants, i.e. we discuss what can be said about a polygon knowing all its heat invariants. Finally, in the last section, we investigate once again the heat trace for hyperbolic polygons with interior angles $\frac{\pi}{k}$, for some $k\in\mathbb{N}_{\geq 2}$. By using Sommerfeld's method of images we obtain another and more elementary formula for the heat kernel of any wedge with angle $\frac{\pi}{k}$, $k\in\mathbb{N}_{\geq 2}$. Using this formula we compute the contributions to the heat invariants from the vertices of a polygon once again. They will now appear rather differently as finite trigonometric sums. By comparison with the previous formulas, we obtain remarkable identities for these trigonometric sums.

The final chapter deals with two-dimensional orbifolds, also called orbisurfaces. Here, we first investigate two examples of spherical orbisurfaces and, in particular, how their heat traces are related to each other. Then we use this relation and some observations from the article \citep{Watson} to compute the heat invariants for two-dimensional Riemannian orbifolds of constant curvature. We will obtain formulas for all heat invariants in terms of the Gaussian curvature and the singular strata of the orbifold. In the last section, we discuss some applications of the heat invariants and show how the orbifold theory is linked to the results of Chapter \ref{chapter:polygons}. For example, we prove that the contribution of an interior angle $\frac{\pi}{k}$, $k\in\mathbb{N}_{\geq 2}$, to the heat invariants for a polygon is the same as the contribution of a dihedral point of isotropy order $2k$ to the heat invariants for an orbisurface, if the polygon and the orbisurface have the same curvature. This fact together with the formulas from \citep{Watson} provide an alternative proof for the contribution of an angle $\frac{\pi}{k}$, $k\in\mathbb{N}_{\geq 2}$, to the heat invariants for a polygon.
        \chapter{Preliminaries}
\label{chapter:Preliminaries}

\section{Background in spectral geometry}
\label{section:Background}

In this section we want to recall some basic definitions and well-known facts to lay the foundations for this thesis. In particular we will clarify our notation along the way. Since all of the material in this section is well documented in the literature, we do not need to go into the details. Instead we will always point out where to find the corresponding topics in the literature. 

Let $M$ be a two-dimensional smooth connected Riemannian manifold and $\Omega\subset M$ be any non-empty domain, i.e. an open and connected subset. Note that the case $\Omega=M$ is included here. Let $\Delta_{\Omega}$ be the (positive semi-definite) Dirichlet Laplacian for $\Omega$ with respect to the Riemannian measure (see \cite{Grigoryan}).

A central function for our purposes is the heat kernel associated to the Dirichlet Laplacian $\Delta_{\Omega}$, which we want to introduce first. Since we will consider no other operator than the Dirichlet Laplacian for $\Omega$, we will call it simply the heat kernel of $\Omega$.

\begin{definition} (see \cite{Grigoryan}, Definition 9.1.)
\label{definition:FundamentalSolution}
A smooth function $u:\Omega\times (0,\infty)\rightarrow \mathbb{R}$ is called a \emph{fundamental solution} to the heat equation at the point $y\in \Omega$ if it satisfies the following conditions:
\begin{align}
\label{equation:FundamentalSolution}
\begin{cases}
\text{ }\text{ }\left( \partial_t + \Delta\right) u(x,t) = 0, &\quad \forall\, (x,t)\in \Omega\times (0,\infty),\\
\lim\limits_{t\searrow 0} \int\limits_{\Omega}u(x,t) f(x) dx = f(y),&\quad \forall\, f\in C_{c}^{\infty}(\Omega).
\end{cases}
\end{align}
As usual, $C_{c}^{\infty}(\Omega)$ denotes the space of all real-valued, smooth and compactly supported functions on $\Omega$. Further, $\Delta:C^{\infty}(\Omega)\rightarrow C^{\infty}(\Omega)$, $\Delta f :=-\diverg(\nabla f)$ denotes the Laplacian of $\Omega$, where $\nabla$ and $\diverg$ are taken with respect to the Riemannian metric of $\Omega$ induced by $M$.
\end{definition}

The heat kernel of $\Omega$ can be characterised as the minimal non-negative fundamental solution to the heat equation in the sense of Definition \ref{definition:HeatKernel} below. For the existence of such a minimal fundamental solution we refer to \citep{Grigoryan}. The uniqueness follows directly from the minimality property.

\begin{definition} (see \cite{Grigoryan}, Theorem 9.5.)
\label{definition:HeatKernel}
The heat kernel of $\Omega$ is defined as the smooth function
\begin{align*}
K_{\Omega}:\Omega\times\Omega\times (0,\infty)\rightarrow \mathbb{R},\quad (x,y,t)\mapsto K_{\Omega}(x,y;t), 
\end{align*}
such that for any $y\in\Omega$ the function $K_{\Omega}\left(\cdot, y;\cdot \right)$ is a non-negative solution to \eqref{equation:FundamentalSolution}, and it is \emph{minimal} in the following sense: If $u(x,t)$ is another smooth and non-negative solution to \eqref{equation:FundamentalSolution}, then $u(x,t)\geq K_{\Omega}\left(x, y;  t \right)$ for all $x\in\Omega$, $t>0$.
\end{definition}

Another very important function is the heat trace, defined for relatively compact domains and closely related to the heat kernel.

\begin{definition}
\label{definition:heat trace}
Let $\Omega\subset M$ be a relatively compact domain, i.e. a domain with compact closure. The \emph{heat trace} of $\Omega$ is defined as the function $Z_{\Omega}:(0,\infty)\rightarrow\mathbb{R}$ given by
\begin{align}
\label{equation:heat trace}
Z_{\Omega}(t):=\int\limits_{\Omega} K_{\Omega}(x,x;t) dx, \quad t>0.
\end{align}
\end{definition}

The integral in \eqref{equation:heat trace} is convergent since $0\leq K_{\Omega}(x,x;t)\leq K_{M}(x,x;t)$  for all $t>0$, $x\in\Omega$ and $x\mapsto K_{M}(x,x;t)$ is integrable as a continuous function on the compact domain $\overline{\Omega}$. For the second estimate note that $K_{\Omega}(\cdot,x;\cdot)$ and $K_{M}(\cdot,x;\cdot)_{\mid \Omega\times (0,\infty)}$ are both fundamental solutions at $x\in\Omega$ and $K_{\Omega}(\cdot,x;\cdot)$ satisfies the minimality property of Definition \ref{definition:HeatKernel}.

\begin{proposition} \emph{(see \citep[Theorem 10.13]{Grigoryan})}
\label{proposition:heat trace bounded domain}
If the domain $\Omega$ is relatively compact, then the following holds:
\begin{itemize}
\item[$(i)$] The spectrum of $\Delta_{\Omega}$ consists of a sequence of non-negative eigenvalues without a finite accumulation point and each eigenvalue has a finite multiplicity.

Suppose the eigenvalues $\left( \lambda_i \right)_{i=1}^{\infty}$ are enumerated with multiplicity such that
\begin{align}
0\leq \lambda_1<\lambda_2\leq \lambda_3\leq...\nearrow  \infty.
\end{align}
There exists an orthonormal basis $\lbrace \varphi_i \rbrace_{i=1}^{\infty}$ for $L^2\left( \Omega \right)$ such that each function $\varphi_i$ is an eigenfunction of $\Delta_{\Omega}$ corresponding to the eigenvalue $\lambda_i$.
\item[$(ii)$] The heat kernel $K_{\Omega}(x,y;t)$ is given by
\begin{align}
\label{equation:HeatKernelBoundedDomain}
K_{\Omega}(x,y;t) = \sum\limits_{i=1}^{\infty} e^{-\lambda_i t}\varphi_i(x)\varphi_i(y),
\end{align}
where the series converges absolutely and uniformly on $\Omega\times\Omega\times [\epsilon,\infty)$ for any $\epsilon>0$.
\end{itemize}
\end{proposition}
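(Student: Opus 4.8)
The plan is to read off both statements from the spectral theory of the heat semigroup. For $t>0$ let $P_t\colon L^2(\Omega)\to L^2(\Omega)$ denote the operator with integral kernel $K_\Omega(\cdot,\cdot;t)$. From $K_\Omega\ge 0$, the sub-stochastic bound $\int_\Omega K_\Omega(x,y;t)\,dy\le 1$ and the symmetry $K_\Omega(x,y;t)=K_\Omega(y,x;t)$ one gets by Cauchy--Schwarz that $P_t$ is a bounded self-adjoint operator with $\|P_t\|\le 1$; moreover $(P_t)_{t>0}$ is a positivity-preserving strongly continuous semigroup with generator $-\Delta_\Omega$, and $K_\Omega$ has the semigroup property (these are part of the standard theory of the Dirichlet heat semigroup, see \citep{Grigoryan}). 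The decisive point is that $P_t$ is compact: by the semigroup property and symmetry $\int_\Omega K_\Omega(x,y;t)^2\,dy=K_\Omega(x,x;2t)$, hence
\begin{align*}
\|P_t\|_{\mathrm{HS}}^2=\int_\Omega\int_\Omega K_\Omega(x,y;t)^2\,dy\,dx=\int_\Omega K_\Omega(x,x;2t)\,dx\le\int_{\overline{\Omega}}K_M(x,x;2t)\,dx<\infty
\end{align*}
by exactly the domination and integrability already noted above, so $P_t$ is Hilbert--Schmidt, in particular compact.

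Next I would apply the spectral theorem for compact self-adjoint operators to $P_1$: $L^2(\Omega)$ has an orthonormal basis $\{\varphi_i\}_{i\ge 1}$ of eigenfunctions of $P_1$, the eigenvalues are real, of finite multiplicity, and accumulate only at $0$. Since $\sigma(\Delta_\Omega)\subset[0,\infty)$ and $e^{-t\lambda}>0$ there, $P_t=e^{-t\Delta_\Omega}$ is injective and a contraction, so each eigenvalue of $P_1$ equals $e^{-\lambda_i}$ for some $\lambda_i\ge 0$; as $\lambda\mapsto e^{-t\lambda}$ is injective on $[0,\infty)$, the spectral resolution of $\Delta_\Omega$ is atomic with atoms exactly at the $\lambda_i$. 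Hence $\{\varphi_i\}$ is a common eigenbasis for all $P_t$ and for $\Delta_\Omega$, with $\Delta_\Omega\varphi_i=\lambda_i\varphi_i$ and $\sigma(\Delta_\Omega)=\{\lambda_i\}$; the $\lambda_i$ do not depend on $t$, have finite multiplicity, and $\lambda_i\to\infty$ because $e^{-\lambda_i}\to 0$. Elliptic regularity (equivalently, $\varphi_i=e^{\lambda_i}P_1\varphi_i$ together with smoothness of $K_\Omega$ in its space variables) gives $\varphi_i\in C^\infty(\Omega)$. Enumerating the $\lambda_i$ increasingly with multiplicity yields $(i)$.

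For $(ii)$, fix $x\in\Omega$ and $t>0$. As above $K_\Omega(x,\cdot;t)\in L^2(\Omega)$, with Fourier coefficients $\langle K_\Omega(x,\cdot;t),\varphi_i\rangle=(P_t\varphi_i)(x)=e^{-\lambda_i t}\varphi_i(x)$, so Parseval's identity gives
\begin{align*}
\sum_{i=1}^\infty e^{-2\lambda_i t}\varphi_i(x)^2=\|K_\Omega(x,\cdot;t)\|_{L^2(\Omega)}^2=K_\Omega(x,x;2t),
\end{align*}
and in particular $\sum_i e^{-\lambda_i s}\varphi_i(x)^2=K_\Omega(x,x;s)$ for every $s>0$. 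Since $\{\varphi_i\}$ is an orthonormal basis, $K_\Omega(x,\cdot;t)=\sum_i e^{-\lambda_i t}\varphi_i(x)\varphi_i$ in $L^2(\Omega)$, and I would upgrade this to the absolutely and uniformly convergent pointwise identity \eqref{equation:HeatKernelBoundedDomain} by estimating the tails: for $t\ge\epsilon$ and $M>N$, Cauchy--Schwarz in $\ell^2$ gives
\begin{align*}
\sum_{i=N+1}^{M}e^{-\lambda_i t}|\varphi_i(x)\varphi_i(y)|\le\Bigl(\sum_{i>N}e^{-\lambda_i\epsilon}\varphi_i(x)^2\Bigr)^{1/2}\Bigl(\sum_{i>N}e^{-\lambda_i\epsilon}\varphi_i(y)^2\Bigr)^{1/2},
\end{align*}
and, pulling out the smallest exponential, $\sum_{i>N}e^{-\lambda_i\epsilon}\varphi_i(x)^2\le e^{-\lambda_{N+1}\epsilon/2}\sum_i e^{-\lambda_i\epsilon/2}\varphi_i(x)^2=e^{-\lambda_{N+1}\epsilon/2}K_\Omega(x,x;\epsilon/2)\le C_\epsilon\,e^{-\lambda_{N+1}\epsilon/2}$, with $C_\epsilon:=\max_{x\in\overline{\Omega}}K_M(x,x;\epsilon/2)<\infty$. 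Since $\lambda_{N+1}\to\infty$, these tails tend to $0$ uniformly in $(x,y,t)\in\Omega\times\Omega\times[\epsilon,\infty)$, which is the claimed absolute and uniform convergence; the (continuous) sum of the series defines the same operator $P_t$ as the kernel $K_\Omega(\cdot,\cdot;t)$, so the two agree almost everywhere and hence, being continuous, everywhere, i.e.\ \eqref{equation:HeatKernelBoundedDomain} holds.

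The only step that is not entirely routine is the \emph{uniform} convergence in $(ii)$, since $\partial\Omega$ is allowed to be arbitrarily irregular. The point I want to make there is that one should avoid a Dini argument on $\overline{\Omega}$ (the partial sums $\sum_{i\le N}e^{-\lambda_i\epsilon}\varphi_i^2$ need not extend continuously up to the boundary) and instead use the factorisation trick above, which needs only $\lambda_{N+1}\to\infty$ and the crude uniform bound $K_\Omega(x,x;\epsilon/2)\le C_\epsilon$ coming from the domination by $K_M$. All remaining ingredients — the Hilbert--Schmidt property, the spectral theorem, and Parseval's identity — are standard.
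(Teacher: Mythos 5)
Your argument is correct, and it is essentially the standard proof of the result the paper simply cites (Grigoryan, Theorem 10.13) without reproving: compactness of $e^{-t\Delta_\Omega}$ via the Hilbert--Schmidt bound $\|P_t\|_{\mathrm{HS}}^2=\int_\Omega K_\Omega(x,x;2t)\,dx\le\int_{\overline{\Omega}}K_M(x,x;2t)\,dx<\infty$, the spectral theorem, and Parseval combined with the tail factorisation $e^{-\lambda_i\epsilon}\le e^{-\lambda_{N+1}\epsilon/2}e^{-\lambda_i\epsilon/2}$ and the on-diagonal domination $K_\Omega(x,x;\epsilon/2)\le K_M(x,x;\epsilon/2)$ to get uniform convergence. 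The only point you leave aside is the strict inequality $\lambda_1<\lambda_2$ appearing in the enumeration, but the paper itself treats simplicity of the first eigenvalue (via connectedness of $\Omega$) as a separate cited fact in the subsequent Remark, so nothing is missing.
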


\begin{remark}
\label{remark:heatkernelproperties}
Because $\Omega$ is connected (by definition), the first eigenvalue must be simple, i.e. $\lambda_1<\lambda_2$ (see \citep[Corollary 10.12]{Grigoryan}). Further, one can show that $\lambda_1>0$ if $M\backslash \overline{\Omega}$ is non-empty (see \citep[Theorem 10.22]{Grigoryan}). 

If $\Omega$ is a relatively compact domain with \emph{smooth} boundary $\partial \Omega$, then the heat kernel can be extended continuously to $\overline{\Omega}\times \overline{\Omega} \times (0,\infty)$ by setting $K_{\Omega}(x,y;t):=0$ whenever $x$ or $y$ lies on the boundary (see \citep[Section 2.3]{GrigoryanEstimate} and \citep{Dodziuk}).
\end{remark}

\begin{corollary}
\label{corollary:HeatTraceBoundedDomain}
If $\Omega$ is relatively compact, then the heat trace is related to the spectrum of $\Delta_{\Omega}$ by
\begin{align}
\label{equation:HeatTraceBoundedDomain}
Z_{\Omega}(t)=\sum\limits_{i=1}^{\infty}e^{-\lambda_i t}.
\end{align}
\end{corollary}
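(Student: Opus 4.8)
The plan is to combine the spectral expansion of the heat kernel on the diagonal with the orthonormality of the eigenfunctions. Concretely, I would start from the formula \eqref{equation:HeatKernelBoundedDomain} in Proposition \ref{proposition:heat trace bounded domain}~$(ii)$, which holds because $\Omega$ is relatively compact, and specialise it to $y=x$, obtaining $K_{\Omega}(x,x;t)=\sum_{i=1}^{\infty}e^{-\lambda_i t}\varphi_i(x)^2$ for every $x\in\Omega$ and $t>0$. Substituting this into the definition \eqref{equation:heat trace} of the heat trace gives $Z_{\Omega}(t)=\int_{\Omega}\sum_{i=1}^{\infty}e^{-\lambda_i t}\varphi_i(x)^2\,dx$.

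The next step is to interchange the summation and the integration. Fix $t>0$ and pick $\epsilon\in(0,t]$; by Proposition \ref{proposition:heat trace bounded domain}~$(ii)$ the series $\sum_i e^{-\lambda_i t}\varphi_i(x)\varphi_i(y)$ converges absolutely and uniformly on $\Omega\times\Omega\times[\epsilon,\infty)$, hence in particular the series of non-negative continuous functions $\sum_i e^{-\lambda_i t}\varphi_i(\cdot)^2$ converges uniformly on $\Omega$. Since $\overline{\Omega}$ is compact, $\Omega$ has finite Riemannian measure, so uniform convergence permits term-by-term integration (alternatively one may invoke the monotone convergence theorem directly, as all summands are non-negative). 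This yields $Z_{\Omega}(t)=\sum_{i=1}^{\infty}e^{-\lambda_i t}\int_{\Omega}\varphi_i(x)^2\,dx$.

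Finally, since $\{\varphi_i\}_{i=1}^{\infty}$ is an orthonormal basis of $L^2(\Omega)$ by Proposition \ref{proposition:heat trace bounded domain}~$(i)$, one has $\int_{\Omega}\varphi_i(x)^2\,dx=\lVert\varphi_i\rVert_{L^2(\Omega)}^2=1$ for each $i$, and therefore $Z_{\Omega}(t)=\sum_{i=1}^{\infty}e^{-\lambda_i t}$, which is \eqref{equation:HeatTraceBoundedDomain}. In particular the right-hand side is finite for every $t>0$, consistent with the convergence of the integral in \eqref{equation:heat trace} noted after Definition \ref{definition:heat trace}. There is essentially no hard step here; the only point requiring a word of justification is the exchange of sum and integral, and this is immediate from the uniform convergence statement in Proposition \ref{proposition:heat trace bounded domain}~$(ii)$ together with the relative compactness of $\Omega$.
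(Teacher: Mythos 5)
Your proposal is correct and follows essentially the same route as the paper: specialise the eigenfunction expansion \eqref{equation:HeatKernelBoundedDomain} to the diagonal, interchange sum and integral using the uniform convergence from Proposition \ref{proposition:heat trace bounded domain}~$(ii)$ (the paper does exactly this; your alternative appeal to monotone convergence is a fine extra remark), and conclude with $\lVert\varphi_i\rVert_{L^2(\Omega)}=1$. No gaps.
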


\begin{proof}
Since for any fixed $t>0$, the series in \eqref{equation:HeatKernelBoundedDomain} converges uniformly on $\Omega\times\Omega$, we obtain
\begin{align*}
Z_{\Omega}(t)&=\int\limits_{\Omega} \sum\limits_{i=1}^{\infty} e^{-\lambda_i t}\varphi_i(x)^2dx = \sum\limits_{i=1}^{\infty} \int\limits_{\Omega} e^{-\lambda_i t}\varphi_i(x)^2dx =\sum\limits_{i=1}^{\infty}  e^{-\lambda_i t}.
\end{align*}
The last equality follows because $\Vert \varphi_i \Vert_{L^2(\Omega)}=1$ by definition. 
\end{proof}

We will use the Laplace transform in Section \ref{section:Green} to solve boundary value problems for the Green's function and the heat kernel. Therefore, we want to summarise some fundamental facts of the Laplace transform in advance, referring to \cite{DaviesIntegral} and \cite{Doetsch} for more details.

\begin{definition}
\label{definition:LaplaceTransform}
Let $f:(0,\infty)\rightarrow\mathbb{R}$ be a continuous function. The \emph{Laplace integral} of $f$ with parameter $s\in\mathbb{C}$ is defined formally as
\begin{align*}
\int\limits_{0}^{\infty} e^{-st}f(t) dt. 
\end{align*}
Suppose the Laplace integral of $f$ \emph{exists} for at least one value of $s$, i.e. the function $(0,\infty)\ni t\mapsto e^{-st}f(t)\in\mathbb{C}$ is integrable. Then the \emph{Laplace transform} of $f$ is the function $F$ given by
\begin{align}
\label{equation:LaplaceTransform}
F(s):=\int\limits_{0}^{\infty} e^{-st}f(t)dt,
\end{align} 
which is defined for all values of $s\in\mathbb{C}$ such that the Laplace integral exists. We write $\mathcal{L}\lbrace f \rbrace$ for the Laplace transform of $f$, such that $\mathcal{L}\lbrace f \rbrace(s):=F(s)$.
\end{definition}

In practice, the Laplace transform is often convergent and holomorphic on a half-plane as in \eqref{equation:HalfPlane} below.

\begin{definition}
\label{definition:HalfPlane}
For any $\delta\in\mathbb{R}$ we use the following notation:
\begin{align}
\label{equation:HalfPlane}
\mathcal{H}_{>\delta}:=\{\,z\in\mathbb{C} \mid  \Re(z)>\delta \,\},
\end{align}
where $\Re(z)$ denotes the real part of $z$.
\end{definition}

\begin{proposition}
\label{proposition:InverseLaplaceTransform}
Let $f:(0,\infty)\rightarrow\mathbb{R}$ be continuous and suppose the Laplace integral of $f$ is absolutely convergent at some point $\delta\in\mathbb{R}$. Then the Laplace integral is absolutely convergent for all $s\in \mathcal{H}_{>\delta}$ and the Laplace transform $F=\mathcal{L}\lbrace f \rbrace$ is holomorphic on the half-plane $\mathcal{H}_{>\delta}$. Moreover, the Laplace transform is unique, i.e. if $g:(0,\infty)\rightarrow \mathbb{R}$ is continuous and $\mathcal{L}\lbrace g \rbrace = \mathcal{L}\lbrace f \rbrace$, then $f(t)=g(t)$ for all $t\in (0,\infty)$.

If $f$ is even continuously differentiable, then for any $\varepsilon>\delta$ and any $t\in(0,\infty)$,
\begin{align}
\label{equation:InverseLaplaceTransform}
f(t)=\frac{1}{2\pi i}\lim\limits_{N\rightarrow\infty}\int\limits_{\varepsilon-iN}^{\varepsilon+iN}e^{st} F(s) ds.
\end{align}
\end{proposition}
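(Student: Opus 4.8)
The plan is to treat the three claims in turn. For the convergence and holomorphy statement, the starting point is the elementary bound $|e^{-su}f(u)| = e^{-\Re(s)u}|f(u)| \le e^{-\delta u}|f(u)|$, valid for every $u>0$ whenever $\Re(s)\ge\delta$; comparison with the Laplace integral at $\delta$, which is absolutely convergent by hypothesis, then yields absolute convergence of $\mathcal{L}\{f\}(s)$ for all $s$ with $\Re(s)\ge\delta$, in particular on $\mathcal{H}_{>\delta}$. For holomorphy I would fix $\delta'>\delta$ and observe that on $\{\Re(s)\ge\delta'\}$ the integrand $u\mapsto u\,e^{-su}f(u)$ is dominated by $u\,e^{-\delta' u}|f(u)|\le C_{\delta'}\,e^{-\delta u}|f(u)|\in L^1(0,\infty)$, so one may differentiate $\mathcal{L}\{f\}$ under the integral sign; since $\delta'>\delta$ is arbitrary this gives holomorphy of $F$ on $\mathcal{H}_{>\delta}$. (Alternatively one checks continuity of $F$ by dominated convergence and applies Morera's theorem, interchanging $\oint_{\partial T}$ with $\int_0^\infty$ by Fubini on a compact triangle $T\subset\mathcal{H}_{>\delta}$ and using that $s\mapsto e^{-su}$ is entire, so the inner contour integral vanishes.)

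For uniqueness, suppose $g$ is continuous with $\mathcal{L}\{g\}=\mathcal{L}\{f\}$. Then both Laplace integrals are absolutely convergent on a common half-plane $\mathcal{H}_{>\delta_0}$, so $h:=f-g$ satisfies $\int_0^\infty e^{-su}h(u)\,du=0$ for all $s\in\mathcal{H}_{>\delta_0}$. Specialising to $s=\delta_0+1+n$ with $n=0,1,2,\dots$ and substituting $x=e^{-u}$ turns this into the moment condition $\int_0^1 x^n\,\widetilde h(x)\,dx=0$ for all $n$, where $\widetilde h(x):=x^{\delta_0}h(-\ln x)$ and $\widetilde h\in L^1(0,1)$ precisely because $h$'s Laplace integral converges absolutely at $\delta_0$. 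Since the polynomials are dense in $C[0,1]$, a function of $L^1(0,1)$ against which every polynomial integrates to zero must vanish almost everywhere; hence $\widetilde h=0$ a.e., so $h=0$ a.e. on $(0,\infty)$, and by continuity $f\equiv g$.

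For the inversion formula I would pass to Fourier analysis. Parametrising the contour as $s=\varepsilon+i\tau$, $\tau\in[-N,N]$, the partial Bromwich integral becomes $\tfrac{e^{\varepsilon t}}{2\pi}\int_{-N}^{N}e^{i\tau t}F(\varepsilon+i\tau)\,d\tau$. Setting $\phi(u):=e^{-\varepsilon u}f(u)$ for $u>0$ and $\phi(u):=0$ for $u\le 0$, one has $\phi\in L^1(\mathbb{R})$ because $\varepsilon>\delta$, and $F(\varepsilon+i\tau)=\int_{\mathbb{R}}e^{-i\tau u}\phi(u)\,du=\widehat\phi(\tau)$, so the partial Bromwich integral equals $e^{\varepsilon t}$ times the symmetric partial Fourier integral of $\phi$ at the point $t$. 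Since $f$ is continuously differentiable and $t>0$, the function $\phi$ is $C^1$ near $t$ (its only discontinuity being at the origin), so the pointwise Fourier inversion theorem — for instance via the Dini test, whose hypothesis follows from local Lipschitz continuity — gives $\tfrac{1}{2\pi}\lim_{N\to\infty}\int_{-N}^N e^{i\tau t}\widehat\phi(\tau)\,d\tau=\phi(t)=e^{-\varepsilon t}f(t)$. Multiplying by $e^{\varepsilon t}$ yields exactly \eqref{equation:InverseLaplaceTransform}.

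The convergence estimates and the uniqueness argument are routine. The hard part will be the inversion formula: one must invoke a genuinely pointwise Fourier inversion theorem (not merely an $L^2$ or a Cesàro/Abel version) and verify its hypotheses at every $t>0$, taking care that the jump of $\phi$ at $0$ is harmless because $t$ stays bounded away from the origin — this is precisely where the $C^1$-hypothesis on $f$ enters, and it also guarantees that the symmetric limit in \eqref{equation:InverseLaplaceTransform} exists as written. Since all three assertions are classical, one could alternatively just cite them from \cite{Doetsch} or \cite{DaviesIntegral}.
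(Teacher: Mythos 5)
Your argument is correct and complete, but it is worth noting that the paper does not actually prove this proposition at all: it simply refers the reader to Doetsch (Satz 9, p.~79, for uniqueness) and to Davies (Sections 2.1 and 3.3, for convergence, holomorphy, and the complex inversion formula). So you have supplied a self-contained proof where the thesis only cites. Each of your three steps is the standard classical argument and checks out. For convergence and holomorphy, the domination $u\,e^{-\delta'u}|f(u)|\le C_{\delta'}e^{-\delta u}|f(u)|$ with $C_{\delta'}=\sup_{u>0}u\,e^{-(\delta'-\delta)u}<\infty$ is exactly what justifies differentiating under the integral on $\{\Re(s)\ge\delta'\}$. For uniqueness, the reduction via $x=e^{-u}$ to the Hausdorff moment problem on $[0,1]$ is sound; note only that your parenthetical is slightly off — $\int_0^1|\widetilde h(x)|\,dx=\int_0^\infty e^{-(\delta_0+1)u}|h(u)|\,du$, so integrability of $\widetilde h$ comes from absolute convergence at $\delta_0+1$, not at $\delta_0$ itself (harmless, since convergence at $\delta_0$ implies it at $\delta_0+1$). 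Also observe that the hypothesis $\mathcal L\{g\}=\mathcal L\{f\}$ presupposes, under the paper's Definition of the Laplace transform, that the two integrals are (absolutely) convergent on the same set, which is what licenses your choice of a common $\delta_0$. For the inversion formula, rewriting the truncated Bromwich integral as $e^{\varepsilon t}$ times the symmetric partial Fourier integral of $\phi(u)=e^{-\varepsilon u}f(u)\mathbf 1_{u>0}\in L^1(\mathbb R)$ and invoking a pointwise (Dini-type) Fourier inversion theorem at the point $t>0$, where $\phi$ is locally $C^1$, is precisely the right mechanism; you correctly identify that the jump of $\phi$ at the origin is irrelevant because the Dini criterion is local and the symmetric limit handles the possible non-integrability of $\widehat\phi$. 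What your approach buys over the paper's is transparency about exactly where each hypothesis (absolute convergence at $\delta$, continuity, $C^1$-regularity) enters; what the citation buys is brevity and access to the sharper versions in Doetsch, where \eqref{equation:InverseLaplaceTransform} is established under weaker regularity than $C^1$.
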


For the result about uniqueness of the Laplace transform, we refer to \citep[Satz 9 on p. 79]{Doetsch}. The other results of Proposition \ref{proposition:InverseLaplaceTransform} are also contained in \citep{Doetsch}, but more elegant proofs can be found in \citep[Section 2.1 and 3.3]{DaviesIntegral}. 

We emphasise that the integral in \eqref{equation:InverseLaplaceTransform} is taken along an arbitrary straight line in the half-plane $\mathcal{H}_{>\delta}$, parallel to the imaginary axis. Furthermore, both endpoints of the integral tend to infinity simultaneously. 

\begin{definition}
\label{definition:InverseLaplaceTransform}
A continuous function $f:(0,\infty)\rightarrow\mathbb{R}$ is called the \emph{inverse Laplace transform} of a function $F$, if $\mathcal{L}\{ f\} = F$. Then we use the notation $\mathcal{L}^{-1}\lbrace F \rbrace:=f$.
\end{definition} 

If the inverse Laplace transform of a given function is not known already, one can use \eqref{equation:InverseLaplaceTransform} for its calculation. Equation \eqref{equation:InverseLaplaceTransform} is called the \emph{complex inversion formula} and is actually also valid in more general situations (see \citep[Satz 1 on p. 210]{Doetsch}).

Next, we want to consider the Laplace transform of the heat kernel $K_{\Omega}(x,y;\cdot)$. It does not always exist if $x = y$ and thus we introduce the following sets.

\begin{definition}
\label{definition:OffDiagonal}
The sets
\begin{align*}
\diag(\Omega):&=\{\, \left(x,y\right)\in\Omega\times\Omega \mid x=y \,\}, \\
\offdiag(\Omega):&=\{\, \left(x,y\right)\in\Omega\times\Omega \mid x\neq y \,\}
\end{align*} 
are called the \emph{diagonal} and \emph{off-diagonal} of the cartesian product $\Omega\times\Omega$, respectively.
\end{definition}

By definition, $K_{\Omega}(x,y;t)$ is smooth on $\Omega\times\Omega\times (0,\infty)$. Moreover, we can extend the heat kernel smoothly to $\offdiag(\Omega)\times \mathbb{R}$ by setting $K_{\Omega}(x,y;t):=0$ for all $t\leq 0$ (see \citep[Corollary 9.21]{Grigoryan}). In particular, for any $(x,y)\in\offdiag(\Omega)$ the function $K_{\Omega}(x,y;\cdot)$ can be extended continuously at $t=0$ and one can prove that the Laplace integral of $K_{\Omega}(x,y;\cdot)$ exists for all $s>0$ (see \citep[Exercise 9.10]{Grigoryan}). Thus, by Proposition \ref{proposition:InverseLaplaceTransform}, the Laplace transform $s\mapsto \mathcal{L}\{
K_{\Omega}(x,y;\cdot)\}(s)$ is holomorphic on the domain $\mathcal{H}_{>0}$.

\begin{definition}
\label{definition:Greens}
The Laplace transform of $K_{\Omega}(x,y;t)$ is called the \emph{resolvent kernel} or \emph{Green's function} of $\Omega$. More precisely, the Green's function is defined as the function
\begin{align}
\label{equation:Greens}
\begin{split}
&G_{\Omega}: \offdiag(\Omega) \times \mathcal{H}_{>0}\rightarrow \mathbb{C}, \\
&\left((x,y),s\right)\mapsto G_{\Omega}(x,y;s):=\int\limits_{0}^{\infty} e^{-st}K_{\Omega}(x,y;t) dt.
\end{split}
\end{align}
\end{definition}

\begin{proposition}
\label{proposition:PDEGreen}
\emph{(\citep[Exercise 9.10]{Grigoryan})} For any $s>0$, the Green's function $G_{\Omega}\left( \cdot,\cdot;s \right)$ is a non-negative and smooth function on $\offdiag(\Omega)$. Furthermore, for any $y\in\Omega$ and any $s > 0$ the function $u:\Omega\backslash \{y\}\ni x\mapsto G_{\Omega}(x,y;s)\in\mathbb{R}$ belongs to $L^1(\Omega)$ and
\begin{align}
\label{equation:PDEGreen}
\int\limits_{\Omega} u(x)\cdot \left( s + \Delta \right)f(x)\, dx &=f(y)\, \text{ for all } f\in C_c^{\infty}(\Omega).
\end{align}
In particular, $\left( s + \Delta \right)u(x)=0$ for all $x\in\Omega\backslash \{y\}$.
\end{proposition}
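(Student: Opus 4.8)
The plan is to extract the stated properties of $G_\Omega(\cdot,\cdot;s)$ directly from the Laplace-transform representation in \eqref{equation:Greens} together with the known properties of the heat kernel. For the non-negativity and smoothness on $\offdiag(\Omega)$, I would start from the fact (recalled in the excerpt) that $K_\Omega$ is smooth and non-negative on $\Omega\times\Omega\times(0,\infty)$ and extends smoothly by zero to $\offdiag(\Omega)\times\mathbb{R}$, and that for $(x,y)\in\offdiag(\Omega)$ the Laplace integral of $K_\Omega(x,y;\cdot)$ converges for all $s>0$. Non-negativity of $G_\Omega(\cdot,\cdot;s)$ is then immediate since it is an integral of a non-negative integrand. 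For smoothness, I would use the standard theorem on differentiating a parameter integral: one needs locally uniform (in $(x,y)$) integrable bounds on $K_\Omega$ and its spatial derivatives against $e^{-st}$. Near $t=0$ this is controlled by the smooth extension by zero; near $t=\infty$ I would invoke an exponential decay estimate for $K_\Omega$ on the off-diagonal (such decay is available from Grigor'yan's heat-kernel estimates, and in the relatively compact case also from the eigenfunction expansion \eqref{equation:HeatKernelBoundedDomain}). This justifies interchanging $\partial_x$ and $\int_0^\infty$ to any order, giving smoothness of $G_\Omega(\cdot,\cdot;s)$ in $x$ (and symmetrically in $y$, and jointly by the same argument applied to mixed derivatives).

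For the $L^1$-statement and the weak equation \eqref{equation:PDEGreen}, fix $y\in\Omega$ and $s>0$ and set $u(x)=G_\Omega(x,y;s)$. Integrability of $u$ over $\Omega$ follows from Tonelli's theorem: $\int_\Omega u(x)\,dx = \int_\Omega\int_0^\infty e^{-st}K_\Omega(x,y;t)\,dt\,dx$, and by non-negativity one may swap the order; the inner spatial integral of $K_\Omega(x,y;t)$ over $\Omega$ is at most $1$ (it is a sub-probability density, being dominated by the heat kernel of $M$ and satisfying the Dirichlet condition), so the whole expression is bounded by $\int_0^\infty e^{-st}\,dt = 1/s < \infty$. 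For \eqref{equation:PDEGreen}, take $f\in C_c^\infty(\Omega)$ and compute
\begin{align*}
\int_\Omega u(x)\,(s+\Delta)f(x)\,dx
&= \int_\Omega\int_0^\infty e^{-st}K_\Omega(x,y;t)\,dt\,(s+\Delta)f(x)\,dx\\
&= \int_0^\infty e^{-st}\int_\Omega K_\Omega(x,y;t)\,(s+\Delta)f(x)\,dx\,dt,
\end{align*}
where the interchange is justified by Fubini, using $\int_\Omega K_\Omega(x,y;t)\,|(s+\Delta)f(x)|\,dx \le \|(s+\Delta)f\|_\infty$ and integrability of $e^{-st}$ over $(0,\infty)$. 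Now by self-adjointness of $\Delta_\Omega$ (equivalently, Green's formula, using that $f$ has compact support so no boundary terms arise) and the heat equation $\partial_t K_\Omega(x,y;t) = -\Delta_x K_\Omega(x,y;t)$, the inner integral equals $\int_\Omega K_\Omega(x,y;t)\,s f(x)\,dx - \frac{d}{dt}\int_\Omega K_\Omega(x,y;t)\,f(x)\,dx$. Writing $h(t):=\int_\Omega K_\Omega(x,y;t)f(x)\,dx$, the $t$-integral becomes $\int_0^\infty e^{-st}\big(s\,h(t) - h'(t)\big)\,dt$, which integrates by parts to $\big[-e^{-st}h(t)\big]_0^\infty + \int_0^\infty\!\big(s e^{-st}h(t) - s e^{-st}h(t)\big)dt = h(0^+)$ (the boundary term at $\infty$ vanishes by exponential decay, the one at $0$ gives $h(0^+)$). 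Finally $h(0^+) = \lim_{t\searrow 0}\int_\Omega K_\Omega(x,y;t)f(x)\,dx = f(y)$ by the fundamental-solution property \eqref{equation:FundamentalSolution}. This proves \eqref{equation:PDEGreen}.

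The elliptic equation $(s+\Delta)u = 0$ on $\Omega\setminus\{y\}$ then follows: for test functions $f$ supported in $\Omega\setminus\{y\}$, the weak identity \eqref{equation:PDEGreen} with such $f$ combined with the already-established smoothness of $u$ on $\offdiag(\Omega)$ lets one integrate by parts back to obtain $\int (s+\Delta)u\cdot f = f(y) = 0$ — wait, more carefully: for $f$ with support away from $y$ one has $f(y)=0$, so $\int_\Omega u\,(s+\Delta)f = 0$, and integrating by parts (legitimate since $u$ is smooth there) gives $\int_\Omega (s+\Delta)u\cdot f = 0$ for all such $f$, hence $(s+\Delta)u \equiv 0$ on $\Omega\setminus\{y\}$ by the fundamental lemma of the calculus of variations. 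The main obstacle I anticipate is the careful justification of the two Fubini/differentiation-under-the-integral interchanges — in particular securing the locally uniform exponential decay of $K_\Omega$ and its derivatives on the off-diagonal as $t\to\infty$, and handling the behaviour as $t\searrow 0$ where $K_\Omega(x,y;\cdot)$ is only known to extend continuously (by zero) rather than with controlled derivatives; on $\offdiag(\Omega)$, however, the smooth extension by zero past $t=0$ resolves this cleanly, so the decay estimate is the real point.
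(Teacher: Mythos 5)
The paper does not actually prove this proposition: it is quoted from \citep[Exercise 9.10]{Grigoryan}, so there is no in-text argument to compare against. Your proposal is the standard (and surely intended) derivation — Laplace-transform the defining properties of the heat kernel — and it is correct: non-negativity is immediate, the $L^1$ bound $\int_\Omega u \le 1/s$ follows from Tonelli and the sub-Markov property $\int_\Omega K_\Omega(x,y;t)\,dy\le 1$, and the identity \eqref{equation:PDEGreen} follows from Green's formula (no boundary terms, since $f$ is compactly supported), the heat equation, and the integration by parts in $t$ whose boundary term at $0^+$ produces $f(y)$ via \eqref{equation:FundamentalSolution}. One small correction of emphasis: at $t\to\infty$ you do not need exponential decay of $K_\Omega$ — the factor $e^{-st}$ with $s>0$ together with boundedness of $h(t)=\int_\Omega K_\Omega(x,y;t)f(x)\,dx$ (bounded by $\|f\|_\infty$) already kills the boundary term, and for the differentiation-under-the-integral step it suffices that $K_\Omega$ and its local derivatives are locally bounded for $t\ge 1$ (which follows from the monotonicity of $t\mapsto K_\Omega(x,x;t)$ plus interior parabolic estimates), not that they decay.
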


\section{Associated Legendre functions}
\label{section:Legendre}

In this section, we want to introduce the so-called associated Legendre functions of the first and second kinds and summarise some of their properties. Then we investigate a special class of integrals involving associated Legendre functions, which seem to be unsolved up to now. Some of these integrals are important for our purposes, because they will naturally appear in the next chapter, where we will obtain explicit formulas for the Green's function and the heat kernel of a wedge. Therefore we will explain our method for solving those integrals in detail. Further, we want to remark that these integrals can be rewritten as so-called \emph{generalised Mehler transforms}. From this point of view we basically compute new generalised Mehler transforms. Despite the fact that the generalised Mehler transform has been known for some time (see \cite{Lowndes}, \cite{Sneddon}, \cite{Rosenthal}), apparently there hardly exist calculated examples compared with other integral transformations (see \cite{Oberhettinger} for a list of known generalised Mehler transforms).

We want to stress that our discussion of the associated Legendre functions will not be complete, and that extensive treatises can be found in the literature on associated Legendre functions (see for example \citep[Chapter $8.7-8.8$]{Gradshteyn}, \cite{Hobson}, \cite{Virchenko}, \citep[ Chapter III]{Erdelyi}, \citep[Chapter $14$]{NIST}, \citep[Chapter $5$]{Olver}, and \citep[Chapter $8$]{Temme}).

\begin{definition}
\label{definition:LegendreDGL}
The \emph{associated Legendre equation} is an ordinary differential equation, defined as
\begin{align}
\label{equation:LegendreDGL}
\left( 1-z^2 \right)\frac{d^2 u}{dz^2} - 2z \frac{d u}{dz} + \left( \nu \left( \nu+1 \right) - \frac{\mu}{1-z^2} \right) u = 0,
\end{align}
with parameters $\nu, \mu \in \mathbb{C}$ and variable $z\in \mathbb{C}\backslash\lbrace{ -1, +1 \rbrace}$. Any solution of \eqref{equation:LegendreDGL} is called an \emph{associated Legendre function}.
\end{definition}

In particular, we are interested in two special solutions, namely, the associated Legendre functions of the first and second kinds. These can be defined in several equivalent ways; we choose to define them via the hypergeometric function.

\begin{definition}
\label{definition:Pochhammer}
For any $z\in\mathbb{C}, k\in\mathbb{N}_0$, the \emph{Pochhammer symbol} is defined as
\begin{align*}
\left( z \right)_{k}:=\begin{cases}
1 , & \text{if $k=0$,}\\
z(z+1)(z+2)\cdots (z+k-1), & \text{if $k\geq 1$.}
\end{cases}
\end{align*}
It follows that $\left( -n \right)_k = 0$ if and only if $n\in\mathbb{N}_0,k\in\mathbb{N}$ with $k>n$. The Pochhammer symbol is related to the gamma function $\Gamma(z)$ through the equation
\begin{align*}
(z)_k = \frac{\Gamma(z+k)}{\Gamma(z)},
\end{align*}
which is easily established using the identity $\Gamma(z+1)=z\cdot \Gamma(z)$.
\end{definition}

The following facts about the hypergeometric function, i.e. everything up to Definition \ref{definition:LegendreFirstSecond} below, can be found for example in \citep[Section 9 of Chapter 5]{Olver}.

\begin{definition} 
\label{definition:Hypergeom}
Let $a,b,c \in\mathbb{C}$ such that $c\notin \lbrace 0,-1,-2,... \rbrace$, and let $z\in\mathbb{C}\backslash [1,\infty)$. The \emph{hypergeometric function} $F(a,b;c;z)$ is defined through
\begin{align*}
F(a,b;c;z):=\sum\limits_{k=0}^{\infty}\frac{(a)_k  (b)_k}{(c)_k \text{ } k!} z^k,\quad \text{for }\vert z \vert<1,
\end{align*}
and via analytic continuation for $\vert z \vert \geq 1$ with $z\notin [ 1,\infty)$. 

The function
\begin{align*}
\frac{1}{\Gamma(c)}F(a,b;c;z)
\end{align*}
is an entire function in all three parameters $a,b,c$ and is analytic in $z\in\mathbb{C}\backslash [1,\infty)$. As usual, this means that the function is analytic wherever it is defined directly and can be extended analytically to where it is not formally defined. In particular, we think of the above function as analytically extended in $c\in\lbrace 0,-1,-2,... \rbrace$, even though $F(a,b;c;z)$ is formally not defined for those values of $c$.
 Furthermore, the hypergeometric function is a solution of the \emph{hypergeometric differential equation}:
\begin{align*}
z(1-z)\frac{d^2 u}{dz^2} + \left( c-\left(a+b+1 \right)z \right)\frac{d u}{dz}- ab\cdot u = 0.
\end{align*}
\end{definition}

We use the hypergeometric function to introduce now the most important functions in this section. For powers, we always use the standard branch: $z^{\mu}=e^{\mu\log(z)}$ with $\log(z):=\log(\vert z \vert) + i\arg(z)$ for $z\in\mathbb{C}\backslash (-\infty,0],$ with $\arg(z)\in(-\pi,\pi)$.

\begin{definition} Let $z\in\mathbb{C}\backslash (-\infty, 1].$
\label{definition:LegendreFirstSecond}
The \emph{associated Legendre function of the first kind} is defined by
\begin{align}
\label{equation:LegendreFirst}
P_{\nu}^{\mu}(z) := \left( \frac{z+1}{z-1} \right)^{\frac{\mu}{2}}\frac{1}{\Gamma(1-\mu)}F\left( -\nu, \nu + 1; 1-\mu;\frac{1-z}{2} \right),
\end{align}
where $\nu,\mu\in\mathbb{C}$.
The \emph{associated Legendre function of the second kind} is defined by
\begin{align}
\label{equation:LegendreSecond}
Q_{\nu}^{\mu}(z) := \frac{e^{\mu\pi i} \Gamma\left( \nu+\mu+1 \right)\sqrt{\pi} \left( z^2-1 \right)^{\frac{\mu}{2}}}{2^{\nu +1} z^{\nu+\mu+1}\cdot \Gamma\left(\nu + \frac{3}{2}\right)}F\left(\frac{\nu+\mu+2}{2}, \frac{\nu+\mu+1}{2}; \nu+\frac{3}{2};\frac{1}{z^2} \right),
\end{align}
where $\mu,\nu\in\mathbb{C}$ such that $\mu+\nu\notin\lbrace -1,-2,-3,,... \rbrace$.

For $\mu=0$ we use $P_{\nu}(z) := P_{\nu}^{0}(z)$ and $Q_{\nu}(z) := Q_{\nu}^{0}(z)$. These functions are also known as \emph{Legendre functions of the first and second kinds}, respectively.

Note that for $z\in\mathbb{C}\backslash (-\infty,1]$, both $\frac{1-z}{2}$ and $\frac{1}{z^2}$ are in $\mathbb{C}\backslash [1,\infty)$, so that Definition \ref{definition:Hypergeom} indeed applies. Moreover, for these $z$ we have $\frac{z+1}{z-1}\in\mathbb{C}\backslash (-\infty,0]$, so $\big( \frac{z+1}{z-1} \big)^{\frac{\mu}{2}}$ is defined as explained above. The same holds for $\left(z^2-1\right)^{\frac{\mu}{2}}$ if we read it as $(z+1)^{\frac{\mu}{2}}\cdot (z-1)^{\frac{\mu}{2}}$.
\end{definition} 
The associated Legendre functions of the first and second kinds are linearly independent solutions of \eqref{equation:LegendreDGL}, and are analytic in the parameters $\mu$ and $\nu$, whenever defined, as well as in the variable $z.$ Definition \ref{definition:LegendreFirstSecond} was first introduced by Hobson, it can be found in his classical treatise \cite{Hobson} and is commonly used in the literature (e.g. in \cite{Gradshteyn}, \cite{Erdelyi}, \cite{Temme} etc.). Some authors, however, define the associated Legendre functions differently, in particular, the function of the second kind. For example, E. Barnes and G. Watson define $Q_{\nu}^{\mu}(z)$ differently in \cite{BarnesLegendre} and \cite{WatsonAsymp}, respectively. Also \cite{Olver}, \cite{NIST} do not prefer to work with $Q_{\nu}^{\mu}(z)$ as defined in \eqref{equation:LegendreSecond}.

There are plenty of remarkable relations and identities between the asociated Legendre functions of the first and second kinds (see \cite{Gradshteyn}, \cite{Erdelyi}). For the convenience of the reader we summarise those which will be important for us.

\begin{lemma}
\label{lemma:MagischeFormeln}
The associated Legendre functions of the first and second kinds satisfy the following identities for all allowed values of $\nu, \mu, z,\omega$:
\begin{align}
P_{\nu}^{\mu}(z) &=P_{-\nu-1}^{\mu}(z), \label{equation:ReflectionLegendreP} \\
Q_{\nu}^{\mu}(z) &= e^{2i\mu\pi}\frac{\Gamma\left( \nu+\mu+1 \right)}{\Gamma\left( \nu-\mu+1 \right)}Q_{\nu}^{-\mu}(z), \label{equation:MagischeFormel1}\\
P_{\nu}^{-\mu}(z) &= \frac{\Gamma\left( \nu-\mu+1 \right)}{\Gamma\left( \nu+\mu+1 \right)}\left(P_{\nu}^{\mu}(z) - \frac{2}{\pi}e^{-i\mu\pi}\sin\left( \mu\pi \right) Q_{\nu}^{\mu}(z) \right),\label{equation:MagischeFormel2} \\
Q_{\nu}^{-\mu}(z)Q_{\nu}^{\mu}(\omega) &= Q_{\nu}^{-\mu}(\omega)Q_{\nu}^{\mu}(z), \label{equation:SymmetrieProduktLegendreQ}\\
-2\frac{\sin\left( \mu\pi \right)}{\pi}Q_{\nu}^{-\mu}(z)Q_{\nu}^{\mu}(\omega) &= e^{-i\mu\pi}P_{\nu}^{-\mu}(\omega)Q_{\nu}^{\mu}(z) - e^{i\mu\pi}P_{\nu}^{\mu}(\omega)Q_{\nu}^{-\mu}(z). \label{equation:MagischeFormel3}
\end{align}
\end{lemma}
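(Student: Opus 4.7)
The plan is to treat the three fundamental identities (1), (2), (3) first and then derive (4) and (5) as algebraic consequences.

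For (1), the argument is essentially a one-liner. Under $\nu \mapsto -\nu-1$, the hypergeometric parameters $-\nu$ and $\nu+1$ in Definition \ref{definition:LegendreFirstSecond} simply swap places, while the parameter $1-\mu$, the variable $(1-z)/2$, and the prefactor $\bigl(\tfrac{z+1}{z-1}\bigr)^{\mu/2}/\Gamma(1-\mu)$ are unchanged. Since $F(a,b;c;w) = F(b,a;c;w)$ by its series definition, (1) follows.

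For (2) and (3), I would exploit the fact that the associated Legendre equation \eqref{equation:LegendreDGL} depends on $\mu$ only through $\mu^2$. Hence $P_\nu^{-\mu}$ and $Q_\nu^{-\mu}$ are again solutions, and since $P_\nu^\mu$ and $Q_\nu^\mu$ are linearly independent on $\mathbb{C}\setminus(-\infty,1]$ (their asymptotics $z^\nu$ vs. $z^{-\nu-1}$ as $z\to\infty$, read off from \eqref{equation:LegendreFirst} and \eqref{equation:LegendreSecond}, are incompatible), each $\pm\mu$ version is a linear combination of the other basis. For (2), both $Q_\nu^\mu$ and $Q_\nu^{-\mu}$ decay like $z^{-\nu-1}$ at infinity while $P_\nu^\mu$ grows like $z^\nu$; therefore $Q_\nu^{-\mu}$ must be a scalar multiple of $Q_\nu^\mu$, and the scalar is obtained by comparing the leading prefactors $e^{\pm\mu\pi i}\Gamma(\nu\pm\mu+1)(z^2-1)^{\pm\mu/2}$ of \eqref{equation:LegendreSecond}. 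The ratio produces $e^{2i\mu\pi}\Gamma(\nu+\mu+1)/\Gamma(\nu-\mu+1)$. For (3), write $P_\nu^{-\mu} = A\,P_\nu^\mu + B\,Q_\nu^\mu$ and determine $A,B$ by matching the coefficients of $(z-1)^{\pm\mu/2}$ near $z=1$: from \eqref{equation:LegendreFirst} the expansion of $P_\nu^{\pm\mu}$ begins with $\bigl(\tfrac{2}{z-1}\bigr)^{\mp\mu/2}/\Gamma(1\mp\mu)$, while the expansion of $Q_\nu^\mu$ near $z=1$ mixes both powers and is standard. Solving the resulting linear system yields the gamma-function combinations stated.

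Identity (4) is immediate: replace $Q_\nu^\mu(\omega)$ on the left and $Q_\nu^\mu(z)$ on the right by (2); the $e^{2i\mu\pi}\Gamma(\nu+\mu+1)/\Gamma(\nu-\mu+1)$ factors match, confirming the symmetry. For (5), substitute (3) for $P_\nu^{-\mu}(\omega)$ into its left-hand side and rewrite $Q_\nu^{-\mu}(z)$ using (2). The term $e^{-i\mu\pi}\tfrac{\Gamma(\nu-\mu+1)}{\Gamma(\nu+\mu+1)}P_\nu^\mu(\omega)Q_\nu^\mu(z)$ cancels exactly against $e^{i\mu\pi}P_\nu^\mu(\omega)Q_\nu^{-\mu}(z)$, and what remains is $-\tfrac{2\sin(\mu\pi)}{\pi}Q_\nu^{-\mu}(z)Q_\nu^\mu(\omega)$ after a final application of (2).

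The main obstacle I anticipate is the careful bookkeeping of branch choices in (2) and (3): the factors $e^{\pm i\mu\pi}$ and $e^{2i\mu\pi}$ arise precisely from the standard branch convention $z^\mu=e^{\mu\log z}$ applied to $(z^2-1)^{\mu/2}$ on the slit plane $\mathbb{C}\setminus(-\infty,1]$, and they must be tracked consistently through every asymptotic comparison. Once this is done at $z\to\infty$ for (2) and at $z\to 1$ for (3), the remaining identities (4) and (5) are purely algebraic and drop out without further analytic input.
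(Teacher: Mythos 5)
Your proposal is correct, and for identities \eqref{equation:ReflectionLegendreP}, \eqref{equation:SymmetrieProduktLegendreQ} and \eqref{equation:MagischeFormel3} it follows the same route as the paper: \eqref{equation:ReflectionLegendreP} from the symmetry $F(a,b;c;w)=F(b,a;c;w)$ applied to the definition, \eqref{equation:SymmetrieProduktLegendreQ} by applying \eqref{equation:MagischeFormel1} twice, and \eqref{equation:MagischeFormel3} by substituting \eqref{equation:MagischeFormel1} and \eqref{equation:MagischeFormel2} and cancelling (your bookkeeping of which terms cancel checks out). The genuine difference is in \eqref{equation:MagischeFormel1} and \eqref{equation:MagischeFormel2}: the paper simply cites them from Gradshteyn--Ryzhik (formulas 8.736~4 and 8.736~1), whereas you sketch actual derivations via the invariance of the Legendre equation under $\mu\mapsto-\mu$, linear independence of $P_\nu^\mu$ and $Q_\nu^\mu$, and asymptotic matching at $z\to\infty$ (for \eqref{equation:MagischeFormel1}) and $z\to 1$ (for \eqref{equation:MagischeFormel2}). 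Your argument for \eqref{equation:MagischeFormel1} is clean and the leading coefficients from \eqref{equation:LegendreSecond} do produce exactly the stated ratio $e^{2i\mu\pi}\Gamma(\nu+\mu+1)/\Gamma(\nu-\mu+1)$; the only caveat is that "incompatible asymptotics $z^\nu$ vs.\ $z^{-\nu-1}$" needs $\Re(\nu)\neq-\tfrac12$, after which one extends by analyticity in $\nu$. For \eqref{equation:MagischeFormel2} your sketch leans on the "standard" two-term expansion of $Q_\nu^\mu$ near $z=1$, which is itself a connection formula for the hypergeometric function of essentially the same depth as the identity being proved --- so in practice this step would still end up being a citation, and the paper's choice to cite the whole identity is not really less self-contained. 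Net effect: your version is more instructive about where the branch factors $e^{\pm i\mu\pi}$ come from, at the cost of a longer argument whose hardest ingredient is still outsourced.
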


\begin{proof}
Equation \eqref{equation:ReflectionLegendreP} is stated in \citep[formula 8.731 5]{Gradshteyn} and is a consequence of \eqref{equation:LegendreFirst} and the symmetry of the hypergeometric function with respect to the first two variables, i.e.  $F(a,b;c;z)=F(b,a;c;z)$. The identities \eqref{equation:MagischeFormel1} and  \eqref{equation:MagischeFormel2} can be found in \citep[formula 8.736 4]{Gradshteyn} and  \citep[formula 8.736 1]{Gradshteyn}, respectively. Equation \eqref{equation:SymmetrieProduktLegendreQ} follows from \eqref{equation:MagischeFormel1}, when applied twice.

In order to prove \eqref{equation:MagischeFormel3}, we first apply \eqref{equation:MagischeFormel1}, \eqref{equation:MagischeFormel2} and get:
\begin{align*}
P_{\nu}^{-\mu}(\omega)Q_{\nu}^{\mu}(z) = e^{2i\mu\pi}P_{\nu}^{\mu}(\omega)Q_{\nu}^{-\mu}(z) - 2e^{i\mu\pi}\frac{\sin\left( \mu\pi \right)}{\pi}Q_{\nu}^{-\mu}(z)Q_{\nu}^{\mu}(\omega).
\end{align*}
If we multiply both sides of the above equation with $e^{-i\mu\pi}$ and rearrange the terms, we obtain \eqref{equation:MagischeFormel3}.
\end{proof}

Another remarkable connection between the associated Legendre functions of the first and second kinds is given by \emph{Whipple's formula} (see \citep[formulas (13) and (14) on p. 141]{Erdelyi}).
\begin{lemma}
\label{lemma:Whipple}
For all $z\in\mathbb{C}\backslash (-\infty,1]$ with $\Re(z)>0$ the following relations hold:
\begin{align}
e^{-i\mu\pi}Q_{\nu}^{\mu}(z)&=\sqrt{\frac{\pi}{2}}\Gamma\left( \nu+\mu+1 \right)\frac{1}{\left( z^2-1 \right)^{\frac{1}{4}}}P_{-\mu-\frac{1}{2}}^{-\nu-\frac{1}{2}}\left( \frac{z}{\left( z^2 -1 \right)^{\frac{1}{2}}} \right),\label{equation:Whipple1} \\
P_{\nu}^{\mu}(z)&=ie^{i\nu\pi}\sqrt{\frac{2}{\pi}}\cdot \frac{1}{\Gamma\left(-\nu-\mu\right)}\cdot \frac{1}{\left( z^2-1 \right)^{\frac{1}{4}}}Q_{-\mu-\frac{1}{2}}^{-\nu-\frac{1}{2}}\left( \frac{z}{\left( z^2 -1 \right)^{\frac{1}{2}}} \right). \label{equation:Whipple2}
\end{align}
\end{lemma}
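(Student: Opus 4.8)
The plan is to exploit the fact that, after the substitution $\zeta:=z/(z^2-1)^{1/2}$, each side of \eqref{equation:Whipple1} is a solution of one and the same ordinary differential equation, namely the associated Legendre equation \eqref{equation:LegendreDGL}, and then to fix the proportionality constant by letting $z\to+\infty$. The relation \eqref{equation:Whipple2} will afterwards drop out of \eqref{equation:Whipple1} by applying it a second time.

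\emph{Step 1 (change of variables).} For $z$ in the region of \eqref{equation:Whipple1}, i.e.\ $z\in\mathbb{C}\setminus(-\infty,1]$ with $\Re(z)>0$, one first checks that $\zeta$ again lies in $\mathbb{C}\setminus(-\infty,1]$, that $\Re(\zeta)>0$, and that
\begin{align*}
\zeta^2-1=\frac{1}{z^2-1},\qquad (\zeta^2-1)^{-1/4}=(z^2-1)^{1/4},\qquad \frac{\zeta}{(\zeta^2-1)^{1/2}}=z,
\end{align*}
all with the standard branch. Now let $g$ be an arbitrary solution of \eqref{equation:LegendreDGL} with parameters $\tilde\nu=-\mu-\tfrac12$, $\tilde\mu=-\nu-\tfrac12$, and put $v(z):=(z^2-1)^{-1/4}\,g(\zeta)$. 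Substituting $v$ into \eqref{equation:LegendreDGL} with parameters $\nu,\mu$ and simplifying --- here one uses the two ``magic'' identities $\tilde\nu(\tilde\nu+1)=\mu^2-\tfrac14$ and $\tilde\mu^2=\nu(\nu+1)+\tfrac14$, the surplus $\pm\tfrac14$ being exactly what the prefactor $(z^2-1)^{-1/4}$ supplies --- shows that $v$ solves \eqref{equation:LegendreDGL} with parameters $\nu,\mu$. I expect this computation, although completely elementary, to be the technical core of the proof and the main place where care with the chain rule and with the branch of $(z^2-1)^{-1/4}$ is needed.

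\emph{Step 2 (identify the solution and the constant).} Taking $g=P_{-\mu-1/2}^{-\nu-1/2}$ and recalling that $P_\nu^\mu$ and $Q_\nu^\mu$ are linearly independent solutions of \eqref{equation:LegendreDGL}, Step~1 yields constants $A=A(\nu,\mu)$, $B=B(\nu,\mu)$ (independent of $z$) with
\begin{align*}
(z^2-1)^{-1/4}\,P_{-\mu-1/2}^{-\nu-1/2}(\zeta)=A\,P_\nu^\mu(z)+B\,Q_\nu^\mu(z).
\end{align*}
Now let $z\to+\infty$, so that $\zeta\to 1^+$ and the hypergeometric factor in \eqref{equation:LegendreFirst} tends to $1$; a short computation using \eqref{equation:LegendreFirst} then gives that the left-hand side behaves like $\tfrac{2^{-\nu-1/2}}{\Gamma(\nu+3/2)}\,z^{-\nu-1}$. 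From \eqref{equation:LegendreSecond} one has $Q_\nu^\mu(z)\sim \tfrac{e^{i\mu\pi}\Gamma(\nu+\mu+1)\sqrt{\pi}}{2^{\nu+1}\Gamma(\nu+3/2)}\,z^{-\nu-1}$, whereas $P_\nu^\mu(z)$ grows like $z^{\nu}$ (it cannot be a pure $z^{-\nu-1}$ solution at infinity, for then it would be proportional to $Q_\nu^\mu$, contradicting linear independence). Hence, for $\Re(\nu)>-\tfrac12$ the power $z^\nu$ strictly dominates $z^{-\nu-1}$, so matching leading terms forces $A=0$ and
\begin{align*}
B=\frac{e^{-i\mu\pi}}{\sqrt{\pi/2}\,\Gamma(\nu+\mu+1)},
\end{align*}
which is precisely \eqref{equation:Whipple1}. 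Since both sides of \eqref{equation:Whipple1} are analytic in $(\nu,\mu)$ wherever defined, the restriction $\Re(\nu)>-\tfrac12$ (and any genericity used in the leading-order matching) is removed by analytic continuation. The delicate points here are the bookkeeping of the factors $e^{\pm i\mu\pi}$ coming from the standard branch, and checking that $\zeta$ stays in $\mathbb{C}\setminus(-\infty,1]$ along the path $z\to+\infty$.

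\emph{Step 3 (deduce \eqref{equation:Whipple2}).} Apply the now-established \eqref{equation:Whipple1} with $z$ replaced by $\zeta$ and $(\nu,\mu)$ replaced by $(-\mu-\tfrac12,-\nu-\tfrac12)$. Using the identities from Step~1 (in particular $\zeta/(\zeta^2-1)^{1/2}=z$ and $(\zeta^2-1)^{-1/4}=(z^2-1)^{1/4}$), this reads
\begin{align*}
e^{i(\nu+1/2)\pi}\,Q_{-\mu-1/2}^{-\nu-1/2}(\zeta)=\sqrt{\pi/2}\,\Gamma(-\nu-\mu)\,(z^2-1)^{1/4}\,P_\nu^\mu(z).
\end{align*}
Solving for $P_\nu^\mu(z)$ and using $e^{i\pi/2}=i$ and $(\pi/2)^{-1/2}=(2/\pi)^{1/2}$ gives exactly \eqref{equation:Whipple2}. (Alternatively one could rerun Steps~1--2 with $g=Q_{-\mu-1/2}^{-\nu-1/2}$, which this time has growth of order $z^{\nu}$ as $z\to\infty$ and is therefore proportional to $P_\nu^\mu(z)$, reading off the constant in the same way; or one could bypass the differential equation altogether via a quadratic hypergeometric transformation, the numerator parameters of $F\bigl(-\nu,\nu+1;1-\mu;\tfrac{1-z}{2}\bigr)$ in \eqref{equation:LegendreFirst} summing to $1$, which is exactly the configuration in which such a transformation exists --- but the route through \eqref{equation:LegendreDGL} seems the most transparent.)
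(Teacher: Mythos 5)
Your argument is correct, and it is worth noting that the paper does not prove this lemma at all: it simply cites Erd\'elyi (formulas (13) and (14) on p.~141) for Whipple's formula. Your proposal therefore supplies a genuine, self-contained derivation where the paper offers only a reference. The route you take is the classical one: the substitution $\zeta=z/(z^2-1)^{1/2}$ together with the prefactor $(z^2-1)^{-1/4}$ carries the associated Legendre equation with parameters $(-\mu-\tfrac12,-\nu-\tfrac12)$ into the one with parameters $(\nu,\mu)$, and the constant is then pinned down by the $z\to+\infty$ asymptotics, where $Q_\nu^\mu(z)\sim e^{i\mu\pi}\Gamma(\nu+\mu+1)\sqrt{\pi}\,(2z)^{-\nu-1}/\Gamma(\nu+\tfrac32)$ while the left-hand side behaves like $2^{-\nu-1/2}z^{-\nu-1}/\Gamma(\nu+\tfrac32)$; I checked that this yields exactly the constant $\sqrt{\pi/2}\,\Gamma(\nu+\mu+1)e^{i\mu\pi}$ of \eqref{equation:Whipple1}, and that your Step~3 substitution $(\nu,\mu)\mapsto(-\mu-\tfrac12,-\nu-\tfrac12)$, $z\mapsto\zeta$ reproduces \eqref{equation:Whipple2} including the factor $ie^{i\nu\pi}=e^{i(\nu+1/2)\pi}$. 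Two points deserve the care you already flag: the justification that $P_\nu^\mu(z)$ genuinely carries a nonzero $z^\nu$ term is only generic (the coefficient $\propto 1/\Gamma(\nu-\mu+1)$ can vanish, in which case $P_\nu^\mu$ and $Q_\nu^\mu$ are in fact linearly dependent), so the analytic-continuation step at the end of Step~2 is not optional but essential; and the branch identities $(\zeta^2-1)^{-1/4}=(z^2-1)^{1/4}$ and $\zeta/(\zeta^2-1)^{1/2}=z$ do hold on the stated region because $z^2-1\notin(-\infty,0]$ there. With those caveats made explicit, your proof is complete and arguably more informative than the paper's citation.
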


In order to investigate certain integrals later in this section, it is helpful to know the following asymptotic behaviour of the associated Legendre functions of the first and second kinds.

\begin{lemma}
\label{lemma:LegendreAsymp}
Let $a>0$.
\begin{itemize}
\item[$(i)$] For any $\mu\in\mathbb{C}$:
\begin{align} P_{\nu}^{\mu}\left( \cosh(a) \right) = &\frac{\Gamma\left( \nu+1 \right)}{\Gamma\left( \nu-\mu+1 \right)} \cdot \frac{1}{\sqrt{2\pi (\nu+1)\sinh(a)}} \nonumber\\
& \cdot \left( e^{\left( \nu+\frac{1}{2} \right)a} + e^{-\pi i \left( \mu-\frac{1}{2} \right)-\left( \nu+\frac{1}{2} \right)a} \right)\left( 1+O\left( \frac{1}{\vert \nu \vert} \right) \right) \label{equation:LegendreAsymp1}
\end{align}
as $\vert \nu \vert \rightarrow\infty$ with $\Re\left(\nu \right)>-1$.
\item[$(ii)$] For any $\mu\in\mathbb{C}$:
\begin{align}
P_{-\frac{1}{2}+i\rho}^{\mu}\left(\cosh(a)\right) = \rho^{\mu-\frac{1}{2}}\sqrt{\frac{2}{\pi\sinh(a)}}\cos\left( a\rho+\frac{\pi}{4}\left( 2\mu-1 \right) \right)\left( 1+O\left(\frac{1}{\rho}\right) \right) \label{equation:LegendreAsymp2}
\end{align}
as $\rho\rightarrow\infty$ with $\rho\in\mathbb{R}$.
\item[$(iii)$] For any $\mu\in\mathbb{C}$ and $\delta\in (0,\pi)$:
\begin{align}
Q_{\nu}^{\mu}\left(\cosh(a)\right) = &\sqrt{\frac{\pi}{2\sinh(a)}}\nu^{\mu-\frac{1}{2}}e^{i\mu\pi-a\left( \nu+\frac{1}{2} \right)}\left(1+O\left(\frac{1}{\vert \nu \vert}\right)\right) \label{equation:LegendreAsymp3}
\end{align}
as $ \vert \nu \vert \rightarrow\infty$ with $\vert \arg(\nu) \vert<\pi-\delta $.
\end{itemize}
\end{lemma}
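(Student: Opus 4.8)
The plan is to reduce everything to known uniform asymptotic expansions of the hypergeometric function (or, equivalently, of the Legendre functions) and to track the dominant terms. All three statements concern the regime where one parameter (namely $\nu$, or $\rho$ with $\nu=-\tfrac12+i\rho$) tends to infinity while $z=\cosh(a)$ stays fixed with $a>0$; hence $z>1$ is bounded away from the singular points $\pm1$, and the hypergeometric series defining $P_\nu^\mu$ and $Q_\nu^\mu$ in Definition \ref{definition:LegendreFirstSecond} are evaluated at the fixed arguments $\tfrac{1-z}{2}<0$ and $z^{-2}\in(0,1)$. The first step is therefore to fix the reference for large-$\nu$ asymptotics of Legendre functions with fixed argument: these are classical (e.g. \citep[Chapter 5]{Olver}, \citep[Chapter 14]{NIST}, \citep[Chapter III]{Erdelyi}), and I would cite the standard formula $P_\nu^\mu(\cosh a)\sim (\text{explicit constant})\,(2\pi \nu \sinh a)^{-1/2}\,e^{(\nu+1/2)a}$, valid as $|\nu|\to\infty$, together with its companion for $Q_\nu^\mu$.

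For part $(iii)$, I would quote directly the known asymptotic $Q_\nu^\mu(\cosh a)\sim \sqrt{\pi/(2\nu\sinh a)}\,\nu^\mu e^{i\mu\pi}e^{-(\nu+1/2)a}$ as $|\nu|\to\infty$ in any sector $|\arg\nu|<\pi-\delta$; this follows from \eqref{equation:LegendreSecond} because $F\bigl(\tfrac{\nu+\mu+2}{2},\tfrac{\nu+\mu+1}{2};\nu+\tfrac32;z^{-2}\bigr)\to1$ and the prefactor can be simplified asymptotically using Stirling's formula $\Gamma(\nu+\mu+1)/\Gamma(\nu+\tfrac32)\sim \nu^{\mu-1/2}$ and $2^{\nu+1}z^{\nu+\mu+1}=2^{\nu+1}\cosh(a)^{\nu+\mu+1}$, which combines with $(z^2-1)^{\mu/2}=\sinh(a)^\mu$ and $\cosh a=(e^a+e^{-a})/2$ to produce the stated exponential $e^{-a(\nu+1/2)}$ and the factor $\sqrt{\pi/(2\sinh a)}$. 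The error term $1+O(1/|\nu|)$ comes from the next term in the hypergeometric series plus the next term in Stirling. For part $(i)$, the two exponentials $e^{(\nu+1/2)a}$ and $e^{-\pi i(\mu-1/2)-(\nu+1/2)a}$ reflect the two linearly independent behaviours near $z=\infty$ and near the branch structure; I would derive this by writing $P_\nu^\mu$ as a linear combination of $Q_\nu^\mu$ and $Q_{-\nu-1}^\mu$ (or equivalently via the connection formula expressing $P$ through two $Q$'s with opposite sign of the exponent of the large parameter), inserting the asymptotics of each $Q$ from $(iii)$, and collecting the $\Gamma$-quotients; the coefficient $\Gamma(\nu+1)/\Gamma(\nu-\mu+1)$ then comes out of \eqref{equation:MagischeFormel2} together with Stirling. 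For part $(ii)$, I would substitute $\nu=-\tfrac12+i\rho$ into the result of part $(i)$: then $(\nu+\tfrac12)a=i\rho a$, the two exponentials become $e^{i\rho a}$ and $e^{-\pi i(\mu-1/2)}e^{-i\rho a}$, and after factoring out $\rho^{\mu-1/2}$ (which is what $\Gamma(\nu+1)/\Gamma(\nu-\mu+1)\cdot(\nu+1)^{-1/2}$ becomes asymptotically for purely imaginary $\nu$, up to a unimodular constant) the bracket collapses by Euler's formula to $2\cos\bigl(a\rho+\tfrac{\pi}{4}(2\mu-1)\bigr)$, absorbing the $\sqrt2$ into the constant $\sqrt{2/(\pi\sinh a)}$.

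The main obstacle is bookkeeping rather than analysis: one must pin down the \emph{exact} constants and branch choices so that the phase $\tfrac{\pi}{4}(2\mu-1)$ in $(ii)$ and the factor $e^{-\pi i(\mu-1/2)}$ in $(i)$ come out precisely right, and one must make sure the asymptotic expansion of the hypergeometric function is genuinely uniform in the relevant range of $\nu$ (a sector for $(i)$ and $(iii)$, the imaginary axis for $(ii)$) — the subtlety being that $F$ of a fixed argument has a convergent series so uniformity in $\nu$ is automatic, but the simplification of the $\Gamma$-prefactors via Stirling must be justified uniformly in the same sector, which is where the hypothesis $|\arg\nu|<\pi-\delta$ in $(iii)$ is used. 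I expect no genuinely new ideas are needed beyond carefully citing \citep[Chapter 5]{Olver} or \citep[Chapter 14]{NIST} and performing the Stirling/Euler manipulations; accordingly I would keep the write-up short, stating precisely which classical formula each of $(i)$, $(ii)$, $(iii)$ specialises, and then verifying the constants.
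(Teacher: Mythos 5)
Your derivation of $(ii)$ from $(i)$ is exactly what the paper does: substitute $\nu=-\tfrac12+i\rho$, apply the Gamma-quotient asymptotics \eqref{equation:GammaQuotAsymp} to get the factor $\rho^{\mu-\frac12}$, and collapse the two exponentials via Euler's formula into $\cos\bigl(a\rho+\tfrac{\pi}{4}(2\mu-1)\bigr)$. For $(i)$ and $(iii)$, however, the paper does not derive anything: it cites the asymptotic expansions of the (generalised) associated Legendre functions from Virchenko and G\"otze, whereas you attempt a self-contained derivation — and that derivation contains a genuine error.

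The gap is your claim that $F\bigl(\tfrac{\nu+\mu+2}{2},\tfrac{\nu+\mu+1}{2};\nu+\tfrac32;z^{-2}\bigr)\to 1$, and more generally that ``uniformity in $\nu$ is automatic'' because the argument $z^{-2}$ is fixed. All three parameters of this hypergeometric function grow linearly in $\nu$; the first correction term of the series is $\tfrac{ab}{c}z^{-2}\sim\tfrac{\nu}{4\cosh^2(a)}$, which tends to infinity, so the series is not $1+O(1/|\nu|)$ — convergence of the series for each fixed $\nu$ says nothing about the limit in $\nu$. One can see the failure directly from \eqref{equation:LegendreSecond}: the prefactor contributes $(2\cosh a)^{-(\nu+1)}=e^{-a(\nu+1)}\bigl(1+e^{-2a}\bigr)^{-(\nu+1)}$, so to reach the stated leading term $e^{-a(\nu+\frac12)}$ in \eqref{equation:LegendreAsymp3} the hypergeometric factor must itself grow like $\bigl(1+e^{-2a}\bigr)^{\nu+1}e^{a/2}$ times bounded corrections. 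Extracting that growth is exactly the non-trivial content of the classical large-degree asymptotics (Watson's expansion), which is why the paper handles $(i)$ and $(iii)$ by citation rather than by reading off the definition. Since your route to $(i)$ feeds the asymptotics of $(iii)$ into a connection formula (and \eqref{equation:MagischeFormel2} is in any case not a formula expressing $P_\nu^\mu$ through two $Q$'s with opposite large-parameter behaviour — you would need the $Q_\nu^\mu$/$Q_{-\nu-1}^\mu$ connection formula, and then $-\nu-1$ leaves the sector $|\arg|<\pi-\delta$ when $\nu$ is near the positive real axis), the error propagates to $(i)$ as well. To repair the write-up, either cite the uniform asymptotic expansions as the paper does, or supply a genuine asymptotic analysis of the hypergeometric factor.
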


\begin{proof}
All three asymptotic estimates are basically stated at the beginning of $\S 8$ in \citep{Virchenko}.

A proof of \eqref{equation:LegendreAsymp1} can be found in \cite{Goetze}, which is also referred to in \citep{Virchenko}. At this point we want to remark that both sources \cite{Virchenko} and \cite{Goetze} state more general results. They deal with asymptotic estimates for so-called \emph{generalised associated Legendre functions} of \emph{first} and \emph{second kinds}, denoted by $P_{\nu}^{n, m}(z)$ and $Q_{\nu}^{n, m}(z)$ respectively. For $n=m=\mu$ these functions reduce to $P_{\nu}^{\mu}(z)$, respectively $Q_{\nu}^{\mu}(z)$.

We show how equation \eqref{equation:LegendreAsymp2} can be deduced from \eqref{equation:LegendreAsymp1}. Let $\rho\in\mathbb{R}$ and set $\nu = -\frac{1}{2}+i\rho$ in equation \eqref{equation:LegendreAsymp1} to obtain
\begin{align*}
P_{-\frac{1}{2}+i\rho}^{\mu}\left( \cosh(a)\right) = \frac{\Gamma(i\rho+\frac{1}{2})}{\Gamma(i\rho+\frac{1}{2}-\mu)} \left(\frac{1}{2}+ i\rho \right)^{-\frac{1}{2}} &\sqrt{\frac{2}{\pi\sinh(a)}}\,\, \cdot  \\
&\cdot \frac{1}{2}\left( e^{i\rho a}+ e^{-\pi i\left(\mu-\frac{1}{2}\right)-i\rho a} \right)\left( 1+ O\left( \frac{1}{\rho} \right) \right)
\end{align*}
as $\rho\rightarrow\infty$. Now use the following well-known asymptotic estimate for the quotient of two gamma functions (see \citep[formula (12)]{Tricomi} or \citep[formula (11) on p. 33]{Luke1}):
\begin{align}
\frac{\Gamma\left( z+\alpha \right)}{\Gamma\left( z+\beta \right)} = z^{\alpha-\beta}\left(1 + O\left( \frac{1}{\vert z\vert}\right) \right) \label{equation:GammaQuotAsymp}
\end{align}  
as $z\rightarrow\infty$ with $\vert arg(z) \vert\leq \pi-\epsilon$ for some $\epsilon>0$.
Thus
\begin{align*}
P_{-\frac{1}{2}+i\rho}^{\mu}\left(\cosh(a)\right) = \left(i\rho\right)^{\mu}\left( \frac{1}{2} + i\rho \right)^{-\frac{1}{2}}&\sqrt{\frac{2}{\pi\sinh(a)}}\,\, \cdot \\
&\cdot \frac{1}{2}\left( e^{i\rho a}+ e^{-\pi i\left(\mu-\frac{1}{2}\right)-i\rho a} \right)\left( 1+ O\left( \frac{1}{\rho} \right) \right)
\end{align*}
as $\rho\rightarrow\infty$. Further, we observe that 
\begin{align*}
\left( \frac{1}{2} + i\rho \right)^{-\frac{1}{2}} &= \left( i\rho \right)^{-\frac{1}{2}}\left( 1+ \frac{1}{2 i \rho} \right)^{-\frac{1}{2}}  = i^{-\frac{1}{2}} \rho^{-\frac{1}{2}} \left( 1+O\left( \frac{1}{\rho} \right) \right)
\end{align*}
as $\rho\rightarrow\infty$. Hence,
\begin{align*}
P_{-\frac{1}{2}+i\rho}^{\mu}\left(\cosh(a)\right) = \rho^{\mu-\frac{1}{2}}\sqrt{\frac{2}{\pi\sinh(a)}}\cdot \frac{i^{\mu-\frac{1}{2}}}{2}\left( e^{i\rho a}+ e^{-\pi i\left(\mu-\frac{1}{2}\right)-i\rho a} \right)\left( 1+ O\left( \frac{1}{\rho} \right) \right)
\end{align*}
as $\rho\rightarrow\infty$.
An easy calculation shows that 
\begin{align*}
\frac{i^{\mu-\frac{1}{2}}}{2}\left( e^{i\rho a}+ e^{-\pi i\left(\mu-\frac{1}{2}\right)-i\rho a} \right)& = \frac{1}{2}e^{i\frac{\pi}{2}(\mu-\frac{1}{2})}\left( e^{i\rho a}+ e^{-\pi i\left(\mu-\frac{1}{2}\right)-i\rho a} \right)\\
& =  \frac{1}{2}\left( e^{i\rho a + i\frac{\pi}{4}(2\mu-1)}+ e^{-i\frac{\pi}{4} \left(2\mu - 1\right)-i\rho a} \right) \\
& = \cos\left( a\rho+\frac{\pi}{4}\left( 2\mu-1 \right) \right),
\end{align*}
which closes the proof of equation \eqref{equation:LegendreAsymp2}.

Equation \eqref{equation:LegendreAsymp3} is proven in Lemma $2$ of \S 8 in \cite{Virchenko}, even though the statement of the lemma as well as the proof given there contain some typos. The statement given in \cite{Virchenko} is mostly correct, but the restriction $\vert \arg(\nu) \vert<\pi-\delta$ in the asymptotic estimate misses there.

\end{proof}

\begin{lemma}
\label{lemma:LegendreProdAsymp}
Let $\nu\in \mathbb{C}$ and $z,\omega\in (1,\infty)$ be given. Further, let $\tilde{z},\tilde{\omega}\in (0,\infty)$ be such that 
\begin{align}
\label{equation:TildeDefinitionen}
\cosh(\tilde{z})=\frac{z}{\left( z^2-1\right)^{\nicefrac{1}{2}}}, \quad \cosh(\tilde{\omega})=\frac{\omega}{\left( \omega^2-1\right)^{\nicefrac{1}{2}}}.
\end{align}
Then
\begin{itemize}
\item[$1)$]
\begin{align} 
e^{-i\pi\mu} P_{\nu}^{-\mu}\left( \omega \right)Q_{\nu}^{\mu}\left( z \right) = \frac{e^{- \tilde{\omega}\mu}}{2\mu } \left( e^{\mu\tilde{z}}+e^{i\pi(\nu+1)}e^{-\mu\tilde{z}} \right) \left( 1+O\left(\frac{1}{\vert \mu \vert}\right) \right) \label{equation:AsympLegendreProdKond1}
\end{align}
as $\vert \mu \vert\rightarrow\infty$ with $\Re(\mu)>-\frac{1}{2}$.
\item[$2)$] 
\begin{align}
\frac{\sin(i\pi\rho)}{\pi} Q_{\nu}^{-i\rho}\left( z \right)Q_{\nu}^{i\rho}\left( \omega \right) = &\frac{i}{ \rho} \cos\left(\tilde{z}\rho-\frac{\pi}{2}\left( \nu+1 \right)\right)\cdot \nonumber\\
&\cdot \cos\left(\tilde{\omega}\rho-\frac{\pi}{2}\left( \nu+1 \right)\right)\left( 1+O\left( \frac{1}{\rho} \right) \right) \label{equation:AsympLegendreProdKond2}
\end{align}
as $\rho\rightarrow\infty$ with $\rho\in \mathbb{R}$.
\end{itemize}
\end{lemma}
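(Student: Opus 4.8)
The strategy is to plug the asymptotic estimates from Lemma \ref{lemma:LegendreAsymp} into Whipple's formula (Lemma \ref{lemma:Whipple}), using the substitution \eqref{equation:TildeDefinitionen}, and then simplify the resulting products of exponentials or cosines using the magic formulas of Lemma \ref{lemma:MagischeFormeln}. For part $1)$, I would start from \eqref{equation:Whipple1}, which expresses $e^{-i\pi\mu}Q_\nu^\mu(z)$ in terms of $P_{-\mu-1/2}^{-\nu-1/2}\big(z/(z^2-1)^{1/2}\big) = P_{-\mu-1/2}^{-\nu-1/2}(\cosh(\tilde z))$. Applying \eqref{equation:LegendreAsymp1} with $\nu \leftrightarrow -\mu - \tfrac12$ and $\mu \leftrightarrow -\nu-\tfrac12$ (note $\Re(-\mu-\tfrac12) > -1$ is exactly the hypothesis $\Re(\mu) > -\tfrac12$), and combining the gamma-quotient prefactor via \eqref{equation:GammaQuotAsymp}, one gets an asymptotic expression for $e^{-i\pi\mu}Q_\nu^\mu(z)$ of the form $\tfrac12 \mu^{-1/2}(\ldots)\,(e^{-\mu\tilde z} + \text{const}\cdot e^{\mu\tilde z})(1+O(1/|\mu|))$ up to explicit constants and powers of $\mu$. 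For the factor $P_\nu^{-\mu}(\omega)$ I would instead use \eqref{equation:LegendreAsymp1} directly (with the roles of $\nu,\mu$ as written, and $\mu$ replaced by $-\mu$), which gives a single-exponential leading term $\sim \tfrac{1}{\sqrt{2\pi(\nu+1)\sinh(\tilde\omega)}}\,e^{(\nu+1/2)a}$-type behaviour — wait, more carefully, since $P_\nu^{-\mu}$ has its order going to infinity, I would need the large-order asymptotics; the cleaner route is to express $P_\nu^{-\mu}(\omega)$ via \eqref{equation:Whipple2} as well, turning it into a $Q$-function with large second index $-\nu-\tfrac12$ at argument $\cosh(\tilde\omega)$, and then apply \eqref{equation:LegendreAsymp3}, which gives a clean single exponential $e^{-\tilde\omega(\ldots)}$. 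Multiplying the two asymptotic expansions, the powers of $\mu$ should cancel down to the $1/(2\mu)$ prefactor, the extraneous constants should combine (using $\Gamma$-reflection and the fact that $e^{i\pi(\nu+1)}$ appears from the second exponential branch in \eqref{equation:LegendreAsymp1}), and the error terms multiply to $O(1/|\mu|)$. The main care needed is bookkeeping the standard-branch powers $i^{\pm 1/2}$, $e^{i\pi(\cdots)}$ and the gamma factors so that everything collapses exactly to the stated right-hand side.

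**Part 2).** Here I would argue almost identically but track the oscillatory (cosine) behaviour rather than exponential decay. Setting $\mu = i\rho$ with $\rho \in \mathbb{R}$, the function $Q_\nu^{i\rho}(z)$ via Whipple \eqref{equation:Whipple1} becomes (up to prefactors) $P_{-i\rho-1/2}^{-\nu-1/2}(\cosh(\tilde z))$, whose large-$\rho$ asymptotics are governed by \eqref{equation:LegendreAsymp2} (the conical-function regime, since the lower index is now $-\tfrac12 + i(-\rho)$) — this yields a $\cos(\tilde z\rho + \tfrac\pi4(2\mu-1))$-type factor with $\mu = -\nu-\tfrac12$, i.e. $\cos(\tilde z\rho - \tfrac\pi2(\nu+1))$ after simplifying $\tfrac\pi4(2(-\nu-\tfrac12)-1) = -\tfrac\pi2(\nu+1)$. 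Doing the same for $Q_\nu^{-i\rho}(\omega)$ gives the analogous cosine in $\tilde\omega$. Multiplying, and absorbing the factor $\sin(i\pi\rho)/\pi \sim \tfrac{1}{2\pi}e^{\pi\rho}$ together with the exponentially-growing-in-$\rho$ gamma prefactors $\Gamma(\nu+i\rho+1)$, $\Gamma(\nu-i\rho+1)$ (whose product's modulus behaves like $e^{-\pi\rho}\rho^{2\Re\nu+1}$ up to constants, via \eqref{equation:GammaQuotAsymp} / Stirling) and the explicit $\rho$-powers from Whipple and from \eqref{equation:LegendreAsymp2}, everything should telescope to the prefactor $i/\rho$. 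The identity \eqref{equation:MagischeFormel1} or \eqref{equation:SymmetrieProduktLegendreQ} may be useful to symmetrise $Q_\nu^{-i\rho}(z)Q_\nu^{i\rho}(\omega)$ before estimating, if it makes the gamma bookkeeping more transparent.

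**Expected obstacle.** The genuine mathematical content — the asymptotics of Legendre functions — is all imported from Lemma \ref{lemma:LegendreAsymp} and Lemma \ref{lemma:Whipple}, so the proof is ``only'' a computation. The real difficulty is therefore the branch-and-constant bookkeeping: correctly handling $(z^2-1)^{1/4}$, the standard-branch powers $i^{\mu-1/2}$, the phase factors $e^{i\mu\pi}$, $e^{i\pi(\nu+1)}$, and the gamma-function reflection/duplication identities so that the product of two individually messy asymptotic expansions collapses to the clean stated right-hand side. In part 2) there is the additional subtlety that one is multiplying a factor that grows like $e^{\pi\rho}$ (the sine) against products of gamma values that decay like $e^{-\pi\rho}$, so one must be sure the subexponential powers of $\rho$ are tracked to leading order and that the error terms, each $O(1/\rho)$, genuinely combine to $O(1/\rho)$ rather than accumulating. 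I would organise the computation by first recording each individual asymptotic (for $Q_\nu^\mu(z)$, $P_\nu^{-\mu}(\omega)$, resp. their $\mu = \pm i\rho$ versions) as a displayed formula with all constants explicit, and only then multiply, so that the cancellations are visible line by line.
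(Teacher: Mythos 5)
Your overall route is the same as the paper's: apply Whipple's formulas to both factors so that the large parameter $\mu$ (resp.\ $\rho$) becomes the \emph{degree} of the resulting Legendre functions, then feed in \eqref{equation:LegendreAsymp1}/\eqref{equation:LegendreAsymp3} (resp.\ \eqref{equation:LegendreAsymp2}), handle the gamma prefactors with \eqref{equation:GammaQuotAsymp} (resp.\ Stirling), and multiply. Your second thought in part $1)$ — converting $P_\nu^{-\mu}(\omega)$ via \eqref{equation:Whipple2} into $Q_{\mu-\frac12}^{-\nu-\frac12}(\cosh\tilde\omega)$ and using \eqref{equation:LegendreAsymp3} rather than trying a large-order estimate — is exactly what the paper does.

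There is, however, one concrete error. After \eqref{equation:Whipple1} you obtain $P_{-\mu-\frac12}^{-\nu-\frac12}(\cosh\tilde z)$, and you claim that the hypothesis of \eqref{equation:LegendreAsymp1}, namely $\Re(\text{degree})>-1$, reads $\Re(-\mu-\tfrac12)>-1$ and ``is exactly'' $\Re(\mu)>-\tfrac12$. That equivalence is false: $\Re(-\mu-\tfrac12)>-1$ means $\Re(\mu)<\tfrac12$, so for $|\mu|\to\infty$ with $\Re(\mu)>-\tfrac12$ the degree $-\mu-\tfrac12$ typically has very negative real part and \eqref{equation:LegendreAsymp1} does not apply as stated. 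The missing step is the reflection formula \eqref{equation:ReflectionLegendreP}, $P_{-\nu-1}^{\mu}=P_{\nu}^{\mu}$, which lets you replace $P_{-\mu-\frac12}^{-\nu-\frac12}$ by $P_{\mu-\frac12}^{-\nu-\frac12}$; now $\Re(\mu-\tfrac12)>-1$ is genuinely equivalent to $\Re(\mu)>-\tfrac12$, and \eqref{equation:LegendreAsymp1} applies. The same reflection is needed (silently, in your part $2)$) to pass from $P_{-\frac12-i\rho}^{-\nu-\frac12}$ to $P_{-\frac12+i\rho}^{-\nu-\frac12}$ before invoking \eqref{equation:LegendreAsymp2}, which is stated for $\rho\to+\infty$. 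With \eqref{equation:ReflectionLegendreP} inserted at these two points, the rest of your bookkeeping plan goes through as in the paper.
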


\begin{proof}
Since $\cosh^2(s)-\sinh^2(s)=1$ for all $s\in\mathbb{C}$, it follows that
\begin{align}
\label{equation:BeziehungTilde}
\frac{1}{\sqrt{\sinh(\tilde{z})}}=\left( z^2-1 \right)^{\nicefrac{1}{4}}, \quad \frac{1}{\sqrt{\sinh(\tilde{\omega})}}=\left( \omega^2-1 \right)^{\nicefrac{1}{4}}.
\end{align}
Let us consider situation $1)$ first.

When we use Whipple's formulas \eqref{equation:Whipple1}, \eqref{equation:Whipple2} and then \eqref{equation:ReflectionLegendreP}, we get
\begin{align*}
 e^{-i\pi \mu}P_{\nu}^{-\mu}\left( \omega \right) & Q_{\nu}^{\mu}\left( z \right) \\
&=\frac{\Gamma\left( \mu+\nu+1 \right)}{\Gamma\left( \mu-\nu \right)}\frac{ie^{i\nu\pi}}{\left( z^2-1 \right)^{\nicefrac{1}{4}}\left( \omega^2-1 \right)^{\nicefrac{1}{4}}} P_{\mu-\frac{1}{2}}^{-\nu-\frac{1}{2}}\left( \cosh(\tilde{z}) \right) Q_{\mu-\frac{1}{2}}^{-\nu-\frac{1}{2}}\left( \cosh(\tilde{\omega}) \right).
\end{align*}
We can now apply the asymptotic estimates \eqref{equation:LegendreAsymp1}, \eqref{equation:LegendreAsymp3} to the right-hand side of the above equation. When we do so and then use the relation \eqref{equation:BeziehungTilde}, we obtain for $\vert \mu \vert\rightarrow\infty$ with $\Re(\mu)>-\frac{1}{2}$:
\begin{align}
e^{-i\pi \mu}P_{\nu}^{-\mu}\left( \omega \right) Q_{\nu}^{\mu}\left( z \right) &=\frac{\Gamma\left( \mu+\frac{1}{2} \right)}{\Gamma\left( \mu-\nu\right)} \frac{e^{-\tilde{\omega}\mu} \left( \mu-\frac{1}{2} \right)^{-\nu-1}}{2\sqrt{\mu+\frac{1}{2}} } \left( e^{\mu\tilde{z}}+e^{i\pi(\nu+1)}e^{-\mu\tilde{z}} \right) \left( 1+O\left(\frac{1}{\vert \mu \vert}\right) \right) \nonumber \\
&=\frac{\Gamma\left( \mu+\frac{1}{2} \right)}{\Gamma\left( \mu-\nu\right)} \frac{e^{-\tilde{\omega}\mu} \mu^{-\nu-\frac{3}{2}}}{2} \left( e^{\mu\tilde{z}}+e^{i\pi(\nu+1)}e^{-\mu\tilde{z}} \right) \left( 1+O\left(\frac{1}{\vert \mu \vert}\right) \right). \label{equation:AsymptProd9}
\end{align}
Applying the asymptotic estimate \eqref{equation:GammaQuotAsymp} for the quotient of two gamma functions, we get
\begin{align}
\frac{\Gamma\left( \mu+\frac{1}{2} \right)}{\Gamma\left( \mu-\nu\right)} = \mu^{\nu+\frac{1}{2}}\left( 1+O\left( \frac{1}{\vert \mu \vert} \right) \right) \label{equation:GammaQuotSpez}
\end{align}
as $\mu\rightarrow\infty$ with $\Re(\mu)>-\frac{1}{2}$. Lastly, when we combine \eqref{equation:GammaQuotSpez} and \eqref{equation:AsymptProd9}, we obtain
\begin{align*}
e^{-i\pi \mu}P_{\nu}^{-\mu}\left( \omega \right)Q_{\nu}^{\mu}\left( z \right) = \frac{e^{-\tilde{\omega}\mu}}{2 \mu } \left( e^{\mu\tilde{z}}+e^{i\pi(\nu+1)}e^{-\mu\tilde{z}} \right) \left( 1+O\left(\frac{1}{\vert \mu  \vert}\right) \right),
\end{align*}
as $\mu\rightarrow\infty$ with $\Re(\mu)>-\frac{1}{2}$. This shows $1)$. 

For $2)$, we first use Whipple's formula \eqref{equation:Whipple1} twice and \eqref{equation:ReflectionLegendreP} once, to obtain
\begin{align}
Q_{\nu}^{-i\rho}(z)Q_{\nu}^{i\rho}(\omega) =\, &\frac{\pi}{2}\Gamma\left( i\rho+\nu+1 \right)\Gamma\left( -i\rho+\nu+1 \right)\frac{1}{\left(z^2-1\right)^{\nicefrac{1}{4}}\left(\omega^2-1\right)^{\nicefrac{1}{4}}}\cdot  \nonumber \\
& \cdot P_{-\frac{1}{2}+i\rho}^{-\nu-\frac{1}{2}}\left( \cosh(\tilde{z})  \right)P_{-\frac{1}{2}+i\rho}^{-\nu-\frac{1}{2}}\left( \cosh(\tilde{\omega}) \right). \label{equation:ConditionsSchritt1.1}
\end{align}
It is known (see e.g. \cite{Lebedev} on p. 15) that for any $\varepsilon \in (0,\pi)$ and $\alpha\in\mathbb{C}$,
\begin{align}
\label{equation:AsymptotikGamma}
\Gamma\left( z+\alpha \right) = e^{\left( z+\alpha-\frac{1}{2} \right)\log(z)-z+\frac{1}{2}\log(2\pi)}\left(1+O\left(\frac{1}{\vert z \vert}\right)\right),
\end{align}
as $\vert z \vert\rightarrow\infty$ with $\vert \arg(z) \vert<\pi-\varepsilon$.

Hence we obtain that, for $\rho\rightarrow\infty$ with $\rho\in \mathbb{R}$,
\begin{align*}
\Gamma\left(\pm i\rho+\nu+1\right) &= e^{\left( \pm i\rho+\nu+\frac{1}{2} \right)\log(\pm i\rho)}e^{\mp i\rho}\cdot \sqrt{2\pi}\left(1+O\left(\frac{1}{ \rho }\right)\right),
\end{align*}
and therefore
\begin{align}
\label{equation:ProduktGamma}
\Gamma\left( i\rho+\nu+1\right) \Gamma\left( -i\rho+\nu+1\right) = \rho^{2\nu+1} e^{-\pi\rho}\cdot 2\pi \left(1+O\left(\frac{1}{ \rho }\right)\right).
\end{align}

On the other hand, when we use \eqref{equation:LegendreAsymp2} and \eqref{equation:BeziehungTilde}, we get
\begin{align}
&\frac{\pi}{2}\frac{1}{\left(z^2-1\right)^{\nicefrac{1}{4}}\left(\omega^2-1\right)^{\nicefrac{1}{4}}}\cdot 
P_{-\frac{1}{2}+i\rho}^{-\nu-\frac{1}{2}}\left( \cosh(\tilde{z}) \right)P_{-\frac{1}{2}+i\rho}^{-\nu-\frac{1}{2}}\left( \cosh(\tilde{\omega})\right) \nonumber \\
&= \rho^{-2\nu-2}\cos\left( \tilde{z}\rho-\frac{\pi}{2}\left( \nu+1 \right) \right)\cos\left( \tilde{\omega}\rho-\frac{\pi}{2}\left( \nu+1 \right) \right)\left( 1+O\left( \frac{1}{\rho} \right) \right) \label{equation:ConditionsBeweis2}
\end{align}
as $\rho\rightarrow\infty$ with $\rho \in \mathbb{R}$. From \eqref{equation:ConditionsBeweis2}, \eqref{equation:ProduktGamma} and \eqref{equation:ConditionsSchritt1.1} we obtain
\begin{align*}
Q_{\nu}^{-i\rho}(z)Q_{\nu}^{i\rho}(\omega) = \frac{2\pi}{\rho} e^{-\pi \rho} \cos\left( \tilde{z}\rho-\frac{\pi}{2}\left( \nu+1 \right) \right)\cos\left( \tilde{\omega}\rho-\frac{\pi}{2}\left( \nu+1 \right) \right)\left( 1+O\left( \frac{1}{\rho} \right) \right)
\end{align*}
as $\rho\rightarrow\infty$ with $\rho\in \mathbb{R}$. Hence, as $\rho\rightarrow\infty$ in $\mathbb{R}$:
\begin{align*}
&\frac{\sin(\pi i \rho)}{\pi} Q_{\nu}^{-i\rho}(z)Q_{\nu}^{i\rho}(\omega)   \\
&= \left( 1-e^{-2\pi\rho} \right)\frac{i}{\rho} \cos\left( \tilde{z}\rho-\frac{\pi}{2}\left( \nu+1 \right) \right)\cos\left( \tilde{\omega}\rho-\frac{\pi}{2}\left( \nu+1 \right) \right)\left( 1+O\left( \frac{1}{\rho} \right) \right)  \\
&=\frac{i}{\rho} \cos\left( \tilde{z}\rho-\frac{\pi}{2}\left( \nu+1 \right) \right)\cos\left( \tilde{\omega}\rho-\frac{\pi}{2}\left( \nu+1 \right) \right)\left( 1+O\left( \frac{1}{\rho} \right) \right). 
\end{align*}
\end{proof}

In the last part of this section we want to discuss a class of integrals involving associated Legendre functions of the second kind. We start with a rather technical lemma.
\begin{lemma}
\label{lemma:Schritt1}
Let $\nu\in\mathbb{C}$ with $\Re(\nu)>-1$ and $z,\omega\in\mathbb{C}\backslash\left(-\infty, 1 \right]$. Suppose further that we are given the following situation:
\begin{itemize}
\item[$(i)$] Let $g$ be a meromorphic function on $\mathbb{C}$ and let $S\subset \mathbb{C}$ denote the set of all poles of $g$. Suppose that $g$ is complex differentiable at all points on the imaginary axis, possibly except for a simple pole at the origin, and that $g$ is an odd function, i.e. $g(-s)=-g(s)$ for all $s\in\mathbb{C}\backslash S$.
\item[$(ii)$] Fix some $\varepsilon\in (0,1)$ such that $0<2\varepsilon<  1+\Re(\nu) $ and $B_{2\varepsilon}(0)\cap S\backslash\lbrace{ 0\rbrace}=\emptyset$, where $B_{2\epsilon}(0)$ denotes the open disc of radius $2\varepsilon$ centered at the origin. 
\item[$(iii)$] Let $N:=\left(N_k \right)_{k\in\mathbb{N}}\subset \left[1,\infty \right)$ be an unbounded and monotonically increasing sequence.
\end{itemize}  

Then there exists a function $\varphi:\mathbb{N}\rightarrow \mathbb{C}$ with $\lim_{k\rightarrow\infty}\varphi(k)=0$ and such that for any $k\in\mathbb{N}$ the following holds:
If $C_{k}$ denotes any injective and piecewise differentiable curve from $-iN_k$ to $iN_k$ such that all other points on the curve lie in the domain $\lbrace z\in\mathbb{C}: \Re(z)>0 \rbrace\backslash S$ \emph{(}see Fig. $\ref{Skizze:LemmaSchritt1}$\emph{)}, then we have:
\begin{align}
\label{equation:Schritt1}
2\left( \sum\limits_{p\in S\left( C_{k} \right)} \emph{Res}\left( g;p \right) e^{-i\pi p} P_{\nu}^{-p}(\omega)Q_{\nu}^{p}(z) \right)& + \emph{Res}\left( g;0 \right) P_{\nu}( \omega )Q_{\nu}(z) \nonumber \\
=\frac{2}{\pi}\int\limits_{\frac{\varepsilon}{N_k}}^{N_k} \frac{\sin\left(\pi i\rho \right)}{\pi}g(i\rho)Q_{\nu}^{-i\rho}(z)  Q_{\nu}^{i\rho}(\omega ) d\rho &+ \frac{1}{\pi i}\int\limits_{C_{k}}g(s)e^{-i\pi s}P_{\nu}^{-s}( \omega )  Q_{\nu}^{s}(z) ds  \nonumber \\
&  + \varphi(k),
\end{align}
where $S\left( C_{k} \right)$ denotes the set of all poles of $g$ contained in the bounded domain which is enclosed by the imaginary axis and the curve $C_{k}$.
\end{lemma}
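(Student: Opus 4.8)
The plan is to read the left-hand side of \eqref{equation:Schritt1} as a sum of residues and to produce the right-hand side by integrating the function
\[
h(s) := g(s)\, e^{-i\pi s}\, P_{\nu}^{-s}(\omega)\, Q_{\nu}^{s}(z)
\]
over a closed contour built from $C_k$, two segments of the imaginary axis, and a small semicircle about the origin. First I would record the analytic properties of $h$ near the closed right half-plane. By \eqref{equation:LegendreFirst} and the fact noted after Definition \ref{definition:Hypergeom} that $c\mapsto \frac{1}{\Gamma(c)}F(a,b;c;\,\cdot\,)$ is entire, the factor $s\mapsto P_{\nu}^{-s}(\omega)$ is entire; and since $\Re(\nu)>-1$, the poles of $s\mapsto Q_{\nu}^{s}(z)$ (which come from $\Gamma(\nu+s+1)$, hence lie at $s\in\{-\nu-1,-\nu-2,\dots\}$) all satisfy $\Re(s)<0$. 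Thus $h$ is holomorphic on the imaginary axis apart from a possible simple pole at $0$, where $\Res(h;0)=\Res(g;0)\,P_{\nu}(\omega)Q_{\nu}(z)$ (because $e^{-i\pi s}P_{\nu}^{-s}(\omega)Q_{\nu}^{s}(z)$ is holomorphic there and equals $P_{\nu}(\omega)Q_{\nu}(z)$ at $s=0$), and on any admissible $C_k$ and in the domain it encloses together with the imaginary axis, $h$ has poles only at the points of $S(C_k)$, which are poles of $g$.

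Next I would fix $k$, set $r_k:=\varepsilon/N_k$ (so that by (ii), $B_{r_k}(0)\cap S\subseteq\{0\}$), and take $\Gamma_k$ to be the closed curve consisting of $C_k$ from $-iN_k$ to $iN_k$, then the imaginary axis from $iN_k$ down to $ir_k$, then the semicircle $\gamma_k$ of radius $r_k$ about $0$ inside $\{\Re(s)>0\}$ traversed clockwise from $ir_k$ to $-ir_k$, then the imaginary axis from $-ir_k$ down to $-iN_k$; this is the positively oriented boundary of the domain enclosed by $C_k$ and the imaginary axis with a small notch at $0$ removed. The residue theorem gives
\[
\int_{\Gamma_k} h(s)\,ds \;=\; 2\pi i \sum_{p\in S(C_k)} \Res(g;p)\, e^{-i\pi p}\, P_{\nu}^{-p}(\omega)\, Q_{\nu}^{p}(z).
\]
I then split $\int_{\Gamma_k}h$ into its four pieces. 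The $C_k$-piece is exactly the contour integral in \eqref{equation:Schritt1}. For the two imaginary segments I substitute $s=\pm it$ and use that $g$ is odd together with identity \eqref{equation:MagischeFormel3} taken at $\mu=it$, which rewrites $e^{\pi t}P_{\nu}^{-it}(\omega)Q_{\nu}^{it}(z)-e^{-\pi t}P_{\nu}^{it}(\omega)Q_{\nu}^{-it}(z)$ as $-\tfrac{2}{\pi}\sin(i\pi t)\,Q_{\nu}^{-it}(z)Q_{\nu}^{it}(\omega)$; after dividing the whole identity by $\pi i$ at the end this contributes the term $\tfrac{2}{\pi}\int_{\varepsilon/N_k}^{N_k}\tfrac{\sin(i\pi\rho)}{\pi}g(i\rho)Q_{\nu}^{-i\rho}(z)Q_{\nu}^{i\rho}(\omega)\,d\rho$.

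It remains to handle the semicircle. Since $r_k\to 0$ and $h$ has at worst a simple pole at $0$, the standard small-arc estimate yields $\int_{\gamma_k}h(s)\,ds\to -i\pi\,\Res(g;0)P_{\nu}(\omega)Q_{\nu}(z)$, so that
\[
\varphi(k):=\frac{1}{\pi i}\Big(\int_{\gamma_k}h(s)\,ds + i\pi\,\Res(g;0)P_{\nu}(\omega)Q_{\nu}(z)\Big)\longrightarrow 0 ,
\]
and $\varphi(k)$ depends only on $r_k$, not on the choice of $C_k$. Collecting the four pieces of $\int_{\Gamma_k}h$, equating with the residue sum, and dividing by $\pi i$ yields precisely \eqref{equation:Schritt1}. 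The delicate point is not any of the analytic identities — the entirety of the Legendre factors and the identity \eqref{equation:MagischeFormel3} are supplied by the preceding sections, and the convergence $\varphi(k)\to 0$ is routine — but rather making the contour argument clean for an \emph{arbitrary} admissible $C_k$: one must check that $\Gamma_k$ is a genuine piecewise differentiable closed curve whose interior, with the notch at $0$ removed, contains exactly the poles in $S(C_k)$ and whose orientation is the positive one, which needs a little care when $C_k$ comes close to the origin.
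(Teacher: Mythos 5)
Your proof is correct and follows essentially the same route as the paper: the residue theorem applied to a contour built from $C_k$, two imaginary-axis segments, and a small semicircle about the origin, with the oddness of $g$ and identity \eqref{equation:MagischeFormel3} converting the axis integrals into the $Q_{\nu}^{-i\rho}(z)Q_{\nu}^{i\rho}(\omega)$ integral, and the semicircle supplying the half-residue term $\Res(g;0)P_{\nu}(\omega)Q_{\nu}(z)$. The only cosmetic difference is that the paper averages two closed contours (one enclosing the origin, one excluding it) instead of using a single indented contour plus the small-arc lemma; both yield the same $\varphi(k)\to 0$.
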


\begin{figure} [ht] 
 \centering
\begin{tikzpicture}[decoration={markings,							
mark=at position 1cm with {\arrow[line width=1pt]{>}},
mark=at position 3.5cm with {\arrow[line width=1pt]{>}},
mark=at position 5.5cm with {\arrow[line width=1pt]{>}},
mark=at position 7.85cm with {\arrow[line width=1pt]{>}},
mark=at position 9cm with {\arrow[line width=1pt]{>}},
mark=at position 10.4cm with {\arrow[line width=1pt]{>}},
mark=at position 12.3cm with {\arrow[line width=1pt]{>}}
}
]
\draw[help lines,->] (-3,0) -- (3,0) coordinate (xaxis);
\draw[help lines,->] (0,-3) -- (0,3) coordinate (yaxis);

\path[draw,line width=0.8pt,postaction=decorate] (0,-2) node[left] {$-iN_k$} ..controls (2,-1.8) and (2.9,-1.4) .. (2.2,-0.9) ..controls (0.7,0) and (1.2,0.5) ..(1.4,0.6)..controls (2.3, 1) and (2.7, 1.5)..(0,2) node[left] {$iN_k$} ;


\draw[dashed] (-1.4,3) -- (-1.4, 0); 
\draw[dashed] (-1.4,-0.5) -- (-1.4, -3);

\draw[dashed] (0,0) circle (0.7); 


\draw (0,0) node[circle,fill,inner sep=1.5pt]{};
\draw (0.5,0.9) node[circle,fill,inner sep=1.5pt]{};
\draw (-0.5,-0.9) node[circle,fill,inner sep=1.5pt]{};
\draw (1.8,1) node[circle,fill,inner sep=1.5pt]{};
\draw (-1.8,-1) node[circle,fill,inner sep=1.5pt]{};
\draw (0.9,-1.6) node[circle,fill,inner sep=1.5pt]{};
\draw (-0.9,1.6) node[circle,fill,inner sep=1.5pt]{};
\draw (2.2,0) node[circle,fill,inner sep=1.5pt]{};
\draw (-2.2,0) node[circle,fill,inner sep=1.5pt]{};

\node[below] at (xaxis) {$\Re(z)$};
\node[left] at (yaxis) {$\Im(z)$};
\node at (2.1,0.5) {$C_{k}$};
\node[left] at (0,0.3) {\scriptsize{$2\varepsilon$}};
\node[below] at (-1.4,0) {\scriptsize $-1-\Re(\nu)$};
\end{tikzpicture}
\caption{Situation of Lemma \ref{lemma:Schritt1}} \label{Skizze:LemmaSchritt1}
\end{figure}
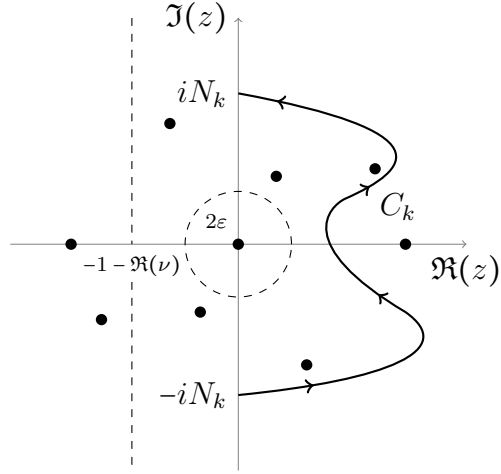

\begin{proof}
Let $k\in\mathbb{N}$ and $\omega,z\in \mathbb{C}\backslash (-\infty,1]$ be arbitrary. First we close up the curve $C_k$ in two different ways and obtain two piecewise smooth curves $\gamma_{k}^1$ and $\gamma_{k}^2$, as shown in Fig. \ref{Skizze:Gamma12}. That is, $\gamma_{k}^1$ joins the point $-iN_k$ with $iN_k$ along $C_k$, then goes from $iN_k$ to $i\frac{\varepsilon}{N_k}$ in a straight line, moves around the origin from $i\frac{\varepsilon}{N_k}$ to $-i\frac{\varepsilon}{N_k}$ on a semicircle centered at the origin and with positive orientation, and eventually joins $-i\frac{\varepsilon}{N_k}$ to $-iN_k$ by a straight line. The second curve is defined similarly as the first one, with only a single difference. The curve $\gamma_{k}^2$ again moves around the origin from $i\frac{\varepsilon}{N_k}$ to $-i\frac{\varepsilon}{N_k}$ on a semicircle centered at the origin, but now with the opposite orientation.

\begin{figure} [ht] 
  \subfloat[The closed path $\gamma_{k}^1$.]{
\begin{tikzpicture}[decoration={markings,							
mark=at position 1cm with {\arrow[line width=1pt]{>}},
mark=at position 3.5cm with {\arrow[line width=1pt]{>}},
mark=at position 5.5cm with {\arrow[line width=1pt]{>}},
mark=at position 7.85cm with {\arrow[line width=1pt]{>}},
mark=at position 9cm with {\arrow[line width=1pt]{>}},
mark=at position 10.4cm with {\arrow[line width=1pt]{>}},
mark=at position 12.3cm with {\arrow[line width=1pt]{>}}
}
]
\draw[help lines,->] (-3,0) -- (3,0) coordinate (xaxis);
\draw[help lines,->] (0,-3) -- (0,3) coordinate (yaxis);

\path[draw,line width=0.8pt,postaction=decorate] (0,-2)..controls (2,-1.8) and (2.9,-1.4) .. (2.2,-0.9) ..controls (0.7,0) and (1.2,0.5) ..(1.4,0.6)..controls (2.3, 1) and (2.7, 1.5)..(0,2) node[left] {$iN_k$} -- (0,0.4) node[right] {$i\frac{\varepsilon}{N_k}$} arc (90:270:0.4) -- (0,- 0.4) -- (0,-2) node[left] {$-iN_k$};


\draw[dashed] (-1.4,3) -- (-1.4, 0); 
\draw[dashed] (-1.4,-0.5) -- (-1.4, -3);


\draw (0,0) node[circle,fill,inner sep=1.5pt]{};
\draw (0.5,0.9) node[circle,fill,inner sep=1.5pt]{};
\draw (-0.5,-0.9) node[circle,fill,inner sep=1.5pt]{};
\draw (1.8,1) node[circle,fill,inner sep=1.5pt]{};
\draw (-1.8,-1) node[circle,fill,inner sep=1.5pt]{};
\draw (0.9,-1.6) node[circle,fill,inner sep=1.5pt]{};
\draw (-0.9,1.6) node[circle,fill,inner sep=1.5pt]{};
\draw (2.2,0) node[circle,fill,inner sep=1.5pt]{};
\draw (-2.2,0) node[circle,fill,inner sep=1.5pt]{};

\node[below] at (xaxis) {$\Re(z)$};
\node[left] at (yaxis) {$\Im(z)$};
\node at (2.1,0.5) {$C_{k}$};
\node[below] at (-1.4,0) {\scriptsize $- 1-\Re(\nu) $};
\end{tikzpicture}
}
\quad
  \subfloat[The closed path $\gamma_{k}^2$.]{
\begin{tikzpicture}[decoration={markings,							
mark=at position 1cm with {\arrow[line width=1pt]{>}},
mark=at position 3.5cm with {\arrow[line width=1pt]{>}},
mark=at position 5.5cm with {\arrow[line width=1pt]{>}},
mark=at position 7.85cm with {\arrow[line width=1pt]{>}},
mark=at position 9cm with {\arrow[line width=1pt]{>}},
mark=at position 10.4cm with {\arrow[line width=1pt]{>}},
mark=at position 12.3cm with {\arrow[line width=1pt]{>}}
}
]
\draw[help lines,->] (-3,0) -- (3,0) coordinate (xaxis);
\draw[help lines,->] (0,-3) -- (0,3) coordinate (yaxis);

\path[draw,line width=0.8pt,postaction=decorate] (0,-2)..controls (2,-1.8) and (2.9,-1.4) .. (2.2,-0.9) ..controls (0.7,0) and (1.2,0.5) ..(1.4,0.6)..controls (2.3, 1) and (2.7, 1.5)..(0,2) node[left] {$iN_k$} -- (0,0.4) node[left] {$i\frac{\varepsilon}{N_k}$} arc (90:-90:0.4) -- (0,- 0.4) -- (0,-2) node[left] {$-iN_k$};


\draw[dashed] (-1.4,3) -- (-1.4, 0); 
\draw[dashed] (-1.4,-0.5) -- (-1.4, -3);


\draw (0,0) node[circle,fill,inner sep=1.5pt]{};
\draw (0.5,0.9) node[circle,fill,inner sep=1.5pt]{};
\draw (-0.5,-0.9) node[circle,fill,inner sep=1.5pt]{};
\draw (1.8,1) node[circle,fill,inner sep=1.5pt]{};
\draw (-1.8,-1) node[circle,fill,inner sep=1.5pt]{};
\draw (0.9,-1.6) node[circle,fill,inner sep=1.5pt]{};
\draw (-0.9,1.6) node[circle,fill,inner sep=1.5pt]{};
\draw (2.2,0) node[circle,fill,inner sep=1.5pt]{};
\draw (-2.2,0) node[circle,fill,inner sep=1.5pt]{};

\node[below] at (xaxis) {$\Re(z)$};
\node[left] at (yaxis) {$\Im(z)$};
\node at (2.1,0.5) {$C_{k}$};
\node[below] at (-1.4,0) {\scriptsize $- 1-\Re(\nu)$};
\end{tikzpicture}

}
\caption{\label{Skizze:Gamma12}}
\end{figure}

By definition, both curves $\gamma_{k}^1$ and $\gamma_{k}^2$ can be written as a composition of the following curves:
\begin{align*}
\gamma_{k}^1 = C_k + \alpha_{k} + \beta_{k}^1 + \delta_k, \\
\gamma_{k}^2 = C_k + \alpha_{k} + \beta_{k}^2 + \delta_k,
\end{align*}
where
\begin{align*}
\alpha_k: \left[ -N_k,-\frac{\varepsilon}{N_k} \right]\rightarrow \mathbb{C},&\quad \rho\mapsto -\rho i; \\
\delta_k: \left[ \frac{\varepsilon}{N_k}, N_k \right]\rightarrow \mathbb{C},&\quad \rho\mapsto -\rho i; \\
\beta_{k}^1 : \left[ 0,\pi \right]\rightarrow \mathbb{C},&\quad \theta\mapsto \frac{\varepsilon}{N_k}ie^{i\theta}; \\
\beta_{k}^2 : \left[ 0,\pi \right]\rightarrow \mathbb{C}, &\quad \theta\mapsto \frac{\varepsilon}{N_k}ie^{-i\theta}.
\end{align*}
We observe that the function $u\mapsto P_{\nu}^{-u}(\omega )Q_{\nu}^{u} ( z ) $ is holomorphic on the half plane $\lbrace u\in\mathbb{C} : \Re(u)>-1-\Re\left( \nu \right) \rbrace$. Hence, we can rewrite the left-hand side of $\eqref{equation:Schritt1}$ by using first the residue theorem and then the above decomposition of $\gamma_{k}^1, \gamma_{k}^2$ as follows:
\begin{align}
\label{equation:Schritt1Bew1}
&\qquad 2\left( \sum\limits_{p \in S\left( C_k \right)} \Res\left( g;p \right) e^{-i\pi p} P_{\nu}^{-p}\left( \omega \right)Q_{\nu}^{p}\left( z \right) \right) + \Res\left( g;0 \right) P_{\nu}\left( \omega \right)Q_{\nu}\left( z \right) \nonumber \\
&=\frac{1}{2\pi i}\oint\limits_{\gamma_{k}^1} g(s) e^{-i\pi s} P_{\nu}^{-s}\left( \omega \right)Q_{\nu}^{s}\left( z \right) ds + \frac{1}{2\pi i}\oint\limits_{\gamma_{k}^2} g(s) e^{-i\pi s} P_{\nu}^{-s}\left( \omega \right)Q_{\nu}^{s}\left( z \right)  ds \nonumber \\
&= \frac{1}{\pi i}\int\limits_{\alpha_k} g(s) e^{-i\pi s} P_{\nu}^{-s}\left( \omega \right)Q_{\nu}^{s}\left( z \right) ds + \frac{1}{\pi i}\int\limits_{\delta_k} g(s) e^{-i\pi s} P_{\nu}^{-s}\left( \omega \right)Q_{\nu}^{s}\left( z \right)  ds \nonumber \\
&\quad +\frac{1}{\pi i}\int\limits_{C_k}g(s)e^{-i\pi s}P_{\nu}^{-s}\left( \omega \right)  Q_{\nu}^{s}\left( z \right) ds + \varphi(k),
\end{align}
with
\begin{align}
\label{equation:DefFehlerterm}
\varphi(k):= \frac{1}{2\pi i}\int\limits_{\beta_{k}^1} g(s) e^{-i\pi s} P_{\nu}^{-s}\left( \omega \right)Q_{\nu}^{s}\left( z \right) ds + \frac{1}{2\pi i}\int\limits_{\beta_{k}^2} g(s) e^{-i\pi s} P_{\nu}^{-s}\left( \omega \right)Q_{\nu}^{s}\left( z \right)  ds.
\end{align}
We observe that we can sum up the first two summands of \eqref{equation:Schritt1Bew1} as follows:
\begin{align*}
&\frac{1}{\pi i}\int\limits_{\alpha_k} g(s) e^{-i\pi s} P_{\nu}^{-s}\left( \omega \right)Q_{\nu}^{s}\left( z \right) ds + \frac{1}{\pi i}\int\limits_{\delta_k} g(s) e^{-i\pi s} P_{\nu}^{-s}\left( \omega \right)Q_{\nu}^{s}\left( z \right)  ds \\
= &-\frac{1}{\pi}\int\limits_{-N_k}^{-\frac{\varepsilon}{N_k}} g(-i\rho) e^{-i\pi \left(-i\rho\right)} P_{\nu}^{i\rho}\left( \omega \right)Q_{\nu}^{-i\rho}\left( z \right) d\rho - \frac{1}{\pi}\int\limits_{\frac{\varepsilon}{N_k}}^{N_k} g(-i\rho) e^{i\pi \left( i\rho \right)} P_{\nu}^{i\rho}\left( \omega \right)Q_{\nu}^{-i\rho}\left( z \right)  d\rho \\
= &-\frac{1}{\pi}\int\limits_{\frac{\varepsilon}{N_k}}^{N_k} g(i\rho) e^{-i\pi \left(i\rho\right)} P_{\nu}^{-i\rho}\left( \omega \right)Q_{\nu}^{i\rho}\left( z \right) d\rho - \frac{1}{\pi}\int\limits_{\frac{\varepsilon}{N_k}}^{N_k} g(-i\rho) e^{i\pi \left( i\rho \right)} P_{\nu}^{i\rho}\left( \omega \right)Q_{\nu}^{-i\rho}\left( z \right)  d\rho \\
= & -\frac{1}{\pi}\int\limits_{\frac{\varepsilon}{N_k}}^{N_k} g\left( i\rho \right)\left( e^{-i\pi\left(i\rho \right)}P_{\nu}^{-i\rho}\left( \omega \right)Q_{\nu}^{i\rho}\left( z \right) - e^{i\pi\left( i\rho \right)}P_{\nu}^{i\rho}\left( \omega \right)Q_{\nu}^{-i\rho}\left( z \right) \right)d\rho,
\end{align*}
where in the last equality we used that the function $g$ is odd. Now we can use \eqref{equation:MagischeFormel3} to simplify the  expression in paranthesis under the integral sign and obtain, in total:
\begin{align}
\label{equation:Schritt1Bew2}
& \frac{1}{\pi i}\int\limits_{\alpha_k} g(s) e^{-i\pi s} P_{\nu}^{-s}\left( \omega \right)Q_{\nu}^{s}\left( z \right) ds + \frac{1}{\pi i}\int\limits_{\delta_k} g(s) e^{-i\pi s} P_{\nu}^{-s}\left( \omega \right)Q_{\nu}^{s}\left( z \right)  ds \nonumber \\
=\, &\frac{2}{\pi}\int\limits_{\frac{\varepsilon}{N_k}}^{N_k} \frac{\sin\left( i\rho \pi \right)}{\pi}g\left( i\rho \right)Q_{\nu}^{-i\rho} \left( z \right) Q_{\nu}^{i\rho}\left( \omega \right) d\rho.
\end{align}
Thus the claimed equation \eqref{equation:Schritt1} follows from \eqref{equation:Schritt1Bew1} together with \eqref{equation:Schritt1Bew2}.

It remains to show that $\lim_{k\rightarrow\infty}\varphi(k)=0$, where $\varphi(k)$ is defined as in \eqref{equation:DefFehlerterm}. When we use the parametrisation of $\beta_{k}^{1}$ and $\beta_{k}^2$ given above, we obtain
\begin{align*}
\varphi\left(k \right) = \, &\frac{1}{2\pi}\int\limits_{0}^{\pi} g\left( i\frac{\varepsilon}{N_k}e^{i\theta} \right) e^{-i\pi \left(i\frac{\varepsilon}{N_k}e^{i\theta}\right)} \LegendreP{\nu}{-i\frac{\varepsilon}{N_k}e^{i\theta}}{\omega} \cdot \LegendreQ{\nu}{i\frac{\varepsilon}{N_k}e^{i\theta}}{z}\cdot \left( i\frac{\varepsilon }{N_k}e^{i\theta} \right) d\theta  \\
& -\frac{1}{2\pi} \int\limits_{0}^{\pi}  g\left( i\frac{\varepsilon}{N_k}e^{-i\theta} \right) e^{-i\pi \left(i\frac{\varepsilon}{N_k}e^{-i\theta}\right)} \LegendreP{\nu}{-i\frac{\varepsilon}{N_k}e^{-i\theta}}{\omega} \cdot \LegendreQ{\nu}{i\frac{\varepsilon}{N_k}e^{-i\theta}}{z}\cdot \left( i\frac{\varepsilon }{N_k}e^{-i\theta} \right)  d\theta.
\end{align*}
By assumption, the meromorphic function $g$ has either a simple pole at the origin or is complex differentiable there. Hence the limit of the function $s g(s)$ exists as $s\rightarrow 0$ and is given by $\lim_{s\rightarrow 0} sg(s) = \Res\left( g;0 \right)\in\mathbb{C}$. Thus the integrand $s\mapsto sg(s)e^{-i\pi s}P_{\nu}^{-s}(\omega)Q_{\nu}^{ s}(z)$ is continuous in $B_{\varepsilon}(0)$, in particular at the origin $s=0$, and is bounded on $B_{\varepsilon}(0)$. By Lebesgue's dominated convergence theorem and since $\lim_{k\rightarrow\infty} N_k=\infty$, we have
\begin{align*}
\lim\limits_{k\rightarrow \infty}\varphi(k) &= \frac{1}{2\pi}\int\limits_{0}^{\pi} \Res(g;0)P_{\nu}(\omega)Q_{\nu}(z) d\theta - \frac{1}{2\pi}\int\limits_{0}^{\pi} \Res(g;0)P_{\nu}(\omega)Q_{\nu}(z) d\theta  \\
&= 0.
\end{align*} 
\end{proof}

The following corollary follows immediately from Lemma \ref{lemma:Schritt1}.

\begin{corollary}
\label{corollary:Corollary1Schritt1}
Let $\nu\in\mathbb{C}$ with $\Re(\nu)>-1$ and $z,\omega\in\mathbb{C}\backslash\left( -\infty, 1 \right]$. Further, let $g$ be a meromorphic function with the same properties as in Lemma \emph{\ref{lemma:Schritt1}} and suppose:
\begin{enumerate}
\item[$1)$] There exists a sequence of curves $\left( C_k\right)_{k\in\mathbb{N}}$, each of them as in Lemma $\emph{\ref{lemma:Schritt1}}$, and such that
\begin{align}
\lim\limits_{k\rightarrow \infty}\int\limits_{C_k} g(s)e^{-i\pi s}P_{\nu}^{-s}(\omega)Q_{\nu}^{s}(z) ds = 0. \label{equation:Schritt1.1}
\end{align}
\item[$2)$] The following integral converges in $\mathbb{C}$:
\end{enumerate}
\begin{align}
\int\limits_{0}^{\infty} \frac{\sin(\pi i \rho)}{\pi} g\left(i\rho\right)Q_{\nu}^{-i\rho}(z)Q_{\nu}^{i\rho}(\omega) d\rho. \label{equation:Schritt1.2}
\end{align}
Then it follows that
\begin{align}
\label{equation:Corollary1Schritt1}
&\frac{2}{\pi}\int\limits_{0}^{\infty} \frac{\sin(\pi i \rho)}{\pi} g\left(i\rho\right)Q_{\nu}^{-i\rho}(z)Q_{\nu}^{i\rho}(\omega) d\rho \nonumber \\
= & \lim\limits_{k\rightarrow\infty} \left( 2 \sum\limits_{p\in S\left( C_k \right)} \emph{Res}\left( g;p \right) e^{-i\pi p} P_{\nu}^{-p}(\omega)Q_{\nu}^{p}(z) \right)  + \emph{Res}\left( g;0 \right) P_{\nu}( \omega )Q_{\nu}(z).
\end{align} 
\end{corollary}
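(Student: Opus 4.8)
The plan is to pass to the limit $k\to\infty$ in the identity \eqref{equation:Schritt1} of Lemma \ref{lemma:Schritt1}, applied to the given sequence of curves $(C_k)_{k\in\mathbb{N}}$, after checking that each of the three terms on its right-hand side converges.

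First I would note that the curve integral $\frac{1}{\pi i}\int_{C_k} g(s)e^{-i\pi s}P_\nu^{-s}(\omega)Q_\nu^{s}(z)\,ds$ tends to $0$ by hypothesis $1)$, and that $\varphi(k)\to0$ by the final assertion of Lemma \ref{lemma:Schritt1}. The only term requiring a small argument is the spectral integral $\int_{\varepsilon/N_k}^{N_k}\frac{\sin(\pi i\rho)}{\pi}g(i\rho)Q_\nu^{-i\rho}(z)Q_\nu^{i\rho}(\omega)\,d\rho$. Here I would observe that, since $g$ has at worst a simple pole at the origin, the product $\rho\,g(i\rho)$ stays bounded near $\rho=0$; combined with the boundedness of $\sin(\pi i\rho)/\rho$ and the continuity of $\rho\mapsto Q_\nu^{\pm i\rho}(z)$, $Q_\nu^{\pm i\rho}(\omega)$ at $\rho=0$, this shows the integrand is bounded on a neighbourhood of $0$. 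Hence $\int_0^{\varepsilon/N_k}(\cdots)\,d\rho\to0$, and together with hypothesis $2)$, which controls the behaviour at infinity, this gives $\int_{\varepsilon/N_k}^{N_k}(\cdots)\,d\rho\to\int_0^{\infty}(\cdots)\,d\rho$ as $k\to\infty$.

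Combining these three facts, the right-hand side of \eqref{equation:Schritt1} converges to $\frac{2}{\pi}\int_0^{\infty}\frac{\sin(\pi i\rho)}{\pi}g(i\rho)Q_\nu^{-i\rho}(z)Q_\nu^{i\rho}(\omega)\,d\rho$, hence so does the left-hand side. Since the summand $\Res(g;0)\,P_\nu(\omega)Q_\nu(z)$ does not depend on $k$, the finite sums $2\sum_{p\in S(C_k)}\Res(g;p)e^{-i\pi p}P_\nu^{-p}(\omega)Q_\nu^{p}(z)$ must converge as well, and transposing $\Res(g;0)\,P_\nu(\omega)Q_\nu(z)$ to the other side yields exactly \eqref{equation:Corollary1Schritt1}. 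I do not expect any real obstacle here; the only point deserving care is the harmless one just described — namely that lowering the left endpoint of the spectral integral from $\varepsilon/N_k$ down to $0$ in the limit changes nothing — and this is precisely where the simple-pole hypothesis on $g$ at the origin, together with the vanishing of $\sin(\pi i\rho)$ there, is used.
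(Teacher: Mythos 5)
Your proof is correct and follows exactly the route the paper intends: the paper states that the corollary "follows immediately from Lemma \ref{lemma:Schritt1}", i.e.\ by passing to the limit $k\to\infty$ in \eqref{equation:Schritt1}, which is precisely what you do. Your only added detail — that the integral over $[0,\varepsilon/N_k]$ vanishes in the limit because $\rho\,g(i\rho)$, $\sin(\pi i\rho)/\rho$ and $Q_{\nu}^{\pm i\rho}$ are all bounded near $\rho=0$ — is the right (and harmless) point to check.
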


It is an interesting problem to investigate for which functions $g$ as in Lemma \ref{lemma:Schritt1} the conditions $1)$ and $2)$ of Corollary \ref{corollary:Corollary1Schritt1} are satisfied. We will not pursue this problem in full generality. Instead, since it is sufficient for our purposes, we will restrict the parameters $\omega, z$ to real values. 

The following lemma establishes a class of functions $g$ for which condition $2)$ of Corollary \ref{corollary:Corollary1Schritt1} is satisfied.

\begin{lemma}
\label{lemma:ConditionsSchritt1}
Let $\nu\in \mathbb{C}$ with $\Re(\nu)\notin \{ -1, -2, -3,... \}$ \emph{(}e.g. $\Re(\nu)>-1$\emph{)} and let $z,\omega\in \left(1,\infty \right)$. Let $g$ be as in Lemma \emph{\ref{lemma:Schritt1}}.
\begin{itemize}
\item[$(i)$] Suppose $g(i\rho)=O\left(\frac{1}{\rho}\right)$, as $\rho\rightarrow\infty$ in $\mathbb{R}$. Then the integral \eqref{equation:Schritt1.2} is absolutely convergent, and hence convergent.

\item[$(ii)$] Suppose there exists some constant $C\in\mathbb{C}\backslash\{ 0 \}$ such that $g(i\rho)=C\left(1+O\left(\frac{1}{\rho}\right)\right)$, as $\rho\rightarrow\infty$ in $\mathbb{R}$. Then the integral \eqref{equation:Schritt1.2} is convergent for $\omega\neq z$, but divergent for $\omega = z$. Further, it is never absolutely convergent.
\end{itemize}
\end{lemma}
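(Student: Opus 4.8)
The plan is to reduce every assertion to the large-$\rho$ asymptotics furnished by part $2)$ of Lemma~\ref{lemma:LegendreProdAsymp}. Write $f(\rho)$ for the integrand of \eqref{equation:Schritt1.2}, let $\tilde z,\tilde\omega\in(0,\infty)$ be as in \eqref{equation:TildeDefinitionen}, set $a:=\tfrac{\pi}{2}(\nu+1)$ and $P(\rho):=\cos(\tilde z\rho-a)\cos(\tilde\omega\rho-a)$. First I would dispose of the origin, which is common to both parts: since $\Re(\nu)\notin\{-1,-2,\dots\}$, also $\Re(\pm i\rho+\nu)=\Re(\nu)\notin\{-1,-2,\dots\}$, so by Definition~\ref{definition:LegendreFirstSecond} the functions $Q_\nu^{\pm i\rho}(z)$ and $Q_\nu^{\pm i\rho}(\omega)$ are defined and continuous in $\rho\in\mathbb{R}$; and because $g$ has no poles on the imaginary axis except possibly a simple one at $0$, while $\sin(i\pi\rho)=i\sinh(\pi\rho)$ vanishes at $\rho=0$ and $\lim_{s\to0}sg(s)=\Res(g;0)$ exists, the factor $\tfrac{\sin(i\pi\rho)}{\pi}g(i\rho)=\tfrac{\sinh(\pi\rho)}{\pi\rho}\,(i\rho)g(i\rho)$ extends continuously to $\rho=0$. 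Hence $f$ is continuous on $(0,\infty)$ and bounded near $0$, so $\int_0^1|f|<\infty$ and only the behaviour as $\rho\to\infty$ is at issue. I would also record two elementary facts: $P$ is bounded on $\mathbb{R}$, since $|\cos(\tilde z\rho-a)|^2=\cosh^2(\Im a)-\sin^2(\tilde z\rho-\Re a)\le\cosh^2(\Im a)$ and likewise for $\tilde\omega$; and $z\mapsto\tilde z$ is injective on $(1,\infty)$, because $r\mapsto r/(r^2-1)^{1/2}$ is strictly decreasing there, so that $z=\omega\Leftrightarrow\tilde z=\tilde\omega$, which will govern the two alternatives in $(ii)$.

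For $(i)$ and the convergent case of $(ii)$ I would simply plug in the asymptotics. By Lemma~\ref{lemma:LegendreProdAsymp}~$2)$ and boundedness of $P$, one has $\tfrac{\sin(i\pi\rho)}{\pi}Q_\nu^{-i\rho}(z)Q_\nu^{i\rho}(\omega)=O(1/\rho)$ as $\rho\to\infty$. In case $(i)$, multiplying by $g(i\rho)=O(1/\rho)$ gives $f(\rho)=O(1/\rho^2)$, so $\int_1^\infty|f|<\infty$ and \eqref{equation:Schritt1.2} converges absolutely. In case $(ii)$, with $g(i\rho)=C(1+O(1/\rho))$ the same lemma yields $f(\rho)=\tfrac{iC}{\rho}P(\rho)+O(1/\rho^2)$; the error term is absolutely integrable on $[1,\infty)$. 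Using the product-to-sum identity $P(\rho)=\tfrac12\cos((\tilde z-\tilde\omega)\rho)+\tfrac12\cos((\tilde z+\tilde\omega)\rho-2a)$ together with the fact that $\int_1^\infty\tfrac{\cos(\lambda\rho+b)}{\rho}\,d\rho$ converges for every $\lambda\in\mathbb{R}\setminus\{0\}$ and $b\in\mathbb{C}$ (Dirichlet's test, since $\cos(\lambda\rho+b)$ has the bounded primitive $\tfrac1\lambda\sin(\lambda\rho+b)$), I conclude: if $\omega\neq z$ then both frequencies $\tilde z\pm\tilde\omega$ are nonzero, so $\int_1^\infty\tfrac{P(\rho)}{\rho}\,d\rho$ converges and hence \eqref{equation:Schritt1.2} converges; if $\omega=z$ then $\tilde z=\tilde\omega$ and $P(\rho)=\tfrac12+\tfrac12\cos(2\tilde z\rho-2a)$, where the constant $\tfrac12$ contributes $\tfrac12\ln R\to\infty$ while the oscillatory part converges, so (as $C\neq0$) \eqref{equation:Schritt1.2} diverges.

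It remains to show $\int_1^\infty\tfrac{|P(\rho)|}{\rho}\,d\rho=\infty$, which then gives $\int_0^\infty|f|=\infty$ via $|f(\rho)|\ge\tfrac{|C|}{\rho}|P(\rho)|-O(1/\rho^2)$; this is the only genuinely delicate step, and I expect it to be the main obstacle, because $|P|$ is in general not periodic (when $\tilde z/\tilde\omega$ is irrational) and not even real-valued (when $\nu\notin\mathbb{R}$), so one cannot simply compute the mean of $|\cos|$. My plan is to pass to $|P|^2=P\overline P$, which \emph{is} a bona fide trigonometric polynomial in $\rho$ with real frequencies, nonnegative, and not identically zero (each factor $\cos(\tilde z\rho-a)$ is a nonzero entire function of $\rho$, hence has only isolated zeros). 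Consequently its constant Fourier coefficient $m$ is strictly positive, and $\tfrac1L\int_{\rho_0}^{\rho_0+L}|P(\rho)|^2\,d\rho\to m$ as $L\to\infty$ uniformly in $\rho_0\ge0$ (for each nonzero frequency $\lambda$ one has $|\tfrac1L\int_{\rho_0}^{\rho_0+L}e^{i\lambda\rho}\,d\rho|\le\tfrac{2}{|\lambda|L}\to0$). Fixing $L$ so large that $\int_{\rho_0}^{\rho_0+L}|P|^2\ge\tfrac{mL}{2}$ for all $\rho_0\ge0$, and using the pointwise bound $|P|\ge|P|^2/\sqrt M$ with $M:=\sup_\rho|P(\rho)|^2<\infty$, I obtain a uniform lower bound $\int_{\rho_0}^{\rho_0+L}|P|\ge c:=\tfrac{mL}{2\sqrt M}>0$; summing over the blocks $[1+nL,1+(n+1)L]$, $n\ge0$, then bounds $\int_1^\infty\tfrac{|P|}{\rho}$ below by $\sum_{n\ge0}\tfrac{c}{1+(n+1)L}=\infty$. (When $\nu\notin\mathbb{R}$ there is a shortcut: then $|\cos(\tilde z\rho-a)|^2=\cosh^2(\Im a)-\sin^2(\tilde z\rho-\Re a)\ge\sinh^2(\Im a)>0$, so $|P|$ is bounded below by a positive constant and the divergence is immediate; but the block-average argument covers all $\nu$ at once.)
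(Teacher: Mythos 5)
Your proof is correct, and its skeleton is the paper's: both arguments reduce everything to the asymptotics \eqref{equation:AsympLegendreProdKond2}, dispose of the origin by continuity, isolate in case $(ii)$ the leading term $\frac{iC}{\rho}P(\rho)$ with $P(\rho)=\cos(\tilde z\rho-a)\cos(\tilde\omega\rho-a)$, and observe that the remainder is $O(1/\rho^2)$ because $P$ is bounded. The differences lie in how the oscillatory term is then handled. For the convergence/divergence of $\int P(\rho)\rho^{-1}\,d\rho$ you use the product-to-sum identity plus Dirichlet's test, while the paper writes down an explicit bounded antiderivative of $\cos(\tilde z\rho+c)\cos(\tilde\omega\rho+c)$ (respectively the antiderivative \eqref{equation:AntiderivativeCosineSquare} of $\cos^2$, whose linear part produces the logarithm) and integrates by parts; these are the same computation in different clothing. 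The genuine divergence of routes is in the non-absolute-convergence step: the paper first passes to real arguments via $\vert\cos(s)\vert\ge\vert\cos(\Re s)\vert$ and then exhibits an explicit antiderivative $S(\rho)=\rho/4+u(\rho)$ of $\cos^2(\tilde z\rho+\Re c)\cos^2(\tilde\omega\rho+\Re c)$ with $u$ bounded, whereas you work directly with the complex-valued trigonometric polynomial $\vert P\vert^2=P\overline{P}$ and run a block-averaging argument off its positive mean. Your route is cleaner — it treats $z=\omega$ and $z\neq\omega$ uniformly and avoids the paper's rather unwieldy formula for $u$ — at the cost of one fact you assert without proof, namely that the mean $m$ of the nonnegative trigonometric polynomial $\vert P\vert^2$ is strictly positive (for a general nonnegative function, "not identically zero" does not imply positive mean). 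The quickest justification is orthogonality applied to $P$ itself: writing $P=\sum_j c_j e^{i\lambda_j\rho}$ with distinct real frequencies $\lambda_j$, the zero-frequency coefficient of $P\overline{P}$ equals $\sum_j\vert c_j\vert^2$, which is positive because, for instance, the coefficient of $e^{i(\tilde z+\tilde\omega)\rho}$ in $P$ is $\frac14 e^{-2ia}\neq 0$. With that one line added, your argument is complete and fully replaces the paper's computation.
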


\begin{proof}
Let us abbreviate the integrand in \eqref{equation:Schritt1.2} by
\begin{align*}
f(\rho):=\frac{\sin(\pi i \rho)}{\pi} g\left(i\rho\right)Q_{\nu}^{-i\rho}(z)Q_{\nu}^{i\rho}(\omega),\quad \rho\in (0,\infty).
\end{align*}
The function $f(\rho)$ is continuous on $(0,\infty)$ and can be extended continuously to $[0,\infty)$ by the assumptions on $g$.  Therefore, we only need to investigate how the integrand behaves asymptotically as $\rho\rightarrow\infty$ in $\mathbb{R}$. We will see that both $(i)$ and $(ii)$ follow from \eqref{equation:AsympLegendreProdKond2} by elementary calculations.

First consider case $(i)$.  Note that the function 
\begin{align*}
\mathbb{R}\ni \rho \mapsto \cos\left( \tilde{z}\rho-\frac{\pi}{2}\left( \nu+1 \right) \right)\cos\left( \tilde{\omega}\rho-\frac{\pi}{2}\left( \nu+1 \right) \right)\in\mathbb{C}
\end{align*}
is bounded, where $\tilde{z}, \tilde{\omega}\in (0,\infty)$ are defined by \eqref{equation:TildeDefinitionen}. Thus it follows from \eqref{equation:AsympLegendreProdKond2} that
\begin{align*}
\left\vert f(\rho) \right\vert = O\left( \frac{1}{\rho^2} \right)
\end{align*}
as $\rho\rightarrow\infty$ in $\mathbb{R}$. Hence the integral \eqref{equation:Schritt1.2} is absolutely convergent.

Now consider case $(ii)$. In this case it follows from \eqref{equation:AsympLegendreProdKond2} that
\begin{align}
f(\rho) = \underbrace{C\frac{i}{\rho} \cos\left( \tilde{z}\rho-\frac{\pi}{2}\left( \nu+1 \right) \right)\cos\left( \tilde{\omega}\rho-\frac{\pi}{2}\left( \nu+1 \right) \right)}_{=:\psi(\rho)}\left( 1+O\left( \frac{1}{\rho} \right) \right) \label{equation:ConditionBeweis4}
\end{align}
as $\rho\rightarrow\infty$ in $\mathbb{R}$. Note that $z\neq\omega$ if and only if $\tilde{z}\neq\tilde{\omega}$, which follows directly from \eqref{equation:TildeDefinitionen}.

We claim that if $z\neq \omega$ and $K>0$, then the function $\psi$ is integrable over the interval $[K,\infty)$, where ``integrable'' is meant here in the sense that the integral converges. Furthermore, $\psi$ is not integrable over $[K,\infty)$ if $z=\omega$, and for any values of $z,\omega\in (1,\infty)$ it is not absolutely integrable over $[K,\infty)$. 

Before we prove this claim, let us discuss why statement $(ii)$ follows from this behaviour of $\psi $. It follows from \eqref{equation:ConditionBeweis4} that there exists a function $\phi:[\Lambda,\infty)\rightarrow\mathbb{R}$ with $\Lambda>0$ such that for some constant $D>0$ we have $\vert \phi(\rho) \vert\leq \frac{D}{\rho}$  and $f(\rho)=\psi(\rho) + \psi(\rho)\cdot \phi(\rho)$ for all $\rho\in [\Lambda,\infty)$. Hence there exists some constant $E>0$ such that $\vert \psi(\rho)\cdot \phi(\rho)\vert \leq \frac{E}{\rho^2}$ for all $\rho\in [\Lambda,\infty)$ and the function $\psi\cdot \phi$ is continuous because $f,\psi$ are continuous and $\psi \cdot \phi  = f  - \psi $. Thus $\psi \cdot \phi $ must be (absolutely) integrable over $[\Lambda,\infty)$. Therefore it follows from the equation $f=\psi+\psi\cdot \phi$ that $f$ is integrable over $[\Lambda,\infty)$ if and only if $\psi$ is integrable over $[\Lambda,\infty)$. This shows that if the above claim is true, then the function $f$ is integrable if $z\neq \omega$, but it is not integrable if $z=\omega$.

Further, if the above claim is true then the integral of $f$ is never absolutely convergent. This is easily seen, since it follows from \eqref{equation:ConditionBeweis4} that
\begin{align*}
\left\vert f(\rho) \right\vert = \left\vert \psi(\rho)\right\vert \left( 1+o(1) \right)
\end{align*}
as $\rho\rightarrow\infty$ in $\mathbb{R}$.
Hence for any $K>0$ the function $\vert f \vert$ is integrable over $[K,\infty)$ if and only if $\left\vert \psi\right\vert$ is integrable over $[K,\infty)$.

It remains to prove the above claim for $\psi$. Let $K>0$ and suppose $z\neq \omega$ and thus $\tilde{z}\neq \tilde{\omega}$. To keep the notation short, let us introduce the following abbreviations:
\begin{align*}
c:=-\frac{\pi}{2}\left( \nu + 1 \right); \quad q(\rho):=\cos\left( \tilde{z}\rho +c \right)\cos\left( \tilde{\omega}\rho +c \right).
\end{align*}
 One can easily show that the function $q$ has the following antiderivative:
\begin{align*}
Q(\rho) := \frac{1}{\tilde{z}^2-\tilde{\omega}^2}\left( \tilde{z}\cdot \sin\left( \tilde{z}\rho +c \right)\cos\left( \tilde{\omega}\rho +c  \right) - \tilde{\omega} \cdot \cos\left( \tilde{z}\rho +c \right)\sin\left( \tilde{\omega}\rho +c  \right)\right).
\end{align*}
This antiderivative is obviously bounded. Hence, using integration by parts,
\begin{align*}
\int\limits_{K}^{\infty} \psi(\rho) d\rho &= \lim\limits_{L\rightarrow\infty} \Bigg(  C \cdot i\left( \frac{Q(L)}{L} - \frac{Q(K)}{K} \right) + C \cdot i \int\limits_{K}^{L} \frac{Q(\rho)}{\rho^2} d\rho \Bigg) \\
&= - C \cdot i \left( \frac{Q(K)}{K}  - \int\limits_{K}^{\infty} \frac{Q(\rho)}{\rho^2} d\rho \right).
\end{align*}
Observe that the remaining integral on the right-hand side is even absolutely convergent. Thus the integral on the left-hand side must be convergent as well.

Suppose now $z=\omega$, and thus $\tilde{z}=\tilde{\omega}$. Again, one can easily show that for all $\tau\in\mathbb{C}$ the function $\mathbb{R}\ni\rho\mapsto \cos^2\left( \tilde{z}\rho +\tau \right)\in\mathbb{C}$ has the following antiderivative:
\begin{align}
\label{equation:AntiderivativeCosineSquare}
\rho\mapsto R(\rho;\tau):=\frac{\sin\left( \tilde{z}\rho +\tau  \right)\cos\left( \tilde{z}\rho +\tau  \right)}{2\tilde{z}} + \frac{\rho}{2}.
\end{align}
When we apply integration by parts, we obtain:
\begin{align*}
\int\limits_{K}^{\infty} \psi(\rho) d\rho = \lim\limits_{L\rightarrow\infty} \Bigg(  C \cdot i\left( \frac{R(L;c)}{L} - \frac{R(K;c)}{K} \right) +& \frac{C \cdot i}{2\tilde{z}}  \int\limits_{K}^{L} \frac{\sin\left( \tilde{z}\rho +c  \right)\cos\left( \tilde{z}\rho +c  \right)}{ \rho^2} d\rho \Bigg. \\
\Bigg. +& \frac{C\cdot i}{2}\ln\left( \frac{L}{K} \right) \Bigg).
\end{align*}
The right-hand side is not convergent as $L\rightarrow\infty$, because we have $\lim_{L\rightarrow\infty}\ln\left( \frac{L}{K} \right)=\infty$, and all the other terms on the right-hand side converge to some complex number for $L\rightarrow\infty$. Therefore, the left-hand side must be divergent as well. In other words, $\psi$ is not integrable if $z=\omega$.

It remains to show that $\psi$ is not absolutely integrable over $[K,\infty)$ for all values of $z,\omega\in (1,\infty)$. It is easy to show that $\vert \cos(s) \vert\geq \vert \cos(\Re(s)) \vert$ for all $s\in\mathbb{C}$. Thus we have 
\begin{align*}
\vert \psi(\rho) \vert \geq \frac{\vert C \vert}{\rho}\cdot \vert \cos\left( \tilde{z}\rho +\Re(c) \right)\vert \cdot \vert\cos \left( \tilde{\omega}\rho +\Re(c) \right) \vert,\quad \forall \rho\in(0,\infty).
\end{align*}
 If $z=\omega$ then we have for all $L>K$:
 \begin{align*}
\int\limits_{K}^{L} \vert \psi(\rho) \vert \, d\rho &\geq \vert C \vert  \int\limits_{K}^{L} \frac{1}{\rho}\cdot \cos^2\left( \tilde{z}\rho +\Re(c) \right) d\rho \\
&= \vert C \vert  \left( \frac{R(L;\Re(c))}{L} - \frac{R(K;\Re(c))}{K} \right) + \frac{ \vert C \vert }{2\tilde{z}}  \int\limits_{K}^{L} \frac{\sin\left( \tilde{z}\rho +c  \right)\cos\left( \tilde{z}\rho +c  \right)}{ \rho^2} d\rho \Bigg. \\
\Bigg. &\qquad\qquad\qquad\qquad\qquad\qquad\quad\quad\,\,  + \frac{\vert C \vert}{2}\ln\left( \frac{L}{K} \right) \Bigg),
 \end{align*}
where $\rho\mapsto R(\rho;\Re(c))$ is the function defined in \eqref{equation:AntiderivativeCosineSquare} for $\tau=\Re(c)$. The  right-hand side diverges to $\infty$ as $L\rightarrow\infty$. Thus $\psi$ is also not absolutely integrable.

Suppose now $z\neq\omega$. Then we have for all $L>K$:
\begin{align*}
\int\limits_{K}^{L} \vert \psi(\rho) \vert\,  d\rho \geq \vert C \vert\cdot \int\limits_{K}^{L} \frac{1}{\rho}\cdot \cos^2\left( \tilde{z}\rho +\Re(c) \right)\cos^2\left( \tilde{\omega}\rho + \Re(c) \right)  d\rho.
\end{align*}
The integral on the right-hand side diverges to $\infty$ as $L\rightarrow\infty$: An easy calculation yields that the function $\mathbb{R}\ni \rho\mapsto \cos^2\left( \tilde{z}\rho +\Re(c) \right)\cos^2\left( \tilde{\omega}\rho + \Re(c) \right)\in\mathbb{R}$ has an elementary antiderivative of the form
\begin{align*}
S(\rho):=\frac{\rho}{4} + u(\rho),
\end{align*}
where $u$ is a bounded function. An explicit formula for $u$ is given by
\begin{align*}
u(\rho):=\frac{1}{\tilde{\omega}^2-\tilde{z}^2}& \Bigg( \frac{\tilde{\omega}}{2}\cos^2\left( \tilde{z}\rho +\Re(c) \right)\cos\left( \tilde{\omega}\rho +\Re(c) \right)\sin\left( \tilde{\omega}\rho +\Re(c) \right) - \\
&\qquad - \frac{\tilde{z}}{2}\cos^2\left( \tilde{\omega}\rho+\Re(c) \right)\cos\left( \tilde{z}\rho +\Re(c) \right)\sin\left( \tilde{z}\rho +\Re(c) \right)  +\\
&\qquad +\frac{\tilde{\omega}^2}{4\tilde{z}}\cos\left( \tilde{z}\rho +\Re(c) \right)\sin\left( \tilde{z}\rho +\Re(c) \right) - \\
&\qquad  -\frac{\tilde{z}^2}{4\tilde{\omega}}\cos\left( \tilde{\omega}\rho +\Re(c) \right)\sin\left( \tilde{\omega}\rho +\Re(c) \right)  \Bigg).
\end{align*}
Thus, with integration by parts, we obtain
\begin{align*}
\lim\limits_{L\rightarrow\infty} \int\limits_{K}^{L} \frac{1}{\rho}\cdot \cos^2\left( \tilde{z}\rho +\Re(c) \right)&\cos^2\left( \tilde{\omega}\rho + \Re(c) \right)  d\rho = \\
&= \lim\limits_{L\rightarrow\infty} \left( \left( \frac{S(L)}{L} - \frac{S(K)}{K} \right) + \int\limits_{K}^{L} \frac{u(\rho)}{\rho^2} d\rho + \frac{1}{4}\ln\left( \frac{L}{K} \right)\right) = \infty.
\end{align*}
Consequently, it follows that $\lim_{L\rightarrow\infty} \int_{K}^{L} \vert \psi(\rho) \vert d\rho = \infty$ and thus our proof is complete.
\end{proof}

Now we want to consider functions $g$ for which condition $1)$ of Corollary \ref{corollary:Corollary1Schritt1} is satisfied. We consider only a special class of curves $C_k$, which is no problem because $1)$ is a condition of existence for the $C_k$.

First we observe that there exists always a sequence $\left( N_k \right)$ as in Lemma \ref{lemma:Schritt1} such that all curves $C_k$ can be chosen as semicircles centered at the origin: If the function $g$ is an entire function or has at most finitely many poles, then this is clear. Suppose now that $g$ has infinitely many poles. By definition, all meromorphic functions have only isolated poles, and hence the set of all poles is at most countable. Let $\lbrace z_i \rbrace_{i=1}^{\infty}$ be the set of all poles of $g$ and let $\lbrace \vert z_i \vert \rbrace_{i=1}^{\infty}$ be the set of all magnitudes of poles. Choose a sequence $\left( N_k \right)_{k\in\mathbb{N}}$ as in Lemma \ref{lemma:Schritt1} such that $ N_k \in [1,\infty)\backslash \lbrace \vert z_i \vert \rbrace_{i=1}^{\infty}$ for all $k\in\mathbb{N}$, and choose $C_k$ as semicircles centered at the origin with radius $N_k$.

\begin{lemma}
Let $g$ be a function as in Lemma $\ref{lemma:Schritt1}$. Let $\nu \in \mathbb{C}$ with $\Re(\nu)>-1$ and  $\omega, z\in (1,\infty)$ with $\omega<z$.
Suppose that $(C_k)_{k\in\mathbb{N}}$, is a sequence of curves as in Lemma $\ref{lemma:Schritt1}$, each of which is a semicircle centered at the origin. Suppose further that $g$ is bounded on the union of the images of all $C_k$. Then $\lim_{k\rightarrow\infty}\int_{C_k} g(s) e^{-i\pi s}P_{\nu}^{-s}(\omega)Q_{\nu}^{s}(z) ds = 0$. In particular, condition $1)$ of Corollary $\ref{corollary:Corollary1Schritt1}$ is satisfied.
\end{lemma}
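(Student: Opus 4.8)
The plan is to bound the integrand on each semicircle $C_k$ directly by means of the asymptotic expansion of Lemma~\ref{lemma:LegendreProdAsymp}~$1)$, and then to use the hypothesis $\omega<z$ to force the resulting estimate to zero. First I would parametrise $C_k$ as $s=N_ke^{i\theta}$ with $\theta\in[-\tfrac{\pi}{2},\tfrac{\pi}{2}]$, so that $\Re(s)=N_k\cos\theta\ge 0$ on $C_k$ (in particular $\Re(s)>-\tfrac12$ everywhere on every $C_k$) and $|ds|=N_k\,d\theta$. Since $\Re(s)\ge 0$, the point $s-\tfrac12$ stays in a sector bounded away from the negative real axis, so all the auxiliary estimates entering the proof of Lemma~\ref{lemma:LegendreProdAsymp}~$1)$ apply uniformly there; consequently there is an $R_0$ such that for all $k$ with $N_k\ge R_0$ and all $s$ on $C_k$,
\[
\bigl|e^{-i\pi s}P_{\nu}^{-s}(\omega)Q_{\nu}^{s}(z)\bigr|\;\le\;\frac{1}{N_k}\Bigl(e^{\Re(s)(\tilde z-\tilde\omega)}+c_{\nu}\,e^{-\Re(s)(\tilde z+\tilde\omega)}\Bigr),
\]
where $\tilde z,\tilde\omega\in(0,\infty)$ are as in \eqref{equation:TildeDefinitionen}, $c_{\nu}:=\bigl|e^{i\pi(\nu+1)}\bigr|=e^{-\pi\Im(\nu)}$, and the bounded error factor $1+O(1/|s|)$ from \eqref{equation:AsympLegendreProdKond1} has been absorbed into the constant. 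Note that the whole quantity $e^{-i\pi s}P_{\nu}^{-s}(\omega)Q_{\nu}^{s}(z)$ is precisely the one whose asymptotics are given, so no separate control of $|e^{-i\pi s}|$ is needed.

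The decisive point is the sign of $\tilde z-\tilde\omega$. Since $t\mapsto t/(t^2-1)^{\nicefrac{1}{2}}=1/\sqrt{1-t^{-2}}$ is strictly decreasing on $(1,\infty)$ and $\cosh$ is increasing on $(0,\infty)$, the hypothesis $\omega<z$ gives $\cosh(\tilde\omega)>\cosh(\tilde z)$, hence $\tilde\omega>\tilde z>0$; thus both $\tilde\omega-\tilde z$ and $\tilde z+\tilde\omega$ are strictly positive. Using $|g|\le C_g$ on $\bigcup_k C_k$ together with $|ds|=N_k\,d\theta$ and $\Re(s)=N_k\cos\theta$, I obtain for $N_k\ge R_0$
\[
\Bigl|\int_{C_k} g(s)e^{-i\pi s}P_{\nu}^{-s}(\omega)Q_{\nu}^{s}(z)\,ds\Bigr|\;\le\;C_g\int_{-\pi/2}^{\pi/2}\Bigl(e^{-N_k(\tilde\omega-\tilde z)\cos\theta}+c_{\nu}\,e^{-N_k(\tilde z+\tilde\omega)\cos\theta}\Bigr)\,d\theta .
\]
Both summands on the right are of the form $\int_{-\pi/2}^{\pi/2}e^{-N_k\lambda\cos\theta}\,d\theta$ with $\lambda\in\{\tilde\omega-\tilde z,\ \tilde z+\tilde\omega\}\subset(0,\infty)$ fixed. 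The integrand is bounded by $1$ and tends to $0$ pointwise for $\theta\in(-\tfrac\pi2,\tfrac\pi2)$, so the integral tends to $0$ as $k\to\infty$ by dominated convergence; alternatively, concavity of $\cos$ on $[0,\tfrac\pi2]$ gives $\cos\theta\ge 1-\tfrac{2\theta}{\pi}$ and hence the explicit bound $\int_{-\pi/2}^{\pi/2}e^{-N_k\lambda\cos\theta}\,d\theta\le \pi/(N_k\lambda)$. Either way, letting $k\to\infty$ yields $\lim_{k\to\infty}\int_{C_k} g(s)e^{-i\pi s}P_{\nu}^{-s}(\omega)Q_{\nu}^{s}(z)\,ds=0$, which is exactly condition~$1)$ of Corollary~\ref{corollary:Corollary1Schritt1}.

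The main obstacle is that the crude estimate ``integrand is $O(1/N_k)$ while $C_k$ has length $\pi N_k$'' only gives $O(1)$ and is useless; one genuinely needs the exponential decay of $e^{-\tilde\omega s}\bigl(e^{\tilde z s}+e^{i\pi(\nu+1)}e^{-\tilde z s}\bigr)$ in the interior $\Re(s)>0$ of the right half-plane, which is precisely where the assumption $\omega<z$ (equivalently $\tilde\omega>\tilde z$) is used, together with the observation that the portion of $C_k$ near the endpoints $\pm iN_k$, where no decay is available, shrinks fast enough that its contribution still vanishes. A secondary point to check carefully is the uniformity in $\arg(s)$ of the asymptotic \eqref{equation:AsympLegendreProdKond1} over the half-plane $\Re(s)\ge 0$, which is what makes the first displayed estimate legitimate for all $s$ on $C_k$ simultaneously.
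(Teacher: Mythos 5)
Your proof is correct and follows essentially the same route as the paper's: parametrise $C_k$ as a semicircle, apply the asymptotic estimate \eqref{equation:AsympLegendreProdKond1} to get exponential decay governed by $\tilde\omega-\tilde z>0$ (which is where $\omega<z$ enters), cancel the $1/N_k$ against the arc length, and control the endpoint region where $\cos\theta$ degenerates. The only (cosmetic) difference is the final step — you invoke dominated convergence or the chord bound $\cos\theta\ge 1-\tfrac{2\theta}{\pi}$, while the paper splits the interval at $\tfrac{\pi}{2}-\delta$ and uses $\sin\theta\ge\tfrac{2}{\pi}\theta$ — both of which are valid.
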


\begin{proof}
Let $\tilde{z}, \tilde{\omega}\in (0,\infty)$ be associated with $z, \omega$ as in \eqref{equation:TildeDefinitionen}. Observe that the function 
\begin{align*}
x\mapsto \frac{x}{\left( x^2-1 \right)^{\nicefrac{1}{2}}},\quad x>1,
\end{align*}
is monotonically decreasing on $ \left( 1,\infty \right)$; hence $\tilde{z}<\tilde{\omega}$.

Let $\left( N_k \right)_{k=1}^{\infty}\subset \left[1,\infty \right)$ be the unbounded and monotonically increasing sequence such that $C_k$ runs from $-iN_k$ to $iN_k$. Let $C_k$ be parametrised as
\begin{align*}
C_k:\left[ -\frac{\pi}{2}, \frac{\pi}{2} \right]\rightarrow \mathbb{C}, \quad \theta\mapsto N_k\cdot e^{i\theta}.
\end{align*}
By assumption, there exists some $A>0$ such that $\vert g(s)\vert\leq A$ for all $s\in \cup_{k=1}^{\infty} C_k([-\frac{\pi}{2}, \frac{\pi}{2}])$. Further, by \eqref{equation:AsympLegendreProdKond1} there exists some $K>0$ such that for all $s \in\mathbb{C}$ with $\Re(s)\geq 0$ and  $\vert s \vert>K$:
\begin{align}
\left\vert e^{-i\pi s}P_{\nu}^{-s}(\omega)Q_{\nu}^{s}(z) \right\vert &\leq \frac{1}{\vert s \vert} \left( \vert e^{-s(\tilde{\omega}-\tilde{z})} \vert + e^{-\pi \Im(\nu)}\cdot \vert  e^{-s\left( \tilde{z}+ \tilde{\omega} \right)} \vert \right) \leq \frac{1+e^{-\pi\Im(\nu)}}{\vert s \vert} \cdot  e^{-\Re(s)(\tilde{\omega}-\tilde{z})}.
\end{align}
Therefore there exist some $q\in\mathbb{N}$ such that for all $k\geq q$:
\begin{align*}
\left\vert \int\limits_{C_k} g(s) e^{-i\pi s}P_{\nu}^{-s}(\omega)Q_{\nu}^{s}(z) ds \right\vert &\leq \left( 1+e^{-\pi\Im(\nu)} \right)A \int\limits_{-\frac{\pi}{2}}^{\frac{\pi}{2}} e^{-N_k\cos(\theta) \left( \tilde{\omega}- \tilde{z} \right)} d\theta \\
&= \left( 1+e^{-\pi\Im(\nu)} \right)2A \int\limits_{0}^{\frac{\pi}{2}} e^{-N_k\cos(\theta) \left( \tilde{\omega}- \tilde{z} \right)} d\theta.
\end{align*}
To finish the proof of this corollary, we will show that the last integral converges to $0$ as $k\rightarrow\infty$. 

Fix some $\delta \in (0,\frac{\pi}{2})$. Then 
\begin{align*}
\int\limits_{0}^{\frac{\pi}{2}} e^{-N_k\cos(\theta) \left( \tilde{\omega}- \tilde{z} \right)} d\theta =  \underbrace{\int\limits_{0}^{\frac{\pi}{2}-\delta} e^{-N_k\cos(\theta) \left( \tilde{\omega}- \tilde{z} \right)} d\theta}_{=:I_1} +  \underbrace{\int\limits_{\frac{\pi}{2}-\delta}^{\frac{\pi}{2}} e^{-N_k\cos(\theta) \left( \tilde{\omega}- \tilde{z} \right)} d\theta}_{=:I_2}.
\end{align*}
The first term converges obviously to $0$, since (recall $\tilde{z}<\tilde{\omega}$)
\begin{align*}
I_1\leq \left( \frac{\pi}{2}-\delta \right) e^{-N_k\cos(\frac{\pi}{2}-\delta) \left( \tilde{\omega}- \tilde{z} \right)}\longrightarrow 0\quad \text{ as } k\rightarrow\infty.
\end{align*}
The second integral converges to $0$ as well. In fact, with a simple substitution we get
\begin{align*}
I_2= \int\limits_{0}^{\delta}e^{-N_k\cos(-\theta+\frac{\pi}{2}) \left( \tilde{\omega}- \tilde{z} \right)} d\theta = \int\limits_{0}^{\delta}e^{-N_k\sin(\theta) \left( \tilde{\omega}- \tilde{z} \right)} d\theta.
\end{align*}
Now observe that $\sin(\theta)$ is concave on $\left[ 0,\frac{\pi}{2} \right]$, and thus $\sin(\theta)\geq \frac{2}{\pi}\theta$ for all $\theta\in \left[ 0,\frac{\pi}{2} \right]$. In particular $e^{-N_k\sin(\theta) \left( \tilde{\omega}- \tilde{z} \right)}\leq e^{-N_k \frac{2}{\pi}\theta \left( \tilde{\omega}- \tilde{z} \right)}$. Hence
\begin{align*}
I_2 \leq \int\limits_{0}^{\delta} e^{-N_k \frac{2}{\pi}\theta \left( \tilde{\omega}- \tilde{z} \right)} d\theta = \frac{\pi}{2 \left( \tilde{\omega}- \tilde{z} \right) N_{k}} \left( 1- e^{-N_k \frac{2}{\pi}\delta \left( \tilde{\omega}- \tilde{z} \right)} \right)\longrightarrow 0 \quad \text{ as } k\rightarrow \infty.
\end{align*}
\end{proof}

To get an overview of the preceding discussion, we summarise the relevant parts in a single theorem.

\begin{theorem}
\label{theorem:TheoremZentralIntegral}
Let $z,\omega\in (1,\infty)$ with $\omega<z$. Let $\nu\in \mathbb{C}$ with $\Re(\nu)>-1$. Suppose that $g$ is an odd and meromorphic function on $\mathbb{C}$, which is complex differentiable at all points on the imaginary axis except possibly with a simple pole at the origin. Suppose further:
\begin{itemize}
\item[$(i)$] Either $g(i\rho) = O\left( \frac{1}{\rho}\right)$ as $\rho \rightarrow \infty$ in $\mathbb{R}$ \emph{(}in which case we will say ``$g$ is of type $I"$\emph{)}, or there exists some constant  $C\in\mathbb{C}\backslash\{ 0 \}$ with $g(i\rho) = C \left( 1+ O\left( \frac{1}{\rho}\right) \right)$ as $\rho \rightarrow \infty$ in $\mathbb{R}$ \emph{(}``$g$ is of type $II"$\emph{)}.
\item[$(ii)$] There exists an unbounded and monotonically increasing sequence $\left( N_{k} \right)_{k\in\mathbb{N}}\subset [1,\infty)$ such that $g$ is bounded on $\cup_{k=1}^{\infty} S^{1}(N_k)$, where  $ S^{1}(N_k)$ denotes the circle of radius $N_k$ centered at the origin.
\end{itemize}
Then
\begin{align}
\label{equation:TheoremZentralIntegral}
&\frac{2}{\pi}\int\limits_{0}^{\infty} \frac{\sin(\pi i \rho)}{\pi} g\left(i\rho\right)Q_{\nu}^{-i\rho}(z)Q_{\nu}^{i\rho}(\omega) d\rho \nonumber \\
= & \lim\limits_{k\rightarrow\infty} \left( 2 \sum\limits_{p\in B^{+}\left( N_k \right)} \emph{Res}\left( g;p \right) e^{-i\pi p} P_{\nu}^{-p}(\omega)Q_{\nu}^{p}(z) \right)  + \emph{Res}\left( g;0 \right) P_{\nu}( \omega )Q_{\nu}(z),
\end{align} 
where $B^{+}\left( N_k \right):=\lbrace z\in\mathbb{C} : \vert z \vert < N_k,\text{ }\Re(z)>0 \rbrace$.

Further, we know that the integral on the left-hand side of \eqref{equation:TheoremZentralIntegral} is absolutely convergent for all $z,\omega\in (1,\infty)$ and $\nu\in\mathbb{C}$ with $\Re(\nu)\notin \{ -1, -2, -3,... \}$, if $g$ is of type $I$. If, instead, $g$ is of type $II$, then this integral is convergent if and only if $z\neq \omega$, and it is never absolutely convergent.
%
\end{theorem}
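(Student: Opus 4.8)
The plan is to assemble the theorem from the three preparatory results established above: Corollary~\ref{corollary:Corollary1Schritt1}, Lemma~\ref{lemma:ConditionsSchritt1}, and the (unlabelled) lemma stated immediately before the theorem, which asserts the vanishing of the contour integral $\int_{C_k}g(s)e^{-i\pi s}P_{\nu}^{-s}(\omega)Q_{\nu}^{s}(z)\,ds$ along semicircles. Equation~\eqref{equation:TheoremZentralIntegral} is then nothing but \eqref{equation:Corollary1Schritt1}, once its two hypotheses have been checked and once one observes that, for a semicircle $C_k$ of radius $N_k$ centered at the origin in the right half-plane, the bounded region enclosed by $C_k$ and the imaginary axis is precisely $B^{+}(N_k)$, so that $S(C_k)$ coincides with the set of poles of $g$ lying in $B^{+}(N_k)$.

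First I would fix the data demanded by Lemma~\ref{lemma:Schritt1}. Since $\Re(\nu)>-1$, one may choose $\varepsilon\in(0,1)$ with $0<2\varepsilon<1+\Re(\nu)$, and, the poles of $g$ being isolated, shrink $\varepsilon$ so that $B_{2\varepsilon}(0)\cap S\setminus\{0\}=\emptyset$. For the radii I would take the sequence $(N_k)$ provided by hypothesis~$(ii)$: it is unbounded, monotonically increasing and contained in $[1,\infty)$, and since $g$ is bounded on $S^{1}(N_k)$ it has no pole there. Hence the semicircle $C_k\colon[-\tfrac{\pi}{2},\tfrac{\pi}{2}]\to\mathbb{C}$, $\theta\mapsto N_k e^{i\theta}$, runs from $-iN_k$ to $iN_k$ with all of its remaining points in $\{\Re(z)>0\}\setminus S$; thus each $C_k$ is a curve of the kind permitted in Lemma~\ref{lemma:Schritt1} and is a semicircle centered at the origin.

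Next I would verify the two hypotheses of Corollary~\ref{corollary:Corollary1Schritt1}. For condition~$1)$: as $\omega<z$ we are exactly in the situation of the semicircle lemma, and $g$ is bounded on $\bigcup_k C_k([-\tfrac{\pi}{2},\tfrac{\pi}{2}])\subseteq\bigcup_k S^{1}(N_k)$ by~$(ii)$; that lemma then gives $\lim_{k\to\infty}\int_{C_k}g(s)e^{-i\pi s}P_{\nu}^{-s}(\omega)Q_{\nu}^{s}(z)\,ds=0$. For condition~$2)$: if $g$ is of type $I$, Lemma~\ref{lemma:ConditionsSchritt1}$(i)$ shows \eqref{equation:Schritt1.2} converges absolutely, hence converges; if $g$ is of type $II$, then $\omega<z$ forces $\omega\neq z$, so Lemma~\ref{lemma:ConditionsSchritt1}$(ii)$ shows \eqref{equation:Schritt1.2} converges. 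With both hypotheses in place, Corollary~\ref{corollary:Corollary1Schritt1} applied to $(C_k)$ yields \eqref{equation:Corollary1Schritt1}, and rewriting $S(C_k)$ as $B^{+}(N_k)$ gives \eqref{equation:TheoremZentralIntegral}.

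Finally, the concluding assertion about the (absolute) convergence of the left-hand side of \eqref{equation:TheoremZentralIntegral} for all $z,\omega\in(1,\infty)$ and all $\nu$ with $\Re(\nu)\notin\{-1,-2,\dots\}$ is precisely the content of Lemma~\ref{lemma:ConditionsSchritt1}$(i)$ and $(ii)$; those statements need only $\Re(\nu)\notin\{-1,-2,\dots\}$ (so that the Legendre functions in play are defined and the asymptotics~\eqref{equation:AsympLegendreProdKond2} apply), whereas the stronger $\Re(\nu)>-1$ is used only for the residue/contour argument producing \eqref{equation:TheoremZentralIntegral} itself. I do not anticipate a genuine obstacle: the argument is essentially a matching of hypotheses, and the only points needing a line of justification are that the $(N_k)$ from~$(ii)$ automatically miss the moduli of the poles of $g$ and that the non-endpoint part of each semicircle lies in the open right half-plane, both immediate.
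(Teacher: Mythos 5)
Your proposal is correct and follows exactly the route the paper intends: the theorem is stated there as a summary of the preceding discussion, and its justification is precisely the assembly of Corollary \ref{corollary:Corollary1Schritt1}, Lemma \ref{lemma:ConditionsSchritt1}, and the semicircle lemma that you carry out, including the observations that hypothesis $(ii)$ forces the circles $S^{1}(N_k)$ to avoid the poles of $g$ and that $S(C_k)=B^{+}(N_k)\cap S$ for semicircular $C_k$. No gaps.
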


With this theorem, we can solve many integrals involving associated Legendre functions. 

\begin{example}
\label{example:LegendreIntegral}
Let $\nu\in \mathbb{C}$ with $\Re(\nu)>-1$ and $\omega,z\in (1,\infty)$ with $\omega<z$. The following examples of functions $g$ satisfy $\vert g(s) \vert = O\left( \frac{1}{\vert s \vert} \right)$ as $\vert s \vert\rightarrow\infty$ with $s\in\mathbb{C}$. In particular, these $g$ will satisfy the above conditions $(i), (ii)$ \emph{(}more precisely, they are of type $I$\emph{)} so we can apply Theorem $\ref{theorem:TheoremZentralIntegral}$ to solve the corresponding integrals. In order to simplify the integrands, we use the identity $\sin(is)=i\sinh(s)$ for $s\in\mathbb{C}$.
\begin{itemize}
\item[$(a)$] If $g(s)=\frac{1}{s}$, then:
\begin{align}
\label{equation:ErstesWichtigesIntegral}
\frac{2}{\pi}\int\limits_{0}^{\infty} \frac{\sinh(\pi \rho)}{\pi} \cdot \frac{1}{ \rho} Q_{\nu}^{-i\rho}(z)Q_{\nu}^{i\rho}(\omega) d\rho = P_{\nu}( \omega )Q_{\nu}(z).
\end{align}
\label{example:LegendreIntegral1}
\item[$(b)$] If $g(s)=\frac{s}{(s-1)(s+1)}=\frac{s}{s^2-1}$, then:
\begin{align*}
\frac{2}{\pi}\int\limits_{0}^{\infty} \frac{\sinh(\pi \rho)}{\pi} \cdot \frac{ \rho}{\rho^2+1} Q_{\nu}^{-i\rho}(z)Q_{\nu}^{i\rho}(\omega) d\rho = -P_{\nu}^{-1}(\omega)Q_{\nu}^{1}(z).
\end{align*}
\item[$(c)$] We can generalise the above examples. Let $\mathcal{H}_{>0}=\lbrace s\in\mathbb{C} \mid \Re(s)>0 \rbrace$. For any $n\in\mathbb{N}_{0}$ and pairwise different $a_1,...,a_n\in\mathcal{H}_{>0}$, we define the following polynomials:
\begin{align*}
p\left(s \mid a_1,...,a_n\right):= \prod\limits_{i=1}^{n}\left( s-a_i \right)\left( s + a_i \right) = \prod\limits_{i=1}^{n}\left( s^2-a_i^2 \right).
\end{align*}
As usual we define the empty product as $1$, e.g. 
\begin{align*}
\prod_{\substack{i=1\\i\neq 1}}^{1}(...):=1 ,\quad \prod_{i=1}^{0}(...):=1.
\end{align*}
 Thus, if $n=0$, then $p\left(s \mid a_1,...,a_n\right):=1.$

Now let $m,n\in\mathbb{N}_{0}$ be given. We fix pairwise different $b_1,...,b_m\in\mathcal{H}_{>0}$ and pairwise different  $a_1,...,a_n\in\mathcal{H}_{>0}$. Then the following functions satisfy the conditions of Theorem $\ref{theorem:TheoremZentralIntegral}$:
\begin{align*}
g_1(s):=\frac{1}{s}\cdot \frac{p\left( s\mid a_1,...,a_n \right)}{p\left( s\mid b_1,...,b_m \right)}\quad \text{if }\, m\geq n; \quad 
g_2(s):= s \cdot \frac{p\left( s\mid a_1,...,a_n \right)}{p\left( s\mid b_1,...,b_m \right)},\quad \text{if }\, m > n.
\end{align*}
For $g_1$ we obtain the integrals
\begin{align*}
\frac{2}{\pi}\int\limits_{0}^{\infty}& \frac{\sinh(\pi  \rho)}{\pi} \cdot \frac{(-1)^{n+m}}{ \rho}\cdot \left( \frac{\prod\limits_{i=1}^{n}\left( \rho^2+a_i^2\right)}{\prod\limits_{i=1}^{m}\left(\rho^2+b_i^2\right)}\right) Q_{\nu}^{-i\rho}(z)Q_{\nu}^{i\rho}(\omega) d\rho = \\
& \left(\mathlarger{\sum\limits_{k=1}^{m}} \left(\frac{\prod\limits_{i=1}^{n}\left( b_k^2-a_i^2 \right)}{\prod\limits_{\substack{i=1\\i\neq k}}^{m}\left(b_k^2-b_i^2\right) }\right) \frac{e^{-i\pi b_k}}{b_k} P_{\nu}^{-b_k}( \omega )Q_{\nu}^{b_k}(z)\right) + (-1)^{n+m} \left( \frac{\prod\limits_{i=1}^{n}a_i^2}{\prod\limits_{i=1}^{m}b_i^2 }\right)P_{\nu}( \omega )Q_{\nu}(z).
\end{align*}
For $g_2$ we obtain
\begin{align*}
\frac{2}{\pi}\int\limits_{0}^{\infty}& \frac{\sinh(\pi  \rho)}{\pi} (-1)^{n+m} \cdot \rho\cdot \left( \frac{\prod\limits_{i=1}^{n}\left( \rho^2+a_i^2\right)}{\prod\limits_{i=1}^{m}\left(\rho^2+b_i^2\right)}\right) Q_{\nu}^{-i\rho}(z)Q_{\nu}^{i\rho}(\omega) d\rho = \\
& -\left(\mathlarger{\sum\limits_{k=1}^{m}} \left(\frac{\prod\limits_{i=1}^{n}\left( b_k^2-a_i^2 \right)}{\prod\limits_{\substack{i=1\\i\neq k}}^{m}\left(b_k^2-b_i^2\right) }\right) e^{-i\pi b_k}P_{\nu}^{-b_k}( \omega )Q_{\nu}^{b_k}(z)\right).
\end{align*}
\item[$(d)$] Let $c\in\mathcal{H}_{>0}$, $n\in\mathbb{N}$, $m\in\mathbb{N}_{0}$ such that $2m+1<4n,$ and set $g(s):=\frac{s^{2m+1}}{(s-c)^{2n}(s+c)^{2n}}$. The only poles of $g$ are of order $2n$ at the points $c$ and $-c$. In order to compute the residue of $g$ at $c$, note that
\begin{align*}
\Res(g;c)&= \frac{1}{(2n-1)!}\cdot \frac{d^{2n-1}}{ds^{2n-1}}_{\Big| s=c}  \left\{ \frac{s^{2m+1}}{(s+c)^{2n}} \right\},\\
\Res(g;-c)&= \frac{1}{(2n-1)!}\cdot \frac{d^{2n-1}}{ds^{2n-1}}_{\Big| s=-c} \left\{ \frac{s^{2m+1}}{(s-c)^{2n}} \right\},
\end{align*}
and thus we have $\Res(g;c) = \Res(g;-c)$. Consider the curve $S_R:[0,2\pi]\ni \theta \mapsto R\cdot e^{i\theta}\in\mathbb{C}$ with $R>0$. Then we have by the residue theorem:
\begin{align*}
2\cdot \Res(g;c) &=   \frac{1}{2\pi i} \lim\limits_{R\rightarrow\infty} \int\limits_{S_R} g(s) ds \\
&=  \frac{1}{2\pi} \lim\limits_{R\rightarrow\infty} \int\limits_{0}^{2\pi} \frac{R^{2m+1}}{R^{4n-1}} \cdot \frac{ e^{i (2m+2)\theta}}{\left( e^{i\theta} - \frac{c}{R}  \right)^{2n}\left( e^{i\theta} + \frac{c}{R} \right)^{2n}} d\theta = 
\begin{cases}
0, \text{ if } 2m+1 < 4n-1 \\
1, \text{ if } 2m+1 = 4n-1.
\end{cases}
\end{align*}
Thus we have

\begin{align*}
\frac{2}{\pi}\int\limits_{0}^{\infty} \frac{\sinh(\pi \rho)}{\pi} \cdot \frac{(-1)^{m+1}\rho^{2m+1}}{\left( \rho^2+c^2 \right)^{2n}} &Q_{\nu}^{-i\rho}(z)Q_{\nu}^{i\rho}(\omega) d\rho = \nonumber \\
&= \begin{cases}
0, &\text{if } 2m+1 < 4n-1 \\
e^{-i\pi c}P_{\nu}^{-c}( \omega )Q_{\nu}^{c}(z), &\text{if } 2m+1 = 4n-1.
\end{cases}
\end{align*}
\end{itemize}
\end{example}

%
\begin{remark}
As we will show in the next chapter (see Lemma \ref{lemma:LemmaInverseLaplaceLegendreQ} $(ii)$), equation \eqref{equation:ErstesWichtigesIntegral} is also valid if $\omega = z$ and $\nu\in (-1,\infty)$. That case will be very useful as we shall see.
\end{remark}

We want to stress a special case of the previous theorem by the following corollary. 
\begin{corollary}
\label{corollary:Corollary1Schritt2}
Let $\nu\in \mathbb{C}$ with $\Re(\nu)>-1$ and $\omega,z\in (1,\infty)$ with $\omega<z$. Let $f:\mathbb{C}\rightarrow\mathbb{C}$ be an entire and even function, i.e. $f(s)=f(-s)$ for all $s\in\mathbb{C}$. 
Consider the function 
\begin{align*}
g(s):=\frac{\pi}{\sin(\pi s)}\cdot f(s),
\end{align*}
and the sequence $(N_k)_{k\in\mathbb{N}}$ with $N_k:=k+\frac{1}{2}$. 
Suppose that the conditions $(i)$ and $(ii)$ of Theorem $\ref{theorem:TheoremZentralIntegral}$ are satisfied with this particular $g$ and sequence $\left(N_k\right)_{k\in\mathbb{N}}$. Then we have
\begin{align}
\label{eqiaton:Corollary2Schritt1}
\frac{2}{\pi}\int\limits_{0}^{\infty} f\left(i\rho\right)Q_{\nu}^{-i\rho}(z)Q_{\nu}^{i\rho}(\omega) d\rho = 2 \left( \sum\limits_{m=1}^{\infty} f(m) P_{\nu}^{-m}(\omega)Q_{\nu}^{m}(z) \right)  + f(0) P_{\nu}( \omega )Q_{\nu}(z).
\end{align} 
\end{corollary}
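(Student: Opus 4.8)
The plan is to apply Theorem \ref{theorem:TheoremZentralIntegral} directly to the function $g(s)=\frac{\pi}{\sin(\pi s)}f(s)$ together with the sequence $N_k=k+\frac12$; since hypotheses $(i)$ and $(ii)$ of that theorem are assumed to hold for this particular $g$ and this particular sequence, all that remains is to identify each of the three pieces of \eqref{equation:TheoremZentralIntegral} with the corresponding piece of \eqref{eqiaton:Corollary2Schritt1}. First I would record the elementary structural facts about $g$: it is odd, being the product of the odd function $s\mapsto\pi/\sin(\pi s)$ and the even function $f$; its only poles are the zeros of $\sin(\pi s)$, i.e.\ the integers (at an integer where $f$ happens to vanish the singularity is removable and the corresponding residue is simply $0$); and near the origin $\pi/\sin(\pi s)=\frac{1}{s}+O(s)$, so $g$ has at most a simple pole at $0$ with $\Res(g;0)=f(0)$.

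Next I would compute the residues at the positive integers. Using $\sin(\pi s)=\sin\bigl(\pi(s-m)+\pi m\bigr)=(-1)^m\sin\bigl(\pi(s-m)\bigr)$ for $m\in\mathbb{N}$, one gets $\frac{\pi}{\sin(\pi s)}=\frac{(-1)^m}{s-m}+O(s-m)$, hence $\Res(g;m)=(-1)^m f(m)$. Since $e^{-i\pi m}=(-1)^m$, this yields the key cancellation $\Res(g;m)\,e^{-i\pi m}=f(m)$, which is precisely the coefficient appearing in \eqref{eqiaton:Corollary2Schritt1}.

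Then I would pin down the index set. With $N_k=k+\frac12$ the set $B^{+}(N_k)=\{z\in\mathbb{C}:|z|<k+\frac12,\ \Re(z)>0\}$ contains among the poles of $g$ precisely the integers $1,2,\dots,k$ (all poles of $g$ are real integers, and $k<k+\frac12<k+1$), so
\begin{align*}
2\sum_{p\in B^{+}(N_k)}\Res(g;p)\,e^{-i\pi p}P_{\nu}^{-p}(\omega)Q_{\nu}^{p}(z)
=2\sum_{m=1}^{k}f(m)\,P_{\nu}^{-m}(\omega)Q_{\nu}^{m}(z).
\end{align*}
Moreover the integrand on the left of \eqref{equation:TheoremZentralIntegral} simplifies, since $\frac{\sin(\pi i\rho)}{\pi}g(i\rho)=\frac{\sin(\pi i\rho)}{\pi}\cdot\frac{\pi}{\sin(\pi i\rho)}f(i\rho)=f(i\rho)$, so that the left-hand side of \eqref{equation:TheoremZentralIntegral} becomes $\frac{2}{\pi}\int_0^{\infty}f(i\rho)Q_{\nu}^{-i\rho}(z)Q_{\nu}^{i\rho}(\omega)\,d\rho$, the left-hand side of \eqref{eqiaton:Corollary2Schritt1}. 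Substituting these identifications together with $\Res(g;0)P_{\nu}(\omega)Q_{\nu}(z)=f(0)P_{\nu}(\omega)Q_{\nu}(z)$ into \eqref{equation:TheoremZentralIntegral} gives \eqref{eqiaton:Corollary2Schritt1} immediately.

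I do not expect a genuine obstacle: this is just a specialization of Theorem \ref{theorem:TheoremZentralIntegral}. The only point worth stressing is that the convergence of the series $\sum_{m=1}^{\infty}f(m)P_{\nu}^{-m}(\omega)Q_{\nu}^{m}(z)$ need not be argued separately, because Theorem \ref{theorem:TheoremZentralIntegral} already asserts that the limit as $k\to\infty$ of the partial sums $2\sum_{m=1}^{k}f(m)P_{\nu}^{-m}(\omega)Q_{\nu}^{m}(z)$ exists (it equals the left-hand integral minus $f(0)P_{\nu}(\omega)Q_{\nu}(z)$), so this convergence comes for free. A minor bookkeeping check is that $N_k=k+\frac12\notin\mathbb{Z}$, so the circles $S^{1}(N_k)$ avoid every pole of $g$, consistently with hypothesis $(ii)$.
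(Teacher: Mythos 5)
Your proposal is correct and follows exactly the paper's argument: both apply Theorem \ref{theorem:TheoremZentralIntegral} to $g(s)=\frac{\pi}{\sin(\pi s)}f(s)$, using that the only poles are simple poles at the integers with $\Res(g;p)=(-1)^p f(p)$, that $e^{-i\pi m}=(-1)^m$ cancels the sign, and that $N_k=k+\frac12$ makes $B^+(N_k)$ pick out exactly $\{1,\dots,k\}$. Your write-up simply spells out the residue computation and the simplification of the integrand that the paper leaves implicit.
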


\begin{proof}
Observe that $s\mapsto \frac{\pi}{\sin(\pi s)}$ is an odd function and meromorphic on $\mathbb{C}$. The only singularities are simple poles at all integer points $p\in\mathbb{Z}$ with residue $\Res\left( s\mapsto \frac{\pi}{\sin(\pi s)};p \right)=\left(-1\right)^{p}$. Thus $g$ satisfies the assumptions of Theorem \ref{theorem:TheoremZentralIntegral} with $\Res\left( g;p \right)=\left( -1 \right)^p f(p)$. Therefore \eqref{eqiaton:Corollary2Schritt1} follows when we apply \eqref{equation:TheoremZentralIntegral} to this case.
\end{proof}

Let us apply Corollary \ref{corollary:Corollary1Schritt2} to solve an important integral, which will reappear in the next chapter.

\begin{example}
\label{example:IntegralFuerGreenSumme}
Let $\nu\in \mathbb{C}$ with $\Re(\nu)>-1$ and $\omega,z\in (1,\infty)$ with $\omega<z$. Further, let $f(s):=\cos\left( s\cdot \left( \pi-\theta \right) \right)$, with $\theta\in\left[ 0, 2\pi \right)$. We set $g(s):=\frac{\pi}{\sin(\pi s)}\cdot f(s)$ and $N_k:=k+\frac{1}{2}$ as in Corollary \emph{\ref{corollary:Corollary1Schritt2}}. We first check if the conditions $(i)$ and $(ii)$ of Theorem \emph{\ref{theorem:TheoremZentralIntegral}} are satisfied. 

Note that for $\rho\in\mathbb{R}$ we have
\begin{align*}
g(i\rho) = \frac{\pi}{i} \cdot \frac{\cosh((\pi-\theta)\rho)}{\sinh(\pi\rho)} = \frac{\pi}{i} \cdot \frac{e^{-\theta \rho }+ e^{-(2\pi-\theta)\rho}}{1-e^{-2\pi\rho}}.
\end{align*}
If $\theta\in (0,2\pi)$ then this is in $O(e^{-\alpha\rho})$ as $\rho\rightarrow\infty$ with $\alpha:=\min\lbrace \theta, 2\pi-\theta \rbrace>0$. In particular, it is in $O\left( \frac{1}{\rho} \right)$ as $\rho\rightarrow\infty$, so $g$ is of type I. If $\theta = 0$, then 
\begin{align*}
g(i\rho) = \frac{\pi}{i}\cdot \frac{1 + e^{-2\pi\rho}}{1-e^{-2\pi\rho}} = \frac{\pi}{i} \cdot \left( 1+ \frac{2e^{-2\pi \rho}}{1-e^{-2\pi\rho}} \right).
\end{align*}
In particular, $g$ is of type II \emph{(}with $C:=\frac{\pi}{i}$ in Theorem \emph{\ref{theorem:TheoremZentralIntegral}} $(i)$\emph{)}. Thus, condition $(i)$ of Theorem \emph{\ref{theorem:TheoremZentralIntegral}} is satisfied.

Note that $g$ is bounded on the set $\cup_{k=1}^{\infty} S^1(N_k)$ if and only if $g^2$ is bounded on this set. Furthermore, if $x:=\Re(s),\, y:=\Im(s)$ so that $s=x+iy$, then a straightforward calculation shows:
\begin{align*}
\vert g(s) \vert^2 =\pi^2 \cdot \frac{\vert \cos((\pi-\theta)(x+iy))\vert^2}{\vert \sin(\pi(x+iy))\vert^2} = \pi^2 \cdot \frac{\cosh(2(\pi-\theta)y) + \cos(2(\pi-\theta)x)}{\cosh(2 \pi y) - \cos(2 \pi x)}.
\end{align*}
Let $y_0>0$ be such that 
\begin{align*}
\left\lbrace \, s\in S^1(N_k) \, \Big| \, \vert \Im(s) \vert\leq y_0 \,\right\rbrace \subset \left\lbrace  s\in S^1(N_k) \, \Big| \, \vert \Re(s) \vert \in \left[ k+\frac{1}{3},k+\frac{1}{2}  \right]  \right\rbrace
\end{align*}
for all $k\in\mathbb{N}$ \emph{(}or equivalently, for $k=1$\emph{)}. Then on $\cup_{k=1}^{\infty} \{\, s\in S^1(N_k) \mid \vert \Im(s) \vert\leq y_0  \,\}$ we have $\cos(2\pi x)\in  \left[ -1, -\frac{1}{2}\right]$ and thus
\begin{align*}
\vert g(s) \vert^2 \leq 2\pi^2 \cdot \frac{\cosh(2(\pi-\theta)y)}{\cosh(2\pi y)}\leq 2\pi^2.
\end{align*}
On $\{\, s\in\mathbb{C} \mid \vert \Im(s) \vert\geq y_0 \,\}$ we have
\begin{align*}
\vert g(s) \vert^2 \leq \pi^2 \cdot \frac{\cosh(2(\pi-\theta)y) + 1}{\cosh(2 \pi y) - 1}.
\end{align*}
Since $y\mapsto \pi^2 \cdot \frac{\cosh(2(\pi-\theta)y) + 1}{\cosh(2 \pi y) - 1}$ has a limit for $y\rightarrow\infty$ it is bounded on $[y_0,\infty)$. We conclude that $g^2$ is bounded on $\{\, s\in\mathbb{C} \mid \vert \Im(s) \vert\geq y_0 \,\}$, too. In particular, condition $(ii)$ of Theorem \emph{\ref{theorem:TheoremZentralIntegral}} is satisfied by the function $g$ with the above $N_k$. Therefore, by Corollary \emph{\ref{corollary:Corollary1Schritt2}} we get:
\begin{align*}
\frac{2}{\pi}\int\limits_{0}^{\infty} \cosh\left(\rho \left( \pi-\theta  \right) \right)Q_{\nu}^{-i\rho}(z) & Q_{\nu}^{i\rho}(\omega) d\rho   \\
&= 2 \left( \sum\limits_{m=1}^{\infty} \cos\left(m \left( \pi-\theta  \right) \right) P_{\nu}^{-m}(\omega)Q_{\nu}^{m}(z) \right) +  P_{\nu}( \omega )Q_{\nu}(z)  \\
&= 2 \left( \sum\limits_{m=1}^{\infty} (-1)^m\cos\left(m \theta  \right) P_{\nu}^{-m}(\omega)Q_{\nu}^{m}(z) \right) +  P_{\nu}( \omega )Q_{\nu}(z)  \\
&=Q_{\nu}\left( \omega z -\sqrt{\omega^2-1} \sqrt{z^2-1}\cos\left( \theta \right) \right),
\end{align*}
where the last equality is a classical addition formula \emph{(}see e.g. \emph{\citep[$8.795\text{ } 2$]{Gradshteyn}}\emph{)}.
\end{example}

In the above example we needed to assume $\omega<z$ in order to apply Corollary \ref{corollary:Corollary1Schritt2}, but it turns out that the above formula is also valid for more general values of $\omega$ and $z$. Before we generalise that formula, we will prove some estimates in the following lemma, which will also be useful later.

\begin{lemma}
\label{lemma:EstimateProductLegendreFunctions}
Let $\omega, z\in (1,\infty)$ and $\rho\in [0,\infty)$ be arbitrary. 
\begin{itemize}
\item[$(i)$]
For all $\nu\in\mathbb{C}$ with $\Re(\nu)\geq -\frac{1}{2} $:
\begin{align}
\label{equation:EstimateProductLegendreFunctions}
\vert  Q_{\nu}^{-i\rho}(z)Q_{\nu}^{i\rho}(\omega)\vert \leq \frac{\pi^4}{\sqrt{z-1} \cdot \sqrt{\omega-1}} \cdot \frac{\left( \vert \nu \vert + 1 +\rho\right)^{2\vert \nu \vert +1}}{\vert \Gamma(\nu+1)\vert^2}\cdot e^{-\pi\rho}.
\end{align}
\item[$(ii)$] For all $\nu\in\mathbb{C}$ with $\Re(\nu)\geq 0 $:
\begin{align}
\label{equation:EstimateProductLegendreFunctionsForSmallValuesOfZ}
\vert  Q_{\nu}^{-i\rho}(z)Q_{\nu}^{i\rho}(\omega)\vert \leq \ln\left( \frac{z+1}{z-1} \right)\ln\left( \frac{\omega+1}{\omega-1} \right) \cdot \frac{\pi^2 \left( \vert \nu \vert + 1 +\rho\right)^{2\vert \nu \vert +1}}{\vert \Gamma(\nu+1)\vert^2}\cdot e^{-\pi\rho}.
\end{align}
\item[$(iii)$] For all $\nu\in\mathbb{C}$ with $-1<\Re(\nu)<-\frac{1}{2}$:
\begin{align}
\label{equation:EstimateProductLegendreFunctionsGreaterRange}
\vert  Q_{\nu}^{-i\rho}(z)Q_{\nu}^{i\rho}(\omega)\vert \leq \frac{\pi^3 e^{\frac{1}{3(\Re(\nu)+1)}} }{\vert \Gamma(\nu+1)\vert^2 \cdot ((z-1)(\omega-1))^{\Re(\nu)+1} (\Re(\nu)+1)^{1-2\Re(\nu)} }\, e^{-\pi\rho}.
\end{align}
\end{itemize}
\end{lemma}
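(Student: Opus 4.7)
Proof proposal:

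The strategy is to split $Q_{\nu}^{-i\rho}(z)Q_{\nu}^{i\rho}(\omega)$ into gamma-function factors times associated Legendre functions of the first kind, and to estimate each piece separately. Whipple's formula produces exponentials $e^{\pm\pi\rho}$ in each $Q_{\nu}^{\pm i\rho}$ individually, but these cancel in the product; the $e^{-\pi\rho}$ decay asserted in (i)--(iii) therefore has to come from the remaining gamma factors.

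First I would apply \eqref{equation:Whipple1} with $\mu=-i\rho$ and with $\mu=i\rho$, then use \eqref{equation:ReflectionLegendreP} to unify the orders, obtaining
\begin{align*}
Q_{\nu}^{-i\rho}(z)Q_{\nu}^{i\rho}(\omega) = \frac{\pi}{2}\,\Gamma(\nu+1+i\rho)\Gamma(\nu+1-i\rho)\,\frac{P_{-\frac{1}{2}-i\rho}^{-\nu-\frac{1}{2}}(\cosh\tilde{z})\,P_{-\frac{1}{2}-i\rho}^{-\nu-\frac{1}{2}}(\cosh\tilde{\omega})}{(z^2-1)^{1/4}(\omega^2-1)^{1/4}},
\end{align*}
with $\cosh\tilde{z}=z/\sqrt{z^2-1}$ and $\sinh\tilde{z}=1/\sqrt{z^2-1}$ (analogously for $\tilde{\omega}$), the exponentials having cancelled. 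Next, I would bound each Legendre $P$-factor by invoking the hyperbolic Mehler--Dirichlet integral
\begin{align*}
P_{\sigma}^{-\eta}(\cosh\alpha) = \frac{\sqrt{2}\,(\sinh\alpha)^{\eta}}{\sqrt{\pi}\,\Gamma(\eta+\tfrac{1}{2})}\int_{0}^{\alpha}(\cosh\alpha-\cosh t)^{\eta-\frac{1}{2}}\cosh\!\left((\sigma+\tfrac{1}{2})t\right)dt,\quad \Re(\eta)>-\tfrac{1}{2},
\end{align*}
specialised to $\sigma=-\tfrac{1}{2}-i\rho$ and $\eta=\nu+\tfrac{1}{2}$: the crucial feature is that $\cosh((\sigma+\tfrac{1}{2})t)=\cos(\rho t)$ has modulus $\leq 1$, so taking absolute values yields
\begin{align*}
\left|P_{-\frac{1}{2}-i\rho}^{-\nu-\frac{1}{2}}(\cosh\tilde{z})\right| \leq \frac{\sqrt{2}\,(\sinh\tilde{z})^{\Re(\nu)+\frac{1}{2}}}{\sqrt{\pi}\,|\Gamma(\nu+1)|}\int_{0}^{\tilde{z}}(\cosh\tilde{z}-\cosh t)^{\Re(\nu)}\,dt,
\end{align*}
and analogously for $\tilde{\omega}$.

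The remaining real integral is then estimated using the factorisation $\cosh\tilde{z}-\cosh t = 2\sinh\tfrac{\tilde{z}+t}{2}\sinh\tfrac{\tilde{z}-t}{2}$ together with the elementary bounds $\sinh\tfrac{\tilde{z}+t}{2}\leq \sinh\tilde{z}$ and $\sinh\tfrac{\tilde{z}-t}{2}\geq \tfrac{\tilde{z}-t}{2}$, the precise combination chosen according to the sign of $\Re(\nu)$. Collecting the resulting powers of $\sinh\tilde{z}=1/\sqrt{z^2-1}$ and combining them with the Whipple prefactor $(z^2-1)^{-1/4}$ produces, case by case, the three advertised $z$-dependent factors: $(z-1)^{-1/2}$ in (i) for $\Re(\nu)\geq -\tfrac{1}{2}$, $\ln\tfrac{z+1}{z-1}$ in (ii), where the integrand is non-singular and the trivial bound $(\cosh\tilde{z}-\cosh t)^{\Re(\nu)}\leq(\cosh\tilde{z}-1)^{\Re(\nu)}$ combined with $\int_{0}^{\tilde{z}}dt = \tilde{z} = \tfrac{1}{2}\ln\tfrac{z+1}{z-1}$ is sharp enough, and $(z-1)^{-\Re(\nu)-1}$ in (iii), where the endpoint singularity at $t=\tilde{z}$ is the dominant effect and forces the power $(\tilde{z}-t)^{\Re(\nu)}$ to be integrated against the bound $\sinh\tfrac{\tilde{z}-t}{2}\geq\tfrac{\tilde{z}-t}{2}$.

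Finally, to control the gamma-function ratio uniformly in $\rho\geq 0$, I would use the Weierstrass product
\begin{align*}
\frac{|\Gamma(\nu+1)|^{2}}{|\Gamma(\nu+1+i\rho)\Gamma(\nu+1-i\rho)|} = \left|\prod_{k=0}^{\infty}\left(1+\frac{\rho^{2}}{(k+\nu+1)^{2}}\right)\right|
\end{align*}
together with Stirling's asymptotic to obtain
\begin{align*}
\frac{|\Gamma(\nu+1+i\rho)\Gamma(\nu+1-i\rho)|}{|\Gamma(\nu+1)|^{2}} \leq C_{0}\,(|\nu|+1+\rho)^{2\Re(\nu)+1}\,e^{-\pi\rho},\quad \rho\geq 0,
\end{align*}
with an absolute constant $C_{0}$ that is absorbed into the displayed $\pi^{k}$ coefficients. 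Since $(|\nu|+1+\rho)^{2\Re(\nu)+1}\leq(|\nu|+1+\rho)^{2|\nu|+1}$ (the base is at least $1$), substituting this bound together with the $P$-function estimates above yields the three inequalities asserted. I expect the main obstacle to be the bookkeeping in the case analysis: the powers of $\sinh\tilde{z}$, $\sinh(\tilde{z}/2)$ and $\tilde{z}$ arising from the integral representation must combine with the Whipple factor $(z^2-1)^{-1/4}$ to collapse into the clean elementary prefactors $(z-1)^{-1/2}$, $\ln\tfrac{z+1}{z-1}$ and $(z-1)^{-\Re(\nu)-1}$; uniformity of the $\Gamma$-bound down to $\rho=0$ is a secondary subtlety, since a pure Stirling estimate fails there and must be patched with the Weierstrass product.
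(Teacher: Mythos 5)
Your architecture — Whipple's formula to pass to Legendre $P$-functions, the Mehler--Dirichlet integral whose oscillatory factor $\cos(\rho t)$ has modulus $\le 1$, and a Stirling-type bound extracting $e^{-\pi\rho}$ from $\Gamma(\nu+1+i\rho)\Gamma(\nu+1-i\rho)$ — is a genuinely different route from the paper's, which estimates $Q_{\nu}^{\pm i\rho}$ directly from the representation $Q_{\nu}^{\mu}(z)=e^{\mu\pi i}2^{-\nu-1}\tfrac{\Gamma(\nu+1+\mu)}{\Gamma(\nu+1)}(z^2-1)^{-\mu/2}\int_0^{\pi}\sin(t)^{2\nu+1}(z+\cos t)^{\mu-\nu-1}dt$, so that the gamma factors are already in place and the integral lives on the fixed interval $[0,\pi]$. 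Your route does deliver parts $(i)$ and $(ii)$, modulo two slips: the Mehler--Dirichlet formula has $(\sinh\alpha)^{-\eta}$, not $(\sinh\alpha)^{\eta}$ (with your sign the powers do not collapse even in case $(i)$: one gets $(z^2-1)^{-\Re(\nu)-3/4}$ instead of $(z^2-1)^{-1/4}$), and in case $(iii)$ the gamma constant cannot be absolute, since $|\nu+1\pm i\rho|$ can be as small as $\Re(\nu)+1$ — this is precisely why the paper's $(iii)$ carries the factors $e^{1/(3(\Re(\nu)+1))}$ and $(\Re(\nu)+1)^{1-2\Re(\nu)}$.

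The genuine gap is case $(iii)$. After Whipple and Mehler--Dirichlet the $z$-dependence is $(z^2-1)^{\Re(\nu)/2}\int_0^{\tilde z}(\cosh\tilde z-\cosh t)^{\Re(\nu)}\,dt$, and the delicate regime $z\to 1^+$ corresponds to $\tilde z\to\infty$. Your lower bound $\cosh\tilde z-\cosh t=2\sinh\tfrac{\tilde z+t}{2}\sinh\tfrac{\tilde z-t}{2}\ge(\text{polynomial in }\tilde z,\ \tilde z-t)$ undervalues $\cosh\tilde z-\cosh t\approx e^{\tilde z}/2$ (for $t$ away from $\tilde z$) by an exponentially large factor, and since $\Re(\nu)<0$ this inflates the integrand by that same factor. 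Concretely your estimates give a bound of order $(z-1)^{\Re(\nu)/2}\,|\ln(z-1)|^{2\Re(\nu)+1}$ near $z=1$, whereas \eqref{equation:EstimateProductLegendreFunctionsGreaterRange} asserts $(z-1)^{-(\Re(\nu)+1)}$; for $\Re(\nu)$ close to $-1$ (say $\Re(\nu)=-0.9$, so $\Re(\nu)/2=-0.45$ while $-(\Re(\nu)+1)=-0.1$) your bound exceeds the asserted one by an unbounded factor as $z\to 1^+$, so the claimed collapse to $(z-1)^{-\Re(\nu)-1}$ does not happen. To close the gap you would need the sharp large-$\tilde z$ behaviour $\int_0^{\tilde z}(\cosh\tilde z-\cosh t)^{\Re(\nu)}dt\asymp e^{\Re(\nu)\tilde z}\,\tilde z$, or simply switch to the paper's representation, where $(z+\cos t)^{-\Re(\nu)-1}\le(z-1)^{-\Re(\nu)-1}$ and $\int_0^{\pi}\sin(t)^{2\Re(\nu)+1}dt\le\tfrac{\pi}{2(\Re(\nu)+1)}$ (via the concavity bound $\sin t\ge\tfrac{2t}{\pi}$) produce the stated prefactor immediately.
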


\begin{proof}
Let $\omega, z\in (1,\infty)$, $\rho\in [0,\infty)$, and $\nu\in\mathbb{C}$ with $\Re(\nu)> -1 $ be given. From \citep[formula $(5)$ on p. $155$]{Erdelyi} we know that for all $\mu \in\mathbb{C}$ with $\Re(\mu)\geq 0$:
\begin{align}
\label{equation:RepresentationLegendreQIntegral}
 Q_{\nu}^{\mu}(z) = e^{\mu\pi i}\cdot \frac{1}{2^{\nu+1}}\cdot \frac{\Gamma(\nu + 1 +\mu)}{\Gamma(\nu + 1)} \cdot \frac{1}{(z^2-1)^{\frac{\mu}{2}}} \cdot \int\limits_{0}^{\pi} \frac{\sin(t)^{2\nu+1}}{(z+\cos(t))^{-\mu+\nu+1}}\, dt.
\end{align}
Hence,
\begin{align}
\label{equation:AbschaetzungProduktLegendre1}
\vert Q_{\nu}^{-i\rho}(z)  Q_{\nu}^{i\rho}(\omega) \vert \leq & \frac{1}{4^{\Re(\nu)+1}}\cdot \frac{\vert \Gamma(\nu + 1 + i\rho) \Gamma(\nu + 1 - i\rho)\vert}{\vert \Gamma(\nu + 1)\vert^2 }  \cdot \int\limits_{0}^{\pi} \frac{\sin(t)^{2\Re(\nu)+1}}{(z+\cos(t))^{\Re(\nu)+1}}\, dt \, \cdot   \nonumber \\
& \cdot \int\limits_{0}^{\pi} \frac{\sin(t)^{2\Re(\nu)+1}}{(\omega+\cos(t))^{\Re(\nu)+1}}\, dt.
\end{align}

$(i)$. Suppose $\Re(\nu)\geq -\frac{1}{2}$. Then we can rewrite the integrals appearing above as follows:
\begin{align}
\int\limits_{0}^{\pi} \frac{\sin(t)^{2\Re(\nu)+1}}{(z+\cos(t))^{\Re(\nu)+1}}\, dt &= \int\limits_{0}^{\pi} \left(\frac{1-\cos^2(t)}{z+\cos(t)}\right)^{\Re(\nu)+\frac{1}{2}} \cdot \frac{1}{\sqrt{z+\cos(t)}}\, dt \nonumber \\
&=\int\limits_{0}^{\pi}\left( \frac{1+\cos(t)}{z+\cos(t)}\right)^{\Re(\nu)+\frac{1}{2}}  \cdot \frac{ \left( 1-\cos(t) \right)^{\Re(\nu)+\frac{1}{2}}}{\sqrt{z+\cos(t)}}\, dt. \label{equation:AbschaetzungProduktLegendreZwischenschritt}
\end{align}
Using the estimates $0\leq \frac{1+\cos(t)}{z+\cos(t)}<1$, $0\leq 1-\cos(t)\leq 2$, and $0< \frac{1}{\sqrt{z+\cos(t)}}\leq\frac{1}{\sqrt{z-1}}$ for all $t\in [0,\pi]$, we obtain from \eqref{equation:AbschaetzungProduktLegendreZwischenschritt}:
\begin{align}
\label{equation:AbschaetzungProduktLegendre2}
\int\limits_{0}^{\pi} \frac{\sin(t)^{2\Re(\nu)+1}}{(z+\cos(t))^{\Re(\nu)+1}}\, dt \, \leq \, 2^{\Re(\nu)+\frac{1}{2}} \cdot \frac{\pi}{\sqrt{z-1}}.
\end{align}
Hence, from \eqref{equation:AbschaetzungProduktLegendre1} and \eqref{equation:AbschaetzungProduktLegendre2} we get
\begin{align}
\label{equation:EstimateProductLegendreFunctionsBeweisZwischenschritt}
\vert  Q_{\nu}^{-i\rho}(z)Q_{\nu}^{i\rho}(\omega)\vert \leq \frac{\pi^2}{\sqrt{z-1} \cdot \sqrt{\omega-1}} \cdot \frac{\vert \Gamma(\nu+1+i\rho)\Gamma(\nu+1-i\rho) \vert}{2\cdot \vert \Gamma(\nu+1)\vert^2}.
\end{align}

%


Moreover, it is well-known (see \citep[formula $5.6.9$ on p. 138]{NIST}) that for all $z\in \mathbb{C}$ with $\Re(z) > 0$:
\begin{align}
\label{equation:GammaFunctionEstimateGeneral}
\vert \Gamma(z) \vert \leq \sqrt{2\pi} \cdot \vert z \vert ^{\Re(z)-\frac{1}{2}} \cdot e^{-\frac{\pi}{2}\vert \Im(z) \vert} \cdot e^{\frac{1}{6 \vert z \vert}}.
\end{align} 
Thus, 
\begin{align}
\label{equation:GammaImaginaryEstimate}
\vert \Gamma(\nu+1+i\rho)\Gamma(\nu+1-i\rho) \vert &\leq  2\pi e^{\frac{2}{3}}\cdot \left( \vert \nu \vert + 1 + \rho \right)^{2\Re(\nu)+1}\cdot e^{-\frac{\pi}{2}(\vert \Im(\nu) + \rho \vert + \vert \Im(\nu) - \rho \vert)} \nonumber \\
&\leq  2\pi^2 \cdot \left( \vert \nu \vert + 1 + \rho \right)^{2\vert \nu \vert+1} \cdot e^{-\pi\rho}.
\end{align}
Hence, $(i)$ follows from \eqref{equation:GammaImaginaryEstimate} and \eqref{equation:EstimateProductLegendreFunctionsBeweisZwischenschritt}.

$(ii).$ Suppose $\Re(\nu)\geq 0$. Similar as above, we can estimate the integrals on the right-hand side of \eqref{equation:AbschaetzungProduktLegendre1} as follows:
\begin{align*}
\int\limits_{0}^{\pi} \frac{\sin(t)^{2\Re(\nu)+1}}{(z+\cos(t))^{\Re(\nu)+1}}\, dt \, & = \int\limits_{0}^{\pi} \left( \frac{1+\cos(t)}{z+\cos(t)}\right)^{\Re(\nu)} \cdot (1-\cos(t))^{\Re(\nu)}\cdot \frac{\sin(t)}{z+\cos(t)} \, dt \\
& \leq 2^{\Re(\nu)} \int\limits_{0}^{\pi} \frac{\sin(t)}{z+\cos(t)} \, dt = 2^{\Re(\nu)} \ln\left(\frac{z+1}{z-1}\right).
\end{align*}
Hence, $(ii)$ follows from the above estimate combined with \eqref{equation:AbschaetzungProduktLegendre1} and \eqref{equation:GammaImaginaryEstimate}.

$(iii)$. Suppose $-1<\Re(\nu)<-\frac{1}{2}$, and thus $-1<2\Re(\nu)+1<0$. In that case, we estimate the integrals on the right-hand side of \eqref{equation:AbschaetzungProduktLegendre1} as follows:
\begin{align*}
\int\limits_{0}^{\pi} \frac{\sin(t)^{2\Re(\nu)+1}}{(z+\cos(t))^{\Re(\nu)+1}}\, dt \, &\leq \frac{1}{(z-1)^{\Re(\nu)+1}}\int_{0}^{\pi}\sin(t)^{2\Re(\nu)+1} \, dt  \\
&\, =\frac{2}{(z-1)^{\Re(\nu)+1}}\int_{0}^{\frac{\pi}{2}}\sin(t)^{2\Re(\nu)+1} \, dt.
\end{align*}
Note that $[0,\frac{\pi}{2}]\ni t\mapsto \sin(t)\in\mathbb{R}$ is concave and thus $\sin(t)\geq \frac{2}{\pi}t$. Hence, from the above estimate, we obtain
\begin{align*}
\int\limits_{0}^{\pi} \frac{\sin(t)^{2\Re(\nu)+1}}{(z+\cos(t))^{\Re(\nu)+1}}\, dt \,&\,\leq  \frac{2}{(z-1)^{\Re(\nu)+1}}\int_{0}^{\frac{\pi}{2}} \left(\frac{2}{\pi}t\right)^{2\Re(\nu)+1} \, dt \\
&\, =  \frac{\pi}{2\cdot (z-1)^{\Re(\nu)+1} (\Re(\nu)+1)}.
\end{align*}
Using that upper bound and the estimate \eqref{equation:AbschaetzungProduktLegendre1}  we have 
\begin{align*}
\vert  Q_{\nu}^{-i\rho}(z)Q_{\nu}^{i\rho}(\omega)\vert \leq \frac{\pi^2}{2(z-1)^{\Re(\nu)+1} (\omega-1)^{\Re(\nu)+1} (\Re(\nu)+1)^2} \cdot \frac{\vert\Gamma(\nu+1+i\rho)\Gamma(\nu+1-i\rho) \vert }{\vert \Gamma(\nu+1)\vert^2 }.
\end{align*}
Furthermore, from \eqref{equation:GammaFunctionEstimateGeneral} we obtain
\begin{align*}
\vert \Gamma(\nu+1+i\rho)\Gamma(\nu+1-i\rho) \vert \leq (2\pi) e^{\frac{1}{3(\Re(\nu)+1)}} (\Re(\nu)+1)^{2\Re(\nu)+1} e^{-\pi\rho},
\end{align*}
and therefore
\begin{align*}
\vert  Q_{\nu}^{-i\rho}(z)Q_{\nu}^{i\rho}(\omega)\vert &\leq \frac{\pi^3 e^{\frac{1}{3(\Re(\nu)+1)}}}{\vert \Gamma(\nu+1)\vert^2 (z-1)^{\Re(\nu)+1} (\omega-1)^{\Re(\nu)+1} (\Re(\nu)+1)^{1-2\Re(\nu)} }\cdot e^{-\pi \rho}.
\end{align*}
\end{proof}

\begin{corollary}
\label{corollary:IntegralProduktLegendreFuerGreensFunction}
Let $\nu\in \mathbb{C}$ with $\Re(\nu)>-1$, $\theta\in [0,2\pi)$ and $\omega,z\in (1,\infty)$ such that $\omega\neq z$. Then
\begin{align}
\label{example:IntegralFuerGreen}
\frac{2}{\pi}\int\limits_{0}^{\infty} \cosh\left(\rho \left( \pi-\theta  \right) \right)Q_{\nu}^{-i\rho}(z)Q_{\nu}^{i\rho}(\omega) d\rho = Q_{\nu}\left( \omega z -\sqrt{\omega^2-1} \sqrt{z^2-1}\cos\left( \theta \right) \right).
\end{align}
Furthermore, the above equality holds for all $\theta\in (0,2\pi)$ and $\omega=z\in (1,\infty)$.
\end{corollary}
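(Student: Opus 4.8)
The case $\omega < z$ is precisely the content of Example~\ref{example:IntegralFuerGreenSumme}, so the plan is to reduce the two remaining situations to it. First I would dispose of the case $\omega > z$ by a symmetry observation: putting $\mu = i\rho$ in the identity \eqref{equation:SymmetrieProduktLegendreQ} gives $Q_\nu^{-i\rho}(z)\,Q_\nu^{i\rho}(\omega) = Q_\nu^{-i\rho}(\omega)\,Q_\nu^{i\rho}(z)$, so the integrand on the left of \eqref{example:IntegralFuerGreen} is unchanged under interchanging $\omega$ and $z$, and the right-hand side $Q_\nu(\omega z - \sqrt{\omega^2-1}\sqrt{z^2-1}\cos\theta)$ is visibly symmetric in $\omega,z$ as well. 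Hence, given $\omega > z$ in $(1,\infty)$, applying Example~\ref{example:IntegralFuerGreenSumme} with the roles of $z$ and $\omega$ interchanged (so that its hypothesis ``first argument $<$ second argument'' holds) and then using this symmetry establishes \eqref{example:IntegralFuerGreen} for all $\omega \ne z$ and all $\theta \in [0,2\pi)$.

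It then remains to treat $\omega = z$ for $\theta \in (0,2\pi)$, and here the plan is a continuity/limiting argument. I would fix $z \in (1,\infty)$ and $\theta \in (0,2\pi)$ and regard both sides of \eqref{example:IntegralFuerGreen} as functions of $\omega$ on a small closed interval $[z-\varepsilon, z+\varepsilon] \subset (1,\infty)$; by the previous step they already agree there for $\omega \ne z$, so it suffices to show each side is continuous at $\omega = z$. The right-hand side is continuous (in fact analytic) in $\omega$ because $\omega z - \sqrt{\omega^2-1}\sqrt{z^2-1}\cos\theta$ stays in $\mathbb{C}\setminus(-\infty,1]$ near $\omega = z$. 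For the left-hand side I would apply dominated convergence: setting $\alpha := |\pi - \theta| < \pi$ one has $\cosh(\rho(\pi-\theta)) \le e^{\alpha\rho}$ for $\rho \ge 0$, while Lemma~\ref{lemma:EstimateProductLegendreFunctions} — part $(i)$ if $\Re(\nu) \ge -\tfrac12$ and part $(iii)$ if $-1 < \Re(\nu) < -\tfrac12$ — provides a bound of the shape $|Q_\nu^{-i\rho}(z)\,Q_\nu^{i\rho}(\omega)| \le C\,(1+|\nu|+\rho)^{2|\nu|+1}\,e^{-\pi\rho}$ with a constant $C$ depending on $\nu$ and on $[z-\varepsilon,z+\varepsilon]$ but \emph{not} on $\omega$ in that interval (the factor $\sqrt{\omega-1}$, resp.\ $(\omega-1)^{\Re(\nu)+1}$, is bounded below there since $\omega$ stays away from $1$). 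Since $\rho \mapsto e^{\alpha\rho}(1+|\nu|+\rho)^{2|\nu|+1}e^{-\pi\rho}$ is integrable on $[0,\infty)$ and $Q_\nu^{i\rho}(\omega) \to Q_\nu^{i\rho}(z)$ pointwise as $\omega \to z$ (analyticity of $Q_\nu^{\mu}$ in its variable), dominated convergence yields continuity of $\omega \mapsto \frac{2}{\pi}\int_0^\infty \cosh(\rho(\pi-\theta))\,Q_\nu^{-i\rho}(z)\,Q_\nu^{i\rho}(\omega)\,d\rho$ at $\omega = z$; letting $\omega \to z$ through values $\ne z$ would then complete the argument.

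The one point that actually needs care is the uniformity of the dominating function over $\omega$ in a neighbourhood of $z$, which is exactly why the explicit estimates of Lemma~\ref{lemma:EstimateProductLegendreFunctions} — rather than just the convergence statement in Theorem~\ref{theorem:TheoremZentralIntegral} — are needed here. It is also worth recording that the case $\theta = 0$ genuinely cannot be admitted when $\omega = z$: then $\alpha = \pi$, the bound above loses its decay, and indeed the integral diverges (consistent with $g$ being of type $II$ in Theorem~\ref{theorem:TheoremZentralIntegral}), so the restriction $\theta \in (0,2\pi)$ in the last assertion of the corollary is sharp.
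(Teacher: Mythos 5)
Your proposal is correct and follows essentially the same route as the paper: Example \ref{example:IntegralFuerGreenSumme} for $\omega<z$, the symmetry \eqref{equation:SymmetrieProduktLegendreQ} for $\omega>z$, and a dominated-convergence limit using the uniform bounds of Lemma \ref{lemma:EstimateProductLegendreFunctions} $(i)$ and $(iii)$ for $\omega=z$ with $\theta\in(0,2\pi)$. The only cosmetic difference is that the paper lets the first argument tend to $z$ while you vary $\omega$, which is immaterial by the same symmetry.
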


\begin{proof}
Suppose $\theta\in [0,2\pi)$ and $\omega,z\in (1,\infty)$ such that $\omega\neq z$. For $\omega<z$ the proof was given in Example \ref{example:IntegralFuerGreenSumme}. In addition to that, observe that both sides of \eqref{example:IntegralFuerGreen} are symmetric in the variables $\omega,z$. The right-hand side is obviously symmetric in those variables, and the left-hand side is symmetric since the term $Q_{\nu}^{-i\rho}(z)Q_{\nu}^{i\rho}(\omega)$ of the integrand is symmetric due to \eqref{equation:SymmetrieProduktLegendreQ}. Hence the equation holds also if $z<\omega$.

Now suppose that we have $\theta\in (0,2\pi)$ and $\omega = z\in (1,\infty)$. Observe that the function on the right-hand side of \eqref{example:IntegralFuerGreen} is continuous in $z\in (1,\infty)$ and thus for any sequence $\left( z_n \right)_{n\in\mathbb{N}}\subset (1,\infty)$ with $z_n\neq z$ for all $n\in\mathbb{N}$ and  $\lim_{n\rightarrow\infty} z_n = z$ we have:
\begin{align*}
 Q_{\nu}\left( \omega z -\sqrt{\omega^2-1}\sqrt{z^2-1}\cos\left( \theta \right) \right) &= \lim\limits_{n\rightarrow\infty} Q_{\nu}\left( \omega z_n -\sqrt{\omega^2-1} \sqrt{z_n^2-1}\cos\left( \theta \right) \right) \\
 &=\lim\limits_{n\rightarrow\infty} \frac{2}{\pi}\int\limits_{0}^{\infty} \cosh\left(\rho \left( \pi-\theta  \right) \right)Q_{\nu}^{-i\rho}(z_n)Q_{\nu}^{i\rho}(\omega) d\rho \\
&= \frac{2}{\pi}\int\limits_{0}^{\infty} \cosh\left(\rho \left( \pi-\theta  \right) \right)Q_{\nu}^{-i\rho}(z)Q_{\nu}^{i\rho}(\omega) d\rho,
\end{align*}
where we obtain the last equality by Lebesgue's dominated convergence theorem: The integrand is continuous with respect to $z\in (1,\infty)$, and the sequence of integrands is dominated by some integrable function due to Lemma \ref{lemma:EstimateProductLegendreFunctions} $(i)$ and $(iii)$. In fact, by Lemma \ref{lemma:EstimateProductLegendreFunctions} and the estimate $\cosh(\rho(\pi-\theta))\leq e^{\, \rho\vert \pi-\theta \vert}$ for all $\rho\geq 0$, there exist $C,\varepsilon>0$ such that for all $\rho\geq 0$ and $n\in\mathbb{N}$: $\vert \cosh\left(\rho \left( \pi-\theta  \right) \right)Q_{\nu}^{-i\rho}(z_n)Q_{\nu}^{i\rho}(\omega) \vert\leq C \cdot e^{-\rho\varepsilon}$. This upper bound is obviously integrable over $\rho\in[0,\infty)$.
\end{proof}

\begin{remark}
One can consider the above integrals as certain so-called generalised Mehler transforms (see e.g. \cite{Oberhettinger}). One only needs to apply Whipple's formula \eqref{equation:Whipple1} to the term $Q_{\nu}^{-i\rho}(z)$ on the left-hand side of \eqref{equation:TheoremZentralIntegral}. The corresponding ordinary Mehler transform is obtained if we additionally set $\nu = -\frac{1}{2}$. Note that if we apply Whipple's formula \eqref{equation:Whipple1} also to the term $Q_{\nu}^{i\rho}(\omega)$ on the left-hand side of \eqref{equation:TheoremZentralIntegral}, then some terms under the integral cancel each other out and the integrand becomes shorter. In particular, if $\nu = -\frac{1}{2}$, then the integrand can be simplified further due to the formula (see \citep[8.334 2]{Gradshteyn})
\begin{align*}
\Gamma\left(\frac{1}{2} + i\rho \right)\Gamma\left(\frac{1}{2} - i\rho \right) = \frac{\pi}{\cos(\pi i\rho)},\quad \rho\in \mathbb{R}.
\end{align*}
\end{remark}

\begin{lemma}
\label{lemma:HolomorphieIntegralLegendreProdukt}
Let $\omega, z\in (1,\infty)$. Further, let $h:[0,\infty)\rightarrow\infty$ be continuous such that there exists some $\varepsilon > 0$ with $h(\rho)=O(e^{(\pi-\varepsilon)\rho})$ as $\rho\rightarrow\infty$. Then the function
\begin{align*}
F:\mathcal{H}_{>-\frac{1}{2}} \ni \nu\mapsto \int\limits_{0}^{\infty} Q_{\nu}^{-i\rho}(z)Q_{\nu}^{i\rho}(\omega) h(\rho) d\rho \in\mathbb{C}
\end{align*}
is holomorphic on $\mathcal{H}_{>-\frac{1}{2}}=\{\, z\in\mathbb{C} \mid \Re(z)>-\frac{1}{2}\,  \}$.
\end{lemma}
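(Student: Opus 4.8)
The plan is to exhibit $F$ as a locally uniform limit of holomorphic functions and then invoke Weierstrass's convergence theorem. Fix a compact set $K\subset\mathcal{H}_{>-\frac12}$ and let $R$ be a bound for $|\nu|$ on $K$. First I would record that for every fixed $\rho\geq0$ the parameters $\nu\pm i\rho$ are admissible for $Q_\nu^{\pm i\rho}$ throughout $\mathcal{H}_{>-\frac12}$: indeed $\Re(\nu\pm i\rho)=\Re(\nu)>-\frac12$, so $\nu\pm i\rho\notin\{-1,-2,\dots\}$ and the condition $\mu+\nu\notin\{-1,-2,\dots\}$ of Definition \ref{definition:LegendreFirstSecond} is never violated. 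Since the associated Legendre functions are analytic in their parameters, $\nu\mapsto Q_\nu^{-i\rho}(z)Q_\nu^{i\rho}(\omega)$ is thus holomorphic on $\mathcal{H}_{>-\frac12}$ for each $\rho$, and $(\nu,\rho)\mapsto Q_\nu^{-i\rho}(z)Q_\nu^{i\rho}(\omega)h(\rho)$ is jointly continuous.

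The key step is the construction of an integrable majorant, uniform for $\nu\in K$, and this is exactly what Lemma \ref{lemma:EstimateProductLegendreFunctions}$(i)$ supplies. Since $\Re(\nu)\geq-\frac12$ on $K$, that lemma gives
\[
\bigl|Q_\nu^{-i\rho}(z)Q_\nu^{i\rho}(\omega)\bigr|\ \leq\ \frac{\pi^4}{\sqrt{z-1}\sqrt{\omega-1}}\cdot\frac{(|\nu|+1+\rho)^{2|\nu|+1}}{|\Gamma(\nu+1)|^2}\,e^{-\pi\rho}.
\]
On $K$ the exponent $2|\nu|+1$ is bounded by $2R+1$, the base by $R+1+\rho$, and $|\Gamma(\nu+1)|$ is bounded \emph{below} by some $c_K>0$, because $\Gamma$ is holomorphic and zero-free on $\{\Re(w)>\frac12\}$, a half-plane containing the compact set $\{\nu+1:\nu\in K\}$. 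Hence there is $A_K>0$ with $|Q_\nu^{-i\rho}(z)Q_\nu^{i\rho}(\omega)|\leq A_K(1+\rho)^{2R+1}e^{-\pi\rho}$ for all $\nu\in K$, $\rho\geq0$. After possibly shrinking $\varepsilon$ so that $\varepsilon\in(0,\pi)$, continuity of $h$ on $[0,\infty)$ together with $h(\rho)=O(e^{(\pi-\varepsilon)\rho})$ yields $|h(\rho)|\leq Me^{(\pi-\varepsilon)\rho}$ for all $\rho\geq0$, and therefore
\[
\bigl|Q_\nu^{-i\rho}(z)Q_\nu^{i\rho}(\omega)h(\rho)\bigr|\ \leq\ \Phi_K(\rho):=A_K M\,(1+\rho)^{2R+1}e^{-\varepsilon\rho}\in L^1\bigl([0,\infty)\bigr),
\]
the decay $e^{-\pi\rho}$ of the Legendre product being precisely what absorbs the growth $e^{(\pi-\varepsilon)\rho}$ of $h$.

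With this in hand I would finish as follows. Put $F_n(\nu):=\int_0^n Q_\nu^{-i\rho}(z)Q_\nu^{i\rho}(\omega)h(\rho)\,d\rho$; each $F_n$ is holomorphic on $\mathcal{H}_{>-\frac12}$ by the standard fact that integrating a jointly continuous integrand which is holomorphic in the parameter over a compact interval produces a holomorphic function (Fubini plus Morera, or differentiation under the integral sign). The majorant gives $|F(\nu)-F_n(\nu)|\leq\int_n^\infty\Phi_K(\rho)\,d\rho\to0$ uniformly for $\nu\in K$, so $F_n\to F$ locally uniformly on $\mathcal{H}_{>-\frac12}$, whence $F$ is holomorphic. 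Equivalently, dominated convergence gives continuity of $F$ and Fubini (justified by $\Phi_K$) together with Cauchy's theorem give $\oint_{\partial T}F(\nu)\,d\nu=\int_0^\infty\bigl(\oint_{\partial T}Q_\nu^{-i\rho}(z)Q_\nu^{i\rho}(\omega)\,d\nu\bigr)h(\rho)\,d\rho=0$ for every triangle $T\subset\mathcal{H}_{>-\frac12}$, so Morera's theorem applies.

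There is no genuine obstacle here beyond the bookkeeping of the uniform bound; the content is entirely in Lemma \ref{lemma:EstimateProductLegendreFunctions}. The one point that must not be overlooked is that the polynomial factor in that estimate carries the $\nu$-dependent degree $2|\nu|+1$, so one really does need the compactness of $K$ — no global bound of this type holds on all of $\mathcal{H}_{>-\frac12}$ — and likewise the lower bound $|\Gamma(\nu+1)|\geq c_K$ has to be taken uniform over $K$.
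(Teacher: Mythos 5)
Your proof is correct and follows essentially the same route as the paper: the paper also obtains a locally uniform integrable majorant from Lemma \ref{lemma:EstimateProductLegendreFunctions}$(i)$ by bounding $|\nu|$ and $1/|\Gamma(\nu+1)|^2$ on compact subsets of $\mathcal{H}_{>-\frac{1}{2}}$, and then invokes a standard holomorphy-under-the-integral theorem (citing Elstrodt) rather than spelling out the truncation/Morera argument as you do. The only difference is that you prove that abstract step by hand, which is fine.
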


\begin{proof}
Let $f(\nu,\rho):=Q_{\nu}^{-i\rho}(z)Q_{\nu}^{i\rho}(\omega) h(\rho)$ for all $\nu\in \mathcal{H}_{>-\frac{1}{2}}$ and $\rho \in [0,\infty)$. Because of \citep[Satz $5.8$ on p. 148]{Elstrodt} it suffices to show for all non-empty compact sets $K\subset \mathcal{H}_{>-\frac{1}{2}}$ the existence of some integrable function $g_K:[0,\infty)\rightarrow\mathbb{R}$ with $\sup_{\nu\in K}\vert f(\nu,\rho) \vert\leq g_K(\rho)$ for all $\rho\in [0,\infty)$. Note that the other conditions of \citep[Satz $5.8$ on p. 148]{Elstrodt} are obviously satisfied by $f$, namely $f(\nu,\cdot)$ is integrable for all $\nu\in \mathcal{H}_{>-\frac{1}{2}}$ because of Theorem \ref{theorem:TheoremZentralIntegral} and $f(\cdot, \rho)$ is holomorphic on $\mathcal{H}_{>-\frac{1}{2}}$ for all $\rho\geq 0$.

Let $K\subset \mathcal{H}_{>-\frac{1}{2}}$ be an arbitrary non-empty compact set. From \eqref{equation:EstimateProductLegendreFunctions} and the assumptions on $h$, there exist constants $\tilde{C}, \varepsilon>0$ such that for all $\nu \in K$ and $\rho\in[0,\infty)$:
\begin{align}
\label{equation:BeweisHolomorphie1}
\vert f(\nu,\rho) \vert \leq \tilde{C}\cdot \frac{(\vert\nu \vert + 1 +\rho)^{2\vert \nu \vert+1}}{\vert \Gamma(\nu+1)\vert^2} \cdot  e^{ -\varepsilon \rho}.
\end{align}

With $D:=\sup_{\nu\in K }\left\lbrace \frac{1}{\vert \Gamma(\nu+1)\vert^2} \right\rbrace$, $M:=\sup_{\nu\in K}\{ \vert \nu \vert \}$ we have for all $\nu\in K$ and $\rho\in[0,\infty)$:
\begin{align*}
\vert f(\nu,\rho) \vert \leq \tilde{C} D   \left( M + 1 + \rho \right)^{2M+1}\cdot e^{-\varepsilon \rho}\leq C\cdot e^{-\frac{\varepsilon}{2}\rho}=: g_K(\rho),
\end{align*}
where $C:=\sup_{\rho \geq 0} \{ \tilde{C}D(M+1+\rho)^{2M+1}\cdot e^{-\frac{\varepsilon}{2}\rho} \}\in (0,\infty)$. That function $g_K$ is obviously integrable over $[0,\infty)$ and satisfies $\sup_{\nu\in K}\vert f(\nu,\rho) \vert\leq g_K(\rho)$ for all $\rho\in [0,\infty)$. Hence, $F$ must be holomorphic.
\end{proof}

\begin{lemma}
\label{lemma:SecondEstimateProductLegendreFunctions}
Let $\nu\in (0,\infty)$ be fixed.
\begin{itemize}
\item[$(i)$] For any $b \in (0,\infty)$ there exist constants $C,D>0$ \emph{(}depending on $\nu$ and b\emph{)} such that for all $a\in (0,\infty)$ and $\mu>0$:
\begin{align}
\label{equation:SecondEstimateProductLegendreFunctions}
\vert P_{\nu}^{-\mu}(\cosh(a))Q_{\nu}^{\mu}(\cosh(b)) \vert \leq Ce^{\left(\nu+\frac{1}{2}\right)a} \cdot  \mu^{\nu+\frac{1}{2}} \cdot \left(D\cdot \sinh\left(\frac{a}{2}\right)\right)^{\mu}.
\end{align}
\item[$(ii)$] For all $a,b\in (0,\infty)$
\begin{align}
\label{equation:SecondEstimateProductLegendreFunctions2}
\vert P_{\nu}(\cosh(a))Q_{\nu}(\cosh(b)) \vert \leq e^{\nu(a-b)}\cdot e^{\frac{a}{2}} \ln\left(\frac{\cosh(b)+1}{\cosh(b)-1}\right).
\end{align}
\end{itemize}
\end{lemma}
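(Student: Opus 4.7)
The strategy is to estimate each of the two Legendre factors separately via a suitable integral representation and then multiply. For part $(i)$, I will start from the Mehler--Dirichlet formula
\[
P_\nu^{-\mu}(\cosh a)=\frac{(\sinh a)^{-\mu}}{\Gamma(\mu+\tfrac12)\sqrt{\pi/2}}\int_0^a(\cosh a-\cosh t)^{\mu-\frac12}\cosh((\nu+\tfrac12)t)\,dt,
\]
valid for $\Re\mu>-\tfrac12$ (and in particular for all $\mu>0$). Bounding $\cosh((\nu+\tfrac12)t)\leq e^{(\nu+1/2)a}$ pulls out the desired exponential factor. For the remaining $t$-integral I substitute $\sinh(t/2)=u\sinh(a/2)$, converting it into a beta integral: $\int_0^a(\cosh a-\cosh t)^{\mu-\frac12}\,dt\leq 2^{\mu+\frac12}\sinh^{2\mu}(a/2)\cdot\frac{\sqrt\pi\,\Gamma(\mu+\frac12)}{2\,\Gamma(\mu+1)}$. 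Simplifying via $\sinh a=2\sinh(a/2)\cosh(a/2)$ yields the clean estimate
\[
|P_\nu^{-\mu}(\cosh a)|\leq \frac{e^{(\nu+\frac12)a}\tanh^\mu(a/2)}{\Gamma(\mu+1)}\qquad (\mu>0).
\]

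For $Q_\nu^\mu(\cosh b)$ I will use the integral representation \eqref{equation:RepresentationLegendreQIntegral} already invoked in Lemma \ref{lemma:EstimateProductLegendreFunctions}, bounding $(\cosh b+\cos t)^{\mu-\nu-1}\leq(\cosh b+1)^\mu(\cosh b-1)^{-\nu-1}$ uniformly in $t\in[0,\pi]$. The remaining $t$-integral reduces to the constant $\int_0^\pi\sin^{2\nu+1}(t)\,dt$, so I obtain an inequality $|Q_\nu^\mu(\cosh b)|\leq A_b\,\Gamma(\mu+\nu+1)\bigl((\cosh b+1)/\sinh b\bigr)^\mu$ with $A_b$ depending only on $\nu$ and $b$. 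Multiplying the two estimates, using $\tanh(a/2)\leq\sinh(a/2)$ (since $\cosh(a/2)\geq 1$) and setting $D:=(\cosh b+1)/\sinh b$, produces
\[
|P_\nu^{-\mu}(\cosh a)Q_\nu^\mu(\cosh b)|\leq A_b\,e^{(\nu+\frac12)a}\,\frac{\Gamma(\mu+\nu+1)}{\Gamma(\mu+1)}\,(D\sinh(a/2))^\mu.
\]
By the gamma-ratio asymptotic \eqref{equation:GammaQuotAsymp}, $\Gamma(\mu+\nu+1)/\Gamma(\mu+1)\leq C_1\mu^\nu\leq C_1\mu^{\nu+\frac12}$ for all $\mu\geq 1$, while for $\mu\in(0,1]$ this ratio is bounded by a constant; enlarging $C$ suitably absorbs the latter range, and $(i)$ follows.

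Part $(ii)$ is more elementary. Using the standard Laplace integrals $P_\nu(\cosh a)=\frac{1}{\pi}\int_0^\pi(\cosh a+\sinh a\cos t)^\nu\,dt$ and $Q_\nu(\cosh b)=\int_0^\infty(\cosh b+\sinh b\cosh t)^{-\nu-1}\,dt$, the crude bounds $\cosh a+\sinh a\cos t\leq e^a$ and $(\cosh b+\sinh b\cosh t)^{-\nu}\leq e^{-\nu b}$ (valid for $\nu>0$) give $P_\nu(\cosh a)\leq e^{\nu a}$ and $Q_\nu(\cosh b)\leq e^{-\nu b}Q_0(\cosh b)=\tfrac12 e^{-\nu b}\log\tfrac{\cosh b+1}{\cosh b-1}$. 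Multiplying yields a bound actually stronger than the claimed one, since $\tfrac12\leq e^{a/2}$ whenever $a>0$.

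The main obstacle lies in $(i)$ and concerns the behaviour at small $\mu$: as $\mu\to 0^+$ with $a>0$ fixed, the left-hand side tends to the positive constant $P_\nu(\cosh a)Q_\nu(\cosh b)$, whereas the right-hand side tends to $0$ because of the factor $\mu^{\nu+\frac12}$. The inequality as literally written is therefore genuinely meaningful only for $\mu$ bounded away from $0$, which is the case in the intended applications, where $\mu$ will range over positive integers; with that implicit restriction the estimates above complete the proof.
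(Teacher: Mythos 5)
Your argument is essentially the paper's: the Mehler--Dirichlet integral for $P_{\nu}^{-\mu}(\cosh(a))$ to extract the factors $e^{(\nu+\frac{1}{2})a}$ and $\sinh(\frac{a}{2})^{\mu}$, the representation \eqref{equation:RepresentationLegendreQIntegral} for $Q_{\nu}^{\mu}(\cosh(b))$ to extract $\Gamma(\nu+1+\mu)$ and a $\mu$-th power of a constant depending on $b$, and then the gamma-ratio estimate \eqref{equation:GammaQuotAsymp}; the only differences are cosmetic (you evaluate the beta integral exactly where the paper bounds the corresponding integral crudely by $2\sqrt{2}$, you take $D=(\cosh(b)+1)/\sinh(b)$ instead of $2\coth(b)$, and in $(ii)$ you use the Laplace integrals rather than specialising the $\mu=0$ case of the bounds from $(i)$). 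Your closing caveat about small $\mu$ is correct and in fact exposes a defect in the paper's own proof: the paper asserts that $\vert\Gamma(\nu+1+\mu)/\Gamma(\frac{1}{2}+\mu)\vert\leq\tilde{C}\mu^{\nu+\frac{1}{2}}$ for \emph{all} $\mu>0$, which fails as $\mu\searrow 0$ since the left side tends to $\Gamma(\nu+1)/\sqrt{\pi}>0$; correspondingly, \eqref{equation:SecondEstimateProductLegendreFunctions} as stated cannot hold near $\mu=0$ because its left-hand side tends to $P_{\nu}(\cosh(a))Q_{\nu}(\cosh(b))>0$ while its right-hand side tends to $0$. The estimate is only used with $\mu$ ranging over a set bounded away from zero (multiples of $\pi/\gamma$ and the poles $p_{\ell}$ in the proof of Lemma \ref{lemma:PropertiesOfH}), so the lemma should simply be restated for $\mu\geq\mu_0>0$ with constants depending also on $\mu_0$, exactly as you propose.
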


\begin{proof}
From \citep[formula 8.715 1]{Gradshteyn} we know that for all $\mu \geq 0$ and $a>0$:
\begin{align*}
P_{\nu}^{-\mu}(\cosh(a)) = \frac{\sqrt{2}}{\sqrt{\pi}\sinh(a)^{\mu}\cdot \Gamma(\frac{1}{2} + \mu)} \int\limits_{0}^{a} \cosh\bigg(\left(\nu+\frac{1}{2}\right)x\bigg) \cdot (\cosh(a)-\cosh(x))^{\mu-\frac{1}{2}}\, dx,
\end{align*}
and therefore
\begin{align}
\label{equation:SecondEstimateProductLegendreFunctionsBeweis1}
\vert P_{\nu}^{-\mu}(\cosh(a)) \vert \leq \sqrt{\frac{2}{\pi}} \cdot \frac{e^{\left(\nu+\frac{1}{2}\right)a} (\cosh(a)-1)^{\mu}}{\sinh(a)^{\mu}\cdot \vert \Gamma(\frac{1}{2} + \mu)\vert} \int\limits_{0}^{a} \frac{1}{\sqrt{\cosh(a)-\cosh(x)}}\, dx. 
\end{align}
Since $\cosh(t)=2 \sinh^2\left(\frac{t}{2}\right) + 1$ for all $t\in\mathbb{R}$, the above integral can be estimated as follows:
\begin{align*}
\int\limits_{0}^{a} \frac{1}{\sqrt{\cosh(a)-\cosh(x)}}\, dx &= \frac{1}{\sqrt{2}}\int\limits_{0}^{a} \frac{1}{\sqrt{\sinh^2\left(\frac{a}{2}\right)-\sinh^2\left(\frac{x}{2}\right)}}\, dx \\
&\leq \frac{1}{\sqrt{2\sinh\left(\frac{a}{2}\right)}} \int\limits_{0}^{a} \frac{1}{\sqrt{\sinh\left(\frac{a}{2}\right)-\sinh\left(\frac{x}{2}\right)}}\, dx,
\end{align*}
and using $\sinh\left(\frac{a}{2}\right)-\sinh\left(\frac{x}{2}\right)\geq \frac{a}{2}-\frac{x}{2}$ for all $x\in (0,a)$ we obtain:
\begin{align}
\label{equation:SecondEstimateProductLegendreFunctionsBeweis2}
\int\limits_{0}^{a} \frac{1}{\sqrt{\cosh(a)-\cosh(x)}}\, dx \leq \frac{1}{\sqrt{\sinh\left(\frac{a}{2}\right)}} \int\limits_{0}^{a} \frac{1}{\sqrt{a-x}}\, dx = 2\sqrt{\frac{a}{\sinh\left(\frac{a}{2}\right)}}\leq 2  \sqrt{2}.
\end{align}
Because of \eqref{equation:SecondEstimateProductLegendreFunctionsBeweis1} and \eqref{equation:SecondEstimateProductLegendreFunctionsBeweis2} we have for all $\mu \geq 0$ and $a>0$:
\begin{align}
\label{equation:BeweisAbschaetzungLegendrePGrob}
\vert P_{\nu}^{-\mu}(\cosh(a)) \vert \leq \frac{3\cdot  e^{\left(\nu+\frac{1}{2}\right)a} }{ \vert \Gamma(\frac{1}{2} + \mu)\vert} \cdot \left(\frac{\cosh(a)-1}{\sinh(a)}\right)^{\mu}\leq \frac{3\cdot  e^{\left(\nu+\frac{1}{2}\right)a} }{ \vert \Gamma(\frac{1}{2} + \mu)\vert} \cdot \left(\sinh\left(\frac{a}{2}\right)\right)^{\mu},
\end{align}
where, for the last equality, we used $\frac{\cosh(a)-1}{\sinh(a)}=\frac{2\sinh^2(\frac{a}{2})}{\sinh(a)}<\sinh(\frac{a}{2})$.

 Further, from \eqref{equation:RepresentationLegendreQIntegral} (or see \citep[formula (5) on p. 155]{Erdelyi}), for all $\mu \geq 0$ and $b>0$:
\begin{align}
\label{equation:SecondEstimateProductLegendreFunctionsBeweis3}
\vert Q_{\nu}^{\mu}(\cosh(b))\vert = \frac{  \Gamma(\nu+1+\mu) }{\Gamma(\nu+1)2^{\nu+1}\sinh(b)^{\mu}}\int\limits_{0}^{\pi} \frac{\sin(t)^{2\nu+1}\cdot(\cosh(b)+\cos(t))^{\mu}}{(\cosh(b)+\cos(t))^{\nu+1}} \,dt.
\end{align}
$(i)$. We will estimate the integrand given above. Note that for any $z>1$ the function $(-1,1)\ni x\mapsto \frac{1-x^2}{z+x}\in\mathbb{R}$ has a global maximum at $x=-z+\sqrt{z^2-1}$, where the maximal value is $2z-2\sqrt{z^2-1}$. Hence, with $z:=\cosh(b)$ we have for all $t\in(0,\pi)$ 
\begin{align}
\label{equation:AbschaetzungUnterIntegralFein}
 \frac{\sin(t)^{2\nu+1}}{(\cosh(b)+\cos(t))^{\nu+1}} &= \left(\frac{1-\cos^2(t)}{\cosh(b)+\cos(t)}\right)^{\nu}\cdot \frac{\sin(t)}{\cosh(b)+\cos(t)} \leq \frac{2^{\nu}e^{-b\nu}\sin(t)}{(\cosh(b)-1)}.
\end{align}
Thus, we can estimate the integral on the right-hand side of \eqref{equation:SecondEstimateProductLegendreFunctionsBeweis3} as follows:
\begin{align*}
\int\limits_{0}^{\pi} \frac{\sin(t)^{2\nu+1}\cdot(\cosh(b)+\cos(t))^{\mu}}{(\cosh(b)+\cos(t))^{\nu+1}} \,dt \,&\leq \frac{2^{\nu}}{(\cosh(b)-1)}\int\limits_{0}^{\pi} \sin(t)(\cosh(b)+\cos(t))^{\mu} \,dt \\
&=\frac{2^{\nu}}{(\cosh(b)-1)}\cdot \frac{(\cosh(b)+1)^{\mu+1}-(\cosh(b)-1)^{\mu+1}}{\mu+1} \\
& \leq \frac{2^{\nu+1}  \cosh(b)^{\mu+1} 2^{\mu}}{(\cosh(b)-1)}.
\end{align*}
Using \eqref{equation:SecondEstimateProductLegendreFunctionsBeweis3} we have
\begin{align}
\label{equation:SecondEstimateProductLegendreFunctionsBeweis4}
\vert Q_{\nu}^{\mu}(\cosh(b))\vert \leq \frac{  \Gamma(\nu+1+\mu) }{\Gamma(\nu+1)}\cdot \frac{\cosh(b)}{\cosh(b)-1}\cdot \left(2\coth(b)\right)^{\mu}.
\end{align}

Let $\tilde{C}>0$ be some constant such that $\big\vert\frac{\Gamma(\nu+1+\mu)}{ \Gamma(\frac{1}{2} + \mu)}\big\vert \leq \tilde{C}\cdot \mu^{\nu+\frac{1}{2}}$ for all $\mu>0$, where such a constant exists because of \eqref{equation:GammaQuotAsymp}. If we set $C:=\tilde{C}\cdot \frac{3 \cosh(b)}{(\cosh(b)-1)\Gamma(\nu+1)}$ and $D:=2\coth(b)$, then the lemma follows from \eqref{equation:BeweisAbschaetzungLegendrePGrob} and \eqref{equation:SecondEstimateProductLegendreFunctionsBeweis4}.

$(ii)$. Similarly, for $\mu=0$ we obtain from equation \eqref{equation:SecondEstimateProductLegendreFunctionsBeweis3}:
\begin{align*}
\vert Q_{\nu}(\cosh(b))\vert &= \frac{1}{2^{\nu+1}}\int\limits_{0}^{\pi} \frac{\sin(t)^{2\nu+1}}{(\cosh(b)+\cos(t))^{\nu+1}} \,dt \\
&= \frac{1}{2^{\nu+1}}\int\limits_{0}^{\pi} \left( \frac{1-\cos^2(t)}{\cosh(b)+\cos(t)}\right)^{\nu} \frac{\sin(t)}{\cosh(b)+\cos(t)} \,dt \\
& \leq \frac{e^{-b \nu }}{2}\int\limits_{0}^{\pi} \frac{\sin(t)}{ \cosh(b)+\cos(t)} \,dt \\
& = \frac{e^{-b \nu }}{2}\ln\left(\frac{\cosh(b)+1}{\cosh(b)-1}\right).
\end{align*}
The statement follows from the above estimate together with \eqref{equation:BeweisAbschaetzungLegendrePGrob}.
\end{proof}
        \chapter{Spectral invariants for polygons}
\label{chapter:polygons}

The main purpose of this chapter is to compute the heat invariants for any hyperbolic polygon. Let us start with some basic definitions to clarify the terminology. 

\begin{definition}
\label{definition:Polygon}
A \emph{hyperbolic polygon} is defined as a relatively compact domain in the hyperbolic plane $\mathbb{H}^2$ whose boundary is a union of finitely many geodesic segments.

Analogously, the terms \emph{Euclidean polygon} and \emph{spherical polygon} are defined as relatively compact domains  in the Euclidean plane $\mathbb{R}^2$ and the unit sphere $\mathbb{S}^2$, respectively, with a piecewise geodesic boundary.

\end{definition}

We will have more to say about polygons below, in particular about the definition of \emph{edges}, \emph{vertices} and \emph{angles} of a polygon. But for the moment it is sufficient to think of all these notions on an intuitive level as we are used to. The following subsets of the hyperbolic plane will play an important role for our purposes.


\begin{definition}
\label{definition:Wedge}
A domain $W\subset \mathbb{H}^2$ is called a \emph{hyperbolic wedge} if there exist some $\gamma\in (0,2\pi]$ and polar coordinates $(a,\alpha)$ with respect to some base point $P\in\mathbb{H}^2$ (where the angle $\alpha$ is measured with respect to a chosen geodesic ray emanating from $P$ and a chosen orientation), such that $W$ is parametrised as
\begin{align*}
W=\{\, (a,\alpha) \mid 0<a<\infty,\, 0<\alpha <\gamma \,\}.
\end{align*}
The point $P$ is called the \emph{vertex} and $\gamma$ is called the \emph{angle} of the wedge. Thus, loosely speaking, a wedge is any domain bounded by two geodesic rays emanating from one point, its vertex. 

Again, the terms \emph{Euclidean wedge} and \emph{spherical wedge} are defined analogously. Note that spherical wedges are relatively compact.
\end{definition} 


Before we start our study of the heat invariants for hyperbolic polygons, we want to informally point out some previous publications related to our problem. Most of the works in this direction deal with Euclidean polygons, such that the study of spectral invariants for Euclidean polygons has a long and also intense history. Early publications focusing more or less on Euclidean polygons include \cite{Weyl}, \cite{CourantHilbert}, \cite{BaileyBrownell}, \cite{Fedosov}, \cite{Kac}, \cite{McKean}. Regarding the heat invariants for Euclidean polygons, a rigorous and complete investigation can be found in \cite{VanDenBerg}. M. van den Berg and S. Srisatkunarajah proved that the heat trace $Z_{\Omega_{\mathbb{R}}}(t)$ for any Euclidean polygon $\Omega_{\mathbb{R}}$ with $M$ vertices and interior angles $\gamma_1,...,\gamma_M$ has an asymptotic expansion of the form
\begin{align}
\label{equation:EuclideanAsymp}
Z_{\Omega_{\mathbb{R}}}(t) = \frac{\vert \Omega_{\mathbb{R}} \vert}{4\pi t} - \frac{\vert \partial\Omega_{\mathbb{R}} \vert}{8\sqrt{\pi t}} + \sum\limits_{i=1}^{M}\frac{\pi^2-\gamma_i^2}{24\pi\gamma_i} + O\left( e^{-\frac{c}{t}} \right), 
\end{align}
as $t\searrow 0$, with some constant $c>0$. Their approach to prove this asymptotic estimate was basically a combination of two previous results. Firstly, they used the so-called \emph{principle of not feeling the boundary} due to M. Kac (see \cite{Kac}). By this principle Kac approximated the heat kernel $K_{\Omega_{\mathbb{R}}}(x,x;t)$ by simpler functions with the same asymptotic expansion as $t\searrow 0$. Roughly speaking, if the point $x\in\Omega_{\mathbb{R}}$ is near to a vertex of the polygon, then the heat kernel $K_{\Omega_{\mathbb{R}}}$ is approximated by the heat kernel of a Euclidean wedge; if the point $x\in\Omega_{\mathbb{R}}$ is close to the boundary but not near to a vertex, it is approximated by the heat kernel of a half-plane; if the point is far from the boundary $\partial\Omega_{\mathbb{R}}$, the heat kernel of the whole plane is used as an approximation. Unfortunately, Kac was not able to find a simple enough formula for the heat kernel of a Euclidean wedge; he could only find a complicated integral expression for it. Secondly, M. van den Berg and S. Srisatkunarajah used a result due to D.B. Ray. It is claimed in \cite{McKean} that Ray obtained the constant term on the right-hand side of \eqref{equation:EuclideanAsymp} using an explicit formula for the Green's function of a wedge. This formula for the Green's function is stated in \cite{McKean} (and in \cite{VanDenBerg}) without proof and Ray never published his results. Using this formula as a starting point, van den Berg and Srisatkunarajah could deduce a simpler expression for the heat kernel of a wedge compared to that of Kac.

The principle of not feeling the boundary can be used in the same way to solve the analogous problem for hyperbolic polygons, as we will show in the first two sections of this chapter. The big challenge for hyperbolic polygons, as it was for Euclidean polygons, is to obtain a workable formula for the heat kernel of a wedge (see Section \ref{section:Green}).

The heat invariants for spherical polygons were published in \cite{Watson}, even though that article contains many inaccuracies and typos. Watson's approach is similar to that of \cite{VanDenBerg} for Euclidean polygons. He, too, uses the principle of not feeling the boundary to approximate the heat trace function. One advantage in the spherical case is that any spherical wedge, also commonly known as \emph{spherical lune}, is a relatively compact domain and all eigenvalues of its Dirichlet Laplacian are known. Explicit formulas for all eigenvalues of any spherical lune are stated in \cite{Gromes}, such that Watson could use them in his work \cite{Watson}. He shows that the asymptotic expansion of the heat trace $Z_{\Omega_{\mathbb{S}}}(t)$ for any spherical polygon $\Omega_{\mathbb{S}}$ has the form
\begin{align}
\label{equation:SphereAsymp}
Z_{\Omega_{\mathbb{S}}}(t) \overset{t\downarrow 0}{\sim} \frac{\vert \Omega_{\mathbb{S}} \vert}{4\pi t} - \frac{\vert \partial\Omega_{\mathbb{S}} \vert}{8\sqrt{\pi t}} + \sum\limits_{k=0}^{\infty}\left( i_k^{ \mathbb{S} } + b_k^{\mathbb{S}}\cdot t^{\frac{1}{2}} + \nu_k^{\mathbb{S}}  \right)t^k,
\end{align}
where $i_{k}^{ \mathbb{S} },b_{k}^{ \mathbb{S} },\nu_{k}^{ \mathbb{S} }$ stand for contributions from the interior, the boundary, and the vertices, respectively, and he obtains explicit formulas for all coefficients appearing in \eqref{equation:SphereAsymp}.

The asymptotic expansion of the heat trace for hyperbolic polygons is very similar to the spherical case. We will prove in Section \ref{section:ExpansionTrace} (see Theorem \ref{theorem:AsymptoticExpansionHeatTraceHyperbolPolygon}) that the heat trace $Z_{\Omega_{\mathbb{H}}}(t)$ for any hyperbolic polygon $\Omega_{\mathbb{H}}$ has an asymptotic expansion as in \eqref{equation:SphereAsymp}, i.e.
\begin{align}
\label{equation:HyperbAsymp}
Z_{\Omega_{\mathbb{H}}}(t) \overset{t\downarrow 0}{\sim} \frac{\vert \Omega_{\mathbb{H}} \vert}{4\pi t} - \frac{\vert \partial \Omega_{\mathbb{H}} \vert}{8\sqrt{\pi t}} + \sum\limits_{k=0}^{\infty}\left( i_k^{ \mathbb{H} } + b_k^{\mathbb{H}}\cdot t^{\frac{1}{2}} + \nu_k^{\mathbb{H}}  \right)t^k.
\end{align}
We will also obtain explicit formulas for all coefficients in \eqref{equation:HyperbAsymp}. The coefficients in \eqref{equation:HyperbAsymp} and \eqref{equation:SphereAsymp} are very similar to each other, only the sign changes in every other coefficient. More precisely, we have $i_k^{ \mathbb{H}} = (-1)^{k+1} i_k^{ \mathbb{S}}$, $b_k^{ \mathbb{H}} = (-1)^{k+1} b_k^{ \mathbb{S}}$ and $ \nu_k^{ \mathbb{H}}=\left(-1\right)^{k}  \nu_k^{ \mathbb{S}}$ for all $k\in \mathbb{N}_{0}$. Morally this is ``obvious'' for $i_k^{\mathbb{H}}$ and $b_k^{\mathbb{H}}$ because of the principle of not feeling the boundary and general facts from \citep{GilkeyBranson} (and \citep{Watson}); thus our results confirm Watson's formulas. It is remarkable for $\nu_k^{\mathbb{H}}$, since there is no general theory known for the contributions of vertices to the heat invariants like for the contributions from the interior and the contributions from a smooth boundary.

Let us return to Definition \ref{definition:Polygon} and examine some aspects of hyperbolic polygons. The motivation for that definition comes from the article \citep{Watson} in which the heat invariants are stated for spherical polygons as defined above. Thus it appears natural to treat the analogous setting in the hyperbolic case, too. Moreover, even though there is no definition given of the term \emph{polygon} in the article \citep{VanDenBerg}, the results and the reasoning of that article are basically valid for Euclidean polygons in the sense of Definition \ref{definition:Polygon}. It should be noted that the authors of the article \citep{VanDenBerg} seem to consider more general polygonal domains of the Euclidean plane than it is done in the other articles cited above. For example, in contrast to the other articles, they seem to include polygons with angles of measure $2\pi$ (see \citep[formulas $(1.6)$, $(2.2)$ or $(2.14)$]{VanDenBerg}). Note that the angle $2\pi$ is excluded in \citep[formula $(4a)$]{McKean} even though the contrary is claimed in \citep[formula $(1.6)$]{VanDenBerg}. 

If the boundary of a hyperbolic polygon is the union of (finitely many) piecewise geodesic simple closed curves, as it is seen in Fig. \ref{ExamplesSimplePolygons}, then it is evident how to define the edges, vertices, and angles. However, our definition of the term hyperbolic polygon is more general and also includes some pathological cases which are usually not treated as polygons (see Fig. \ref{Polygons}). In order to get an idea of the definitions we have marked the six polygons of Fig. \ref{ExamplesSimplePolygons} and Fig. \ref{Polygons} as follows: the vertices are marked by a dot, the (interior) angles are denoted by $\gamma_1, \gamma_2, \gamma_3,...$, and the edges correspond to the straight line segments between two dots.


\begin{figure} [ht] 
 \centering
\begin{tikzpicture}

\node[circle,fill,inner sep=1pt] at (-5,-3) {};
\node[circle,fill,inner sep=1pt] at (-2,-3) {};
\node[circle,fill,inner sep=1pt] at (-2,0) {};
\node[circle,fill,inner sep=1pt] at (-4,-0.5) {};
\node[circle,fill,inner sep=1pt] at (-5,0) {};

\draw (-5,-3) -- (-2,-3) -- (-2,0) -- (-4,-0.5) -- (-5,0) -- (-5,-3);

\node[circle,fill,inner sep=1pt] at (-0.5,-3) {};
\node[circle,fill,inner sep=1pt] at (2.5,-3) {};
\node[circle,fill,inner sep=1pt] at (2,1) {};
\node[circle,fill,inner sep=1pt] at (0,0.8) {};

\draw (-0.5,-3) -- (2.5,-3) -- (2,1) -- (0,0.8) -- (-0.5,-3);


\node[circle,fill,inner sep=1pt] at (1,-2.3) {};
\node[circle,fill,inner sep=1pt] at (1.5,0) {};
\node[circle,fill,inner sep=1pt] at (0.8,-0.3) {};
\node[circle,fill,inner sep=1pt] at (0.2,-1) {};

\draw  (1,-2.3) -- (1.5,0) -- (0.8,-0.3) -- (0.2,-1) -- (1,-2.3);


\node[circle,fill,inner sep=1pt] at (4,-3) {};
\node[circle,fill,inner sep=1pt] at (8,-3) {};
\node[circle,fill,inner sep=1pt] at (7,1) {};

\node[circle,fill,inner sep=1pt] at (5.6,-2.5) {};
\node[circle,fill,inner sep=1pt] at (6.6,-2.5) {};
\node[circle,fill,inner sep=1pt] at (6.6, -1.7) {};
\node[circle,fill,inner sep=1pt] at (5.6,-1.7) {};

\node[circle,fill,inner sep=1pt] at (7.1,-1.1) {};
\node[circle,fill,inner sep=1pt] at (6.8,-0.2) {};
\node[circle,fill,inner sep=1pt] at (6.4,-0.8) {};

\draw (4,-3) -- (8,-3) -- (7,1) -- (4,-3); 

\draw (5.6,-2.5) -- (6.6,-2.5) -- (6.6, -1.7) -- (5.6,-1.7) -- (5.6,-2.5); 

\draw  (7.1,-1.1) -- (6.8,-0.2) -- (6.4,-0.8) -- (7.1,-1.1); 


\draw[dashed] (-4.5,-3) arc (0:90:5mm);
\node[above right] at (-5.05,-3.05) {\tiny $\gamma_1$};

\draw[dashed] (-2.5,-3) arc (180:90:5mm);
\node[above left] at (-1.95,-3.05) {\tiny $\gamma_2$};

\draw[dashed] (-2,-0.5) arc (270:197:5mm);
\node[below] at (-2.15,-0.05) {\tiny $\gamma_3$};

\draw[dashed] (-3.66,-0.41) arc (14:-205:3.5mm);
\node[below] at (-4,-0.5) {\tiny $\gamma_4$};

\draw[dashed] (-5,-0.5) arc (270:330:5mm);
\node[below] at (-4.82,-0.05) {\tiny $\gamma_5$};


\draw[dashed] (0,-3) arc (0:80:5mm);
\node[above] at (-0.3,-3) {\tiny $\gamma_1$};

\draw[dashed] (2,-3) arc (180:100:5mm);
\node[above] at (2.3,-3) {\tiny $\gamma_2$};

\draw[dashed] (2,0.5) arc (270:190:5mm);
\node[below] at (1.8,1) {\tiny $\gamma_3$};

\draw[dashed] (0,0.3) arc (-90:7:5mm);
\node[below] at (0.2,0.8) {\tiny $\gamma_4$};


\draw[dashed] (0.84,-2.05) arc (-238.4:77.74:3mm);
\node[below] at (1,-2.2) {\tiny $\gamma_5$};

\draw[dashed] (0.4,-0.77) arc (49.4:301.6:3mm);
\node[left] at (0.33,-1) {\tiny $\gamma_6$};

\draw[dashed] (1.08,-0.18) arc (23.2:229.4:3mm);
\node[above] at (0.75,-0.4) {\tiny $\gamma_7$};

\draw[dashed] (1.44,-0.29) arc (-102.3:190.9:3mm);
\node[above] at (1.6,-0.1) {\tiny $\gamma_8$};


\draw[dashed] (4.5,-3) arc (0:50:5mm);
\node[above] at (4.3,-3.1) {\tiny $\gamma_1$};

\draw[dashed] (7.5,-3) arc (180:105:5mm);
\node[above] at (7.78,-3.05) {\tiny $\gamma_2$};

\draw[dashed] (7.12,0.51) arc (293.58:240:5mm);
\node[below] at (6.95,0.85) {\tiny $\gamma_3$};


\draw[dashed] (5.9,-2.5) arc (360:90:3mm);
\node[below] at (5.5,-2.4) {\tiny $\gamma_4$};

\draw[dashed] (6.3,-2.5) arc (-180:90:3mm);
\node[below] at (6.7,-2.4) {\tiny $\gamma_7$};

\draw[dashed] (6.6, -2) arc (-90:180:3mm);
\node[above] at (6.7, -1.8) {\tiny $\gamma_6$};

\draw[dashed] (5.9,-1.7) arc (0:270:3mm);
\node[above] at (5.5,-1.8) {\tiny $\gamma_5$};


\draw[dashed] (6.57,-0.55) arc (56.3:337:3mm);
\node[left] at (6.58,-0.9) {\tiny $\gamma_8$};

\draw[dashed] (6.89,-0.48) arc (-71.56:236.3:3mm);
\node[above] at (6.8,-0.3) {\tiny $\gamma_{9}$};

\draw[dashed] (6.82,-0.98) arc (-203:108.4:3mm);
\node[below] at (7.1,-1.05) {\tiny $\gamma_{10}$};

\end{tikzpicture}
\caption{ Simple polygons } \label{ExamplesSimplePolygons}
\end{figure}
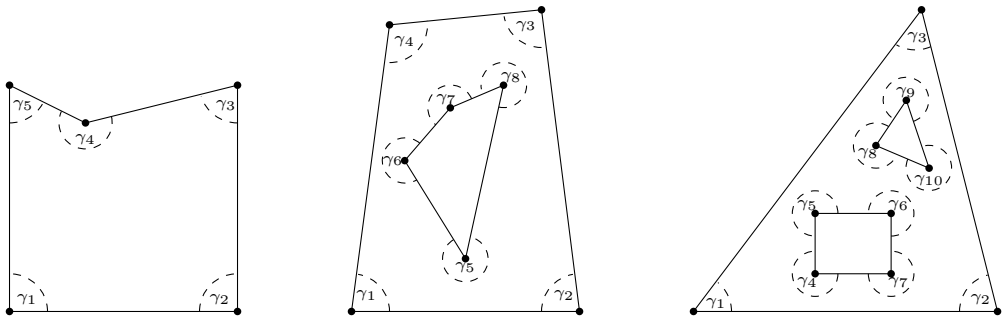

\begin{figure} [ht] 
 \centering
\begin{tikzpicture}


\draw (-5,-3) -- (-2,-3) -- (-2,0) -- (-5,-0.5) -- (-5,-3); 
\draw (-4.4,-1.9) -- (-3.5,-1.9) -- (-3.2,-0.9) -- (-3.5,-1.9) -- (-2.5,-1.9); 

\draw (0,-3) -- (2,-3) -- (2,1) -- (0,1) -- (0,-1) -- (1,-1) -- (0,-1) -- (0,-3);

\draw (4,-3) -- (8,-3) -- (8,1) -- (4,1) -- (4,-3); 
\draw (5.3, -2) -- (7.3,-0.5) -- (7.3,-2)--(5.3,-0.5); 


\node[circle,fill,inner sep=1pt] at (-5,-3) {};
\node[circle,fill,inner sep=1pt] at (-2,-3) {};
\node[circle,fill,inner sep=1pt] at (-2,0) {};
\node[circle,fill,inner sep=1pt] at (-5,-0.5) {};

\draw[dashed] (-4.5,-3) arc (0:90:5mm);
\node[above right] at (-5.05,-3.05) {\tiny $\gamma_1$};

\draw[dashed] (-2.5,-3) arc (180:90:5mm);
\node[above left] at (-1.95,-3.05) {\tiny $\gamma_2$};

\draw[dashed] (-2,-0.5) arc (258:197:5mm);
\node[below] at (-2.15,-0.05) {\tiny $\gamma_3$};

\draw[dashed] (-5,-1) arc (-90:8:5mm);
\node[below] at (-4.8,-0.5) {\tiny $\gamma_4$};

\node[circle,fill,inner sep=1pt] at (-4.4,-1.9) {};
\node[circle,fill,inner sep=1pt] at (-3.5,-1.9) {};
\node[circle,fill,inner sep=1pt] at (-3.2,-0.9) {};
\node[circle,fill,inner sep=1pt] at (-2.5,-1.9) {};

\draw[dashed] (-4.1,-1.9) arc (0:360:3mm);
\node[left] at (-4.3,-1.9) {\tiny $\gamma_5$};

\draw[dashed] (-3.9,-1.9) arc (-180:0:4mm);
\node[below] at (-3.5,-1.85) {\tiny $\gamma_{10}$};

\draw[dashed] (-3.1,-1.9) arc (0:73.33:4mm);
\node[above] at (-3.3,-2) {\tiny $\gamma_8$};

\draw[dashed] (-3.385,-1.517) arc (73.33:180:4mm);
\node[above] at (-3.65,-1.95) {\tiny $\gamma_6$};

\draw[dashed] (-3.286,-1.187) arc (-106.67:253.33:3mm);
\node at (-3.15,-0.75) {\tiny $\gamma_7$};

\draw[dashed] (-2.8,-1.9) arc (-180:180:3mm);
\node[right] at (-2.6,-1.9) {\tiny $\gamma_9$};


\node[circle,fill,inner sep=1pt] at (0,-3) {};
\node[circle,fill,inner sep=1pt] at (2,-3) {};
\node[circle,fill,inner sep=1pt] at (2,1) {};
\node[circle,fill,inner sep=1pt] at (0,1) {};
\node[circle,fill,inner sep=1pt] at (0,-1) {};
\node[circle,fill,inner sep=1pt] at (1,-1) {}; 

\draw[dashed] (0.5,-3) arc (0:90:5mm);
\node[above] at (0.2,-3) {\tiny $\gamma_1$};

\draw[dashed] (1.5,-3) arc (180:90:5mm);
\node[above] at (1.8,-3) {\tiny $\gamma_2$};

\draw[dashed] (2,0.5) arc (270:180:5mm);
\node[below] at (1.8,1) {\tiny $\gamma_3$};

\draw[dashed] (0,0.5) arc (-90:0:5mm);
\node[below] at (0.2,1) {\tiny $\gamma_4$};

\draw[dashed] (0,-0.5) arc (90:-90:5mm);
\node[above] at (0.2,-1) {\tiny $\gamma_5$};

\draw[dashed] (0.65,-1) arc (-180:180:3.5mm);
\node[right] at (0.9,-1) {\tiny $\gamma_6$};

\node[below] at (0.2,-1) {\tiny $\gamma_7$};

\node[circle,fill,inner sep=1pt] at (4,-3) {};
\node[circle,fill,inner sep=1pt] at (8,-3) {};
\node[circle,fill,inner sep=1pt] at (8,1) {};
\node[circle,fill,inner sep=1pt] at (4,1) {};

\node[circle,fill,inner sep=1pt] at (5.3, -2) {};
\node[circle,fill,inner sep=1pt] at (7.3,-0.5) {};
\node[circle,fill,inner sep=1pt] at (7.3,-2) {};
\node[circle,fill,inner sep=1pt] at (5.3,-0.5) {};
\node[circle,fill,inner sep=1pt] at (6.3,-1.25) {};

\node[below] at (5.2, -1.9) {\tiny $\gamma_5$};
\node[left] at (6.35,-1.25) {\tiny $\gamma_{6}$};
\node[above] at (5.2,-0.6) {\tiny $\gamma_{7}$};
\node[above] at (6.35,-1.25) {\tiny $\gamma_8$};
\node[above] at (7.4,-0.6) {\tiny $\gamma_9$};
\node[below] at (7.4,-2) {\tiny $\gamma_{10}$};
\node[below] at (6.35,-1.25) {\tiny $\gamma_{11}$};

\draw[dashed] (4.5,-3) arc (0:90:5mm);
\node[above] at (4.2,-3) {\tiny $\gamma_1$};

\draw[dashed] (7.5,-3) arc (180:90:5mm);
\node[above] at (7.8,-3) {\tiny $\gamma_2$};

\draw[dashed] (8,0.5) arc (270:180:5mm);
\node[below] at (7.8,1) {\tiny $\gamma_3$};

\draw[dashed] (4,0.5) arc (-90:0:5mm);
\node[below] at (4.2,1) {\tiny $\gamma_4$};

\draw[dashed] (5.6, -1.8) arc (35.5:395.5:4mm);
\draw[dashed] (6.65, -1) arc (38:315:4mm);
\draw[dashed] (5.6,-0.7) arc (-35.5:324.5:4mm);
\draw[dashed] (7.3,-0.9) arc (-90:217:4mm);
\draw[dashed] (7.3,-1.6) arc (90:-217:4mm);

\end{tikzpicture}
\caption{ Polygons } \label{Polygons}
\end{figure}

Note that the number of edges, vertices, and angles of any polygon in Fig. \ref{ExamplesSimplePolygons} is the same, whereas it is not the same for the polygons of Fig. \ref{Polygons} (e.g. each polygon of Fig. \ref{Polygons} has more angles than vertices). Furthermore, angles of measure $\pi$ and $2\pi$ may appear in general. For example, in the left polygon of Fig. \ref{Polygons} the angle $\gamma_{10}$ has measure $\pi$ and $\gamma_5,\gamma_7,\gamma_9$ are of size $2\pi$. We proceed with precise definitions of edges, vertices, and angles.
 
\begin{definition}
\label{definition:HyperbolDistanceAndGeodesicDisc}
For any two points $x,y\in\mathbb{H}^2$ we denote the (hyperbolic) distance between them by $d(x,y)$.
Further, for any $r>0$ and $P\in\mathbb{H}^2$ let 
\begin{align}
\hyperball{r}{P}:=\{\, x\in\mathbb{H}^2 \mid d(x,P)<r \,\}
\end{align}
denote the geodesic disc in the hyperbolic plane of radius $r$ and center $P$. 
\end{definition}

\begin{definition}
\label{definition:EdgeVertex}
Let $\Omega\subset\mathbb{H}^2$ be a hyperbolic polygon and let $P\in\partial\Omega$ be a boundary point. If there exists some $r>0$ such that $\hyperball{r}{P}\cap\partial\Omega$ is exactly one diameter of $\hyperball{r}{P}$, i.e. exactly one geodesic segment of length $2\cdot r$ and midpoint $P$, then we call $P$ a \emph{smooth boundary point} of $\Omega$. Otherwise $P$ is called a \emph{vertex} of $\Omega$.

A geodesic segment is called an \emph{edge} of $\Omega$, if it is contained in $\partial\Omega$ and both of its endpoints are vertices and there is no other vertex lying on it (except for the endpoints).

Hence $\Omega$ has finitely many vertices and the boundary $\partial \Omega$ consists of the union of finitely many edges which have no points in common except for vertices.
\end{definition}


\begin{definition}
\label{definition:Angle}
Let  $\Omega\subset\mathbb{H}^2$ be a hyperbolic polygon and let $P\in\partial\Omega$ be a vertex. Let $E_1, E_2$ be (not necessarily different) edges of $\Omega$ having $P$ as an endpoint. Let $W$ be a wedge with vertex $P$ and bounded by geodesic rays which contain $E_1$ and $E_2$, respectively. We choose polar coordinates $(a,\alpha)$ such that we have
\begin{align*}
W=\{\, (a,\alpha) \mid 0<a<\infty,\, 0<\alpha <\gamma \,\}
\end{align*}
for some $\gamma\in (0,2\pi]$. If there exists a $r>0$ such that
\begin{align*}
W\cap \hyperball{r}{P} \cap \Omega = \{\, (a,\alpha) \mid 0<a<r,\, 0<\alpha <\gamma \,\},
\end{align*}
then we call $W$ an \emph{angle} of $\Omega$ at $P$. For convenience we refer to $W$ by referring to the corresponding angle $\gamma$ of $W$.  
\end{definition}

Lastly, let us briefly discuss the meaning of the constants $\vert \Omega_{\mathbb{H}} \vert$ and $\vert \partial \Omega_{\mathbb{H}} \vert$ appearing in \eqref{equation:HyperbAsymp}. As usual, the former denotes the volume (or rather the area) of the polygon $\Omega_{\mathbb{H}}$. However, $\vert \partial \Omega_{\mathbb{H}} \vert$ does not always correspond to the length of the boundary $\partial\Omega_{\mathbb{H}}$, but is defined as follows.

\begin{definition}
\label{definition:VolumenVerallgemeinerteFlaeche}
Let $\Omega$ be a hyperbolic polygon. Suppose $\tilde{M}\in\mathbb{N}$ is the number of its edges and $E_1,...,E_{\tilde{M}}$ denote all edges of $\Omega$. Further, let $L(E_j)\in (0,\infty)$ be the length of the edge $E_j$ for all $j=1,...,\tilde{M}$. Let $k\in \{1,...,\tilde{M}\}$ be arbitrary and let $Q_k$ denote the midpoint of the edge $E_k$. If there exists some $r>0$ such that $\hyperball{r}{Q_k}\cap \Omega$ is the disjoint union of two (open) half-discs, then we define $\vert E_k \vert:=2\cdot L(E_k)$, and otherwise we set $\vert E_k \vert:=L(E_k)$. We now define $\vert \partial\Omega \vert:=\sum_{j=1}^{\tilde{M}} \vert E_j \vert$.

Note that if the boundary of $\Omega$ is the union of piecewise geodesic simple closed curves, then we have $\vert E_j \vert = L(E_j)$ for all $j=1,...,\tilde{M}$ so that $\vert \partial \Omega \vert$ indeed corresponds to the length of the boundary.
\end{definition}

The constants $\vert \partial \Omega_{\mathbb{R}} \vert$ and $\vert \partial \Omega_{\mathbb{S}} \vert$ appearing in the asymptotic expansions \eqref{equation:EuclideanAsymp} and \eqref{equation:SphereAsymp}, respectively, must be interpreted in an analogous manner so that the formulas are correct in general.

\section{Green's function and heat kernel of a wedge}
\label{section:Green}

Let $W\subset \mathbb{H}^2$ be a hyperbolic wedge with vertex $P \in\mathbb{H}^2$ and interior angle $\gamma\in (0,2\pi]$. Our aim in this section is to study the function $Z_{\gamma}$ defined by
\begin{align*}
Z_{\gamma}(t;r):= \int\limits_{B_r^{\mathbb{H}^2}\hspace{-0.5mm}(P) \cap W} K_W(x,x;t) dx, \quad r,t>0.
\end{align*}
Note that the heat kernel $K_W$ is the one for the entire wedge, while the integration is done over a bounded sector only.
In order to study $Z_{\gamma}$, we first investigate the heat kernel $K_W$ (recall Definition \ref{definition:HeatKernel}) and the Green's function $G_W$ (recall Definition \ref{definition:Greens}) of the wedge and establish explicit formulas for them as generalised Mehler transforms. These formulas in turn will lead to an explicit formula for $Z_{\gamma}(t;r)$, which will be used in Section \ref{section:ExpansionTrace} to compute the heat invariants for all hyperbolic polygons.


It will be useful to introduce the following shifted functions.

\begin{definition}
\label{definition:ShiftedFunctions1}
For any domain $\Omega\subset \mathbb{H}^2$, we call the functions $K_{\Omega}^{\nicefrac{1}{4}}:\Omega\times\Omega\times (0,\infty)\rightarrow\mathbb{R}$ and $G_{\Omega}^{\nicefrac{1}{4}}:\offdiag(\Omega)\times \mathcal{H}_{>\frac{1}{4}}\rightarrow\mathbb{C}$ defined by
\begin{align}
K_{\Omega}^{\nicefrac{1}{4}}(x,y;t):&=e^{\frac{1}{4}t}\cdot K_{\Omega}(x,y;t),\label{equation:ShiftedHeat}\\
G_{\Omega}^{\nicefrac{1}{4}}(x,y;s):&=\mathcal{L}\lbrace K_{\Omega}^{\nicefrac{1}{4}}(x,y;t)\rbrace (s) \label{equation:ShiftedGreen}
\end{align}
the \emph{shifted} heat kernel and the \emph{shifted} Green's function of $\Omega$, respectively.
\end{definition}

Like the heat kernel, the function $K_{\Omega}^{\nicefrac{1}{4}}$ satisfies the conditions of Definition \ref{definition:HeatKernel} if the Laplacian $\Delta$ is replaced in condition \eqref{equation:FundamentalSolution} by the \emph{shifted} Laplacian $\Delta^{\nicefrac{1}{4}}:=\Delta-\frac{1}{4}$. Further, for each $s>\frac{1}{4}$, the shifted Green's function  $G_{\Omega}^{\nicefrac{1}{4}}$ satisfies Proposition \ref{proposition:PDEGreen} if the Laplacian is replaced by the shifted Laplacian. The reason is that  $s+\Delta^{\nicefrac{1}{4}}=\left(s-\frac{1}{4}\right)+\Delta$ and
\begin{align}
G_{\Omega}^{\nicefrac{1}{4}}(x,y;s) &= \int\limits_{0}^{\infty} e^{-st }e^{\frac{1}{4}t}\cdot K_{\Omega}(x,y;t) dt= \int\limits_{0}^{\infty} e^{-\left(s-\frac{1}{4}\right)t }\cdot K_{\Omega}(x,y;t) dt \nonumber \\
& = G_{\Omega}\left(x,y;s-\frac{1}{4}\right). \label{equation:ConnectionShiftGreen}
\end{align}
Most of the formulas below are stated in terms of the shifted functions, since then they become shorter. The formulas can always be rewritten into corresponding formulas for the Green's function by \eqref{equation:ConnectionShiftGreen} and for the heat kernel using \eqref{equation:ShiftedHeat}.

The heat kernel $K_{\mathbb{H}^2}$ for the hyperbolic plane is given by the following formulas (see, e.g., \citep[(12) and (13) on p. 246]{Chavel}):
\begin{align}
K_{\mathbb{H}^2}(x,y;t)&= \frac{1}{2\pi}\int\limits_{0}^{\infty} e^{-\left( \frac{1}{4} + \rho^2 \right)t}P_{-\frac{1}{2}+i\rho}\left( \cosh(d(x,y)) \right)\rho\tanh\left( \pi\rho \right)d\rho \label{equation:HyperHeat1}\\
&=\frac{\sqrt{2}}{(4\pi t)^{\nicefrac{3}{2}}}e^{-\frac{t}{4}}\int\limits_{d(x,y)}^{\infty} \frac{\rho e^{-\frac{\rho^2}{4t}}}{\sqrt{\cosh(\rho)-\cosh(d(x,y))}}d\rho, \label{equation:HyperHeat2}
\end{align}
where $P_{-\frac{1}{2}+i\rho}$ is the Legendre function of the first kind from Definition \ref{definition:LegendreFirstSecond}.
We start with a helpful new formula for the Green's function $G_{\mathbb{H}^2}$ of the hyperbolic plane in terms of polar coordinates.

\begin{proposition}
\label{proposition:GreenPlane}
Let $s\in\mathbb{C}$ be such that $\Re(s) > \frac{1}{4}$, and let $x,y\in\mathbb{H}^2$ with $x\neq y$ be given. The shifted Green's function for the hyperbolic plane is given by the equation
\begin{align}
G_{\mathbb{H}^2}^{\nicefrac{1}{4}}(x,y;s)&=\frac{1}{2\pi}Q_{\sqrt{s}-\frac{1}{2}}\left( \cosh(d(x,y)) \right). \label{equation:GreenPlane1}
\end{align}
Suppose, further, that we have chosen polar coordinates in $\mathbb{H}^2$ \emph{(}associated with a chosen orientation and a chosen geodesic ray\emph{)} such that $x=(a,\alpha)$ and $y=(b,\beta)$, where $a,b\in(0,\infty)$ and $\alpha,\beta\in [0,2\pi)$. Then
\begin{align}
G_{\mathbb{H}^2}^{\nicefrac{1}{4}}(x,y;s)=\frac{1}{\pi^2} \int\limits_{0}^{\infty} Q_{\sqrt{s}-\frac{1}{2}}^{-i\rho}\left(\cosh(a)\right)Q_{\sqrt{s}-\frac{1}{2}}^{i\rho}\left(\cosh(b)\right) \cosh\left(\rho\cdot \left( \pi-\vert \alpha-\beta \vert \right)\right)d\rho. \label{equation:GreenPlane2}
\end{align}
\end{proposition}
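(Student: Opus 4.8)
The plan is to prove the two formulas \eqref{equation:GreenPlane1} and \eqref{equation:GreenPlane2} in sequence, deriving the first directly from the known spectral formula \eqref{equation:HyperHeat1} for the heat kernel and the second from the first via the Legendre addition formula already established in Corollary \ref{corollary:IntegralProduktLegendreFuerGreensFunction}.

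For \eqref{equation:GreenPlane1}, I would start from \eqref{equation:HyperHeat1} and apply the Laplace transform in $t$. Multiplying by $e^{t/4}$ to pass to the shifted heat kernel kills the $\frac14$ in the exponent, so formally
\[
G_{\mathbb{H}^2}^{\nicefrac14}(x,y;s)=\frac{1}{2\pi}\int_0^\infty\Bigl(\int_0^\infty e^{-(s+\rho^2)t}\,dt\Bigr)P_{-\frac12+i\rho}(\cosh d(x,y))\,\rho\tanh(\pi\rho)\,d\rho
=\frac{1}{2\pi}\int_0^\infty\frac{P_{-\frac12+i\rho}(\cosh d(x,y))\,\rho\tanh(\pi\rho)}{s+\rho^2}\,d\rho.
\]
To justify the interchange of the two integrals I would check absolute integrability of $(t,\rho)\mapsto e^{-(s+\rho^2)t}P_{-\frac12+i\rho}(\cosh d)\rho\tanh(\pi\rho)$ on $(0,\infty)^2$ for $\Re(s)>\frac14$, using the asymptotic estimate \eqref{equation:LegendreAsymp2} (which gives $P_{-\frac12+i\rho}(\cosh a)=O(\rho^{-1/2})$ as $\rho\to\infty$) together with the fact that $\rho\tanh(\pi\rho)$ grows only linearly and $\int_0^\infty e^{-\Re(s)t}\rho^{1/2}(s+\rho^2)^{-1}$-type bounds converge. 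The remaining task is to identify the resulting $\rho$-integral with $\frac{1}{2\pi}Q_{\sqrt s-\frac12}(\cosh d)$. This is a classical Mehler–Fock type representation of the Legendre function $Q$; rather than redo it from scratch I would cite the standard integral representation (it is essentially the statement that $Q_\nu(\cosh r)$ is the resolvent kernel of the radial hyperbolic Laplacian, available in \citep{Gradshteyn} or \citep{Erdelyi}), or alternatively recover it from \eqref{equation:HyperHeat2} by substituting $\nu=\sqrt s-\frac12$ and recognising the integral $\int_d^\infty(\cosh\rho-\cosh d)^{-1/2}e^{-\lambda\rho}\,d\rho$ as a known representation of $Q$. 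Either way, \eqref{equation:GreenPlane1} follows, and holomorphy of both sides in $s\in\mathcal H_{>1/4}$ (the left side by Proposition \ref{proposition:InverseLaplaceTransform} applied to the off-diagonal heat kernel, the right side since $Q_\nu$ is analytic in $\nu$ and $s\mapsto\sqrt s-\frac12$ is holomorphic on $\mathcal H_{>1/4}$) lets me extend the identity from real $s$ to all of $\mathcal H_{>1/4}$ by the identity theorem.

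For \eqref{equation:GreenPlane2}, I would invoke the hyperbolic law of cosines: if $x=(a,\alpha)$ and $y=(b,\beta)$ in geodesic polar coordinates about the common base point, then
\[
\cosh d(x,y)=\cosh a\cosh b-\sinh a\sinh b\cos(\alpha-\beta).
\]
Setting $\nu=\sqrt s-\frac12$ (so $\Re(\nu)>-\frac14>-1$), $\omega=\cosh a$, $z=\cosh b$ — or whichever ordering makes $\omega<z$; by symmetry of \eqref{example:IntegralFuerGreen} under swapping $\omega,z$ it does not matter — and $\theta=|\alpha-\beta|\in[0,2\pi)$, and observing that $\sqrt{\omega^2-1}=\sinh a$, $\sqrt{z^2-1}=\sinh b$ and $\cos\theta=\cos(\alpha-\beta)$, Corollary \ref{corollary:IntegralProduktLegendreFuerGreensFunction} gives exactly
\[
Q_{\nu}\bigl(\cosh a\cosh b-\sinh a\sinh b\cos(\alpha-\beta)\bigr)=\frac{2}{\pi}\int_0^\infty\cosh\bigl(\rho(\pi-|\alpha-\beta|)\bigr)Q_\nu^{-i\rho}(\cosh a)Q_\nu^{i\rho}(\cosh b)\,d\rho,
\]
valid for $a\neq b$ (the $\omega\neq z$ case) and also for $a=b$ provided $|\alpha-\beta|\in(0,2\pi)$, i.e. $x\neq y$, which is the standing hypothesis. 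Dividing by $2\pi$ and combining with \eqref{equation:GreenPlane1} yields \eqref{equation:GreenPlane2}. One small case to handle separately is $a=b$ and $\alpha=\beta$, which would force $x=y$ and is excluded; and if $|\alpha-\beta|$ as computed exceeds the range $[0,2\pi)$ one simply replaces it by its representative mod $2\pi$, noting $\cos$ and the even function $\cosh(\rho(\pi-\cdot))$ behave correctly under $\theta\mapsto 2\pi-\theta$.

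The main obstacle is the first step: rigorously justifying the Fubini interchange in the Laplace transform of \eqref{equation:HyperHeat1} and then cleanly identifying the Mehler–Fock integral with $Q_{\sqrt s-\frac12}(\cosh d)$. The Fubini part is routine given the Legendre asymptotics in Lemma \ref{lemma:LegendreAsymp}, but pinning down the $Q$-identification without hand-waving requires either a careful citation or a short independent derivation from \eqref{equation:HyperHeat2}; I would favour the latter, since \eqref{equation:HyperHeat2} already expresses $K_{\mathbb{H}^2}$ in a form whose Laplace transform is a one-dimensional integral that matches a textbook representation of $Q_\nu(\cosh r)$ after the substitution $u=\rho^2/(4t)$. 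Everything after that is bookkeeping with the law of cosines and the previously proved addition formula.
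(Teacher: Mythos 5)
Your proposal is correct and follows essentially the same route as the paper: Laplace-transform \eqref{equation:HyperHeat1} with a Fubini--Tonelli interchange justified by the $O(\rho^{-1/2})$ bound from \eqref{equation:LegendreAsymp2}, identify the resulting Mehler--Fock integral with $\frac{1}{2\pi}Q_{\sqrt{s}-\frac{1}{2}}(\cosh d(x,y))$ by citing the standard tabulated integral (the paper uses \citep[7.213]{Gradshteyn}), and then deduce \eqref{equation:GreenPlane2} from the hyperbolic law of cosines \eqref{equation:HyperbolicDistancePolarCoordinates} together with Corollary \ref{corollary:IntegralProduktLegendreFuerGreensFunction}, checking that $x\neq y$ forces either $a\neq b$ or $\vert\alpha-\beta\vert\in(0,2\pi)$. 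The only cosmetic difference is your optional identity-theorem extension from real to complex $s$, which the paper avoids by carrying out the Fubini argument directly for all $s$ with $\Re(s)>\frac{1}{4}$.
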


\begin{proof}
From \eqref{equation:HyperHeat1} we obtain
\begin{align}
\label{equation:IntegralformelShiftedHeatPlane}
K_{\mathbb{H}^2}^{\nicefrac{1}{4}}(x,y;t)=\frac{1}{2\pi}\int\limits_{0}^{\infty} e^{-  \rho^2 t}P_{-\frac{1}{2}+i\rho}\left( \cosh(d(x,y)) \right)\rho\tanh\left( \pi\rho \right)d\rho.
\end{align}
Thus using the Fubini-Tonelli theorem we get
\begin{align*}
G_{\mathbb{H}^2}^{\nicefrac{1}{4}}(x,y;s)&=\int\limits_{0}^{\infty} e^{-st}\left(\frac{1}{2\pi}\int\limits_{0}^{\infty} e^{-  \rho^2 t }P_{-\frac{1}{2}+i\rho}\left( \cosh(d(x,y)) \right)\rho\tanh\left( \pi\rho \right)d\rho \right) dt\\
&= \frac{1}{2\pi}\int\limits_{0}^{\infty} \left( \int\limits_{0}^{\infty}  e^{-\left( s+  \rho^2\right) t } dt\right) P_{-\frac{1}{2}+i\rho}\left( \cosh(d(x,y)) \right)\rho\tanh\left( \pi\rho \right)d\rho \\
&= \frac{1}{2\pi}\int\limits_{0}^{\infty} \frac{\rho\tanh\left( \pi\rho \right)}{s+\rho^2} P_{-\frac{1}{2}+i\rho}\left( \cosh(d(x,y)) \right)d\rho \\
&= \frac{1}{2\pi} Q_{\sqrt{s}-\frac{1}{2}}\left( \cosh(d(x,y)) \right),
\end{align*}
where the last equality can be found for example in \citep[7.213]{Gradshteyn} or \citep[p. 20]{Oberhettinger}. Let us explain shortly why the Fubini-Tonelli theorem is applicable, which we used in the second equality above. Because of \eqref{equation:LegendreAsymp2} and since $\rho\mapsto P_{-\frac{1}{2}+i\rho}\left( \cosh(d(x,y))\right)$ is continuous, there exists some constant $C>0$ such that for all $\rho\in (0,\infty)$: $\vert P_{-\frac{1}{2}+i\rho}\left( \cosh(d(x,y))\right) \vert\leq \frac{C}{\sqrt{\rho}}$. Hence,
\begin{align*}
\int\limits_{0}^{\infty}\int\limits_{0}^{\infty} \vert e^{-\left( s+  \rho^2\right) t }P_{-\frac{1}{2}+i\rho}\left( \cosh(d(x,y))\right) \rho\tanh\left( \pi\rho \right) \vert\, dt d\rho &\leq C  \int\limits_{0}^{\infty}\int\limits_{0}^{\infty} e^{-\left( \frac{1}{4} +  \rho^2\right)t} \cdot \sqrt{\rho}\, dt \, d\rho \\
&=C\int\limits_{0}^{\infty} \frac{\sqrt{\rho}}{\frac{1}{4}+\rho^2}\, d\rho<\infty.
\end{align*}
This completes the proof of equation \eqref{equation:GreenPlane1}.

The second formula \eqref{equation:GreenPlane2} follows from \eqref{equation:GreenPlane1} and Corollary \ref{corollary:IntegralProduktLegendreFuerGreensFunction} with $z:=\cosh(a)$, $\omega:=\cosh(b)$, and $\theta:=\vert \alpha-\beta \vert$, since the distance is given in the above polar coordinates by the following formula (see \citep[Theorem 2.2.1 (i)]{Buser}):
\begin{align}
\label{equation:HyperbolicDistancePolarCoordinates}
\cosh(d(x,y))=\cosh(a)\cosh(b)-\sinh(a)\sinh(b)\cos(\vert \alpha-\beta \vert).
\end{align}
Note that the assumptions of Corollary \ref{corollary:IntegralProduktLegendreFuerGreensFunction} are satisfied here because $x\neq y$ implies that either $a\neq b$ or $\vert \alpha-\beta \vert\in (0,2\pi)$.
\end{proof} 

\begin{remark}
Note that the function on the right-hand side of \eqref{equation:GreenPlane1} is defined for all $s\in\mathbb{C}\backslash (-\infty,0]$ and is also holomorphic on that domain. Hence, the function $\mathbb{C}\backslash (-\infty,0] \ni s\mapsto \frac{1}{2\pi}Q_{\sqrt{s}-\frac{1}{2}}\left( \cosh(d(x,y)) \right)\in\mathbb{C}$ is an analytic continuation of $\mathcal{H}_{>\frac{1}{4}}\ni s\mapsto G_{\mathbb{H}^2}^{\nicefrac{1}{4}}(x,y;s)\in\mathbb{C}$. Obviously, the same applies to the right-hand side of \eqref{equation:GreenPlane2}.
\end{remark}

Next, we focus on the heat kernel $K_W$ and the Green's function $G_W$ of the hyperbolic wedge. Before we give a rigorous treatment of those functions, we will consider a heuristic derivation of the formula \eqref{equation:GreenWedge} for $G_W^{\nicefrac{1}{4}}$ given below. We want to point out that our heuristic derivation is analogous to a derivation given in \cite{Srisatkunarajah}. In his doctoral thesis, Srisatkunarajah has worked out a proof of the formula due to D.B. Ray for the Green's function of a Euclidean wedge (as mentioned, this formula is published in \cite{McKean} and \cite{VanDenBerg}). However, the proof given in \cite{Srisatkunarajah} is incomplete and has some argumentative gaps, which we will provide in the hyperbolic case by Lemma \ref{lemma:PropertiesOfH} and Theorem \ref{theorem:GreenWedge}. Note that our formula for the Green's function of a hyperbolic wedge is similar to Ray's formula. The only difference is that the Bessel functions appearing in Ray's formula are replaced properly with the associated Legendre functions of the second kind. However, in the Euclidean case all integrals appearing in the derivation and involving Bessel functions are well-known, such that their solutions could be used in \cite{Srisatkunarajah}. We, instead, will have to refer to Section \ref{section:Legendre} to solve the corresponding integrals for the associated Legendre functions of the second kind.

As mentioned, we proceed with a largely heuristic discussion of the Green's function for $W$ in order to motivate the formula \eqref{equation:GreenWedge} below. Recall that $\Delta=-\diverg \circ \nabla$ denotes the Laplacian. Suppose the heat kernel of the wedge $W$ can be written as
\begin{align}
\label{equation:AnsatzW-Kern}
K_W(x,y;t) = K_{\mathbb{H}^2}(x,y;t)-h(x,y;t),
\end{align}
where $h:\overline{W}\times W\times[0,\infty)\rightarrow\mathbb{R}$ is a function satisfying for all $y\in W$, the following conditions:
\begin{align}
\label{equation:PDE1}
\begin{cases}
h\left( \cdot,y;\cdot \right):\overline{W}\times [0,\infty)\rightarrow \mathbb{R}& \text{ is continuous, } \\
\left( \partial_t + \Delta \right)h(x,y;t) = 0& \text{ for all }x\in W \text{ and } t>0, \\
h(x,y;0)=0&  \text{ for all }x\in W, \\
h(x,y;t) = K_{\mathbb{H}^2}(x,y;t)& \text{ for all }x\in \partial W \text{ and } t>0.
\end{cases}
\end{align}

\begin{remark}
Suppose $h(x,y;t):=K_{\mathbb{H}^2}(x,y;t)-K_W(x,y;t)$ for all $x,y\in W$ and $t>0$. If for all $y\in W$ the function $K_W(\cdot, y;\cdot)$ can be extended continuously to $\overline{W}\times (0,\infty)$ by $K_W (x,y;t):=0$ for all $x\in \partial W$ and $t>0$, then $h$ indeed satisfies all conditions of \eqref{equation:PDE1}. (Of course one needs to extend for all $y\in W$ the function $h(\cdot,y;\cdot)$ appropriately.) Note that $h(\cdot,y;\cdot)$ can be extended continuously to $W\times \{0\}$ by $h(x,y;0):=0$ for all $x\in W$ (see \citep[Corollary $9.21$ and Exercise $9.7$]{Grigoryan}). 
\end{remark}

When we multiply both sides of equation \eqref{equation:AnsatzW-Kern} with $e^{\frac{t}{4}}$, we obtain the corresponding equation for the shifted functions. With $h^{\nicefrac{1}{4}}(x,y;t):=e^{\frac{t}{4}}h(x,y;t)$, we have
\begin{align}
\label{equation:AnsatzW-KernShift}
K_{W}^{\nicefrac{1}{4}}(x,y;t) = K_{\mathbb{H}^2}^{\nicefrac{1}{4}}(x,y;t)-h^{\nicefrac{1}{4}}(x,y;t)\quad \text{ for all } t>0\,\text{ and } x,y\in W.
\end{align}
When we apply the Laplace transform on both sides of \eqref{equation:AnsatzW-KernShift}, we obtain the equation
\begin{align}
\label{equation:AnsatzGreenShift}
G_W^{\nicefrac{1}{4}}(x,y;s) = G_{\mathbb{H}^2}^{\nicefrac{1}{4}}(x,y;s) - H^{\nicefrac{1}{4}}(x,y;s)
\end{align}
for all $(x,y)\in \offdiag(W)$ and $s\in\mathbb{C}$ with $\Re(s)>\frac{1}{4}$, where
\begin{align}
\label{equation:GreenShifted}
H^{\nicefrac{1}{4}}(x,y;s):=\mathcal{L}\lbrace h^{\nicefrac{1}{4}}(x,y;t) \rbrace (s).
 \end{align}
For any $s>\frac{1}{4}$ and $y\in W$ the function $H^{\nicefrac{1}{4}}(\cdot ,y;s)$ solves the following boundary value problem: 
\begin{align}
\label{equation:PDEShifted-H}
\begin{cases}
H^{\nicefrac{1}{4}}(\cdot,y;s): \overline{W}\rightarrow \mathbb{R} &\text{ is continuous and bounded,}\\
\left( s + \Delta-\frac{1}{4} \right)H^{\nicefrac{1}{4}}\left( x,y;s \right) = 0 &\text{ for all } x\in W,\\
H^{\nicefrac{1}{4}}\left( x,y;s \right) = G_{\mathbb{H}^2}^{\nicefrac{1}{4}}\left( x,y;s \right) & \text{ for all } x\in \partial W.
\end{cases}
\end{align}

We want to find a solution to the problem \eqref{equation:PDEShifted-H} in terms of polar coordinates, so let us choose for the rest of this section polar coordinates such that
\begin{align*}
W=\{\, (a,\alpha) \mid 0< a <\infty, \, 0<\alpha < \gamma \,\}.
\end{align*}
The Laplacian is given in polar coordinates by the formula
\begin{align*}
-\Delta = \frac{1}{\sinh(a)}\frac{\partial}{\partial a}\left\lbrace \sinh(a)\frac{\partial}{\partial a} \right\rbrace + \frac{1}{\sinh^2(a)}\frac{\partial^2}{\partial^2 \alpha},
\end{align*}
which follows from the well-known representation of the Laplacian in local coordinates (see e.g. \citep[formula $(3.40)$]{Grigoryan}) and since the Riemannian metric $g$ of the hyperbolic plane is represented in polar coordinates by $g=da^2 + \sinh^2(a)d\alpha^2$ (see \citep[formula $(3.70)$]{Grigoryan}). In the following, let $y=(b,\beta)\in W$ and $s>\frac{1}{4}$ both be fixed.

Firstly, we use the method of separation of variables to find product solutions to the partial differential equation (PDE) in \eqref{equation:PDEShifted-H}, i.e. we look for solutions of the form $u(a,\alpha)=v(a)\cdot w(\alpha)$. Substituting into the PDE, we obtain for all points $x=(a,\alpha)$ such that $u(a,\alpha)\neq 0:$
\begin{align*}
&\left( s + \Delta -\frac{1}{4}\right) u(a,\alpha) = 0  \\
\Leftrightarrow &\left( s-\frac{1}{4} \right)v(a)w(\alpha) - \frac{w(\alpha)}{\sinh(a)}\frac{\partial}{\partial a}\left\lbrace \sinh(a)v'(a) \right\rbrace  - \frac{1}{\sinh^2(a)} w''(\alpha)v(a)  = 0 \\
\Leftrightarrow &\left( s-\frac{1}{4} \right)\sinh^2(a) - \frac{\sinh(a)}{v(a)}\frac{\partial}{\partial a}\left\lbrace \sinh(a)v'(a) \right\rbrace = \frac{w''(\alpha)}{w(\alpha)}.
\end{align*} 

Therefore, both sides must be equal to some separation constant $c\in\mathbb{R}$. We assume $c$ to be non-negative and write $c=\rho^2$ with $\rho\geq 0$. The PDE now reduces to the following two ordinary differential equations:

\begin{itemize}
\item[I)] $w''(\alpha)=\rho^2 \cdot w(\alpha)$,
\item[II)] $\left( -s+\frac{1}{4} \right)\sinh^2(a)\cdot v(a) + \sinh(a) \frac{\partial}{\partial a}\left\lbrace \sinh(a)v'(a) \right\rbrace + \rho^2\cdot v(a) = 0.$
\end{itemize}

We first solve the second equation. When we multiply this equation by $\frac{1}{\sinh^2(a)}$, it can be written equivalently as:
\begin{align*}
v''(a) + \frac{\cosh(a)}{\sinh(a)}v'(a) + \left( \left( -s+\frac{1}{4} \right) - \frac{\left(i\rho\right)^2}{\sinh^2(a)} \right)\cdot v(a) = 0.
\end{align*}
Consider $\tilde{v}:=v\circ \arcosh :(1,\infty)\rightarrow\mathbb{C}$. When we substite $\tilde{v}$ into the above equation, we get the following equivalent differential equation:
\begin{align*}
(1-z^2)\tilde{v}''(z) -2z \tilde{v}'(z) + \left( \left( s - \frac{1}{4} \right) - \frac{\left(i\rho\right)^2}{1-z^2} \right)\cdot \tilde{v}(z) = 0\, \text{ for all } z\in (1,\infty).
\end{align*}
If $\nu:=\sqrt{s} - \frac{1}{2} $, then $\nu\cdot(\nu+1) = s-\frac{1}{4}$. This differential equation for $\tilde{v}$ is nothing else than the associated Legendre equation stated in \eqref{equation:LegendreDGL} with parameters $\nu=\sqrt{s}-\frac{1}{2}$ and $\mu=i\rho$. For all $\rho\geq 0$ two linearly independent solutions are (see \citep[\S 14.2(iii)]{NIST})
\begin{align*}
z\mapsto P_{\sqrt{s} - \frac{1}{2}}^{i\rho}(z)\,\text{ and }\, z\mapsto Q_{\sqrt{s} - \frac{1}{2} }^{-i\rho}(z).
\end{align*}
On the other hand, two linearly independent solutions for equation I) are the functions $\alpha\mapsto\cosh(\rho \alpha)$ and $\alpha\mapsto \sinh(\rho \alpha)$. Thus we have the following product solutions to the PDE:
\begin{align*}
x=(a,\alpha)\mapsto \left( \tilde{A}_1\cosh(\rho \alpha)+ \tilde{A}_2\sinh(\rho \alpha) \right)\cdot \left( \tilde{B}_1 P_{\sqrt{s} - \frac{1}{2}}^{i\rho}(\cosh(a)) + \tilde{B}_2 Q_{\sqrt{s} - \frac{1}{2}}^{-i\rho}(\cosh(a)) \right)
\end{align*}
with arbitrary constants $\tilde{A}_1, \tilde{A}_2, \tilde{B}_1, \tilde{B}_2\in\mathbb{C}$.

Secondly, we construct from the product solutions above a suitable solution $u=H^{\nicefrac{1}{4}}(\cdot,y;s):\overline{W}\rightarrow\mathbb{R}$ to the PDE which also meets the boundary condition in \eqref{equation:PDEShifted-H}. Since 
\begin{align*}
\lim\limits_{a\rightarrow\infty} \vert P_{\sqrt{s} - \frac{1}{2}}^{i\rho}(\cosh(a)) \vert =\infty
\end{align*}
(see e.g. \citep[14.8.12]{NIST}), we set $\tilde{B}_1=0$. Further, because of the formula \eqref{equation:GreenPlane2} for the boundary condition we try, by using the superposition principle, 
\begin{align*}
u(a,\alpha):=\int\limits_{0}^{\infty}\left( A_1(\rho)\cosh(\rho \alpha)+ A_2(\rho)\sinh(\rho \alpha) \right) Q_{\sqrt{s} - \frac{1}{2}}^{-i\rho}(\cosh(a)) d\rho.
\end{align*}
We determine suitable functions $A_1(\rho),\, A_2(\rho)$ from the boundary conditions. The boundary $\partial W$ of the wedge contains the two rays emanating from $P$, which are described in polar coordinates by
\begin{align*}
 \{\, x=(a,\alpha)\mid 0<a<\infty\, ;\, \alpha=0\, \text{ or }\, \alpha=\gamma \,\}.
\end{align*}
For $\alpha=0$ we have
\begin{align*}
u(a,0) &= \int\limits_{0}^{\infty}  A_1(\rho)Q_{ \sqrt{s} - \frac{1}{2}}^{-i\rho}(\cosh(a)) d\rho,\\
G_{\mathbb{H}^2}^{\nicefrac{1}{4}}((a,0),(b,\beta);s) &= \frac{1}{\pi^2}\int\limits_{0}^{\infty}\cosh(\rho(\pi-\beta))Q_{\sqrt{s} - \frac{1}{2}}^{-i\rho}(\cosh(a))Q_{ \sqrt{s} - \frac{1}{2}}^{i\rho}(\cosh(b)) d\rho.
\end{align*}
Therefore we set
\begin{align*}
A_1(\rho):=\frac{1}{\pi^2}\cosh(\rho(\pi-\beta))Q_{\sqrt{s} - \frac{1}{2} }^{i\rho}(\cosh(b)),
\end{align*}
such that the boundary condition at $\alpha=0$ is satisfied.

Moreover,
\begin{align*}
u(a,\gamma) &= \int\limits_{0}^{\infty} \lbrace A_1(\rho)\cosh(\rho\gamma) + A_2(\rho)\sinh(\rho\gamma)\rbrace Q_{ \sqrt{s} - \frac{1}{2}}^{-i\rho}(\cosh(a)) d\rho,\\
G_{\mathbb{H}^2}^{\nicefrac{1}{4}}((a,\gamma),(b,\beta);s) &= \frac{1}{\pi^2}\int\limits_{0}^{\infty}\cosh(\rho(\pi-\vert \gamma - \beta \vert))Q_{\sqrt{s} - \frac{1}{2}}^{-i\rho}(\cosh(a))Q_{ \sqrt{s} - \frac{1}{2}}^{i\rho}(\cosh(b)) d\rho.
\end{align*}
Obviously $\vert \gamma - \beta \vert=\gamma-\beta$ and thus we set:
\begin{align*}
A_2(\rho):=\frac{1}{\pi^2}Q_{\sqrt{s} - \frac{1}{2} }^{i\rho}(\cosh(b))\left(\frac{\cosh(\rho(\pi-(\gamma-\beta))) - \cosh(\rho(\pi-\beta))\cosh(\rho\gamma)}{\sinh(\rho\gamma)}\right).
\end{align*}
Finally we obtain the following candidate for a solution of \eqref{equation:PDEShifted-H}, defined for all $(a,\alpha)\in \overline{W}\backslash\{ P \}$ as:
\begin{align*}
u(a,\alpha) =&\, \frac{1}{\pi^2} \int\limits_{0}^{\infty} Q_{ \sqrt{s} - \frac{1}{2}}^{-i\rho}(\cosh(a))  Q_{ \sqrt{s} - \frac{1}{2}}^{i\rho}(\cosh(b))\Bigg( \cosh(\rho(\pi-\beta))\cosh(\rho\alpha) \\
& + \left( \cosh(\rho(\pi-(\gamma-\beta))) - \cosh(\rho(\pi-\beta))\cosh(\rho\gamma) \right)\frac{\sinh(\rho\alpha)}{\sinh(\rho\gamma)} \Bigg) d\rho.
\end{align*}
Now observe that the long expression in brackets above can be simplified, since
\begin{align}
&\cosh(\rho(\pi-\beta))\cosh(\rho\alpha) + \left( \cosh(\rho(\pi-(\gamma-\beta))) - \cosh(\rho(\pi-\beta))\cosh(\rho \gamma) \right)\frac{\sinh(\rho\alpha)}{\sinh(\rho\gamma)} \nonumber \\
&=\frac{\sinh(\pi\rho)}{\sinh(\gamma\rho)}\cosh(\rho(\gamma-\alpha-\beta)) - \frac{\sinh((\pi-\gamma)\rho)}{\sinh(\gamma\rho)}\cosh((\alpha-\beta)\rho). \label{equation:IdentitaetCosHyperbolicusMonster}
\end{align}
Thus we set for all $x=(a,\alpha)\in \overline{W}\backslash\{ P \}$:
\begin{align}
\label{equation:SolutionH}
H^{\nicefrac{1}{4}} (x,y;s) := & \, \frac{1}{\pi^2} \int\limits_{0}^{\infty} Q_{ \sqrt{s} - \frac{1}{2}}^{-i\rho}(\cosh(a))  Q_{ \sqrt{s} - \frac{1}{2}}^{i\rho}(\cosh(b)) \Bigg( \frac{\sinh(\pi\rho)}{\sinh(\gamma\rho)}\cosh(\rho(\gamma-\alpha-\beta)) \nonumber \\
&- \frac{\sinh((\pi-\gamma)\rho)}{\sinh(\gamma\rho)}\cosh((\alpha-\beta)\rho) \Bigg)d\rho.
\end{align}
Now, a candidate for a formula (in polar coordinates) of the shifted Green's function $G_W^{\nicefrac{1}{4}}$ can be deduced using \eqref{equation:AnsatzGreenShift}, \eqref{equation:GreenPlane2} and \eqref{equation:SolutionH}. That formula is stated explicitly in Theorem \ref{theorem:GreenWedge} below. In order to give a rigorous proof we first study the function $H^{\nicefrac{1}{4}}$, which is done in the following lemma. 

\begin{lemma}
\label{lemma:PropertiesOfH}
Consider the function 
\begin{align*}
H^{\nicefrac{1}{4}}:W\times W\times\mathbb{C}\backslash(-\infty,0]\ni (x,y, s)\mapsto H^{\nicefrac{1}{4}}(x,y;s)\in \mathbb{C},
\end{align*}
where $H^{\nicefrac{1}{4}}(x,y;s)$ is defined by the right-hand side of \eqref{equation:SolutionH} with $x=(a,\alpha)$ and $y=(b,\beta)$ \emph{(}with respect to the polar coordinates chosen above\emph{)}. Then:
\begin{itemize}
\item[$(i)$] For all $x,y\in W$ the function $\mathbb{C}\backslash(-\infty,0]\ni s\mapsto H^{\nicefrac{1}{4}}(x,y;s)\in \mathbb{C}$ is holomorphic.
\item[$(ii)$] For all $s\in\mathbb{C}$ with $\Re(s)>0$ the function
\begin{align*}
W\times W \ni (x,y)\mapsto H^{\nicefrac{1}{4}}(x,y;s)\in \mathbb{R}
\end{align*}
is continuous. Moreover, $H^{\nicefrac{1}{4}}(x,y;s)$ is real valued for all $s>0$ and $x,y\in W$. Further, for all $s>\frac{1}{4}$ and $y\in W$ the function $W\ni x\mapsto H^{\nicefrac{1}{4}}(x,y;s)\in\mathbb{R}$ is smooth and $(s+\Delta^{\nicefrac{1}{4}})H^{\nicefrac{1}{4}}(\cdot,y;s) \equiv 0$ \emph{(}recall that $\Delta^{\nicefrac{1}{4}}=\Delta-\frac{1}{4}$\emph{)}.
\item[$(iii)$] For all $s>\frac{1}{4}$ and $y\in W$ the function $W\ni x\mapsto H^{\nicefrac{1}{4}}(x,y;s)\in \mathbb{R}$ can be extended continuously to $\overline{W}$. That extension \emph{(}which we also denote by $H^{\nicefrac{1}{4}}$\emph{)} satisfies $H^{\nicefrac{1}{4}}(x,y;s)=G_{\mathbb{H}^2}^{\nicefrac{1}{4}}(x,y;s)$ for all $x\in \partial W$. Furthermore, we have $0 < H^{\nicefrac{1}{4}}(x,y;s)$ for all $x\in W$ and $H^{\nicefrac{1}{4}}(x,y;s)\leq G_{\mathbb{H}^2}^{\nicefrac{1}{4}}(x,y;s)$ for all $x\in W\backslash\{y\}$.
\end{itemize}
\end{lemma}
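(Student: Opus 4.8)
The plan is to establish the three assertions about $H^{\nicefrac14}$ essentially from the defining integral \eqref{equation:SolutionH}, relying heavily on the estimates for products $Q_\nu^{-i\rho}(z)Q_\nu^{i\rho}(\omega)$ from Lemma \ref{lemma:EstimateProductLegendreFunctions} to justify convergence, differentiation under the integral sign and term-by-term limits. First I would rewrite the parenthetical expression in \eqref{equation:SolutionH} back into the ``unsimplified'' form on the left-hand side of \eqref{equation:IdentitaetCosHyperbolicusMonster}, namely $\cosh(\rho(\pi-\beta))\cosh(\rho\alpha)+\bigl(\cosh(\rho(\pi-(\gamma-\beta)))-\cosh(\rho(\pi-\beta))\cosh(\rho\gamma)\bigr)\frac{\sinh(\rho\alpha)}{\sinh(\rho\gamma)}$; for $0\le\alpha\le\gamma$, $0<\beta<\gamma\le 2\pi$ this quantity is $O(e^{\pi\rho})$ as $\rho\to\infty$ (the worst term is $\cosh(\rho(\pi-\beta))\cosh(\rho\alpha)$, bounded by $e^{\rho(\pi-\beta+\alpha)}\le e^{\rho(\pi+\gamma-\beta)}$; and $\frac{\sinh((\pi-\gamma)\rho)}{\sinh(\gamma\rho)}$, $\frac{\sinh(\pi\rho)}{\sinh(\gamma\rho)}$ are $O(e^{(\pi-\gamma)\rho})$, so the products are again $O(e^{\pi\rho})$). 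Combined with $\vert Q_\nu^{-i\rho}(\cosh a)Q_\nu^{i\rho}(\cosh b)\vert\le C(\nu,a,b)\,(\vert\nu\vert+1+\rho)^{2\vert\nu\vert+1}e^{-\pi\rho}$ from \eqref{equation:EstimateProductLegendreFunctions} (valid for $\Re(\nu)\ge-\frac12$, and also \eqref{equation:EstimateProductLegendreFunctionsGreaterRange} for $-1<\Re(\nu)<-\frac12$), the integrand decays like a polynomial times $e^{-\varepsilon\rho}$ whenever the angular prefactor is $O(e^{(\pi-\varepsilon)\rho})$, which forces $\gamma>0$ and either the diagonal $\alpha=\beta$ situation or a strict inequality; in fact for $0<\alpha<\gamma$ one has $\pi-\beta+\alpha<\pi+\gamma-\beta\le\pi+\gamma$, and one needs the genuine exponent to beat $\pi$ --- this requires a short case analysis which I will organise by noting $\nu=\sqrt s-\frac12$ has $\Re(\nu)>-\frac12$ exactly when $\Re(\sqrt s)>0$, i.e. $s\in\mathbb C\setminus(-\infty,0]$.

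For $(i)$, holomorphy in $s$ on $\mathbb C\setminus(-\infty,0]$: I would apply the standard theorem on holomorphy of parameter integrals (as cited in Lemma \ref{lemma:HolomorphieIntegralLegendreProdukt}, \citep[Satz 5.8]{Elstrodt}). The integrand is holomorphic in $s$ for each fixed $\rho$ because $\nu=\sqrt s-\frac12$ depends holomorphically on $s$ and $Q_\nu^{\pm i\rho}$ are holomorphic in $\nu$; and on any compact $K\subset\mathbb C\setminus(-\infty,0]$ we get $\min_{s\in K}\Re(\nu)>-1$, so using \eqref{equation:EstimateProductLegendreFunctions} (or \eqref{equation:EstimateProductLegendreFunctionsGreaterRange}), uniform bounds on $\vert\nu\vert$, and $1/\vert\Gamma(\nu+1)\vert$ on $K$, one produces a $K$-uniform dominating function of the form $C(M+1+\rho)^{2M+1}e^{-\varepsilon\rho}$, integrable over $(0,\infty)$. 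This is exactly the argument of Lemma \ref{lemma:HolomorphieIntegralLegendreProdukt} transplanted here, with $h(\rho)$ being the bounded-times-$O(e^{(\pi-\varepsilon)\rho})$ angular factor.

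For $(ii)$, continuity in $(x,y)$ and reality for real $s$ follow similarly by dominated convergence: the integrand is continuous in $(a,\alpha,b,\beta)$, and on compact subsets of $W\times W$ one has uniform control of $\alpha,\beta$ away from the endpoints, hence a uniform dominating function; reality for $s>0$ comes from $Q_\nu^{-i\rho}(z)\overline{\phantom{x}}=Q_{\bar\nu}^{i\rho}(\bar z)$-type conjugation symmetry together with the fact that $\nu,z,\omega$ are then real and the angular factor is real. Smoothness in $x$ and the PDE $(s+\Delta^{\nicefrac14})H^{\nicefrac14}(\cdot,y;s)\equiv0$ for $s>\frac14$ I would get by differentiating under the integral: each product solution $\bigl(A_1(\rho)\cosh\rho\alpha+A_2(\rho)\sinh\rho\alpha\bigr)Q_{\sqrt s-\frac12}^{-i\rho}(\cosh a)$ is annihilated by $s+\Delta^{\nicefrac14}$ by construction (this is the separation-of-variables computation preceding the lemma, where $\nu(\nu+1)=s-\frac14$ and $Q^{-i\rho}$ solves the associated Legendre equation \eqref{equation:LegendreDGL}); differentiating twice in $a$ and $\alpha$ only multiplies the integrand by at most a fixed polynomial in $\rho$, so the same exponential decay $e^{-\varepsilon\rho}$ still dominates on compacta of $W$, legitimising the interchange.

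For $(iii)$, the boundary behaviour: setting $\alpha=0$ in \eqref{equation:SolutionH} and using the identity \eqref{equation:IdentitaetCosHyperbolicusMonster} in reverse collapses the bracket to $\cosh(\rho(\pi-\beta))$, and then \eqref{equation:GreenPlane2} with $\theta=\vert0-\beta\vert=\beta$ gives exactly $G_{\mathbb H^2}^{\nicefrac14}((a,0),(b,\beta);s)$; setting $\alpha=\gamma$ the bracket collapses to $\cosh(\rho(\pi-(\gamma-\beta)))=\cosh(\rho(\pi-\vert\gamma-\beta\vert))$, again matching \eqref{equation:GreenPlane2}. The continuous extension to $\overline W$ is just continuity of the integrand in $\alpha\in[0,\gamma]$ together with the (now uniform on $[0,\gamma]\times[\text{compact }\beta]$) dominating function --- here I would note that at $\alpha\in\{0,\gamma\}$ the prefactor degenerates nicely and the bound $O(e^{\pi\rho})$ for the angular factor is retained. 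The positivity $0<H^{\nicefrac14}(x,y;s)$ and the inequality $H^{\nicefrac14}\le G_{\mathbb H^2}^{\nicefrac14}$ on $W\setminus\{y\}$ I would obtain not from the integral directly but from the maximum principle: by $(ii)$, $u:=H^{\nicefrac14}(\cdot,y;s)$ is a bounded solution of $(s-\frac14+\Delta)u=0$ on $W$ with continuous boundary values $u|_{\partial W}=G_{\mathbb H^2}^{\nicefrac14}(\cdot,y;s)>0$; since $s-\frac14>0$, the operator $\Delta+(s-\frac14)$ satisfies a minimum principle for bounded solutions on the (non-compact) wedge (using that $G_{\mathbb H^2}^{\nicefrac14}$ itself is a positive bounded supersolution off the diagonal, or an exhaustion argument with the known positivity of Green's functions), giving $u>0$ in $W$; and $G_{\mathbb H^2}^{\nicefrac14}(\cdot,y;s)-u$ is a nonnegative solution on $W\setminus\{y\}$ that is $\ge0$ on $\partial W$ and blows up like the free Green's function at $y$, hence $\ge0$ by the minimum principle again.

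The main obstacle I anticipate is the convergence bookkeeping for the angular factor: one must check in every case ($\alpha$ interior vs. on the boundary, $\gamma<\pi$ vs. $\gamma\ge\pi$, $\gamma=2\pi$) that the net exponential rate of the integrand is genuinely negative, i.e. that the angular factor is $O(e^{(\pi-\varepsilon)\rho})$ with $\varepsilon>0$ uniformly on compacta, rather than merely $O(e^{\pi\rho})$ which would only give a borderline-divergent integral; the delicate point is that $\cosh(\rho(\pi-\beta))\cosh(\rho\alpha)$ has rate $\pi-\beta+\alpha$ which for $\alpha$ near $\gamma$ and $\beta$ near $0$ approaches $\pi+\gamma$, so one must use the \emph{cancellation} exhibited by the right-hand side of \eqref{equation:IdentitaetCosHyperbolicusMonster}: there the rates are $\pi-\gamma+\vert\gamma-\alpha-\beta\vert$ and $(\pi-\gamma)+\vert\alpha-\beta\vert$, both of which are $\le\pi$, and strictly $<\pi$ unless $\gamma=2\pi$ with $\alpha,\beta$ at extreme values. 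So the correct strategy is to estimate using the simplified (right-hand) form of the bracket, and to treat the degenerate $\gamma=2\pi$ endpoint case separately (there $\sinh((\pi-\gamma)\rho)/\sinh(\gamma\rho)=-\sinh(\pi\rho)/\sinh(2\pi\rho)=O(e^{-\pi\rho})$, which actually helps). A secondary obstacle is justifying the maximum/minimum principle on the non-compact wedge for $(iii)$; I would handle this by exhausting $W$ with bounded sectors $B_n^{\mathbb H^2}(P)\cap W$, applying the classical maximum principle for $\Delta+(s-\frac14)$ on each, and using the established boundedness of $H^{\nicefrac14}$ together with decay of $G_{\mathbb H^2}^{\nicefrac14}(x,y;s)$ as $d(x,y)\to\infty$ to pass to the limit.
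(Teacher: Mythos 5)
Your overall strategy matches the paper for part $(i)$ (holomorphy via Lemma \ref{lemma:HolomorphieIntegralLegendreProdukt}), for the continuity and reality claims in $(ii)$, for the identification of the boundary values on the two rays in $(iii)$, and for the positivity and comparison with $G_{\mathbb H^2}^{\nicefrac14}$ via the elliptic minimum principle with an exhaustion of the wedge. However, there are two genuine gaps.

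First and most seriously, your treatment of the continuous extension to $\overline W$ in $(iii)$ misses the vertex $P$. You argue that the extension is ``just continuity of the integrand in $\alpha\in[0,\gamma]$ together with a dominating function,'' but this only covers boundary points with $a>0$. At the vertex one has $a=0$, where $Q_{\sqrt s-\frac12}^{-i\rho}(\cosh a)$ is not defined and, as the paper points out, does not even have a limit as $a\searrow 0$; so no dominated-convergence argument applied directly to the integral can work there. The paper's proof devotes most of its effort in $(iii)$ to precisely this point: it splits $H^{\nicefrac14}=I_1-I_2$, applies Theorem \ref{theorem:TheoremZentralIntegral} to rewrite each integral as a residue series in the products $P_\nu^{-p}(\cosh a)Q_\nu^{p}(\cosh b)$, and then uses the estimate \eqref{equation:SecondEstimateProductLegendreFunctions} to show the series tend to $0$ as $a\searrow0$, uniformly in $\alpha$, leaving the limit $\frac{1}{2\pi}Q_{\sqrt s-\frac12}(\cosh b)=G_{\mathbb H^2}^{\nicefrac14}(P,y;s)$. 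Without some version of this argument your extension to $\overline W$ is incomplete.

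Second, for the smoothness and the PDE in $(ii)$ you propose differentiating under the integral sign, asserting that two derivatives in $a$ and $\alpha$ ``only multiply the integrand by a polynomial in $\rho$.'' The estimates you have available (Lemma \ref{lemma:EstimateProductLegendreFunctions}) bound only the products $Q_\nu^{-i\rho}(z)Q_\nu^{i\rho}(\omega)$ themselves, not $\partial_a Q_\nu^{-i\rho}(\cosh a)$, so you would need to derive additional derivative bounds (e.g.\ via recurrence relations for $Q_\nu^\mu$) to justify the interchange. The paper avoids this entirely: it shows $\int_W H^{\nicefrac14}(x,y;s)\,(s+\Delta^{\nicefrac14})f(y)\,dy=0$ for all $f\in C_c^\infty(W)$ by Fubini and Green's formula (using that each separated solution $\psi(x,\cdot,\rho)$ is annihilated by $s+\Delta^{\nicefrac14}$), and then invokes elliptic regularity to get smoothness and the equation. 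Your route is not wrong in principle, but as written it rests on unproven estimates, whereas the weak-solution argument only needs the bounds you already have.
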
 

\begin{proof}
$(i)$. Let $x,y\in W$ be arbitrary. Obviously, the function $H^{\nicefrac{1}{4}}(x,y;\cdot)$ can be written as a composition of holomorphic functions due to Lemma \ref{lemma:HolomorphieIntegralLegendreProdukt}. Note that the function $ s\mapsto \sqrt{s}-\frac{1}{2}$ is holomorphic on the domain $\mathbb{C}\backslash(-\infty,0]$, where that domain is mapped onto $\mathcal{H}_{>-\frac{1}{2}}$. Therefore, Lemma \ref{lemma:HolomorphieIntegralLegendreProdukt} can indeed be applied here; the other conditions of Lemma \ref{lemma:HolomorphieIntegralLegendreProdukt} are obviously satisfied with $h(\rho) := \frac{1}{\pi^2} \cdot \big( \frac{\sinh(\pi\rho)}{\sinh(\gamma\rho)}\cosh(\rho(\gamma-\alpha-\beta))- \frac{\sinh((\pi-\gamma)\rho)}{\sinh(\gamma\rho)}\cosh((\alpha-\beta)\rho) \big)$.

$(ii)$. Suppose $s\in\mathbb{C}$ with $\Re(s)>0$ is given. We write $H^{\nicefrac{1}{4}}(\cdot,\cdot ;s)$ as
\begin{align*}
H^{\nicefrac{1}{4}}(x,y ;s) = \int\limits_{0}^{\infty} \psi(x,y,\rho) d\rho \,\, \text{ for all } x,y\in W
\end{align*}
with
\begin{align*}
\psi(x,y,\rho):=  \frac{1}{\pi^2} \, Q_{ \sqrt{s} - \frac{1}{2}}^{-i\rho}(\cosh(a))  Q_{ \sqrt{s} - \frac{1}{2}}^{i\rho}(\cosh(b)) &\Bigg( \frac{\sinh(\pi\rho)}{\sinh(\gamma\rho)}\cosh(\rho(\gamma-\alpha-\beta)) \\
&- \frac{\sinh((\pi-\gamma)\rho)}{\sinh(\gamma\rho)}\cosh((\alpha-\beta)\rho) \Bigg)
\end{align*}
for all $x,y\in W$ and $\rho>0$. We also set $\psi(x,y,0):=\lim_{\rho\searrow 0}\psi(x,y,\rho)$ for all $x,y\in W$, where the limit obviously exists.

First of all, note that for all $x,y\in W$ the function $\rho\mapsto \psi(x,y,\rho)$ is absolutely integrable over $[0,\infty)$ (see Theorem \ref{theorem:TheoremZentralIntegral}). Thus, the functions
\begin{align*}
q: W\times W \ni (x,y)\mapsto \int\limits_{0}^{\infty} \vert \psi(x,y,\rho) \vert \, d\rho \in [0,\infty)
\end{align*}
and $H^{\nicefrac{1}{4}}(\cdot,\cdot;s)$ are continuous, which is an immediate consequence of Lebesgue's dominated convergence theorem and \eqref{equation:EstimateProductLegendreFunctions}.

Suppose now $s>0$. Note that $Q_{ \sqrt{s} - \frac{1}{2}}^{-i\rho}(\cosh(a))  Q_{ \sqrt{s} - \frac{1}{2}}^{i\rho}(\cosh(b))$ is real valued for all $a,b>0$ and $\rho\geq 0$. This can be seen from \eqref{equation:ConditionsSchritt1.1} and \citep[\S 14.20]{NIST} (see the comment preceding formula $14.20.6$ of \citep{NIST}). Thus, $\psi(x,y,\rho)$ is real valued for all $x,y\in W$ and $\rho\geq 0$ and, consequently, $H^{\nicefrac{1}{4}}(x,y ;s)$ is also real valued for all $x,y\in W$. Also note that for all $\rho\geq0$ the function $W\times W \ni (x,y)\mapsto \psi(x,y,\rho)\in\mathbb{R}$ is smooth. In the following, we deal with the other (less obvious) properties of $H^{\nicefrac{1}{4}}$ stated in $(ii)$.

We suppose now $s>\frac{1}{4}$. First, we show that for all $x\in W$ the function $H^{\nicefrac{1}{4}}(x,\cdot ;s)$ is smooth, which can be seen as follows: For all $x\in W$ the function $W \ni y \mapsto H^{\nicefrac{1}{4}}(x,y;s)\in\mathbb{R}$ belongs to $L_{\text{loc}}^2(W)$ as any continuous function, this means (by definition of the space $L_{\text{loc}}^2(W)$, see \citep[p. 98]{Grigoryan}), $H^{\nicefrac{1}{4}}(x,\cdot;s)\in L^2(U)$ for any relatively compact open set $U\subset W$. Moreover, for all $f\in C_{c}^{\infty}(W)$ and $x\in W$
\begin{align}
\int\limits_W H^{\nicefrac{1}{4}}(x,y;s) \cdot (sf+\Delta^{\nicefrac{1}{4}} f)(y) \,dy &= \int\limits_W \left( \int\limits_{0}^{\infty} \psi(x,y,\rho) d\rho \right) \cdot (sf+\Delta^{\nicefrac{1}{4}} f) (y) \, dy \nonumber \\
& =  \int\limits_{0}^{\infty}  \int\limits_W \psi(x,y,\rho) \cdot (sf+\Delta^{\nicefrac{1}{4}} f) (y) \, dy  \,    d\rho \nonumber \\
& = \int\limits_{0}^{\infty}  \int\limits_W  \underbrace{(s\psi(x,\cdot,\rho) + \Delta^{\nicefrac{1}{4}} \psi(x,\cdot,\rho))}_{\equiv 0}(y) \cdot f(y) \, dy  \,    d\rho \nonumber \\
&= 0. \label{equation:HSmoothFunction}
\end{align}
Note that we used the Fubini-Tonelli theorem for the second equality which is possible because $q(x,\cdot)$ is continuous for all $x\in W$, and $\vert(s+\Delta^{\nicefrac{1}{4}})f \vert$ is compactly supported as well as bounded, and thus
\begin{align*}
\int\limits_W  \int\limits_{0}^{\infty} \vert \psi(x,y,\rho)\cdot(sf+\Delta^{\nicefrac{1}{4}} f)(y)\vert  \,d\rho \,  dy = \int\limits_W q(x,y) \cdot \vert(s f+\Delta^{\nicefrac{1}{4}} f)(y) \vert \, dy <\infty.
\end{align*}
Further, we used ``Green's formula'' for the third equality (see \citep[Theorem 3.16]{Grigoryan}). Lastly, note that $(s+\Delta^{\nicefrac{1}{4}})\psi(x,\cdot,\rho)\equiv 0$ by the discussion preceding Lemma \ref{lemma:PropertiesOfH}. Thus, by elliptic regularity (see \citep[Corollary 7.3]{Grigoryan}), the function $y\mapsto H^{\nicefrac{1}{4}}(x,y;s)$ is smooth. 

Note that, since $H^{\nicefrac{1}{4}}(x,y;s)=H^{\nicefrac{1}{4}}(y,x;s)$ for all $x,y\in W$ (recall equation \eqref{equation:SymmetrieProduktLegendreQ}), the function $W\ni x\mapsto H^{\nicefrac{1}{4}}(\cdot,y;s)\in\mathbb{R}$ is smooth for all $y\in W$, too.

Furthermore, by \eqref{equation:HSmoothFunction} and Green's formula, it follows that $(s+\Delta^{\nicefrac{1}{4}})H^{\nicefrac{1}{4}}(x,\cdot;s)\equiv 0$ for arbitrary $x\in W$. More precisely, for all $x\in W$ and $f\in C_{c}^{\infty}(W)$:
\begin{align*}
0 = \int\limits_W H^{\nicefrac{1}{4}}(x,y;s) \cdot (sf+\Delta^{\nicefrac{1}{4}} f)(y) \,dy = \int\limits_W f(y) \cdot (sH^{\nicefrac{1}{4}}(x,\cdot;s)+\Delta^{\nicefrac{1}{4}} H^{\nicefrac{1}{4}}(x,\cdot;s))(y) dy.
\end{align*}
Because $f\in C_{c}^{\infty}(W)$ was arbitrary and $H^{\nicefrac{1}{4}}(x,\cdot;s)$ is smooth, it follows for all $x\in W$: $(s+\Delta^{\nicefrac{1}{4}})H^{\nicefrac{1}{4}}(x,\cdot;s)\equiv 0$ (see \citep[Lemma 3.13]{Grigoryan}). Finally, because of the symmetry $H^{\nicefrac{1}{4}}(x,y;s)=H^{\nicefrac{1}{4}}(y,x;s)$ for all $x,y\in W$, we also have $(s+\Delta^{\nicefrac{1}{4}})H^{\nicefrac{1}{4}}(\cdot,y;s)\equiv 0$ for arbitrary $y\in W$.

$(iii)$. Let $s>\frac{1}{4}$ and let $y\in W$ be fixed. We want to show that the function $W\ni x\mapsto H^{\nicefrac{1}{4}}(x,y;s)\in\mathbb{R}$ can be extended continuously to $\overline{W}$. Suppose $x\in\partial W$ is any boundary point other than the vertex of $W$, i.e. in polar coordinates $x=(a,\alpha)$ with $a\in (0,\infty)$ and $\alpha\in \{ 0, \gamma \}$. For those boundary points, we define the value of $H^{\nicefrac{1}{4}}(x,y;s)$ by the formula given on the right-hand side of \eqref{equation:SolutionH}. Note that the integral which is obtained after setting $\alpha=0$ or $\alpha=\gamma$ on the right-hand side of \eqref{equation:SolutionH} is absolutely convergent (see Theorem \ref{theorem:TheoremZentralIntegral}). Further, that continuation indeed provides a continuous function $W\backslash\{P\}\ni x\mapsto H^{\nicefrac{1}{4}}(x,y;s)\in\mathbb{R}$ (recall that $P$ denotes the vertex of the wedge). This can easily be seen by using Lebesgue's dominated convergence theorem and \eqref{equation:EstimateProductLegendreFunctions}. Lastly, note that $H^{\nicefrac{1}{4}}(x,y;s) = G_{\mathbb{H}^2}^{\nicefrac{1}{4}}(x,y;s)$ for all $x\in \partial W\backslash\{P\}$, which is obvious when using \eqref{equation:IdentitaetCosHyperbolicusMonster}.

On the contrary, it is more challenging to show that $H^{\nicefrac{1}{4}}(\cdot,y;s)$ can also be extended continuously at the vertex $P$. Note for example, that the formula on the right-hand side of \eqref{equation:SolutionH} has no meaning for $a=0$ because $Q^{-i\rho}_{\sqrt{s}-\frac{1}{2}}(z)$ is formally not defined for $z=1$ (also the limit of $Q^{-i\rho}_{\sqrt{s}-\frac{1}{2}}(\cosh(a))$ as $a\searrow 0$ does not exist). However, we will show that
\begin{align}
\label{equation:WertSpitzenpunkt}
\lim\limits_{x\rightarrow P} H^{\nicefrac{1}{4}}(x,y;s) = \frac{1}{2\pi} Q_{\sqrt{s}-\frac{1}{2}}(\cosh(b)).
\end{align}
For that purpose, let us write $H^{\nicefrac{1}{4}}(x,y;s) = I_1(x) - I_2(x)$ for all $x\in W$ with
\begin{align*}
I_1(x):&=\frac{1}{\pi^2}\int\limits_{0}^{\infty} Q_{ \sqrt{s} - \frac{1}{2}}^{-i\rho}(\cosh(a))  Q_{ \sqrt{s} - \frac{1}{2}}^{i\rho}(\cosh(b))\cdot \frac{\sinh(\pi\rho)}{\sinh(\gamma\rho)}\cosh(\rho(\gamma-\alpha-\beta)) d\rho,\\
I_2(x):&=\frac{1}{\pi^2}\int\limits_{0}^{\infty} Q_{ \sqrt{s} - \frac{1}{2}}^{-i\rho}(\cosh(a))  Q_{ \sqrt{s} - \frac{1}{2}}^{i\rho}(\cosh(b))\cdot \frac{\sinh((\pi-\gamma)\rho)}{\sinh(\gamma\rho)}\cosh((\alpha-\beta)\rho) d\rho.
\end{align*}
First, we will consider the limit of $I_1(x)$ as $x\rightarrow P$. Note that $I_1$ can be written as
\begin{align*}
I_1(x) = \frac{1}{2}\cdot \frac{2}{\pi} \int\limits_{0}^{\infty} Q_{ \sqrt{s} - \frac{1}{2}}^{-i\rho}(\cosh(a))  Q_{ \sqrt{s} - \frac{1}{2}}^{i\rho}(\cosh(b))\cdot \frac{\sin(i\pi\rho)}{\pi} \cdot \frac{\cos(i\rho(\gamma-\alpha-\beta))}{\sin(i\gamma\rho)} d\rho.
\end{align*}
Since we are only interested in the limit of $I_1(x)$ as $x=(a,\alpha)\rightarrow P$ we may assume in the followig $a<b$. The idea is to apply Theorem \ref{theorem:TheoremZentralIntegral} with $g(z)=\frac{\cos(z(\gamma-\alpha-\beta))}{\sin(z\gamma)}$. The only singularities of $g$ are simple poles at $k\frac{\pi}{\gamma}$ with $k\in\mathbb{Z}$ with residue $\Res(g;k\frac{\pi}{\gamma})=\frac{(-1)^k}{\gamma}\cos\left(\frac{\pi k}{\gamma}(\gamma-\alpha-\beta)\right)$. Hence, by Theorem \ref{theorem:TheoremZentralIntegral},
\begin{align}
I_1(x) =\, &\sum\limits_{k=1}^{\infty} \frac{(-1)^k}{\gamma} \cos\left(\frac{\pi k}{\gamma}(\gamma-\alpha-\beta)\right)e^{-i\pi k \frac{\pi}{\gamma}}P_{ \sqrt{s} - \frac{1}{2}}^{-k\frac{\pi}{\gamma}}(\cosh(a))  Q_{ \sqrt{s} - \frac{1}{2}}^{k\frac{\pi}{\gamma}}(\cosh(b)) \,+ \nonumber\\
&+ \frac{1}{2\gamma} P_{ \sqrt{s} - \frac{1}{2}}(\cosh(a))  Q_{ \sqrt{s} - \frac{1}{2}}(\cosh(b)). \label{equation:TheoremAufI1}
\end{align}
Note that
\begin{align}
\label{equation:BehaviourSingularityOneLegendreP}
\lim\limits_{a\searrow 0}P_{ \sqrt{s} - \frac{1}{2}}(\cosh(a)) = 1,
\end{align}
(see \citep[formula 14.8.7 on p. 361]{NIST}) and thus
\begin{align*}
\lim\limits_{a\searrow 0} \frac{1}{2\gamma} P_{ \sqrt{s} - \frac{1}{2}}(\cosh(a))  Q_{ \sqrt{s} - \frac{1}{2}}(\cosh(b)) = \frac{1}{2\gamma} Q_{ \sqrt{s} - \frac{1}{2}}(\cosh(b)).
\end{align*}
Now, we want to show that the series appearing on the right-hand side of \eqref{equation:TheoremAufI1} converges to zero as $a\searrow 0$, uniformly for all $\alpha\in (0,\gamma)$. By Lemma \ref{lemma:SecondEstimateProductLegendreFunctions} $(i)$, there exist constants $C,D>0$ such that for all $\mu>0$ and $a\in\mathbb{R}$ with $0<a< 2\arsinh\left(\frac{1}{2D}\right) <\frac{1}{D}$ (the last inequality follows because $\arsinh(x)<x$ for all $x>0$):
\begin{align*}
\vert  P_{\sqrt{s}-\frac{1}{2}}^{-\mu}(\cosh(a))Q_{\sqrt{s}-\frac{1}{2}}^{\mu}(\cosh(b)) &\vert \leq Ce^{\sqrt{s}a} \cdot  \mu^{\sqrt{s}} \cdot \left(D\sinh\left(\frac{a}{2}\right)\right)^{\mu}\\
&\leq Ce^{\frac{\sqrt{s}}{D}}\left(\frac{1}{2}\right)^{\frac{\mu}{2}} \cdot  \mu^{\sqrt{s}} \cdot \left(D\sinh\left(\frac{a}{2}\right)\right)^{\frac{\mu}{2}}\\
&\leq \hat{C}\cdot \left(D\sinh\left(\frac{a}{2}\right)\right)^{\frac{\mu}{2}},
\end{align*}
where $\hat{C}:=Ce^{\frac{\sqrt{s}}{D}}\cdot \sup_{\mu>0}\big\{ \left(\frac{1}{2}\right)^{\frac{\mu}{2}}\mu^{\sqrt{s}} \,\big\} \in (0,\infty)$. Thus, for all $\alpha\in (0,\gamma)$ and $a\in\mathbb{R}$ with $0<a< 2  \arsinh\left(\frac{1}{2D}\right)$, the absolute value of the series in \eqref{equation:TheoremAufI1} is bounded from above by
\begin{align*}
&\quad\, \sum\limits_{k=1}^{\infty} \bigg\vert \frac{(-1)^k}{\gamma} \cos\left(\frac{\pi k}{\gamma}(\gamma-\alpha-\beta)\right)e^{-i\pi k \frac{\pi}{\gamma}}P_{ \sqrt{s} - \frac{1}{2}}^{-k\frac{\pi}{\gamma}}(\cosh(a))  Q_{ \sqrt{s} - \frac{1}{2}}^{k\frac{\pi}{\gamma}}(\cosh(b)) \bigg\vert \\
&\leq \, \frac{\hat{C}}{\gamma}\sum\limits_{k=1}^{\infty}\left(\left(D\sinh\left(\frac{a}{2}\right)\right)^{\frac{\pi}{2\gamma}}\right)^{k} = \frac{\hat{C}}{\gamma} \cdot \frac{ \left( D\sinh\left(\frac{a}{2}\right)\right)^{\frac{\pi}{2\gamma}}}{1-\left(D\sinh\left(\frac{a}{2}\right)\right)^{\frac{\pi}{2\gamma}}}.
\end{align*}
Obviously, that upper bound holds uniformly for all $\alpha\in(0,\gamma)$ and converges to $0$ as $a\searrow 0$. We conclude 
\begin{align}
\label{equation:LimitI1}
\lim\limits_{x\rightarrow P}I_1(x)=\frac{1}{2\gamma} Q_{ \sqrt{s} - \frac{1}{2}}(\cosh(b)).
\end{align}

Next, we consider $I_2(x)$ as $x\rightarrow P$. If $\gamma=\pi$ then $I_2(x) = 0$ for all $x\in W$ and thus \eqref{equation:WertSpitzenpunkt} follows. Suppose now $\gamma\neq \pi$. We write $I_2$ as
\begin{align*}
I_2(x) = \frac{1}{\pi}\int\limits_{0}^{\infty}Q_{ \sqrt{s} - \frac{1}{2}}^{-i\rho}(\cosh(a))  Q_{ \sqrt{s} - \frac{1}{2}}^{i\rho}(\cosh(b)) \frac{\sin(i\pi \rho)}{\pi} \cdot \frac{\sin((\pi-\gamma)i\rho)\cos((\alpha-\beta)i\rho)}{\sin(i\pi \rho)\sin(\gamma i\rho)} d\rho.
\end{align*}
This time, we will apply Theorem \ref{theorem:TheoremZentralIntegral} with
\begin{align*}
g(z)=\frac{\sin((\pi-\gamma)z)\cos((\alpha-\beta)z)}{\sin(\pi z)\sin(\gamma z)} = \frac{\cos(\gamma z)\cos((\alpha-\beta)z)}{\sin(\gamma z)}-\frac{\cos(\pi z)\cos((\alpha-\beta)z)}{\sin(\pi z)}.
\end{align*}
The singularities of that function are simple poles and the set of all poles is given by $ \mathbb{Z} \cup \big\{ \, k\frac{\pi}{\gamma} \mid k\in\mathbb{Z} \,\big\}$. Let $(p_{\ell})_{\ell=1}^{\infty}$ be the sequence of all positive poles such that $0<p_1<p_2<p_3<...,$ and thus
\begin{align*}
I_2(x)=&\sum\limits_{\ell=1}^{\infty} \text{Res}\left( g;p_{\ell} \right) e^{-i\pi p_{\ell}}P_{\sqrt{s}-\frac{1}{2}}^{-p_{\ell}}(\cosh(a))Q_{\sqrt{s}-\frac{1}{2}}^{p_{\ell}}(\cosh(b)) \\
&+ \frac{1}{2}\left( \frac{1}{\gamma} - \frac{1}{\pi} \right)P_{\sqrt{s}-\frac{1}{2}}(\cosh(a))Q_{\sqrt{s}-\frac{1}{2}}(\cosh(b)).
\end{align*}
As above, one can show that the series on the right-hand side converges to $0$ as $a\searrow 0$, uniformly for all $\alpha\in (0,\gamma)$. Just note that the set of all residue $\{ \text{Res}\left( g;p_{\ell} \right)  \}_{\ell\in\mathbb{N}}$ is bounded. Hence, we obtain together with \eqref{equation:BehaviourSingularityOneLegendreP}:
\begin{align*}
\lim\limits_{x\rightarrow P}I_2(x) = \frac{1}{2}\left( \frac{1}{\gamma} - \frac{1}{\pi} \right)Q_{\sqrt{s}-\frac{1}{2}}(\cosh(b)).
\end{align*}

In particular, we have shown that
\begin{align*}
\lim\limits_{x\rightarrow P} H^{\nicefrac{1}{4}}(x,y;s) = \lim\limits_{x\rightarrow P} ( I_1(x) - I_2(x) ) = \frac{1}{2\pi}Q_{\sqrt{s}-\frac{1}{2}}(\cosh(b))=:H^{\nicefrac{1}{4}}(P,y;s).
\end{align*}
Note that we also have $\frac{1}{2\pi}Q_{\sqrt{s}-\frac{1}{2}}(\cosh(b))=G_{\mathbb{H}^2}^{\nicefrac{1}{4}}(P,y;s)$, which follows directly from \eqref{equation:GreenPlane1} because of $d(P,y)=b$.


It remains to show $0 < H^{\nicefrac{1}{4}}(x,y;s)$ for all $x\in W$ and $H^{\nicefrac{1}{4}}(x,y;s)\leq G_{\mathbb{H}^2}^{\nicefrac{1}{4}}(x,y;s)$ for all $x\in W\backslash\{y\}$. To prove those estimates, we will use the following properties of $H^{\nicefrac{1}{4}}$ and $G_{\mathbb{H}^2}^{\nicefrac{1}{4}}$, respectively. First, for all sequences $(x_n)_{n\in\mathbb{N}}$ such that $x_n\in W\backslash\{ y \}$ and $\lim_{n\rightarrow \infty}d(x_n,P)=\infty$: $\lim_{n\rightarrow\infty}G_{\mathbb{H}^2}^{\nicefrac{1}{4}}(x_n,y;s)=0$ as well as $\lim_{n\rightarrow\infty}H^{\nicefrac{1}{4}}(x_n,y;s)=0$. The former limit is easy to check since $G_{\mathbb{H}^2}^{\nicefrac{1}{4}}(x_n,y;s)=\frac{1}{2\pi}Q_{\sqrt{s}-\frac{1}{2}}(\cosh(d(x_n,y)))$ by \eqref{equation:GreenPlane1}, $\lim_{n\rightarrow \infty}d(x_n,y)=\infty$ by our assumptions on $(x_n)_{n\in\mathbb{N}}$, and $Q_{\sqrt{s}-\frac{1}{2}}(z_n)$ converges to $0$ for all sequences $(z_n)_{n\in\mathbb{N}}$ with $z_n\in (1,\infty)$ and $\lim_{n\rightarrow\infty} z_n =\infty$ (see \citep[14.8.15]{NIST}). For the other limit note that, for all sequences $x_n=(a_n,\alpha_n)\in W$ as above, we have by \eqref{equation:EstimateProductLegendreFunctions} the estimate $\vert H^{\nicefrac{1}{4}}(x_n,y;s)\vert\leq \frac{\hat{D}}{\sqrt{a_n-1}}$ for some constant $\hat{D}>0$. Since $\lim_{n\rightarrow \infty}d(x_n,P)=\infty$, we have $\lim_{n\rightarrow\infty}a_n = \infty$ and thus $\lim_{n\rightarrow\infty}H^{\nicefrac{1}{4}}(x_n,y;s)=0$. Second, note that the heat kernel $K_{\mathbb{H}^2}$ is strictly positive, which can be seen, for example, from \eqref{equation:HyperHeat2}. Consequently $G_{\mathbb{H}^2}^{\nicefrac{1}{4}}(x,y;s)>0$ for all $x\in W\backslash\{ y\}$.

To prove positivity of $H^{\nicefrac{1}{4}}$, let us assume that there exists some $x_{0}\in W$ with $H^{\nicefrac{1}{4}}(x_{0},y;s) < 0$. We choose some radius $R>0$ such that $d(x_0,P) < R$ and $H^{\nicefrac{1}{4}}(x_{0},y;s) < H^{\nicefrac{1}{4}}(x,y;s)$ for all $x \in W$ with $d(x,P) \geq R$. The latter condition can be achieved due to $\lim_{d(x,P)\rightarrow\infty}H^{\nicefrac{1}{4}}(x,y;s) = 0$ as shown above. Now, with $U:=B_R(P)\cap W$, we have $H^{\nicefrac{1}{4}}(\cdot,y;s)_{\vert U}\in C(\overline{U})\cap C^{\infty}(U)$ (which means, by definition, the function belongs to $C^{\infty}(U)$ and can be extended continuously to the closure $\overline{U}$). Furthermore, by our assumptions on $R$, we have $H^{\nicefrac{1}{4}}(x_0,y;s)< H^{\nicefrac{1}{4}}(x,y;s)$ for all $x\in \partial U$ (recall that $H^{\nicefrac{1}{4}}(x,y;s)=G_{\mathbb{H}^2}^{\nicefrac{1}{4}}(x,y;s)>0$ for all $x\in \partial W$). This is a contradiction to the elliptic minimum principle (see \citep[Corollary 8.16]{Grigoryan} and recall that $H^{\nicefrac{1}{4}}(\cdot,y;s)$ satisfies $(sH^{\nicefrac{1}{4}}(\cdot,y;s)+\Delta^{\nicefrac{1}{4}} H^{\nicefrac{1}{4}}(\cdot,y;s))(x)=0$ for all $x\in U$). Thus, we conclude $H^{\nicefrac{1}{4}}(x,y;s)\geq 0$ for all $x\in W$. Moreover, by the \emph{strong} elliptic minimum principle (see \citep[Corollary 8.14]{Grigoryan}), it follows that $H^{\nicefrac{1}{4}}(x,y;s)> 0$ for all $x\in W$.

The second estimate is shown similarly. Note that $u:W\backslash\{y\}\ni x\mapsto G_{\mathbb{H}^2}^{\nicefrac{1}{4}}(x,y;s)-H^{\nicefrac{1}{4}}(x,y;s) \in \mathbb{R}$ is smooth and can be extended continuously to $\overline{W}\backslash\{y\}$ by $u(x):=0$ for all $x\in\partial W$. Moreover, $\lim_{d(x,P)\rightarrow\infty}u(x)=0$ (as shown above) and $\lim_{x\rightarrow y}u(x)=\infty$. The latter limit holds because $H^{\frac{1}{4}}(\cdot,y;s)$ is a bounded function on $W$ and $\lim_{x\rightarrow y} G_{\mathbb{H}^2}^{\nicefrac{1}{4}}(x,y;s) = \lim_{x\rightarrow y}\frac{1}{2\pi}Q_{\sqrt{s}-\frac{1}{2}}(\cosh(d(x,y)))=\infty$, where the last equality follows from \citep[formula (12.23)]{Olver}. Now, suppose there exists some $x_{0}\in W\backslash\{ y \}$ such that $u(x_0)<0$. We choose radii $R, \tau>0$ so that the following conditions are satisfied: $\overline{B_{\tau}(y)}\subset W$, $u(x)>0$ for all $x\in \overline{B_{\tau}(y)}\backslash\{y \}$, $u(x_0)<u(x)$ for all $x\in W$ with $d(x,P)\geq R$, and $x_0 \in \left(W\cap B_R(P) \right) \backslash \overline{B_{\tau}(y)}=:V$. Obviously, $V$ is a relatively compact open set and $u\in C(\overline{V})\cap C^{\infty}(V)$ satisfies $(s+\Delta^{\nicefrac{1}{4}})u(x) = 0$ for all $x\in V$. Moreover $x_0\in V$ is some interior point with the property $u(x_0)<u(x)$ for all $x\in\partial V$, which is a contradiction to the elliptic minimum principle  as above (see \citep[Corollary 8.16]{Grigoryan}). Thus, we conclude $u \geq 0$, or equivalently, $G_{\mathbb{H}^2}^{\nicefrac{1}{4}}(x,y;s) \geq H^{\nicefrac{1}{4}}(x,y;s)$ for all $x\in W\backslash\{ y\}$.
\end{proof}

\begin{theorem}
\label{theorem:GreenWedge}
For all $(x,y)\in\offdiag(W)$ and $s\in\mathbb{C}$ with $\Re(s)>\frac{1}{4}$:
\begin{align}
\label{equation:GreenWedge}
\begin{split}
G_W^{\nicefrac{1}{4}}(x,y;s)&=\frac{1}{\pi^2}\int\limits_{0}^{\infty} Q_{\sqrt{s}-\frac{1}{2}}^{-i\rho}(\cosh(a))Q_{\sqrt{s}-\frac{1}{2}}^{i\rho}(\cosh(b))\cdot \bigg( \cosh(\rho(\pi-\vert \alpha-\beta \vert))-    \\
& \frac{\sinh(\pi\rho)}{\sinh(\gamma\rho)}\cosh(\rho\left( \gamma-\alpha-\beta \right)) + \frac{\sinh(\rho(\pi-\gamma))}{\sinh(\gamma\rho)}\cosh((\alpha-\beta)\rho)\bigg) d\rho,
\end{split}
\end{align}
where $x=(a,\alpha)$ and $y=(b,\beta)$ \emph{(}with respect to the polar coordinates chosen above\emph{)}.
\end{theorem}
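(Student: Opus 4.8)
The plan is to prove the formula by verifying that the right-hand side of \eqref{equation:GreenWedge} defines a function which has all the characterizing properties of the shifted Green's function $G_W^{\nicefrac{1}{4}}$, namely that it is the Laplace transform of the shifted heat kernel, or equivalently (via the uniqueness in Proposition \ref{proposition:InverseLaplaceTransform} and the analytic-continuation remark following Proposition \ref{proposition:GreenPlane}) that for each fixed $s>\frac14$ it solves the appropriate resolvent boundary-value problem on $W$. Concretely, by \eqref{equation:AnsatzGreenShift}, \eqref{equation:GreenPlane2} and \eqref{equation:SolutionH} the candidate on the right of \eqref{equation:GreenWedge} is exactly $G_{\mathbb{H}^2}^{\nicefrac{1}{4}}(x,y;s)-H^{\nicefrac{1}{4}}(x,y;s)$, where $H^{\nicefrac{1}{4}}$ is the function studied in Lemma \ref{lemma:PropertiesOfH}. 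So the work is to show that this difference really is $G_W^{\nicefrac{1}{4}}$.

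First I would argue for $s>\frac14$. By Proposition \ref{proposition:PDEGreen} applied to the shifted operator, the shifted Green's function $G_W^{\nicefrac{1}{4}}(\cdot,y;s)$ is characterized among the $L^1(W)$ functions $u$ by the property $\int_W u\cdot(s+\Delta^{\nicefrac14})f\,dx=f(y)$ for all $f\in C_c^\infty(W)$, together with non-negativity and minimality coming from the heat-kernel construction. Now set $u(x):=G_{\mathbb{H}^2}^{\nicefrac{1}{4}}(x,y;s)-H^{\nicefrac{1}{4}}(x,y;s)$. By Proposition \ref{proposition:GreenPlane} and Proposition \ref{proposition:PDEGreen} for $\mathbb{H}^2$, the term $G_{\mathbb{H}^2}^{\nicefrac14}(\cdot,y;s)$ contributes exactly the distributional point mass $f(y)$, and by Lemma \ref{lemma:PropertiesOfH}$(ii)$ the term $H^{\nicefrac14}(\cdot,y;s)$ satisfies $(s+\Delta^{\nicefrac14})H^{\nicefrac14}(\cdot,y;s)\equiv 0$ on $W$ and contributes nothing; hence $\int_W u\cdot(s+\Delta^{\nicefrac14})f\,dx=f(y)$ for all $f\in C_c^\infty(W)$. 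Moreover by Lemma \ref{lemma:PropertiesOfH}$(iii)$ we have $0\le u$ on $W$ and $u$ extends continuously to $\overline{W}$ with $u\equiv 0$ on $\partial W$ and $u$ bounded near infinity with the right decay; these are precisely the features that pin down the minimal (Dirichlet) resolvent kernel on $W$ as opposed to some other fundamental solution. I would invoke the characterization of the minimal resolvent from \citep{Grigoryan} (the resolvent version of Definition \ref{definition:HeatKernel}) to conclude $u(x)=G_W^{\nicefrac14}(x,y;s)$ for all $(x,y)\in\offdiag(W)$ and all real $s>\frac14$, which is \eqref{equation:GreenWedge} in that range.

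Then I would upgrade to complex $s$ with $\Re(s)>\frac14$ by analyticity. By Lemma \ref{lemma:HolomorphieIntegralLegendreProdukt} (applied with $\nu=\sqrt{s}-\frac12$, noting $s\mapsto\sqrt{s}-\frac12$ maps $\mathbb{C}\setminus(-\infty,0]$ into $\mathcal H_{>-\frac12}$ and using that $h(\rho)$ is $O(e^{(\pi-\varepsilon)\rho})$ since $0<\gamma\le 2\pi$ forces the hyperbolic quotients in the bracket to decay like $e^{-\varepsilon\rho}$ for a suitable $\varepsilon>0$ depending on $\gamma$ — this is where I'd have to be a little careful when $\gamma=2\pi$, but then $\sinh((\pi-\gamma)\rho)/\sinh(\gamma\rho)$ and $\sinh(\pi\rho)/\sinh(\gamma\rho)$ are still $O(e^{-\pi\rho})$ and $O(e^{-\pi\rho})$ respectively), together with Proposition \ref{proposition:GreenPlane}, both sides of \eqref{equation:GreenWedge} are holomorphic in $s$ on $\mathcal H_{>\frac14}$, and they agree on the ray $(\frac14,\infty)$; hence they agree on all of $\mathcal H_{>\frac14}$ by the identity theorem. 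One also uses here that $s\mapsto G_W^{\nicefrac14}(x,y;s)$ is holomorphic on $\mathcal H_{>\frac14}$, which follows from Proposition \ref{proposition:InverseLaplaceTransform} and the remarks after Definition \ref{definition:Greens} (applied to the shifted heat kernel, whose Laplace integral converges on $\mathcal H_{>\frac14}$ by \eqref{equation:ConnectionShiftGreen}).

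The main obstacle is the first step: making rigorous that $u=G_{\mathbb{H}^2}^{\nicefrac14}-H^{\nicefrac14}$ is the \emph{minimal} non-negative resolvent kernel of $W$, not merely \emph{a} solution of the boundary-value problem. All the ingredients are in Lemma \ref{lemma:PropertiesOfH} — non-negativity of $H^{\nicefrac14}$ and the bound $H^{\nicefrac14}\le G_{\mathbb{H}^2}^{\nicefrac14}$ give $0\le u\le G_{\mathbb{H}^2}^{\nicefrac14}$, the decay at infinity, and the vanishing boundary values — and these should match the minimality/uniqueness statement for resolvent kernels in \citep{Grigoryan}; the care needed is in quoting that statement in exactly the form applicable to a non-compact domain like $W$ and in handling the mild singularity at the vertex $P$, which Lemma \ref{lemma:PropertiesOfH}$(iii)$ already resolves by showing $u$ extends continuously (with value $0$) across $P\in\partial W$. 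An alternative, perhaps cleaner, route for this step is to verify instead that $K_W^{\nicefrac14}:=K_{\mathbb{H}^2}^{\nicefrac14}-h^{\nicefrac14}$ is the heat kernel of $W$ by checking the conditions of Definition \ref{definition:HeatKernel} directly (using the remark after \eqref{equation:AnsatzW-Kern} together with the known continuous extension of $K_W$ to $\overline W$), and then take Laplace transforms; but either way the crux is the uniqueness/minimality argument, everything else being analyticity and the already-established properties of $H^{\nicefrac14}$.
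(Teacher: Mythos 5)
Your proposal follows essentially the same route as the paper: identify the right-hand side with $G_{\mathbb{H}^2}^{\nicefrac{1}{4}}-H^{\nicefrac{1}{4}}$, verify via Lemma \ref{lemma:PropertiesOfH} that this solves the shifted resolvent problem with the correct boundary behaviour and decay, invoke a uniqueness statement from \citep{Grigoryan}, and pass from real $s>\frac{1}{4}$ to all of $\mathcal{H}_{>\frac{1}{4}}$ by holomorphy. The only technical difference is that the paper applies the uniqueness result (\citep[Exercise 8.2]{Grigoryan}) to the mollified functions $\int_W \tilde{G}_W^{\nicefrac{1}{4}}(\cdot,y;s)f(y)\,dy$ for non-negative $f\in C_c^{\infty}(W)$ rather than at the kernel level, precisely to sidestep the singularity at $x=y$ that you yourself flag as the delicate point.
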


\begin{proof}
For brevity, we denote the right-hand side of \eqref{equation:GreenWedge} by $\tilde{G}_W^{\nicefrac{1}{4}}(x,y;s)$. More precisely,
\begin{align*}
\tilde{G}_W^{\nicefrac{1}{4}}:&\offdiag(W) \times \mathcal{H}_{>\frac{1}{4}} \rightarrow\mathbb{C},\\
 &((x,y),s)\mapsto \tilde{G}_W^{\nicefrac{1}{4}}(x,y;s):=G_{\mathbb{H}^2}^{\nicefrac{1}{4}}(x,y;s) - H^{\nicefrac{1}{4}}(x,y;s),
\end{align*} 
where $H^{\nicefrac{1}{4}}$ is defined as in \eqref{equation:SolutionH}. Recall the following facts: For all $x,y\in W$ with $x\neq y$ the functions $G_{\mathbb{H}^2}^{\nicefrac{1}{4}}(x,y;\cdot)$ and $G_W^{\nicefrac{1}{4}}(x,y;\cdot)$ are holomorphic on $\mathcal{H}_{>\frac{1}{4}}$ (recall Proposition \ref{proposition:InverseLaplaceTransform}). Similary, for all $x,y\in W$ the function $H^{\nicefrac{1}{4}}(x,y;\cdot)$ is also holomorphic on $\mathcal{H}_{>\frac{1}{4}}$ which follows from Lemma \ref{lemma:PropertiesOfH} $(i)$. Hence, it suffices to prove the equality $G_W^{\nicefrac{1}{4}}(x,y;s) = \tilde{G}_W^{\nicefrac{1}{4}}(x,y;s)$ for all $s\in(\frac{1}{4},\infty)$ and $x,y\in W$ with $x\neq y$.

Let $s>\frac{1}{4}$ be fixed and let $f\in C_{c}^{\infty}(W)$ be given such that $f(x)\geq 0$ for all $x\in W$. We extend that function to $\mathbb{H}^2$ by $f(y):=0$ for all $y\in\mathbb{H}^2\backslash W$ and we denote the extended function also by $f$, so that it belongs to $C_{c}^{\infty}(\mathbb{H}^2)$. We define
\begin{align*}
&u_W:W\ni x\mapsto \int\limits_{W}G_W^{\nicefrac{1}{4}}(x,y;s)f(y)\, dy\in\mathbb{R} \\
\text{ and }\,\, &u_{\mathbb{H}^2}:\mathbb{H}^2\ni x\mapsto \int\limits_{\mathbb{H}^2}G_{\mathbb{H}^2}^{\nicefrac{1}{4}}(x,y;s)f(y) \, dy\in\mathbb{R},
\end{align*}
which are known to be smooth functions (see \citep[Theorem $8.7 (ii)$]{Grigoryan}). Moreover, one can show that $(s+\Delta^{\nicefrac{1}{4}})u_W(x)=f(x)$ for all $x\in W$, respectively $(s+\Delta^{\nicefrac{1}{4}})u_{\mathbb{H}^2}(x)=f(x)$ for all $x\in \mathbb{H}^2$ (see \citep[Theorem $8.4 (b)$]{Grigoryan}). Similarly, we define
\begin{align*}
\Phi:W\ni x\mapsto \int\limits_{W}H^{\nicefrac{1}{4}}(x,y;s)f(y) \, dy\in\mathbb{R}.
\end{align*}
Note that $\Phi$ is smooth and satisfies $(s+\Delta^{\nicefrac{1}{4}})\Phi(x)=0$ for all $x\in W$, which follows e.g. from Lemma \ref{lemma:PropertiesOfH} $(ii)$, \citep[Satz $5.7$ Zusatz on p. 148]{Elstrodt} and the fact that $f$ is compactly supported in $W$ (by assumption).

Next, we define
\begin{align*}
&\tilde{u}_W:W\ni x\mapsto \int\limits_{W}\tilde{G}_W^{\nicefrac{1}{4}}(x,y;s)f(y)\, dy\in\mathbb{R}
\end{align*}
and we will show $u_W(x)=\tilde{u}_W(x)$ for all $x\in W$ by applying the result in \citep[Exercise 8.2]{Grigoryan} (where the notation given in \citep[Exercise 8.2]{Grigoryan} corresponds to our notation as follows: $\alpha:=s-\frac{1}{4}$, $R_{\alpha}f:=u_W$ and $u:=\tilde{u}_W$). Let us show that the conditions of \citep[Exercise 8.2]{Grigoryan} are indeed satisfied by $\tilde{u}_W$:

First, note that $\tilde{u}_W = u_{\mathbb{H}^2}-\Phi$ and thus, by the discussion above, $\tilde{u}_W$ is smooth and satisfies $(s+\Delta^{\nicefrac{1}{4}})\tilde{u}_W(x)=f(x)$ for all $x\in W$. Moreover, $\tilde{u}_W$ is non-negative, since for all $x\in W$
\begin{align*}
\tilde{u}_W(x)=\int\limits_{W}\tilde{G}_W^{\nicefrac{1}{4}}(x,y;s)f(y) dy = \int\limits_{W}(G_{\mathbb{H}^2}^{\nicefrac{1}{4}}(x,y;s) - H^{\nicefrac{1}{4}}(x,y;s)) \cdot f(y) dy,
\end{align*}
where $f(y)\geq 0$ for all $y\in W$ (by assumption), and $\left(G_{\mathbb{H}^2}^{\nicefrac{1}{4}} - H^{\nicefrac{1}{4}}\right)(x,y;s)\geq 0$ for all $(x,y)\in\offdiag(W)$ by Lemma \ref{lemma:PropertiesOfH} $(iii)$. 
Lastly, we need to check whether $\lim_{n\rightarrow\infty}\tilde{u}_W(x_n)=0$ for all sequences $(x_n)_{n\in\mathbb{N}}\subset W$ such that either there exists some $x_{\ast}\in \partial W$ with $\lim_{n\rightarrow \infty}x_n=x_{\ast}$ or $\lim_{n\rightarrow\infty}d(P,x_n)=\infty$. That property follows immediately from Lebesgue's dominated convergence theorem and the following facts:

Recall that for all $y\in W$ the function $\tilde{G}_W^{\nicefrac{1}{4}}(\cdot,y;s):W\backslash\{ y \} \rightarrow \mathbb{R}$ can be extended continuously to $\overline{W}\backslash\{ y \}$ by $\tilde{G}_W^{\nicefrac{1}{4}}(x,y;s):=0$ for all $x\in\partial W$ (see Lemma \ref{lemma:PropertiesOfH} $(iii)$). Further, we have shown in the proof of Lemma \ref{lemma:PropertiesOfH} $(iii)$ that $\lim_{d(x,P)\rightarrow\infty}\tilde{G}_W^{\nicefrac{1}{4}}(x,y;t)=0$ for all $y\in W$. Lastly, for all $\varepsilon >0$ there exists some constant $D>0$ such that for all $x,y\in W$ with $d(x,y)\geq \varepsilon$:
\begin{align*}
0\leq \tilde{G}_W^{\nicefrac{1}{4}}(x,y;s)\leq G_{\mathbb{H}^2}^{\nicefrac{1}{4}}(x,y;s) \leq D.
\end{align*}
The first estimate follows from Lemma \ref{lemma:PropertiesOfH} $(iii)$ and for the second estimate note that there exists some constant $C>0$ such that
\begin{align}
\label{equation:EstimateHeatKernelPlane}
K_{\mathbb{H}^2} (x,y;t) \leq \frac{C}{t} \cdot e^{-\frac{d(x,y)^2}{8t}},\quad \forall x,y\in\mathbb{H}^2,\, t>0
\end{align}
(see e.g. \citep[Lemma 7.4.26]{Buser}).
Thus, it follows for all $x,y\in \mathbb{H}^2$ with $d(x,y)\geq \epsilon$:
\begin{align*}
G_{\mathbb{H}^2}^{\nicefrac{1}{4}}(x,y;s) &= \int\limits_{0}^{\infty} e^{(\frac{1}{4}-s)t} K_{\mathbb{H}^2} (x,y;t) dt \leq \int\limits_{0}^{\infty} e^{(\frac{1}{4}-s)t} \frac{C}{t} \cdot e^{-\frac{\varepsilon^2}{8t}} dt=:D.
\end{align*}

Thus, by Lebesgue's dominated convergence theorem we have $\lim_{n\rightarrow\infty}\tilde{u}_W(x_n)=0$ for any sequence as above. Using \citep[Exercise 8.2]{Grigoryan} we have $u_W(x)=\tilde{u}_W(x)$ for all $x\in W$. 

Since $f\in C_{c}^{\infty}(W)$ with $f\geq 0$ was arbitrary, we conclude that for all $x\in W$: $G_W^{\nicefrac{1}{4}}(x,y;s)=G_{\mathbb{H}^2}^{\nicefrac{1}{4}}(x,y;s)-H^{\nicefrac{1}{4}}(x,y;s)$ for almost all $y\in W$. Further, for any $x\in W$, both sides are continuous in $y\in W\backslash\{x\}$ and thus they must be equal for all $y\in W\backslash\{x\}$.

%

\end{proof}

By definition, the (shifted) Green's function is defined as the Laplace transform of the (shifted) heat kernel. Thus we obtain a formula for the (shifted) heat kernel from Theorem \ref{theorem:GreenWedge}.

\begin{corollary}
\label{corollary:FormelHeatKernelWedgeShift}
For all $t>0$ and $x\in W$
\begin{align}
\label{equation:FormelHeatKernelWedgeShift}
&K_W^{\nicefrac{1}{4}}(x,x;t) = K_{\mathbb{H}^2}^{\nicefrac{1}{4}}(x,x;t) \, - \mathcal{L}^{-1}\left\lbrace s\mapsto H^{\nicefrac{1}{4}}(x,x;s)\right\rbrace (t),
\end{align}
where $H^{\nicefrac{1}{4}}$ is the same function as in Lemma \emph{\ref{lemma:PropertiesOfH}}.
 
Moreover, for all $x\in W$ the function $(0,\infty)\ni t\mapsto  K_{\mathbb{H}^2}^{\nicefrac{1}{4}}(x,x;t) - K_W^{\nicefrac{1}{4}}(x,x;t) \in\mathbb{R}$ is non-negative, can be extended continuously at $t=0$, its Laplace integral is \emph{(}absolutely\emph{)} convergent and equal to $H^{\nicefrac{1}{4}}(x,x;s)$ for all $s\in\mathcal{H}_{>\frac{1}{4}}$. The above inverse Laplace transform can be computed by the inversion formula \eqref{equation:InverseLaplaceTransform} with $\varepsilon>\frac{1}{4}$.
\end{corollary}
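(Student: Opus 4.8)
The plan is to work throughout with the single function $h^{\nicefrac{1}{4}}(x,y;t):=K_{\mathbb{H}^2}^{\nicefrac{1}{4}}(x,y;t)-K_W^{\nicefrac{1}{4}}(x,y;t)$ on $W\times W\times(0,\infty)$ (the shifted counterpart of the $h$ in \eqref{equation:PDE1}, \eqref{equation:SolutionH}) and to transfer Theorem \ref{theorem:GreenWedge} from the off\nobreakdash-diagonal to the diagonal. First I would record the elementary properties: since $K_W$ is the minimal non\nobreakdash-negative fundamental solution (Definition \ref{definition:HeatKernel}) and $K_{\mathbb{H}^2}|_{W\times W\times(0,\infty)}$ is also a non\nobreakdash-negative fundamental solution, $0\le K_W\le K_{\mathbb{H}^2}$ on $W$, so $h^{\nicefrac{1}{4}}\ge 0$; and $h^{\nicefrac{1}{4}}$ is smooth on $W\times W\times(0,\infty)$ because both kernels are. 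By the remark following \eqref{equation:PDE1} (which rests on \citep[Corollary 9.21 and Exercise 9.7]{Grigoryan}), $h^{\nicefrac{1}{4}}(\cdot,y;\cdot)$ extends continuously to $W\times\{0\}$ with value $0$; taking $y=x$ shows $t\mapsto h^{\nicefrac{1}{4}}(x,x;t)$ extends continuously at $t=0$ with value $0$. This already delivers the non\nobreakdash-negativity and continuous\nobreakdash-extendability claims of the corollary.

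Next I would establish the off\nobreakdash-diagonal Laplace identity. For $(x,y)\in\offdiag(W)$ the Laplace integrals of $K_{\mathbb{H}^2}^{\nicefrac{1}{4}}(x,y;\cdot)$ and $K_W^{\nicefrac{1}{4}}(x,y;\cdot)$ converge on $\mathcal{H}_{>\frac14}$ and equal $G_{\mathbb{H}^2}^{\nicefrac{1}{4}}(x,y;\cdot)$ and $G_W^{\nicefrac{1}{4}}(x,y;\cdot)$ (discussion preceding Definition \ref{definition:Greens} together with \eqref{equation:ShiftedGreen}--\eqref{equation:ConnectionShiftGreen}), so Theorem \ref{theorem:GreenWedge} gives
\[
\int_0^\infty e^{-st}h^{\nicefrac{1}{4}}(x,y;t)\,dt \;=\; G_{\mathbb{H}^2}^{\nicefrac{1}{4}}(x,y;s)-G_W^{\nicefrac{1}{4}}(x,y;s)\;=\;H^{\nicefrac{1}{4}}(x,y;s),\qquad (x,y)\in\offdiag(W),\ s\in\mathcal{H}_{>\tfrac14},
\]
the integral being absolutely convergent since $h^{\nicefrac{1}{4}}\ge 0$. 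To reach the diagonal, fix $x\in W$, set $\delta:=d(x,\partial W)>0$, and restrict to $y\in\overline{B^{\mathbb{H}^2}_{\delta/4}(x)}$, so that $d(y,\partial W)>\delta/2$. A parabolic maximum\nobreakdash-principle comparison — applied to $h$, which by \eqref{equation:PDE1} solves the heat equation on $W\times(0,\infty)$ with zero initial data and boundary values $K_{\mathbb{H}^2}(\cdot,y;\cdot)$ on $\partial W$, exhausting the unbounded $W$ by bounded subdomains and using the Gaussian bound \eqref{equation:EstimateHeatKernelPlane} to kill the far boundary — yields a constant $C=C(x)>0$, independent of $y$, with
\[
0\le h^{\nicefrac{1}{4}}(x,y;t)\le \frac{C}{t}\,e^{-\frac{\delta^2}{32\,t}}\qquad\text{for }0<t\le 1,\ y\in\overline{B^{\mathbb{H}^2}_{\delta/4}(x)};
\]
this is exactly the ``not feeling the boundary'' estimate. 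For $t\ge 1$ one uses the crude bound $h^{\nicefrac{1}{4}}(x,y;t)\le K_{\mathbb{H}^2}^{\nicefrac{1}{4}}(x,y;t)\le K_{\mathbb{H}^2}^{\nicefrac{1}{4}}(x,x;t)$, whose product with $e^{-t/4}$ decays like $t^{-\nicefrac{3}{2}}$ as $t\to\infty$ by \eqref{equation:HyperHeat2}. Together these give, for each $s\in\mathcal{H}_{>\frac14}$, an integrable majorant for $t\mapsto e^{-st}h^{\nicefrac{1}{4}}(x,y;t)$ uniform in $y\in\overline{B^{\mathbb{H}^2}_{\delta/4}(x)}$, and in fact the majorant is integrable already at $s=\tfrac14$.

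Now, since $h^{\nicefrac{1}{4}}(x,y;t)\to h^{\nicefrac{1}{4}}(x,x;t)$ as $y\to x$ for every $t>0$ (continuity of the heat kernels), dominated convergence gives $\int_0^\infty e^{-st}h^{\nicefrac{1}{4}}(x,y;t)\,dt\to\int_0^\infty e^{-st}h^{\nicefrac{1}{4}}(x,x;t)\,dt$, while $H^{\nicefrac{1}{4}}(x,y;s)\to H^{\nicefrac{1}{4}}(x,x;s)$ by the continuity statement of Lemma \ref{lemma:PropertiesOfH}$(ii)$. Hence the Laplace integral of $h^{\nicefrac{1}{4}}(x,x;\cdot)$ converges absolutely on $\mathcal{H}_{>\frac14}$ (indeed at $s=\tfrac14$) and $\mathcal{L}\{h^{\nicefrac{1}{4}}(x,x;\cdot)\}(s)=H^{\nicefrac{1}{4}}(x,x;s)$ there; that is the asserted value of the Laplace integral. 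Finally, as $h^{\nicefrac{1}{4}}(x,x;\cdot)$ is continuous, the uniqueness part of Proposition \ref{proposition:InverseLaplaceTransform} and Definition \ref{definition:InverseLaplaceTransform} give $\mathcal{L}^{-1}\{s\mapsto H^{\nicefrac{1}{4}}(x,x;s)\}=h^{\nicefrac{1}{4}}(x,x;\cdot)$, and rearranging the definition of $h^{\nicefrac{1}{4}}$ yields \eqref{equation:FormelHeatKernelWedgeShift}; since $h^{\nicefrac{1}{4}}(x,x;\cdot)$ is smooth on $(0,\infty)$ and its Laplace integral is absolutely convergent at $\delta=\tfrac14$, the second part of Proposition \ref{proposition:InverseLaplaceTransform} shows this inverse transform is computed by the complex inversion formula \eqref{equation:InverseLaplaceTransform} along any line $\Re(s)=\varepsilon$ with $\varepsilon>\tfrac14$.

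I expect the only real obstacle to be the passage from the off\nobreakdash-diagonal identity of Theorem \ref{theorem:GreenWedge} to the diagonal, i.e.\ showing that $y\mapsto\int_0^\infty e^{-st}h^{\nicefrac{1}{4}}(x,y;t)\,dt$ is continuous up to $y=x$. This needs a bound on $h^{\nicefrac{1}{4}}(x,y;t)$ near $t=0$ that is integrable and uniform for $y$ near $x$; producing it rigorously — via the maximum principle on the unbounded wedge combined with the Gaussian upper bound \eqref{equation:EstimateHeatKernelPlane}, taking care of the boundedness of $h$ on finite time slabs and the exhaustion by bounded subdomains — is the technical heart of the argument.
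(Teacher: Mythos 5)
Your proposal follows essentially the same route as the paper: the off-diagonal identity $\mathcal{L}\{K_{\mathbb{H}^2}^{\nicefrac{1}{4}}-K_W^{\nicefrac{1}{4}}\}=H^{\nicefrac{1}{4}}$ from Theorem \ref{theorem:GreenWedge}, passage to the diagonal by dominated convergence together with the continuity of $H^{\nicefrac{1}{4}}$ from Lemma \ref{lemma:PropertiesOfH}~$(ii)$, and then injectivity of the Laplace transform. The one place where you diverge is the dominating function for small $t$: you propose to prove a sharp bound $h^{\nicefrac{1}{4}}(x,y;t)\le \frac{C}{t}e^{-\delta^2/(32t)}$ via a parabolic maximum principle applied to $h$ with boundary data on $\partial W$, which is both more than is needed and the least solid step of your argument, since \eqref{equation:PDE1} is only hypothesized in the paper and the continuity of $K_W(\cdot,y;\cdot)$ up to the non-smooth boundary $\partial W$ is not established (one would have to sandwich a smooth subdomain as in Lemma \ref{lemma:PNFBAllgemein}). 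The paper instead observes that the continuous extension of $(y,t)\mapsto h^{\nicefrac{1}{4}}(x,y;t)$ to $W\times[0,\infty)$ — which you already invoke in your first paragraph — is bounded on $K\times[0,1]$ for any compact $K\ni x$, and combined with $e^{-t/4}h^{\nicefrac{1}{4}}\le K_{\mathbb{H}^2}(x,y;t)$ and \eqref{equation:EstimateHeatKernelPlane} for $t\ge 1$ this yields the majorant $Ce^{(\frac14-\Re(s))t}$ directly; substituting that simpler bound into your argument removes the gap.
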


\begin{proof}
Let $x\in W$ be arbitrary. By definition, for all $y\in W$ with $y\neq x$ and for all $s\in\mathcal{H}_{>\frac{1}{4}}$:
\begin{align*}
G_{W}^{\nicefrac{1}{4}}(x,y;s)=\mathcal{L}\lbrace K_{W}^{\nicefrac{1}{4}}(x,y;\cdot) \rbrace(s)\, \text{ and }\, G_{\mathbb{H}^2}^{\nicefrac{1}{4}}(x,y;s) = \mathcal{L}\lbrace K_{\mathbb{H}^2}^{\nicefrac{1}{4}}(x,y;\cdot) \rbrace(s).
\end{align*}
Furthermore, by Theorem \ref{theorem:GreenWedge}, we have $H^{\nicefrac{1}{4}}(x,y;s)=G_{\mathbb{H}^2}^{\nicefrac{1}{4}}(x,y;s)-G_{W}^{\nicefrac{1}{4}}(x,y;s)$ for all $y\in W$ with $y\neq x$ and $s\in\mathcal{H}_{>\frac{1}{4}}$. Thus, for all $y\in W$ with $y\neq x$ and $t>0$:
\begin{align*}
\mathcal{L}^{-1}\left\lbrace s\mapsto H^{\nicefrac{1}{4}}(x,y;s)\right\rbrace (t) = K_{\mathbb{H}^2}^{\nicefrac{1}{4}}(x,y;t) - K_W^{\nicefrac{1}{4}}(x,y;t).
\end{align*}
We will show that the above equation is also valid for $y=x$. For that purpose we consider $u:W\times [0,\infty)\rightarrow \mathbb{R}$ defined for all $(y,t)\in W\times [0,\infty)$ as
\begin{align*}
u(y,t):=\begin{cases}
K_{\mathbb{H}^2}^{\nicefrac{1}{4}}(x,y;t)-K_{W}^{\nicefrac{1}{4}}(x,y;t),&\text{ if }t>0\\
0,& \text{ if }t=0.
\end{cases}
\end{align*}
Obviously, for all $y\in W$ and $t>0$ we have $0\leq K_{W}^{\nicefrac{1}{4}} (x,y;t)\leq K_{\mathbb{H}^2}^{\nicefrac{1}{4}}(x,y;t)$, and thus $0\leq u(y,t)\leq K_{\mathbb{H}^2}^{\nicefrac{1}{4}}(x,y;t)$. Moreover, $u$ is continuous (see \citep[Corollary $9.21$ and Exercise $9.7$]{Grigoryan}). Hence, the Laplace transform of $u(y,\cdot)$ exists for all $y\in W$ and $s\in\mathcal{H}_{>\frac{1}{4}}$. Just note that for all $y\in W$ and $s\in\mathcal{H}_{>\frac{1}{4}}$:
\begin{align*}
\int\limits_{0}^{\infty}e^{-st}u(y,t)dt \leq  \int\limits_{0}^{1}e^{-st}u(y,t)dt+\int\limits_{1}^{\infty}e^{-st}K_{\mathbb{H}^2}^{\nicefrac{1}{4}}(x,y;t)dt,
\end{align*}
where the first integral is convergent because of the continuity of $u(y,\cdot)$ on $[0,\infty)$, and the second integral is convergent due to \eqref{equation:EstimateHeatKernelPlane}.

Let $(y_n)_{n\in\mathbb{N}}\subset W\backslash\{x\}$ be a sequence such that $\lim_{n\rightarrow\infty}y_n = x$. Then for all $s\in\mathcal{H}_{>\frac{1}{4}}$:
\begin{align*}
\mathcal{L}\left\lbrace u(x,\cdot)\right\rbrace  (s) &=\int\limits_{0}^{\infty} e^{-st}(K_{\mathbb{H}^2}^{\nicefrac{1}{4}}(x,x;t)-K_{W}^{\nicefrac{1}{4}}(x,x;t)) dt\\
&=\lim\limits_{n\rightarrow\infty}\int\limits_{0}^{\infty}e^{-st}(K_{\mathbb{H}^2}^{\nicefrac{1}{4}}(x,y_n;t)-K_{W}^{\nicefrac{1}{4}}(x,y_n;t)) dt\\
&=\lim\limits_{n\rightarrow\infty} (G_{\mathbb{H}^2}^{\nicefrac{1}{4}}(x,y_n;s)-G_{W}^{\nicefrac{1}{4}}(x,y_n;s)) =\lim\limits_{n\rightarrow\infty} H^{\nicefrac{1}{4}}(x,y_n;s) \\
&=H^{\nicefrac{1}{4}}(x,x;s).
\end{align*}
Thus, by Proposition \ref{proposition:InverseLaplaceTransform}, $u(x,t)= \mathcal{L}^{-1}\{s\mapsto H^{\nicefrac{1}{4}}(x,x;s)\}(t)$ for all $t>0$. Note that we used continuity of $H^{\nicefrac{1}{4}}$ for the last equation (see Lemma \ref{lemma:PropertiesOfH} $(ii)$), and, for the second equation, we used Lebesgue's dominated convergence theorem which is allowed because of the following: First, for all compact $K\subset W$ with $x\in K$, $u_{\vert K\times [0,1]}$ is bounded by continuity of $u$ and also the function $K\times [1,\infty)\ni (y,t)\mapsto e^{-\frac{1}{4}t}u(y,t)\in\mathbb{R}$ is bounded because of $0\leq e^{-\frac{1}{4}t}u(y,t)\leq K_{\mathbb{H}^2}(x,y;t)$ and \eqref{equation:EstimateHeatKernelPlane}. Thus, $K\times [0,\infty)\ni (y,t)\mapsto e^{-\frac{1}{4}t}u(y,t)\in\mathbb{R}$ is bounded as well. In particular, there exists some constant $C>0$ such that for all $t\geq 0$ and $n\in\mathbb{N}$: $\vert e^{-st} u(x,y_n;t) \vert \leq C\cdot e^{(\frac{1}{4}-\Re(s))t}$. That upper bound is integrable over $[0,\infty)$ since, by assumption, $\Re(s)>\frac{1}{4}$.

Lastly, note that the inversion formula \eqref{equation:InverseLaplaceTransform} holds with $\varepsilon>\frac{1}{4}$ because of Proposition \ref{proposition:InverseLaplaceTransform} and because the Laplace integral of $u(x,\cdot)$ is absolutely convergent for all $s>\frac{1}{4}$, as explained above. 
\end{proof}


Now that we have established the above formula for the heat kernel $K_W$ we are in a position to investigate thoroughly the function stated at the beginning of this section. According to the discussion before, we introduce the shifted function
\begin{align}
\label{equation:DefinitionHeatTraceShift}
Z_{\gamma}^{\nicefrac{1}{4}}(t;r):=e^{\frac{t}{4}}\cdot Z_{\gamma}(t;r) = \int\limits_{B_r^{\mathbb{H}^2}\hspace{-0.5mm}(P)\cap W} K_W^{\nicefrac{1}{4}}(x,x;t) dx
\end{align}
and consequently maintain to work with the shifted function $Z_{\gamma}^{\nicefrac{1}{4}}(t;r)$ instead of $Z_{\gamma}(t;r)$ itself. We have the following formula for $Z_{\gamma}^{\nicefrac{1}{4}}(t;r):$

\begin{theorem}
\label{theorem:HeatTraceShift}
Let $r>0$ be arbitrary. For all $t>0$:
\begin{align}
\label{equation:HeatTraceShift}
\int\limits_{B_r^{\mathbb{H}^2}\hspace{-0.5mm}(P)\cap W} K_W^{\nicefrac{1}{4}}(x,x;t) dx = &\int\limits_{0}^{r} \int\limits_{0}^{\gamma} K_{\mathbb{H}^2}^{\nicefrac{1}{4}}((a,\alpha),(a,\alpha);t) \sinh(a) d\alpha da \, - \nonumber \\
& - 2\int\limits_{0}^{r}\int\limits_{0}^{\frac{\pi}{2}} K_{\mathbb{H}^2}^{\nicefrac{1}{4}}((a,\alpha),(a,-\alpha);t) \sinh(a) d\alpha da  \nonumber\\
& + \frac{\gamma}{2\pi}\int\limits_{0}^{\infty} \frac{e^{-\frac{u^2}{4t}}}{\sqrt{4\pi t}} \cdot \frac{e^{\frac{u}{2}}}{e^{u}-1} \cdot \left( \frac{\pi}{\gamma}\coth\left(  \frac{\pi}{\gamma} \cdot \frac{u}{2} \right) - \coth\left(\frac{u}{2}\right) \right)du  \nonumber \\
&-A_{\gamma}(t;r),
\end{align}
where
\begin{align}
\label{equation:DefinitionFunktionA_gamma}
A_{\gamma}(t;r):=\frac{\gamma}{\pi^2} \mathcal{L}^{-1}\Bigg\lbrace s\mapsto  \int\limits_{r}^{\infty} \int\limits_{0}^{\infty} Q_{ \sqrt{s} - \frac{1}{2}}^{-i\rho}(\cosh(a))&Q_{ \sqrt{s} - \frac{1}{2}}^{i\rho}(\cosh(a)) \cdot \nonumber \\
& \cdot\frac{\sinh(\rho (\pi-\gamma))}{\sinh(\rho \gamma)} \sinh(a)  d\rho\, da \Bigg\rbrace  (t).
\end{align}
\end{theorem}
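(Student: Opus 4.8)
The plan is to start from the formula for the shifted heat kernel of the wedge proven in Corollary \ref{corollary:FormelHeatKernelWedgeShift}, integrate it over $B_r^{\mathbb{H}^2}(P)\cap W$, and then manipulate the resulting three pieces (the plane term, the ``boundary'' term coming from $H^{\nicefrac14}$, and the ``vertex'' term coming from $H^{\nicefrac14}$) into the four terms on the right-hand side of \eqref{equation:HeatTraceShift}. Concretely, by Corollary \ref{corollary:FormelHeatKernelWedgeShift} we have $K_W^{\nicefrac14}(x,x;t)=K_{\mathbb{H}^2}^{\nicefrac14}(x,x;t)-\mathcal{L}^{-1}\{s\mapsto H^{\nicefrac14}(x,x;s)\}(t)$, and $H^{\nicefrac14}(x,x;s)$ is given by \eqref{equation:SolutionH} with $\alpha=\beta$, i.e. with the bracket $\frac{\sinh(\pi\rho)}{\sinh(\gamma\rho)}\cosh(\rho(\gamma-2\alpha))-\frac{\sinh((\pi-\gamma)\rho)}{\sinh(\gamma\rho)}$. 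Integrating in polar coordinates $x=(a,\alpha)$ over $0<a<r$, $0<\alpha<\gamma$ with area element $\sinh(a)\,d\alpha\,da$, the plane term produces immediately the first summand on the right of \eqref{equation:HeatTraceShift}. First I would justify pulling the $\mathcal{L}^{-1}$ outside the $(a,\alpha)$-integral (using the estimates of Lemma \ref{lemma:EstimateProductLegendreFunctions} and \ref{lemma:HolomorphieIntegralLegendreProdukt} together with the inversion formula \eqref{equation:InverseLaplaceTransform} along a vertical line with $\varepsilon>\tfrac14$, and dominated convergence), reducing everything to the analysis of $\int_0^r\int_0^\gamma H^{\nicefrac14}((a,\alpha),(a,\alpha);s)\sinh(a)\,d\alpha\,da$.

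Next I would split this double integral along the split $H^{\nicefrac14}=I_1-I_2$ used in the proof of Lemma \ref{lemma:PropertiesOfH}(iii). The term containing $\frac{\sinh((\pi-\gamma)\rho)}{\sinh(\gamma\rho)}$ is independent of $\alpha$, so the $\alpha$-integration over $(0,\gamma)$ just multiplies it by $\gamma$; this piece, after carrying out the $a$-integral over $(0,r)$, splits as an integral over $(0,\infty)$ minus an integral over $(r,\infty)$ — the tail piece is precisely what is packaged into $-A_\gamma(t;r)$ via \eqref{equation:DefinitionFunktionA_gamma} (note the sign: $H^{\nicefrac14}$ enters $K_W^{\nicefrac14}$ with a minus, and $-I_2$ contributes $+\frac{\sinh((\pi-\gamma)\rho)}{\sinh(\gamma\rho)}$ inside $H^{\nicefrac14}$, so $\mathcal{L}^{-1}$ of the $(0,\infty)$-part will give the third summand and $\mathcal{L}^{-1}$ of the $(r,\infty)$-part gives $-A_\gamma(t;r)$). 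To evaluate the $(0,\infty)$-part explicitly as the third summand, I would use the inverse Laplace transform computed via Example \ref{example:LegendreIntegral}(a) together with the remark following Example \ref{example:LegendreIntegral} (extending \eqref{equation:ErstesWichtigesIntegral} to $\omega=z$): this identifies $\frac{2}{\pi}\int_0^\infty\frac{\sinh(\pi\rho)}{\pi}\frac1\rho Q_\nu^{-i\rho}(z)Q_\nu^{i\rho}(z)\,d\rho=P_\nu(z)Q_\nu(z)$, and combining with \eqref{equation:SecondEstimateProductLegendreFunctions2}, Whipple's formula, and the integral representation \eqref{equation:HyperHeat2} one arrives at the stated $u$-integral $\int_0^\infty\frac{e^{-u^2/4t}}{\sqrt{4\pi t}}\frac{e^{u/2}}{e^u-1}\big(\frac{\pi}{\gamma}\coth(\frac{\pi}{\gamma}\frac u2)-\coth(\frac u2)\big)du$ up to the factor $\frac{\gamma}{2\pi}$; here I expect to use that the $a$-integral $\int_0^\infty\sinh(a)(\cdots)\,da$ reassembles the $\coth$-difference via a geometric-series/partial-fractions identity for $\frac{\sinh(\pi\rho)}{\sinh(\gamma\rho)}$ analogous to the one for $\frac1{\sin(\gamma z)}$ used in the proof of Lemma \ref{lemma:PropertiesOfH}(iii).

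The $I_1$-term, which carries the $\alpha$-dependence $\cosh(\rho(\gamma-2\alpha))$, is the source of the second summand $-2\int_0^r\int_0^{\pi/2}K_{\mathbb{H}^2}^{\nicefrac14}((a,\alpha),(a,-\alpha);t)\sinh(a)\,d\alpha\,da$: here I would first integrate out $\alpha$ over $(0,\gamma)$ against $\frac{\sinh(\pi\rho)}{\sinh(\gamma\rho)}\cosh(\rho(\gamma-2\alpha))$, which yields $\frac{\sinh(\pi\rho)}{\gamma\rho}\cdot\frac{\sinh(\gamma\rho)}{\sinh(\gamma\rho)}\cdot(\text{something})$ — more precisely $\int_0^\gamma\cosh(\rho(\gamma-2\alpha))\,d\alpha=\frac{\sinh(\gamma\rho)}{\rho}$, so the whole $I_1$-contribution after the $\alpha$-integration becomes $\frac{1}{\pi^2}\int_0^\infty Q_{\sqrt s-\frac12}^{-i\rho}(\cosh a)Q_{\sqrt s-\frac12}^{i\rho}(\cosh a)\frac{\sinh(\pi\rho)}{\rho}\,d\rho$, independent of $\gamma$. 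This is, by \eqref{equation:ErstesWichtigesIntegral} (in the $\omega=z$ form), $\frac1{2\pi}P_{\sqrt s-\frac12}(\cosh a)Q_{\sqrt s-\frac12}(\cosh a)$, i.e. $G^{\nicefrac14}_{\mathbb H^2}$ evaluated at two points at distance $0$ from each other in the angular sense but — comparing with \eqref{equation:GreenPlane1} and \eqref{equation:HyperbolicDistancePolarCoordinates} — it is exactly $G^{\nicefrac14}_{\mathbb H^2}((a,\alpha),(a,-\alpha);s)$ when one recognizes that the relevant distance between $(a,\alpha)$ and $(a,-\alpha)$ is governed by $\cosh(2\alpha)$; taking $\mathcal{L}^{-1}$ gives $K_{\mathbb H^2}^{\nicefrac14}((a,\alpha),(a,-\alpha);t)$, and the factor $2$ and the range $(0,\pi/2)$ come from unfolding the $\alpha$-integral (the reflection $\alpha\mapsto-\alpha$ and the symmetry $\cosh$ about $\gamma/2$; one should be careful that the correct statement after Fubini is $\frac{1}{\gamma}\cdot\gamma=1$ times a term whose $\mathcal L^{-1}$ is independent of $\gamma$, and the claimed form $2\int_0^{\pi/2}$ is just a rewriting of that $\gamma$-independent quantity valid for every $\gamma$, in particular checking it on $\gamma=2\pi$).

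The main obstacle I anticipate is twofold: (a) rigorously interchanging the inverse Laplace transform with the two-dimensional $(a,\alpha)$-integral and with the $\rho$-integral, which requires careful uniform bounds — these are available from Lemmas \ref{lemma:EstimateProductLegendreFunctions}, \ref{lemma:HolomorphieIntegralLegendreProdukt}, \ref{lemma:SecondEstimateProductLegendreFunctions} and the decay $\frac{\sinh((\pi-\gamma)\rho)}{\sinh(\gamma\rho)}=O(e^{-\gamma\rho})$ resp. $\frac{\sinh(\pi\rho)}{\sinh(\gamma\rho)}$ bounded for $\gamma\ge\pi$ and $O(e^{(\pi-\gamma)\rho})$ for $\gamma<\pi$, but they must be combined correctly, and the tail over $(r,\infty)$ in $a$ is exactly where the $\mathcal L^{-1}$ cannot be evaluated in closed form and must be left as $A_\gamma(t;r)$; (b) correctly handling the $\gamma<\pi$ case (where $\frac{\sinh(\pi\rho)}{\sinh(\gamma\rho)}$ grows) versus $\gamma\ge\pi$, since in the former some of the $a$- or $\rho$-integrals are only conditionally convergent and one must argue via Theorem \ref{theorem:TheoremZentralIntegral} and the principal-value/analytic-continuation arguments already established. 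Once the interchange is legitimate, the remaining computations are the partial-fraction identity for $\frac{\sinh(\pi\rho)}{\sinh(\gamma\rho)}$, the evaluation of the elementary $\alpha$-integrals $\int_0^\gamma\cosh(\rho(\gamma-2\alpha))\,d\alpha=\frac{\sinh(\gamma\rho)}{\rho}$, and the identification of the closed-form inverse Laplace transforms via \eqref{equation:ErstesWichtigesIntegral} and \eqref{equation:HyperHeat2}, all of which are routine.
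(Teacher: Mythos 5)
Your overall architecture matches the paper's proof: start from Corollary \ref{corollary:FormelHeatKernelWedgeShift}, integrate in polar coordinates, carry out the $\alpha$-integration (using $\int_0^\gamma\cosh(\rho(\gamma-2\alpha))\,d\alpha=\sinh(\gamma\rho)/\rho$), split the $\alpha$-independent piece of the $a$-integral as $\int_0^\infty-\int_r^\infty$ with the tail packaged into $A_\gamma(t;r)$, and justify interchanging $\mathcal L^{-1}$ with the spatial integrals (the paper does this by Fubini--Tonelli on the Laplace-transform side rather than by dominated convergence on the inversion contour, but either works). Two points on your second summand: $\frac{1}{\pi^2}\int_0^\infty Q_{\nu}^{-i\rho}Q_{\nu}^{i\rho}\frac{\sinh(\pi\rho)}{\rho}\,d\rho$ equals $\frac{1}{2}P_\nu Q_\nu$ by \eqref{equation:VerallgemeinerungIntegralausdruckSingulaereStelle}, not $\frac{1}{2\pi}P_\nu Q_\nu$; and $\frac{1}{2}P_\nu(\cosh a)Q_\nu(\cosh a)$ is \emph{not} pointwise equal to $G^{\nicefrac{1}{4}}_{\mathbb H^2}((a,\alpha),(a,-\alpha);s)$ for any single $\alpha$ --- it is an angular average of such values (Lemma \ref{lemma:LemmaInverseLaplaceLegendreQ}$(i)$ / Lemma \ref{lemma:InverseLaplace1}), which is precisely why the unfolded double integral $2\int_0^r\int_0^{\pi/2}$ appears. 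You gesture at the unfolding, so this is imprecision rather than a fatal error.

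The genuine gap is in the third summand. After integrating the $\alpha$-independent piece over $a\in(0,\infty)$ you must compute $\mathcal L^{-1}$ of $s\mapsto\frac{\gamma}{\pi^2}\int_0^\infty\bigl(\int_1^\infty Q_{\sqrt s-\frac12}^{-i\rho}(x)Q_{\sqrt s-\frac12}^{i\rho}(x)\,dx\bigr)\frac{\sinh((\pi-\gamma)\rho)}{\sinh(\gamma\rho)}\,d\rho$, and none of the tools you name for this step does the job: \eqref{equation:ErstesWichtigesIntegral} is a $\rho$-integral identity at fixed arguments of the Legendre functions, not an integral over their argument, and Whipple's formula together with \eqref{equation:HyperHeat2} does not produce a closed form for the inner $x$-integral. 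The paper's derivation rests on two specific inputs absent from your plan: the evaluation $\int_1^\infty Q_\nu^{-i\rho}(x)Q_\nu^{i\rho}(x)\,dx=\frac{\pi}{2\sinh(\pi\rho)}\cdot\frac{1}{2i\sqrt s}\bigl(\psi(\sqrt s+i\rho+\frac12)-\psi(\sqrt s-i\rho+\frac12)\bigr)$ (Lemma \ref{lemma:IntegralProdLegendrePsiFunktion}, via a formula of Robin), and the inverse Laplace transform of that digamma difference as $\frac{1}{\sqrt{\pi t}}\int_0^\infty e^{-u^2/4t}\frac{e^{u/2}}{e^u-1}\sin(\rho u)\,du$ (Lemma \ref{lemma:InverseLaplaceTransformPsiFunktion}). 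Only after these does the remaining $\rho$-integration against $\sin(\rho u)$, via $\int_0^\infty\frac{\sin(\rho u)}{e^{y\rho}-1}\,d\rho=\frac{\pi}{2y}\coth(\frac{\pi u}{y})-\frac{1}{2u}$, assemble the $\coth$-difference; the partial-fraction identity you mention enters only at this last stage. Without the two Laplace/Legendre identities your route cannot reach the stated $u$-integral.
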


\begin{proof}
In polar coordinates, the domain of integration is parametrised as
\begin{align*}
\hyperball{r}{P} \cap W = \{\, (a,\alpha) \mid 0<a<r, \, 0<\alpha<\gamma \,\}
\end{align*}
and the volume form is given by $dx=\sinh(a) da d\alpha$. From \eqref{equation:FormelHeatKernelWedgeShift} we get for all $t>0$:
\begin{align}
\label{equation:ShiftedHeatTraceWedgeDiagonal}
 Z_{\gamma}^{\nicefrac{1}{4}}(t;r) = &\int\limits_{0}^{r} \int\limits_{0}^{\gamma}  K_{\mathbb{H}^2}^{\nicefrac{1}{4}}((a,\alpha),(a,\alpha);t)\cdot \sinh(a) d\alpha da \nonumber \\
& - \int\limits_{0}^{r} \int\limits_{0}^{\gamma} \mathcal{L}^{-1}\Bigg\lbrace s\mapsto H^{\nicefrac{1}{4}}((a,\alpha),(a,\alpha);s) \Bigg\rbrace (t) \cdot \sinh(a) d\alpha da.
\end{align}
The first summand on the right-hand side (RHS) of equation \eqref{equation:ShiftedHeatTraceWedgeDiagonal} is exactly the same as the first term on the RHS of \eqref{equation:HeatTraceShift}. Let us simplify the second summand on the RHS of \eqref{equation:ShiftedHeatTraceWedgeDiagonal}. In order to get a better overview we postpone some of the lengthy calculations involved in the remaining part. We will fill in these gaps later by the Lemmas \ref{lemma:LemmaInverseLaplaceLegendreQ} -- \ref{lemma:InverseLaplaceTransformPsiFunktion} to which we will refer here whenever needed.

First, let $s>\frac{1}{4}$ be given. Then $Q_{ \sqrt{s} - \frac{1}{2}}^{-i\rho}(\cosh(a))\cdot Q_{ \sqrt{s} - \frac{1}{2}}^{i\rho}(\cosh(a))$ is non-negative for all $a>0$ and $\rho\geq 0$. This follows from \eqref{equation:ConditionsSchritt1.1} and \citep[\S 14.20]{NIST}. Thus, by \eqref{equation:SolutionH} and the Fubini-Tonelli theorem:
\begin{align}
&\int\limits_{0}^{r} \int\limits_{0}^{\gamma} H^{\nicefrac{1}{4}}((a,\alpha), (a,\alpha);s) \cdot \sinh(a) d\alpha \, da \nonumber \\
&=  \frac{1}{\pi^2}  \int\limits_{0}^{r} \int\limits_{0}^{\gamma} \int\limits_{0}^{\infty} Q_{ \sqrt{s} - \frac{1}{2}}^{-i\rho}(\cosh(a))  Q_{ \sqrt{s} - \frac{1}{2}}^{i\rho}(\cosh(a)) \Bigg( \frac{\sinh(\pi\rho)}{\sinh(\gamma\rho)}\cosh(\rho(\gamma-2\alpha)) \nonumber \nonumber \\
&\qquad\qquad\qquad\qquad\qquad\qquad\qquad\qquad\qquad\qquad\quad - \frac{\sinh((\pi-\gamma)\rho)}{\sinh(\gamma\rho)} \Bigg)d\rho \, \sinh(a) d\alpha \, da \nonumber \\
&=\frac{1}{\pi^2}  \int\limits_{0}^{r}  \int\limits_{0}^{\infty} Q_{ \sqrt{s} - \frac{1}{2}}^{-i\rho}(\cosh(a))  Q_{ \sqrt{s} - \frac{1}{2}}^{i\rho}(\cosh(a)) \int\limits_{0}^{\gamma} \Bigg( \frac{\sinh(\pi\rho)}{\sinh(\gamma\rho)} \cosh(\rho(\gamma-2\alpha)) \nonumber \nonumber \\
&\qquad\qquad\qquad\qquad\qquad\qquad\qquad\qquad\qquad\qquad\quad - \frac{\sinh((\pi-\gamma)\rho)}{\sinh(\gamma\rho)} \Bigg) d\alpha \, d\rho \, \sinh(a)  \, da \nonumber \\
&=\frac{1}{\pi^2}  \int\limits_{0}^{r}  \int\limits_{0}^{\infty} Q_{ \sqrt{s} - \frac{1}{2}}^{-i\rho}(\cosh(a))  Q_{ \sqrt{s} - \frac{1}{2}}^{i\rho}(\cosh(a)) \Bigg( \frac{\sinh(\pi\rho)}{\rho} \nonumber \\
&\qquad\qquad\qquad\qquad\qquad\qquad\qquad\qquad\qquad\qquad\quad - \gamma \frac{\sinh((\pi-\gamma)\rho)}{\sinh(\gamma\rho)} \Bigg)  d\rho \, \sinh(a)  \, da \nonumber \\
&=\frac{1}{2}  \int\limits_{0}^{r} P_{ \sqrt{s} - \frac{1}{2}}(\cosh(a))  Q_{ \sqrt{s} - \frac{1}{2}}(\cosh(a))\sinh(a) da \nonumber \\
& \qquad - \frac{ \gamma}{\pi^2}  \int\limits_{0}^{r}  \int\limits_{0}^{\infty} Q_{ \sqrt{s} - \frac{1}{2}}^{-i\rho}(\cosh(a))  Q_{ \sqrt{s} - \frac{1}{2}}^{i\rho}(\cosh(a)) \frac{\sinh((\pi-\gamma)\rho)}{\sinh(\gamma\rho)}   d\rho \, \sinh(a)  \, da, \label{equation:VertauschungIntegrationInverseLaplaceTrafor}
\end{align}
where we used Lemma \ref{lemma:LemmaInverseLaplaceLegendreQ} $(ii)$ for the last equality. Note that both integrals are finite due to \eqref{equation:SecondEstimateProductLegendreFunctions2} and \eqref{equation:EstimateProductLegendreFunctionsForSmallValuesOfZ}, respectively. Moreover the right-hand side is holomorphic in $s\in\mathbb{C}\backslash (-\infty, 0]$.

Thus, we obtain for all $s>\frac{1}{4}$:
\begin{align*}
\mathcal{L}\,&\bigg\lbrace t\mapsto \int_{\hyperball{r}{P} \cap W} K_{\mathbb{H}^2}^{\nicefrac{1}{4}}(x,x;t)-K_{W}^{\nicefrac{1}{4}}(x,x;t) dx \bigg\rbrace(s)\\
& = \int\limits_{0}^{\infty} e^{-st}\int\limits_{\hyperball{r}{P} \cap W} K_{\mathbb{H}^2}^{\nicefrac{1}{4}}(x,x;t)-K_{W}^{\nicefrac{1}{4}}(x,x;t) dx dt\\
& = \int\limits_{\hyperball{r}{P} \cap W} \int\limits_{0}^{\infty} e^{-st} \left( K_{\mathbb{H}^2}^{\nicefrac{1}{4}}(x,x;t)-K_{W}^{\nicefrac{1}{4}}(x,x;t)\right)  dt dx \\
& = \int\limits_{\hyperball{r}{P} \cap W} H^{\nicefrac{1}{4}}(x,x;s) dx,
\end{align*}
where we applied the Fubini-Tonelli theorem for the second equality (note that all functions are non-negative). Thus, we have for all $t>0$
\begin{align}
\label{equation:VertauschungIntegraleUndInverseLaplaceTrafo}
\mathcal{L}^{-1}\Bigg\lbrace s\mapsto \int\limits_{\hyperball{r}{P} \cap W} H^{\nicefrac{1}{4}}(x,x;s) dx \Bigg\rbrace (t) &=  \int\limits_{\hyperball{r}{P} \cap W} (K_{\mathbb{H}^2}^{\nicefrac{1}{4}}(x,x;t)-K_{W}^{\nicefrac{1}{4}}(x,x;t))dx \nonumber \\
& =  \int\limits_{\hyperball{r}{P} \cap W} \mathcal{L}^{-1}\Bigg\lbrace s\mapsto H^{\nicefrac{1}{4}}(x,x;s) \Bigg\rbrace (t)dx.
\end{align}
In other words, we may interchange the order of integration and $\mathcal{L}^{-1}$. Using \eqref{equation:VertauschungIntegraleUndInverseLaplaceTrafo} and \eqref{equation:VertauschungIntegrationInverseLaplaceTrafor}, we obtain for all $t>0$:
\begin{align*}
&\quad -\int\limits_{\hyperball{r}{P} \cap W} \mathcal{L}^{-1}\Bigg\lbrace s\mapsto H^{\nicefrac{1}{4}}(x,x;s) \Bigg\rbrace (t)dx \\
&=\, -\frac{1}{2}\, \mathcal{L}^{-1}\Bigg\lbrace s\mapsto \int\limits_{0}^{r} P_{ \sqrt{s} - \frac{1}{2}}(\cosh(a))  Q_{ \sqrt{s} - \frac{1}{2}}(\cosh(a))\sinh(a) da\Bigg\rbrace (t) \nonumber \\
&\quad + \frac{ \gamma}{\pi^2} \, \mathcal{L}^{-1}\Bigg\lbrace s\mapsto \int\limits_{0}^{\infty}  \int\limits_{0}^{\infty} Q_{ \sqrt{s} - \frac{1}{2}}^{-i\rho}(\cosh(a))  Q_{ \sqrt{s} - \frac{1}{2}}^{i\rho}(\cosh(a)) \frac{\sinh((\pi-\gamma)\rho)}{\sinh(\gamma\rho)}   d\rho \, \sinh(a)  \, da \Bigg\rbrace (t)\\
& \quad -\, A_{\gamma}(t;r)
\end{align*}

The above inverse Laplace transforms can be computed explicitly as we will show in Lemma \ref{lemma:InverseLaplace1}-\ref{lemma:InverseLaplaceTransformPsiFunktion} below. From Lemma \ref{lemma:InverseLaplace1} we know for all $t>0$:
\begin{align*}
&-\frac{1}{2}  \mathcal{L}^{-1}\left\lbrace \int\limits_{0}^{r} P_{ \sqrt{s} - \frac{1}{2}}(\cosh(a))  Q_{ \sqrt{s} - \frac{1}{2}}(\cosh(a)) \sinh(a) da \right\rbrace (t) \\
 = & -2\int\limits_{0}^{r} \int\limits_{0}^{\frac{\pi}{2}} K_{\mathbb{H}^2}^{\nicefrac{1}{4}}((a,\alpha),(a,-\alpha);t) \sinh(a) d\alpha da,
\end{align*}
which is equal to the second summand on the RHS of \eqref{equation:HeatTraceShift}.

For the second inverse Laplace transform, we combine Lemma \ref{lemma:IntegralProdLegendrePsiFunktion} below with Lemma \ref{lemma:InverseLaplaceTransformPsiFunktion} below to obtain
\begin{align*}
&I:=\frac{ \gamma}{\pi^2}  \mathcal{L}^{-1}\Bigg\lbrace s\mapsto \int\limits_{0}^{\infty}  \int\limits_{0}^{\infty} Q_{ \sqrt{s} - \frac{1}{2}}^{-i\rho}(\cosh(a))  Q_{ \sqrt{s} - \frac{1}{2}}^{i\rho}(\cosh(a)) \frac{\sinh((\pi-\gamma)\rho)}{\sinh(\gamma\rho)}   d\rho \, \sinh(a)  \, da \Bigg\rbrace (t)\\
& = \frac{\gamma}{\pi} \frac{1}{\sqrt{4\pi t}}  \int\limits_{0}^{\infty} e^{-\frac{u^2}{4t}} \cdot \frac{e^{\frac{u}{2}}}{e^{u}-1} \cdot \left( \int\limits_{0}^{\infty} \frac{\sinh((\pi-\gamma)\rho)}{\sinh(\gamma\rho)\sinh(\rho \pi)}\cdot \sin(\rho u) d\rho \right) du.
\end{align*}
Further,
\begin{align*}
\frac{\sinh((\pi-\gamma)\rho)}{\sinh(\gamma\rho)\sinh(\rho \pi)} &= \frac{\cosh(\rho \gamma)}{\sinh(\rho \gamma)} - \frac{\cosh(\rho \pi)}{\sinh(\rho \pi)} = \left(1 + \frac{2}{e^{2\gamma\rho}-1}\right) - \left( 1 + \frac{2}{e^{2\pi\rho}-1} \right)\\
&=2\left( \frac{1}{e^{2\gamma\rho}-1} - \frac{1}{ e^{2\pi\rho}-1} \right),
\end{align*}
and thus
\begin{align*}
I = \frac{2\gamma}{\pi} \frac{1}{\sqrt{4\pi t}}  \int\limits_{0}^{\infty} e^{-\frac{u^2}{4t}} \cdot \frac{e^{\frac{u}{2}}}{e^{u}-1} \cdot \left( \int\limits_{0}^{\infty} \left(  \frac{\sin(\rho u)}{e^{2\gamma\rho}-1} - \frac{\sin(\rho u)}{e^{2\pi\rho}-1}  \right)  d\rho \right) du.
\end{align*}
From \citep[3.911 2]{Gradshteyn}, we know that for all $u\in (0,\infty)$, $y\in\mathbb{C}$ with $\Re(y)>0$:
\begin{align*}
\int\limits_{0}^{\infty} \frac{\sin(\rho u)}{e^{ y \rho}-1}d\rho = \frac{\pi}{2y}\coth\left( \frac{\pi u}{y} \right) - \frac{1}{2u}.
\end{align*}
Hence,
\begin{align*}
I &= \frac{2\gamma}{\pi} \frac{1}{\sqrt{4\pi t}}  \int\limits_{0}^{\infty} e^{-\frac{u^2}{4t}} \cdot \frac{e^{\frac{u}{2}}}{e^{u}-1} \cdot \left(  \left( \frac{\pi}{4 \gamma}\coth\left( \frac{\pi u}{2\gamma} \right) - \frac{1}{2u} \right)   - \left( \frac{\pi}{4 \pi}\coth\left( \frac{\pi u}{2\pi} \right) - \frac{1}{2u}\right)   \right) du  \\
&= \frac{\gamma}{2 \pi} \frac{1}{\sqrt{4\pi t}}  \int\limits_{0}^{\infty} e^{-\frac{u^2}{4t}} \cdot \frac{e^{\frac{u}{2}}}{e^{u}-1} \cdot \left(  \frac{\pi}{\gamma}\coth\left( \frac{\pi}{\gamma}\cdot \frac{u}{2} \right) - \coth\left( \frac{u}{2} \right) \right) du
\end{align*}
This is exactly the third term on the RHS of \eqref{equation:HeatTraceShift}, such that the proof is completed.
\end{proof}

\begin{remark}
There also exists a Euclidean version of Theorem \ref{theorem:HeatTraceShift} published in \citep[Theorem 2]{VanDenBerg}. In their article van den Berg and Srisatkunarajah proved that formula in order to compute the heat invariants for Euclidean polygons. Besides, this formula is used by Mazzeo and Rowlett in \citep{Mazzeo} to study the so-called heat trace anomaly on Euclidean polygons, which refers to the fact that the third heat invariant is not continuous with respect to Lipschitz convergence of domains in the Euclidean plane. In the last paragraph of \citep{Mazzeo} the authors point out that an analogous formula for higher dimensional Euclidean wedges would be needed in order to generalise their results for higher dimensional polyhedra.
\end{remark}

In the remaining part of this section we will prove three lemmas which we promised and already used in the proof of Theorem \ref{theorem:HeatTraceShift}.

\begin{lemma}
\label{lemma:LemmaInverseLaplaceLegendreQ}
For all $\nu\in (-1,\infty)$ and $a>0$:
\begin{itemize}
\item[$(i)$] 
\begin{align}
\label{equation:LemmaInverseLaplaceLegendreQ}
\frac{1}{\pi} \int\limits_{0}^{\pi} Q_{\nu}\left(\cosh\left(d\left( \left(a,\frac{\alpha}{2} \right),\left( a,-\frac{\alpha}{2}\right)  \right)\right)\right) d\alpha = P_{\nu}\left( \cosh(a) \right)Q_{\nu}\left(\cosh(a)\right).
\end{align}
\item[$(ii)$]
\begin{align}
\label{equation:VerallgemeinerungIntegralausdruckSingulaereStelle}
\frac{2}{\pi}\int\limits_{0}^{\infty} \frac{\sinh(\pi \rho)}{\pi \cdot \rho}\, Q_{\nu}^{-i\rho}(\cosh(a))Q_{\nu}^{i\rho}(\cosh(a)) d\rho = P_{\nu}(\cosh(a))Q_{\nu}(\cosh(a)).
\end{align}
\end{itemize}
\end{lemma}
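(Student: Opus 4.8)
The plan is to prove the two identities by relating both sides to the addition formula in Corollary \ref{corollary:IntegralProduktLegendreFuerGreensFunction}, which already packages the hard analysis.

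\textbf{Part $(i)$.} First I would record the distance formula \eqref{equation:HyperbolicDistancePolarCoordinates} in the special configuration $x=\left(a,\tfrac{\alpha}{2}\right)$, $y=\left(a,-\tfrac{\alpha}{2}\right)$, which gives
\begin{align*}
\cosh\!\left(d\!\left(\left(a,\tfrac{\alpha}{2}\right),\left(a,-\tfrac{\alpha}{2}\right)\right)\right)=\cosh^2(a)-\sinh^2(a)\cos(\alpha).
\end{align*}
Then I would invoke Corollary \ref{corollary:IntegralProduktLegendreFuerGreensFunction} with $\omega=z=\cosh(a)\in(1,\infty)$ and $\theta=\alpha\in(0,2\pi)$ (this is precisely the case $\omega=z$ allowed by that corollary for $\theta\in(0,2\pi)$), together with the relation $\nu=\sqrt{s}-\frac12\leftrightarrow\nu(\nu+1)=s-\frac14$; but it is cleaner to state it directly in terms of $\nu\in(-1,\infty)$, which is legitimate since both sides of \eqref{example:IntegralFuerGreen} are analytic in $\nu$ on $\mathcal H_{>-\frac12}$ and the corollary holds on $\Re(\nu)>-1$. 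This yields, for each $\alpha\in(0,\pi]$,
\begin{align*}
Q_{\nu}\!\left(\cosh^2(a)-\sinh^2(a)\cos(\alpha)\right)=\frac{2}{\pi}\int\limits_{0}^{\infty}\cosh\!\left(\rho(\pi-\alpha)\right)Q_{\nu}^{-i\rho}(\cosh(a))Q_{\nu}^{i\rho}(\cosh(a))\,d\rho.
\end{align*}
Now I would integrate this over $\alpha\in[0,\pi]$ and interchange the $\alpha$- and $\rho$-integrals. The interchange is justified because the integrand is (absolutely) dominated using Lemma \ref{lemma:EstimateProductLegendreFunctions}: $\bigl|Q_{\nu}^{-i\rho}(\cosh a)Q_{\nu}^{i\rho}(\cosh a)\bigr|$ is bounded by a constant times $(\,|\nu|+1+\rho)^{2|\nu|+1}e^{-\pi\rho}$, while $|\cosh(\rho(\pi-\alpha))|\le e^{\pi\rho}$ for $\alpha\in[0,\pi]$, so the double integral over $[0,\pi]\times[0,\infty)$ converges absolutely; for $\nu\in(-1,-\tfrac12)$ one uses part $(iii)$ of that lemma instead. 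After the interchange, I compute the elementary inner integral $\int_0^\pi\cosh(\rho(\pi-\alpha))\,d\alpha=\frac{\sinh(\pi\rho)}{\rho}$, giving
\begin{align*}
\int\limits_{0}^{\pi}Q_{\nu}\!\left(\cosh\!\left(d\!\left(\left(a,\tfrac{\alpha}{2}\right),\left(a,-\tfrac{\alpha}{2}\right)\right)\right)\right)d\alpha=\frac{2}{\pi}\int\limits_{0}^{\infty}\frac{\sinh(\pi\rho)}{\rho}Q_{\nu}^{-i\rho}(\cosh a)Q_{\nu}^{i\rho}(\cosh a)\,d\rho.
\end{align*}
To finish $(i)$ it then suffices to identify the right-hand side with $\pi\,P_\nu(\cosh a)Q_\nu(\cosh a)$ — i.e. with $(ii)$.

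\textbf{Part $(ii)$.} This is Example \ref{example:LegendreIntegral}$(a)$ (equation \eqref{equation:ErstesWichtigesIntegral}) with $g(s)=\frac1s$, but there it was proven only for $\omega<z$; here $\omega=z=\cosh(a)$ is on the diagonal, exactly the boundary case excluded because $g=\frac1s$ is of type $I$ and so the integral is still absolutely convergent (Theorem \ref{theorem:TheoremZentralIntegral} says type-$I$ integrals converge absolutely for \emph{all} $z,\omega\in(1,\infty)$). So the plan is a continuity/analytic-continuation argument: fix $\nu\in(-1,\infty)$ and $a>0$, pick a sequence $z_n\downarrow\cosh(a)$ with $z_n>\cosh(a)$ (equivalently $b_n\downarrow a$ with $b_n>a$), apply \eqref{equation:ErstesWichtigesIntegral} to get
\begin{align*}
\frac{2}{\pi}\int\limits_{0}^{\infty}\frac{\sinh(\pi\rho)}{\pi\rho}Q_{\nu}^{-i\rho}(z_n)Q_{\nu}^{i\rho}(\cosh a)\,d\rho=P_{\nu}(\cosh a)Q_{\nu}(z_n),
\end{align*}
and let $n\to\infty$. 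The right-hand side converges by continuity of $Q_\nu$ on $(1,\infty)$. For the left-hand side I would apply Lebesgue's dominated convergence theorem, using that $\rho\mapsto Q_{\nu}^{-i\rho}(z)$ is continuous in $z\in(1,\infty)$ and that, by Lemma \ref{lemma:EstimateProductLegendreFunctions}$(i)$ (for $\nu\ge-\tfrac12$) respectively $(iii)$ (for $-1<\nu<-\tfrac12$), the integrand is bounded uniformly in $n$ by an integrable function of $\rho$ — here one uses that the $z_n$ stay in a compact subinterval of $(1,\infty)$, so the factors $\frac{1}{\sqrt{z_n-1}}$ resp. $(z_n-1)^{-(\nu+1)}$ are bounded; the extra $\frac{\sinh(\pi\rho)}{\pi\rho}\sim\frac{e^{\pi\rho}}{2\pi\rho}$ is absorbed by the $e^{-\pi\rho}$ in the estimates, leaving something like $C\,(|\nu|+1+\rho)^{2|\nu|+1}/\rho$ for large $\rho$, which is... not integrable near $\infty$ as stated, so in fact one must combine the $\frac1\rho$ with the polynomial: the correct bound from the estimates for $\Re(\nu)\ge-\tfrac12$ gives $|Q_\nu^{-i\rho}Q_\nu^{i\rho}|\le C\,|\Gamma(\nu+1+i\rho)\Gamma(\nu+1-i\rho)|\,e$-factors — more simply, from \eqref{equation:EstimateProductLegendreFunctionsBeweisZwischenschritt} the product is $O(\rho^{2\nu}e^{-\pi\rho})$, hence the integrand is $O(\rho^{2\nu-1})$ times a decaying exponential, which is integrable. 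Thus the dominated-convergence step goes through and yields \eqref{equation:VerallgemeinerungIntegralausdruckSingulaereStelle}.

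\textbf{Closing the loop and the main obstacle.} Combining the two parts: Part $(ii)$ identifies the common right-hand side obtained in Part $(i)$, so $(i)$ follows. The main obstacle is the uniform integrable domination needed for the limit $z_n\to\cosh(a)$ in Part $(ii)$: near $\rho=0$ the factor $\frac{\sinh(\pi\rho)}{\pi\rho}$ is harmless (it tends to $1$), but near $\rho\to\infty$ one must be careful that the polynomial growth in $\rho$ coming from $|\Gamma(\nu+1\pm i\rho)|$ is genuinely beaten by $e^{-\pi\rho}$ from \eqref{equation:GammaImaginaryEstimate} — Lemma \ref{lemma:EstimateProductLegendreFunctions} is tailored exactly for this and supplies a bound of the form $C(|\nu|+1+\rho)^{2|\nu|+1}e^{-\pi\rho}$ uniform over $z_n$ in a compact set, so the only real work is bookkeeping the three ranges of $\Re(\nu)$. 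A secondary point to check is that \eqref{equation:ErstesWichtigesIntegral} and the addition formula \eqref{example:IntegralFuerGreen} may be applied with $\nu\in(-1,\infty)$ real rather than merely $\Re(\nu)>-1$; this is immediate since the stated hypotheses already allow $\Re(\nu)>-1$.
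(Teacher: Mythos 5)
Your architecture is the mirror image of the paper's: you prove $(ii)$ first by letting $z_n\downarrow\cosh(a)$ in the $\rho$-integral of \eqref{equation:ErstesWichtigesIntegral} and then deduce $(i)$, whereas the paper proves $(i)$ first by letting $b_n\downarrow a$ in the $\alpha$-integral and then reads off $(ii)$. The reduction between the two parts (addition formula on the diagonal plus Fubini, with $\int_0^\pi\cosh(\rho(\pi-\alpha))\,d\alpha=\sinh(\pi\rho)/\rho$) is common to both and is fine, provided you justify Fubini by the absolute convergence statement of Theorem \ref{theorem:TheoremZentralIntegral} for the type-I function $g(s)=1/s$ rather than by the crude pointwise bound you quote, which does not by itself give an integrable majorant.

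The genuine gap is the dominated convergence step in your Part $(ii)$, and you half-notice it yourself before waving it away with an incorrect claim. The uniform bounds of Lemma \ref{lemma:EstimateProductLegendreFunctions} (equivalently \eqref{equation:EstimateProductLegendreFunctionsBeweisZwischenschritt} combined with \eqref{equation:GammaImaginaryEstimate}) give $\vert Q_\nu^{-i\rho}(z_n)Q_\nu^{i\rho}(\cosh a)\vert\le C\,(\vert\nu\vert+1+\rho)^{2\nu+1}e^{-\pi\rho}$ uniformly over $z_n$ in a compact subset of $(1,\infty)$; multiplying by $\frac{\sinh(\pi\rho)}{\pi\rho}\sim\frac{e^{\pi\rho}}{2\pi\rho}$ cancels the exponentials \emph{exactly} and leaves $O(\rho^{2\nu})$, which is not integrable at infinity for $\nu\ge-\tfrac12$. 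There is no leftover decaying exponential, and your exponent $2\nu-1$ is off by one. The actual integrability of the integrand in \eqref{equation:VerallgemeinerungIntegralausdruckSingulaereStelle} comes from the oscillatory cancellation in \eqref{equation:AsympLegendreProdKond2}, which yields $\vert Q_\nu^{-i\rho}(z)Q_\nu^{i\rho}(\omega)\vert=O(e^{-\pi\rho}/\rho)$ — but that asymptotic is stated pointwise for fixed $z,\omega$ and is not established uniformly in $z$ near $\cosh(a)$, so it cannot serve as a dominating function for the sequence either. The paper sidesteps this entirely by taking the limit on the geometric side: it first shows, for every $b>0$, that $\frac{1}{\pi}\int_0^\pi Q_\nu(\cosh d((a,\tfrac\alpha2),(b,-\tfrac\alpha2)))\,d\alpha$ equals the $\rho$-integral with arguments $\cosh(a),\cosh(b)$, hence equals $P_\nu(\cosh a)Q_\nu(\cosh b)$ for $b>a$; then, as $b_n\downarrow a$, the integrands $Q_\nu(\cosh d((a,\alpha),(b_n,0)))$ are non-negative and increase monotonically (the distance is strictly increasing in $b$ and $Q_\nu$ is non-negative and strictly decreasing on $(1,\infty)$ by \eqref{equation:RepresentationLegendreQIntegral}), so the monotone convergence theorem applies with no domination needed. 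You should either adopt that route or supply a genuinely uniform version of \eqref{equation:AsympLegendreProdKond2}; as written, your limit passage is not justified.
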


\begin{proof}
Let $\nu \in (-1,\infty)$ and $a\in (0,\infty)$ be given. Using \eqref{equation:HyperbolicDistancePolarCoordinates} and Corollary \ref{corollary:IntegralProduktLegendreFuerGreensFunction}, we have for all $ b \in (0,\infty)$: 
\begin{align}
\frac{1}{\pi} \int\limits_{0}^{\pi} Q_{\nu} &\left(\cosh\left(d\left( \left(a,\frac{\alpha}{2} \right),\left( b,-\frac{\alpha}{2}\right)  \right)\right)\right) d\alpha \nonumber \\
&= \frac{1}{\pi} \int\limits_{0}^{\pi} Q_{\nu}\left(\cosh(a)\cosh(b)-\sinh(a)\sinh(b)\cos(\alpha) \right) d\alpha \nonumber\\
&=\frac{2}{\pi^2} \int\limits_{0}^{\pi} \int\limits_{0}^{\infty} \cosh(\rho(\pi-\alpha)) Q_{\nu}^{-i\rho}(\cosh(b))Q_{\nu}^{i\rho}(\cosh(a))\, d\rho \,d\alpha \nonumber\\
&=\frac{2}{\pi^2}  \int\limits_{0}^{\infty} \left( \int\limits_{0}^{\pi} \cosh(\rho(\pi-\alpha)) d\alpha\right) \, Q_{\nu}^{-i\rho}(\cosh(b))Q_{\nu}^{i\rho}(\cosh(a))d\rho \nonumber\\
&=\frac{2}{\pi}  \int\limits_{0}^{\infty} \frac{\sinh(\rho \pi)}{ \pi}\cdot \frac{1}{\rho} \, Q_{\nu}^{-i\rho}(\cosh(b))Q_{\nu}^{i\rho}(\cosh(a))\, d\rho \label{equation:PunkterweiterungIntegralausdruck0}.
\end{align}
 Note also that we used the Fubini-Tonelli theorem for the third equality, which is allowed due to Theorem \ref{theorem:TheoremZentralIntegral}. More precisely,
\begin{align*}
 \frac{2}{\pi^2}  \int\limits_{0}^{\infty} \int\limits_{0}^{\pi} &\, \vert \cosh(\rho(\pi-\alpha)) \, Q_{\nu}^{-i\rho}(\cosh(b))Q_{\nu}^{i\rho}(\cosh(a))\vert\, d\alpha\, d\rho \\
&=\frac{2}{\pi^2}  \int\limits_{0}^{\infty} \left( \int\limits_{0}^{\pi} \cosh(\rho(\pi-\alpha)) d\alpha\right) \, \vert Q_{\nu}^{-i\rho}(\cosh(b))Q_{\nu}^{i\rho}(\cosh(a))\vert \,d\rho \\
&=\frac{2}{\pi}  \int\limits_{0}^{\infty} \frac{\sinh(\rho \pi)}{ \pi}\cdot \frac{1}{\rho} \, \vert Q_{\nu}^{-i\rho}(\cosh(b))Q_{\nu}^{i\rho}(\cosh(a))\vert \, d\rho <\infty.
\end{align*}
 
$(i)$. From \eqref{equation:PunkterweiterungIntegralausdruck0} and \eqref{equation:ErstesWichtigesIntegral} we obtain for all $b\in (0,\infty)$ with $a<b$:
\begin{align}
\frac{1}{\pi} \int\limits_{0}^{\pi} Q_{\nu} &\left(\cosh\left(d\left( \left(a,\frac{\alpha}{2} \right),\left( b,-\frac{\alpha}{2}\right)  \right)\right)\right) d\alpha = P_{\nu}\left( \cosh(a) \right)Q_{\nu}\left(\cosh(b)\right). \label{equation:PunkterweiterungIntegralausdruck}
\end{align}
Let $(b_n)_{n\in\mathbb{N}}\subset (a,\infty)$ be any sequence with $b_{n}>b_{n+1}$ for all $n\in\mathbb{N}$ and $\lim_{n\rightarrow\infty} b_n=a$. By continuity of $(0,\infty)\ni b\mapsto Q_{\nu}\left(\cosh(b)\right)\in\mathbb{R}$,
\begin{align}
P_{\nu}\left( \cosh(a) \right)Q_{\nu}\left(\cosh(a)\right) &= \lim\limits_{n\rightarrow\infty}P_{\nu}\left( \cosh(a) \right)Q_{\nu}\left(\cosh(b_n)\right) \nonumber \\
&=\lim\limits_{n\rightarrow\infty} \frac{1}{\pi} \int\limits_{0}^{\pi} Q_{\nu}( \cosh( d (  (a,\alpha  ), ( b_n , 0 ) ) )) d\alpha, \label{equation:PunkterweiterungIntegralausdruck2}
\end{align}
where we used \eqref{equation:PunkterweiterungIntegralausdruck} and $\cosh(d\left( \left(a,\frac{\alpha}{2} \right), \left( b_n , - \frac{\alpha}{2} \right) \right))=\cosh(d (  (a,\alpha  ), ( b_n , 0 ) ) )$ for the last equality.
Thus, it remains to show 
\begin{align*}
\lim\limits_{n\rightarrow\infty} \frac{1}{\pi}\int\limits_{0}^{\pi} Q_{\nu}( \cosh( d (  (a,\alpha  ), ( b_n , 0 ) ) )) d\alpha = \frac{1}{\pi} \int\limits_{0}^{\pi} Q_{\nu} ( \cosh( d (  (a,\alpha  ), ( a , 0 ) ) )) d\alpha.
\end{align*}
Note that for all $\alpha\in \left(0,\pi\right)$,
\begin{align*}
\varphi: [a,\infty)\ni b\mapsto \cosh( d (  (a,\alpha), ( b , 0 ) )  = \cosh(a)\cosh(b)-\sinh(a)\sinh(b)\cos(\alpha) \in (0,\infty)
\end{align*}
is strictly increasing because $\varphi''(b)=\varphi(b)>0$ for all $b>a$ and $\varphi'(a)>0$. Moreover, the function $Q_{\nu}:(1,\infty)\ni x \mapsto Q_{\nu}(x)\rightarrow \mathbb{R}$ is non-negative and strictly decreasing, which can be seen from \eqref{equation:RepresentationLegendreQIntegral}. Thus, by the monotone convergence theorem, we have 
\begin{align*}
\lim\limits_{n\rightarrow\infty}\frac{1}{\pi} \int\limits_{0}^{\pi} Q_{\nu} (\cosh\left( d ( (a,\alpha ), (b_n ,0) )\right) )\, d\alpha = \frac{1}{\pi} \int\limits_{0}^{\pi} Q_{\nu} (\cosh\left( d ( (a,\alpha ), (a,0) )\right) ) \, d\alpha.
\end{align*}
Hence, using the above equation and \eqref{equation:PunkterweiterungIntegralausdruck2}, we have
\begin{align*}
P_{\nu}\left( \cosh(a) \right)Q_{\nu}\left(\cosh(a)\right) &= \frac{1}{\pi} \int\limits_{0}^{\pi} Q_{\nu} (\cosh\left( d ( (a,\alpha ), (a,0) )\right) ) d\alpha \\
&=\frac{1}{\pi} \int\limits_{0}^{\pi} Q_{\nu}\left(\cosh\left(d\left( \left(a,\frac{\alpha}{2} \right),\left( a,-\frac{\alpha}{2}\right)  \right)\right)\right) d\alpha,
\end{align*}
which closes the proof of $(i)$.

$(ii)$. The second statement follows immediately from $(i)$ and \eqref{equation:PunkterweiterungIntegralausdruck0} with $b:=a$.
\end{proof}

We first discovered the following Laplace transform in Lemma \ref{lemma:InverseLaplace1} by performing the inverse Laplace transform with the complex inversion formula. Even though the calculations in connection with the complex inversion formula are instructive, we will compute the Laplace transform directly for the sake of a shorter proof. 

\begin{lemma}
\label{lemma:InverseLaplace1}
 For all $a,r>0$ the following Laplace transform exists and is given for all $s\in \mathcal{H}_{>\frac{1}{4}}$ by
\begin{align}
\label{equation:InverseLaplace1.1}
\mathcal{L}\bigg\lbrace t\mapsto 4 \int\limits_{0}^{r} \int\limits_{0}^{\frac{\pi}{2}}K_{\mathbb{H}^2}^{\nicefrac{1}{4}}&((a,\alpha),(a,-\alpha);t) \sinh(a)d\alpha da \bigg\rbrace (s) \nonumber \\
&= \int\limits_{0}^{r} P_{\sqrt{s}-\frac{1}{2}}(\cosh(a))Q_{\sqrt{s}-\frac{1}{2}}(\cosh(a)) \sinh(a)\, da.
\end{align}
\end{lemma}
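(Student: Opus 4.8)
The plan is to compute the Laplace transform on the left of \eqref{equation:InverseLaplace1.1} directly, by pulling the Laplace integral inside the integration over the sector, identifying the result with the Green's function of $\mathbb{H}^2$ via \eqref{equation:GreenPlane1}, and then invoking Lemma~\ref{lemma:LemmaInverseLaplaceLegendreQ}~$(i)$. Throughout, $\nu$ will stand for $\sqrt{s}-\frac{1}{2}$.

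First I would fix the analytic framework. For every $t>0$ the map $(a,\alpha)\mapsto K_{\mathbb{H}^2}^{\nicefrac{1}{4}}((a,\alpha),(a,-\alpha);t)\sinh(a)$ is continuous on $[0,r]\times[0,\frac{\pi}{2}]$ (the heat kernel is smooth for $t>0$, also on the diagonal), so $f(t):=4\int_{0}^{r}\int_{0}^{\frac{\pi}{2}}K_{\mathbb{H}^2}^{\nicefrac{1}{4}}((a,\alpha),(a,-\alpha);t)\sinh(a)\,d\alpha\,da$ is a continuous, non-negative function of $t>0$. By the Gaussian-type bound \eqref{equation:EstimateHeatKernelPlane}, $\int_{0}^{\infty}e^{-\Re(s)t}f(t)\,dt<\infty$ for every $s\in\mathcal{H}_{>\frac{1}{4}}$, so the Laplace integral of $f$ exists and is absolutely convergent there; by Proposition~\ref{proposition:InverseLaplaceTransform} the left-hand side of \eqref{equation:InverseLaplace1.1} is therefore holomorphic on $\mathcal{H}_{>\frac{1}{4}}$. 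The right-hand side is holomorphic on $\mathcal{H}_{>\frac{1}{4}}$ as well: for each fixed $a>0$ the map $s\mapsto P_{\nu}(\cosh(a))Q_{\nu}(\cosh(a))$ is holomorphic on $\mathbb{C}\setminus(-\infty,0]$, and using the integral representation \eqref{equation:RepresentationLegendreQIntegral} (with $\mu=0$) and the analogous one for $P_{\nu}$ one checks that $|P_{\nu}(\cosh(a))Q_{\nu}(\cosh(a))|$ is, uniformly for $s$ in a compact subset of $\mathcal{H}_{>\frac{1}{4}}$, bounded by a constant times $\ln\!\big(\frac{\cosh(a)+1}{\cosh(a)-1}\big)$, which is integrable against $\sinh(a)\,da$ over $(0,r]$; hence differentiation under the integral sign applies. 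By the identity theorem it now suffices to prove \eqref{equation:InverseLaplace1.1} for real $s>\frac{1}{4}$, where in addition $\nu=\sqrt{s}-\frac{1}{2}\in(0,\infty)$.

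So let $s>\frac{1}{4}$. Since $e^{-(s-\frac{1}{4})t}K_{\mathbb{H}^2}((a,\alpha),(a,-\alpha);t)\geq0$, the Fubini--Tonelli theorem (with finiteness again supplied by \eqref{equation:EstimateHeatKernelPlane}) lets me interchange the Laplace integral with the double integral, so, using \eqref{equation:GreenPlane1},
\begin{align*}
\mathcal{L}\{f\}(s)
&=4\int_{0}^{r}\int_{0}^{\frac{\pi}{2}}G_{\mathbb{H}^2}^{\nicefrac{1}{4}}\big((a,\alpha),(a,-\alpha);s\big)\sinh(a)\,d\alpha\,da\\
&=\frac{2}{\pi}\int_{0}^{r}\int_{0}^{\frac{\pi}{2}}Q_{\nu}\big(\cosh(d((a,\alpha),(a,-\alpha)))\big)\sinh(a)\,d\alpha\,da.
\end{align*}
Substituting $\beta=2\alpha$ and noting, by \eqref{equation:HyperbolicDistancePolarCoordinates}, that $d((a,\alpha),(a,-\alpha))=d((a,\tfrac{\beta}{2}),(a,-\tfrac{\beta}{2}))$, this becomes
\begin{align*}
\int_{0}^{r}\left(\frac{1}{\pi}\int_{0}^{\pi}Q_{\nu}\Big(\cosh\big(d((a,\tfrac{\beta}{2}),(a,-\tfrac{\beta}{2}))\big)\Big)\,d\beta\right)\sinh(a)\,da
=\int_{0}^{r}P_{\nu}(\cosh(a))Q_{\nu}(\cosh(a))\sinh(a)\,da,
\end{align*}
where the last equality is precisely Lemma~\ref{lemma:LemmaInverseLaplaceLegendreQ}~$(i)$ applied with $\nu\in(-1,\infty)$. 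Since $\nu=\sqrt{s}-\frac{1}{2}$, the right-hand side is the expression claimed in \eqref{equation:InverseLaplace1.1}, which settles the real case and hence, by the previous step, the lemma.

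The mathematical content is thus short; the only real work is bookkeeping. The step I expect to cost the most care is the justification of the two interchanges of integration together with the holomorphy of the right-hand side near $a=0$, where $Q_{\nu}(\cosh(a))$ has a logarithmic singularity — one must confirm it is integrable against $\sinh(a)\,da$ and dominated uniformly for $s$ in compact sets. All of this follows routinely from the estimates collected in Section~\ref{section:Legendre}, so I would keep these verifications brief. An alternative that avoids reducing to real $s$ would be to extend Lemma~\ref{lemma:LemmaInverseLaplaceLegendreQ}~$(i)$ to complex $\nu$ with $\Re(\nu)>-1$, replacing the monotone-convergence step in its proof by dominated convergence; but reducing to real $s$ via holomorphy is shorter.
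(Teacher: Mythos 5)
Your proof is correct and follows essentially the same route as the paper's: for real $s>\frac{1}{4}$ one applies Fubini--Tonelli, identifies the inner Laplace integral with $G_{\mathbb{H}^2}^{\nicefrac{1}{4}}$ via \eqref{equation:GreenPlane1}, rescales the angle, and invokes Lemma~\ref{lemma:LemmaInverseLaplaceLegendreQ}~$(i)$, after which both sides are extended to $\mathcal{H}_{>\frac{1}{4}}$ by holomorphy and Proposition~\ref{proposition:InverseLaplaceTransform}. The only (immaterial) difference is that the paper obtains finiteness of the Laplace integral directly from Tonelli and the finiteness of $\int_0^r P_{\nu}Q_{\nu}\sinh(a)\,da$, rather than from a separate a priori estimate on $f(t)$ as $t\searrow 0$.
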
 

\begin{proof}
Let $a, r>0$ be arbitrary. Note that for all $t>0$:
\begin{align*}
4\int\limits_{0}^{\frac{\pi}{2}} &K_{\mathbb{H}^2}^{\nicefrac{1}{4}}\left( \left( a,\alpha \right),\left( a,-\alpha \right);t \right) d\alpha = 2\int\limits_{0}^{\pi} K_{\mathbb{H}^2}^{\nicefrac{1}{4}}\left( \left( a,\frac{\alpha}{2} \right),\left( a,-\frac{\alpha}{2} \right);t \right) d\alpha .
\end{align*}
Thus, by the Fubini-Tonelli theorem, \eqref{equation:GreenPlane1}, and \eqref{equation:LemmaInverseLaplaceLegendreQ} we have for all $s>\frac{1}{4}$:
\begin{align*}
\int\limits_{0}^{\infty}e^{-st} \cdot 4 \int\limits_{0}^{r} \int\limits_{0}^{\frac{\pi}{2}} &K_{\mathbb{H}^2}^{\nicefrac{1}{4}}\left( \left( a,\alpha \right),\left( a,-\alpha \right);t \right) \sinh(a)\, d\alpha\, da\, dt \\
&= \int\limits_{0}^{\infty}e^{-st} \cdot2 \int\limits_{0}^{r}\int\limits_{0}^{\pi} K_{\mathbb{H}^2}^{\nicefrac{1}{4}}\left( \left( a,\frac{\alpha}{2} \right),\left( a,-\frac{\alpha}{2} \right);t \right)\sinh(a)\, d\alpha\, da\, dt \\
&=2 \int\limits_{0}^{r} \sinh(a) \int\limits_{0}^{\pi} \int\limits_{0}^{\infty}e^{-st} K_{\mathbb{H}^2}^{\nicefrac{1}{4}}\left( \left( a,\frac{\alpha}{2} \right),\left( a,-\frac{\alpha}{2} \right);t \right) \,dt \, d\alpha \, da\\
&= \int\limits_{0}^{r} \sinh(a) \frac{1}{\pi}\int\limits_{0}^{\pi} Q_{\sqrt{s}-\frac{1}{2}}\left(  \cosh\left(d\left( \left(a,\frac{\alpha}{2} \right),\left( a,-\frac{\alpha}{2}\right)  \right)\right)\right) d\alpha\, da\\
&=\int\limits_{0}^{r} \sinh(a) P_{\sqrt{s}-\frac{1}{2}}\left( \cosh(a) \right)Q_{\sqrt{s}-\frac{1}{2}}\left(\cosh(a)\right) \, da.
\end{align*}
Note that the application of the Fubini-Tonelli theorem is justified in the second equality above, since $s>\frac{1}{4}$ and therefore all functions under the integral sign are positive. 

We have shown that the Laplace integral is convergent and equal to $\int_{0}^{r} P_{\sqrt{s}-\frac{1}{2}}\left( \cosh(a) \right) \cdot Q_{\sqrt{s}-\frac{1}{2}}\left(\cosh(a)\right) \sinh(a)\, da$ for all $s>\frac{1}{4}$. Thus, by Proposition \ref{proposition:InverseLaplaceTransform}, the Laplace transform of $t\mapsto 4\int_{0}^{r} \int_{0}^{\frac{\pi}{2}}K_{\mathbb{H}^2}^{\nicefrac{1}{4}}((a,\alpha),(a,-\alpha);t) \sinh(a) d\alpha\, da$ exists for all $s\in\mathcal{H}_{>\frac{1}{4}}$. Moreover, that Laplace transform must be equal to $\int_{0}^{r} P_{\sqrt{s}-\frac{1}{2}}(\cosh(a))Q_{\sqrt{s}-\frac{1}{2}}(\cosh(a))  \sinh(a)\, da$ for all $s\in\mathcal{H}_{>\frac{1}{4}}$, because both are holomorphic on that domain.
\end{proof}

\begin{lemma}
\label{lemma:IntegralProdLegendrePsiFunktion}
For all $s\in\mathbb{C}\backslash(-\infty,0]$, $\rho\in\mathbb{R}$:
\begin{align}
\int\limits_{1}^{\infty} Q_{ \sqrt{s} - \frac{1}{2}}^{-i\rho}(x)  Q_{ \sqrt{s} - \frac{1}{2}}^{i\rho}(x) dx =\, &\frac{\pi}{2\sinh(\rho \pi)} \cdot \frac{1}{2i\sqrt{s}}\, \cdot\nonumber \\
&\cdot \left( \psi \left( \sqrt{s}+i\rho +\frac{1}{2} \right) - \psi\left( \sqrt{s} - i\rho +\frac{1}{2} \right) \right),
\end{align}
where 
\begin{align}
\label{equation:Psi-function}
\psi(z):=\frac{\Gamma'(z)}{\Gamma(z)} 
\end{align}
is the logarithmic derivative of the gamma function, also called psi-function.
\end{lemma}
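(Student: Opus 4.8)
The plan is to compute the integral $\int_1^\infty Q_{\nu}^{-i\rho}(x)Q_{\nu}^{i\rho}(x)\,dx$ with $\nu=\sqrt{s}-\tfrac12$ by exploiting the fact that the two factors $Q_\nu^{-i\rho}$ and $Q_\nu^{i\rho}$ are solutions of the associated Legendre equation \eqref{equation:LegendreDGL} with the same $\nu$ but conjugate order parameters $\mp i\rho$. First I would set $\nu:=\sqrt{s}-\tfrac12$, so $\nu(\nu+1)=s-\tfrac14$, and abbreviate $u(x):=Q_\nu^{-i\rho}(x)$, $v(x):=Q_\nu^{i\rho}(x)$. Writing \eqref{equation:LegendreDGL} in self-adjoint (Sturm--Liouville) form, $\frac{d}{dx}\big((1-x^2)w'\big)+\big(\nu(\nu+1)-\tfrac{\mu^2}{1-x^2}\big)w=0$, and taking $v\cdot(\text{equation for }u)-u\cdot(\text{equation for }v)$, the terms $\nu(\nu+1)uv$ cancel and the remaining $\mu^2$-terms combine to give
\begin{align*}
\frac{d}{dx}\Big((1-x^2)\big(u'v-uv'\big)\Big) = \big((-i\rho)^2-(i\rho)^2\big)\frac{uv}{1-x^2} = 0.
\end{align*}
So $(1-x^2)(u'v-uv')$ is constant in $x$; this Wronskian-type relation, however, does not by itself give the product integral. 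Instead I would differentiate with respect to the parameter: the standard trick for $\int W_1W_2$ where the $W_i$ solve an equation with a parameter is a Lagrange-type identity obtained by differentiating one equation with respect to $\mu$ (or here, using the two equations with $\pm i\rho$ and subtracting after multiplying the right way), producing a term proportional to $uv$ that is an exact derivative. Concretely, multiplying the $u$-equation by $v$, the $v$-equation by $u$, and this time \emph{adding} after dividing by suitable factors does not work; the clean route is: $v\cdot(\text{eq }u) + u\cdot(\text{eq }v)$ after first dividing each Legendre equation by $(1-x^2)$ is messy, so I would instead follow the classical derivation (as in Erdélyi or Gradshteyn) which integrates $\int Q_\nu^{-\mu}Q_\nu^{\mu}\,dx$ by relating it to a derivative in $\mu$ of a single factor evaluated at the endpoint.

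The cleaner and more direct path I would actually pursue is to use Whipple's formula \eqref{equation:Whipple1} to convert both $Q_\nu^{\pm i\rho}(x)$ into associated Legendre functions of the first kind $P$ with swapped roles of degree and order, turning the integral over $x\in(1,\infty)$ into an integral over the transformed variable, where known orthogonality-type integrals for conical (Mehler) functions apply; but since the statement's right-hand side features $\psi(\sqrt{s}\pm i\rho+\tfrac12)$ and a factor $1/(2i\sqrt{s})=1/(2(\nu+\tfrac12))$, the most transparent derivation is to start from the integral representation \eqref{equation:RepresentationLegendreQIntegral} for each $Q_\nu^{\pm i\rho}$ on $[1,\infty)$ — valid because $\Re(\pm i\rho)=0\ge 0$ when $\rho$ is real (a boundary case; I would need to check the representation extends or argue by continuity from $\Re(\mu)>0$) — substitute $x=\cosh(b)$, interchange the order of the $b$-integral with the two $t$-integrals via Fubini--Tonelli (positivity of integrands for $s>\tfrac14$, then analytic continuation in $s$), and evaluate the resulting elementary $b$-integral. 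This should collapse to a beta-type integral in the inner variables producing gamma functions; combining $\Gamma(\nu+1+i\rho)\Gamma(\nu+1-i\rho)/\Gamma(\nu+1)^2$ with the reflection/duplication relations and recognizing that $\frac{d}{d\nu}\log\Gamma(\nu+1\pm i\rho) = \psi(\nu+1\pm i\rho) = \psi(\sqrt{s}+\tfrac12\pm i\rho)$ will produce the psi-function difference.

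Alternatively, and perhaps most economically, I would look up or re-derive the indefinite integral $\int Q_\nu^{-\mu}(x)Q_\nu^{\mu}(x)\,dx$ from the self-adjoint-form manipulation above: the function $(1-x^2)(u'v-uv')$ being constant, together with the boundary behavior of $Q_\nu^{\pm i\rho}$ as $x\to 1^+$ and $x\to\infty$ (from Lemma \ref{lemma:LegendreAsymp} $(iii)$ and the logarithmic/algebraic behavior near $x=1$), pins down that constant; but to get $\int uv$ itself one differentiates the Legendre equation with respect to the degree $\nu$. Writing $u=Q_\nu^{-i\rho}$ and differentiating its equation in $\nu$, one gets an inhomogeneous equation whose right-hand side is $-(2\nu+1)u$; pairing $v$ against this via the Lagrange identity yields
\begin{align*}
(2\nu+1)\int_1^\infty Q_\nu^{-i\rho}(x)Q_\nu^{i\rho}(x)\,dx = \Big[(1-x^2)\big(\partial_\nu u\cdot v - u\cdot v'\big)\Big]_1^\infty
\end{align*}
up to sign and symmetrization in $\pm i\rho$, and the boundary term at infinity vanishes while the one at $x=1$ is computed from the explicit small-argument expansion of $Q_\nu^{\mp i\rho}$ near $x=1$, which involves $\Gamma$-functions of $\nu\pm i\rho+\tfrac12$ and the factor $1/\sinh(\rho\pi)$ (via $\Gamma(\tfrac12+i\rho)\Gamma(\tfrac12-i\rho)=\pi/\cosh(\pi\rho)$-type identities, or rather $1/\Gamma(1-i\rho)\Gamma(1+i\rho)$ giving $\sinh$). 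Dividing by $2\nu+1=2\sqrt{s}$ and extracting the $\partial_\nu$ of the gamma-function prefactor as $\psi(\sqrt{s}+i\rho+\tfrac12)-\psi(\sqrt{s}-i\rho+\tfrac12)$ then gives the claimed formula. The main obstacle I anticipate is handling the order parameter $\mu=\pm i\rho$ on the imaginary axis — the integral representation \eqref{equation:RepresentationLegendreQIntegral} and several asymptotic formulas are stated for $\Re(\mu)\ge 0$ or $\Re(\mu)>0$, so I would need a careful continuity/limiting argument (or a direct check that the boundary case is admissible) — together with justifying convergence of $\int_1^\infty$ at the lower endpoint, where $Q_\nu^{\pm i\rho}(x)$ oscillates like $(x-1)^{\mp i\rho/2}$ and the product $Q_\nu^{-i\rho}Q_\nu^{i\rho}$ is bounded, so absolute convergence holds, but the endpoint boundary term in the Lagrange identity must be extracted as a genuine limit rather than a plain substitution.
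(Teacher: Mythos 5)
Your final route is sound but genuinely different from what the paper does. The paper's proof is essentially a two-line reduction: it cites Robin's closed-form evaluation of $(2\nu+1)\int_1^\infty \bigl(Q_\nu^{\mu}(x)\bigr)^2\,dx$ in terms of $\frac{\pi e^{2i\mu\pi}}{2\sin(\mu\pi)}\frac{\Gamma(\nu+\mu+1)}{\Gamma(\nu-\mu+1)}\bigl(\psi(\nu+\mu+1)-\psi(\nu-\mu+1)\bigr)$, sets $\nu=\sqrt{s}-\tfrac12$, $\mu=i\rho$, and then converts the mixed-order product $Q_\nu^{-i\rho}Q_\nu^{i\rho}$ into $\bigl(Q_\nu^{i\rho}\bigr)^2$ via the connection formula \eqref{equation:MagischeFormel1}, after which the exponential and gamma factors cancel. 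You instead propose to re-derive that normalization integral from scratch by the classical Sturm--Liouville argument: your observation that the plain Wronskian identity degenerates to $0=0$ (because $(\pm i\rho)^2$ coincide) is correct, and your pivot to differentiating the Legendre equation in the degree $\nu$ to produce $(2\nu+1)\int_1^\infty uv\,dx$ as a boundary term of a Lagrange bracket is exactly how formulas like Robin's are proved. What your write-up buys is self-containedness; what it costs is that the entire content of the identity --- the factor $\pi/(2\sinh(\rho\pi))$, the $1/(2\sqrt{s})=1/(2\nu+1)$, and the $\psi$-difference --- lives in the boundary term at $x\to1^+$, which you only gesture at. That evaluation is delicate but doable: near $x=1$ each $Q_\nu^{\pm i\rho}$ is a combination $a(x-1)^{-i\rho/2}+b(x-1)^{i\rho/2}+\ldots$, the "diagonal" contributions to $(1-x^2)(u'v-uv')$ cancel identically, and the cross terms produce a finite limit $2i\rho(ad-bc)$ whose coefficients involve $\Gamma(1\pm i\rho)$ (whence the $\sinh$) and $\Gamma(\nu\mp i\rho+1)/\Gamma(\nu+1)$ (whence, after the $\nu$-derivative, the $\psi$'s). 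Your earlier detours (Whipple, the integral representation \eqref{equation:RepresentationLegendreQIntegral} plus Fubini) are not the ones that close the argument, and you correctly flag the $\Re(\mu)=0$ boundary case as needing care. As it stands your text is a correct proof skeleton of a stronger, self-contained statement, but the decisive limit computation at $x=1$ is asserted rather than carried out, so you should either execute it or, as the paper does, simply quote the known formula and reduce to it.
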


\begin{proof}
From \citep[vol. II, equation $(395)$ on p. 200]{Robin} we know that for all $\nu,\, \mu\in\mathbb{C}$ such that $\Re(\nu)>-\frac{1}{2},$ $\Re(\mu)\in (-1,1):$
\begin{align}
\label{equation:RobinZitat}
(2\nu+1)\int\limits_{1}^{\infty} Q_{ \nu}^{\mu}(x)  Q_{ \nu}^{\mu}(x) dx = \frac{\pi e^{i2\mu\pi}}{2\sin(\mu \pi)}& \cdot \frac{\Gamma(\nu+\mu+1)}{\Gamma(\nu-\mu+1)}\, \cdot \nonumber \\
&\cdot \left( \psi\left( \nu+\mu+1 \right) - \psi\left( \nu-\mu+1 \right) \right).
\end{align}
We want to remark that Robin uses a different notation in his book than we do above, in particular his psi-function is shifted by $1$ compared with \eqref{equation:Psi-function}. The relevant notation used in \citep[vol. II]{Robin} is defined at the beginning of the book on page VII.

Let $s\in\mathbb{C}\backslash(-\infty,0]$, $\rho\in\mathbb{R}$ and set $\nu:=\sqrt{s}-\frac{1}{2}$, $\mu:=i\rho$. Then \eqref{equation:RobinZitat} is equivalent to
\begin{align*}
\int\limits_{1}^{\infty} Q_{\sqrt{s}-\frac{1}{2}}^{i\rho}(x)  Q_{\sqrt{s}-\frac{1}{2}}^{i\rho}(x) dx = \frac{\pi e^{-2\rho\pi}}{2i\sinh(\rho \pi)}&\cdot \frac{\Gamma\left(\sqrt{s}+i\rho+\frac{1}{2}\right)}{\Gamma\left(\sqrt{s}-i\rho+\frac{1}{2}\right)}\cdot\frac{1}{2\sqrt{s}}\cdot\\
&\cdot\left( \psi\left( \sqrt{s}+i\rho+\frac{1}{2} \right) - \psi\left( \sqrt{s}-i\rho+\frac{1}{2} \right) \right).
\end{align*}
Thus using \eqref{equation:MagischeFormel1}, we obtain:
\begin{align*}
\int\limits_{1}^{\infty} Q_{\sqrt{s}-\frac{1}{2}}^{-i\rho}(x)  Q_{\sqrt{s}-\frac{1}{2}}^{i\rho}(x) dx &= e^{2\rho \pi}\frac{\Gamma\left( \sqrt{s} -i\rho+\frac{1}{2} \right)}{\Gamma\left( \sqrt{s} +i\rho+\frac{1}{2} \right)}\int\limits_{1}^{\infty} Q_{\sqrt{s}-\frac{1}{2}}^{i\rho}(x)  Q_{\sqrt{s}-\frac{1}{2}}^{i\rho}(x) dx \\
&=\frac{\pi}{2\sinh(\rho\pi)} \cdot \frac{1}{2i\sqrt{s}}\cdot\left( \psi\left( \sqrt{s}+i\rho+\frac{1}{2} \right) - \psi\left( \sqrt{s}-i\rho+\frac{1}{2} \right) \right).
\end{align*}
\end{proof}

It remains to compute one inverse Laplace transform. Luckily, this can be easily deduced from a known Laplace transform and using an abstract property of the Laplace transform.

\begin{lemma}
\label{lemma:InverseLaplaceTransformPsiFunktion}
Let $\rho\in\mathbb{R}$ and $t\in(0,\infty)$. Further, let 
\begin{align*}
F:\mathcal{H}_{>0}\ni s\mapsto \frac{1}{2i} \cdot \frac{1}{\sqrt{s}}\left( \psi\left( \sqrt{s}+i\rho +\frac{1}{2} \right) - \psi\left( \sqrt{s} - i\rho +\frac{1}{2} \right) \right) \in\mathbb{C}.
\end{align*}
 Then $\mathcal{L}^{-1}\left\lbrace F \right\rbrace$ exists, and
\begin{align}
\label{equation:InverseLaplaceTransformPsiFunktion}
\mathcal{L}^{-1}\left\lbrace F \right\rbrace(t) = \frac{1}{\sqrt{\pi t}}\int\limits_{0}^{\infty} e^{-\frac{u^2}{4t}} \cdot \frac{e^{\frac{u}{2}}}{e^{u}-1} \cdot \sin(\rho u)du.
\end{align}
\end{lemma}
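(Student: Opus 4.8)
The idea is to exhibit an explicit continuous function on $(0,\infty)$ whose Laplace transform equals $F$ and then appeal to the uniqueness part of Proposition \ref{proposition:InverseLaplaceTransform}. Set $g(u):=\frac{e^{u/2}}{e^{u}-1}\sin(\rho u)$ for $u>0$. This function extends continuously to $[0,\infty)$ (with $g(0)=\rho$, since $\frac{e^{u/2}}{e^{u}-1}\sim u^{-1}$ and $\sin(\rho u)\sim\rho u$), it is bounded on $(0,\infty)$, and $g(u)=O(e^{-u/2})$ as $u\to\infty$. Hence the function
\begin{align*}
g^{\ast}(t):=\frac{1}{\sqrt{\pi t}}\int_{0}^{\infty}e^{-u^{2}/(4t)}g(u)\,du,\qquad t>0,
\end{align*}
is well defined, and by Lebesgue's dominated convergence theorem it is continuous on $(0,\infty)$. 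It remains to show $\mathcal{L}\{g^{\ast}\}=F$; the conclusion \eqref{equation:InverseLaplaceTransformPsiFunktion} is then immediate, as $g^{\ast}$ is precisely the right-hand side there.

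\textbf{Computing $\mathcal{L}\{g^{\ast}\}$.} First I would recall the classical Laplace transform pair $\int_{0}^{\infty}e^{-st}\,(\pi t)^{-1/2}e^{-u^{2}/(4t)}\,dt=s^{-1/2}e^{-u\sqrt{s}}$ for $s>0$ and $u\geq0$ (see e.g.\ \citep{Doetsch} or \citep{Oberhettinger}). Since, for every $s>0$,
\begin{align*}
\int_{0}^{\infty}\int_{0}^{\infty}e^{-st}(\pi t)^{-1/2}e^{-u^{2}/(4t)}\,|g(u)|\,dt\,du=\int_{0}^{\infty}\frac{e^{-u\sqrt{s}}}{\sqrt{s}}\,|g(u)|\,du<\infty,
\end{align*}
the Fubini--Tonelli theorem gives $\mathcal{L}\{g^{\ast}\}(s)=\frac{1}{\sqrt{s}}\int_{0}^{\infty}e^{-u\sqrt{s}}g(u)\,du$ for all $s>0$; in particular the Laplace integral of $g^{\ast}$ is absolutely convergent at every $\delta>0$, so by Proposition \ref{proposition:InverseLaplaceTransform} the function $\mathcal{L}\{g^{\ast}\}$ is holomorphic on $\mathcal{H}_{>0}$. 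Now expand $\frac{e^{u/2}}{e^{u}-1}=\sum_{k=0}^{\infty}e^{-(k+1/2)u}$ as the increasing limit of its partial sums, apply dominated convergence (with dominating function $e^{-u\sqrt{s}}\frac{e^{u/2}}{e^{u}-1}|\sin(\rho u)|$, integrable because $|\sin(\rho u)|\le|\rho|u$ near $0$ and of the decay at infinity), and use $\int_{0}^{\infty}e^{-\lambda u}\sin(\rho u)\,du=\rho/(\lambda^{2}+\rho^{2})$ for $\Re\lambda>0$ to obtain
\begin{align*}
\int_{0}^{\infty}e^{-u\sqrt{s}}g(u)\,du=\sum_{k=0}^{\infty}\frac{\rho}{\bigl(k+\tfrac12+\sqrt{s}\bigr)^{2}+\rho^{2}}=\frac{1}{2i}\sum_{k=0}^{\infty}\left(\frac{1}{k+\sqrt{s}+\tfrac12-i\rho}-\frac{1}{k+\sqrt{s}+\tfrac12+i\rho}\right),
\end{align*}
the series converging absolutely since its $k$-th term is $O(k^{-2})$.

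\textbf{Identifying the digamma difference and concluding.} Using the classical series $\psi(w)-\psi(z)=\sum_{k=0}^{\infty}\bigl(\tfrac{1}{k+z}-\tfrac{1}{k+w}\bigr)$ with $z=\sqrt{s}+\tfrac12-i\rho$ and $w=\sqrt{s}+\tfrac12+i\rho$, the last sum equals $\psi\bigl(\sqrt{s}+i\rho+\tfrac12\bigr)-\psi\bigl(\sqrt{s}-i\rho+\tfrac12\bigr)$. Therefore $\mathcal{L}\{g^{\ast}\}(s)=F(s)$ for all $s>0$, and since both sides are holomorphic on $\mathcal{H}_{>0}$ and agree on $(0,\infty)$, they agree on all of $\mathcal{H}_{>0}$. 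Thus $\mathcal{L}\{g^{\ast}\}=F$, and because $g^{\ast}$ is continuous, $g^{\ast}=\mathcal{L}^{-1}\{F\}$ by Definition \ref{definition:InverseLaplaceTransform} and the uniqueness in Proposition \ref{proposition:InverseLaplaceTransform}, which is exactly \eqref{equation:InverseLaplaceTransformPsiFunktion}.

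I do not expect a genuine obstacle here: the argument is a ``textbook'' subordination computation, and the only points needing a little care are the two interchanges of integration (the Fubini step defining $\mathcal{L}\{g^{\ast}\}$, and the sum--integral interchange), which are controlled by the decay $g(u)=O(e^{-u/2})$ at infinity and the bound $|\sin(\rho u)|\le|\rho|u$ near $0$, together with the absolute convergence of the digamma series.
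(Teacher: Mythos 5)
Your proof is correct and follows essentially the same route as the paper: both identify $F(s)=\tfrac{1}{\sqrt{s}}\,\mathcal{L}\{u\mapsto \tfrac{e^{u/2}}{e^{u}-1}\sin(\rho u)\}(\sqrt{s})$, recognise that inner Laplace transform as the digamma difference, apply the Gaussian subordination rule, and conclude by uniqueness of the Laplace transform. The only difference is that the paper cites the integral representation $\int_{0}^{\infty}\frac{e^{-\mu t}-e^{-\nu t}}{1-e^{-t}}\,dt=\psi(\nu)-\psi(\mu)$ from Gradshteyn and the subordination formula from Prudnikov's tables, whereas you rederive both (the former via the geometric expansion of $\tfrac{e^{u/2}}{e^{u}-1}$ and the series for $\psi$, the latter via Fubini and the classical pair $\mathcal{L}\{(\pi t)^{-1/2}e^{-u^{2}/(4t)}\}(s)=s^{-1/2}e^{-u\sqrt{s}}$), which makes your version more self-contained but not materially different.
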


\begin{proof}
From \citep[3.311 7]{Gradshteyn} we know for all $\nu,\,\mu\in\mathbb{C}$ with $\Re(\nu),\, \Re(\mu)>0:$
\begin{align}
\label{equation:LemmaIntegralPsiFunktion}
\int\limits_{0}^{\infty} \frac{e^{-\mu t}-e^{-\nu t}}{1-e^{-t}}dt = \psi(\nu)-\psi(\mu).
\end{align}
Let $\rho\in\mathbb{R}$ and $s\in \mathcal{H}_{>0}$. Further, set $\mu:=s-i\rho+\frac{1}{2}$ and $\nu:=s+i\rho+\frac{1}{2}$. Then \eqref{equation:LemmaIntegralPsiFunktion} gives
\begin{align*}
\psi\left( s+i\rho+\frac{1}{2} \right)-\psi\left( s-i\rho+\frac{1}{2} \right) &= \int\limits_{0}^{\infty}\frac{e^{-\left(s-i\rho+\frac{1}{2}\right) t}-e^{-\left( s+i\rho+\frac{1}{2}\right) t}}{1-e^{-t}}dt \\
&=\int\limits_{0}^{\infty}e^{-st} \cdot \frac{e^{-\frac{t}{2}}}{1-e^{-t}}  \cdot \left( e^{i\rho t}-e^{-i\rho t} \right) dt\\
&=\int\limits_{0}^{\infty}e^{-st}  \cdot \frac{e^{\frac{t}{2}}}{e^{t}-1}  \cdot 2i\sin(\rho t) dt.
\end{align*}
In other words, if $f(t):=\frac{e^{\frac{t}{2}}}{e^{t}-1}\sin(\rho t)$ for $t>0$, then
\begin{align*}
\mathcal{L}\left\lbrace f \right\rbrace(s) = \frac{1}{2i}\left(\psi\left( s+i\rho+\frac{1}{2} \right)-\psi\left( s-i\rho+\frac{1}{2} \right)\right)
\end{align*}
for all $s\in \mathcal{H}_{>0}$. Hence, it follows from  \citep[29 on p. 5]{PrudnikovInverseLaplace} that
\begin{align*}
\mathcal{L}&\left\lbrace t\mapsto \frac{1}{\sqrt{\pi t}}\int\limits_{0}^{\infty} e^{-\frac{u^2}{4t}}f(u) du \right\rbrace(s) =\frac{1}{\sqrt{s}} \cdot \frac{1}{2i}\cdot \left(\psi\left( \sqrt{s}+i\rho+\frac{1}{2} \right)-\psi\left( \sqrt{s}-i\rho+\frac{1}{2} \right)\right)
\end{align*}
for $s\in \mathcal{H}_{>0}$. Proposition \ref{proposition:InverseLaplaceTransform} now implies the statement.
\end{proof}

\section{Heat invariants for hyperbolic polygons}
\label{section:ExpansionTrace}

Let $\Omega\subset \mathbb{H}^2$ be a hyperbolic polygon. We are now ready to compute the heat invariants for $\Omega$, i.e. to compute the asymptotic expansion of the heat trace
\begin{align*}
Z_{\Omega}(t)=\int\limits_{\Omega}K_{\Omega}(x,x;t)dx.
\end{align*}
The coefficients in the asymptotic expansion will depend, in particular, on the number and the size of the angles. So let us assume $\Omega$ has $M$ angles $\gamma_1,...,\gamma_M \in (0,2\pi]$ where $M\geq 3$ is an integer.

The method we use in order to compute the asymptotic expansion of $Z_{\Omega}(t)$ is known as the principle of not feeling the boundary and was first formulated by M. Kac (see \citep{Kac}). According to this principle the heat kernels $K_{\Omega}(x,x;t)$ and $K_{\mathbb{H}^2}(x,x;t)$ have the same asymptotic expansion as $t\searrow 0$ for fixed $x\in\Omega$. In other words, the short time asymptotic behaviour of the heat kernel does not ``feel'' the presence of the boundary. Let us put this principle on a sound basis by the following lemma. We note that the lemma below is somewhat more general than it is needed in this section, but we will make use of it in full generality in subsequent sections. 

\begin{lemma}
\label{lemma:PNFBAllgemein}
Let $N$ be a two-dimensional complete Riemannian manifold whose Gaussian curvature is bounded. Let $U\subset N$ be an arbitrary domain and let $A\subset N$ be a compact subset such that $A\subset U.$ Then there exist constants $T,\,C,\,D>0$ such that
\begin{align}
\label{equation:LemmaPNFBAllgemein}
\vert K_{N} (x,y;t) - K_{U} (x,y;t) \vert \leq \frac{C}{t} \cdot e^{-\frac{D}{t}}\quad \text{ for all }\, x,y\in A,\, t\in (0,T].
\end{align}
\end{lemma}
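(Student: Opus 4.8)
The plan is to combine domain monotonicity of Dirichlet heat kernels, the parabolic maximum principle on a relatively compact auxiliary domain, and a Gaussian upper bound for $K_N$. If $U=N$ there is nothing to prove, so I would assume $U\neq N$. Since $A$ is compact, $N\setminus U$ is closed and disjoint from $A$, and $N$ is complete (so closed metric balls are compact), one may choose a smooth, relatively compact domain $V$ with $A\subset V\subset\overline V\subset U$, and set $\delta:=d(A,\partial V)>0$, where $d$ is the Riemannian distance of $N$. Domain monotonicity of the Dirichlet heat kernel — which follows from the minimality property of Definition~\ref{definition:HeatKernel} by the very argument used in the excerpt to show $0\le K_\Omega(x,x;t)\le K_M(x,x;t)$ — gives $0\le K_V(x,y;t)\le K_U(x,y;t)\le K_N(x,y;t)$ for all $x,y\in V$, $t>0$. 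Hence $|K_N(x,y;t)-K_U(x,y;t)|=K_N(x,y;t)-K_U(x,y;t)\le h(x,y;t)$ with $h:=K_N-K_V$, and it suffices to bound $h(x,y;t)$ for $x,y\in A$.

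Next I would fix $y\in A$ and set $u:=h(\cdot,y;\cdot)$ on $\overline V\times[0,\infty)$. Because $V$ is a smooth relatively compact domain, $K_V(\cdot,y;\cdot)$ extends continuously to $\overline V\times(0,\infty)$, vanishing on $\partial V$ (Remark~\ref{remark:heatkernelproperties}); moreover $h$ extends continuously to $t=0$ with $h(\cdot,y;0)\equiv 0$, exactly as in the Remark following \eqref{equation:PDE1} (cf.\ \citep[Corollary 9.21 and Exercise 9.7]{Grigoryan}). Thus $u\in C(\overline V\times[0,\infty))$, it is smooth on $V\times(0,\infty)$ and solves $(\partial_t+\Delta)u=0$ there, it vanishes at $t=0$, and it equals $K_N(\cdot,y;\cdot)$ on $\partial V\times(0,\infty)$. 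Applying the parabolic maximum principle on $\overline V\times[0,t]$ then yields, for every $t>0$ and $x\in\overline V$,
\[
h(x,y;t)=u(x,t)\le \max_{z\in\partial V,\; 0\le s\le t} K_N(z,y;s).
\]

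It remains to estimate the right–hand side. Since $N$ is a complete surface of bounded Gaussian curvature, there is an off-diagonal heat kernel bound $K_N(p,q;s)\le \frac{C_1}{s}\,e^{-d(p,q)^2/(c_0 s)}$ for all $p,q\in N$ and $s\in(0,T_0]$, with $C_1,c_0,T_0>0$ depending only on $N$ — the analogue of \eqref{equation:EstimateHeatKernelPlane}, resting on the two-sided geometric control that bounded curvature provides (a lower Ricci bound together with a uniform small-scale lower volume bound), via a Li–Yau / Cheeger–Gromov–Taylor type estimate. For $z\in\partial V$ and $y\in A$ one has $d(z,y)\ge\delta$, hence $K_N(z,y;s)\le \frac{C_1}{s}\,e^{-\delta^2/(c_0 s)}$; and an elementary one-variable computation shows that $s\mapsto \frac{1}{s}\,e^{-\delta^2/(c_0 s)}$ is nondecreasing on $(0,\delta^2/c_0]$, so its supremum over $s\in(0,t]$ is attained at $s=t$ once $t\le\delta^2/c_0$. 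Taking $T:=\min\{T_0,\delta^2/c_0\}$, $C:=C_1$ and $D:=\delta^2/c_0$ therefore gives $h(x,y;t)\le \frac{C}{t}\,e^{-D/t}$ for all $x\in\overline V$ (in particular $x\in A$), $y\in A$, $t\in(0,T]$, which is \eqref{equation:LemmaPNFBAllgemein}.

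I expect the main obstacle to be precisely this last ingredient: justifying a uniform Gaussian upper bound for $K_N$ on a fixed time interval for an \emph{arbitrary} complete surface with merely bounded curvature, i.e.\ checking that bounded curvature by itself furnishes the geometry (volume non-collapsing and doubling on bounded scales) needed to run such an estimate with base-point-independent constants. The remaining steps — domain monotonicity, the maximum principle on the relatively compact $V$, continuity up to $t=0$, and the optimization in $s$ — are routine and quotable from \citep{Grigoryan} and from facts already established in the excerpt.
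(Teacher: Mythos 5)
Your proposal is correct and follows essentially the same route as the paper: domain monotonicity to reduce to a smooth relatively compact auxiliary domain, continuous extension of $K_N-K_V$ to the parabolic boundary, the parabolic maximum principle, and a Gaussian off-diagonal upper bound for $K_N$ combined with monotonicity of $s\mapsto\frac{1}{s}e^{-D/s}$ on $(0,D)$. The obstacle you flag at the end is resolved exactly as you need it: since only points of the compact set $\overline V$ matter, the injectivity radius is bounded below there and the Li--Yau type estimate applies with uniform constants, which is precisely what the paper invokes.
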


\begin{proof}

First choose a relatively compact domain $G\subset N$ with smooth boundary $\partial G$ and such that $A\subset G\subset U$. This is always possible: Consider a so-called \emph{cutoff} function of $A$ in $U$ (see \citep[Theorem 3.5]{Grigoryan}), i.e. a smooth function $\varphi\in C^{\infty}_{c}(U)$ such that $0\leq \varphi \leq 1$  and $\varphi \equiv 1$ in a neighborhood of $A$. By Sard's theorem (see \citep[Theorem 6.10]{Lee}) there are infinitely many regular values of $\varphi$ in the interval $(0,1)$. Now choose any regular value $g\in (0,1)$ and define $G:=\varphi^{-1}(g,\infty)$.

By the minimality of the heat kernel we have for all $x,y\in G$ and $t>0$:
\begin{align*}
K_G(x,y;t)\leq K_U (x,y;t)\leq K_{N}(x,y;t),
\end{align*}
and thus
\begin{align}
\label{equation:LemmaPNFBInequality1}
0 \leq  K_{N}(x,y;t) - K_U (x,y;t) \leq K_{N}(x,y;t) - K_G(x,y;t).
\end{align}

Let $y\in A$ be fixed. The function $u:G\times (0,\infty)\rightarrow \mathbb{R},\, u(x,t):= K_{N} (x,y;t) - K_{G} (x,y;t)$ is (by definition) a smooth non-negative solution to the heat equation on $G\times (0,\infty)$. Moreover, it can be extended continuously to $\overline{G}\times [0,\infty)$ by the following procedure: If we set $u(x,t):=0$ for all $t=0$, $x\in G$ then we obtain a continuous function on $G\times [0,\infty)$ (see \citep[Exercise 9.7]{Grigoryan}). Since the boundary $\partial G$ is smooth, $K_{G} ( \cdot ,y; \cdot)$ can be extended continuously to $\overline{G} \times (0,\infty)$ by $K_{G} (x,y;t):=0$ for all $x\in\partial G$ and $t>0$ (see also the remark after Proposition \ref{proposition:heat trace bounded domain}). Thus $u(x,t)$ can be extended continuously by $u(x,t):=K_{N}(x,y;t)$ for all $x\in \partial G,\, t>0$. Alltogether, we have extended $u(x,t)$ continuously to $\overline{G}\times (0,\infty)\cup G\times \{ 0 \}$ as follows:
\begin{align*}
u(x,t) = \begin{cases}
K_{N} (x,y;t) - K_{G} (x,y;t),& \text{ if } (x,t)\in G\times (0,\infty), \\
 0,& \text{ if } (x,t)\in G\times \{ 0 \}, \\
K_{N} (x,y;t),&  \text{ if } (x,t)\in \partial G\times  (0,\infty).
\end{cases}
\end{align*}

It remains to show that $u(x,t)$ can be extended continuously to $\partial G \times \{ 0 \}$. We obtain a continuous extension when we set $u(x,0):=0$ for all $x\in\partial G$, which follows directly from the existence of constants $\tilde{T},\, C>0$ such that
\begin{align*}
0\leq u(x,t)\leq K_{N} (x,y;t) \leq \frac{C}{t}\cdot e^{-\frac{ d(x,y)^2}{16t}}\quad \text{ for all } (x,t) \in  \overline{G}\times (0,\tilde{T}].
\end{align*}
The last estimate given above is well-known. If $N$ is non-compact, then it follows from \citep[Theorem 4]{LiYauEstimate} and because of the fact that $\inf\{\, i(x)\mid x\in \overline{G} \,\}>0$, where $i(x)$ denotes the injectivity radius of $x$ (see \citep[Proposition 2.1.10]{Klingenberg}). It is also explained in \citep[remark on p. $1050$]{LiYauEstimate} how to prove the same estimate as in \citep[Theorem 4]{LiYauEstimate} if $N$ is compact. However, there is a simpler proof if $N$ is compact. If $N$ is closed, then the estimate follows from \citep[Theorem 1.1]{GrigoryanGaussian} together with the well-known asymptotic expansion for $K_N(x,x;t)$ as $t\searrow 0$, which holds uniformly in $x\in N$ (see e.g. \citep[Theorem 3.3]{Donnelly}).

Thus we can apply the parabolic maximum principle (see \citep[Theorem 8.10]{Grigoryan}) to $u(x,t)$ in order to obtain for all $x\in G$ and $t\in (0,\tilde{T}]$:
\begin{align}
\label{equation:LemmaPNFBInequality2}
u(x,t) &\leq \max\limits_{ (z,s)\in \overline{G} \times [0,t]} u(z,s) =  \max\limits_{\substack{ (z,s)\in \overline{G} \times \{ 0 \} \cup \\   \qquad \partial G \times (0,t)}} u(z,s) \nonumber \\
& = \max\limits_{\substack{ (z,s)\in \partial G \times (0,t)}}u(z,s)\nonumber \\
& \leq \max\limits_{ s\in (0,t)} \frac{C}{s}\cdot e^{-\frac{ d(\partial G ,y)^2}{16s}},
\end{align}
where $d(\partial G,y):=\inf_{z\in\partial G} d(z,y)$.

We set $D:= \frac{ d(\partial G, A)^2}{16}>0$, where $d(\partial G , A):=\inf_{y\in A} d(\partial G, y)$. Let $T\in(0,D)\cap (0,\tilde{T})$ be fixed and let $C>0$ be as above. The function $s\mapsto \frac{C}{s}\cdot e^{-\frac{ D}{s}}$ is monotonically increasing in $s\in (0,D)$. Hence it follows from \eqref{equation:LemmaPNFBInequality1} and \eqref{equation:LemmaPNFBInequality2} that for all $x,y\in A$ and $t\in (0,T]$:
\begin{align*}
0\leq K_{N}(x,y;t) - K_U (x,y;t) \leq \max\limits_{ s\in (0,t)} \frac{C}{s}\cdot e^{-\frac{ D}{s}} = \frac{C}{t}\cdot e^{-\frac{ D}{t}}
\end{align*}
\end{proof}

In order to approximate the heat kernel $K_{\Omega}(x,x;t)$ for points $x\in\Omega$ close to the boundary $\partial\Omega$ it is helpful to use the following probabilistic formula for the heat kernel. An introduction to (conditional) Wiener measures on Riemannian manifolds, written for geometers, can be found in \citep{Baer}. All other probabilistic notions we use can be found, for example, in \citep{HsuStochastik}. Again, we formulate the next lemma more general than it is needed in this section in view of applications appearing later in this thesis.

\begin{lemma}
\label{lemma:ProbabilisticFormulaHeatKernel}
Let $N$ be a two-dimensional complete Riemannian manifold whose Gaussian curvature is bounded. Let $U\subset N$ be any domain. Then we have for all $x,y\in U$ and $t>0$:
\begin{align}
\label{equation:ProbabilisticFormulaHeatKernel}
K_{U}(x,y;t)=K_{N}(x,y;t)\cdot \emph{Prob}\{\, \omega(s)\in U,\, 0\leq s \leq t \mid \omega(0)=x,\, \omega(t)=y \, \},
\end{align}
where $\omega: [0,t]\rightarrow N$ denotes a continuous curve such that $\omega(0)=x$ and $\omega(t)=y$ and $\emph{Prob}\{ ...\vert... \}$ is the normalised conditional Wiener measure. \emph{(}The measure is normalised in the sense that it is equal to the conditional Wiener measure as in \emph{\citep[Proposition 3.15]{Baer}} multiplied by $\frac{1}{K_{N}(x,y;t)}$ so that it becomes a probability measure\emph{)}.

%
\end{lemma}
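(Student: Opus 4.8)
The plan is to establish the Feynman--Kac type formula \eqref{equation:ProbabilisticFormulaHeatKernel} by comparing the heat semigroups of $N$ and of $U$ via the conditional Wiener measure on path space. First I would recall the Feynman--Kac--It\^o representation of the heat kernel on the ambient complete manifold $N$: for $x,y\in N$ and $t>0$, the heat kernel $K_N(x,y;t)$ is the total mass of the (unnormalised) conditional Wiener measure $\mu_{x,y}^t$ on the space $C([0,t];N)$ of continuous paths $\omega$ with $\omega(0)=x$ and $\omega(t)=y$; this is exactly the content of \citep[Proposition 3.15]{Baer}. Since the Gaussian curvature of $N$ is bounded and $N$ is complete, the Brownian motion on $N$ is non-explosive (stochastically complete), so $\mu_{x,y}^t$ is a genuine finite measure and the normalised version $\mathrm{Prob}\{\,\cdot \mid \omega(0)=x,\omega(t)=y\,\} := \frac{1}{K_N(x,y;t)}\mu_{x,y}^t$ is a probability measure, as asserted in the parenthetical remark of the statement.

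Next I would identify the Dirichlet heat kernel $K_U(x,y;t)$ with the same conditional Wiener measure restricted to those paths that stay inside $U$. Concretely, for $x,y\in U$ one introduces the first exit time $\tau_U := \inf\{\,s>0 \mid \omega(s)\notin U\,\}$ and invokes the standard fact that the Dirichlet heat semigroup $e^{-t\Delta_U}$ is represented by Brownian motion killed on exiting $U$: $K_U(x,y;t)$ equals the mass of $\mu_{x,y}^t$ carried by the event $\{\tau_U > t\} = \{\,\omega(s)\in U,\ 0\le s\le t\,\}$. This is the measure-theoretic analogue of the domain monotonicity $K_U\le K_N$ already used in the excerpt, and it can be deduced either from the probabilistic construction of the minimal heat kernel (Definition \ref{definition:HeatKernel}) as the kernel of the killed semigroup, or by an exhaustion argument: take an increasing sequence of relatively compact smooth subdomains $G_j \nearrow U$, use the classical Feynman--Kac formula on each $G_j$ (where Brownian motion, heat kernel and exit times are standard), and pass to the limit $j\to\infty$ using monotone convergence on the path-space side and the monotone convergence $K_{G_j}\nearrow K_U$ on the analytic side (the latter being part of the minimality characterisation in Definition \ref{definition:HeatKernel}). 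Dividing the resulting identity $K_U(x,y;t) = \mu_{x,y}^t(\{\tau_U>t\})$ by $K_N(x,y;t) = \mu_{x,y}^t(C([0,t];N))$ yields \eqref{equation:ProbabilisticFormulaHeatKernel}.

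The step I expect to be the main obstacle is making the path-space formalism precise enough to justify these manipulations: one must be careful that the conditional Wiener measure $\mu_{x,y}^t$ is supported on continuous paths (so that the exit time $\tau_U$ is well defined and $\{\tau_U>t\}$ is measurable, using that $U$ is open), that killing Brownian motion on $\partial U$ indeed produces the Dirichlet Laplacian $\Delta_U$ of Definition \ref{definition:FundamentalSolution} rather than some other self-adjoint extension, and that the exhaustion limit commutes with everything. All of this is standard in the stochastic-analysis literature — I would cite \citep{Baer} for the Wiener-measure construction on Riemannian manifolds and \citep{HsuStochastik} for Brownian motion, exit times, stochastic completeness under bounded curvature, and the probabilistic representation of the Dirichlet heat kernel — so in the write-up I would keep this lemma's proof short, essentially assembling these cited facts, exactly in the spirit of the remark preceding the lemma that ``all other probabilistic notions we use can be found, for example, in \citep{HsuStochastik}''.
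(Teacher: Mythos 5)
Your proposal is correct and follows essentially the same route as the paper: Feynman--Kac representation of $K_N$ via the conditional Wiener measure, identification of $K_U$ with the measure of paths avoiding $\partial U$ (first for relatively compact smooth subdomains, then by exhaustion and monotone convergence on both the analytic and path-space sides), and division by $K_N(x,y;t)$. The only step the paper makes explicit that you delegate to the literature is the upgrade of the identity $K_U(x,y;t)=\mu^t_{x,y}(\{\tau_U>t\})$ from almost every $y$ (which is what testing against $f\in C_c$ gives) to every $y$, done there via the symmetry and Chapman--Kolmogorov property of the killed kernel; your write-up should either cite a pointwise version of the Feynman--Kac formula or include that short argument.
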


\begin{proof}
It is well-known that if the Gaussian curvature is bounded from below, then the volume of any geodesic disc in $N$ increases at most exponentially. This follows by comparison with a space of constant curvature (see e.g. \citep[Lemma 35 on p. 269]{Petersen}). Thus $N$ is stochastically complete (see \citep[Theorem 11.8]{Grigoryan}), and therefore the results from \citep[Section 3.4]{Baer} are valid in our situation.

Let us introduce some notation. We denote the set of continuous paths in $N$ starting at $x\in N$ by $C_{x}([0,\infty);\,N)$. In other words, for any $x\in N$
\begin{align*}
C_{x}([0,\infty);\,N):=\{\,\omega:[0,\infty)\rightarrow N \mid \omega\,\text{is continuous and } \omega(0)=x\, \}.
\end{align*}
Similarly, for any $x,y\in N$, $t>0$ we set
\begin{align*}
C_x([0,t];\,N):&=\{\, \omega:[0,t]\rightarrow N \mid \omega\,\text{is continuous and } \omega(0)=x\,  \},\\
C_x^{y}([0,t];\,N):&=\{\, \omega:[0,t]\rightarrow N \mid \omega\,\text{is continuous and } \omega(0)=x, \, \omega(t)=y\,  \}.
\end{align*}
By a \emph{curve} we mean any element of one of the above sets. If $x\in U$ then the \emph{first exit time} from $U$ of a curve $\omega$ is defined as
\begin{align*}
\tau_U(\omega):=\inf\{\, s>0\mid \omega(s)\notin U\, \}.
\end{align*}
If the curve $\omega$ never leaves the set $U$, i.e. if $\{ s>0\mid \omega(s)\notin U\} = \emptyset$, then we set $\tau_U(\omega):=\infty$ with the convention that $t<\infty $ for all $t\in [0,\infty)$. Finally, let
\begin{align*}
1_{\{\, t< \tau_U \,\} }(\omega):=
\begin{cases}
1, \text{ if }t<\tau_U(\omega),\\
0, \text{ otherwise}.
\end{cases}
\end{align*}

We first assume that $U$ is a relatively compact domain with smooth boundary and prove \eqref{equation:ProbabilisticFormulaHeatKernel} for this case. In this special case, it is well-known that for any bounded Borel function $f:U\rightarrow \mathbb{R}$, any $x\in U$ and $t>0$: 
\begin{align}
\label{equation:ConnectionHeatSemigroupAndWienerMeasure}
\int\limits_{U}f(y)\cdot K_{U}(x,y;t)dy = \int\limits_{C_x([0,\infty);\, N)} f(\omega(t))\cdot 1_{\{\, t<\tau_U \} }(\omega) d\mathbb{P}_x(\omega),
\end{align}
where $\mathbb{P}_x$ denotes the Wiener measure on $C_x([0,\infty);\, N)$ as in \citep[Section 3.4]{Baer}.
A proof of \eqref{equation:ConnectionHeatSemigroupAndWienerMeasure} can be found in \citep[Proposition 4.1.3]{HsuStochastik}. Compare also with \citep[Theorem 8.6]{GrigoryanBrownianMotion}. 

Moreover, let $\mathbb{P}_x^{t}$ denote the Wiener measure on $C_x([0,t];\,N)$ and let $\mathbb{P}_{x,y}^{t}$ denote the conditional Wiener measure on $C_x^{y}([0,t];\,N)$ as in \citep[Section 3]{Baer}.
We have (see \citep[Remark 3.20]{Baer}) 
\begin{align}
\label{equation:RelationWienerMeasureRestrictionMap}
\mathbb{P}_x^{t} = (\text{rest}_{t})_{\ast} \mathbb{P}_x,
\end{align}
where $\text{rest}_{t}: C_x([0,\infty);\,N)\rightarrow C_x([0,t];\,N)$, $\omega\mapsto \omega_{\mid[0,t]}$, is the restriction map. When we use in the following order \eqref{equation:ConnectionHeatSemigroupAndWienerMeasure}, and \eqref{equation:RelationWienerMeasureRestrictionMap} combined with the transformation rule, and then \citep[Lemma 2.24]{Baer} we obtain for any $x\in U$ and $t>0$:
\begin{align*}
\int\limits_{U} f(y)\cdot K_{U}(x,y;t)dy &= \int\limits_{C_x([0,\infty);\, N)} f(\omega(t))\cdot 1_{\{\, t<\tau_U \} }(\omega) d\mathbb{P}_x(\omega)\\
&=\int\limits_{C_x([0,\infty);\, N)} f(\text{rest}_t(\omega)(t))\cdot 1_{\{\, t<\tau_U \} }(\text{rest}_t(\omega)) d\mathbb{P}_x(\omega)\\
&= \int\limits_{C_x([0,t];\, N)} f(\omega(t)) \cdot 1_{\{\, t<\tau_U \} }(\omega) d\mathbb{P}_x^{t}(\omega)\\
&= \int\limits_{N} \int\limits_{C_x^{y}([0,t];\, N)} f(\omega(t)) \cdot 1_{\{\, t<\tau_U \} }(\omega) d\mathbb{P}_{x,y}^{t}(\omega)dy \\
&= \int\limits_{U} f(y) \cdot  \int\limits_{C_x^{y}([0,t];\, N)}  1_{\{\, t<\tau_U \} }(\omega) d\mathbb{P}_{x,y}^{t}(\omega) dy.
\end{align*}
Note that we have used \citep[Lemma 2.24]{Baer} for the fourth equation above, where the metric measure space in Lemma $2.24$ of \citep{Baer} is taken as explained at the beginning of Section $3$ in \citep{Baer}. Recall that $f$ was an arbitrary bounded Borel function on $U$ and $x\in U$, $t>0$ were arbitrary as well. Thus for all $x\in U$ and $t>0$ we have:
\begin{align*}
K_{U}(x,y;t) &= \int\limits_{C_x^{y}([0,t];\, N)}  1_{\{\, t<\tau_U \} }(\omega) d\mathbb{P}_{x,y}^{t}(\omega)
\end{align*}
for almost all $y\in U$. We show that this equality is actually true for all $y\in U$. For this purpose, let us define
\begin{align*}
q_{U}(x,y;t):= \int\limits_{C_x^{y}([0,t];\, N)}  1_{\{\, t<\tau_U \} }(\omega) d\mathbb{P}_{x,y}^{t}(\omega)
\end{align*}
for all $x,y \in U$ and $t>0$.
One can prove exactly as in \citep[Proposition 3.1]{Sznitman}, that for all $x,y\in U$ and $t,s>0$:
\begin{align*}
q_U(x,y;t) &= q_U(y,x;t),\\
q_U(x,y;t+s) &= \int\limits_U q_U(x,z;t)q_U(z,y;s) dz.
\end{align*}
Thus we obtain for all $x,y\in U$, $t>0$:
\begin{align*}
q_U(x,y;t) &= \int\limits_U q_U \left( x,z;\frac{t}{2} \right)q_U \left( z,y;\frac{t}{2} \right) dz =\int\limits_U K_U \left( x,z;\frac{t}{2} \right)q_U \left( y,z;\frac{t}{2} \right) dz\\
&=\int\limits_U K_U \left( x,z;\frac{t}{2} \right)K_U \left( y,z;\frac{t}{2} \right) dz\\
&= K_U(x,y;t),
\end{align*}
where we have used twice that for all $x\in U$, $t>0$: $K_U(x,y;t) = q_U(x,y;t)$ for almost all $y\in U$. For the last equality we have used the semigroup identity of the heat kernel (see \citep[Theorem 7.13]{Grigoryan}). Finally we have (by definition) for all $x,y\in U$ and $t>0$:
\begin{align*}
K_{N}(x,y;t)\cdot \text{Prob}\{\, \omega(s)\in U,\, 0\leq s\leq t\mid \omega(0)=x,\, \omega(t)=y \, \}= \int\limits_{C_x^{y}([0,t];\, N)} \hspace{-0.1cm} 1_{\{\, t<\tau_U \} }(\omega) d\mathbb{P}_{x,y}^{t}(\omega).
\end{align*}
Therefore the claimed equation \eqref{equation:ProbabilisticFormulaHeatKernel} is established for relatively compact domains $U$ with smooth boundary.

If $U$ is an arbitrary domain, then we proceed as follows. Consider a sequence $\left( U_n \right)_{n\in\mathbb{N}}$ of relatively compact domains with smooth boundary, such that $U=\cup_{n=1}^{\infty}U_n$ and $\overline{U_n}\subset U_{n+1}$ for all $n\in\mathbb{N}$. For all $n\in\mathbb{N}$ we extend the domain of the heat kernel $K_{U_n}(x,y;t)$ to $U\times U\times (0,\infty)$ by setting $K_{U_n}(x,y;t):=0$ whenever $x$ or $y$ lies outside of $U_n$. On the one hand, it is well-known (see \citep{Dodziuk} or \citep[Theorem 4 on p. 188]{Chavel}) that
\begin{align*}
K_U(x,y;t)=\lim\limits_{n\rightarrow\infty }K_{U_n}(x,y;t) \quad \text{ for all } x,y\in U,\, t>0.
\end{align*}
On the other hand, from Lebesgue's monotone convergence theorem, we have for all $x,y\in U$, $t>0$:
\begin{align*}
\lim\limits_{n\rightarrow\infty}\int\limits_{C_x^{y}([0,t];\, N)}  1_{\{\, t<\tau_{U_n} \} }(\omega) d\mathbb{P}_{x,y}^{t}(\omega) = \int\limits_{C_x^{y}([0,t];\, N)}  1_{\{\, t<\tau_U \} }(\omega) d\mathbb{P}_{x,y}^{t}(\omega).
\end{align*}
When we put everything together we obtain for all $x,y\in U$ and $t>0$:
\begin{align*}
K_U(x,y;t)&=\lim\limits_{n\rightarrow\infty }K_{U_n}(x,y;t) = \lim\limits_{n\rightarrow\infty}\int\limits_{C_x^{y}([0,t];\, N)}  1_{\{\, t<\tau_{U_n} \} }(\omega) d\mathbb{P}_{x,y}^{t}(\omega) \\
&= \int\limits_{C_x^{y}([0,t];\, N)}  1_{\{\, t<\tau_U \} }(\omega) d\mathbb{P}_{x,y}^{t}(\omega)\\
&= K_{N}(x,y;t)\cdot \text{Prob}\{\, \omega(s)\in U,\, 0\leq s \leq t \mid \omega(0)=x,\, \omega(t)=y \, \}.
\end{align*}
\end{proof}

We now return to the hyperbolic polygon $\Omega$ with interior angles $\gamma_1,...,\gamma_M$. We will approximate the values $K_{\Omega}(x,x;t)$ of the heat kernel by simpler functions applying Lemma \ref{lemma:PNFBAllgemein}. The approximating functions will depend on the location of the point $x\in\Omega$. For this reason, we will decompose the polygon into three subsets, denoted by $\Omega_V,\Omega_E$ and $\Omega_I$. Roughly speaking, the set $\Omega_V$ will consist of all points which are close to a vertex; the set $\Omega_E$ will contain all points which are close to an edge but away from the vertices; and the set $\Omega_I$ will contain all points away from the boundary $\partial\Omega$. Let us define these subsets in an exact manner. 

Recall that $d(x,y)$ denotes the (hyperbolic) distance for any $x,y\in \mathbb{H}^2$ and $\hyperball{r}{P}=\lbrace\, x\in\mathbb{H}^2 \mid d(x,P)<r \,\rbrace$ for $r>0$ and $P\in\mathbb{H}^2$. For any $i\in\lbrace\, 1,...,M \, \rbrace$ let $W_i\subset \mathbb{H}^2$ be the wedge corresponding to the interior angle $\gamma_i$ as in Definition \ref{definition:Angle}. Let us denote the vertex of $W_i$ by  $P_i\in \mathbb{H}^2$ for all $i=1,...,M$. For all $r>0$ and $i=1,...,M$ we define
\begin{align*}
W_r(P_i):=\hyperball{r}{P_i} \cap W_i = \lbrace\, p\in W_i \mid d(p,P_i)< r \,\rbrace,
\end{align*}
and let
\begin{align*}
R:=\frac{1}{2}\sup\left\lbrace\, r>0 \, \middle|\, W_r(P_{j})\cap W_r(P_{k})=\emptyset, \, \forall\, j\neq k;\, \bigcup\limits_{\ell=1}^{M}W_r(P_{\ell})\subset \Omega \,\right\rbrace.
\end{align*}
The set of points close to the vertices is defined as
\begin{align}
\label{equation:DefinitionNearVertices}
\Omega_V:=\bigcup\limits_{i=1}^{M}W_R(P_i).
\end{align}
Note that the sets $W_R(P_1)$,..., $W_R(P_M)$ are pairwise disjoint. 

The set $\Omega_E$ is defined as follows. Let $\tilde{M}\in\mathbb{N}$ be the number of edges and let $E_1,...,E_{\tilde{M}}$ denote all the edges of the polygon $\Omega$ (recall Definition \ref{definition:EdgeVertex}). By definition, the edges have no common points except for vertices. For all $\tilde{\delta}>0$ and $j=1,...,\tilde{M}$ we define the set
\begin{align*}
\Omega_{E_j}(\tilde{\delta}):=\left\lbrace\, p\in\Omega \mid p\notin \Omega_V, \, d(p,E_j)<\tilde{\delta} \,\right\rbrace.
\end{align*}
We can choose $\delta=\delta(\Omega)>0$ such that $\Omega_{E_k}(\delta)\cap\Omega_{E_{\ell}}(\delta)=\emptyset$, for all $k,\, \ell\in\lbrace 1,...,\tilde{M} \rbrace$ with $k\neq \ell$. Let us fix such a constant once and for all. Now define
\begin{align}
\label{equation:DefinitionNearBoundary}
\Omega_E:=\Omega_{\partial \Omega}(\delta):=\left\lbrace\,  p\in\Omega \mid p\notin\Omega_V,\, d(p,\partial\Omega)<\delta \, \right\rbrace=\bigcup_{j=1}^{\tilde{M}}\Omega_{E_j}(\delta).
\end{align}
Also let $\Omega_{E_j}:=\Omega_{E_j}(\delta)$ for all $j\in\{1,...,\tilde{M}\}$. Thus for any point $p\in \Omega_E$ there exists a unique edge $E_j$, $j\in\{1,..,\tilde{M}\}$, which is the closest to $p$ among all edges. Note that for any $j\in\{\, 1,...,\tilde{M}\,\}$ the set $\Omega_{E_j}$ may have one or two connected components. More precisely, if $\vert E_j \vert = 2\cdot L(E_j)$, where the notation is to be understood as in Definition \ref{definition:VolumenVerallgemeinerteFlaeche}, then $\Omega_{E_j}$ has exactly two connected components, and otherwise only one. Also note that $\Omega_E$ has exactly $M$ connected components, i.e. the number of angles of $\Omega$ and the number of connected components of $\Omega_E$ is the same.

The remaining points will constitute the points away from the boundary, i.e.
\begin{align}
\label{equation:DefinitionAwayBoundary}
\Omega_I:=\Omega\backslash \left(\Omega_V\cup \Omega_E\right)=\lbrace\, p\in\Omega \mid p\notin\Omega_V, \, p\notin \Omega_E\, \rbrace.
\end{align}

Now that we have established the decomposition of the polygon, we can apply the principle of not feeling the boundary.

\begin{lemma} 
\label{lemma:PNFB}
There exist constants $T,C,D >0,$ such that for all $t\in(0,T]$, $i\in\{\, 1,...,M \,\}$ and $j\in\{\, 1,...,\tilde{M} \,\}:$
\begin{itemize}
\item[$(i)$] 
\begin{align}
\label{equation:PNFBInterior}
\vert K_{\Omega}(x,x;t)-K_{\mathbb{H}^2}(x,x;t) \vert &\leq \frac{C}{t}e^{-\frac{D}{t}} \quad \text{ for all } x\in \Omega_I. 
\end{align}

\item[$(ii)$] 
\begin{align}
\label{equation:PNFBBoundary}
\vert K_{\Omega}(x,x;t)-K_{E_j}(x,x;t) \vert &\leq \frac{C}{t}e^{-\frac{D}{t}} \quad \text{ for all } x\in \Omega_{E_j}, 
\end{align}
where $K_{E_j}(x,x;t):=K_{H_j(x)}(x,x;t)$ and $K_{H_j(x)}$ denotes the heat kernel of the half-plane $H_j:=H_j(x)\subset \mathbb{H}^2$ bounded by the line which contains the edge $E_j$ and with $x\in H_j$.
\item[$(iii)$]
\begin{align}
\label{equation:PNFBVertices}
\vert K_{\Omega}(x,x;t)-K_{\gamma_i}(x,x;t) \vert &\leq \frac{C}{t}e^{-\frac{D}{t}} \quad \text{ for all } x\in W_R(P_i), 
\end{align}
where $K_{\gamma_i}$ is the heat kernel of the wedge $W_i$.
\end{itemize}
\end{lemma}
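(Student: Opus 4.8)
The plan is to deduce all three estimates from the general principle of not feeling the boundary established in Lemma \ref{lemma:PNFBAllgemein}, applied to suitable ambient manifolds and domains, and then to patch the finitely many local estimates together with uniform constants.

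First I would treat $(i)$. The ambient manifold is $N=\mathbb{H}^2$, which is complete with constant (hence bounded) Gaussian curvature, and the domain is $U=\Omega$. I would apply Lemma \ref{lemma:PNFBAllgemein} with $A=\overline{\Omega_I}$; this is a compact subset of $\mathbb{H}^2$ contained in the open set $\Omega$ by construction (points of $\Omega_I$ are at distance at least $\min\{\delta,R\}$ from $\partial\Omega$, so $\overline{\Omega_I}\subset\Omega$). The lemma directly yields constants $T_0,C_0,D_0>0$ with $\vert K_{\mathbb{H}^2}(x,y;t)-K_\Omega(x,y;t)\vert\leq \frac{C_0}{t}e^{-D_0/t}$ for all $x,y\in\overline{\Omega_I}$ and $t\in(0,T_0]$; specialising to $y=x$ gives \eqref{equation:PNFBInterior}.

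For $(ii)$, fix $j\in\{1,\dots,\tilde M\}$ and a connected component of $\Omega_{E_j}$. Let $H_j$ be the half-plane bounded by the complete geodesic $\ell_j$ carrying the edge $E_j$, chosen on the side containing that component; note $H_j$ is determined by $j$ and the component, not by the individual point $x$, since all points in one component lie on the same side of $\ell_j$ and closest to $E_j$. Now $H_j$ is a domain in $\mathbb{H}^2$, and I claim $\Omega\subset H_j$ is false in general but locally near $E_j$ we have $\Omega_{E_j}\subset \Omega\cap H_j$; the relevant comparison is between $K_\Omega$ and $K_{H_j}$, both sandwiched by $K_{\mathbb{H}^2}$. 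More precisely I would argue: by the choice of $\delta$, every point $p$ in this component of $\Omega_{E_j}$ has a geodesic ball $\hyperball{\delta'}{p}$ (some fixed $\delta'>0$ independent of $p$) whose intersection with $\Omega$ agrees with its intersection with $H_j$ — this is where the definition of edge and of $\Omega_{E_j}$ as points closer to $E_j$ than $\delta$ and away from $\Omega_V$ is used, so that locally $\partial\Omega$ coincides with $\ell_j$. Then both $K_{\mathbb{H}^2}-K_\Omega$ and $K_{\mathbb{H}^2}-K_{H_j}$ are controlled via Lemma \ref{lemma:PNFBAllgemein} applied with $A$ a fixed compact neighbourhood in the respective domain; subtracting gives \eqref{equation:PNFBBoundary}. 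Taking the minimum of $T$, maximum of $C$, minimum of $D$ over the finitely many components and finitely many $j$ keeps the constants uniform.

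For $(iii)$, fix $i\in\{1,\dots,M\}$. Here I would apply Lemma \ref{lemma:PNFBAllgemein} twice: once with $N=\mathbb{H}^2$, $U=\Omega$, $A=\overline{W_R(P_i)}$ (compact and contained in $\Omega$ because $\bigcup_\ell W_{2R}(P_\ell)\subset\Omega$ by the definition of $R$), giving $\vert K_{\mathbb{H}^2}-K_\Omega\vert\leq\frac{C}{t}e^{-D/t}$ on $W_R(P_i)$; and once with $N=\mathbb{H}^2$, $U=W_i$, $A=\overline{W_R(P_i)}$ (compact and contained in $W_i$ since $W_R(P_i)=\hyperball{R}{P_i}\cap W_i$ and closure stays inside $W_{2R}(P_i)\cup\{\text{boundary of }W_i\}$; one must be slightly careful that $\overline{W_R(P_i)}$ meets $\partial W_i$, so strictly speaking $A$ should be a compact subset of the open domain $W_i$ — but actually we only need the estimate on $W_R(P_i)$ itself, and $\overline{W_R(P_i)}$ is not contained in $W_i$). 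This last point is the main obstacle: $W_R(P_i)$ touches $\partial W_i$, so Lemma \ref{lemma:PNFBAllgemein} as stated (which requires $A\subset U$ compactly in the interior) does not literally apply to compare $K_{W_i}$ with $K_{\mathbb{H}^2}$ on all of $W_R(P_i)$. I would resolve this by noting that near $\partial W_i$ the polygon $\Omega$ and the wedge $W_i$ coincide in a neighbourhood (by Definition \ref{definition:Angle}, $W_i\cap\hyperball{r}{P_i}\cap\Omega$ equals the sector, so $\partial\Omega$ and $\partial W_i$ agree inside $\hyperball{R}{P_i}$), hence the difference $K_{W_i}(\cdot,\cdot;t)-K_\Omega(\cdot,\cdot;t)$ extends continuously and vanishes on the common boundary; then a parabolic maximum principle argument exactly as in the proof of Lemma \ref{lemma:PNFBAllgemein} — applied to a relatively compact smooth domain $G$ with $W_R(P_i)\subset G$ and $G\cap\hyperball{R'}{P_i}$ straddling the same boundary — yields the exponential bound. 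Collecting constants over $i=1,\dots,M$ and combining with the estimates from $(i)$ and $(ii)$ by taking $T=\min$, $C=\max$, $D=\min$ finishes the proof.
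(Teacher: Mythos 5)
Part $(i)$ is fine and matches the paper. The serious problem is part $(ii)$. Points of $\Omega_{E_j}$ can be arbitrarily close to the edge $E_j\subset\partial\Omega\cap\partial H_j$, so $\Omega_{E_j}$ does \emph{not} have a compact closure inside $\Omega$ or inside $H_j$, and Lemma \ref{lemma:PNFBAllgemein} (which requires $A$ compact with $A\subset U$) simply cannot be applied to bound $K_{\mathbb{H}^2}-K_\Omega$ or $K_{\mathbb{H}^2}-K_{H_j}$ on $\Omega_{E_j}$. Indeed neither difference is exponentially small there: as $x\to E_j$ one has $K_\Omega(x,x;t)\to 0$ while $K_{\mathbb{H}^2}(x,x;t)\sim\frac{1}{4\pi t}$, so each difference is of order $t^{-1}$ near the edge. "Subtracting" two quantities that are individually large requires a genuine cancellation argument, and your proposal does not supply one. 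The paper supplies it via the probabilistic representation of Lemma \ref{lemma:ProbabilisticFormulaHeatKernel}: writing $K_\Omega(x,x;t)=K_{\mathbb{H}^2}(x,x;t)\cdot\mathrm{Prob}\{\omega\text{ avoids }\partial\Omega\}$ and comparing with the corresponding probability for the full geodesic $F_j\supset E_j$, the discrepancy is controlled by the probability that a loop at $x$ hits $F_j\setminus E_j$ or $\partial\Omega\setminus F_j$ — and \emph{those} sets are at positive distance from $\Omega_{E_j}$ (precisely because $\Omega_V$ was removed and $\delta$ was chosen small), so Lemma \ref{lemma:PNFBAllgemein} applies to the auxiliary domains $\mathbb{H}^2\setminus(F_j\setminus E_j)$ and $\mathbb{H}^2\setminus(\partial\Omega\setminus F_j)$. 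Your sketch never accounts for these error terms.

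The same issue arises in $(iii)$, and there you do correctly identify it ($\overline{W_R(P_i)}$ meets $\partial W_i$). Your proposed fix — a maximum-principle comparison on a domain straddling the common boundary, where the difference $K_{W_i}-K_\Omega$ vanishes — is the deterministic analogue of the paper's probabilistic argument and could be made to work (e.g.\ by comparing both $K_\Omega$ and $K_{W_i}$ with $K_{W_{2R}(P_i)}$ and bounding each non-negative difference on the arc $E_i(2R)$ by the Gaussian estimate), but as written it is only a sketch: you would need two-sided bounds (neither $\Omega\subset W_i$ nor $W_i\subset\Omega$), and the continuous extension by zero of the heat kernel of the corner domain $W_{2R}(P_i)$ to its non-smooth boundary needs justification, which is exactly the technicality the Wiener-measure formula avoids. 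To repair the proof you should either import Lemma \ref{lemma:ProbabilisticFormulaHeatKernel} into both $(ii)$ and $(iii)$, or carry out the maximum-principle comparison carefully in both parts; the bare "apply Lemma \ref{lemma:PNFBAllgemein} and subtract" argument in $(ii)$ is not salvageable.
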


\begin{proof}
From Lemma \ref{lemma:PNFBAllgemein} we know that there exist $T_1, C_1, D_1 > 0$ such that 
\begin{align}
\label{equation:BeweisPNFBStufe1.1}
\vert K_{\Omega}(x,x;t)-K_{\mathbb{H}^2}(x,x;t) \vert &\leq \frac{C_1}{t}e^{-\frac{D_1}{t}}\quad \text{ for all } x\in \Omega_I,\, t\in (0,T_1].
\end{align}

Next, we will use Lemma \ref{lemma:PNFBAllgemein} and Lemma \ref{lemma:ProbabilisticFormulaHeatKernel} in order to estimate the left-hand side of \eqref{equation:PNFBBoundary} and then of \eqref{equation:PNFBVertices} as in \citep[Lemma 6 and 7]{VanDenBerg}. Let $j\in\{\, 1,...,\tilde{M} \,\}$ be given, let $x\in \Omega_{E_j}$, and let $F_j$ denote the geodesic line containing the edge $E_j$. With Lemma \ref{lemma:ProbabilisticFormulaHeatKernel} we have
\begin{align*}
K_{\Omega}(x,x;t)&=K_{\mathbb{H}^2}(x,x;t)\cdot \text{Prob}\{\, \omega(s)\in \Omega,\, 0\leq s\leq t \mid \omega(0)=x=\omega(t)  \,\} \\
&\leq K_{\mathbb{H}^2}(x,x;t)\cdot \text{Prob}\{\, \omega(s)\notin E_j,\, 0\leq s\leq t \mid \omega(0)=x=\omega(t)  \,\} \\
&\leq K_{\mathbb{H}^2}(x,x;t)\cdot \text{Prob}\{\, \omega(s)\notin F_j,\, 0\leq s\leq t \mid \omega(0)=x=\omega(t)  \,\} \\
&\quad\text{ }+ K_{\mathbb{H}^2}(x,x;t)\cdot \text{Prob}\{\, \omega(s)\in  F_j \backslash E_j \text{ for some } s\in[0, t] \mid \omega(0)=x=\omega(t)  \,\} \\
&= K_{E_j}(x,x;t) + K_{\mathbb{H}^2}(x,x;t) \\
&\quad  - K_{\mathbb{H}^2}(x,x;t)\cdot \text{Prob}\{\, \omega(s)\in \mathbb{H}^2 \backslash \left( F_j\backslash E_j \right),\, 0\leq s\leq t \mid \omega(0)=x=\omega(t)  \,\} \\
&= K_{E_j}(x,x;t)+K_{\mathbb{H}^2}(x,x;t) - K_{\mathbb{H}^2 \backslash \left( F_j\backslash E_j \right)}(x,x;t).
\end{align*}
Note that we used in particular the following implications with respect to any continuous curve $\omega:[0,t]\rightarrow\mathbb{H}^2$ with $\omega(0)=x=\omega(t)$: $\omega$ stays in $\Omega \Rightarrow \omega$ does not meet $E_j \Rightarrow \omega$ does not meet $F_j$ or meets $F_j \backslash E_j$, and: $\omega$ does not meet $F_j \Leftrightarrow \omega$ stays in $H_j$.  

Thus from Lemma \ref{lemma:PNFBAllgemein} we know that there exist $\tilde{T}_2^{j}, \tilde{C}_2^{j},\tilde{D}_2^{j}>0$ such that for all $t\in(0,\tilde{T}_2^{j}]$, $x\in \Omega_{E_j}$:
\begin{align}
\label{equation:BeweisPNFB1}
K_{\Omega}(x,x;t) \leq K_{E_j}(x,x;t) + \frac{\tilde{C}_2^{j}}{t}e^{-\frac{\tilde{D}_2^{j}}{t}}.
\end{align}
Similarly, we have for all $x\in \Omega_{E_j}$:
\begin{align*}
K_{\Omega}(x,x;t) &= K_{\mathbb{H}^2}(x,x;t)\cdot \text{Prob}\{\, \omega(s)\in \Omega,\, 0\leq s\leq t \mid \omega(0)=x=\omega(t)  \,\}\\
&= K_{\mathbb{H}^2}(x,x;t)\cdot \text{Prob}\{\, \omega(s)\notin \partial\Omega,\, 0\leq s\leq t \mid \omega(0)=x=\omega(t)  \,\}\\
& \geq K_{\mathbb{H}^2}(x,x;t)\cdot \text{Prob}\{\, \omega(s)\notin F_j,\, 0\leq s\leq t \mid \omega(0)=x=\omega(t)  \,\} \\
&\quad - K_{\mathbb{H}^2}(x,x;t)\cdot \text{Prob}\{\, \omega(s)\in \partial\Omega\backslash F_j \text{ for some } s\in[0, t] \mid \omega(0)=x=\omega(t)  \,\} \\
&= K_{E_j}(x,x;t) - \left( K_{\mathbb{H}^2}(x,x;t) - K_{\mathbb{H}^2 \backslash \left( \partial\Omega\backslash F_j \right)} (x,x;t)\right).
\end{align*}
We used in particular the following implications for any continuous curve $\omega:[0,t]\rightarrow\mathbb{H}^2$ with $\omega(0)=x=\omega(t)$: $\omega$ stays in $\Omega \Leftrightarrow \omega$ does not meet $\partial\Omega$; $\omega$ does not meet $F_j \Rightarrow \omega$ does not meet $\partial\Omega$ or meets $\partial\Omega\backslash F_j$. 

Thus it follows again from Lemma \ref{lemma:PNFBAllgemein} that there exist constants $\hat{T}_2^{j}, \hat{C}_2^{j},\hat{D}_2^{j} >0$ such that for all $t\in(0,\hat{T}_2^{j}]$, $x\in \Omega_{E_j}$:
\begin{align}
\label{equation:BeweisPNFB2}
K_{\Omega}(x,x;t) \geq K_{E_j}(x,x;t) - \frac{\hat{C}_2^{j}}{t}e^{-\frac{\hat{D}_2^{j}}{t}}.
\end{align}
When we combine the estimates \eqref{equation:BeweisPNFB1} and \eqref{equation:BeweisPNFB2} and we set $T_2:=\min\cup_{j=1}^{\tilde{M}}\{ \tilde{T}_2^{j}, \hat{T}_2^{j} \}$, $C_2:=\max\cup_{j=1}^{\tilde{M}}\{ \tilde{C}_2^{j}, \hat{C}_2^{j} \}$, $D_2:=\min\cup_{j=1}^{\tilde{M}}\{ \tilde{D}_2^{j}, \hat{D}_2^{j} \}$, then we have for all $j\in\{\, 1,...,\tilde{M} \,\}$:
\begin{align}
\label{equation:BeweisPNFBStufe1.2}
\vert K_{\Omega}(x,x;t)-K_{E_j}(x,x;t) \vert &\leq \frac{C_2}{t}e^{-\frac{D_2}{t}}\quad \text{ for all } x\in \Omega_{E_j},\, t\in (0,T_2].
\end{align}

Now consider \eqref{equation:PNFBVertices} and let $i\in\{ 1,...,M\}$ be given. For any $r>0$ we set $E_i(r):=\partial \hyperball{r}{P_i} \cap W_i=\{\, p\in W_i\mid d(p,P_i)=r \,\}$. Because of $W_{2R}(P_i)\subset\Omega$, we have for all $x\in W_R(P_i)$:
\begin{align*}
K_{\Omega}(x,x;t) &\geq K_{\mathbb{H}^2}(x,x;t)\cdot \text{Prob}\{\, \omega(s)\in W_{2R}(P_i),\, 0\leq s\leq t \mid \omega(0)=x=\omega(t)  \,\}\\
&\geq K_{\mathbb{H}^2}(x,x;t)\cdot \text{Prob}\{\, \omega(s)\in W_i,\, 0\leq s\leq t \mid \omega(0)=x=\omega(t)  \,\}\\
&\quad - K_{\mathbb{H}^2}(x,x;t)\cdot \text{Prob}\{\, \omega(s)\in E_i(2R)\text{ for some } s\in[0, t] \mid \omega(0)=x=\omega(t)  \,\} \\
&= K_{\gamma_i}(x,x;t) - \left( K_{\mathbb{H}^2}(x,x;t) - K_{\mathbb{H}^2\backslash E_i(2R)}(x,x;t) \right). 
\end{align*}
We used the following implications valid for any continuous curve $\omega:[0,t]\rightarrow\mathbb{H}^2$ with $\omega(0)=x=\omega(t)$: $\omega$ stays in $\Omega \Rightarrow \omega$ stays in $W_{2R}(P_i)$; $\omega$ stays in $W_i \Rightarrow \omega$ stays in $W_{2R}(P_i)$ or meets $E_i(2R)$.
Hence, using Lemma \ref{lemma:PNFBAllgemein}, there exist $T_3^{i}, C_3^{i}, D_3^{i} >0$ such that for all $x\in W_R(P_i)$, $t\in (0,T_3^{i}]$:
\begin{align}
\label{equation:LemmaPNFBInequality3}
K_{\Omega}(x,x;t) \geq K_{\gamma_i}(x,x;t) - \frac{C_3^{i}}{t}e^{-\frac{D_3^{i}}{t}}.
\end{align}
Similarly, for all $x\in W_R(P_i)$:
\begin{align*}
K_{\Omega}(x,x;t) &\leq  K_{\mathbb{H}^2}(x,x;t)\cdot \text{Prob}\{\, \omega(s)\in W_i,\, 0\leq s\leq t \mid \omega(0)=x=\omega(t)  \,\}\\
&\quad + K_{\mathbb{H}^2}(x,x;t)\cdot \text{Prob}\{\, \omega(s)\in E_i(2R)\text{ for some } s\in[0, t] \mid \omega(0)=x=\omega(t)  \,\} \\
& = K_{\gamma_i}(x,x;t) +  \left(  K_{\mathbb{H}^2}(x,x;t) - K_{\mathbb{H}^2\backslash E_i(2R)}(x,x;t) \right).
\end{align*}
We used above the following implication which holds for any continuous curve $\omega:[0,t]\rightarrow\mathbb{H}^2$ with $\omega(0)=x=\omega(t)$: $\omega$ stays in $\Omega\Rightarrow \omega$ stays in $W_i$ or meets $E_i(2R)$. Thus we have
\begin{align}
\label{equation:LemmaPNFBInequality4}
K_{\Omega}(x,x;t) \leq K_{\gamma_i}(x,x;t) + \frac{C_3^{i}}{t}e^{-\frac{D_3^{i}}{t}}\quad \text{ for all }\, x\in W_R(P_i), \, t\in (0,T_3^{i}].
\end{align}
Let $T_3:=\min \cup_{i=1}^M\{ T_3^{i} \}$, $C_3:=\max \cup_{i=1}^M \{ C_3^{i} \}$, $D_3:=\min \cup_{i=1}^M\{ D_3^{i} \}$. From \eqref{equation:LemmaPNFBInequality3} and \eqref{equation:LemmaPNFBInequality4} it follows that for all $i\in\{\, 1,...,M \,\}$:
\begin{align}
\label{equation:BeweisPNFBStufe1.3}
\vert K_{\Omega}(x,x;t)-K_{\gamma_i}(x,x;t) \vert \leq  \frac{C_3}{t}e^{-\frac{D_3}{t}} \quad \text{ for all }\, x\in W_R(P_i), \, t\in (0,T_3].
\end{align}
Lastly, all three estimates $(i)-(iii)$ follow from \eqref{equation:BeweisPNFBStufe1.1}, \eqref{equation:BeweisPNFBStufe1.2} and \eqref{equation:BeweisPNFBStufe1.3}, when we set $T:=\min\{\, T_1,T_2,T_3 \,\}$, $C:=\max\{\, C_1,C_2,C_3 \,\}$, and $D:=\min\{\, D_1,D_2,D_3 \,\}$.
\end{proof}

Instead of applying the above lemma to the heat trace directly, we will use it to approximate the shifted heat trace
\begin{align}
\label{equation:DefinitionShiftedHeatTrace}
Z_{\Omega}^{\nicefrac{1}{4}}(t):= e^{\frac{1}{4}t}\cdot Z_{\Omega}(t) = \int\limits_{\Omega}K_{\Omega}^{\nicefrac{1}{4}}(x,x;t)dx
\end{align}
and compute its asymptotic expansion as $t\searrow 0$. Of course the two functions $Z_{\Omega}$ and $Z_{\Omega}^{\nicefrac{1}{4}}$ are closely related to each other, but the formulas for the coefficients in the asymptotic expansion of $Z_{\Omega}^{\nicefrac{1}{4}}(t)$ as $t\searrow 0$ are shorter than for $Z_{\Omega}(t)$.

Because of the preceding lemma we can approximate the shifted heat trace as follows.
\begin{corollary}
\label{corollary:PNFB}
There exist constants $T, C',D>0$ such that for all $t\in (0,T]:$
\begin{align*}
\bigg\vert Z_{\Omega}^{\nicefrac{1}{4}}(t)-\int\limits_{\Omega_I}K_{\mathbb{H}^2}^{\nicefrac{1}{4}}(x,x;t)dx-\sum\limits_{j=1}^{\tilde{M}}\int\limits_{\text{ } \Omega_{E_j}}K_{E_j}^{\nicefrac{1}{4}}(x,x;t)dx -\sum\limits_{i=1}^{M} \int\limits_{\text{ }W_R (P_i)}K_{\gamma_i}^{\nicefrac{1}{4}}(x,x;t)dx \bigg\vert \leq \frac{C'}{t}e^{-\frac{D}{t}},
\end{align*}
where $K_{\mathbb{H}^2}^{\nicefrac{1}{4}}, K_{E_j}^{\nicefrac{1}{4}}$ and $K_{\gamma_i}^{\nicefrac{1}{4}}$ denote the corresponding shifted heat kernels as in \eqref{equation:ShiftedHeat}.
\end{corollary}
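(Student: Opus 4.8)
The plan is to combine the three pointwise estimates of Lemma \ref{lemma:PNFB} with the decomposition $\Omega = \Omega_I \sqcup \Omega_E \sqcup \Omega_V$ and simply integrate. First I would multiply each of the three inequalities \eqref{equation:PNFBInterior}, \eqref{equation:PNFBBoundary}, \eqref{equation:PNFBVertices} by $e^{\frac{1}{4}t}$; since $e^{\frac{1}{4}t}\le e^{\frac{1}{4}T}$ for $t\in(0,T]$, this only changes the constant $C$ into some $C_1 := e^{\frac{1}{4}T}C$, and it replaces each heat kernel by its shifted counterpart (recall \eqref{equation:ShiftedHeat}). Thus for all $t\in(0,T]$ we get $\vert K_\Omega^{\nicefrac14}(x,x;t)-K_{\mathbb{H}^2}^{\nicefrac14}(x,x;t)\vert \le \tfrac{C_1}{t}e^{-\frac{D}{t}}$ on $\Omega_I$, $\vert K_\Omega^{\nicefrac14}(x,x;t)-K_{E_j}^{\nicefrac14}(x,x;t)\vert \le \tfrac{C_1}{t}e^{-\frac{D}{t}}$ on $\Omega_{E_j}$, and $\vert K_\Omega^{\nicefrac14}(x,x;t)-K_{\gamma_i}^{\nicefrac14}(x,x;t)\vert \le \tfrac{C_1}{t}e^{-\frac{D}{t}}$ on $W_R(P_i)$.

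Next I would split the integral defining $Z_\Omega^{\nicefrac14}(t)$ in \eqref{equation:DefinitionShiftedHeatTrace} over the three pieces, using $\Omega_V = \bigcup_{i=1}^M W_R(P_i)$ (the union being disjoint, by the remark after \eqref{equation:DefinitionNearVertices}) and $\Omega_E = \bigcup_{j=1}^{\tilde M}\Omega_{E_j}$ (also disjoint, by the choice of $\delta$ in \eqref{equation:DefinitionNearBoundary}). Subtracting the claimed approximating quantity term by term and applying the triangle inequality gives
\begin{align*}
&\bigg\vert Z_{\Omega}^{\nicefrac{1}{4}}(t)-\int\limits_{\Omega_I}K_{\mathbb{H}^2}^{\nicefrac{1}{4}}(x,x;t)\,dx-\sum\limits_{j=1}^{\tilde{M}}\int\limits_{\Omega_{E_j}}K_{E_j}^{\nicefrac{1}{4}}(x,x;t)\,dx -\sum\limits_{i=1}^{M} \int\limits_{W_R (P_i)}K_{\gamma_i}^{\nicefrac{1}{4}}(x,x;t)\,dx \bigg\vert \\
&\quad \leq \int\limits_{\Omega_I}\vert K_{\Omega}^{\nicefrac14}-K_{\mathbb{H}^2}^{\nicefrac14}\vert\,dx + \sum\limits_{j=1}^{\tilde{M}}\int\limits_{\Omega_{E_j}}\vert K_{\Omega}^{\nicefrac14}-K_{E_j}^{\nicefrac14}\vert\,dx + \sum\limits_{i=1}^{M}\int\limits_{W_R(P_i)}\vert K_{\Omega}^{\nicefrac14}-K_{\gamma_i}^{\nicefrac14}\vert\,dx \leq \frac{C_1}{t}e^{-\frac{D}{t}}\cdot\vert\Omega\vert,
\end{align*}
where in the last step I bounded each integrand by $\tfrac{C_1}{t}e^{-\frac{D}{t}}$ and used that the three families of domains partition $\Omega$ up to a null set, so the total measure is $\vert\Omega\vert<\infty$ ($\Omega$ is relatively compact). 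Setting $C' := C_1\vert\Omega\vert = e^{\frac14 T}C\vert\Omega\vert$ yields the assertion with the same $T$ and $D$ as in Lemma \ref{lemma:PNFB}.

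This argument is essentially routine; there is no real obstacle, only two small points to check. One is that $\Omega_V$, $\Omega_E$, $\Omega_I$ genuinely cover $\Omega$ and are pairwise disjoint — this is exactly how they were defined in \eqref{equation:DefinitionNearVertices}, \eqref{equation:DefinitionNearBoundary}, \eqref{equation:DefinitionAwayBoundary}, so the only thing to note is that the boundary pieces $\partial\Omega_V\cap\Omega$ etc. have measure zero and hence do not affect the integrals. The other is that the approximating heat kernels $K_{E_j}^{\nicefrac14}$ and $K_{\gamma_i}^{\nicefrac14}$ are integrable over the respective regions, which is immediate since on each region they differ from $K_\Omega^{\nicefrac14}$ by a bounded quantity and $K_\Omega^{\nicefrac14}(x,x;\cdot)$ is continuous, hence bounded on the compact closures. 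So the corollary follows directly.
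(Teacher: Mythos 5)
Your proposal is correct and follows essentially the same route as the paper: decompose $\Omega$ into the pairwise disjoint pieces $\Omega_I$, $\Omega_{E_j}$, $W_R(P_i)$, apply the triangle inequality, bound each integrand by the shifted version of the estimates from Lemma \ref{lemma:PNFB} (absorbing the factor $e^{t/4}\le e^{T/4}$ into the constant), and sum the measures to get $C'=e^{T/4}C\,\vert\Omega\vert$. The only cosmetic difference is that the paper's decomposition is an exact partition of $\Omega$ by construction, so your "up to a null set" caveat is not needed.
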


\begin{proof}
We decompose the polygon into the pairwise disjoint subsets from \eqref{equation:DefinitionNearVertices}-\eqref{equation:DefinitionAwayBoundary}:
\begin{align*}
\Omega=\Omega_I\cup\Omega_E\cup\Omega_V = \Omega_{I}\cup \left( \bigcup_{j=1}^{\tilde{M}}\Omega_{E_j} \right) \cup\left( \bigcup_{i=1}^M W_R(P_i) \right).
\end{align*}
Let $T,C,D>0$ be as in Lemma \ref{lemma:PNFB} and $C':=\vert \Omega\vert e^{\frac{T}{4}} \cdot C$. Then for all $t\in(0,T]$:
\begin{align*}
&\bigg\vert Z_{\Omega}^{\nicefrac{1}{4}}(t)-\int\limits_{\Omega_I}K_{\mathbb{H}^2}^{\nicefrac{1}{4}}(x,x;t)dx-\sum\limits_{j=1}^{\tilde{M}}\int\limits_{\text{ }\Omega_{E_j}}K_{E_j}^{\nicefrac{1}{4}}(x,x;t)dx -\sum\limits_{i=1}^{M} \int\limits_{\text{ }W_R (P_i)}K_{\gamma_i}^{\nicefrac{1}{4}}(x,x;t)dx \bigg\vert = \\
&\bigg\vert \int\limits_{\Omega_I} \bigg(  K_{\Omega}^{\nicefrac{1}{4}}(x,x;t)- K_{\mathbb{H}^2}^{\nicefrac{1}{4}}(x,x;t)\bigg)dx + \sum\limits_{j=1}^{\tilde{M}}\int\limits_{\text{ }\Omega_{E_j}} \bigg( K_{\Omega}^{\nicefrac{1}{4}}(x,x;t)-K_{E_j}^{\nicefrac{1}{4}}(x,x;t)\bigg)dx\\ 
&\qquad\qquad\qquad\qquad\qquad\qquad\qquad\qquad + \sum\limits_{i=1}^{M} \int\limits_{\text{ }W_R (P_i)} \bigg( K_{\Omega}^{\nicefrac{1}{4}}(x,x;t)-K_{\gamma_i}^{\nicefrac{1}{4}}(x,x;t)\bigg) dx \bigg\vert \leq \\
& \int\limits_{\Omega_I}\bigg\vert  K_{\Omega}^{\nicefrac{1}{4}}(x,x;t)- K_{\mathbb{H}^2}^{\nicefrac{1}{4}}(x,x;t) \bigg\vert dx +\sum\limits_{j=1}^{\tilde{M}}\int\limits_{\text{ }\Omega_{E_j}} \bigg\vert  K_{\Omega}^{\nicefrac{1}{4}}(x,x;t)-K_{E_j}^{\nicefrac{1}{4}}(x,x;t)\bigg\vert dx\\
&\qquad\qquad\qquad\qquad\qquad\qquad\qquad\qquad +  \sum\limits_{i=1}^{M} \int\limits_{\text{ }W_R (P_i)}\bigg\vert K_{\Omega}^{\nicefrac{1}{4}}(x,x;t)-K_{\gamma_i}^{\nicefrac{1}{4}}(x,x;t)\bigg\vert dx \leq \\
& \left(\vert \Omega_I\vert + \sum\limits_{j=1}^{\tilde{M}} \vert \Omega_{E_j} \vert + \sum\limits_{i=1}^M\vert W_R(P_i) \vert  \right)e^{\frac{T}{4}} \frac{C}{t}e^{-\frac{D}{t}} = \frac{C'}{t}e^{-\frac{D}{t}}.
\end{align*}
\end{proof}

Corollary \ref{corollary:PNFB} shows that the shifted heat trace $Z_{\Omega}^{\nicefrac{1}{4}}(t)$ has the same asymptotic expansion for $t\searrow 0$ as the function
\begin{align}
\label{equation:ErsatzSummeAsymptotik1}
t\mapsto \int\limits_{\Omega_I}K_{\mathbb{H}^2}^{\nicefrac{1}{4}}(x,x;t)dx + \sum\limits_{j=1}^{\tilde{M}}\int\limits_{\text{ }\Omega_{E_j}}K_{E_j}^{\nicefrac{1}{4}}(x,x;t)dx + \sum\limits_{i=1}^{M} \int\limits_{\text{ }W_R (P_i)}K_{\gamma_i}^{\nicefrac{1}{4}}(x,x;t)dx.
\end{align}
For any $i\in \lbrace\, 1,...,M\, \rbrace$ an explicit formula for the function $\int_{W_R (P_i)}K_{\gamma_i}^{\nicefrac{1}{4}}(x,x;t)dx$ is given by Theorem \ref{theorem:HeatTraceShift}. Furthermore, we have for all $j = 1,...,\tilde{M}$ and $x\in \Omega_{E_j}$
\begin{align*}
K_{E_j}(x,x;t)= K_{H_j}(x,x;t) = K_{\mathbb{H}^2}(x,x;t)-K_{\mathbb{H}^2}(x,x_{E_j};t),
\end{align*}
where $x_{E_j}$ denotes the image of $x\in H_j$ under the reflection in $\partial H_j$. This follows from the following Lemma.
\begin{lemma}
\label{lemma:HeatKernelHalfPlaneProof}
Let $H\subset \mathbb{H}^2$ be a half-plane. The heat kernel of $H$ is given for all $x,y\in H$ and $t>0$ by
\begin{align*}
K_{H}(x,y;t)=K_{\mathbb{H}^2}(x,y;t)-K_{\mathbb{H}^2}(x,y_{\partial H};t),
\end{align*}
where $y_{\partial H}$ denotes the reflection of $y$ in the boundary $\partial H$.
\end{lemma}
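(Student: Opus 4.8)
The plan is to prove the formula for the heat kernel of a hyperbolic half-plane $H$ by the classical method of images, which works because the hyperbolic plane admits an isometric reflection in the boundary geodesic $\partial H$. First I would let $\sigma:\mathbb{H}^2\to\mathbb{H}^2$ denote the reflection in the geodesic line $\partial H$; this is an isometry of $\mathbb{H}^2$ fixing $\partial H$ pointwise and interchanging $H$ with the opposite half-plane $\sigma(H)=\mathbb{H}^2\setminus\overline{H}$. The candidate function is
\begin{align*}
u(x,y;t):=K_{\mathbb{H}^2}(x,y;t)-K_{\mathbb{H}^2}(x,\sigma(y);t),\quad x,y\in H,\ t>0,
\end{align*}
and the goal is to show $u=K_H$ by verifying that $u$ is a non-negative fundamental solution of the heat equation on $H$ satisfying the Dirichlet condition, and then invoking the characterisation of $K_H$ as the minimal such solution (Definition \ref{definition:HeatKernel}).

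The key steps, in order, are as follows. (1) Since $K_{\mathbb{H}^2}$ is symmetric, smooth away from the diagonal and solves $(\partial_t+\Delta)K_{\mathbb{H}^2}(\cdot,z;\cdot)=0$ for each fixed $z$, and since $\sigma(y)\notin H$ whenever $y\in H$, the second term $K_{\mathbb{H}^2}(\cdot,\sigma(y);\cdot)$ is a smooth solution of the heat equation on all of $H\times(0,\infty)$; hence $u(\cdot,y;\cdot)$ solves the heat equation on $H$ and, by the defining property of $K_{\mathbb{H}^2}$, the second term contributes nothing to the initial-condition integral against $f\in C_c^\infty(H)$ (its support is disjoint from $H$ in the limit $t\searrow 0$), so $u$ satisfies the fundamental-solution conditions \eqref{equation:FundamentalSolution} at each $y\in H$. (2) For the boundary condition, note that for $x\in\partial H$ we have $\sigma(x)=x$, so $d(x,\sigma(y))=d(\sigma(x),\sigma(y))=d(x,y)$ by the isometry property, and since $K_{\mathbb{H}^2}(x,y;t)$ depends only on $d(x,y)$ (formula \eqref{equation:HyperHeat2}), $u(x,y;t)=0$ for all $x\in\partial H$; this shows $u$ extends continuously to $\overline H\times H\times(0,\infty)$ with the Dirichlet boundary value zero. (3) Non-negativity: using the probabilistic formula of Lemma \ref{lemma:ProbabilisticFormulaHeatKernel} with $U=H$, one has $K_H(x,y;t)=K_{\mathbb{H}^2}(x,y;t)\cdot\mathrm{Prob}\{\omega(s)\in H,\ 0\le s\le t\mid\omega(0)=x,\omega(t)=y\}$, and a reflection argument on Brownian paths (the reflection principle, valid because $\sigma$ is a measure-preserving isometry) identifies the complementary probability with $K_{\mathbb{H}^2}(x,\sigma(y);t)/K_{\mathbb{H}^2}(x,y;t)$; equivalently, one can argue directly that $u\ge 0$ because $d(x,\sigma(y))\ge d(x,y)$ for $x,y\in H$ (the reflected point is at least as far away) together with the fact that $r\mapsto K_{\mathbb{H}^2}$ as a function of $d$ via \eqref{equation:HyperHeat2} is strictly decreasing in the distance. (4) Minimality: if $v\ge 0$ is any other fundamental solution on $H$, then extending $v$ by $0$ outside $H$ and comparing with $K_{\mathbb{H}^2}$, the difference $K_{\mathbb{H}^2}(\cdot,y;\cdot)-v$ is a solution on $H$ with boundary value $\ge 0$, so by the parabolic maximum principle $v\le K_{\mathbb{H}^2}(\cdot,y;\cdot)$; combined with the boundary vanishing this pins down $u$ as the minimal one. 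Alternatively, and more cleanly, since $K_H$ is by definition minimal and $u$ is shown to be a non-negative fundamental solution, $u\ge K_H$; for the reverse inequality one uses that $K_H$ itself, extended by zero, gives a solution on $\mathbb{H}^2\setminus\partial H$ dominated by $K_{\mathbb{H}^2}$, and a symmetrisation/reflection of $K_H$ recovers $u$. I would present whichever of these is shortest given what the thesis has already set up, most likely the probabilistic one via Lemma \ref{lemma:ProbabilisticFormulaHeatKernel}.

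The main obstacle is making the reflection principle for Brownian motion on $\mathbb{H}^2$ rigorous, or equivalently justifying the minimality comparison carefully: one must ensure that the extension-by-zero and the isometry-invariance of the Wiener measure interact correctly, and that the strong Markov property at the first hitting time of $\partial H$ applies. Since $\sigma$ is an isometry of $\mathbb{H}^2$ it induces a measure-preserving map on path space, and a path from $x\in H$ to $\sigma(y)$ must cross $\partial H$; reflecting the portion of such a path after its first hitting time of $\partial H$ sets up the bijection underlying the image formula. All the analytic ingredients — smoothness and distance-dependence of $K_{\mathbb{H}^2}$, the continuous extension of heat kernels of smooth domains to the boundary (Remark \ref{remark:heatkernelproperties}), the parabolic maximum principle — are already available in the excerpt, so the write-up should be short; the only genuinely delicate point is the probabilistic identity, which can, if desired, be bypassed entirely by the purely PDE-theoretic minimality argument sketched in step (4).
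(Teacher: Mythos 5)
Your proposal is correct in substance, but it takes a different route from the one the thesis actually uses here. The thesis proves this lemma in one line: set $\gamma=\pi$ in the wedge Green's function formula \eqref{equation:GreenWedge} (the term with $\sinh(\rho(\pi-\gamma))$ drops out and the remaining two terms are exactly the Green's functions of $\mathbb{H}^2$ at $y$ and at $y_{\partial H}$ by \eqref{equation:GreenPlane2}), and then invert the Laplace transform. Your method-of-images argument is precisely the "alternative" proof the thesis mentions in passing, and it is carried out in full — for the harder case of a wedge of angle $\pi/k$ with $2k$ image points — in Lemma \ref{lemma:FormelHeatKernelWedgeSpiegelung}; the half-plane is the one-reflection instance of that argument. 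What each approach buys: the Green's function route is free once Theorem \ref{theorem:GreenWedge} is available, whereas your route is self-contained and elementary, needing only the defining properties of $K_{\mathbb{H}^2}$ and a uniqueness criterion.

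One step of yours needs tightening. Steps (1)–(3) are fine (for (3), the direct argument via $d(x,\sigma(y))\ge d(x,y)$ and monotonicity of \eqref{equation:HyperHeat2} in the distance works and avoids the reflection principle entirely). The weak point is (4): showing $u\ge K_H$ is immediate from minimality, but your sketch of the reverse inequality ("symmetrisation of $K_H$ recovers $u$") is not an argument, and the bare parabolic maximum principle does not suffice on the unbounded domain $H$ — you must also control $u$ and $K_H$ at spatial infinity of $H$. This is exactly what the thesis supplies in the wedge case via the Gaussian bound \eqref{equation:EstimateHeatKernelPlane} and the uniform decay \eqref{equation:UniformEstimateInfinityHeatKernel} before invoking Grigor'yan's Theorem 9.7, which packages "non-negative fundamental solution, vanishing on $\partial H$ and at infinity, hence equal to $K_H$". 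If you route step (4) through that theorem with those two estimates, your proof is complete.
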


\begin{proof}
This follows immediately, if we set $\gamma=\pi$ in \eqref{equation:GreenWedge} and then perform the inverse Laplace transform. Alternatively, the statement can also be shown directly by using the definition of $K_{\mathbb{H}^2}$ and \citep[Theorem 9.7]{Grigoryan}.
\end{proof}

 Thus \eqref{equation:ErsatzSummeAsymptotik1} is equal to
\begin{align}
\label{equation:ErsatzSummeAsymptotik2}
\int\limits_{\Omega}&K_{\mathbb{H}^2}^{\nicefrac{1}{4}}(x,x;t)dx \nonumber\\
&- \left( \sum\limits_{j=1}^{\tilde{M}} \int\limits_{\text{ }\Omega_{E_j}}K_{\mathbb{H}^2}^{\nicefrac{1}{4}}(x,x_{E_j};t)dx\right)  - M\cdot 2\int\limits_{0}^{R}\int\limits_{0}^{\frac{\pi}{2}} K_{\mathbb{H}^2}^{\nicefrac{1}{4}}((a,\alpha),(a,-\alpha);t) \sinh(a) d\alpha da \nonumber \\
&+ \sum\limits_{i=1}^{M}  \frac{\gamma_i}{2\pi}\int\limits_{0}^{\infty} \frac{e^{-\frac{u^2}{4t}}}{\sqrt{4\pi t}} \cdot \frac{e^{\frac{u}{2}}}{e^{u}-1} \cdot \left( \frac{\pi}{\gamma_i}\coth\left( \frac{\pi}{\gamma_i} \cdot \frac{u}{2} \right)  - \coth\left(\frac{u}{2}\right) \right)du  \nonumber \\
&- \sum\limits_{i=1}^{M}A_{\gamma_i}(t;R).
\end{align}

Each line of \eqref{equation:ErsatzSummeAsymptotik2} defines a function in $t\in (0,\infty)$, which we denote (including the signs) in successive order by $Z_I^{\nicefrac{1}{4}}(t)$, $Z_E^{\nicefrac{1}{4}}(t)$, $Z_V^{\nicefrac{1}{4}}(t)$, and $A(t)$, respectively. We will compute the asymptotic expansion of each function separately for $t\searrow 0$. It will turn out that $Z_I^{\nicefrac{1}{4}}(t)$ produces the contributions from the interior of the polygon, i.e. those parts which depend on the volume $\vert \Omega \vert$ of the polygon. Similarly, $Z_{E}^{\nicefrac{1}{4}}(t)$ produces the contributions from the edges of the polygon, i.e. those parts which depend on the length of the perimeter $\vert \partial \Omega \vert$. Further, $Z_V^{\nicefrac{1}{4}}(t)$ produces the contributions from the vertices of the polygon, i.e. those parts which depend on the angles of the polygon. The function $A(t)$ will not give any contribution to the asymptotic expansion as $t\searrow 0$. 

The remaining part of this section will be devoted to the asymptotic expansion of each of these functions. In the process, we will encounter the Bernoulli numbers $B_k$ and Bernoulli polynomials $B_k(x)$, which are defined by the following generating function:
\begin{align}
\label{equation:DefinitionBernoulli}
\frac{te^{xt}}{e^t-1}&=:\sum\limits_{k=0}^{\infty}B_k(x)\frac{t^k}{k!} \quad \text{for } \vert t\vert< 2\pi; \quad B_k:=B_k(0).
\end{align} 
By an easy computation one can prove the identity $B_k\left(1-x\right)=\left( -1 \right)^k B_k(x)$ for all $k\in\mathbb{N}_0$ and thus
\begin{align}
\label{equation:VanishingBernoulli}
B_{2k+1}\left(\frac{1}{2}\right)=0 \quad \text{for all } k\in\mathbb{N}_0.
\end{align}
Both Bernoulli numbers and Bernoulli polynomials will appear naturally in the asymptotic expansion of the (shifted) heat trace, or more precisely, in all the coefficients which depend explicitly on the volume $\vert \Omega \vert$ or the interior angles $\gamma_1,...,\gamma_M$ of the polygon. This resembles the situation for spherical polygons discussed in \cite{Watson}. Indeed, as explained already at the beginning of Chapter \ref{chapter:polygons}, a later comparison between the heat invariants for spherical and hyperbolic polygons will show that the corresponding parts which depend on either the volume $\vert \Omega \vert$, the perimeter $\vert \partial \Omega \vert$ or the angles of the polygon differ at most in sign.

We begin with asymptotic expansions of the first three terms in \eqref{equation:ErsatzSummeAsymptotik2}, namely the functions $Z_I^{\nicefrac{1}{4}}(t),\, Z_E^{\nicefrac{1}{4}}(t)$ and $Z_V^{\nicefrac{1}{4}} (t)$. The central tool for computing the asymptotic expansions for these functions is Watson's lemma, due to G. N. Watson.

\begin{lemma} 
\label{lemma:WatsonsLemma}
\emph{(see \cite{Bleistein}, p.103)}
Let $f:(0,\infty)\rightarrow \mathbb{R}$ be a continuous function such that there exists a constant $c>0$ with $f(x)=O(e^{cx})$ as $x\rightarrow \infty$. If $f(x)$ has an asymptotic expansion of the form
\begin{align*}
f(x) \overset{x\downarrow 0}{\sim} \sum\limits_{k=0}^{\infty} a_k x^{\rho_k -1}
\end{align*}
with $0<\rho_0<\rho_1<...\nearrow\infty$, then it follows that
\begin{align*}
\int\limits_{0}^{\infty}f(x)e^{-\frac{x}{t}}dx\text{ }\overset{t\downarrow 0}{\sim}\sum\limits_{k=0}^{\infty} a_k \cdot \Gamma(\rho_k)\cdot t^{\rho_k}.
\end{align*}
\end{lemma}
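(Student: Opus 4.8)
The plan is to prove Watson's lemma by splitting the integral near $0$ and near $\infty$, controlling the two regions separately, and using the hypotheses on $f$ in each. First I would fix $N\in\mathbb{N}$ and use the asymptotic expansion $f(x)\sim\sum_{k=0}^\infty a_k x^{\rho_k-1}$ to write, for some $\delta>0$,
\begin{align*}
f(x)=\sum\limits_{k=0}^{N}a_k x^{\rho_k-1}+R_N(x)\quad\text{for }x\in(0,\delta),
\end{align*}
where $R_N(x)=O(x^{\rho_{N+1}-1})$ as $x\downarrow 0$; choosing $\delta$ small, there is a constant $C_N>0$ with $|R_N(x)|\le C_N x^{\rho_{N+1}-1}$ on $(0,\delta)$. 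Then
\begin{align*}
\int\limits_{0}^{\infty}f(x)e^{-\frac{x}{t}}dx=\sum\limits_{k=0}^{N}a_k\int\limits_{0}^{\delta}x^{\rho_k-1}e^{-\frac{x}{t}}dx+\int\limits_{0}^{\delta}R_N(x)e^{-\frac{x}{t}}dx+\int\limits_{\delta}^{\infty}f(x)e^{-\frac{x}{t}}dx,
\end{align*}
and I would estimate each of the three pieces as $t\downarrow 0$.

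For the main terms, I would compute $\int_0^{\delta}x^{\rho_k-1}e^{-x/t}dx=\int_0^{\infty}x^{\rho_k-1}e^{-x/t}dx-\int_\delta^{\infty}x^{\rho_k-1}e^{-x/t}dx$. The first integral equals $\Gamma(\rho_k)t^{\rho_k}$ by the substitution $x=tu$ and the definition of the gamma function. The tail $\int_\delta^\infty x^{\rho_k-1}e^{-x/t}dx$ is exponentially small: substituting $x=\delta+s$ and bounding $e^{-(\delta+s)/t}\le e^{-\delta/(2t)}e^{-s/(2t)}e^{-\delta/(2t)}$ gives a bound of the form $O\big(e^{-\delta/(2t)}\big)$ for $t$ small, which is $o(t^M)$ for every $M$. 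So each main term contributes $a_k\Gamma(\rho_k)t^{\rho_k}$ up to an exponentially small error. For the remainder term, $\big|\int_0^{\delta}R_N(x)e^{-x/t}dx\big|\le C_N\int_0^{\infty}x^{\rho_{N+1}-1}e^{-x/t}dx=C_N\Gamma(\rho_{N+1})t^{\rho_{N+1}}$, which is $O(t^{\rho_{N+1}})$; this is precisely the order of the first omitted term, which is what "$\sim$" requires.

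For the far region $\int_\delta^\infty f(x)e^{-x/t}dx$, I would use the growth hypothesis $f(x)=O(e^{cx})$: there is $A>0$ with $|f(x)|\le Ae^{cx}$ for all large $x$, hence (enlarging $A$ to absorb the bounded part on $[\delta,x_0]$) $|f(x)|\le A e^{cx}$ on all of $[\delta,\infty)$. Then for $t<\tfrac{1}{2c}$,
\begin{align*}
\Big|\int\limits_{\delta}^{\infty}f(x)e^{-\frac{x}{t}}dx\Big|\le A\int\limits_{\delta}^{\infty}e^{cx-\frac{x}{t}}dx=A\int\limits_{\delta}^{\infty}e^{-(\frac{1}{t}-c)x}dx=\frac{A}{\frac{1}{t}-c}e^{-(\frac{1}{t}-c)\delta},
\end{align*}
which is again $O\big(e^{-\delta/(2t)}\big)$, hence $o(t^M)$ for all $M$. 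Combining the three estimates yields $\int_0^\infty f(x)e^{-x/t}dx=\sum_{k=0}^{N}a_k\Gamma(\rho_k)t^{\rho_k}+O(t^{\rho_{N+1}})$ as $t\downarrow 0$, and since $N$ was arbitrary this is exactly the claimed asymptotic expansion $\sum_k a_k\Gamma(\rho_k)t^{\rho_k}$. The only mildly delicate point is making sure the original improper integral $\int_0^\infty f(x)e^{-x/t}dx$ converges at all for small $t$ — near $0$ because $f(x)=O(x^{\rho_0-1})$ with $\rho_0>0$ is locally integrable, and near $\infty$ because of the exponential decay just exhibited; I expect the bookkeeping of these convergence and error estimates to be the main (though routine) work, with no genuine obstacle.
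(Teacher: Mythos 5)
Your proof is correct and is exactly the standard argument for Watson's lemma (split at $\delta$, termwise Gamma-integrals, exponentially small tails, and an $O(t^{\rho_{N+1}})$ remainder for each $N$), which is the proof given in the reference the paper cites; the paper itself states the lemma without proof. The two convergence points you flag at the end (integrability near $0$ from $\rho_0>0$, and near $\infty$ from $t<1/c$ together with continuity on compacta) are indeed the only hypotheses that need checking, and you use them correctly.
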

Our approach will always be the same. Using the Fubini-Tonelli theorem and substitution, we will bring those functions into a form to which Watson's lemma is applicable.

By formula \eqref{equation:HyperHeat2} for the heat kernel of the hyperbolic plane, the first term is
\begin{align}
\label{equation:DefinitionOfZ_I}
Z_I^{\nicefrac{1}{4}}(t):=\int\limits_{\Omega}K_{\mathbb{H}^2}^{\nicefrac{1}{4}}(x,x;t) dx =  \frac{\vert \Omega \vert}{(4\pi t)^{\frac{3}{2}}}\int\limits_{0}^{\infty}\frac{\sqrt{2}\cdot r}{\sqrt{\cosh r - 1}}e^{-\frac{r^2}{4t}}dr.
\end{align}
Thus we need to compute the asymptotic expansion of the integral expression on the right-hand side of \eqref{equation:DefinitionOfZ_I}.

\begin{lemma}
Let $f(r):=\frac{\sqrt{2}\cdot r}{\sqrt{\cosh r - 1}}$. Then
\begin{align}
\label{lemma:AsymptotikContributionInterior}
\int\limits_{0}^{\infty}f(r)e^{-\frac{r^2}{4t}}dr \overset{t\downarrow 0}{\sim}  \sum\limits_{k=0}^{\infty} 2\sqrt{\pi}\cdot\frac{ B_{2k}\left(\frac{1}{2}\right)}{k!}t^{k+\frac{1}{2}}.
\end{align}
\end{lemma}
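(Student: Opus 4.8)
The plan is to reduce the left-hand side to a Laplace-type integral and apply Watson's lemma (Lemma~\ref{lemma:WatsonsLemma}). First I would put $f$ into closed form: since $\cosh r-1=2\sinh^2(r/2)$ for all $r$, we have $\sqrt{\cosh r-1}=\sqrt 2\,\sinh(r/2)$ for $r>0$, hence $f(r)=r/\sinh(r/2)$. To get the Taylor expansion of $f$ at the origin I would specialise the generating function \eqref{equation:DefinitionBernoulli} at $x=\tfrac12$, giving $\frac{t\,e^{t/2}}{e^t-1}=\sum_{k\ge0}B_k(\tfrac12)\frac{t^k}{k!}$, and note that $\frac{e^{t/2}}{e^t-1}=\frac{1}{2\sinh(t/2)}$; therefore $f(r)=\frac{r}{\sinh(r/2)}=2\sum_{k\ge0}B_k(\tfrac12)\frac{r^k}{k!}$, a power series with radius of convergence $2\pi$. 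By \eqref{equation:VanishingBernoulli} the odd-index terms vanish, so $f(r)=2\sum_{k\ge0}B_{2k}(\tfrac12)\frac{r^{2k}}{(2k)!}$ for $|r|<2\pi$.

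Next I would substitute $x=r^2/4$, i.e.\ $r=2\sqrt x$ and $dr=x^{-1/2}\,dx$, which transforms the integral into $\int_0^\infty g(x)\,e^{-x/t}\,dx$ with $g(x):=x^{-1/2}f(2\sqrt x)=\frac{2}{\sinh(\sqrt x)}$. The function $\sqrt x\,g(x)=\frac{2\sqrt x}{\sinh(\sqrt x)}$ is an even function of $\sqrt x$, hence analytic in $x$ near $0$, with the convergent expansion $2\sum_{k\ge0}\frac{4^k B_{2k}(\tfrac12)}{(2k)!}\,x^k$ (obtained from the previous paragraph after rescaling). Consequently $g$ is continuous on $(0,\infty)$, decays exponentially as $x\to\infty$ (so in particular $g(x)=O(e^{x})$), and satisfies $g(x)\sim\sum_{k\ge0}a_k x^{\rho_k-1}$ with $\rho_k=k+\tfrac12$ and $a_k=\frac{2\cdot4^k B_{2k}(\tfrac12)}{(2k)!}$. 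Since $0<\rho_0<\rho_1<\cdots\nearrow\infty$, all hypotheses of Lemma~\ref{lemma:WatsonsLemma} are in place.

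Applying Watson's lemma yields
\begin{align*}
\int\limits_{0}^{\infty}f(r)\,e^{-\frac{r^2}{4t}}\,dr=\int\limits_{0}^{\infty}g(x)\,e^{-x/t}\,dx\;\overset{t\downarrow0}{\sim}\;\sum\limits_{k=0}^{\infty}a_k\,\Gamma\!\left(k+\tfrac12\right)t^{k+\frac12}.
\end{align*}
To finish I would insert the classical identity $\Gamma\!\left(k+\tfrac12\right)=\frac{(2k)!}{4^k\,k!}\sqrt\pi$, so that
\begin{align*}
a_k\,\Gamma\!\left(k+\tfrac12\right)=\frac{2\cdot4^k B_{2k}(\tfrac12)}{(2k)!}\cdot\frac{(2k)!}{4^k\,k!}\sqrt\pi=\frac{2\sqrt\pi\,B_{2k}(\tfrac12)}{k!},
\end{align*}
which is exactly the asserted coefficient. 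There is no serious obstacle here; the only points deserving a line of justification are the closed form $f(r)=r/\sinh(r/2)$, the legitimacy of expanding $g$ term by term (which holds because $\sqrt x\,g(x)$ is genuinely analytic, so the expansion is in fact convergent for small $x$), and the crude exponential bound on $g$ needed to invoke Watson's lemma.
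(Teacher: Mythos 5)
Your argument is correct and follows essentially the same route as the paper: expand $f$ at the origin via the Bernoulli generating function at $x=\tfrac12$ (the paper writes $f(r)=2re^{r/2}/(e^r-1)$, which is the same closed form as your $r/\sinh(r/2)$), substitute $r=2\sqrt{x}$ to obtain a Laplace integral, apply Watson's lemma, and simplify with $\Gamma(k+\tfrac12)=\frac{(2k)!}{4^k k!}\sqrt{\pi}$. The only (harmless) imprecision is the claim that $g(x)=2/\sinh(\sqrt{x})$ ``decays exponentially''; it decays like $e^{-\sqrt{x}}$, but boundedness already suffices for the hypothesis $g(x)=O(e^{cx})$ of Watson's lemma.
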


\begin{proof}
First we will compute the asymptotic expansion of $f(r)$ as $r\searrow 0$. The function $f$ is obviously smooth and bounded on $(0,\infty)$ and can even be extended to a smooth function on the whole real line $\mathbb{R}$ by the following identity:
\begin{align*}
f(r)=\frac{\sqrt{2}\cdot r}{\sqrt{\cosh r -1}}=\frac{2r}{ e^{\frac{r}{2}}-e^{-\frac{r}{2}}}= 2\frac{re^{\frac{r}{2}}}{e^r-1}.
\end{align*}
Hence, by \eqref{equation:DefinitionBernoulli} we get
\begin{align*}
f(r)\overset{r\downarrow 0}{\sim} \sum\limits_{k=0}^{\infty}2\cdot\frac{B_k(\frac{1}{2})}{k!}r^k.
\end{align*} 
By \eqref{equation:VanishingBernoulli}, $B_{2k+1}\left( \frac{1}{2}\right)=0$ for all $k\in\mathbb{N}_0$ and hence we can write the expansion also as
\begin{align}
\label{equation:LemmaAsymptotikContributionInterior1}
f(r)\overset{r\downarrow 0}{\sim} \sum\limits_{k=0}^{\infty}2\cdot\frac{B_{2k}(\frac{1}{2})}{(2k)!}r^{2k}.
\end{align}
We will now use Watson's lemma  to get the expansion we claimed. We first substite $r$ by $2\sqrt{r}$ and get
\begin{align*}
\int\limits_0^{\infty}f(r)e^{-\frac{r^2}{4t}}dr = \int_0^{\infty}\frac{f\left(2\sqrt{r}\right)}{\sqrt{r}}e^{-\frac{r}{t}}dr=\int\limits_{0}^{\infty} \tilde{f}(r)e^{-\frac{r}{t}}dr,
\end{align*}
where $\tilde{f}(r):=\frac{f\left(2\sqrt{r}\right)}{\sqrt{r}}$. From \eqref{equation:LemmaAsymptotikContributionInterior1} it follows that
\begin{align*}
\tilde{f}(r)\overset{r\downarrow 0}{\sim}\sum\limits_{k=0}^{\infty} 2^{2k+1} \cdot \frac{ B_{2k}\left( \frac{1}{2} \right)}{(2k)!}\cdot r^{\left( k+\frac{1}{2}\right)-1}.
\end{align*}
Hence by Watson's lemma we get
\begin{align}
\label{equation:LemmaAsymptotikContributionInterior2}
\int\limits_{0}^{\infty} \tilde{f}(r)e^{-\frac{r}{t}}dr \overset{t\downarrow 0}{\sim} \sum\limits_{k=0}^{\infty} 2^{2k+1} \cdot \frac{ B_{2k}\left( \frac{1}{2} \right)}{(2k)!}\cdot\Gamma\left(k+\frac{1}{2}\right)\cdot t^{ k+\frac{1}{2}}.
\end{align}
From \citep[8.339 2]{Gradshteyn}, we know that for all $k\in\mathbb{N}_0$
\begin{align}
\label{equation:GammaWert}
\Gamma\left( k+\frac{1}{2}\right)=\frac{\left(2k \right)!}{2^{2k}\cdot k!}\cdot \sqrt{\pi}.
\end{align}
When we combine \eqref{equation:LemmaAsymptotikContributionInterior2} with \eqref{equation:GammaWert}, then the claimed asymptotic expansion \eqref{lemma:AsymptotikContributionInterior} follows.
\end{proof}

One easily deduces the asymptotic expansion of $Z_I(t):=e^{-\frac{t}{4}}\cdot Z_{I}^{\nicefrac{1}{4}}(t)$ as $t\searrow 0$ from the preceding lemma and \eqref{equation:DefinitionOfZ_I}:

\begin{corollary} \hfill
\label{corollary:AsymptotischeEntwicklungShiftedZ_I}
\begin{itemize}
\item[$(i)$]
\begin{align}
\label{equation:AsymptotischeEntwicklungShiftedZ_I}
Z_I^{\nicefrac{1}{4}}(t)\overset{t\downarrow 0}{\sim}\frac{\vert\Omega\vert}{4\pi t}\sum\limits_{k=0}^{\infty}\frac{ B_{2k}\left( \frac{1}{2} \right) }{k!} t^k.
\end{align}
\item[$(ii)$]
\begin{align}
\label{equation:AsymptotischeEntwicklungZ_I}
Z_I(t)\overset{t\downarrow 0}{\sim}\frac{\vert\Omega\vert}{4\pi t}\sum\limits_{k=0}^{\infty} \tilde{i}_k t^k,
\end{align}
where $\tilde{i}_k:=\frac{1}{k!}\sum\limits_{\ell=0}^k\binom{k}{\ell}\left(-\frac{1}{4}\right)^{k-\ell}B_{2\ell}\left(\frac{1}{2}\right).$
\end{itemize}
\end{corollary}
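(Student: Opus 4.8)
The statement to prove is Corollary~\ref{corollary:AsymptotischeEntwicklungShiftedZ_I}, which records the asymptotic expansions of $Z_I^{\nicefrac{1}{4}}(t)$ and $Z_I(t)$ as $t\searrow 0$. The entire analytic content has already been established in the preceding lemma, so the plan is essentially a bookkeeping exercise: substitute the known expansion \eqref{lemma:AsymptotikContributionInterior} into the defining formula \eqref{equation:DefinitionOfZ_I}, and then multiply by $e^{-t/4}$ and collect terms to pass from the shifted to the unshifted trace.

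For part $(i)$, I would start from the identity \eqref{equation:DefinitionOfZ_I}, namely
\begin{align*}
Z_I^{\nicefrac{1}{4}}(t)=\frac{\vert\Omega\vert}{(4\pi t)^{3/2}}\int\limits_{0}^{\infty}f(r)e^{-\frac{r^2}{4t}}dr,
\end{align*}
where $f(r)=\frac{\sqrt{2}\,r}{\sqrt{\cosh r-1}}$. By the preceding lemma,
\begin{align*}
\int\limits_{0}^{\infty}f(r)e^{-\frac{r^2}{4t}}dr\overset{t\downarrow 0}{\sim}\sum\limits_{k=0}^{\infty}2\sqrt{\pi}\cdot\frac{B_{2k}\!\left(\frac{1}{2}\right)}{k!}\,t^{k+\frac{1}{2}},
\end{align*}
so multiplying by $\frac{\vert\Omega\vert}{(4\pi t)^{3/2}}=\frac{\vert\Omega\vert}{8\pi^{3/2}}\,t^{-3/2}$ gives
\begin{align*}
Z_I^{\nicefrac{1}{4}}(t)\overset{t\downarrow 0}{\sim}\frac{\vert\Omega\vert}{8\pi^{3/2}}\sum\limits_{k=0}^{\infty}2\sqrt{\pi}\cdot\frac{B_{2k}\!\left(\frac{1}{2}\right)}{k!}\,t^{k-1}=\frac{\vert\Omega\vert}{4\pi t}\sum\limits_{k=0}^{\infty}\frac{B_{2k}\!\left(\frac{1}{2}\right)}{k!}\,t^{k},
\end{align*}
which is exactly \eqref{equation:AsymptotischeEntwicklungShiftedZ_I}. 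The only point requiring a word of justification is that an asymptotic expansion in powers of $t$ may be multiplied termwise by the fixed power $t^{-3/2}$ and still remain an asymptotic expansion; this is immediate from the definition of asymptotic series.

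For part $(ii)$, since $Z_I(t)=e^{-t/4}Z_I^{\nicefrac{1}{4}}(t)$, I would expand $e^{-t/4}=\sum_{j=0}^{\infty}\frac{1}{j!}\left(-\frac{1}{4}\right)^j t^j$ and form the Cauchy product with the series from part $(i)$. Writing the series in $(i)$ as $\frac{\vert\Omega\vert}{4\pi t}\sum_{\ell\geq 0}\frac{B_{2\ell}(1/2)}{\ell!}t^\ell$, the coefficient of $t^k$ in $\frac{4\pi t}{\vert\Omega\vert}Z_I(t)$ is
\begin{align*}
\tilde{i}_k=\sum\limits_{\ell=0}^{k}\frac{1}{(k-\ell)!}\left(-\frac{1}{4}\right)^{k-\ell}\frac{B_{2\ell}\!\left(\frac{1}{2}\right)}{\ell!}=\frac{1}{k!}\sum\limits_{\ell=0}^{k}\binom{k}{\ell}\left(-\frac{1}{4}\right)^{k-\ell}B_{2\ell}\!\left(\frac{1}{2}\right),
\end{align*}
where the last step just pulls out $\frac{1}{k!}$ and rewrites $\frac{k!}{(k-\ell)!\,\ell!}=\binom{k}{\ell}$. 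Again one invokes the elementary fact that the product of two convergent-in-the-asymptotic-sense power series (one of which, $e^{-t/4}$, is genuinely analytic) admits the Cauchy-product series as its asymptotic expansion. This yields \eqref{equation:AsymptotischeEntwicklungZ_I} with the stated formula for $\tilde{i}_k$.

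There is really no serious obstacle here; the substance was the prior lemma (which in turn rests on Watson's lemma and the Bernoulli generating function). If anything, the only mild care needed is the remark on termwise multiplication of asymptotic expansions by $t^{-3/2}$ and by the entire function $e^{-t/4}$, but both are standard and can be dispatched in a sentence. I would therefore present the proof as the two short computations above, citing the preceding lemma for the expansion of $\int_0^\infty f(r)e^{-r^2/4t}dr$ and \eqref{equation:DefinitionOfZ_I} for the closed form of $Z_I^{\nicefrac{1}{4}}$.
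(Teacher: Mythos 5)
Your proposal is correct and follows exactly the route the paper intends: the paper gives no explicit proof, stating only that the corollary is "easily deduced" from the preceding lemma and \eqref{equation:DefinitionOfZ_I}, which is precisely the substitution and Cauchy-product computation you carry out. Both calculations check out, including the normalization $\frac{|\Omega|}{(4\pi t)^{3/2}}\cdot 2\sqrt{\pi}=\frac{|\Omega|}{4\pi t}\cdot t^{1/2}$ and the rewriting of $\frac{k!}{(k-\ell)!\,\ell!}$ as $\binom{k}{\ell}$ to obtain $\tilde{i}_k$.
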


In order to compare the above coefficients with the corresponding coefficients for spherical polygons given in \cite{Watson}, we rewrite \eqref{equation:AsymptotischeEntwicklungZ_I} appropriately. Because of $\tilde{i}_0=B_0\left(\frac{1}{2}\right)=1$, we can alternatively write the asymptotic expansion as
\begin{align}
Z_I(t)\overset{t\downarrow 0}{\sim} \frac{\vert\Omega\vert}{4\pi t} + \sum\limits_{k=0}^{\infty} i_k^{\mathbb{H}}\cdot  t^k,
\end{align}
where 
\begin{align}
i_k^{\mathbb{H}} := \frac{\vert \Omega \vert}{4\pi}\cdot\tilde{i}_{k+1}= \frac{\vert \Omega\vert}{4\pi}\cdot \frac{1}{(k+1)!}\sum\limits_{\ell=0}^{k+1}\binom{k+1}{\ell}\left(-\frac{1}{4}\right)^{k+1-\ell}B_{2\ell}\left(\frac{1}{2}\right).
\end{align}

 If we compare the coefficients $i_k^{\mathbb{H}}$ with the corresponding coefficients for spherical polygons given in \citep[equation (28) of Corollary 3]{Watson} - which we denote unlike Watson as $i_k^{\mathbb{S}}$ - we see the relation $i_k^{\mathbb{H}}=\left( -1 \right)^{k+1}i_k^{\mathbb{S}}$ for all $ k\in\mathbb{N}_0$. (Note that in \citep[equation (28)]{Watson} the constant $ \frac{\vert \Omega\vert}{4\pi}$ is actually completely missing because of a typo). As we mentioned before, that relation between $i_k^{\mathbb{H}}$ and $i_k^{\mathbb{S}}$ jibes with general facts from \citep{GilkeyBranson}. It follows from \citep{GilkeyBranson} that $i_k^{\mathbb{H}}$ and $i_k^{\mathbb{S}}$ are given by a formula of the form $\vert \Omega \vert \cdot c_k \cdot \kappa^{k+1}$, where $c_k\in\mathbb{R}$ is some universal constant, $\kappa$ denotes the Gaussian curvature of $\mathbb{H}^2$ and $\mathbb{S}^2$, respectively, and $\Omega$ represents a hyperbolic or spherical polygon, as appropriate. Note that $\mathbb{H}^2$ and $\mathbb{S}^2$ have constant Gaussian curvature equal to $-1$ and $1$, respectively. In particular, the covariant derivatives of the Riemannian curvature tensor vanish and hence all curvature invariants vanish as well, except for powers of the Gaussian (or sectional) curvature.
 
We now compute the asymptotic expansion of the second term in \eqref{equation:ErsatzSummeAsymptotik2}, namely the function
\begin{align}
\label{equation:DefinitionOfZ_E(t)}
Z_E^{\nicefrac{1}{4}}(t) := -  & \left( \sum\limits_{j=1}^{\tilde{M}}  \int\limits_{\text{ }\Omega_{E_j}}K_{\mathbb{H}^2}^{\nicefrac{1}{4}}(x,x_{E_j};t)dx \right) \nonumber \\
& \qquad\quad\qquad\quad - M\cdot 2\int\limits_{0}^{R}\int\limits_{0}^{\frac{\pi}{2}} K_{\mathbb{H}^2}^{\nicefrac{1}{4}}((a,\alpha),(a,-\alpha);t) \sinh(a) d\alpha da .
\end{align}
Recall that the sets $\Omega_{E_j}$, $j=1,..., \tilde{M}$, are pairwise disjoint, and $\Omega_E=\cup_{j=1}^{\tilde{M}}\Omega_{E_j}$ has $M$ connected components. Moreover, $\Omega_{E_j}$ consists of two connected components if and only if $\vert E_j \vert = 2\cdot L(E_j)$, and only one connected component if and only if $\vert E_j \vert = L(E_j)$ (see Definition \ref{definition:VolumenVerallgemeinerteFlaeche}). 

Let $C$ be any connected component of $\Omega_E$. Let $\ell \in\{ 1,...,\tilde{M} \}$ be the unique index such that $C\subset \Omega_{E_{\ell}}$. We will compute the asymptotic expansion of 
\begin{align*}
I_{C}^{\nicefrac{1}{4}}(t):&=\int\limits_{\text{ }C}K_{\mathbb{H}^2}^{\nicefrac{1}{4}}(x,x_{E_{\ell}};t)dx + 2\int\limits_{0}^{R}\int\limits_{0}^{\frac{\pi}{2}} K_{\mathbb{H}^2}^{\nicefrac{1}{4}}((a,\alpha),(a,-\alpha);t) \sinh(a) d\alpha da \\
&=\int\limits_{Q_R^{1}\cup C \cup Q_{R}^2}K_{\mathbb{H}^2}^{\nicefrac{1}{4}}(x,x_{E_{\ell}};t)dx,
\end{align*}
where $Q_R^1, Q_R^2$ denote the quarter circles shown in Fig. \ref{Skizze:RandbeitragEcke1}. More precisely, let $y\in C$ be fixed and let $\tilde{P}, \hat{P}\in\mathbb{H}^2$ be the endpoints of the edge $E_{\ell}$. Then $Q_R^1$ and $Q_R^2$ denote the quarter circles with radius $R$ and center $\tilde{P}$ and $\hat{P}$, respectively, bounded on one side by $E_{\ell}$ and such that $Q_R^1 \cup C \cup Q_R^2\subset H_{\ell}(y)$.

\begin{figure} [ht] 
 \centering
\begin{tikzpicture}


\node[below] at (1,0) {$E_{\ell}$};
\node[above] at (1,0) {$C$};
\node[above] at (0.6,1.2) {$\Omega$};


\fill[fill=black!10!white] (3,0) -- (1.5,0) arc (180:90:15mm) -- (3,0); 
\fill[fill=black!10!white] (-1,0) -- (0.5,0) arc (0:90:15mm) -- (-1,0);  
\draw[dashed] (0.5,0) arc (0:90:15mm);
\draw[dashed] (3,0) -- (3,1.5);
\draw[dashed] (-1,0) -- (-1,1.5);
\draw[dashed] (1.5,0) arc (180:90:15mm); 

\node[circle,fill,inner sep=1pt] at (3,0) {};
\node[circle,fill,inner sep=1pt] at (-1,0) {};

\node[right] at (-0.8,0.3) {\tiny $Q_R^{1}$};
\node[above] at (2.1,0) {\tiny $Q_R^{2}$};

\node[below] at (9.5,0) {$E_{\ell}$};
\node[above] at (9.5,0) {$C$};
\node[above] at (9.3,1.2) {$\Omega$};

\fill[fill=black!10!white] (7.5,0) -- (9,0) arc (0:90:15mm) -- (7.5,0);  
\fill[fill=black!10!white] (11.5,0) -- (10,0) arc (180:90:15mm) -- (11.5,0); 
\draw[dashed] (10,0) arc (180:90:15mm); 
\draw[dashed] (11.5,0) -- (11.5,1.5);
\draw[dashed] (9,0) arc (0:90:15mm);
\draw[dashed] (7.5,0) -- (7.5,1.5);

\node[circle,fill,inner sep=1pt] at (7.5,0) {};
\node[circle,fill,inner sep=1pt] at (11.5,0) {};

\node[above] at (10.67,0) {\tiny $Q_R^2$};
\node[above] at (8.1,0) {\tiny $Q_R^1$};


\draw[dashed] (0.36,0.634) -- (1.64,0.634);
\draw[dashed] (8.860,0.634) -- (10.14,0.634);

\draw [line width=0.25mm]  (1.6,1.9)   -- (3,0) -- (-1,0) -- (-1.5,1.7) ; 
\draw [line width=0.25mm]  (10.4,1.5)    -- (11.5,0) -- (7.5,0) -- (6.5,-1); 

\end{tikzpicture}
\caption{Quarter circles}\label{Skizze:RandbeitragEcke1}
\end{figure}
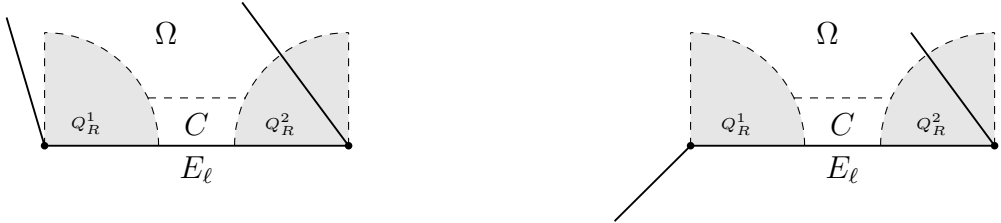

We decompose both quarter circles such that
\begin{align*}
Q_R^1&=S_1 \cup \left( Q_R^1\backslash S_1 \right), \\
Q_R^2&=S_2 \cup \left( Q_R^2\backslash S_2 \right),
\end{align*}
where
\begin{align*}
S_1:&=\{ \, x\in Q_R^1 \mid d(x,E_{\ell})\leq \delta \,\}, \\
S_2:&=\{ \, x\in Q_R^2 \mid d(x,E_{\ell})\leq \delta \,\}.
\end{align*}
Thus
\begin{align*}
Q_R^1\cup C \cup Q_{R}^2 = \underbrace{\big( S_1\cup C \cup S_2 \big)}_{=:S} \bigcup \underbrace{\big( \left( Q_R^1 \backslash S_1 \right) \cup \left( Q_R^2\backslash S_2 \right) \big) }_{=:\Lambda},
\end{align*}
and, since $S$ and $\Lambda$ are disjoint,
\begin{align*}
I_{C}^{\nicefrac{1}{4}}(t) = \int\limits_{\Lambda}K_{\mathbb{H}^2}^{\nicefrac{1}{4}}(x,x_{E_{\ell}};t)dx + \int\limits_{S}K_{\mathbb{H}^2}^{\nicefrac{1}{4}}(x,x_{E_{\ell}};t)dx.
\end{align*}
The first integral does not give any contribution to the asymptotic expansion, because of the estimate \eqref{equation:EstimateHeatKernelPlane} and since
\begin{align*}
d\left(x, x_{E_{\ell}}\right)\geq 2\delta \quad \text{ for all } x\in \Lambda.
\end{align*}
When we choose Fermi coordinates $\left( \rho, \sigma \right)$ with base line $E_{\ell}$, where $\rho$ denotes the distance to $E_{\ell}$, we can parametrise the set $S$ as follows:
\begin{align*}
S = \{ \, \left( \rho, \sigma \right) \mid 0\leq \sigma\leq L(E_{\ell}) ;\text{ } 0\leq \rho \leq \delta \, \},
\end{align*}
where $L(E_{\ell})$ denotes the length of the edge $E_{\ell}$. Since $d\left(x, x_{E_{\ell}}\right)=2\rho$ for any $x=\left( \rho, \sigma \right)$, and the Riemannian metric $g$ is given in Fermi coordinates as $g=d\rho^2+\cosh^2(\rho)d\sigma^2$ (see e.g. \citep[formula $(1.1.9)$]{Buser}), we get with \eqref{equation:HyperHeat2}:
\begin{align}
\label{equation:ContributionEdgeIntegralS}
\int\limits_{S}K_{\mathbb{H}^2}^{\nicefrac{1}{4}}(x,x_{E_{\ell}};t)dx= \frac{L(E_{\ell})}{(4\pi t)^{\frac{3}{2}}} \int\limits_{0}^{\delta}  \int\limits_{2\rho}^{\infty}  \frac{\sqrt{2}\cdot r\cosh(\rho)}{\sqrt{\cosh r -\cosh (2\rho)}}\cdot e^{-\frac{r^2}{4t}}\, dr \, d\rho.
\end{align}
Again, we need to determine the asymptotic expansion of the integral expression on the right-hand side of \eqref{equation:ContributionEdgeIntegralS}.
\begin{lemma}
Let $g(r,\rho):=\frac{\sqrt{2}\cdot r\cosh(\rho)}{\sqrt{\cosh r -\cosh (2\rho)}}$. Then
\begin{align}
\int\limits_{0}^{\delta}  \int\limits_{2\rho}^{\infty} \text{ } g(r,\rho)\cdot e^{-\frac{r^2}{4t}} \, dr \, d\rho \text{ } \overset{t\downarrow 0}{\sim} \pi \sum\limits_{k=0}^{\infty} \delta_{0k} t^{k+1},
\end{align}
where $\delta_{0k}$ is the Kronecker delta, i.e.
\begin{align*}
\delta_{0k}:=
\begin{cases}
1,\, \text{if } k=0,\\
0,\, \text{if } k\geq 1.
\end{cases}
\end{align*}
\end{lemma}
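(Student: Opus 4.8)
The plan is to reduce this to an application of Watson's lemma (Lemma \ref{lemma:WatsonsLemma}), exactly as in the preceding lemma on $Z_I^{\nicefrac14}$. First I would interchange the order of integration using the Fubini-Tonelli theorem, which is legitimate because $g(r,\rho)\geq 0$ on the domain $\{(r,\rho)\mid 0\leq\rho\leq\delta,\ r\geq 2\rho\}$; this turns the integral into
\begin{align*}
\int\limits_{0}^{\delta}\int\limits_{2\rho}^{\infty} g(r,\rho)e^{-\frac{r^2}{4t}}\,dr\,d\rho = \int\limits_{0}^{\infty}\left(\int\limits_{0}^{\min\{\delta,\,r/2\}} g(r,\rho)\,d\rho\right)e^{-\frac{r^2}{4t}}\,dr.
\end{align*}
Call the inner integral $F(r)$. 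Then I would substitute $r=2\sqrt{u}$ to bring the exponential into the form $e^{-u/t}$, obtaining $\int_0^\infty \tilde F(u)e^{-u/t}\,du$ with $\tilde F(u):=F(2\sqrt u)/\sqrt u$, and apply Watson's lemma once I have the asymptotic expansion of $\tilde F(u)$ as $u\searrow 0$, equivalently of $F(r)$ as $r\searrow 0$.

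The key step is therefore to compute the small-$r$ asymptotic expansion of $F(r)=\int_0^{r/2} g(r,\rho)\,d\rho$ (note that for $r$ small enough, $\min\{\delta,r/2\}=r/2$, so the upper limit is $r/2$). Here I would substitute $\rho = r\sigma$ with $\sigma\in[0,1/2]$, giving
\begin{align*}
F(r)=r\int\limits_{0}^{1/2}\frac{\sqrt2\,r\cosh(r\sigma)}{\sqrt{\cosh r-\cosh(2r\sigma)}}\,d\sigma.
\end{align*}
Using $\cosh r - \cosh(2r\sigma) = 2\sinh\!\big(\tfrac{r}{2}(1+2\sigma)\big)\sinh\!\big(\tfrac{r}{2}(1-2\sigma)\big)$ and $\sinh x = x(1+O(x^2))$, the denominator behaves like $r\sqrt{(1+2\sigma)(1-2\sigma)}\cdot(1+O(r^2)) = r\sqrt{1-4\sigma^2}(1+O(r^2))$, so the leading term of $F(r)$ is $\sqrt2\,r\int_0^{1/2}\frac{d\sigma}{\sqrt{1-4\sigma^2}}=\sqrt2\,r\cdot\frac{\pi}{4}$. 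More carefully, $F(r)$ is a smooth even function of $r$ times $r$ — one should check that the integrand, after the substitution, admits a convergent expansion in powers of $r^2$ uniformly in $\sigma\in[0,1/2]$ (the apparent singularity at $\sigma=1/2$ is integrable and the expansion survives term-by-term integration) — hence $F(r)\overset{r\downarrow 0}{\sim}\sum_{k\geq 0}c_k r^{2k+1}$ with $c_0=\frac{\pi\sqrt2}{4}$. Then $\tilde F(u)\overset{u\downarrow 0}{\sim}\sum_{k\geq0}c_k\,2^{2k+1}u^{(k+1)-1}$, and Watson's lemma gives $\int_0^\infty\tilde F(u)e^{-u/t}\,du\overset{t\downarrow0}{\sim}\sum_{k\geq0}c_k 2^{2k+1}\Gamma(k+1)t^{k+1}$, whose $k=0$ term is $c_0\cdot 2\cdot 1\cdot t = \frac{\pi\sqrt2}{4}\cdot 2\,t=\frac{\pi\sqrt2}{2}t$.

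The main obstacle is the discrepancy in constants: the claimed answer has leading coefficient $\pi$, whereas the naive computation above gives $\frac{\pi\sqrt2}{2}=\frac{\pi}{\sqrt2}$, so either the definition of $g$ carries an extra $\sqrt2$ elsewhere in the source or I have mishandled a factor. I would resolve this by tracking the $\sqrt2$ in $g(r,\rho)=\frac{\sqrt2\,r\cosh\rho}{\sqrt{\cosh r-\cosh 2\rho}}$ against the formula \eqref{equation:HyperHeat2} for $K_{\mathbb H^2}^{\nicefrac14}$ and \eqref{equation:ContributionEdgeIntegralS}; most likely the correct bookkeeping is $\cosh r - \cosh 2\rho = 2\sinh(\cdots)\sinh(\cdots)$ combining with the $\sqrt2$ in the numerator to cancel cleanly, so that $c_0 = \frac{\pi}{2}$ and the leading term is indeed $\pi t$. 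Granting that, the only remaining point to verify for Watson's lemma is the growth hypothesis: $\int_0^\infty\tilde F(u)e^{-u/t}\,du$ is finite for small $t$ because $\tilde F(u)=O(u^{-1/2}e^{cu})$ for any fixed $c>0$ (the original $r$-integral has a Gaussian weight, so replacing $e^{-r^2/4t}$ by $e^{-u/t}$ after substitution leaves an integrand dominated by an exponential), which suffices. Finally, since every connected component $C$ of $\Omega_E$ contributes such a term and the "$\Lambda$" part is exponentially small by \eqref{equation:EstimateHeatKernelPlane}, summing over components will later reconstruct the boundary contribution proportional to $\vert\partial\Omega\vert$; but that assembly is carried out after this lemma, so here it suffices to record the stated expansion with only the $k=0$ term nonzero, i.e. the higher coefficients $c_k$ for $k\geq1$ must be shown to vanish — which, if true, should follow from an explicit evaluation of the $\sigma$-integral of each power of $r^2$ via Beta-function identities, or more slickly from comparing with the known Euclidean computation in \citep{VanDenBerg}.
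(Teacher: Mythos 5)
Your skeleton (Fubini--Tonelli, the substitution $r\mapsto 2\sqrt{u}$, Watson's lemma) is exactly the paper's, and your self-diagnosis of the $\sqrt2$ discrepancy is correct: since $\cosh(2x)=2\sinh^2(x)+1$ one has $\cosh r-\cosh(2\rho)=2\bigl(\sinh^2(r/2)-\sinh^2(\rho)\bigr)$, so the $\sqrt2$ coming out of the square root cancels the $\sqrt2$ in the numerator of $g$, the leading coefficient is $c_0=\tfrac{\pi}{2}$, and the leading term is $\pi t$ as claimed.

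The genuine gap is the vanishing of all higher coefficients, which is the entire content of the lemma beyond the leading term and which you leave as "should follow from Beta-function identities" or "comparison with the Euclidean case." The paper needs no expansion at all, because the inner integral evaluates \emph{exactly}: for $r\in(0,2\delta)$, where $\min\{\delta,r/2\}=r/2$,
\begin{align*}
F(r)=\int_0^{r/2}\frac{\sqrt2\,r\cosh\rho}{\sqrt{\cosh r-\cosh(2\rho)}}\,d\rho
=\int_0^{r/2}\frac{r\cosh\rho}{\sqrt{\sinh^2(r/2)-\sinh^2\rho}}\,d\rho
=r\int_0^{\sinh(r/2)}\frac{du}{\sqrt{\sinh^2(r/2)-u^2}}
=\frac{\pi r}{2},
\end{align*}
where the substitution $u=\sinh\rho$ works precisely because $\cosh\rho\,d\rho=du$. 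Hence the function fed into Watson's lemma, $G(r)=F(2\sqrt r)/\sqrt r$, is \emph{identically} $\pi$ on $(0,\delta^2)$, its asymptotic expansion is $\pi+0\cdot r+0\cdot r^2+\cdots$, and Watson's lemma returns $\pi t$ with all higher terms zero. Without this exact evaluation your argument establishes only the $k=0$ coefficient; completing it along your proposed lines would require proving, for every $k\ge1$, that the $\sigma$-integral of the $k$-th Taylor coefficient vanishes — a family of nontrivial identities that the substitution $u=\sinh\rho$ delivers in one stroke.
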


\begin{proof}
By using first the Fubini-Tonelli theorem and then substituting $r$ by $2\sqrt{r}$, we get
\begin{align}
\label{equation:BeweisUmformungG(r)Integral}
\int\limits_{0}^{\delta}  \int\limits_{2\rho}^{\infty} g(r,\rho)\cdot e^{-\frac{r^2}{4t}} \, dr \, d\rho &= \int\limits_{0}^{\infty}  \int\limits_{0}^{\min\lbrace \delta,\frac{r}{2} \rbrace} g(r,\rho)\cdot e^{-\frac{r^2}{4t}} \, d\rho \, dr \nonumber \\
&=\int\limits_{0}^{\infty} \left( \int\limits_{0}^{\min\lbrace \delta,\sqrt{r} \rbrace}  \frac{g(2\sqrt{r},\rho)}{\sqrt{r}} \, d\rho \right) \cdot e^{-\frac{r}{t}} dr \nonumber\\
&=\int\limits_{0}^{\infty}  G(r) \cdot e^{-\frac{r}{t}} \, dr,
\end{align}
where
\begin{align*}
G(r):= \int\limits_{0}^{\min\lbrace \delta,\sqrt{r} \rbrace} \frac{g(2\sqrt{r},\rho)}{\sqrt{r}} \, d\rho.
\end{align*}
In order to use Watson's lemma we need to determine the asymptotic expansion of $G(r)$ as $r\searrow 0$. We claim that for all $r\in (0,\delta^2)$ the function $G(r)$ is constant and equal to $\pi$. By using the identity $\cosh(2x)=2\sinh^2(x)+1$ we get for all $r\in\left(0,\delta^2 \right)$:
\begin{align*}
G(r)&=\int\limits_{0}^{\min\lbrace \delta,\sqrt{r} \rbrace} \frac{g(2\sqrt{r},\rho)}{\sqrt{r}} \, d\rho = 2\cdot \int\limits_{0}^{\sqrt{r}} \frac{\cosh(\rho)}{\sqrt{\sinh^2(\sqrt{r})-\sinh^2\left(\rho \right)}} \, d\rho  \\
&= 2\int\limits_{0}^{\sinh(\sqrt{r})}\frac{1}{\sqrt{\sinh^2(\sqrt{r})-\rho^2}} \, d\rho = 2\int\limits_{0}^{1} \frac{1}{\sqrt{1-\rho^2}} \, d\rho = \pi.
\end{align*}
Thus we have shown, in particular, that 
\begin{align*}
G(r) \overset{r\downarrow 0}{\sim} \sum\limits_{k=0}^{\infty} \left( \pi \delta_{0k}\right) r^{(k+1)-1}.
\end{align*}
 By Watson's lemma we immediately get 
\begin{align}
\label{equation:BeweisG(r)Asymptotik}
\int\limits_{0}^{\infty} G(r) e^{-\frac{r}{t}}dr \text{ } \overset{t\downarrow 0}{\sim} \pi \sum\limits_{k=0}^{\infty}  \delta_{0k} \cdot\Gamma(k+1)\cdot t^{k+1} = \pi\cdot t + 0\cdot t^2 + 0\cdot t^3+....
\end{align}
Thus the lemma follows from \eqref{equation:BeweisG(r)Asymptotik} and \eqref{equation:BeweisUmformungG(r)Integral}.
\end{proof}

\begin{corollary}
\label{corollary:AsymptoticExpansionBoundary}
\begin{align}
\label{equation:CorollaryAsymptoticExpansionBoundary}
\int\limits_{S}K_{\mathbb{H}^2}^{\nicefrac{1}{4}}(x,x_{E_i};t)dx \overset{t\downarrow 0}{\sim} \frac{L(E_{\ell})}{8\sqrt{\pi t}}\cdot \sum\limits_{k=0}^{\infty}\delta_{0k}\cdot t^k,
\end{align}
where $\delta_{0k}$ is the Kronecker delta.
\end{corollary}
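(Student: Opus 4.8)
\textbf{Proof proposal for Corollary \ref{corollary:AsymptoticExpansionBoundary}.}

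The plan is to simply combine equation \eqref{equation:ContributionEdgeIntegralS} with the preceding lemma. Recall that \eqref{equation:ContributionEdgeIntegralS} expresses the integral $\int_{S}K_{\mathbb{H}^2}^{\nicefrac{1}{4}}(x,x_{E_{\ell}};t)\,dx$ as the prefactor $\tfrac{L(E_{\ell})}{(4\pi t)^{3/2}}$ times the double integral $\int_0^{\delta}\int_{2\rho}^{\infty} g(r,\rho)\,e^{-r^2/4t}\,dr\,d\rho$, where $g(r,\rho)=\tfrac{\sqrt{2}\,r\cosh(\rho)}{\sqrt{\cosh r-\cosh(2\rho)}}$. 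First I would invoke the lemma just proven, which gives the asymptotic expansion of this double integral as $\pi\sum_{k=0}^{\infty}\delta_{0k}\,t^{k+1}$, i.e. the expansion is simply $\pi t$ up to terms that vanish to all orders.

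The remaining step is purely algebraic: multiply the expansion $\pi\sum_{k=0}^{\infty}\delta_{0k}t^{k+1}$ by $\tfrac{L(E_{\ell})}{(4\pi t)^{3/2}}$. Since $(4\pi t)^{3/2}=8\pi^{3/2}t^{3/2}$, the leading term becomes
\begin{align*}
\frac{L(E_{\ell})}{8\pi^{3/2}t^{3/2}}\cdot \pi t = \frac{L(E_{\ell})}{8\sqrt{\pi}}\cdot t^{-1/2}=\frac{L(E_{\ell})}{8\sqrt{\pi t}},
\end{align*}
and all higher terms vanish, so the product has the asymptotic expansion $\frac{L(E_{\ell})}{8\sqrt{\pi t}}\sum_{k=0}^{\infty}\delta_{0k}t^k$, which is exactly \eqref{equation:CorollaryAsymptoticExpansionBoundary}. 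One should note that multiplying an asymptotic expansion by the fixed (non-vanishing as $t\searrow 0$ in the sense of a Laurent-type factor) function $t^{-3/2}$ preserves the asymptotic nature of the expansion term by term, which is the only point requiring a word of justification.

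There is essentially no obstacle here: the content has already been done in the lemma, and the corollary is a one-line consequence. The only thing to be careful about is bookkeeping of the constant $\tfrac{1}{(4\pi)^{3/2}}=\tfrac{1}{8\pi^{3/2}}$ and the cancellation $\pi\cdot\pi^{-3/2}=\pi^{-1/2}$, together with the shift of the power of $t$ by $-3/2$ which turns $t^{k+1}$ into $t^{k-1/2}$ and, after factoring out the universal $\tfrac{1}{\sqrt{t}}$, into $t^{k}$.
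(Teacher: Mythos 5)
Your proposal is correct and is exactly the argument the paper intends: the corollary follows by multiplying the lemma's expansion $\pi\sum_{k}\delta_{0k}t^{k+1}$ by the prefactor $\tfrac{L(E_{\ell})}{(4\pi t)^{3/2}}=\tfrac{L(E_{\ell})}{8\pi^{3/2}t^{3/2}}$ from \eqref{equation:ContributionEdgeIntegralS}, and your bookkeeping of the constants and powers of $t$ is right.
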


From the discussion above, we know that the right-hand side of \eqref{equation:CorollaryAsymptoticExpansionBoundary} also gives the asymptotic expansion of $I_{C}^{\nicefrac{1}{4}}(t)$ as $t\searrow 0$. This in turn gives us the asymptotic expansion of $Z_E^{\nicefrac{1}{4}}(t)$ as follows. By definition, if $C_1,...,C_M$ are all the connected components of $\Omega_E$, then we have
\begin{align*}
Z_E^{\nicefrac{1}{4}}(t)=-\sum\limits_{i=1}^{M}I_{C_i}^{\nicefrac{1}{4}}(t).
\end{align*}
Thus, from Corollary \ref{corollary:AsymptoticExpansionBoundary} we obtain the following asymptotic expansions.

\begin{corollary} \hfill
\label{corollary:AsymptoticExpansionZ_E}
\begin{itemize}
\item[$(i)$]
\begin{align}
\label{equation:AsymptoticExpansionShiftZ_E}
Z_E^{\nicefrac{1}{4}}(t)\overset{t\downarrow 0}{\sim} -\frac{\vert \partial\Omega \vert}{8\sqrt{\pi t}}\cdot \sum\limits_{k=0}^{\infty}\delta_{0k}\cdot t^k,
\end{align}
where $\vert \partial \Omega \vert = \sum\limits_{j=1}^{\tilde{M}}\vert E_j \vert$, and $\vert E_{j} \vert$ is defined as in Definition \emph{\ref{definition:VolumenVerallgemeinerteFlaeche}}.  
\item[$(ii)$]
\begin{align}
\label{equation:AsymptoticExpansionZ_E1}
Z_E(t)\overset{t\downarrow 0}{\sim} -\frac{\vert \partial\Omega \vert}{8\sqrt{\pi t}}\cdot \sum\limits_{k=0}^{\infty}\tilde{b}_k\cdot t^k,
\end{align}
where $Z_{E}(t):=e^{-\frac{t}{4}}\cdot Z_{E}^{\nicefrac{1}{4}}(t)$ and $\tilde{b}_k:=\left(-\frac{1}{4}\right)^k\frac{1}{k!}$.
\end{itemize}
\end{corollary}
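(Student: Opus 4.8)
The statement to prove is Corollary \ref{corollary:AsymptoticExpansionZ_E}, parts $(i)$ and $(ii)$, which give the asymptotic expansion of the edge-contribution functions $Z_E^{\nicefrac{1}{4}}(t)$ and $Z_E(t) = e^{-\frac{t}{4}}Z_E^{\nicefrac{1}{4}}(t)$ as $t\searrow 0$.

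\textbf{Plan for part $(i)$.} The key input is Corollary \ref{corollary:AsymptoticExpansionBoundary}, which tells us that for any connected component $C$ of $\Omega_E$, lying in $\Omega_{E_\ell}$, the associated integral satisfies $\int_S K_{\mathbb{H}^2}^{\nicefrac{1}{4}}(x,x_{E_i};t)\,dx \overset{t\downarrow 0}{\sim} \frac{L(E_\ell)}{8\sqrt{\pi t}}\sum_{k=0}^\infty \delta_{0k}t^k$, together with the remark already established in the text that the $\Lambda$-part of $I_C^{\nicefrac{1}{4}}(t)$ contributes nothing to the asymptotic expansion (because $d(x,x_{E_\ell})\geq 2\delta$ on $\Lambda$ and one invokes the Gaussian-type bound \eqref{equation:EstimateHeatKernelPlane}; this makes that term $O(e^{-c/t})$). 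Hence $I_C^{\nicefrac{1}{4}}(t) \overset{t\downarrow 0}{\sim} \frac{L(E_\ell)}{8\sqrt{\pi t}}\sum_{k=0}^\infty \delta_{0k}t^k$. First I would recall the definition $Z_E^{\nicefrac{1}{4}}(t) = -\sum_{i=1}^M I_{C_i}^{\nicefrac{1}{4}}(t)$ where $C_1,\dots,C_M$ are all the connected components of $\Omega_E$, and I would sum the asymptotic expansions. Summing over all components of $\Omega_E$ amounts, by Definition \ref{definition:VolumenVerallgemeinerteFlaeche}, to summing $L(E_j)$ with multiplicity equal to the number of connected components of $\Omega_{E_j}$: that multiplicity is $2$ precisely when $|E_j| = 2L(E_j)$ and $1$ when $|E_j| = L(E_j)$, so $\sum_{i=1}^M L(E_{\ell(C_i)}) = \sum_{j=1}^{\tilde M} |E_j| = |\partial\Omega|$. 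This yields $Z_E^{\nicefrac{1}{4}}(t) \overset{t\downarrow 0}{\sim} -\frac{|\partial\Omega|}{8\sqrt{\pi t}}\sum_{k=0}^\infty \delta_{0k}t^k$, which is $(i)$.

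\textbf{Plan for part $(ii)$.} Since $Z_E(t) = e^{-\frac{t}{4}}Z_E^{\nicefrac{1}{4}}(t)$ and $e^{-t/4} = \sum_{m=0}^\infty \frac{1}{m!}\left(-\frac{1}{4}\right)^m t^m$, I would multiply the expansion from $(i)$ by this power series (multiplication of asymptotic power series is legitimate, with the prefactor $\frac{1}{\sqrt{\pi t}}$ carried along). The expansion in $(i)$ is, up to the constant $-\frac{|\partial\Omega|}{8\sqrt{\pi}}$, just $t^{-1/2}$ (only the $k=0$ Kronecker term survives), so $Z_E(t) \overset{t\downarrow 0}{\sim} -\frac{|\partial\Omega|}{8\sqrt{\pi t}}\sum_{k=0}^\infty \left(-\frac{1}{4}\right)^k \frac{1}{k!}t^k = -\frac{|\partial\Omega|}{8\sqrt{\pi t}}\sum_{k=0}^\infty \tilde b_k t^k$ with $\tilde b_k = \left(-\frac14\right)^k\frac{1}{k!}$, exactly as claimed.

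\textbf{Main obstacle.} Almost all the analytic work has already been done in the preceding lemmas and in Corollary \ref{corollary:AsymptoticExpansionBoundary}; the only genuinely delicate point is the bookkeeping of multiplicities when passing from the sum over connected components $C_i$ of $\Omega_E$ to the sum $\sum_{j}|E_j| = |\partial\Omega|$ from Definition \ref{definition:VolumenVerallgemeinerteFlaeche}. One must carefully match each connected component $C$ with its unique edge $E_{\ell(C)}$, note that an edge $E_j$ with $|E_j| = 2L(E_j)$ contributes two disjoint components each carrying $\frac{L(E_j)}{8\sqrt{\pi t}}$ (total $\frac{|E_j|}{8\sqrt{\pi t}}$), and an edge with $|E_j| = L(E_j)$ contributes one; this is the step to state explicitly rather than wave through. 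Everything else is formal manipulation of asymptotic series. I would also briefly remark, as in the text preceding the corollary, that discarding the $\Lambda$-integral is justified by \eqref{equation:EstimateHeatKernelPlane} together with $d(x,x_{E_\ell}) \geq 2\delta$ on $\Lambda$, so that term is exponentially small and affects no coefficient in the expansion.
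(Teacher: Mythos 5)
Your proposal is correct and follows essentially the same route as the paper: the paper likewise obtains $(i)$ by writing $Z_E^{\nicefrac{1}{4}}(t)=-\sum_{i=1}^{M}I_{C_i}^{\nicefrac{1}{4}}(t)$, invoking Corollary \ref{corollary:AsymptoticExpansionBoundary} for each connected component (with the $\Lambda$-integral discarded as exponentially small), and using the multiplicity convention of Definition \ref{definition:VolumenVerallgemeinerteFlaeche} to identify the total with $\vert\partial\Omega\vert$; part $(ii)$ is then the same formal multiplication by the Taylor series of $e^{-t/4}$.
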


Let us rewrite the asymptotic expansion of $Z_E(t)$, such that its coefficients can better be compared with their spherical counterparts. Since $\tilde{b}_0=1$ we can alternatively write the asymptotic expansion \eqref{equation:AsymptoticExpansionZ_E1} as
\begin{align}
Z_E(t) \overset{t\downarrow 0}{\sim} - \frac{\vert \partial\Omega \vert}{8\sqrt{\pi t}} + \sum\limits_{k=0}^{\infty} b_k^{\mathbb{H}}\cdot t^{k+\frac{1}{2}},
\end{align}
where
\begin{align}
b_k^{\mathbb{H}}:= - \frac{\vert \partial\Omega \vert}{8\sqrt{\pi}} \cdot \tilde{b}_{k+1} = \vert \partial\Omega \vert \frac{(-1)^{k}}{4^{k+2}\cdot \sqrt{\pi}\cdot 2\cdot (k+1)!} \quad \text{ for all } k\in\mathbb{N}_0.
\end{align}

When we compare the coefficients $b_k^{\mathbb{H}}$ with the corresponding spherical coefficients $b_k^{\mathbb{S}}$ from \citep[formula (27)]{Watson}, we see that $b_k^{\mathbb{H}}=(-1)^{k+1}b_k^{\mathbb{S}}$. Similarly as for the coefficients contributed from the interior of a polygon, that relation between $b_k^{\mathbb{H}}$ and $b_k^{\mathbb{S}}$ holds because of the following fact. The constants $b_k^{\mathbb{H}}$ and $b_k^{\mathbb{S}}$ are given by a formula of the form $\vert \partial\Omega \vert\cdot d_k\cdot \kappa^{k+1}$, where $d_k\in\mathbb{R}$ is some universal constant and $\kappa$ denotes the Gaussian curvature of $\mathbb{H}^2$ and $\mathbb{S}^2$, respectively (see \citep{GilkeyBranson}). Note that the geodesic curvature of any edge of $\Omega$ vanishes so that the boundary does not contribute any curvature invariants.

Consider now the third term in \eqref{equation:ErsatzSummeAsymptotik2}, i.e.
\begin{align*}
Z_V^{\nicefrac{1}{4}}(t) := \sum\limits_{i=1}^{M}  \frac{\gamma_i}{2\pi}\int\limits_{0}^{\infty} \frac{e^{-\frac{u^2}{4t}}}{\sqrt{4\pi t}} \cdot \frac{e^{\frac{u}{2}}}{e^{u}-1} \cdot \left( \frac{\pi}{\gamma_i}\coth\left( \frac{\pi}{\gamma_i} \cdot \frac{u}{2} \right) - \coth\left(\frac{u}{2}\right) \right)du .
\end{align*}
Let $i\in \{\, \, 1,..,M \,\}$ be arbitrary and consider
\begin{align*}
I_{\gamma_i}^{\nicefrac{1}{4}}(t):=\frac{\gamma_i}{2\pi}\frac{1}{\sqrt{4\pi t}}\int\limits_{0}^{\infty} e^{-\frac{u^2}{4t}} \cdot  \frac{e^{\frac{u}{2}}}{e^{u}-1} \cdot \left( \frac{\pi}{\gamma_i}\coth\left(  \frac{\pi}{\gamma_i} \cdot \frac{u}{2} \right) - \coth\left(\frac{u}{2}\right) \right)du.
\end{align*}

We first compute the asymptotic expansion of the above integral expression with Watson's lemma.

\begin{lemma} Let $q(u):=\frac{e^{\frac{u}{2}}}{e^{u}-1} \left(\frac{\pi}{\gamma_i}\coth\left(  \frac{\pi}{\gamma_i} \cdot \frac{u}{2} \right) - \coth\left(\frac{u}{2}\right)  \right) $. Then
\begin{align}
\int\limits_{0}^{\infty} e^{-\frac{u^2}{4t}} q(u)du \overset{t\downarrow 0}{\sim} \sum\limits_{k=0}^{\infty} a_k (\gamma_i )\cdot  t^{k+\frac{1}{2}},
\end{align}
where 
\begin{align}
a_k (\gamma_i ):=  \sum\limits_{\ell=1}^{k+1} \binom{2k+2}{2\ell} \frac{B_{2k-2\ell+2}\left(\frac{1}{2}\right)\cdot B_{2\ell}\cdot \sqrt{\pi}}{(k+1)!(2k+1)}  \left(\left(\frac{\pi}{\gamma_i}\right)^{2\ell}-1\right).
\end{align}
\end{lemma}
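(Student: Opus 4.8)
The strategy is to apply Watson's lemma (Lemma \ref{lemma:WatsonsLemma}) to the integral $\int_0^\infty e^{-u^2/(4t)}q(u)\,du$, exactly as was done for the functions $Z_I^{\nicefrac14}$ and $Z_E^{\nicefrac14}$. The main work is to determine the asymptotic expansion of $q(u)$ as $u\searrow 0$ in closed form, using the generating function \eqref{equation:DefinitionBernoulli} for the Bernoulli polynomials and numbers. First I would rewrite the three factors appearing in $q$ in terms of the generating functions: we have $\frac{e^{u/2}}{e^u-1}=\frac1u\cdot\frac{u e^{u/2}}{e^u-1}=\frac1u\sum_{k\ge0}B_k\!\left(\frac12\right)\frac{u^k}{k!}$, which by \eqref{equation:VanishingBernoulli} only has even powers. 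Next, $\coth(x)=\frac{e^{x}+e^{-x}}{e^{x}-e^{-x}}$; using the standard identity $x\coth(x)=\sum_{j\ge0}\frac{2^{2j}B_{2j}}{(2j)!}x^{2j}$ (which follows from \eqref{equation:DefinitionBernoulli} by writing $\frac{2x}{e^{2x}-1}+x=x\coth x$ and noting $B_1=-\frac12$), we obtain $\frac{\pi}{\gamma_i}\coth\!\left(\frac{\pi}{\gamma_i}\cdot\frac u2\right)=\frac2u\cdot\frac{\pi/\gamma_i}{u}\cdot\frac u2\coth\!\left(\frac{\pi}{\gamma_i}\frac u2\right)=\frac2u\sum_{j\ge0}\frac{B_{2j}}{(2j)!}\left(\frac{\pi}{\gamma_i}\right)^{2j}u^{2j}$, and likewise $\coth\!\left(\frac u2\right)=\frac2u\sum_{j\ge0}\frac{B_{2j}}{(2j)!}u^{2j}$. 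Subtracting, the $j=0$ terms cancel, giving
\begin{align*}
\frac{\pi}{\gamma_i}\coth\!\left(\frac{\pi}{\gamma_i}\cdot\frac u2\right)-\coth\!\left(\frac u2\right)=\frac2u\sum_{j\ge1}\frac{B_{2j}}{(2j)!}\left(\left(\frac{\pi}{\gamma_i}\right)^{2j}-1\right)u^{2j}.
\end{align*}

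\textbf{Assembling the expansion of $q$.} Multiplying the expansion of $\frac{e^{u/2}}{e^u-1}$ by the expansion just obtained, one gets
\begin{align*}
q(u)\overset{u\downarrow0}{\sim}\frac{2}{u^2}\left(\sum_{p\ge0}\frac{B_{2p}\!\left(\frac12\right)}{(2p)!}u^{2p}\right)\left(\sum_{j\ge1}\frac{B_{2j}}{(2j)!}\left(\left(\tfrac{\pi}{\gamma_i}\right)^{2j}-1\right)u^{2j}\right).
\end{align*}
Collecting the coefficient of $u^{2(k+1)}$ in the product (so that $q$ starts with a $u^0$ term), set $p+j=k+1$ with $j\ge1$, i.e. $j=\ell$ ranging from $1$ to $k+1$ and $p=k+1-\ell$; this yields $q(u)\overset{u\downarrow0}{\sim}\sum_{k\ge0}c_k(\gamma_i)\,u^{2k}$ with
\begin{align*}
c_k(\gamma_i)=2\sum_{\ell=1}^{k+1}\frac{B_{2k-2\ell+2}\!\left(\frac12\right)}{(2k-2\ell+2)!}\cdot\frac{B_{2\ell}}{(2\ell)!}\left(\left(\tfrac{\pi}{\gamma_i}\right)^{2\ell}-1\right).
\end{align*}
Then I substitute $u=2\sqrt{r}$ to bring the integral into the form $\int_0^\infty \tilde q(r)e^{-r/t}\,dr$ with $\tilde q(r)=\frac{q(2\sqrt r)}{\sqrt r}\overset{r\downarrow0}{\sim}\sum_{k\ge0}c_k(\gamma_i)\,2^{2k}\,r^{(k+\frac12)-1}$, and apply Watson's lemma together with $\Gamma\!\left(k+\frac12\right)=\frac{(2k)!}{2^{2k}k!}\sqrt\pi$ (formula \eqref{equation:GammaWert}). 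The powers of $2$ cancel and one is left with $\int_0^\infty e^{-u^2/(4t)}q(u)\,du\overset{t\downarrow0}{\sim}\sum_{k\ge0}c_k(\gamma_i)\frac{(2k)!}{k!}\sqrt\pi\, t^{k+\frac12}$. It then remains to check the bookkeeping identity $c_k(\gamma_i)\frac{(2k)!}{k!}\sqrt\pi=a_k(\gamma_i)$: rewriting $\frac{(2k)!}{(2k-2\ell+2)!(2\ell)!}=\frac{1}{(2k+1)(2k+2)}\binom{2k+2}{2\ell}$ and combining the factors $2$, $(2k)!/k!$, $\sqrt\pi$ produces precisely $\binom{2k+2}{2\ell}\frac{B_{2k-2\ell+2}\left(\frac12\right)B_{2\ell}\sqrt\pi}{(k+1)!(2k+1)}$, matching the claimed formula for $a_k(\gamma_i)$.

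\textbf{Technical points and the main obstacle.} To invoke Watson's lemma I must first verify its hypotheses for $q$: continuity on $(0,\infty)$ and at most exponential growth. Continuity is clear since $e^u-1>0$ and $\coth$ has no singularities on $(0,\infty)$; for the growth, $\frac{e^{u/2}}{e^u-1}$ decays like $e^{-u/2}$ and the bracket is bounded by $\frac{\pi}{\gamma_i}+1$ plus an exponentially small correction as $u\to\infty$, so $q(u)=O(e^{-u/2})$, which is more than enough. The only genuinely delicate point is that the asymptotic expansion of $q$ at $0$ is obtained by formally multiplying two power series that each have a positive radius of convergence ($2\pi$ and $2\pi\gamma_i/\pi$ respectively for the $\coth$ pieces), so the Cauchy product converges on a neighbourhood of $0$ and the manipulation is rigorous — I would remark this explicitly. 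I expect the main obstacle to be purely the combinatorial identity in the last step: matching my naturally-arising coefficients $c_k(\gamma_i)$ against the paper's $a_k(\gamma_i)$ requires a careful index shift ($j\mapsto\ell$, $p\mapsto k+1-\ell$) and the conversion of factorial quotients into a binomial coefficient times $\frac{1}{(k+1)!(2k+1)}$; this is routine but error-prone, so I would carry it out slowly and double-check the $k=0$ case ($a_0(\gamma_i)=\frac{B_0(1/2)B_2\sqrt\pi}{1!\cdot1}\left(\left(\frac{\pi}{\gamma_i}\right)^2-1\right)=\frac{\sqrt\pi}{6}\left(\left(\frac{\pi}{\gamma_i}\right)^2-1\right)$) directly against the series product as a sanity check.
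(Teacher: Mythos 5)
Your proposal is correct and follows essentially the same route as the paper's proof: expand $q$ near $u=0$ via the Bernoulli generating functions, substitute $u=2\sqrt{r}$, apply Watson's lemma, and convert $\Gamma(k+\tfrac12)$ into factorials to match the stated coefficients. The only differences are cosmetic (the paper works with $q_2(u)=q_1(u^2)$ and writes $\coth(x)=1+\tfrac{2}{e^{2x}-1}$ rather than using the $x\coth x$ series directly), and your bookkeeping identity checks out.
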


\begin{proof}
Substituting $u$ by $2\sqrt{u}$, we obtain
\begin{align*}
\int\limits_{0}^{\infty} e^{-\frac{u^2}{4t}} q(u) du = \int\limits_{0}^{\infty} e^{-\frac{u}{t}} \underbrace{\frac{q\left(2\sqrt{u}\right)}{\sqrt{u}}}_{=:q_1(u)} du.
\end{align*}
Because of Watson's lemma it remains to compute the asymptotic expansion of $q_1(u)$ as $u\searrow 0$. On the other hand, it is more convenient to compute the asymptotic expansion of $q_2(u):=q_1(u^2)$, since then the roots will not appear during the calculations. We have:
\begin{align}
\label{equation:DarstellungVonq_2}
q_2(u)= q_1(u^2) = \frac{q\left(2u \right)}{u} = \frac{1}{u^2} \cdot \frac{ u \cdot e^{u}}{e^{2 u}-1} \cdot \left( \frac{\pi}{\gamma_i}\coth\left( \frac{\pi}{\gamma_i} \cdot u \right) - \coth\left( u \right) \right).
\end{align}
As we see, $q_2$ is a product of simple functions whose asymptotic expansions are easily computed as follows.

By definition of the Bernoulli polynomials, we know for all $u\in\mathbb{R}$ with $\vert u \vert< \pi$:
\begin{align}
\label{equation:Vereinfacheq2}
\frac{ u \cdot e^{u}}{e^{2 u}-1} = \frac{1}{2} \frac{ 2u \cdot e^{u}}{e^{2 u}-1} = \frac{1}{2} \sum\limits_{k=0}^{\infty}B_k\left( \frac{1}{2} \right)\frac{(2u)^{k}}{k!} = \frac{1}{2} \sum\limits_{k=0}^{\infty}B_{2k}\left( \frac{1}{2} \right)4^k \frac{u^{2k}}{(2k)!},
\end{align}
where we have used \eqref{equation:VanishingBernoulli} for the last equality. 

Let us investigate the last term in \eqref{equation:DarstellungVonq_2}. Observe that $\coth(x)=1+\frac{2}{e^{2x}-1}$ for all $x\in \mathbb{R}\backslash \{ 0 \}$. Further, the odd Bernoulli numbers are given by $B_{1}=-\frac{1}{2}$ and $B_{2k+1}=0$ for all $k\in\mathbb{N}$. Hence, for all $u\in\mathbb{R}\backslash \{ 0 \}$ with $\vert u  \vert < \min\{\, \pi, \gamma_i \,\}$:
\begin{align}
\label{equation:Vereinfacheq2.1}
\frac{\pi}{\gamma_i}\coth\left(  \frac{\pi}{\gamma_i} \cdot u \right) -  \coth\left( u \right) &=  \frac{\pi}{\gamma_i}-1  + \frac{1}{u}\left( \frac{2 \frac{\pi}{\gamma_i} u }{e^{2 \frac{\pi}{\gamma_i} u }-1} - \frac{2u}{e^{2u}-1} \right) \nonumber \\
 &=  \frac{\pi}{\gamma_i} - 1  + \frac{1}{u}\left( \sum\limits_{k=0}^{\infty} B_k \frac{\left(2\frac{\pi}{\gamma_i} u\right)^k}{k!} - \sum\limits_{k=0}^{\infty} B_k \frac{(2u)^k}{k!} \right) \nonumber \\
 &=  \frac{\pi}{\gamma_i} -1  + \frac{1}{u} \sum\limits_{k=1}^{\infty}\left(\left(\frac{\pi}{\gamma_i}\right)^k - 1 \right)2^k B_k \frac{u^k}{k!}\nonumber\\
 &=\frac{1}{u}\sum\limits_{k=2}^{\infty}\left(\left(\frac{\pi}{\gamma_i}\right)^k -1\right)2^k B_k \frac{u^{k}}{k!} \nonumber \\
 &=\frac{1}{u} \sum\limits_{k=1}^{\infty}\left(\left(\frac{\pi}{\gamma_i}\right)^{2k}-1\right)2^{2k} B_{2k} \frac{u^{2k}}{(2k)!} \nonumber\\
 &= 2\cdot \sum\limits_{k=0}^{\infty}\left(\left(\frac{\pi}{\gamma_i}\right)^{2k+2}-1\right)2^{2k+1} B_{2k+2} \frac{u^{2k+1}}{(2k+2)!}.
\end{align}
Thus we get from \eqref{equation:DarstellungVonq_2}, \eqref{equation:Vereinfacheq2} and \eqref{equation:Vereinfacheq2.1}, for all $0<  u  < \min\{\, \pi, \gamma_i \,\}$:
\begin{align*}
q_2(u) &= \frac{1}{u} \sum\limits_{k=0}^{\infty}B_{2k}\left( \frac{1}{2} \right)4^k \frac{u^{2k}}{(2k)!}  \cdot  \sum\limits_{\ell=0}^{\infty}\left(\left(\frac{\pi}{\gamma_i}\right)^{2\ell+2}-1\right)2^{2\ell+1} B_{2\ell+2} \frac{u^{2\ell}}{(2\ell+2)!}  \\
& = \frac{1}{u} \sum\limits_{k=0}^{\infty} \sum\limits_{\ell=0}^{k}  B_{2(k-\ell)}\left(\frac{1}{2}\right)4^{k-\ell}\frac{u^{2(k-\ell)}}{\left(2(k-\ell)\right)!}  \left(\left(\frac{\pi}{\gamma_i}\right)^{2\ell+2}-1\right)2^{2\ell+1} B_{2\ell+2} \frac{u^{2\ell}}{(2\ell+2)!} \\
&=\frac{1}{u} \sum\limits_{k=0}^{\infty} \sum\limits_{\ell=0}^{k} \binom{2k+2}{2\ell+2} \cdot B_{2(k-\ell)}\left(\frac{1}{2}\right)\cdot B_{2\ell+2} \cdot 4^{k}\cdot 2 \cdot  \left(\left(\frac{\pi}{\gamma_i}\right)^{2\ell+2}-1\right)   \frac{u^{2k}}{(2k+2)!} \\
&= \frac{1}{u} \sum\limits_{k=0}^{\infty}  \sum\limits_{\ell=1}^{k+1} \binom{2k+2}{2\ell} \cdot B_{2k-2\ell+2}\left(\frac{1}{2}\right)\cdot B_{2\ell} \cdot 2 \cdot  \left(\left(\frac{\pi}{\gamma_i}\right)^{2\ell}-1\right)  4^{k}  \frac{u^{2k}}{(2k+2)!}.
\end{align*}
Thus for all $ 0 < u < \min \{\, \pi^2, \gamma_{i}^2 \, \}$:
\begin{align*}
q_1(u)&=q_2(\sqrt{u})\\
&=\sum\limits_{k=0}^{\infty}  \sum\limits_{\ell=1}^{k+1} \binom{2k+2}{2\ell} \cdot B_{2k-2\ell+2}\left(\frac{1}{2}\right)\cdot B_{2\ell} \cdot 2 \cdot  \left(\left(\frac{\pi}{\gamma_i}\right)^{2\ell}-1\right) \cdot 4^{k}  \frac{u^{\left( k+\frac{1}{2} \right)-1}}{(2k+2)!}.
\end{align*}
This establishes the asymptotic expansion of $q_1(u)$ as $u\searrow 0$. Applying Watson's lemma we obtain
\begin{align*}
\int\limits_{0}^{\infty} e^{-\frac{u}{t}} q_1(u) du \overset{t\downarrow 0}{\sim} &\sum\limits_{k=0}^{\infty} \Gamma\left( k+\frac{1}{2} \right) \cdot\\
& \cdot  \sum\limits_{\ell=1}^{k+1} \binom{2k+2}{2\ell} \cdot B_{2k-2\ell+2}\left(\frac{1}{2}\right)\cdot B_{2\ell} \cdot 2 \cdot \left(\left(\frac{\pi}{\gamma_i}\right)^{2\ell}-1\right) \cdot 4^{k}  \frac{t^{ k+\frac{1}{2} }}{(2k+2)!}.
\end{align*}
The coefficient for $k=0$ in the above asymptotic expansion equals by $\Gamma\left( \frac{1}{2} \right)=\sqrt{\pi}$:
\begin{align*}
\sqrt{\pi}B_{0}\left( \frac{1}{2} \right)\cdot B_2 \cdot \left(  \left( \frac{\pi}{\gamma_i}\right)^2 - 1 \right) = a_0 (\gamma_i ).
\end{align*}
For the other coefficients, we use the doubling formula (see e.g. \citep[8335 1]{Gradshteyn} or \citep[formula (3.5)]{Temme}) 
\begin{align}
\label{equation:DoublingFormula}
\Gamma(z)\Gamma\left(z+\frac{1}{2}\right)=2^{1-2z}\sqrt{\pi}\Gamma\left(2z\right).
\end{align}
In particular this implies for all $k\in\mathbb{N}$:
\begin{align*}
\Gamma\left( k+\frac{1}{2} \right) = \frac{2}{2^{2k}}\sqrt{\pi}\frac{\Gamma(2k)}{\Gamma(k)} = \frac{2}{2^{2k}}\sqrt{\pi}\frac{(2k-1)!}{(k-1)!}.
\end{align*}
Hence, using $\frac{(2k-1)!}{(k-1)!}\cdot \frac{1}{(2k+2)!} = \frac{1}{4}\cdot \frac{1}{(k+1)!\cdot (2k+1)}$:
\begin{align*}
\int\limits_{0}^{\infty} e^{-\frac{u}{t}} q_1(u) du \overset{t\downarrow 0}{\sim} \sum\limits_{k=0}^{\infty} \underbrace{  \sum\limits_{\ell=1}^{k+1} \binom{2k+2}{2\ell} \frac{B_{2k-2\ell+2}\left(\frac{1}{2}\right)\cdot B_{2\ell}\cdot \sqrt{\pi}}{(k+1)!(2k+1)}  \left(\left(\frac{\pi}{\gamma_i}\right)^{2\ell}-1\right) }_{=a_k ( \gamma_i )} \cdot t^{k+\frac{1}{2}}.
\end{align*}
\end{proof}

\begin{corollary}
\label{corollary:ContributionOneVertexShift}
\begin{align}
I_{\gamma_i}^{\nicefrac{1}{4}}(t) \overset{t\downarrow 0}{\sim} \sum\limits_{k=0}^{\infty} c_{k}^{\mathbb{H}}(\gamma_i) \cdot t^k,
\end{align}
where 
\begin{align}
\label{equation:DefinitionC_k(Gamma)}
c_k^{\mathbb{H}}(\gamma_i):&= \frac{\gamma_i}{4\pi \sqrt{\pi}}\cdot a_k(\gamma_i) = \frac{\gamma_i}{\pi}\sum\limits_{\ell=1}^{k+1} \binom{2k+2}{2\ell} \frac{B_{2k-2\ell+2}\left(\frac{1}{2}\right)\cdot  B_{2\ell}}{4\cdot (k+1)!(2k+1)}  \left( \left(\frac{\pi}{\gamma_i}\right)^{2\ell}-1\right) \nonumber \\
&= \sum\limits_{\ell=1}^{k+1} \binom{2k+2}{2\ell} \frac{B_{2k-2\ell+2}\left(\frac{1}{2}\right)\cdot  B_{2\ell}}{4\cdot (k+1)!(2k+1)}  \cdot \frac{ \pi^{2\ell}-\gamma_{i}^{2\ell}}{\pi\cdot \gamma_i^{2\ell-1}} .
\end{align}
\end{corollary}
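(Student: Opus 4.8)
The plan is to derive Corollary \ref{corollary:ContributionOneVertexShift} directly from the preceding lemma together with the definition of $I_{\gamma_i}^{\nicefrac{1}{4}}(t)$. Recall that
\begin{align*}
I_{\gamma_i}^{\nicefrac{1}{4}}(t) = \frac{\gamma_i}{2\pi}\cdot\frac{1}{\sqrt{4\pi t}}\int\limits_{0}^{\infty} e^{-\frac{u^2}{4t}}\, q(u)\, du,
\end{align*}
where $q(u)=\frac{e^{u/2}}{e^u-1}\big(\frac{\pi}{\gamma_i}\coth(\frac{\pi}{\gamma_i}\cdot\frac{u}{2})-\coth(\frac{u}{2})\big)$ is exactly the function appearing in the lemma. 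So the first step is simply to combine the prefactor $\frac{\gamma_i}{2\pi}\cdot\frac{1}{\sqrt{4\pi t}} = \frac{\gamma_i}{2\pi}\cdot\frac{1}{2\sqrt{\pi}}\cdot t^{-\nicefrac{1}{2}} = \frac{\gamma_i}{4\pi\sqrt{\pi}}\cdot t^{-\nicefrac{1}{2}}$ with the asymptotic expansion $\int_0^\infty e^{-u^2/(4t)}q(u)\,du \sim \sum_{k=0}^\infty a_k(\gamma_i)\, t^{k+\nicefrac{1}{2}}$ from the lemma.

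Multiplying these, the factor $t^{-\nicefrac{1}{2}}$ cancels the $t^{\nicefrac{1}{2}}$ in each term, shifting the whole expansion to integer powers of $t$:
\begin{align*}
I_{\gamma_i}^{\nicefrac{1}{4}}(t) \overset{t\downarrow 0}{\sim} \frac{\gamma_i}{4\pi\sqrt{\pi}}\sum\limits_{k=0}^{\infty} a_k(\gamma_i)\, t^{k}.
\end{align*}
This already gives the claimed form with $c_k^{\mathbb{H}}(\gamma_i):=\frac{\gamma_i}{4\pi\sqrt{\pi}}\, a_k(\gamma_i)$. The remaining task is purely algebraic: substitute the explicit formula for $a_k(\gamma_i)$ from the lemma, namely $a_k(\gamma_i)=\sum_{\ell=1}^{k+1}\binom{2k+2}{2\ell}\frac{B_{2k-2\ell+2}(\frac{1}{2})\,B_{2\ell}\,\sqrt{\pi}}{(k+1)!(2k+1)}\big(\big(\frac{\pi}{\gamma_i}\big)^{2\ell}-1\big)$, cancel the $\sqrt{\pi}$ against the $\frac{1}{\sqrt\pi}$ in the prefactor, and rewrite $\frac{\gamma_i}{\pi}\big(\big(\frac{\pi}{\gamma_i}\big)^{2\ell}-1\big) = \frac{\pi^{2\ell}-\gamma_i^{2\ell}}{\pi\cdot\gamma_i^{2\ell-1}}$ to match the second displayed expression for $c_k^{\mathbb{H}}(\gamma_i)$ in the statement.

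There is essentially no obstacle here; the only point requiring a word of care is the legitimacy of multiplying an asymptotic expansion by the power $t^{-\nicefrac{1}{2}}$, but this is immediate from the definition of the asymptotic relation $\overset{t\downarrow 0}{\sim}$ (the error after $N$ terms is $o(t^{N+\nicefrac{1}{2}})$, hence after multiplication by $t^{-\nicefrac{1}{2}}$ it is $o(t^N)$). One should also note for completeness that $q$ is continuous on $(0,\infty)$, extends continuously to $0$, and is $O(e^{cu})$ as $u\to\infty$ for some $c>0$ (so that Watson's lemma was applicable in the preceding lemma in the first place), but since that lemma is already proved, nothing further is needed. Thus the proof of the corollary is a one-line consequence of the lemma plus a bookkeeping simplification of the constant.
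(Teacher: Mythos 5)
Your proof is correct and is exactly the argument the paper intends: the corollary is an immediate consequence of the preceding lemma, obtained by multiplying the expansion $\sum_k a_k(\gamma_i)t^{k+\nicefrac{1}{2}}$ by the prefactor $\frac{\gamma_i}{2\pi}\frac{1}{\sqrt{4\pi t}}=\frac{\gamma_i}{4\pi\sqrt{\pi}}t^{-\nicefrac{1}{2}}$ and simplifying the constants. Your algebraic bookkeeping (cancelling $\sqrt{\pi}$ and rewriting $\frac{\gamma_i}{\pi}\bigl(\bigl(\frac{\pi}{\gamma_i}\bigr)^{2\ell}-1\bigr)=\frac{\pi^{2\ell}-\gamma_i^{2\ell}}{\pi\gamma_i^{2\ell-1}}$) checks out.
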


Since 
\begin{align*}
Z_V^{\nicefrac{1}{4}}(t)=\sum\limits_{i=1}^{M}I_{\gamma_i}^{\nicefrac{1}{4}}(t),
\end{align*}
we obtain the following corollary.

\begin{corollary} \hfill
\label{corollary:EckenbeitragWinkelPolygon}
\begin{itemize}
\item[$(i)$]
\begin{align}
Z_V^{\nicefrac{1}{4}}(t) \overset{t\downarrow 0}{\sim} \sum\limits_{k=0}^{\infty} c_k^{\mathbb{H}} \cdot t^k,
\end{align}
where $c_k^{\mathbb{H}}:= \sum\limits_{i=1}^{M} c_k^{\mathbb{H}}(\gamma_i)$.
\item[$(ii)$]
\begin{align}
Z_V (t) \overset{t\downarrow 0}{\sim}\sum\limits_{k=0}^{\infty} \nu_k^{\mathbb{H}}\cdot t^k,
\end{align}
where $Z_V (t):=e^{-\frac{t}{4}}\cdot Z_V^{\nicefrac{1}{4}}(t)$ and $\nu_k^{\mathbb{H}}:= \sum\limits_{\ell=0}^{k}\frac{1}{(k-\ell)!}\left(-\frac{1}{4}\right)^{k-\ell} c_{\ell}^{\mathbb{H}}$.
\end{itemize}
\end{corollary}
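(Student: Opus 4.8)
The plan is to deduce both statements directly from Corollary \ref{corollary:ContributionOneVertexShift} together with the identity $Z_V^{\nicefrac{1}{4}}(t)=\sum_{i=1}^{M}I_{\gamma_i}^{\nicefrac{1}{4}}(t)$ recorded just before it. First I would recall that, by construction, $Z_V^{\nicefrac{1}{4}}$ is the \emph{finite} sum of the functions $I_{\gamma_1}^{\nicefrac{1}{4}},\dots,I_{\gamma_M}^{\nicefrac{1}{4}}$, and that by Corollary \ref{corollary:ContributionOneVertexShift} each of these satisfies $I_{\gamma_i}^{\nicefrac{1}{4}}(t)\overset{t\downarrow 0}{\sim}\sum_{k=0}^{\infty}c_k^{\mathbb{H}}(\gamma_i)\,t^k$, all expansions being with respect to the common asymptotic scale $\{t^k\}_{k\in\mathbb{N}_0}$. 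Since a finite sum of functions admitting asymptotic power-series expansions in one and the same scale again admits such an expansion, obtained by adding the expansions coefficientwise, part $(i)$ follows at once with $c_k^{\mathbb{H}}=\sum_{i=1}^{M}c_k^{\mathbb{H}}(\gamma_i)$.

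For part $(ii)$ I would write $Z_V(t)=e^{-\frac{t}{4}}\cdot Z_V^{\nicefrac{1}{4}}(t)$ and use that $e^{-\frac{t}{4}}=\sum_{j=0}^{\infty}\frac{1}{j!}\left(-\frac{1}{4}\right)^{j}t^{j}$ is a convergent (entire) power series, in particular bounded on any interval $(0,T]$. Multiplying the asymptotic expansion from $(i)$ by this power series yields again an asymptotic expansion whose coefficients are given by the Cauchy product, so that
\begin{align*}
Z_V(t)\overset{t\downarrow 0}{\sim}\left(\sum_{j=0}^{\infty}\frac{1}{j!}\left(-\frac{1}{4}\right)^{j}t^{j}\right)\left(\sum_{k=0}^{\infty}c_k^{\mathbb{H}}\,t^{k}\right)=\sum_{k=0}^{\infty}\left(\sum_{\ell=0}^{k}\frac{1}{(k-\ell)!}\left(-\frac{1}{4}\right)^{k-\ell}c_\ell^{\mathbb{H}}\right)t^{k},
\end{align*}
which is exactly the claimed expansion with $\nu_k^{\mathbb{H}}=\sum_{\ell=0}^{k}\frac{1}{(k-\ell)!}\left(-\frac{1}{4}\right)^{k-\ell}c_\ell^{\mathbb{H}}$.

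There is essentially no substantive obstacle in this corollary: all the analytic work (the Fubini--Tonelli rearrangement, the substitution, and the application of Watson's lemma) has already been carried out in the lemma preceding Corollary \ref{corollary:ContributionOneVertexShift}. The only points deserving a brief justification are the two elementary facts about asymptotic expansions invoked above, namely that termwise addition of finitely many asymptotic expansions in a common scale is legitimate, and that the product of an asymptotic expansion with a convergent power series is again asymptotic with Cauchy-product coefficients. If one prefers not to cite these, the second can be argued directly from the definition: given $N\in\mathbb{N}_0$, write $Z_V^{\nicefrac{1}{4}}(t)=\sum_{k=0}^{N}c_k^{\mathbb{H}}t^{k}+O(t^{N+1})$ and $e^{-\frac{t}{4}}=\sum_{j=0}^{N}\frac{1}{j!}\left(-\frac{1}{4}\right)^{j}t^{j}+O(t^{N+1})$ as $t\searrow 0$; multiplying, the product of the two polynomial parts reproduces $\sum_{m=0}^{N}\nu_m^{\mathbb{H}}t^{m}$ modulo terms of order $\geq N+1$, while the remaining products involve a factor $O(t^{N+1})$ times a function bounded near $0$, hence are $O(t^{N+1})$. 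Letting $N$ range over $\mathbb{N}_0$ gives the asserted asymptotic expansion of $Z_V(t)$.
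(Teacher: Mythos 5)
Your proposal is correct and follows exactly the route the paper intends: the paper derives this corollary immediately from the identity $Z_V^{\nicefrac{1}{4}}(t)=\sum_{i=1}^{M}I_{\gamma_i}^{\nicefrac{1}{4}}(t)$ and Corollary \ref{corollary:ContributionOneVertexShift}, with the factor $e^{-\frac{t}{4}}$ handled by the Cauchy product just as you describe (compare the analogous Corollaries \ref{corollary:AsymptotischeEntwicklungShiftedZ_I} and \ref{corollary:AsymptoticExpansionZ_E}). Your explicit justification of the two elementary facts about asymptotic expansions is a harmless elaboration of what the paper leaves implicit.
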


\begin{remark}
Observe that when we compare our coefficients $c_k^{\mathbb{H}}(\gamma_i)$ given in \eqref{equation:DefinitionC_k(Gamma)} with the corresponding spherical coefficients $c_k^{\mathbb{S}}(\gamma_i)$ (see \citep[formula $(22)$]{Watson}), then we have $c_k^{\mathbb{H}}(\gamma_i)=(-1)^{k}c_k^{\mathbb{S}}(\gamma_i)$.
Hence, when we compare $\nu_k^{\mathbb{H}}$ with its spherical counterpart $\nu_k^{\mathbb{\mathbb{S}}}$ in \citep[formula (29)]{Watson}, we still have the relation $\nu_k^{\mathbb{H}} = (-1)^{k}\nu_k^{\mathbb{S}}$. As we will see later, there is a deeper reason why the latter relation must hold if $\gamma_i = \frac{\pi}{k}$ for some $k\in\mathbb{N}$.    
\end{remark}

Finally, let us show that 
\begin{align}
A(t):=\sum\limits_{i=1}^{M} A_{\gamma_i}(t;R)
\end{align}
produces no contribution to the asymptotic expansion as $t\searrow 0$.

\begin{lemma}
\label{lemma:A(t)AsymptotikNull}
Let $i\in\lbrace\, 1,..., M\, \rbrace$ and $A_{\gamma_i}(t;R)$ be the function defined in \eqref{equation:DefinitionFunktionA_gamma}. Then for all $m\in\mathbb{N}:$
\begin{align}
A_{\gamma_i}(t;R) = o\left( t^m \right),\, \text{ as }t\searrow 0.
\end{align}
\end{lemma}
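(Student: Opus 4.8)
The plan is to show that $A_{\gamma_i}(t;R)$ decays faster than any power of $t$ by exploiting the fact that the integrand in the definition \eqref{equation:DefinitionFunktionA_gamma} involves $Q_{\sqrt{s}-\frac{1}{2}}^{-i\rho}(\cosh(a))Q_{\sqrt{s}-\frac{1}{2}}^{i\rho}(\cosh(a))$ integrated over $a\in[R,\infty)$, so that the kernel is exponentially small for $a$ bounded away from $0$; this should translate, after the inverse Laplace transform, into an estimate of the form $A_{\gamma_i}(t;R)=O(t^{-1}e^{-D/t})$ for some $D>0$, which is $o(t^m)$ for every $m$.

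First I would fix $i$ and write $\gamma=\gamma_i$ for brevity, and set
\begin{align*}
\Phi(s):=\int\limits_{R}^{\infty}\int\limits_{0}^{\infty} Q_{\sqrt{s}-\frac{1}{2}}^{-i\rho}(\cosh(a))Q_{\sqrt{s}-\frac{1}{2}}^{i\rho}(\cosh(a))\cdot\frac{\sinh(\rho(\pi-\gamma))}{\sinh(\rho\gamma)}\sinh(a)\,d\rho\,da,
\end{align*}
so that $A_{\gamma}(t;R)=\frac{\gamma}{\pi^2}\,\mathcal{L}^{-1}\{\Phi\}(t)$. The key observation is that $Q_{\sqrt{s}-\frac{1}{2}}^{\pm i\rho}(\cosh(a))$ is, by \eqref{equation:LegendreAsymp3} together with Whipple's formula, exponentially decaying in $a$, so that $\Phi$ is actually the Laplace transform of a function $\phi(t)$ supported essentially away from $t=0$. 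More precisely I would identify $\phi(t):=\frac{\gamma}{\pi^2}\mathcal{L}^{-1}\{\Phi\}(t)$ with an integral over $a\in[R,\infty)$ of heat-kernel-type quantities $K_{\mathbb{H}^2}^{\nicefrac14}$-related terms, using the already-established interchange of $\mathcal{L}^{-1}$ with the $a$-integration (exactly as in the derivation of \eqref{equation:VertauschungIntegraleUndInverseLaplaceTrafo}) and the identification of the relevant inverse Laplace transforms appearing in the proof of Theorem \ref{theorem:HeatTraceShift}. In fact, tracing the computation, $A_{\gamma}(t;R)$ arises as (a constant times) $\mathcal{L}^{-1}$ applied to the ``tail'' $\int_R^\infty$ of the $a$-integral that produced the bounded third term in \eqref{equation:HeatTraceShift}; since the corresponding $\phi$-type kernel involves $H^{\nicefrac14}$-contributions evaluated on the diagonal at points $x=(a,\alpha)$ with $a\ge R$, and $H^{\nicefrac14}(x,x;s)\le G_{\mathbb{H}^2}^{\nicefrac14}(x,x;s)$ by Lemma \ref{lemma:PropertiesOfH}$(iii)$, one controls everything by $K_{\mathbb{H}^2}(x,y;t)$ for pairs $x,y$ with $d(x,y)\ge 2R$ (reflected points across an edge at distance $\ge R$ from the vertex), and the Gaussian-type estimate \eqref{equation:EstimateHeatKernelPlane} gives $K_{\mathbb{H}^2}(x,y;t)\le\frac{C}{t}e^{-d(x,y)^2/(8t)}\le\frac{C}{t}e^{-R^2/(2t)}$.

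Concretely the steps would be: (1) using Corollary \ref{corollary:FormelHeatKernelWedgeShift} and the Fubini--Tonelli arguments of the proof of Theorem \ref{theorem:HeatTraceShift}, write $A_{\gamma}(t;R)$ as $\int_{W\setminus B_R^{\mathbb{H}^2}(P)}\big(K_{\mathbb{H}^2}^{\nicefrac14}(x,x;t)-K_W^{\nicefrac14}(x,x;t)\big)\,dx$ minus the explicitly computed (bounded, smooth in $t$) terms, and observe that by construction this equals the ``remainder'' piece whose integrand is a difference of heat kernels over the region $\{a\ge R\}$; (2) bound that integrand pointwise: by the probabilistic formula (Lemma \ref{lemma:ProbabilisticFormulaHeatKernel}, applied with $N=\mathbb{H}^2$, $U=W$) combined with the principle of not feeling the boundary (Lemma \ref{lemma:PNFBAllgemein}) one has $0\le K_{\mathbb{H}^2}^{\nicefrac14}(x,x;t)-K_W^{\nicefrac14}(x,x;t)\le K_{\mathbb{H}^2}^{\nicefrac14}(x,x;t)\cdot\mathrm{Prob}\{\omega \text{ meets }\partial W\}$, and since $d(x,\partial W)\to\infty$ as $d(x,P)\to\infty$ along $W$, and is bounded below by a positive quantity growing with $a$, the relevant probability is controlled by $K_{\mathbb{H}^2}(x,x_\ast;t)$ with $d(x,x_\ast)\ge c\cdot\max\{R,\,\text{distance to }\partial W\}$; (3) integrate the resulting bound $\frac{C}{t}e^{-c\,d(x,\partial W)^2/t}$ against $\sinh(a)\,da\,d\alpha$ over $\{a\ge R,\ 0<\alpha<\gamma\}$ --- the Gaussian factor beats the $\sinh(a)$ volume growth once $t$ is small, yielding $A_{\gamma}(t;R)\le \frac{C'}{t}e^{-D/t}$ for suitable $C',D>0$ and all $t\in(0,T]$; (4) conclude that $\frac{C'}{t}e^{-D/t}=o(t^m)$ as $t\searrow 0$ for every $m\in\mathbb{N}$, so that $A_{\gamma_i}(t;R)=o(t^m)$, and summing over $i$ gives $A(t)=o(t^m)$ as well.

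The main obstacle I anticipate is step (1)--(2): making rigorous the identification of $A_{\gamma}(t;R)$ as a genuine integral of (differences of) heat kernels over the outer region $\{a\ge R\}$, rather than merely a formal inverse Laplace transform of a double integral over Legendre functions. One must justify interchanging $\mathcal{L}^{-1}$ with the $a$- and $\rho$-integrations, which requires absolute-convergence estimates of the type already provided by Theorem \ref{theorem:TheoremZentralIntegral} and Lemma \ref{lemma:EstimateProductLegendreFunctions} (the factor $\frac{\sinh(\rho(\pi-\gamma))}{\sinh(\rho\gamma)}$ being bounded for $\gamma\le\pi$ and exponentially decaying for $\gamma>\pi$, causing no trouble in either case), together with the monotone/dominated convergence arguments and the bound $H^{\nicefrac14}(x,x;s)\le G_{\mathbb{H}^2}^{\nicefrac14}(x,x;s)$ from Lemma \ref{lemma:PropertiesOfH}. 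Alternatively, and perhaps more cleanly, one can avoid returning to the Legendre representation entirely: since by Corollary \ref{corollary:FormelHeatKernelWedgeShift} the function $t\mapsto\int_{W\setminus B_R^{\mathbb{H}^2}(P)}\big(K_{\mathbb{H}^2}^{\nicefrac14}(x,x;t)-K_W^{\nicefrac14}(x,x;t)\big)\,dx$ has Laplace transform $\int_{W\setminus B_R^{\mathbb{H}^2}(P)}H^{\nicefrac14}(x,x;s)\,dx$, and since all the other terms collected in \eqref{equation:HeatTraceShift} have been matched to the first three terms there, $A_{\gamma}(t;R)$ \emph{is} exactly this heat-kernel-difference integral up to sign, and one estimates it directly by Lemma \ref{lemma:PNFBAllgemein}, \eqref{equation:EstimateHeatKernelPlane}, and the probabilistic formula as in step (3). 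This route makes the exponential decay transparent and sidesteps the delicate Fubini/interchange bookkeeping.
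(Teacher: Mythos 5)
Your guiding intuition --- that $A_{\gamma_i}(t;R)$ records an influence of the vertex felt only at distance $\geq R$ from it and should therefore be exponentially small --- is correct, but the concrete route you propose breaks down at step (1), and the paper in fact proceeds entirely differently: it never returns to heat kernels, but bounds the Laplace transform $F(s)$ of $A_{\gamma_i}(\cdot\,;R)$ directly and invokes the \emph{converse} to Watson's lemma, showing $F(s)=o(s^{-m})$ as $\vert s \vert\to\infty$ in $\mathcal{H}_{>\frac{1}{4}}$, with a case distinction on $\gamma_i$ and the reduction identity \eqref{equation:AbgedrehteFormelZurReduktionDerFaelle}.

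The gap is this: $A_{\gamma_i}(t;R)$ is \emph{not} the integral of $K_{\mathbb{H}^2}^{\nicefrac{1}{4}}(x,x;t)-K_W^{\nicefrac{1}{4}}(x,x;t)$ over $W\setminus \hyperball{R}{P}$, nor that integral ``minus bounded explicit terms'', because that integral equals $+\infty$ for every $t>0$. Indeed, if $H$ is the half-plane bounded by the line containing one of the two rays of $\partial W$, then $K_W\leq K_H$ gives $K_{\mathbb{H}^2}(x,x;t)-K_W(x,x;t)\geq K_{\mathbb{H}^2}(x,x_{\partial H};t)$, and this lower bound is \emph{constant} along each curve equidistant from the ray; integrating over the infinite boundary strips of $W\setminus\hyperball{R}{P}$ therefore produces a divergent ``perimeter'' contribution. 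In the decomposition \eqref{equation:VertauschungIntegrationInverseLaplaceTrafor} this divergent piece is exactly the $\frac{\sinh(\pi\rho)}{\rho}$-part of the integrand (equivalently the $\tfrac12\int P_{\sqrt{s}-\frac12}Q_{\sqrt{s}-\frac12}\sinh(a)\,da$ term), and $A_{\gamma_i}$ is by definition only the complementary $\frac{\sinh((\pi-\gamma_i)\rho)}{\sinh(\gamma_i\rho)}$-part restricted to $a\geq R$. For the same reason step (2) fails as written: $d(x,\partial W)$ is \emph{not} bounded below, let alone growing, on $W\setminus\hyperball{R}{P}$ (take points far from the vertex but close to a bounding ray), so a bound of the form $\frac{C}{t}e^{-c\,d(x,\partial W)^2/t}$ is of no use there and its integral against $\sinh(a)\,d\alpha\,da$ over $\{a\geq R\}$ diverges. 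A correct geometric reading of $A_{\gamma_i}$ exists only when $\gamma_i=\frac{\pi}{k}$: there it is the outer-region integral of the \emph{rotational} image terms $K_{\mathbb{H}^2}^{\nicefrac{1}{4}}(x,x_{2j+1};t)$, whose distances $d(x,x_{2j+1})$ are genuinely bounded below for $a\geq R$, and this is essentially what the paper's Case 2 recovers on the Laplace-transform side.

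A secondary inaccuracy compounds this: $\frac{\sinh(\rho(\pi-\gamma_i))}{\sinh(\rho\gamma_i)}$ is bounded in $\rho$ only for $\gamma_i\geq\frac{\pi}{2}$; for $\gamma_i<\frac{\pi}{2}$ it grows like $e^{(\pi-2\gamma_i)\rho}$ and $\int_0^\infty\frac{\sinh(\rho(\pi-\gamma_i))}{\sinh(\rho\gamma_i)}\,d\rho$ diverges, so it cannot be dismissed as ``causing no trouble in either case''. This is precisely the difficulty that forces the paper's two-case argument.
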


\begin{proof}
Recall $A_{\gamma_i}(t;R) = \mathcal{L}^{-1}\left\lbrace F \right\rbrace(t)$ with
\begin{align*}
F(s):=  \int\limits_{R}^{\infty}  \int\limits_{0}^{\infty} \frac{\gamma_i}{\pi^2}Q_{ \sqrt{s} - \frac{1}{2}}^{-i\rho}(\cosh(a))  Q_{ \sqrt{s} - \frac{1}{2}}^{i\rho}(\cosh(a))\frac{\sinh(\rho (\pi-\gamma_i))}{\sinh(\rho \gamma_i)} \sinh(a)\, d\rho \, da
\end{align*}
for $s\in\mathcal{H}_{>\frac{1}{4}}$. Let us first find an appropriate upper bound for the product of the associated Legendre functions under the integral sign. From \eqref{equation:AbschaetzungProduktLegendre1} we have for all $a>0$, $s\in \mathcal{H}_{>\frac{1}{4}}$ and $\rho\geq 0$:
\begin{align*}
\vert Q_{ \sqrt{s} - \frac{1}{2}}^{-i\rho}(\cosh(a))  Q_{ \sqrt{s} - \frac{1}{2}}^{i\rho}(\cosh(a)) \vert \leq \,&\frac{1}{4^{\Re(\sqrt{s}) + \frac{1}{2}}} \cdot \bigg\vert \frac{\Gamma(\sqrt{s}+\frac{1}{2} + i\rho)\cdot \Gamma(\sqrt{s}+\frac{1}{2} - i\rho)}{\Gamma(\sqrt{s}+\frac{1}{2})^2} \bigg\vert \, \cdot \\
& \, \cdot \frac{1}{\cosh(a)-1} \cdot \left( \int\limits_{0}^{\pi} \left( \frac{1-\cos^2(t)}{\cosh(a)+\cos(t)}\right)^{\Re(\sqrt{s})} dt \right)^{2}.
\end{align*}
The same argument as in \eqref{equation:AbschaetzungUnterIntegralFein} shows $0<\frac{1-\cos^2(t)}{\cosh(a)+\cos(t)}\leq 2 e^{-a}$ for all $t\in (0,\pi)$ and thus
\begin{align}
\vert Q_{ \sqrt{s} - \frac{1}{2}}^{-i\rho}(\cosh(a))  Q_{ \sqrt{s} - \frac{1}{2}}^{i\rho}(\cosh(a)) \vert \leq \,&\frac{\pi^2 e^{-2a\Re(\sqrt{s})}}{2(\cosh(a)-1)}\cdot  \nonumber \\
& \cdot \bigg\vert\frac{\Gamma(\sqrt{s}+\frac{1}{2} + i\rho)\cdot \Gamma(\sqrt{s}+\frac{1}{2} - i\rho)}{\Gamma(\sqrt{s}+\frac{1}{2})^2} \bigg\vert
\end{align}
for all $a>0$, $s\in \mathcal{H}_{>\frac{1}{4}}$ and $\rho\geq 0$.

We will estimate the above quotient of gamma functions. Note that, 
\begin{align*}
\bigg\vert\frac{\Gamma(\sqrt{s}+\frac{1}{2} + i\rho)\cdot \Gamma(\sqrt{s}+\frac{1}{2} - i\rho)}{\Gamma(\sqrt{s}+\frac{1}{2})^2} \bigg\vert = \bigg\vert B\left(\sqrt{s}+\frac{1}{2}+i\rho, \sqrt{s}+\frac{1}{2}-i\rho\right) \cdot  \frac{\Gamma(2(\sqrt{s}+\frac{1}{2}))}{\Gamma(\sqrt{s}+\frac{1}{2})^2} \bigg\vert,
\end{align*}
where $B$ denotes the beta function (see \citep[formula (5) on p. 9]{Erdelyi}). From  \citep[formula (24) on p. 11]{Erdelyi} and because $\Re(\sqrt{s})-\frac{1}{2}>0$ for all $s\in\mathcal{H}_{>\frac{1}{4}}$, we have
\begin{align*}
\bigg\vert B\left(\sqrt{s}+\frac{1}{2}+i\rho, \sqrt{s}+\frac{1}{2}-i\rho\right)\bigg\vert \, &\leq \int\limits_{0}^{\infty} e^{- \left(\Re(\sqrt{s})+\frac{1}{2}\right)t} \left( \frac{e^{t}-1}{e^{t}} \right)^{ \Re(\sqrt{s})-\frac{1}{2} }dt \\
&=\int\limits_{0}^{\infty} e^{- \Re(\sqrt{s}) (2t-\ln(e^{t}-1))} \frac{1}{\sqrt{e^{t}-1}} \, dt.
\end{align*}
Observe that the function $(0,\infty) \ni t\mapsto 2t-\ln(e^{t}-1)\in\mathbb{R}$ has a global minimum at $t=\ln(2)$ where the minimal value is $\ln(4)$. Thus, by Laplace's method (see e.g. \citep[Theorem 7.1]{Olver} or \citep[formula (5.1.21)]{Bleistein}), there exists a constant $C_1 >0$ such that for all $\rho\geq 0$ and $s\in \mathcal{H}_{>\frac{1}{4}}$:
\begin{align*}
\bigg\vert B\left(\sqrt{s}+\frac{1}{2}+i\rho, \sqrt{s}+\frac{1}{2}-i\rho\right)\bigg\vert \leq C_1 \cdot \frac{e^{-\Re(\sqrt{s})\ln(4)}}{\Re(\sqrt{s})^{\frac{1}{2}}}.
\end{align*}
Further, using the doubling formula \eqref{equation:DoublingFormula} and then the asymptotic estimate \eqref{equation:GammaQuotAsymp}, there exists some constant $C_2>0$ such that for all $s\in\mathcal{H}_{>\frac{1}{4}}$: 
\begin{align*}
\bigg\vert \frac{\Gamma(2(\sqrt{s}+\frac{1}{2}))}{\Gamma(\sqrt{s}+\frac{1}{2})^2} \bigg\vert = \bigg\vert \frac{4^{\Re(\sqrt{s})}}{\sqrt{\pi}} \cdot \frac{\Gamma(\sqrt{s}+1)}{\Gamma(\sqrt{s}+\frac{1}{2})} \bigg\vert \leq C_2\cdot 4^{\Re(\sqrt{s})}\cdot ( \Re(\sqrt{s}) )^{\frac{1}{2}}.
\end{align*}
Hence, there exists $C>0$ such that for all $\rho\geq 0$, $a>0$, and $s\in\mathcal{H}_{>\frac{1}{4}}$: 
\begin{align}
\label{equation:BeweisA(t)AsymptoticProductLegendre}
\vert Q_{ \sqrt{s} - \frac{1}{2}}^{-i\rho}(\cosh(a))  Q_{ \sqrt{s} - \frac{1}{2}}^{i\rho}(\cosh(a)) \vert \leq \frac{C}{\cosh(a)-1} \cdot e^{-2a\Re(\sqrt{s})}.
\end{align}

We consider two cases. Case $1$: $\gamma_i > \frac{\pi}{2}$:

From the converse to Watson's lemma (see \citep[proof on p. 31]{Wong}) it suffices to show, for each $m\in\mathbb{N}$, that $F(s)=o\left( \frac{1}{s^{m}} \right)$ as $\vert s\vert \rightarrow \infty$ with $s\in\mathcal{H}_{>\frac{1}{4}}$.

By assumption, $\vert \pi-\gamma_i \vert<\gamma_i$ and thus, by \citep[3.981 5]{Gradshteyn}, 
\begin{align}
\int\limits_{0}^{\infty} \frac{\sinh(\rho \vert \pi-\gamma_i \vert)}{\sinh(\rho \gamma_i)} d\rho  = \frac{\pi}{2\gamma_i} \cdot \frac{\sin\left( \pi\frac{\vert \pi-\gamma_i \vert}{\gamma_i} \right)}{1+\cos\left( \pi\frac{\vert \pi-\gamma_i \vert}{\gamma_i} \right)}.
\end{align}

With $C>0$ as in the upper bound \eqref{equation:BeweisA(t)AsymptoticProductLegendre}, we have for all $s\in\mathcal{H}_{>\frac{1}{4}}$, noting that $2\cdot \Re(\sqrt{s})\geq \sqrt{ \vert s \vert}$ for these $s$:
\begin{align*}
\left\vert F(s) \right\vert &\leq \frac{\gamma_i}{\pi^2}  \int\limits_{R}^{\infty} \int\limits_{0}^{\infty}  \left\vert Q_{ \sqrt{s} - \frac{1}{2}}^{-i\rho}(\cosh(a))  Q_{ \sqrt{s} - \frac{1}{2}}^{i\rho}(\cosh(a)) \right\vert \sinh(a)  \cdot \frac{\sinh(\rho \vert \pi-\gamma_i \vert)}{\sinh(\rho \gamma_i)} \, d\rho\, da\\
&\leq \frac{C\cdot \gamma_i \sinh(R)}{\pi^2 (\cosh(R)-1)} \int\limits_{R}^{\infty}  e^{-2a\Re(\sqrt{s})} \int\limits_{0}^{\infty} \frac{\sinh(\rho \vert \pi-\gamma_i \vert)}{\sinh(\rho \gamma_i)}\, d\rho\, da   \\
&=\frac{C\cdot \sinh(R)}{2 \pi (\cosh(R)-1)}\cdot \frac{e^{-2 R\, \Re(\sqrt{s})}}{2\cdot \Re(\sqrt{s})} \cdot \frac{\sin\left( \pi\frac{\vert \pi-\gamma_i \vert}{\gamma_i} \right)}{1+\cos\left( \pi\frac{\vert \pi-\gamma_i \vert}{\gamma_i} \right)} \\
&\leq \tilde{C} \cdot \frac{e^{- R\, \sqrt{\vert s \vert}}}{\sqrt{\vert s \vert}} ,
\end{align*}
where $\tilde{C}:=\frac{C\cdot \sinh(R)}{2\pi (\cosh(R)-1)}\cdot \frac{\sin\left( \pi\frac{ \vert \pi-\gamma_i \vert}{\gamma_i} \right)}{1+\cos\left( \pi\frac{\vert \pi-\gamma_i \vert}{\gamma_i} \right)}$. Since $R>0$, we have for all $m\in\mathbb{N}$: $F(s)=o\left( \frac{1}{s^m} \right)$ as $\vert s \vert\rightarrow\infty$ with $s\in\mathcal{H}_{>\frac{1}{4}}$.

Case $2$: $\gamma_i \in \left( 0,\frac{\pi}{2} \right]$:

In this case the above argument will not work, since now $\pi-\gamma_i\geq \gamma_i$ and thus the integral $\int_{0}^{\infty} \frac{\sinh(\rho (\pi-\gamma_i))}{\sinh(\rho \gamma_i)} d\rho$ is not convergent. Instead, we use that for all $N \in\mathbb{N}$ and $x,y\in\mathbb{R}$:
\begin{align*}
\sinh(x-y)=2\sum\limits_{n=1}^{N}\cosh(x-2ny)\cdot \sinh(y) + \sinh(x-(2N+1)y),
\end{align*}
which one proves easily using the definition of $\cosh$ and $\sinh$. In particular, for all $\rho>0$:
\begin{align}
\label{equation:AbgedrehteFormelZurReduktionDerFaelle}
\frac{\sinh(\rho (\pi-\gamma_i))}{\sinh(\rho \gamma_i)}=2\cdot \sum\limits_{n=1}^{N}\cosh\left( \rho\left( \pi-2n\gamma_i \right) \right) + \frac{\sinh(\rho (\pi-2N\gamma_i - \gamma_i))}{\sinh(\rho \gamma_i)}.
\end{align}
In the special case $\gamma_i=\frac{\pi}{2N}$, this becomes equal to $2\cdot \sum_{n=1}^{N-1}\cosh\left( \rho \left( \pi-2n\gamma_i \right) \right) +1$. (These formulas were also used in \citep[proof of Theorem 2]{VanDenBerg} to estimate the error term.)

Suppose first that there exists some $N\in\mathbb{N}$ with $\gamma_i=\frac{\pi}{2N}$. Then for all $s\in\mathcal{H}_{>\frac{1}{4}}$:
\begin{align}
 F(s) &=  \int\limits_{R}^{\infty}  \int\limits_{0}^{\infty} \frac{\gamma_i}{\pi^2}\, Q_{ \sqrt{s} - \frac{1}{2}}^{-i\rho}(\cosh(a))  Q_{ \sqrt{s} - \frac{1}{2}}^{i\rho}(\cosh(a))\frac{\sinh(\rho (\pi-\gamma_i))}{\sinh(\rho \gamma_i)} \sinh(a) \, d\rho\,  da \nonumber \\
&=2\sum\limits_{n=1}^{N-1} \int\limits_{R}^{\infty}  \int\limits_{0}^{\infty} \frac{\gamma_i}{\pi^2}\, Q_{ \sqrt{s} - \frac{1}{2}}^{-i\rho}(\cosh(a))\cdot \nonumber \\
&\qquad\qquad\qquad\qquad\quad \cdot Q_{ \sqrt{s} - \frac{1}{2}}^{i\rho}(\cosh(a))\cosh\left( \rho \left( \pi-2n\gamma_i \right) \right) \sinh(a)\, d\rho\,  da \nonumber \\
&\qquad +  \int\limits_{R}^{\infty}  \int\limits_{0}^{\infty} \frac{\gamma_i}{\pi^2}Q_{ \sqrt{s} - \frac{1}{2}}^{-i\rho}(\cosh(a))  Q_{ \sqrt{s} - \frac{1}{2}}^{i\rho}(\cosh(a)) \sinh(a) \, d\rho\,  da \nonumber\\
&= 2\gamma_i \sum\limits_{n=1}^{N-1}  \int\limits_{R}^{\infty}  G_{\mathbb{H}^2}^{\nicefrac{1}{4}}\left( (a,n\gamma_i), (a,- n\gamma_i);s  \right) \sinh(a)\, da \nonumber\\
&\qquad + \gamma_i \int\limits_{R}^{\infty} G_{\mathbb{H}^2}^{\nicefrac{1}{4}}\left( \left( a,\frac{\pi}{2} \right), \left( a,- \frac{\pi}{2} \right);s  \right)  \sinh(a) \,   da \nonumber\\
&=\frac{\gamma_i}{\pi} \sum\limits_{n=1}^{N-1}  \int\limits_{R}^{\infty}  Q_{\sqrt{s}-\frac{1}{2}}\left( \cosh\left(d\left(\left( a,n\gamma_i \right), \left( a,- n\gamma_i \right) \right)\right)\right)  \sinh(a) \, da \nonumber \\
&\qquad + \frac{\gamma_i}{2\pi} \int\limits_{R}^{\infty} Q_{\sqrt{s}-\frac{1}{2}}\left( \cosh \left( d\left(\left( a,\frac{\pi}{2} \right), \left( a,- \frac{\pi}{2} \right)\right) \right)\right) \sinh(a) \,   da, \label{equation:BeweisRestgliedKeinBeitrag}
\end{align}
where we used \eqref{equation:GreenPlane2} for the third equality, and \eqref{equation:GreenPlane1} for the last one.

Using \eqref{equation:RepresentationLegendreQIntegral} and \eqref{equation:AbschaetzungUnterIntegralFein}, for all $s\in\mathcal{H}_{>\frac{1}{4}}$ and $(x,y)\in\offdiag(\mathbb{H}^2)$:
\begin{align*}
\vert Q_{\sqrt{s}-\frac{1}{2}}\left( \cosh d\left(\left(x, y\right) \right) \right) \vert \leq \frac{1}{\cosh(d(x,y))-1}\cdot e^{-d(x,y)\left(\Re(\sqrt{s})-\frac{1}{2}\right)}.
\end{align*}
Thus, because of \eqref{equation:BeweisRestgliedKeinBeitrag} and $\Re(\sqrt{s})-\frac{1}{2}>0$ for all $s\in\mathcal{H}_{>\frac{1}{4}}$, there exists some constant $\varepsilon >0$ such that
\begin{align*}
\vert F(s) \vert &\leq N\cdot \frac{\gamma_i}{\pi}\, e^{-\varepsilon \left(\Re(\sqrt{s})-\frac{1}{2}\right)} \int\limits_{R}^{\infty} \frac{\sinh(a)}{\cosh(a)^2-\sinh^2(a)\cos(2\gamma_i)-1}\, da\\
&=\frac{e^{-\varepsilon \left(\Re(\sqrt{s})-\frac{1}{2}\right)}}{2(1-\cos(2\gamma_i))}\int\limits_{R}^{\infty} \frac{1}{\sinh(a)} \, da \\
&< \frac{e^{-\varepsilon \left(\Re(\sqrt{s})-\frac{1}{2}\right)}}{2(1-\cos(2\gamma_i))}\int\limits_{R}^{\infty} \frac{6}{a^3}\, da \\
&= \frac{3\cdot e^{\frac{\epsilon}{2}}}{2(1-\cos(2\gamma_i))R^2} \cdot e^{-\varepsilon \Re(\sqrt{s})}\\
&\leq \frac{3\cdot e^{\frac{\epsilon}{2}}}{2(1-\cos(2\gamma_i))R^2} \cdot e^{-\frac{\varepsilon}{2} \sqrt{\vert s \vert}},
\end{align*}
where, for the last equality, we used again $2\Re(\sqrt{s})\geq \sqrt{\vert s \vert}$ for all $s\in\mathcal{H}_{>\frac{1}{4}}$.


Because $\varepsilon>0$, we have $F(s)=o\left(\frac{1}{s^m}\right)$ as $\vert s \vert\rightarrow\infty$ with $s\in\mathcal{H}_{>\frac{1}{4}}$ for all $m\in\mathbb{N}$.

Suppose now there exists $N\in\mathbb{N}$ such that  $\gamma_i\in\left( \frac{\pi}{2(N+1)}, \frac{\pi}{2N} \right)$. Then one shows again $F(s)=o\left(\frac{1}{s^m}\right)$ as $\vert s \vert\rightarrow\infty$ with $s\in\mathcal{H}_{>\frac{1}{4}}$ for all $m\in\mathbb{N}$. This follows along the lines of the previous two cases, which are discussed above in detail. More precisely, using \eqref{equation:AbgedrehteFormelZurReduktionDerFaelle}, we have
\begin{align*}
F(s)= &\,2\cdot \sum\limits_{n=1}^{N} \int\limits_{R}^{\infty}  \int\limits_{0}^{\infty} \frac{\gamma_i}{\pi^2}\, Q_{ \sqrt{s} - \frac{1}{2}}^{-i\rho}(\cosh(a))  Q_{ \sqrt{s} - \frac{1}{2}}^{i\rho}(\cosh(a))\cosh\left( \rho\left( \pi-2n\gamma_i \right) \right) \sinh(a) \, d\rho\,  da \\
&+ \int\limits_{R}^{\infty}  \int\limits_{0}^{\infty} \frac{\gamma_i}{\pi^2}\, Q_{ \sqrt{s} - \frac{1}{2}}^{-i\rho}(\cosh(a))  Q_{ \sqrt{s} - \frac{1}{2}}^{i\rho}(\cosh(a))\frac{\sinh(\rho (\pi-2N\gamma_i - \gamma_i))}{\sinh(\rho \gamma_i)} \sinh(a) \, d\rho\,  da.
\end{align*}
Now, the first sum can be estimated as we did above. The second summand can be estimated as in case 1, since $\vert \pi-2N\gamma_i - \gamma_i \vert< \gamma_i$.

Finally, as mentioned above, the statement of the lemma follows from the converse to Watson's lemma.
\end{proof}

Let us summarise the discussion of this section into a theorem.

\begin{theorem}
\label{theorem:AsymptoticExpansionHeatTraceHyperbolPolygon}
Let $\Omega\subset\mathbb{H}^2$ be a hyperbolic polygon with $M\geq 3$ angles and let $\gamma_1,...,\gamma_M\in (0,2\pi]$ denote the angles of the polygon. Then
\begin{align}
\label{equation:AsymptoticExpansionHeatTraceHyperbolPolygon}
Z_{\Omega}(t)  \overset{t\downarrow 0}{\sim} \frac{ \vert \Omega \vert}{4\pi t} - \frac{\vert \partial\Omega \vert}{8\sqrt{\pi t}} + \sum\limits_{k=0}^{\infty}\left( i_k^{\mathbb{H}} + b_k^{\mathbb{H}}\cdot t^{\frac{1}{2}} + \nu_k^{\mathbb{H}} \right)t^k,
\end{align}
where the coefficients are given as follows:
\begin{align*}
i_k^{\mathbb{H}} &= \frac{\vert \Omega\vert}{4\pi}\frac{1}{(k+1)!}\sum\limits_{\ell=0}^{k+1}\binom{k+1}{\ell}\left(-\frac{1}{4}\right)^{k+1-\ell}B_{2\ell}\left(\frac{1}{2}\right), \\
b_k^{\mathbb{H}} &= \vert \partial\Omega \vert \frac{(-1)^{k}}{4^{k+2}\cdot \sqrt{\pi}\cdot 2\cdot (k+1)!}, \\
\nu_k^{\mathbb{H}} &= \sum\limits_{\ell=0}^{k}\frac{1}{(k-\ell)!}\left(-\frac{1}{4}\right)^{k-\ell} c_{\ell}^{\mathbb{H}}, \\
\text{ where }\,\,\,\, c_{\ell}^{\mathbb{H}} &= \sum\limits_{i=1}^{M} c_{\ell}^{\mathbb{H}}(\gamma_i) \\
\text{ with }\,\,\,\, c_{\ell}^{\mathbb{H}}(\gamma_i)&=  \sum\limits_{\nu=1}^{\ell +1} \binom{2\ell +2}{2\nu} \frac{B_{2\ell -2\nu+2}\left(\frac{1}{2}\right)\cdot  B_{2\nu}}{4\cdot (\ell +1)!(2\ell +1)} \cdot  \frac{ \pi^{2\nu}-\gamma_{i}^{2\nu}}{\pi\cdot \gamma_i^{2\nu-1}}.
\end{align*}
\end{theorem}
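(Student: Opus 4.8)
The plan is to assemble the theorem directly from the work already done in this section, since all the hard analysis is contained in the lemmas and corollaries above. The key structural input is Corollary \ref{corollary:PNFB}, which shows via the principle of not feeling the boundary (Lemma \ref{lemma:PNFB}, itself built on Lemma \ref{lemma:PNFBAllgemein} and Lemma \ref{lemma:ProbabilisticFormulaHeatKernel}) that the shifted heat trace $Z_{\Omega}^{\nicefrac{1}{4}}(t)$ differs from the sum in \eqref{equation:ErsatzSummeAsymptotik1} by a term that is $O(t^{-1}e^{-D/t})$, hence negligible in any asymptotic expansion in powers of $t$ as $t\searrow 0$. First I would record that \eqref{equation:ErsatzSummeAsymptotik1} equals the four-line expression \eqref{equation:ErsatzSummeAsymptotik2}, using Theorem \ref{theorem:HeatTraceShift} for the wedge contributions $\int_{W_R(P_i)}K_{\gamma_i}^{\nicefrac{1}{4}}(x,x;t)\,dx$ and Lemma \ref{lemma:HeatKernelHalfPlaneProof} for the half-plane heat kernels near the edges, so that $Z_{\Omega}^{\nicefrac{1}{4}}(t)$ has the same asymptotic expansion as $Z_I^{\nicefrac{1}{4}}(t)+Z_E^{\nicefrac{1}{4}}(t)+Z_V^{\nicefrac{1}{4}}(t)+A(t)$.

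Next I would invoke the four asymptotic results already established for these pieces: Corollary \ref{corollary:AsymptotischeEntwicklungShiftedZ_I} for $Z_I^{\nicefrac{1}{4}}(t)$, giving the $\frac{\vert\Omega\vert}{4\pi t}$ term and the interior coefficients; Corollary \ref{corollary:AsymptoticExpansionZ_E} for $Z_E^{\nicefrac{1}{4}}(t)$, giving $-\frac{\vert\partial\Omega\vert}{8\sqrt{\pi t}}$ and the boundary coefficients; Corollary \ref{corollary:EckenbeitragWinkelPolygon} for $Z_V^{\nicefrac{1}{4}}(t)$, giving the vertex contributions $c_k^{\mathbb{H}}=\sum_i c_k^{\mathbb{H}}(\gamma_i)$; and Lemma \ref{lemma:A(t)AsymptotikNull}, which shows $A(t)=o(t^m)$ for every $m$, so $A(t)$ contributes nothing. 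Adding these gives an asymptotic expansion for the shifted heat trace of the form
\begin{align*}
Z_{\Omega}^{\nicefrac{1}{4}}(t)\overset{t\downarrow 0}{\sim}\frac{\vert\Omega\vert}{4\pi t}-\frac{\vert\partial\Omega\vert}{8\sqrt{\pi t}}+\sum_{k=0}^{\infty}\left(i_k^{\nicefrac{1}{4}}+b_k^{\nicefrac{1}{4}}t^{\frac{1}{2}}+\nu_k^{\nicefrac{1}{4}}\right)t^k
\end{align*}
with explicitly known shifted coefficients (the ones appearing in \eqref{equation:AsymptotischeEntwicklungShiftedZ_I}, \eqref{equation:AsymptoticExpansionShiftZ_E}, and Corollary \ref{corollary:EckenbeitragWinkelPolygon}(i)).

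Finally, I would undo the shift. Since $Z_{\Omega}(t)=e^{-t/4}Z_{\Omega}^{\nicefrac{1}{4}}(t)$ and $e^{-t/4}=\sum_{j\ge0}\frac{1}{j!}(-\tfrac14)^j t^j$ is an entire power series, multiplying the asymptotic expansion of $Z_{\Omega}^{\nicefrac{1}{4}}(t)$ by it is legitimate and produces an asymptotic expansion of the same shape; the Cauchy product is exactly how the factors $\frac{1}{(k-\ell)!}(-\tfrac14)^{k-\ell}$ enter the formulas for $i_k^{\mathbb{H}},b_k^{\mathbb{H}},\nu_k^{\mathbb{H}}$. Concretely, $i_k^{\mathbb{H}}$ and $\nu_k^{\mathbb{H}}$ are the convolutions of the shifted coefficients with $e^{-t/4}$ as already spelled out in Corollary \ref{corollary:AsymptotischeEntwicklungShiftedZ_I}(ii) (rewritten as after that corollary) and Corollary \ref{corollary:EckenbeitragWinkelPolygon}(ii), and $b_k^{\mathbb{H}}$ likewise from Corollary \ref{corollary:AsymptoticExpansionZ_E}(ii); substituting the closed form of $c_{\ell}^{\mathbb{H}}(\gamma_i)$ from \eqref{equation:DefinitionC_k(Gamma)} gives the stated expression. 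Honestly, there is no real obstacle left at this stage: the proof is essentially bookkeeping, collecting the pieces proved earlier and performing the elementary Cauchy-product manipulation. The only point requiring a word of care is the justification that adding finitely many asymptotic expansions, discarding an exponentially small remainder, and multiplying by a convergent power series all preserve the asymptotic relation $\overset{t\downarrow0}{\sim}$ in the sense of expansions in $\{t^{k/2}\}$ — but this is standard and follows immediately from the definition of an asymptotic series.
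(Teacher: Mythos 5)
Your proposal is correct and follows exactly the route the paper takes: the theorem is stated there as a summary of the section, with the proof consisting precisely of Corollary \ref{corollary:PNFB}, the decomposition \eqref{equation:ErsatzSummeAsymptotik2}, the asymptotic expansions of $Z_I^{\nicefrac{1}{4}}$, $Z_E^{\nicefrac{1}{4}}$, $Z_V^{\nicefrac{1}{4}}$ from Corollaries \ref{corollary:AsymptotischeEntwicklungShiftedZ_I}, \ref{corollary:AsymptoticExpansionZ_E}, \ref{corollary:EckenbeitragWinkelPolygon}, the vanishing of $A(t)$ from Lemma \ref{lemma:A(t)AsymptotikNull}, and the Cauchy product with $e^{-t/4}$. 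Your bookkeeping of the coefficients matches the paper's.
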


\section{Consequences for polygons}
\label{section:ConsequencesPolygons}

In this section we want to put Theorem \ref{theorem:AsymptoticExpansionHeatTraceHyperbolPolygon} into a more general context and then derive some consequences from the heat invariants.

\begin{definition}
\label{definition:InjektivityRadius}
Suppose $N$ is an arbitrary two-dimensional complete Riemannian manifold. For all $P\in N$ we denote the injectivity radius of $P$ by $i(P)$. Thus we have $i(P)\in (0,\infty]$. Furthermore, for all $P\in N$ and $\rho\in (0,i(P)]$ we define $B_{\rho}(P):=\{\, x\in N \mid d(x,P)<\rho \,\}$, where $d(x,P)$ denotes the distance between $x$ and $P$. In other words, the set $B_{\rho}(P)$ denotes the geodesic disc with radius $\rho$ and center $P$.
\end{definition}

\begin{definition}
\label{definition:PolygonGeneral}
A \emph{polygon} is any relatively compact domain $\Omega\subset N$ whose boundary is a union of finitely many geodesic segments, where $N$ is a two-dimensional complete Riemannian manifold. If, in addition, $N$ has constant curvature $\kappa\in\mathbb{R}$, then $\Omega$ is called a \emph{polygon of constant curvature} $\kappa$. 

The terms \emph{smooth boundary point}, \emph{vertex} and \emph{edge} of a polygon are defined analogously as in Definition \ref{definition:EdgeVertex}.
\end{definition}

Note that according to Definition \ref{definition:PolygonGeneral} a polygon may have no vertices at all. For example, a domain in the cylinder $\mathbb{S}^1\times \mathbb{R}\subset\mathbb{R}^3$ bounded by two closed geodesics is a polygon without vertices. Therefore, we regard each smooth and closed boundary component of a polygon also as an edge of the polygon. Moreover, we allow $\partial\Omega = \emptyset$ as well, namely if $N$ is compact and $\Omega = N$.

\begin{definition}
\label{definition:SectorWedge}
Let $N$ be an arbitrary two-dimensional complete Riemannian manifold. A domain $V\subset N$ is called a \emph{circle sector} if there exist $P\in N$, $\rho\in (0, i(P)]$, $\gamma \in (0,2\pi]$ and polar coordinates $(a,\alpha)$ on $B_{\rho}(P)$ (see \citep[Definition $5.3.1$]{KlingenbergSurface}), such that we have 
\begin{align*}
V = \{\, (a,\alpha) \mid 0< a < \rho,\, 0<\alpha <\gamma \,\}.
\end{align*}
We call $P$ a \emph{vertex} of $V$ and we will refer to $V$ as a circle sector at $P$ with \emph{angle} $\gamma$ and \emph{radius} $\rho$. Moreover, a circle sector at $P$ with radius $\rho = i(P)$ is also called a \emph{wedge}.
\end{definition}

Note that a circle sector may have several vertices (e.g. a spherical wedge has two vertices), but a circle sector at $P$ has a unique radius and angle at $P$.

 The \emph{angles} of a polygon are defined analogously as in Definition \ref{definition:Angle}, where we had defined the angles of hyperbolic polygons. Note that if $\Omega$ is a polygon with non-empty boundary $\partial\Omega$ and $P\in\partial\Omega$ is fixed, then there exists some $\rho\in (0, i(P)]$ such that $\Omega \cap B_{\rho}(P)$ is a disjoint union of circle sectors at $P$ with radius $\rho$.

In the following we aim to generalise Theorem \ref{theorem:AsymptoticExpansionHeatTraceHyperbolPolygon} to polygons of constant curvature. For that purpose, we need the following local version of Lemma \ref{lemma:PNFB}.

\begin{lemma}
\label{lemma:PNFBlokal}
Let $N$ be a two-dimensional complete Riemannian manifold whose Gaussian curvature is bounded, and let $\Omega\subset N$ be a polygon.
\begin{itemize}
\item[$(i)$] Let $P\in \Omega$ be fixed and let $\rho >0$ be such that $B_{2 \rho}(P)\subset \Omega$. There exist constants $T_1, C_1, D_1>0$ such that for all $x\in B_{\rho}(P)$ and $t\in (0,T_1]:$
\begin{align}
\label{equation:PNFBlokalInterior}
 0 \leq K_{\Omega}(x,x;t) - K_{B_{2 \rho}(P)} (x,x;t) \leq \frac{C_1}{t}e^{-\frac{D_1}{t}}.
\end{align} 
\item[$(ii)$] Let $Q\in\partial\Omega$ be fixed and let $r>0$ be such that $\Omega\cap B_{2r}(Q)$ is a disjoint union of circle sectors at $Q$ with radius $2r$. Suppose $W_{2r}(Q)$ denotes one of those circle sectors with an angle $\gamma\in (0,2\pi]$. Let $W_{r}(Q)$ be the circle sector at $Q$ with radius $r$ and angle $\gamma$, which is contained in $W_{2r}(Q)$. Then there exist constants $T_2, C_2, D_2>0$ such that for all $x\in W_{r}(Q)$ and $t\in (0,T_2]:$
\begin{align}
\label{equation:PNFBlokalBoundary}
0 \leq K_{\Omega}(x,x;t) - K_{W_{2 r}(Q)} (x,x;t) \leq \frac{C_2}{t}e^{-\frac{D_2}{t}}. 
\end{align}
\item[$(iii)$] Let $Z\in N$ be arbitrary. Suppose we have given for all $i\in\{1,2,3\}$ a circle sector $W_{r_i}(Z)$ at $Z$ with angle $\theta\in (0,2\pi]$ and radius $r_i\in (0,i(Z)]$ such that $W_{r_1}(Z)\subset W_{r_2}(Z)\subset W_{r_3}(Z)$. Then there exist constants $T_3, C_3, D_3>0$ such that for all $x\in W_{r_1}(Z)$ and $t\in (0,T_3]:$
\begin{align}
\label{equation:PNFBlokalSectors}
0 \leq K_{W_{r_3}(Z)}(x,x;t) - K_{W_{r_2}(Z)} (x,x;t) \leq \frac{C_3}{t}e^{-\frac{D_3}{t}}.
\end{align}
\end{itemize}
\end{lemma}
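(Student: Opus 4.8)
\textbf{Proof plan for Lemma \ref{lemma:PNFBlokal}.}

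The plan is to deduce all three statements from the already-established machinery, chiefly Lemma \ref{lemma:PNFBAllgemein} and Lemma \ref{lemma:ProbabilisticFormulaHeatKernel}, essentially by localising the global arguments that were carried out in the proof of Lemma \ref{lemma:PNFB}. For part $(i)$ I would first invoke Lemma \ref{lemma:PNFBAllgemein} with $A:=\overline{B_{\rho}(P)}$ and $U:=B_{2\rho}(P)$: since $\overline{B_{\rho}(P)}\subset B_{2\rho}(P)\subset\Omega$ is a compact subset of the domain $B_{2\rho}(P)$, and the Gaussian curvature of $N$ is bounded by hypothesis, the lemma directly yields constants $T_1,C_1,D_1>0$ with $|K_{B_{2\rho}(P)}(x,x;t)-K_N(x,x;t)|\leq \frac{C_1}{2t}e^{-D_1/t}$ for $x\in B_{\rho}(P)$; applying it a second time with $U:=\Omega$ (again $\overline{B_\rho(P)}\subset\Omega$ is compact in $\Omega$) gives the same type of bound for $|K_\Omega(x,x;t)-K_N(x,x;t)|$, and the triangle inequality combines them. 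The nonnegativity $0\leq K_\Omega(x,x;t)-K_{B_{2\rho}(P)}(x,x;t)$ is automatic from the minimality/monotonicity of the heat kernel under domain inclusion, exactly as used in \eqref{equation:LemmaPNFBInequality1}.

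For parts $(ii)$ and $(iii)$ the idea is the probabilistic comparison argument from the proof of Lemma \ref{lemma:PNFB}, now done locally. For $(ii)$: write $K_\Omega(x,x;t)=K_N(x,x;t)\cdot\mathrm{Prob}\{\omega(s)\in\Omega,\,0\leq s\leq t\}$ via Lemma \ref{lemma:ProbabilisticFormulaHeatKernel}, and bound the probability above and below. Since $\Omega\cap B_{2r}(Q)$ is a disjoint union of circle sectors at $Q$ and $W_{2r}(Q)$ is one of them, a loop starting and ending at $x\in W_r(Q)$ stays in $W_{2r}(Q)$ iff it stays in $\Omega$ \emph{and} does not cross the geodesic boundary arcs of $W_{2r}(Q)$ lying at distance $2r$ from $Q$ or touch $\partial\Omega$ outside $B_{2r}(Q)$. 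Estimating the Wiener-measure of the "bad" events by $K_N(x,x;t)-K_{N\setminus E}(x,x;t)$ for the relevant closed sets $E$ (which lie at a fixed positive distance from $W_r(Q)$), and applying Lemma \ref{lemma:PNFBAllgemein} to each such difference, gives $|K_\Omega(x,x;t)-K_{W_{2r}(Q)}(x,x;t)|\leq \frac{C_2}{t}e^{-D_2/t}$; the ordering $0\leq K_\Omega(x,x;t)-K_{W_{2r}(Q)}(x,x;t)$ again follows from $W_{2r}(Q)\subset\Omega$. For $(iii)$ the same scheme applies with $N$ replaced by the ambient sector $W_{r_3}(Z)$ in the role of the complete manifold — more precisely one uses the probabilistic formula inside $W_{r_3}(Z)$, or equivalently compares both $K_{W_{r_2}(Z)}$ and $K_{W_{r_3}(Z)}$ to $K_N$ on the compact set $\overline{W_{r_1}(Z)}$, which is a compact subset of $W_{r_2}(Z)$; the obstruction to the loop is hitting the arc $\{d(\cdot,Z)=r_2\}\cap W_\theta(Z)$, again a set at positive distance from $W_{r_1}(Z)$, so Lemma \ref{lemma:PNFBAllgemein} supplies the exponential bound.

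The main subtlety — the one place to be careful rather than routine — is the geometry of the circle sectors near the vertex in parts $(ii)$ and $(iii)$. The sets one removes from $N$ (the far arcs of the sectors, and the part of $\partial\Omega$ outside $B_{2r}(Q)$) are not manifolds, but Lemma \ref{lemma:PNFBAllgemein} only requires the complement of a compact set, so $N\setminus E$ is an admissible domain and the lemma applies verbatim; one must check, however, that the logical implications relating "$\omega$ stays in $\Omega$", "$\omega$ stays in $W_{2r}(Q)$", and "$\omega$ avoids the far arcs" are exactly as in the proof of Lemma \ref{lemma:PNFB}, using that distinct circle sectors at $Q$ meet only at $Q$ and that $\Omega\cap B_{2r}(Q)$ decomposes cleanly. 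Finally, I would take $T:=\min\{T_1,T_2,T_3\}$, $C:=\max\{C_1,C_2,C_3\}$, $D:=\min\{D_1,D_2,D_3\}$ only if a uniform statement is wanted; as stated the three constant triples may be chosen independently, so nothing further is needed.
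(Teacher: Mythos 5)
Your proposal follows essentially the same route as the paper: minimality of the heat kernel for the lower bounds, two applications of Lemma \ref{lemma:PNFBAllgemein} plus the triangle inequality for $(i)$, and the probabilistic comparison via Lemma \ref{lemma:ProbabilisticFormulaHeatKernel} (with the obstruction being the far arc of the larger sector) combined with Lemma \ref{lemma:PNFBAllgemein} for $(ii)$ and $(iii)$ — the paper indeed treats $(iii)$ as analogous to $(ii)$. One caveat: your "or equivalently" alternative in $(iii)$ does not work, because $\overline{W_{r_1}(Z)}$ is \emph{not} a compact subset of the open sector $W_{r_2}(Z)$ — the two sectors share their bounding geodesic rays — so Lemma \ref{lemma:PNFBAllgemein} cannot be applied with $A=\overline{W_{r_1}(Z)}$ and $U=W_{r_2}(Z)$; this is precisely why the probabilistic detour through the far arc (your primary suggestion, and the paper's) is needed for points near the sides of the sector.
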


\begin{proof}
First, let us consider the estimates in \eqref{equation:PNFBlokalInterior}. Since $B_{2 \rho}(P)\subset \Omega$, it follows from the minimality property of the heat kernel that $0\leq K_{\Omega}(x,x;t) - K_{B_{2 \rho}(P)} (x,x;t)$ for all $x\in  B_{\rho}(P)$ and $t>0$. Further, because of Lemma \ref{lemma:PNFBAllgemein} there exist constants $T_1, \tilde{C}_1, D_1>0$ such that for all $x\in B_{\rho}(P)$  and $t\in (0, T_1]$:
\begin{align*}
\vert K_N (x,x;t) - K_{\Omega}(x,x;t) \vert \leq \frac{\tilde{C}_1}{t}e^{-\frac{D_1}{t}}, \,\, \text{ and } \,\, \vert K_N (x,x;t) - K_{B_{2 \rho}(P)}(x,x;t) \vert \leq \frac{\tilde{C}_1}{t}e^{-\frac{D_1}{t}}.
\end{align*}
With $C_1:=2\cdot \tilde{C}_1$ and $D_1,T_1>0$ as above, we have for all $x\in B_{\rho}(P)$  and $t\in (0, T_1]$:
\begin{align*}
K_{\Omega}(x,x;t) - K_{B_{2 \rho}(P)}(x,x;t) &\leq \vert K_{\Omega}(x,x;t) - K_N (x,x;t)\vert + \vert K_{N}(x,x;t) - K_{B_{2 \rho}(P)} (x,x;t)\vert \\
&\leq \frac{C_1}{t}e^{-\frac{D_1}{t}}.
\end{align*}
This closes the proof of part $(i)$. Now consider \eqref{equation:PNFBlokalBoundary}.

As before, because of $W_{2 r}(Q)\subset \Omega$ it follows from the minimality of the heat kernel that $0\leq K_{\Omega}(x,x;t) - K_{W_{2 r}(Q)} (x,x;t)$ for all $x\in  W_{r}(Q)$ and $t>0$. The upper estimate follows as in the proof of Lemma \ref{lemma:PNFB}. More precisely, let $E(2r):=\{\, p\in \overline{W_{2 r}(Q)} \mid d(p,Q) = 2r \,\}$. Using Lemma \ref{lemma:ProbabilisticFormulaHeatKernel}, we have for all $x\in  W_{r}(Q)$ and $t>0$:
\begin{align*}
K_{\Omega}(x,x;t) &\leq  K_{N}(x,x;t)\cdot \text{Prob}\{\, \omega(s)\in W_{2r}(Q),\, 0\leq s\leq t \mid \omega(0)=x=\omega(t)  \,\}\\
&\quad + K_{N}(x,x;t)\cdot \text{Prob}\{\, \omega(s)\in E(2r)\text{ for some } s\in[0, t] \mid \omega(0)=x=\omega(t)  \,\} \\
& = K_{W_{2r}(Q)}(x,x;t) +  \left(  K_{N}(x,x;t) - K_{N\backslash E_i(2R)}(x,x;t) \right).
\end{align*}
Thus by Lemma \ref{lemma:PNFBAllgemein} there exist constants $T_2, C_2, D_2>0$ such that \eqref{equation:PNFBlokalBoundary} holds.

The proof of part $(iii)$ is analogous as for $(ii)$.
\end{proof}

As we have already mentioned, the heat invariants for Euclidean polygons (\citep{VanDenBerg}) and spherical polygons (\citep{Watson}) are known. They are given by \eqref{equation:EuclideanAsymp} and \eqref{equation:SphereAsymp}, respectively. The following corollary unifies these results with our Theorem \ref{theorem:AsymptoticExpansionHeatTraceHyperbolPolygon}.

\begin{corollary}
\label{corollary:HeatAsymptoticConstantCurvaturePolygon}
Let $N$ be a two-dimensional complete Riemannian manifold of constant curvature $\kappa\in\mathbb{R}$, and let $\Omega\subset N$ be a polygon. Suppose that $\Omega$ has $M\in\mathbb{N}_0$ angles, which are denoted by $\gamma_1,...,\gamma_M\in (0,2\pi]$. As usual, we denote the heat trace of $\Omega$ by $Z_{\Omega}$. Then
\begin{align}
\label{equation:HeatAsymptoticConstantCurvaturePolygon}
Z_{\Omega}(t)  \overset{t\downarrow 0}{\sim} I_{\kappa} + B_{\kappa} + \sum\limits_{i=1}^{M} V_{\kappa}(\gamma_i),
\end{align}
where
\begin{align*}
I_{\kappa}:= \frac{\vert \Omega \vert}{4\pi t} \sum\limits_{\nu=0}^{\infty} f_{\nu} \cdot \kappa^{\nu} \cdot t^{\nu}; \quad B_{\kappa}:=\frac{\vert \partial\Omega \vert}{8\sqrt{\pi t}} \sum\limits_{\nu =0}^{\infty} r_{\nu} \cdot \kappa^{\nu} \cdot t^{\nu}; \quad V_{\kappa}(\gamma_i):= \sum\limits_{\nu =0}^{\infty} e_{\nu}(\gamma_i) \cdot \kappa^{\nu } \cdot t^{\nu};
\end{align*}
and $f_{\nu}, r_{\nu}$ and $e_{\nu}(\gamma_i)$ are the following universal coefficients for all $\nu \in \mathbb{N}_0:$
\begin{align}
f_{\nu}:&=  \frac{1}{\nu ! \cdot 4^{\nu}} \sum\limits_{\ell=0}^{\nu} \binom{\nu}{\ell} \left( - 4 \right)^{\ell}  B_{2\ell}\left( \frac{1}{2} \right), \label{equation:CoefficientsAreaHeatAsymptoticConstantCurvaturePolygon}\\
 r_{\nu} :&=  -\frac{1}{4^{\nu} \nu!}, \label{equation:CoefficientsBoundaryHeatAsymptoticConstantCurvaturePolygon} \\
e_{\nu}(\gamma_i):&= \frac{1}{4^{\nu}} \sum\limits_{\ell=0}^{\nu} \frac{\left( - 4 \right)^{\ell}}{(\nu-\ell)!} \sum\limits_{j=1}^{\ell+1}  \binom{2\ell +2}{2 j} \frac{B_{2\ell-2j+2}\left(\frac{1}{2}\right)\cdot  B_{2 j}}{4\cdot (\ell+1)!(2\ell+1)}  \cdot \frac{ \pi^{2 j}-\gamma_{i}^{2 j}}{\pi\cdot \gamma_i^{2 j-1}}  \label{equation:CoefficientsVerticesHeatAsymptoticConstantCurvaturePolygon}.
\end{align}
\emph{(}Note that $\vert \partial\Omega \vert$ is defined analogously as in Definition \emph{\ref{definition:VolumenVerallgemeinerteFlaeche}} and $e_{\nu}(\pi)=0$ for all $\nu\in\mathbb{N}_0$. Further, $0^0$ is understood to be $1$, which occurs if $\kappa,\nu=0$.\emph{)}
\end{corollary}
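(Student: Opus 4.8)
The plan is to reduce the general statement to the three explicitly known cases $\kappa \in \{-1, 0, 1\}$ by a scaling argument, and then to verify that the claimed universal formulas correctly interpolate those three data points. First I would fix a two-dimensional complete Riemannian manifold $N$ of constant curvature $\kappa$ and a polygon $\Omega\subset N$ with angles $\gamma_1,\dots,\gamma_M$. The case $\kappa = 0$ follows from the Euclidean result \eqref{equation:EuclideanAsymp} of \citep{VanDenBerg}, the case $\kappa = 1$ from the spherical result \eqref{equation:SphereAsymp} of \citep{Watson}, and the case $\kappa = -1$ from Theorem \ref{theorem:AsymptoticExpansionHeatTraceHyperbolPolygon}. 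In each of these three cases I would check directly that the general formula \eqref{equation:HeatAsymptoticConstantCurvaturePolygon}, with the coefficients $f_\nu, r_\nu, e_\nu(\gamma_i)$ from \eqref{equation:CoefficientsAreaHeatAsymptoticConstantCurvaturePolygon}--\eqref{equation:CoefficientsVerticesHeatAsymptoticConstantCurvaturePolygon}, specialises to the correct expansion. For $\kappa = -1$ this amounts to matching $i_k^{\mathbb{H}} = \frac{\vert\Omega\vert}{4\pi} f_{k+1}(-1)^{k+1}$, $b_k^{\mathbb{H}} = \frac{\vert\partial\Omega\vert}{8\sqrt\pi} r_{k+1}(-1)^{k+1}$ and $\nu_k^{\mathbb{H}} = \sum_i e_{k+1}(\gamma_i)(-1)^{k+1}$, which is a bookkeeping comparison with the closed forms already obtained in Corollaries \ref{corollary:AsymptotischeEntwicklungShiftedZ_I}, \ref{corollary:AsymptoticExpansionZ_E} and \ref{corollary:EckenbeitragWinkelPolygon}; for $\kappa = 1$ one uses the sign relations $i_k^{\mathbb{H}} = (-1)^{k+1}i_k^{\mathbb{S}}$, $b_k^{\mathbb{H}} = (-1)^{k+1}b_k^{\mathbb{S}}$, $\nu_k^{\mathbb{H}} = (-1)^k\nu_k^{\mathbb{S}}$ noted in the remarks of Section \ref{section:ExpansionTrace}; for $\kappa = 0$ one reads off the finitely many nonzero terms of \eqref{equation:EuclideanAsymp} and confirms that $f_\nu\kappa^\nu$, $r_\nu\kappa^\nu$, $e_\nu(\gamma_i)\kappa^\nu$ all vanish for $\nu\ge 1$ when $\kappa = 0$, leaving exactly $\frac{\vert\Omega\vert}{4\pi t} - \frac{\vert\partial\Omega\vert}{8\sqrt{\pi t}} + \sum_i \frac{\pi^2-\gamma_i^2}{24\pi\gamma_i}$ (here one checks $e_0(\gamma_i) = \frac{\pi^2-\gamma_i^2}{24\pi\gamma_i}$, and $f_0 = r_0 = B_0(\tfrac12) = 1$).

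Next I would handle a general $\kappa$ by homothety. If $\kappa > 0$, then $N$ is isometric, up to rescaling the metric by the constant factor $\kappa$, to an open subset of a manifold of curvature $1$; concretely, multiplying the Riemannian metric $g$ by $\kappa$ multiplies the Laplacian eigenvalues by $1/\kappa$, hence replaces $Z_\Omega(t)$ by $Z_{\Omega}(t/\kappa)$ where the right-hand side is the heat trace of the isometric image $\Omega_1$ in a curvature-$1$ manifold. The polygon $\Omega_1$ has the same angles $\gamma_i$ (angles are scale invariant), while $\vert\Omega_1\vert = \kappa\vert\Omega\vert$ and $\vert\partial\Omega_1\vert = \sqrt\kappa\,\vert\partial\Omega\vert$. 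Substituting the already-established curvature-$1$ expansion for $Z_{\Omega_1}$ and replacing $t$ by $\kappa t$ throughout, the powers of $\kappa$ distribute exactly so that each $t^{\nu-1}$ coefficient in $I$ acquires $\kappa^\nu$, each $t^{\nu-1/2}$ coefficient in $B$ acquires $\kappa^\nu$, and each $t^\nu$ coefficient in $V$ acquires $\kappa^\nu$ — matching \eqref{equation:HeatAsymptoticConstantCurvaturePolygon}. The case $\kappa < 0$ is identical, using the curvature-$(-1)$ result and the scaling factor $-\kappa = \vert\kappa\vert$. The case $\kappa = 0$ requires no scaling and is the base case already dealt with. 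Here I should be slightly careful that the asymptotic expansion behaves well under the substitution $t\mapsto \kappa t$ as $t\searrow 0$, which is immediate since $\kappa$ is a fixed positive constant.

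The main obstacle I anticipate is not conceptual but combinatorial: verifying that the single closed-form coefficients $f_\nu, r_\nu, e_\nu(\gamma_i)$ in \eqref{equation:CoefficientsAreaHeatAsymptoticConstantCurvaturePolygon}--\eqref{equation:CoefficientsVerticesHeatAsymptoticConstantCurvaturePolygon} simultaneously reproduce the hyperbolic coefficients of Theorem \ref{theorem:AsymptoticExpansionHeatTraceHyperbolPolygon} (with $\kappa = -1$, where the alternating signs $(-1)^{k+1}$ and $(-1)^{\ell}$ inside the double sums must be absorbed cleanly into $\kappa^\nu = (-1)^\nu$) and the spherical coefficients of \citep{Watson}. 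Concretely one must reconcile the index shift between ``$k$'' (power of $t$ beyond the leading term) in Theorem \ref{theorem:AsymptoticExpansionHeatTraceHyperbolPolygon} and ``$\nu$'' (exponent of $\kappa$) in Corollary \ref{corollary:HeatAsymptoticConstantCurvaturePolygon}, namely $\nu = k+1$ for the interior and boundary series and $\nu = k+1$ for the vertex series as well once the $\frac{\vert\Omega\vert}{4\pi t}$, $\frac{\vert\partial\Omega\vert}{8\sqrt{\pi t}}$ prefactors are pulled out. I expect this to go through by a term-by-term inspection: the formula for $\tilde i_k$ in Corollary \ref{corollary:AsymptotischeEntwicklungShiftedZ_I}, the formula for $\tilde b_k$ in Corollary \ref{corollary:AsymptoticExpansionZ_E}, and the formula for $c_\ell^{\mathbb{H}}(\gamma_i)$ in Corollary \ref{corollary:ContributionOneVertexShift} together with the shift $\nu_k^{\mathbb{H}} = \sum_\ell \frac{1}{(k-\ell)!}(-\tfrac14)^{k-\ell}c_\ell^{\mathbb{H}}$ all match the stated universal forms under $\kappa\mapsto -1$, since the binomial convolution with $(-\tfrac14)^{\cdot}$ seen in those corollaries is exactly the expansion of $e^{-t/4}$ that converts shifted coefficients to unshifted ones — and this same structure, with $\frac14$ replaced by $-\frac{\kappa}{4}$ and the appropriate $\kappa$-powers, is what \eqref{equation:CoefficientsAreaHeatAsymptoticConstantCurvaturePolygon}--\eqref{equation:CoefficientsVerticesHeatAsymptoticConstantCurvaturePolygon} encode. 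Once the $\kappa=\pm 1, 0$ identifications are checked, the scaling argument upgrades them to all $\kappa$, completing the proof.
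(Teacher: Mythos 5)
Your first two steps — verifying that the universal coefficients $f_\nu$, $r_\nu$, $e_\nu(\gamma_i)$ specialise correctly at $\kappa\in\{-1,0,1\}$ to the hyperbolic, Euclidean and spherical expansions, and then handling general $\kappa$ by rescaling the metric (so that $Z_{(\Omega,c\,g)}(t)=Z_{(\Omega,g)}(t/c)$ and the powers of $\kappa$ distribute as claimed) — coincide with the first two steps of the paper's proof and are fine as far as they go.

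However, there is a genuine gap in the reduction. You write that after rescaling, $N$ is isometric to an open subset of a manifold of curvature $1$ (resp.\ $-1$, $0$) and then substitute ``the already-established curvature-$1$ expansion for $Z_{\Omega_1}$.'' But the base cases you invoke (Theorem \ref{theorem:AsymptoticExpansionHeatTraceHyperbolPolygon}, \citep{VanDenBerg}, \citep{Watson}) are established only for polygons in the \emph{simply connected model spaces} $\mathbb{H}^2$, $\mathbb{R}^2$, $\mathbb{S}^2$. A general complete surface of constant curvature is only \emph{locally} isometric to its model space: it may be a flat torus or cylinder, a hyperbolic surface, $\mathbb{RP}^2$, etc., and a polygon in the sense of Definition \ref{definition:PolygonGeneral} (which allows $\Omega=N$ compact, closed geodesic boundary components, $M=0$ angles, and edges counted twice in $\vert\partial\Omega\vert$) need not embed isometrically into $\mathbb{H}^2$, $\mathbb{R}^2$ or $\mathbb{S}^2$ — the paper's own example of a domain in a cylinder bounded by two closed geodesics illustrates this. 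So the statement for a general curvature-$1$ (or $0$, or $-1$) manifold is precisely what still has to be proven after your rescaling, and your argument is circular at this point. The paper closes this gap with a localisation argument: it constructs a finite cover of $\overline{\Omega}$ by geodesic discs small enough to be isometrically embedded into the model space $M_\kappa$, decomposes $\Omega$ into disjoint pieces near the vertices, near the edges, and in the interior, and uses the local principle of not feeling the boundary (Lemma \ref{lemma:PNFBlokal}, itself resting on Lemmas \ref{lemma:PNFBAllgemein} and \ref{lemma:ProbabilisticFormulaHeatKernel}) to show that each piece contributes, up to an $O(t^{-1}e^{-D/t})$ error, the same asymptotics as the corresponding circle sector, half-disc or disc in $M_\kappa$. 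Without some such transfer step your proof only covers polygons in the three model spaces and their global rescalings, not the corollary as stated.
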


\begin{proof}
We proceed in several steps. Let us start with some comments on the cases we already know. If $N=\mathbb{H}^2$ we have $\kappa=-1$ and the asymptotic expansion \eqref{equation:HeatAsymptoticConstantCurvaturePolygon} coincides with the asymptotic expansion given in \eqref{equation:AsymptoticExpansionHeatTraceHyperbolPolygon}. Note that
\begin{align*}
f_{\nu} =   \frac{4\pi}{\vert \Omega \vert}  (-1)^{\nu} \cdot i_{\nu-1}^{\mathbb{H}},\quad r_{\nu} =  \frac{8\sqrt{\pi}}{\vert \partial\Omega \vert} (-1)^{\nu} \cdot b_{\nu-1}^{\mathbb{H}},\quad e_{\nu}(\gamma_i) = (-1)^{\nu} \sum\limits_{\ell=0}^{\nu} \frac{1}{(\nu-\ell)!} \left( - \frac{1}{4} \right)^{\nu-\ell} c_{\ell}^{\mathbb{H}}(\gamma_i),
\end{align*}
where the coefficients on the right-hand sides are defined as in Theorem \ref{theorem:AsymptoticExpansionHeatTraceHyperbolPolygon} (compare also with the formulas \eqref{equation:AsymptotischeEntwicklungZ_I} and \eqref{equation:AsymptoticExpansionZ_E1}).

If $N=\mathbb{S}^2$, then $\kappa=1$ and \eqref{equation:HeatAsymptoticConstantCurvaturePolygon} reduces to the asymptotic expansion computed in \citep{Watson} (see \eqref{equation:SphereAsymp} and our comment on the coefficients thereafter). And if $M=\mathbb{R}^2$, then $\kappa=0$ and one easily shows that \eqref{equation:HeatAsymptoticConstantCurvaturePolygon} is equal to the first three terms of  \eqref{equation:EuclideanAsymp}.

Next, we generalise these results one step further by scaling the standard metric on the hyperbolic plane and the unit sphere. Let $g_{\mathbb{H}^2}$ denote the standard metric on the hyperbolic plane $\mathbb{H}^2$ and let $c\in (0,\infty)$ be a constant. Consider the space $\left( \mathbb{H}^2, c\,   g_{\mathbb{H}^2} \right)$. The curvature of $\mathbb{H}^2$ with respect to the scaled metric is constant and equal to $\kappa = -\frac{1}{c}$. Obviously, a domain $\Omega\subset \mathbb{H}^2$ is a polygon with respect to $g_{\mathbb{H}^2}$ and has eigenvalues $0<\lambda_1<\lambda_2\leq \lambda_3\leq...$ if and only if it is a polygon with respect to $c\,  g_{\mathbb{H}^2}$ with eigenvalues $0<\frac{\lambda_1}{c }<\frac{\lambda_2}{c }\leq \frac{\lambda_3}{c }\leq...$. Hence, 
\begin{align*}
Z_{\left( \Omega,c\, g_{\mathbb{H}^2}\right)}\left( t \right) = Z_{\left( \Omega, g_{\mathbb{H}^2}\right)}\left( \frac{t}{c } \right),
\end{align*}
and the claim follows easily for polygons in the space $\left( \mathbb{H}^2, c\,  g_{\mathbb{H}^2} \right)$. The same argument can be applied to $\mathbb{S}^2$ as well.

Lastly, suppose that $N$ is an arbitrary two-dimensional complete Riemannian manifold of constant curvature $\kappa\in\mathbb{R}$. Since the metric can be scaled appropriately as above, we assume without loss of generality that $\kappa\in \{ -1,0,1 \}$.  Then $N$ is locally isometric to $M_{\kappa}$, where
\begin{align*}
M_{\kappa}:= \begin{cases}
\left( \mathbb{H}^2,  g_{\mathbb{H}^2}\right),& \text{if }\kappa = -1, \\
\left( \mathbb{R}^2,  g_{\mathbb{R}^2}\right),& \text{if }\kappa = 0, \\
\left( \mathbb{S}^2,  g_{\mathbb{S}^2}\right),& \text{if }\kappa = 1, \\
\end{cases}
\end{align*}
and where $g_{\mathbb{R}^2}$ and $g_{\mathbb{S}^2}$ denote the standard metric on $\mathbb{R}^2$ and $\mathbb{S}^2$, respectively.
We construct a certain finite open cover of $\Omega$. Let us choose around any $Q\in \overline{\Omega}$ a relatively compact geodesic disc  $\genball{r_Q}{Q}$ with $r_Q\in (0,i(Q)]$ as follows: 
\begin{itemize}
\item[$(i)$] If $Q$ is a vertex, we let $r_Q:=2R$, where $R>0$ is such that the discs of radius $2R$ around all vertices are mutually disjoint and can be isometrically embedded into $M_{\kappa}$, and such that $B_{2R}(Q)\cap \Omega$ is a disjoint union of finitely many circle sectors at $Q$ with radius $2R$.
\item[$(ii)$] If $Q$ lies on an edge $E$, but is not a vertex, then choose $r_Q$ such that $\genball{r_Q}{Q}$ contains no vertices, no points from $\partial\Omega \backslash E$, and such that it can also be embedded isometrically into $M_{\kappa}$.
\item[$(iii)$] For any $Q\in\Omega$ we choose $r_Q$ such that $\genball{r_Q}{Q}$ is completely contained in $\Omega$ and can be isometrically embedded into $M_{\kappa}$.
\end{itemize}
Consider the collection of all those discs. Obviously, when we replace any disc of this collection by the geodesic disc with the same center and half of the radius, we obtain an open cover of $\overline{\Omega}$. Let $\genball{R_1}{Q_1},...,\genball{R_n}{Q_n}\subset N$ be a finite subcover of it, where $n\in\mathbb{N}$ is fixed. By construction, any $B_{2\cdot R_{\ell}}(Q_{\ell})$ with $\ell\in\lbrace 1,...,n \rbrace$ can be isometrically embedded into $M_{\kappa}$. Henceforth, we identify these discs with their images in $M_{\kappa}$ through such isometric embeddings. 

Roughly speaking, the idea is now to decompose $\Omega$ into small pieces using the finite cover above. Then we apply Lemma \ref{lemma:PNFBlokal} to $N$ and each piece of $\Omega$. By this process the full  asymptotic expansion of $Z_{\Omega}(t)$ splits into several parts such that each part is attached to one of the pieces of $\Omega$. Then we translate everything to $M_{\kappa}$ by the isometric embeddings and apply Lemma \ref{lemma:PNFBlokal} to $M_{\kappa}$.

We have three kinds of geodesic discs in our finite covering, which lead to the following decomposition of $\Omega$. Let $\eta\in\mathbb{N}_0$ and $\{\ell_1,...,\ell_{\eta}\}\subset \{1,...,n\}$ be such that $\{ B_{R}(Q_{\ell_i}) \}_{i=1}^{\eta}$ is the collection of all discs in the finite cover induced by $(i)$. By definition, $\Omega\cap \left( \cup_{i=1}^{\eta} B_{R}(Q_{\ell_i}) \right)$ consists of a disjoint union of $M$ circle sectors, each contained in exactly one angle of the polygon. Let $\lbrace W_{R} (P_j)\rbrace_{j=1}^{M}$ be all those circle sectors, where $P_j$ denotes the vertex of $\Omega$ corresponding to the angle $\gamma_j$. Similarly, let $\{ \genball{R_{m_i}}{Q_{m_i}} \}_{i=1}^{\tau}$ with $\tau\in\mathbb{N}_{0}$ and $\{m_1,...,m_{\tau}\}\subset \{1,...,n\}$ be all those discs of the finite cover induced by $(ii)$. For each $i\in\{1,...,\tau\}$ the set $\Omega\cap \genball{R_{m_i}}{Q_{m_i}}$ is either a half-disc or a disjoint union of two half-discs. Let $\lbrace B_{e_{j}}\rbrace_{j=1}^{k_1}$ be the collection of all the half-discs which are obtained that way. Finally, let $\lbrace B_{i_{j}}\rbrace_{j=1}^{k_2}$ be all the discs in the finite cover coming from $(iii)$. Then
\begin{align*}
\Omega =  V \cupdot E \cupdot I,
\end{align*}
where 
\begin{align*}
V:=\bigcupdot_{j=1}^{M} W_{R}(P_j),\, E:=\bigcupdot_{j=1}^{k_1} B_{e_{j}}\backslash \left( V\cup \left(\cup_{\ell = 1}^{j-1} B_{e_{\ell}}\right)\right),\, I:=\bigcupdot_{j=1}^{k_2}B_{i_{j}}\backslash \left( V\cup E \cup \left( \cup_{\ell=1}^{j-1}B_{i_{\ell}}\right) \right).
\end{align*}

Let us show how to do the rest of the program. Let $W_{R}(P_j)$ be arbitrary with $j\in \{ 1,...,M \}$. Recall that the circle sector $W_{R}(P_j)$ has angle $\gamma_j$. As in Lemma \ref{lemma:PNFBlokal}, let $W_{2R}(P_j)$ denote the circle sector at $P_j$ with radius $2R$ and angle $\gamma_j$ such that $W_{R}(P_j)\subset W_{2R}(P_j)$. It follows from \eqref{equation:PNFBlokalBoundary} that the function 
\begin{align*}
t\mapsto \int_{W_R(P_j)}K_{\Omega}(x,x;t)dx
\end{align*}
has the same asymptotic expansion as the function $t\mapsto \int_{W_{R}(P_j)}K_{W_{2R}(P_j)}(x,x;t)dx$ as $t\searrow 0$. Note that the domains $W_{R}(P_j)$ and $W_{2R}(P_j)$ are identified with their isometric images in $M_{\kappa}$ and the function $K_{W_{2R}(P_j)}$ is identified with the heat kernel of that isometric image $W_{2R}(P_j)\subset M_{\kappa}$. When we apply \eqref{equation:PNFBlokalSectors} to $M_{\kappa}$, we see that the function $t\mapsto \int_{W_{R}(P_j)}K_{W_{2R}(P_j)}(x,x;t)dx$ has the same asymptotic expansion as the function
\begin{align*}
t\mapsto \int_{W_R(P_j)}K_{W_{\gamma_j}(P_j)}(x,x;t)dx
\end{align*}
as $t\searrow 0$, where $W_{\gamma_j}(P_j)\subset M_{\kappa}$ denotes the wedge at $P_j$ with angle $\gamma_j$ such that $W_R(P_j)\subset W_{\gamma_j}(P_j)$. Note that for $\kappa = -1$ the asymptotic expansion of that function follows from Section \ref{section:ExpansionTrace}. Similarly, if $\kappa=0$ and $\kappa=1$ its asymptotic expansion follows from \citep{VanDenBerg} and \citep{Watson}, respectively. Hence we know the asymptotic expansion of $t\mapsto \int_{W_R(P_j)}K_{\Omega}(x,x;t)dx$ as $t\searrow 0$.

The same kind of argument can also be applied to the other pieces of the decomposition, i.e. any of those disjoint subsets into which $E$ and $I$ are decomposed above. Note that for the subsets of $I$ one needs to apply the above argument with \eqref{equation:PNFBlokalInterior} instead of \eqref{equation:PNFBlokalBoundary}, and \eqref{equation:LemmaPNFBAllgemein} instead of \eqref{equation:PNFBlokalSectors}. From that analysis the statement follows.
\end{proof}

This corollary shows how the heat invariants for Euclidean, spherical and hyperbolic polygons are linked to each other. Further, it follows from Corollary \ref{corollary:HeatAsymptoticConstantCurvaturePolygon} that polygons of constant zero curvature have at most three nonvanishing heat invariants. This was previously well-known for Euclidean polygons (\citep{VanDenBerg}).

Another observation of Corollary \ref{corollary:HeatAsymptoticConstantCurvaturePolygon} is the following: Suppose a polygon of constant curvature $\kappa\in\mathbb{R}$ is given. Then an angle $\gamma\in (0,2\pi]$ of the polygon contributes the terms $V_{\kappa}(\gamma) = \sum_{\nu =0}^{\infty} e_{\nu}(\gamma) \cdot \kappa^{\nu } \cdot t^{\nu}$ to its heat trace asymptotic expansion. In other words, the contributions are given as polynomials in the Gaussian curvature with universal coefficients $e_{\nu}(\gamma)\in\mathbb{R}$ for all $\nu\in\mathbb{N}_{0}$. Another reason for this phenomenon will be provided through orbifold theory in Section \ref{section:ApplicationsOrbifolds} for angles $\gamma=\frac{\pi}{k}$, $k\in\mathbb{N}_{k\geq 2}$. 

We think that it is an interesting problem to investigate the contribution of an angle of a geodesic polygon to the heat invariants if the Gaussian curvature is arbitrary. We expect that in the general case the coefficients of an angle contribution resemble the case of constant curvature. That is, we conjecture that they are always given as polynomials in the Gaussian curvature and its covariant derivatives at the vertex.
 
In the remaining part of this section we draw some conclusions from the heat invariants for polygons of constant curvature. Obviously, the heat trace only depends on the spectrum of the Dirichlet Laplacian through \eqref{equation:HeatTraceBoundedDomain}. Thus, all heat invariants are determined by the spectrum. We may ask: Which geometric properties of a polygon are determined by its heat invariants and therefore  also are spectral invariants? The next corollary provides a first answer.

 Recall that for any polygon $\Omega$ of constant curvature $\kappa$, the Gau{\ss}-Bonnet theorem states:
\begin{align}
\label{equation:GaussBonnet}
\sum\limits_{i=1}^M \gamma_i = \vert \Omega \vert\cdot \kappa + M\cdot \pi - 2\pi\chi(\Omega),
\end{align}
where $M\in\mathbb{N}_{0}$ denotes the number of angles and $\gamma_1,...,\gamma_M$ are the angles of $\Omega$ (see \citep[Theorem V.2.7]{ChavelRiem}). Note that \eqref{equation:GaussBonnet} holds even in the nonorientable case and for polygons of constant curvature in the sense Definition \ref{definition:PolygonGeneral}. We will use this formula repeatedly in the sequel.

\begin{corollary}
\label{corollary:SpectralInvariantsVolumePerimeterCurvature}
Let $\Omega$ be a polygon of constant curvature. Then the volume $\vert \Omega \vert$, the perimeter $\vert \partial\Omega \vert$ and the curvature of the polygon are spectral invariants of $\Omega$. 
\end{corollary}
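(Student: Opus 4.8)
The plan is to read off everything from the heat trace asymptotics obtained in Corollary \ref{corollary:HeatAsymptoticConstantCurvaturePolygon}, combined with the Gau{\ss}--Bonnet formula \eqref{equation:GaussBonnet}. Recall that the heat trace $Z_\Omega(t)$ depends only on the spectrum of $\Delta_\Omega$ via \eqref{equation:HeatTraceBoundedDomain}, and hence so does every coefficient in its asymptotic expansion as $t\searrow 0$; thus any quantity we can recover from those coefficients is automatically a spectral invariant. Write the expansion from \eqref{equation:HeatAsymptoticConstantCurvaturePolygon} in the form
\begin{align*}
Z_\Omega(t) \overset{t\downarrow 0}{\sim} \frac{\vert\Omega\vert}{4\pi t} - \frac{\vert\partial\Omega\vert}{8\sqrt{\pi t}} + \sum_{k=0}^\infty \Big( i_k + b_k t^{1/2} + \nu_k \Big) t^k,
\end{align*}
where, by \eqref{equation:CoefficientsAreaHeatAsymptoticConstantCurvaturePolygon}--\eqref{equation:CoefficientsVerticesHeatAsymptoticConstantCurvaturePolygon} and the computation of $f_0, r_0$, the leading coefficients are $i_{-1}$-type $=\vert\Omega\vert/(4\pi)$ and the $t^{-1/2}$ coefficient is $-\vert\partial\Omega\vert/(8\sqrt\pi)$.

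First I would extract $\vert\Omega\vert$: the coefficient of $t^{-1}$ in $Z_\Omega(t)$ is exactly $\frac{\vert\Omega\vert}{4\pi}$ (since $f_0 = B_0(1/2) = 1$ gives $I_\kappa = \frac{\vert\Omega\vert}{4\pi t}(1 + O(t))$, and the $B_\kappa$ and $V_\kappa(\gamma_i)$ terms contribute nothing more singular than $t^{-1/2}$). Hence $\vert\Omega\vert$ is a spectral invariant. Similarly the coefficient of $t^{-1/2}$ is $-\frac{\vert\partial\Omega\vert}{8\sqrt\pi}$ (using $r_0 = -1$), so $\vert\partial\Omega\vert$ is a spectral invariant. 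For the curvature, the natural place to look is the constant term $k=0$, i.e. the coefficient of $t^0$, which equals $i_0 + \nu_0$. From the formulas, $i_0 = \frac{\vert\Omega\vert}{4\pi}\cdot f_1\cdot\kappa$ is linear in $\kappa$ with coefficient $\frac{\vert\Omega\vert}{4\pi}f_1$, and $\nu_0 = \sum_{i=1}^M e_0(\gamma_i)$, which does \emph{not} involve $\kappa$ (it is the $\nu=0$ term, purely a function of the angles). So the constant term has the shape $\frac{\vert\Omega\vert}{4\pi}f_1\,\kappa + (\text{angle term})$, which by itself is not enough --- one must eliminate the unknown angle contribution.

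The key step, and the main obstacle, is to combine the constant heat-trace coefficient with Gau{\ss}--Bonnet \eqref{equation:GaussBonnet} to isolate $\kappa$. Gau{\ss}--Bonnet reads $\sum_{i=1}^M \gamma_i = \vert\Omega\vert\kappa + M\pi - 2\pi\chi(\Omega)$; this brings in $\chi(\Omega)$ and $M$, which are a priori not known from the spectrum. The cleaner route, which I would actually pursue, is to note that the $\nu=0$ vertex coefficient $e_0(\gamma) = c$ for some explicit constant $c$ times $(\pi^2 - \gamma^2)/(\pi\gamma)$ --- indeed from \eqref{equation:CoefficientsVerticesHeatAsymptoticConstantCurvaturePolygon} with $\nu=0$ we get $e_0(\gamma)=\frac{1}{4}\,B_2\,B_0(1/2)\cdot\frac{\pi^2-\gamma^2}{\pi\gamma} = \frac{1}{24}\cdot\frac{\pi^2-\gamma^2}{\pi\gamma}$, which is the classical van den Berg--Kac vertex contribution $\frac{\pi^2-\gamma^2}{24\pi\gamma}$. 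The point is that this is the \emph{same} coefficient for all three model geometries, so the angle part of the constant term is curvature-independent. Therefore, having already determined $\vert\Omega\vert$ and $\vert\partial\Omega\vert$ spectrally, one looks at the full spectral data: from the subleading terms $i_k, b_k, \nu_k$ for $k\ge 1$ one can in principle recover more, but the slickest argument is that $i_1 = \frac{\vert\Omega\vert}{4\pi}f_2\,\kappa^2$ type expressions together with $i_0$ overdetermine $\kappa$ up to sign, and the sign is then fixed by whether $i_0 - \nu_0$ (now $\nu_0$ is a spectral invariant once $\vert\Omega\vert$ and higher data pin down the angle multiset --- but to avoid circularity I would instead argue directly). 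The clean statement: the coefficient of $t^0$ minus the coefficient of $t^1$ combined appropriately, together with $\vert\Omega\vert$, yields a polynomial identity in $\kappa$ whose coefficients are spectrally known and whose only real root consistent with the sign of $i_0$ (for fixed angle data) determines $\kappa$. I would therefore present it as: $\vert\Omega\vert$ and $\vert\partial\Omega\vert$ come immediately from the two most singular coefficients; then among the remaining heat invariants $\{i_k + \nu_k\}_{k\ge 0}$ and $\{b_k\}_{k\ge 0}$, the $b_k = r_{k+1}\frac{\vert\partial\Omega\vert}{8\sqrt\pi}\kappa^{k+1}$ (for $k\ge 0$) are nonzero multiples of $\kappa^{k+1}$ by known constants times the already-known $\vert\partial\Omega\vert$, so $b_0/\big(r_1\frac{\vert\partial\Omega\vert}{8\sqrt\pi}\big) = \kappa$ directly --- provided $\vert\partial\Omega\vert \ne 0$. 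The genuinely delicate case is thus $\vert\partial\Omega\vert = 0$, i.e. $\Omega = N$ closed; there one falls back on $i_0 = \frac{\vert\Omega\vert}{4\pi}f_1\kappa$ with $f_1 = \frac{1}{4!}(\,\cdot\,)$ an explicit nonzero constant and no vertices ($M=0$, so $\nu_0 = 0$), giving $\kappa = i_0\big/\big(\frac{\vert\Omega\vert}{4\pi}f_1\big)$ outright. Assembling these cases completes the proof.
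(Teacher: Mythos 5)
Your proposal is correct and, in its final form, is essentially the paper's own argument: volume and perimeter from the $t^{-1}$ and $t^{-1/2}$ coefficients, then curvature from the $\sqrt{t}$ coefficient when $\vert\partial\Omega\vert\neq 0$ (since $r_1\neq 0$), and from the $t^0$ coefficient when $\vert\partial\Omega\vert=0$ (no boundary, no vertices, $f_1\neq 0$). The detour through Gau{\ss}--Bonnet and the sign/overdetermination discussion in the middle is unnecessary and could be cut, but the argument you settle on is the right one.
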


\begin{proof}
The heat invariants given by the coefficients of $\frac{1}{t}$ and $\frac{1}{\sqrt{t}}$ determine the volume and the perimeter of the polygon, respectively. If $\vert \partial\Omega \vert \neq 0$, then the curvature of the polygon is determined (for example) by the heat invariant corresponding to $\sqrt{t}$. If $\vert \partial\Omega \vert = 0$, then $\Omega$ has no boundary and vertices and thus the right-hand side of \eqref{equation:HeatAsymptoticConstantCurvaturePolygon} reduces to $I_{\kappa}$. Hence, the curvature can be gleaned from the heat invariant corresponding to $t^{0}$.
\end{proof}

\begin{definition}
If the boundary of a polygon is a disjoint union of piecewise geodesic simple closed curves, then we call it a \emph{simple polygon}.
\end{definition}

Note that for simple polygons the number of angles, edges, and vertices is the same.

\begin{theorem}
\label{theorem:SpectralInvariantsAnglesEulerCharacteristic}
Let $\Omega$ be a polygon of nonzero constant curvature. Then the number of angles which are not equal to $\pi$ is a spectral invariant. Moreover, the multiset consisting of all angles of $\Omega$ which are not equal to $\pi$ is a spectral invariant as well. Furthermore, the Euler characteristic $\chi(\Omega)$ of the polygon is a spectral invariant.

In particular, if $\Omega$ is a simple polygon with nonzero constant curvature, then the number of vertices and the multiset of all angles are spectral invariants.
\end{theorem}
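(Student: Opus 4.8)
The plan is to extract the angle data from the heat invariants via the explicit formulas of Corollary~\ref{corollary:HeatAsymptoticConstantCurvaturePolygon} and then reduce the reconstruction to a rigidity statement about finitely supported measures. Since the spectrum determines the heat trace $Z_\Omega$, it determines its asymptotic expansion and hence all its coefficients. By Corollary~\ref{corollary:SpectralInvariantsVolumePerimeterCurvature} the numbers $|\Omega|,|\partial\Omega|,\kappa$ are spectral invariants. In \eqref{equation:HeatAsymptoticConstantCurvaturePolygon} only $I_\kappa$ and the vertex terms $V_\kappa(\gamma_i)$ carry integer powers of $t$ (the boundary part $B_\kappa$ carries half\mbox{-}integer powers), so for $\nu\in\mathbb N_0$ the coefficient of $t^{\nu}$ in $Z_\Omega$ equals $\tfrac{|\Omega|}{4\pi}f_{\nu+1}\kappa^{\nu+1}+\kappa^{\nu}\sum_{i=1}^{M}e_\nu(\gamma_i)$. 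Because $\kappa\neq 0$, this shows that for every $\nu$ the quantity $E_\nu:=\sum_{i=1}^{M}e_\nu(\gamma_i)$ is a spectral invariant; and since $e_\nu(\pi)=0$ (noted after Corollary~\ref{corollary:HeatAsymptoticConstantCurvaturePolygon}), only the angles $\gamma_i\neq\pi$ contribute to $E_\nu$.

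Next I would put $e_\nu$ into the shape forced by \eqref{equation:CoefficientsVerticesHeatAsymptoticConstantCurvaturePolygon}: writing $\tfrac{\pi^{2j}-\gamma^{2j}}{\pi\gamma^{2j-1}}=\pi^{2j-1}\gamma^{1-2j}-\pi^{-1}\gamma$ one gets $e_\nu(\gamma)=\alpha_\nu\gamma+\sum_{j=1}^{\nu+1}\beta_{\nu,j}\gamma^{1-2j}$ with universal constants, where $\beta_{\nu,\nu+1}\neq 0$ since it is a nonzero rational multiple of the nonvanishing Bernoulli number $B_{2\nu+2}$. Setting $x_i:=1/\gamma_i$, $Q':=\sum_{\gamma_i\neq\pi}\gamma_i=\sum x_i^{-1}$ and $p'_j:=\sum_{\gamma_i\neq\pi}\gamma_i^{1-2j}=\sum x_i^{\,2j-1}$, the invariants become a triangular system $E_\nu=\alpha_\nu Q'+\sum_{j=1}^{\nu+1}\beta_{\nu,j}p'_j$ with invertible leading coefficients; hence $Q',p'_1,p'_2,\dots$ are determined by $\{E_\nu\}$ up to a single free parameter, and formal differentiation gives $\partial p'_m/\partial Q'=c_m$, where $(c_m)_{m\ge1}$ is the unique sequence with $\alpha_\nu+\sum_{m=1}^{\nu+1}\beta_{\nu,m}c_m=0$ for all $\nu$. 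The \emph{technical heart} is the observation that $e_\nu(\pi)=0$ rewrites as $\alpha_\nu+\sum_{j}\beta_{\nu,j}\pi^{-2j}=0$, i.e.\ the sequence $(\pi^{-2m})_m$ satisfies exactly this recursion; by uniqueness $c_m=\pi^{-2m}$ for all $m$.

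Now let $\Omega_1,\Omega_2$ be isospectral polygons of the same nonzero constant curvature $\kappa$, and let $\mu'_k:=\sum_{\gamma_i\neq\pi}\delta_{1/\gamma_i}$ be the associated finitely supported positive measures on $(0,\infty)$; their supports lie in $[\tfrac1{2\pi},\infty)\setminus\{\tfrac1\pi\}$. By the preceding step, with $\lambda:=Q'(\Omega_1)-Q'(\Omega_2)$ we get $\int x^{2m-1}\,d(\mu'_1-\mu'_2)=p'_m(\Omega_1)-p'_m(\Omega_2)=\lambda\pi^{-2m}$ for all $m\ge1$. Since $\lambda\pi^{-2m}=\tfrac\lambda\pi(1/\pi)^{2m-1}=\int x^{2m-1}\,d\bigl(\tfrac\lambda\pi\delta_{1/\pi}\bigr)$, the finitely supported signed measure $\sigma:=\mu'_1-\mu'_2-\tfrac\lambda\pi\delta_{1/\pi}$ has $\int x^{2m-1}\,d\sigma=0$ for all $m\ge1$; writing $\sigma=\sum_a w_a\delta_{x_a}$ and passing to $\sum_a(w_ax_a)\delta_{x_a^2}$, all of whose ordinary moments vanish, Lagrange interpolation at the (distinct) points $x_a^2$ forces $w_ax_a=0$, hence $w_a=0$, i.e.\ $\sigma=0$. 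Therefore $\mu'_1-\mu'_2=\tfrac\lambda\pi\delta_{1/\pi}$; but the left-hand side carries no mass at $1/\pi$, so $\lambda=0$ and $\mu'_1=\mu'_2$. This is precisely the assertion that the number $M'$ of angles $\neq\pi$ and the multiset of those angles are spectral invariants. Finally, rewriting the Gau{\ss}--Bonnet formula \eqref{equation:GaussBonnet} (split the sum into angles $\neq\pi$ and angles $=\pi$, the latter contributing $(M-M')\pi$) as $\chi(\Omega)=\tfrac1{2\pi}\bigl(|\Omega|\kappa-Q'+M'\pi\bigr)$ and using that $|\Omega|,\kappa,Q',M'$ are spectral invariants shows $\chi(\Omega)$ is a spectral invariant. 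For the last assertion: in a simple polygon a vertex of angle $\pi$ would have collinear incident edges and thus be a smooth boundary point by Definition~\ref{definition:EdgeVertex}, and an angle $2\pi$ is impossible; so the vertex set coincides with the set of angles, all $\neq\pi$, whence the number of vertices equals $M'$ and the multiset of all angles equals $\mu'_1$, both just shown to be spectral invariants. The main obstacle is the rigidity in the third paragraph — establishing $c_m=\pi^{-2m}$ and coupling it with the fact that the angle measures avoid $1/\pi$; the remaining steps are bookkeeping with the explicit coefficients and a standard finite moment argument.
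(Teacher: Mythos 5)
Your proposal is correct, and the reconstruction step is genuinely different from the paper's. The paper works with the combinations $W_{\nu}=\sum_i\frac{\pi^{2\nu+2}-\gamma_i^{2\nu+2}}{\pi\gamma_i^{2\nu+1}}$, i.e.\ it chooses basis functions that already vanish at $\gamma=\pi$; the resulting triangular system (leading coefficient a nonzero multiple of $B_{2\nu}$) then determines the whole sequence $(W_\nu)$ with no free parameter, and the angles are recovered \emph{constructively}: one studies $\gamma^{2\nu+1}W_{\nu,1}$ as $\nu\to\infty$ to detect the smallest angle, subtracts its contribution, and iterates. You instead expand $e_\nu$ in the basis $\{\gamma,\gamma^{-1},\gamma^{-3},\dots\}$, accept the resulting one-dimensional ambiguity in $(Q',p_1',p_2',\dots)$, identify the kernel direction as $(1,\pi^{-2},\pi^{-4},\dots)$ precisely from the identity $e_\nu(\pi)=0$ (this is the nice observation: the ambiguity is exactly ``adding mass at $\gamma=\pi$'', which is excluded by fiat), and then settle matters by a Vandermonde/moment uniqueness argument for the difference of two finitely supported measures. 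Your route is a uniqueness proof rather than a reconstruction algorithm, and it buys you a cleaner finish — no $\nu\to\infty$ asymptotics or peeling induction — at the cost of the free-parameter bookkeeping; the paper's route buys an explicit procedure for reading the multiset of angles off the heat invariants of a single polygon. All the supporting details in your write-up check out: the split of \eqref{equation:HeatAsymptoticConstantCurvaturePolygon} into integer versus half-integer powers, the nonvanishing of $\beta_{\nu,\nu+1}$ (a nonzero multiple of $B_{2\nu+2}\pi^{2\nu+1}$ — not rational, as you wrote, but that is immaterial), the passage to $\sum_a(w_ax_a)\delta_{x_a^2}$ so that vanishing of the odd moments becomes vanishing of all ordinary moments, and the Gau{\ss}--Bonnet computation $\chi(\Omega)=\frac{1}{2\pi}(|\Omega|\kappa-Q'+M'\pi)$, which agrees with the paper's.
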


\begin{proof}
Because the Gaussian curvature is not zero, there are infinitely many nonvanishing heat invariants. Let $\kappa\in\mathbb{R}\backslash\{ 0 \}$ denote the curvature of $\Omega$ and let $\gamma_1,...,\gamma_M$ be all angles of $\Omega$ which are not equal to $\pi$, where $M\in\mathbb{N}_0$. Then the coefficient corresponding to $t^{\nu}$, $\nu\in\mathbb{N}_0$, is given by
\begin{align*}
\frac{\vert \Omega \vert}{4\pi}f_{\nu+1}\cdot \kappa^{\nu+1} + \sum\limits_{i=1}^{M} e_{\nu}(\gamma_i) \kappa^{\nu}.
\end{align*}
Note that angles which are equal to $\pi$ can be ignored since $e_{\nu}(\pi)=0$ for all $\nu\in\mathbb{N}_0$.
We know from Corollary \ref{corollary:SpectralInvariantsVolumePerimeterCurvature} that the volume and the curvature of the polygon are determined by the spectrum. Therefore, the spectrum also determines the value of the sum $\sum_{i=1}^{M} e_{\nu}(\gamma_i)$ for all $\nu\in\mathbb{N}_0$. Note that for each $\nu\in\mathbb{N}_0$, the coefficient of the highest power term
\begin{align*}
W_{\nu}:=\sum\limits_{i=1}^{M} \frac{\pi^{2\nu+2} - \gamma_i^{2\nu+2}}{\pi \gamma_i^{2\nu+1}}
\end{align*}
in $\sum_{i=1}^{M} e_{\nu}(\gamma_i)$ corresponds to $\ell = \nu,\, j=\nu + 1$ in \eqref{equation:CoefficientsVerticesHeatAsymptoticConstantCurvaturePolygon} and equals $(-1)^{\nu}\cdot \frac{B_{2\nu}}{4(\nu+1)!(2\nu+1)}$. Since the Bernoulli numbers with even index never vanish, i.e. $B_{2\nu}\neq 0$ for all $\nu\in\mathbb{N}_0$ (see e.g. \citep[p. 23]{Noerlund}), we conclude by induction that the spectrum determines the sequence $(W_{\nu})_{\nu\in\mathbb{N}_0}$.

Thus we also obtain the following spectral invariants:
\begin{align*}
W_{\nu,1}:=\frac{1}{\pi^{2\nu+1}}\left( W_{\nu+1} - W_{\nu}\right) = \sum\limits_{i=1}^{M}\left( \left( \frac{\pi}{\gamma_i}\right)^2 - 1 \right)\left( \frac{1}{\gamma_i} \right)^{2\nu+1}\, \text{ for }  \nu\in\mathbb{N}_{0}.
\end{align*}
We claim that the smallest angle can be deduced from the sequence $(W_{\nu,1})_{\nu\in\mathbb{N}_0}$.

Since $\frac{\pi^2}{\gamma_i^2} - 1\neq 0$ by assumption, we have $\lim_{\nu\rightarrow \infty} \gamma^{2\nu+1} \cdot W_{\nu,1} = 0$ for all $\gamma>0$ if and only if $M=0$. Thus the spectrum determines whether the polygon has vertices or not. 

If $M\geq 1$ then for $\gamma=\min_i  \gamma_i =:\theta_1$ the above limit is not zero. More precisely, observe that there exists some $n_1 \in\mathbb{N}$ such that for all $\gamma \in (0, \theta_1]$ we have
\begin{align*}
\lim\limits_{\nu\rightarrow\infty} \gamma^{2\nu+1}\cdot W_{\nu,1} = 
\begin{cases}
0,\, &\text{ if } \gamma<\theta_1, \\
\left( \left( \frac{\pi}{\theta_1}\right)^2 - 1 \right)\cdot n_1,\, &\text{ if } \gamma = \theta_1.
\end{cases}
\end{align*}
Thus we have
\begin{align*}
\theta_1 = \inf\left\{\, \gamma>0 \, \middle| \, \gamma^{2\nu+1}\cdot W_{\nu, 1} \text{ is convergent as } \nu\rightarrow\infty \text{ with nonzero limit } \right\}.
\end{align*}
So the magnitude of the smallest angle $\theta_1$ is a spectral invariant. 

When we do the same argument as above with $W_{\nu,1}$ replaced by 
\begin{align*}
W_{\nu,2}:=W_{\nu, 1} -  \left( \left( \frac{\pi}{\theta_1}\right)^2 - 1 \right)\left( \frac{1}{\theta_1} \right)^{2\nu+1}
\end{align*}
we obtain the value of $\theta_2:=\min \left( \{\gamma_1,...,\gamma_M\}\backslash\{\theta_1\}\right)$. Repeating this argument in the above manner, we successively obtain $M$ values $\theta_1\leq\theta_2\leq ... \leq \theta_M$. This process will eventually stop when $W_{\nu,M+1}=0$ for all $\nu\in\mathbb{N}_{0}$. Thus, we obtain the value of $M$ as well as the multiset of angles $\{ \gamma_1,...,\gamma_M \} = \{ \theta_1,...,\theta_M \}$.

By \eqref{equation:GaussBonnet}, the Euler characteristic $\chi(\Omega)$ is now determined by the spectrum as well.

If $\Omega$ is a simple polygon, then there are no angles equal to $\pi$ and the number of angles is equal to the number of vertices. Thus the number of vertices of $\Omega$ as well as the multiset of all angles of the polygon are spectral invariants.
\end{proof}

\begin{corollary}
\label{corollary:SpectralInvariantsFromHeatInvariants}
Let $\Omega$ be a polygon of constant zero curvature. Then the spectrum, the Euler characteristic and the number $M$ of all angles of the polygon determines $\sum_{i=1}^{M}\frac{1}{\gamma_i}$, where $\gamma_1,...,\gamma_M\in (0,2\pi]$ denote the angles of the polygon.
\end{corollary}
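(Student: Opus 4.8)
The plan is to isolate the single nonzero ``vertex'' heat invariant of a flat polygon from Corollary~\ref{corollary:HeatAsymptoticConstantCurvaturePolygon}, to observe that it is a spectral invariant, and then to use the Gauss--Bonnet formula to trade the term $\sum_i\gamma_i$ that accompanies $\sum_i 1/\gamma_i$ for the data $M$ and $\chi(\Omega)$.

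First I would specialise Corollary~\ref{corollary:HeatAsymptoticConstantCurvaturePolygon} to $\kappa=0$. Since $\kappa^{\nu}=0$ for $\nu\ge 1$ while $0^{0}=1$, only the $\nu=0$ term of $I_{\kappa}$, of $B_{\kappa}$, and of each $V_{\kappa}(\gamma_i)$ survives, so
\[
Z_{\Omega}(t)\overset{t\downarrow 0}{\sim}\frac{|\Omega|}{4\pi t}\,f_{0}+\frac{|\partial\Omega|}{8\sqrt{\pi t}}\,r_{0}+\sum_{i=1}^{M}e_{0}(\gamma_i),
\]
with $f_{0}=B_{0}(\tfrac{1}{2})=1$, $r_{0}=-1$, and, evaluating \eqref{equation:CoefficientsVerticesHeatAsymptoticConstantCurvaturePolygon} at $\nu=0$ (so $\ell=0$, $j=1$, using $B_{0}(\tfrac{1}{2})=1$ and $B_{2}=\tfrac{1}{6}$),
\[
e_{0}(\gamma_i)=\binom{2}{2}\frac{B_{0}(\tfrac{1}{2})\,B_{2}}{4\cdot 1!\cdot 1}\cdot\frac{\pi^{2}-\gamma_i^{2}}{\pi\gamma_i}=\frac{\pi^{2}-\gamma_i^{2}}{24\pi\gamma_i},
\]
which recovers the vertex contribution in the Euclidean expansion \eqref{equation:EuclideanAsymp}. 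Hence the constant heat invariant is
\[
\nu_{0}:=\sum_{i=1}^{M}\frac{\pi^{2}-\gamma_i^{2}}{24\pi\gamma_i}=\frac{\pi}{24}\sum_{i=1}^{M}\frac{1}{\gamma_i}-\frac{1}{24\pi}\sum_{i=1}^{M}\gamma_i,
\]
and, because the heat trace depends only on the spectrum through \eqref{equation:HeatTraceBoundedDomain}, the number $\nu_{0}$ is a spectral invariant.

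Next I would solve this identity for $\sum_i 1/\gamma_i$ and eliminate $\sum_i\gamma_i$ via the Gauss--Bonnet formula \eqref{equation:GaussBonnet}, which for $\kappa=0$ gives $\sum_{i=1}^{M}\gamma_i=M\pi-2\pi\chi(\Omega)$; this yields
\[
\sum_{i=1}^{M}\frac{1}{\gamma_i}=\frac{24}{\pi}\,\nu_{0}+\frac{1}{\pi^{2}}\sum_{i=1}^{M}\gamma_i=\frac{24\,\nu_{0}}{\pi}+\frac{M-2\chi(\Omega)}{\pi},
\]
which is determined by the spectrum (through $\nu_{0}$), by $M$, and by $\chi(\Omega)$, exactly the data allowed in the statement. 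There is essentially no obstacle in this argument; the only point worth underlining is \emph{why} $M$ and $\chi(\Omega)$ have to be supplied as extra hypotheses. This is precisely the contrast with Theorem~\ref{theorem:SpectralInvariantsAnglesEulerCharacteristic}: when $\kappa\neq 0$ there are infinitely many nonzero heat invariants and the individual angles — hence $M$ and, via Gauss--Bonnet, $\chi(\Omega)$ — can be separated out, whereas for $\kappa=0$ only the three terms above remain, so the spectrum delivers just the single linear combination $\nu_{0}$ of the reciprocals and magnitudes of the $\gamma_i$ (besides $|\Omega|$ and $|\partial\Omega|$), and the combinatorial datum $M$ together with the topological datum $\chi(\Omega)$ is genuinely needed to pin down $\sum_i 1/\gamma_i$.
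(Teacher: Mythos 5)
Your proposal is correct and follows essentially the same route as the paper: extract the $t^{0}$ heat invariant from Corollary \ref{corollary:HeatAsymptoticConstantCurvaturePolygon} at $\kappa=0$, identify it as $\frac{1}{24\pi}\bigl(\sum_{i}\pi^{2}/\gamma_i-\sum_{i}\gamma_i\bigr)$, and eliminate $\sum_i\gamma_i$ via Gauss--Bonnet using $M$ and $\chi(\Omega)$. Your explicit evaluation of $e_0(\gamma_i)$ and the closing remark on why $M$ and $\chi(\Omega)$ must be supplied are both accurate.
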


\begin{proof}
The heat invariant corresponding to $t^{0}$ is given by 
\begin{align*}
\frac{\vert \Omega \vert}{4\pi}f_1\cdot \kappa + \sum\limits_{i=1}^{M} e_{0}(\gamma_i) =  \frac{1}{24 \pi}\left(  \sum\limits_{i=1}^{M} \frac{\pi^2 }{\gamma_i} - \sum\limits_{i=1}^{M} \gamma_i  \right).
\end{align*}
By the Gau{\ss}-Bonnet theorem we have $\sum_{i=1}^{M} \gamma_i = M \pi - 2\pi \chi(\Omega)$. Hence the sum $\sum_{i=1}^{M}\frac{1}{\gamma_i}$ is determined by the spectrum together with $M$ and $\chi(\Omega)$.
\end{proof}

Note that the above Corollary is well-known for simple Euclidean polygons (see e.g. \cite{GrieserTriangle}).

As it is usual, we call two polygons \emph{isospectral} if the spectrum of their Dirichlet Laplacians are equal and all corresponding eigenvalues have the same multiplicities.
By Corollary \ref{corollary:SpectralInvariantsVolumePerimeterCurvature} two polygons with different area, perimeter or curvature can never be isospectral. Or, positively formulated, two isospectral polygons do always have the same area, perimeter and curvature. But what else can be said about isospectral polygons? On the one hand, it is well-known that there exist isospectral polygons which are not isometric. Therefore it is not always possible to deduce the whole geometry of a polygon from its spectrum. On the other hand there are also some positive results known for simple Euclidean polygons. Let us review some of those results.

Obviously, if a polygon has zero curvature, then the heat invariants do not provide much information about the geometry of the polygon. They do not tell us how many vertices a simple polygon has, in contrast to the heat invariants for simple polygons with nonzero curvature (see Theorem \ref{theorem:SpectralInvariantsAnglesEulerCharacteristic} above). However, within special classes of polygons we can deduce nevertheless enough geometric information from the heat invariants to distinguish polygons by their spectra. For example, it is known that the spectrum of a Euclidean triangle determines the triangle in the following sense: If we know that $\Omega$ is a Euclidean polygon with $3$ angles, then the spectrum determines the polygon $\Omega$ up to isometry. This was first proven by C. Durso in her Ph.D. thesis \citep{Durso} using heat invariants and spectral invariants derived by other methods. Later on, a much shorter proof was given by D. Grieser and S. Maronna in \citep{GrieserTriangle}. Grieser and Maronna in fact prove that one can deduce from the heat invariants the values of all three angles of a triangle. 

Similar results are given in the recent article \citep{RowlettLu}. Z. Lu and J. Rowlett prove that one can deduce the values of the angles of a (Euclidean) parallelogram from its heat invariants and thus they obtain the following result: If we know a priori that $\Omega$ is a parallelogram, then the spectrum determines $\Omega$ up to isometry. They also prove in \citep{RowlettLu} that the regular $n$-gon uniquely maximises the isoperimetric ratio $\frac{\vert \Omega \vert}{ \vert \partial\Omega\vert^2}$ among all $n$-gons and thus they conclude: If an $n$-gon is isospectral to a regular $n$-gon, then they are isometric. There are a couple more of interesting results in \citep{RowlettLu}, but those are established using not only the heat invariants but other methods as well.

The following two corollaries add some simple and new observations.

\begin{remark}
If $\Omega\subset N$ is a simply connected polygon with constant curvature $\kappa = 0$, then $\Omega$ is isometric to a Euclidean polygon in $\mathbb{R}^2$. In fact, $\Omega$ is isometric to each component of its preimage under the universal covering $\mathbb{R}^2\rightarrow N$. In particular, $M\geq 3$ and $\sum_{i=1}^{M}\gamma_i = (M-2)\pi$ by elementary geometry, where $\gamma_1,...,\gamma_M$ denote all angles of $\Omega$.
\end{remark}

\begin{corollary}
\label{corollary:SpektralinvariantenFlachePolygone}
Let $\Omega$ be a simply connected Euclidean polygon. Let $M\geq 3$ denote the number of all angles. Then:
\begin{itemize}
\item[$(i)$] The spectrum together with $M$ determines whether $\Omega$ is equiangular or not.
\item[$(ii)$] Let $c_0:=\sum_{i=1}^{M}\frac{\pi^2 - \gamma_i^2}{24\pi\gamma_i}$ be the heat invariant corresponding to the power $t^0$. Then $c_0 > \frac{1}{6}$. Furthermore, $M\geq 1 + \frac{6\cdot c_0}{6 c_0 - 1}$, which gives a nontrivial bound if $c_0\in (\frac{1}{6}, \frac{1}{3})$.
\end{itemize}
\end{corollary}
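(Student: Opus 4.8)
The plan is to extract $c_0$ as a spectral invariant from Corollary~\ref{corollary:HeatAsymptoticConstantCurvaturePolygon} and then reduce both assertions to the convexity of $x\mapsto 1/x$ on $(0,\infty)$. First I would specialise Corollary~\ref{corollary:HeatAsymptoticConstantCurvaturePolygon} to $\kappa=0$: only the $\nu=0$ terms survive, so
\begin{align*}
Z_\Omega(t)\overset{t\downarrow 0}{\sim}\frac{|\Omega|}{4\pi t}-\frac{|\partial\Omega|}{8\sqrt{\pi t}}+c_0,\qquad c_0=\sum_{i=1}^M e_0(\gamma_i),
\end{align*}
and evaluating \eqref{equation:CoefficientsVerticesHeatAsymptoticConstantCurvaturePolygon} at $\nu=0$ with $B_0(1/2)=1$ and $B_2=1/6$ gives $e_0(\gamma_i)=\frac{\pi^2-\gamma_i^2}{24\pi\gamma_i}$; hence $c_0$ coincides with the quantity in the statement and, being the heat invariant attached to $t^0$, is determined by the spectrum. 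Using the elementary angle-sum relation $\sum_i\gamma_i=(M-2)\pi$ recalled before the corollary I would then rewrite
\begin{align*}
c_0=\frac{1}{24\pi}\Big(\pi^2\sum_{i=1}^M\frac1{\gamma_i}-\sum_{i=1}^M\gamma_i\Big)=\frac{\pi}{24}\sum_{i=1}^M\frac1{\gamma_i}-\frac{M-2}{24}.
\end{align*}

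The core step is the claim that among all tuples $(\gamma_1,\dots,\gamma_M)$ with $\gamma_i>0$ and $\sum_i\gamma_i=(M-2)\pi$, the sum $\sum_i 1/\gamma_i$ is minimised exactly at the equiangular tuple. This follows at once from Jensen's inequality (equivalently AM--HM): $\sum_i 1/\gamma_i\ge M^2\big/\sum_i\gamma_i=M^2/((M-2)\pi)$, with equality iff all $\gamma_i$ equal $\frac{(M-2)\pi}{M}$, a value lying in $(0,2\pi)$ for every $M\ge 3$, so the constraint $\gamma_i\le 2\pi$ plays no role and equiangular $M$-gons genuinely exist. Substituting yields
\begin{align*}
c_0\ \ge\ \frac{\pi}{24}\cdot\frac{M^2}{(M-2)\pi}-\frac{M-2}{24}=\frac{M^2-(M-2)^2}{24(M-2)}=\frac{M-1}{6(M-2)}=:c_0^{\mathrm{eq}}(M),
\end{align*}
with equality precisely when $\Omega$ is equiangular. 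Since $M$ and $c_0$ are both known from the spectrum, part $(i)$ follows: $\Omega$ is equiangular if and only if $c_0=c_0^{\mathrm{eq}}(M)$.

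For part $(ii)$ I would note that $M\mapsto c_0^{\mathrm{eq}}(M)=\frac16\big(1+\frac1{M-2}\big)$ is strictly decreasing in $M\ge 3$ with infimum $\frac16$, never attained; hence $c_0\ge c_0^{\mathrm{eq}}(M)>\frac16$. Because $6c_0-1>0$, the inequality $6(M-2)c_0\ge M-1$, i.e.\ $M(6c_0-1)\ge 12c_0-1$, can be solved for $M$ to give
\begin{align*}
M\ \ge\ \frac{12c_0-1}{6c_0-1}\ =\ 1+\frac{6c_0}{6c_0-1}.
\end{align*}
A one-line computation shows $1+\frac{6c_0}{6c_0-1}>3$ exactly when $c_0<\frac13$, so this improves on the a priori inequality $M\ge 3$ precisely for $c_0\in(\frac16,\frac13)$, as asserted.

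I do not anticipate a real obstacle here: the only ingredients are the explicit value of $e_0$ from Corollary~\ref{corollary:HeatAsymptoticConstantCurvaturePolygon}, the angle-sum relation, and the convexity of $1/x$. The only points requiring care are the algebraic simplification producing the closed form $\frac{M-1}{6(M-2)}$ for the equiangular value, and verifying that the equiangular configuration is admissible, so that part $(i)$ is a genuine characterisation rather than merely a one-sided estimate.
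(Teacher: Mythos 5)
Your proof is correct and follows essentially the same route as the paper: both hinge on the angle-sum relation $\sum_i\gamma_i=(M-2)\pi$ combined with the AM--HM (Jensen) inequality for $\sum_i 1/\gamma_i$, with equality characterising the equiangular case. The only cosmetic differences are that you obtain $c_0>\frac16$ directly from the monotone decreasing equiangular value $\frac16\bigl(1+\frac1{M-2}\bigr)$ rather than by the paper's contradiction argument, and you extract $c_0$ straight from Corollary~\ref{corollary:HeatAsymptoticConstantCurvaturePolygon} instead of citing Corollary~\ref{corollary:SpectralInvariantsFromHeatInvariants}.
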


\begin{proof}
$(i)$. Since $\Omega$ is simply connected, we have $\chi(\Omega)=1$. Thus, from Corollary \ref{corollary:SpectralInvariantsFromHeatInvariants} the spectrum together with $M$ determines $\sum_{i=1}^{M}\frac{1}{\gamma_i}$. Applying the arithmetic-harmonic mean inequality, we have
\begin{align*}
\frac{M}{\sum_{i=1}^{M}\frac{1}{\gamma_i}} \leq \frac{\sum_{i=1}^{M}\gamma_i}{M} = \frac{(M-2)\pi}{M},
\end{align*}
with equality if and only if $\gamma_1 = \gamma_2 = ... = \gamma_M$. Thus all angles of the polygon $\Omega$ are equal if and only if $\frac{M^2}{(M-2)\pi} = \sum_{i=1}^{M}\frac{1}{\gamma_i}$.

$(ii)$. Again by $\sum_{i=1}^{M}\gamma_i = (M-2)\pi$ and the arithmetic-harmonic mean inequality we have
\begin{align*}
c_0 &= \frac{\pi}{24} \left( \sum\limits_{i=1}^{M} \frac{1}{\gamma_i} \right) - \frac{M-2}{24} \\
& \geq \frac{1}{24}\cdot \frac{M^2}{(M-2)}  - \frac{M-2}{24}.
\end{align*}
This is equivalent to
\begin{align*}
 12 c_0 - 1  \leq M\left( 6c_0 - 1 \right).
\end{align*}
If $c_0<\frac{1}{6}$, then it follows that $M\leq 1 + \frac{6c_0}{6c_0 - 1 }<2$. This is impossible by $M\geq 3$. Similarly, if $c_0 = \frac{1}{6}$, then we obtain $1\leq 0$ which is a contradiction. Thus it follows that $c_0 > \frac{1}{6}$.

By $c_0>\frac{1}{6}$, we now obtain $M\geq 1 + \frac{6 c_0}{6 c_0 - 1}$, which completes the proof of $(ii)$. This lower bound is useless for $c_0>\frac{1}{3}$, since the function $f(x):=1 + \frac{6 x}{6 x - 1}$ is strictly decreasing in $(\frac{1}{6}, \infty)$ and $f(\frac{1}{3}) = 3$.
\end{proof}

\begin{corollary} \quad
\label{corollary:SmoothDomainsSimplyConnectedPolygonalDomainsNeverIsospectral}
\begin{itemize}
\item[$(i)$] A simply connected Euclidean polygon can never be isospectral to a two-dimensional compact manifold with nonempty smooth boundary.
\item[$(ii)$] A polygon with zero curvature can not be isospectral to a smooth domain in the Euclidean plane.
\end{itemize}
\end{corollary}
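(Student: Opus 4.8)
The plan is to compare the two heat-trace asymptotic expansions and, in each part, derive a contradiction from a single coefficient: the constant term $t^{0}$ for $(i)$ and the coefficient of $t^{1/2}$ for $(ii)$. Throughout we use that isospectral domains have the same heat trace, hence the same (unique) asymptotic expansion as $t\searrow 0$.

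For $(i)$, suppose a simply connected Euclidean polygon $\Omega$ were isospectral to a compact two-dimensional manifold $D$ with non-empty smooth boundary. By Corollary \ref{corollary:SpektralinvariantenFlachePolygone}$(ii)$ the constant term of $Z_\Omega(t)$ equals $c_0=\sum_{i=1}^{M}\tfrac{\pi^2-\gamma_i^2}{24\pi\gamma_i}>\tfrac16$, where $\gamma_1,\dots,\gamma_M$ are the angles of $\Omega$ (and $M\ge 3$, since $\Omega$ is isometric to a genuine Euclidean polygon). For $D$, the constant term of $Z_D(t)$ is $\tfrac{a_2}{4\pi}$, and by the Gauss--Bonnet theorem $a_2=\tfrac13\big(\int_D\kappa\,dA+\int_{\partial D}\kappa_g\,ds\big)=\tfrac{2\pi}{3}\chi(D)$, so this term is $\tfrac{\chi(D)}{6}$. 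Equating the two coefficients forces $\chi(D)=6c_0>1$, i.e. $\chi(D)\ge 2$; but a compact surface with non-empty boundary always has $\chi\le 1$, a contradiction.

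For $(ii)$, suppose a polygon $\Omega$ of constant zero curvature were isospectral to a bounded smooth domain $D\subset\mathbb{R}^2$. By Corollary \ref{corollary:HeatAsymptoticConstantCurvaturePolygon} with $\kappa=0$ (equivalently, by the remark that a zero-curvature polygon has at most three non-vanishing heat invariants), the coefficient of $t^{1/2}$ in $Z_\Omega(t)$ vanishes. For $D$ this coefficient is $\tfrac{a_3}{4\pi}$, and by the explicit formulas of \cite{Smith} and \cite{GilkeyBranson} the heat invariant $a_3$ of a smooth planar domain is a strictly positive universal multiple of $\int_{\partial D}\kappa_g^2\,ds$ --- there is no interior contribution because $\mathbb{R}^2$ is flat, and no lower-order boundary term survives in the flat case. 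Since $D$ is bounded, $\partial D$ is a non-empty finite union of smooth closed curves, none of which is a straight segment, so $\int_{\partial D}\kappa_g^2\,ds>0$ and hence $a_3>0$, contradicting $a_3=0$. (The degenerate case $\partial\Omega=\emptyset$, i.e. $\Omega$ a flat torus or Klein bottle, is excluded even more easily: then $Z_\Omega$ has no $t^{-1/2}$ term, whereas $Z_D$ does because $|\partial D|>0$.)

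The coefficient bookkeeping and the use of Gauss--Bonnet are routine. The only real obstacle lies in $(ii)$: one must be sure that the $t^{1/2}$-coefficient of a smooth planar domain is a \emph{sign-definite} positive multiple of $\int_{\partial D}\kappa_g^2\,ds$, with no competing curvature terms. This relies on the known structure of $a_3$ (from \cite{Smith}, \cite{GilkeyBranson}) rather than on anything established in this thesis, so the final write-up should cite that structure carefully and explain why $a_3$ is strictly positive for every bounded smooth domain.
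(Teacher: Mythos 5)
Your proposal is correct and follows essentially the same route as the paper: part $(i)$ via the bound $c_0>\tfrac16$ from Corollary \ref{corollary:SpektralinvariantenFlachePolygone}$(ii)$ against $\tfrac16\chi(D)\le\tfrac16$, and part $(ii)$ via the strict positivity of the $t^{1/2}$-coefficient for smooth planar domains (cited from \cite{Smith}) against its vanishing for flat polygons. Your extra care about the structure of $a_3$ and the empty-boundary case is fine but not needed beyond what the paper already invokes.
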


\begin{proof}
$(i)$. Note that for a compact two-dimensional manifold $D$ with nonempty smooth boundary the heat invariant corresponding to the power $t^0$ is given by $\frac{1}{6}\chi(D)$, where $\chi(D)$ is the Euler characteristic of $D$. We have $\frac{1}{6}\chi(D) \leq \frac{1}{6}(2-b)\leq \frac{1}{6}$, where $b\in\mathbb{N}$ denotes the number of boundary components of $D$. Using Corollary \ref{corollary:SpektralinvariantenFlachePolygone} $(ii)$, the statement follows.

$(ii)$. Note that for a smooth domain in the Euclidean plane the heat invariant corresponding to the power $t^{\frac{1}{2}}$ is always positive (see the coefficient $c_3$ in \citep{Smith}). But the corresponding heat invariant for a polygon with zero curvature is always $0$. 
\end{proof}

\begin{remark}
Statement $(i)$ of the above corollary was proven independently in the recent article \citep{RowlettLuCorners}. In our case, Corollary \ref{corollary:SmoothDomainsSimplyConnectedPolygonalDomainsNeverIsospectral} appeared as a by-product from our desire to estimate the number of angles of a simply connected Euclidean polygon (Corollary \ref{corollary:SpektralinvariantenFlachePolygone} $(ii)$). Z. Lu and J. Rowlett instead investigate in \citep{RowlettLuCorners} simply connected planar domains with piecewise smooth Lipschitz boundary.
\end{remark}

We have seen that, if a Euclidean polygon is simply connected, Corollary \ref{corollary:SpektralinvariantenFlachePolygone} $(ii)$ in some cases gives a lower bound for the number of angles. In general, however, the heat invariants do not suffice to detect the exact number of angles. We conjecture that Theorem \ref{theorem:SpectralInvariantsAnglesEulerCharacteristic} also holds for polygons of constant zero curvature. That is, we conjecture in particular that the number of vertices as well as the multiset of all angles of a simple polygon with zero curvature is always determined by its spectrum. But we do not know how to prove this conjecture. As we mentioned above, there are lots of known pairs of isospectral and nonisometric simple Euclidean polygons (see e.g. \citep{GordonWebbWolpert}, \citep{BuserConwayDoyle}, \citep{Chapman}). All of these isospectral pairs indeed have the same number of vertices and the same multiset of angles.

Let us also include polygons with nonzero curvature into our considerations. Theorem \ref{theorem:SpectralInvariantsAnglesEulerCharacteristic} shows that pairs of isospectral simple hyperbolic polygons as well as pairs of simple spherical polygons must have the same number of vertices and multiset of angles. However, as for Euclidean polygons, it is known that the spectrum does not always determine all of the geometry. There exist pairs of hyperbolic and spherical polygons which are isospectral and nonisometric (see \citep{GordonWebbHyperbolic} for such pairs of  isospectral polygons). Nevertheless, we can deduce some information about the geometry from the spectrum.

\begin{corollary}
\label{corollary:PolygonNotIsospectralSmoothDomain}
Let $\Omega$ be a polygon with at least one angle not equal to $\pi$ and of constant curvature $\kappa\in\mathbb{R}$. Let $c_0$ denote the heat invariant for $\Omega$ corresponding to $t^{0}$. Further, let $D$ be a two-dimensional Riemannian manifold with smooth boundary.  Then:
\begin{itemize}
\item[$(i)$] $c_0>\frac{1}{6}\chi(\Omega)$.
\item[$(ii)$] If $\Omega$ is isospectral to $D$, then $\chi(\Omega)<\chi(D)$.
\end{itemize}


\end{corollary}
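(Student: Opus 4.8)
The plan is to extract the coefficient $c_0$ of $t^0$ from the asymptotic expansion in Corollary~\ref{corollary:HeatAsymptoticConstantCurvaturePolygon} and compare it with the Gau{\ss}--Bonnet identity \eqref{equation:GaussBonnet}. From Corollary~\ref{corollary:HeatAsymptoticConstantCurvaturePolygon}, the coefficient of $t^0$ in $Z_\Omega(t)$ is
\[
c_0 = \frac{\vert \Omega \vert}{4\pi}\, f_1\cdot \kappa + \sum_{i=1}^{M} e_0(\gamma_i),
\]
where $\gamma_1,\dots,\gamma_M$ are the angles of $\Omega$ not equal to $\pi$ (recall $e_0(\pi)=0$). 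First I would compute $f_1$ and $e_0(\gamma)$ explicitly from \eqref{equation:CoefficientsAreaHeatAsymptoticConstantCurvaturePolygon} and \eqref{equation:CoefficientsVerticesHeatAsymptoticConstantCurvaturePolygon}: one finds $f_1 = \frac{1}{4}\bigl(B_0(\tfrac12) - 4 B_2(\tfrac12)\bigr)$, and using $B_0(\tfrac12)=1$, $B_2(\tfrac12)=-\tfrac{1}{12}$ this gives $f_1 = \tfrac{1}{3}$. Similarly $e_0(\gamma) = \binom{2}{2}\frac{B_0(1/2) B_2}{4\cdot 1!\cdot 1}\cdot\frac{\pi^2-\gamma^2}{\pi\gamma} = \frac{1}{24}\cdot\frac{\pi^2-\gamma^2}{\pi\gamma} = \frac{\pi^2-\gamma^2}{24\pi\gamma}$, using $B_2 = \tfrac{1}{6}$. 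Hence
\[
c_0 = \frac{\vert \Omega \vert\,\kappa}{12\pi} + \sum_{i=1}^{M} \frac{\pi^2-\gamma_i^2}{24\pi\gamma_i}.
\]

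For part $(i)$: I would rewrite $\sum_{i=1}^M \frac{\pi^2-\gamma_i^2}{24\pi\gamma_i} = \frac{1}{24\pi}\sum_{i=1}^M\frac{\pi^2}{\gamma_i} - \frac{1}{24\pi}\sum_{i=1}^M \gamma_i$ and substitute the Gau{\ss}--Bonnet identity \eqref{equation:GaussBonnet} in the form $\sum_{i=1}^M \gamma_i = \vert\Omega\vert\kappa + M\pi - 2\pi\chi(\Omega)$ (valid here since angles equal to $\pi$ contribute $\pi$ to both sides of Gau{\ss}--Bonnet and do not change the relation; more carefully, one should apply Gau{\ss}--Bonnet to the full angle list and note the $\pi$-angles cancel). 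This yields
\[
c_0 = \frac{\vert\Omega\vert\kappa}{12\pi} + \frac{\pi}{24}\sum_{i=1}^M\frac{1}{\gamma_i} - \frac{1}{24\pi}\bigl(\vert\Omega\vert\kappa + M\pi - 2\pi\chi(\Omega)\bigr) = \frac{\vert\Omega\vert\kappa}{24\pi} + \frac{\pi}{24}\sum_{i=1}^M\frac{1}{\gamma_i} - \frac{M}{24} + \frac{\chi(\Omega)}{12}.
\]
So $c_0 - \tfrac{1}{6}\chi(\Omega) = \frac{\vert\Omega\vert\kappa}{24\pi} + \frac{1}{24}\bigl(\pi\sum_i \gamma_i^{-1} - M\bigr) - \frac{\chi(\Omega)}{12}$; I would instead keep the cleaner intermediate form $c_0 = \frac{1}{24\pi}\bigl(\sum_i \frac{\pi^2}{\gamma_i} - \sum_i\gamma_i\bigr) + \frac{\vert\Omega\vert\kappa}{12\pi}$ and bound $\sum_i \frac{\pi^2-\gamma_i^2}{24\pi\gamma_i}$ directly. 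The key elementary inequality is that for $\gamma\in(0,2\pi]$, $\gamma\neq\pi$, the function $g(\gamma)=\frac{\pi^2-\gamma^2}{24\pi\gamma}$ satisfies a strict bound that, summed against Gau{\ss}--Bonnet, beats $\tfrac16\chi(\Omega)$; concretely one shows $\sum_{i=1}^M\frac{\pi^2-\gamma_i^2}{24\pi\gamma_i} + \frac{\vert\Omega\vert\kappa}{12\pi} > \frac{1}{6}\chi(\Omega)$ by using $\sum\gamma_i^{-1} > M^2/(\sum\gamma_i)$ (arithmetic--harmonic mean, strict since not all $\gamma_i$ are equal unless $M\le 1$, and the $M\le 1$ and equality cases are handled separately) together with $\sum_i\gamma_i = \vert\Omega\vert\kappa + M\pi - 2\pi\chi(\Omega)$. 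I expect this convexity/Gau{\ss}--Bonnet bookkeeping — especially the degenerate cases $M=0$, $M=1$, and the case where all nontrivial angles coincide — to be the main obstacle, since the generic argument via strict AM--HM fails there and one needs the positivity of the curvature term or a direct check.

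For part $(ii)$: if $\Omega$ is isospectral to a two-dimensional Riemannian manifold $D$ with smooth nonempty boundary, then their heat traces agree, so the $t^0$-coefficients agree. For $D$ with smooth boundary the $t^0$ heat invariant is $\frac{2\pi}{3}\chi(D)\cdot\frac{1}{4\pi} = \frac{1}{6}\chi(D)$ (this is the statement $a_2 = \frac{2\pi}{3}\chi(D)$ from the introduction, divided by the $4\pi$ prefactor of \eqref{heat asymptotics}, with no vertex or $t^{1/2}$ anomaly). Therefore $\frac{1}{6}\chi(D) = c_0 > \frac{1}{6}\chi(\Omega)$ by part $(i)$, giving $\chi(\Omega) < \chi(D)$, which is the claim. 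Here I would also note that if $D$ has empty boundary the statement is vacuous or handled by observing $\Omega$ has a boundary (since it has an angle $\neq\pi$, hence a vertex and a nonempty boundary) so isospectrality with a closed $D$ is already excluded by Corollary~\ref{corollary:SpectralInvariantsVolumePerimeterCurvature} applied to the perimeter; but since the hypothesis says $D$ has smooth boundary, presumably nonempty, the clean argument above suffices.
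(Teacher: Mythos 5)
Your computation of $c_0=\frac{\vert\Omega\vert\kappa}{12\pi}+\sum_i\frac{\pi^2-\gamma_i^2}{24\pi\gamma_i}$ and all of part $(ii)$ (including the identification of the $t^0$-invariant of $D$ as $\frac16\chi(D)$) match the paper. The gap is in part $(i)$: you outline a strictness argument via the global arithmetic--harmonic mean inequality $\sum_i\gamma_i^{-1}\ge M^2/\sum_i\gamma_i$ followed by a second estimate, but you explicitly leave the cases $M=1$ and ``all nontrivial angles coincide'' unresolved, and your proposed fallback --- ``the positivity of the curvature term'' --- is not available, since $\kappa$ may be negative (hyperbolic polygons are the main case of interest here). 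As written, part $(i)$ is therefore not proved.

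The fix, which is what the paper does, is to use a \emph{pointwise} rather than a global mean inequality, and to keep \emph{all} $M$ angles in the list (those equal to $\pi$ contribute $0$ to $\sum_i e_0(\gamma_i)$ but must appear in Gau{\ss}--Bonnet). Substituting $\vert\Omega\vert\kappa=\sum_i\gamma_i-M\pi+2\pi\chi(\Omega)$ into your formula for $c_0$ gives
\begin{align*}
c_0=\frac{1}{6}\chi(\Omega)+\frac{1}{24}\sum_{i=1}^{M}\left(\frac{\pi}{\gamma_i}+\frac{\gamma_i}{\pi}\right)-\frac{M}{12},
\end{align*}
and now $\frac{\pi}{\gamma}+\frac{\gamma}{\pi}\ge 2$ with equality if and only if $\gamma=\pi$ finishes the proof in one line: each summand is $\ge 2$, and the hypothesis that at least one angle differs from $\pi$ makes at least one summand strictly larger, so $c_0>\frac16\chi(\Omega)$ with no case distinctions. (Your route can in fact be closed --- after AM--HM one gets $c_0-\frac16\chi(\Omega)\ge\frac{S}{24\pi}+\frac{\pi M^2}{24S}-\frac{M}{12}$ with $S=\sum_i\gamma_i$, and strictness follows because either the angles are unequal or $S\ne M\pi$ --- but this is exactly the bookkeeping you flag as an obstacle, and it is avoided entirely by the term-by-term inequality.)
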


\begin{proof}
Let $\gamma_1,...,\gamma_M\in (0,2\pi]$ denote all angles of $\Omega$, where $M\in\mathbb{N}$. From Corollary \ref{corollary:HeatAsymptoticConstantCurvaturePolygon}, we know that the heat invariant for $\Omega$ corresponding to $t^{0}$ is
\begin{align*}
c_{0}:=\frac{\vert \Omega \vert}{4\pi}f_1 \kappa + \sum\limits_{i=1}^{M} e_{0}(\gamma_i).
\end{align*}
The Bernoulli polynomial $B_2(x)$ is given by $B_2(x)=x^2-x+\frac{1}{6}$ (see \citep[$24.2(iv)$ Tables]{NIST}) and thus $B_2=\frac{1}{6}$ and $B_2(\frac{1}{2})=-\frac{1}{12}$. Hence, $f_1=\frac{1}{3}$ and 
\begin{align*}
c_{0}  = \frac{\vert \Omega \vert}{12\pi} \kappa + \frac{\pi}{24} \left( \sum\limits_{i=1}^{M}\frac{1}{\gamma_i} \right) - \frac{1}{24\pi} \sum\limits_{i=1}^{M}\gamma_i. 
\end{align*}
By \eqref{equation:GaussBonnet} and the arithmetic-geometric mean inequality we have
\begin{align*}
c_0 &= \frac{1}{6}\chi(\Omega) +\frac{1}{24}\sum\limits_{i=1}^{M}\left( \frac{\pi}{\gamma_i} + \frac{\gamma_i}{\pi} \right) -\frac{M}{12} \\
&\geq  \frac{1}{6}\chi(\Omega),
\end{align*}
with equality if and only if $\gamma_i = \pi$ for all $i=1,...,M$. This proves both statements of the corollary. For $(ii)$ note that, because $D$ is a smooth domain, the heat invariant for $D$ corresponding to $t^{0}$ is just $\frac{1}{6}\chi(D)$.
\end{proof}

Corollary \ref{corollary:PolygonNotIsospectralSmoothDomain} shows, in particular, that if a polygon with at least one vertex is isospectral to a smoothly bounded compact domain, then they can never be homeomorphic. It is suprising that, to the best of our knowledge, this result was never published for domains in the Euclidean plane.


Hyperbolic and spherical triangles are determined up to isometry by their spectrum within the class of all polygons.

\begin{corollary}
\label{corollary:IsospectralTriangle} 
Let $\Omega$ be any polygon of constant curvature. Then:
\begin{itemize}
\item[$(i)$] If $\Omega$ is isospectral to a hyperbolic triangle $T_{\mathbb{H}}$, then $\Omega$ and $T_{\mathbb{H}}$ are isometric.
\item[$(ii)$] If $\Omega$ is isospectral to a spherical triangle $T_{\mathbb{S}}$, then $\Omega$ and $T_{\mathbb{S}}$ are isometric.
\end{itemize}
\end{corollary}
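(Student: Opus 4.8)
The plan is to extract from the spectrum exactly the data that pins down a hyperbolic (respectively spherical) triangle, and then to invoke classical angle rigidity of triangles. So suppose $\Omega$ is a polygon of constant curvature which is isospectral to the hyperbolic triangle $T_{\mathbb{H}}$, the spherical case being entirely parallel. By Corollary~\ref{corollary:SpectralInvariantsVolumePerimeterCurvature} the curvature, the area $|\Omega|$ and the perimeter $|\partial\Omega|$ are spectral invariants, so $\Omega$ has the same constant curvature $\kappa=-1$ as $T_{\mathbb{H}}$, the same area, and the same (positive) perimeter; in particular $\kappa\neq 0$ and $\partial\Omega\neq\emptyset$. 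Since $\kappa\neq 0$, Theorem~\ref{theorem:SpectralInvariantsAnglesEulerCharacteristic} applies: $\chi(\Omega)=\chi(T_{\mathbb{H}})=1$, and the multiset of angles of $\Omega$ not equal to $\pi$ coincides with the corresponding multiset for $T_{\mathbb{H}}$. As every interior angle of a hyperbolic triangle lies in $(0,\pi)$, that multiset is exactly $\{\alpha,\beta,\gamma\}$, the three angles of $T_{\mathbb{H}}$; hence $\Omega$ has precisely three angles different from $\pi$, namely $\alpha,\beta,\gamma$, and all of its remaining angles are equal to $\pi$.

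Next I would settle the topology. Since $\overline{\Omega}$ is a compact connected surface with nonempty piecewise geodesic boundary and $\chi(\overline{\Omega})=\chi(\Omega)=1$, the classification of surfaces with boundary forces genus $0$ and a single boundary component, i.e. $\overline{\Omega}$ is a closed disk, $\Omega$ is simply connected, and $\partial\Omega$ is a single piecewise geodesic simple closed curve; in particular $\Omega$ is orientable. Because $\Omega$ is simply connected of constant curvature $\kappa$, it carries a well-defined developing map $D\colon\Omega\to M_\kappa=\mathbb{H}^2$ which is a local isometry; this is the negative-curvature analogue of the remark preceding Corollary~\ref{corollary:SpektralinvariantenFlachePolygone} and of the local-model reasoning used in Corollary~\ref{corollary:HeatAsymptoticConstantCurvaturePolygon}.

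I would then remove the fake vertices: at a vertex with angle $\pi$ the two incident edges are collinear geodesic segments, so they may be amalgamated into one geodesic edge without changing $\Omega$ (compare Definitions~\ref{definition:EdgeVertex} and~\ref{definition:Angle}). After all such amalgamations $\Omega$ is a simply connected geodesic polygon with exactly three vertices and interior angles $\alpha,\beta,\gamma\in(0,\pi)$. Developing the boundary into $\mathbb{H}^2$ yields a closed curve made of three geodesic segments; this curve is simple, because a transverse self-crossing would contradict $\overline{\Omega}$ being a disk near the preimage of that point under $D$, while an overlap of two of the segments would force an angle $0$ or $2\pi$ at their common endpoint, contradicting that all three angles lie in $(0,\pi)$. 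A degree argument then shows $D$ is injective on $\Omega$ and carries $\overline{\Omega}$ homeomorphically, hence isometrically, onto the geodesic triangle in $\mathbb{H}^2$ bounded by that curve, a triangle with interior angles $\alpha,\beta,\gamma$. Finally, two hyperbolic triangles with the same three angles are isometric (the AAA congruence theorem), so $\Omega$ is isometric to $T_{\mathbb{H}}$.

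For the spherical statement one runs the same argument with $M_\kappa=\mathbb{S}^2$, replacing hyperbolic AAA by its spherical counterpart (two spherical triangles with equal angle triples are congruent, e.g. via polar duality combined with SSS). The main obstacle is precisely the last structural step: ruling out that $\Omega$, although it shares all the listed invariants, is some exotic polygon (non-embedded, non-convex, or self-overlapping upon developing) rather than a genuine triangle. In the hyperbolic case this is comfortable, since geodesics in $\mathbb{H}^2$ are unique and minimizing and three non-collinear points automatically bound a triangle; in the spherical case great-circle arcs can be long, so more care is needed to check that the developed boundary of the three-vertex polygon is a simple triangular curve. Here one leans on the angle data ($\alpha,\beta,\gamma\in(0,\pi)$, together with the angle sum matching that of $T_{\mathbb{S}}$ through Gauss--Bonnet~\eqref{equation:GaussBonnet}) and on $\chi(\Omega)=1$ to exclude the alternatives before applying spherical AAA.
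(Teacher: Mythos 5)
Your proof is correct and follows essentially the same route as the paper: both extract the curvature, area and perimeter from Corollary~\ref{corollary:SpectralInvariantsVolumePerimeterCurvature} and the angle multiset together with the Euler characteristic from Theorem~\ref{theorem:SpectralInvariantsAnglesEulerCharacteristic}, then conclude via the AAA rigidity of hyperbolic (resp.\ spherical) triangles. The paper's proof is a two-line appeal to these facts, so your additional reconstruction step (disk topology from $\chi(\Omega)=1$, the developing map, and amalgamating the angles equal to $\pi$) merely supplies details the paper treats as immediate rather than constituting a different argument.
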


\begin{proof}
Both statements follow immediately from Corollary \ref{corollary:SpectralInvariantsVolumePerimeterCurvature} and Theorem \ref{theorem:SpectralInvariantsAnglesEulerCharacteristic}. Just note that hyperbolic and spherical triangles are uniquely determined by their angles.
\end{proof}

Note that in Corollary \ref{corollary:IsospectralTriangle} we do not need to assume a priori that $\Omega$ is a triangle, unlike in the analogous result in \citep{GrieserTriangle} for the Euclidean plane.

\begin{corollary}
\label{corollary:ExistencePolygonsSpectrallyDetermined}
Let $\Omega$ be any hyperbolic or spherical polygon.
\begin{itemize}
\item[$(i)$] The spectrum determines whether $\Omega$ is convex or not.
\item[$(ii)$] If $\Omega$ is isospectral to a regular hyperbolic polygon $R_{\mathbb{H}}$, then $\Omega$ and $R_{\mathbb{H}}$ are isometric.
\item[$(iii)$] If $\Omega$ is isospectral to a regular spherical polygon $R_{\mathbb{S}}$, then $\Omega$ and $R_{\mathbb{S}}$ are isometric.
\end{itemize}
\end{corollary}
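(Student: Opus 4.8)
The plan is to read off from the spectrum the full list of data supplied by Corollary~\ref{corollary:SpectralInvariantsVolumePerimeterCurvature} and Theorem~\ref{theorem:SpectralInvariantsAnglesEulerCharacteristic}: the curvature $\kappa$ (hence whether $\Omega$ is hyperbolic or spherical), the area $\vert\Omega\vert$, the perimeter $\vert\partial\Omega\vert$, the Euler characteristic $\chi(\Omega)$, and the multiset $\mathcal{A}$ of all interior angles of $\Omega$ different from $\pi$. Since $\kappa\neq 0$ here, Theorem~\ref{theorem:SpectralInvariantsAnglesEulerCharacteristic} does apply. All three assertions will be derived purely from $(\kappa,\vert\Omega\vert,\vert\partial\Omega\vert,\chi(\Omega),\mathcal{A})$, and the only inputs beyond these spectral facts are two classical statements of convex geometry: (a) a polygon of nonzero constant curvature with nonempty boundary is convex if and only if $\chi(\Omega)=1$ and all its interior angles are $\leq\pi$; and (b) the isoperimetric inequality for geodesic $n$-gons in the hyperbolic plane and on the sphere, together with its equality case.

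For $(i)$ I would first record the easy implication: if $\Omega$ is convex then it is geodesically simply connected, hence a topological disc with $\chi(\Omega)=1$, and at a vertex of angle $\gamma>\pi$ a wedge of angle $\gamma$ is not convex, so all angles of a convex $\Omega$ are $\leq\pi$. The substance of $(i)$ is the converse, which is statement (a): for $\mathbb{H}^2$ this is the standard fact that a simple geodesic polygon with all interior angles $\leq\pi$ is convex, and on $\mathbb{S}^2$ one additionally uses that by Gauss--Bonnet \eqref{equation:GaussBonnet} such a polygon has area $<2\pi$, so it lies in an open hemisphere and the planar argument transfers. Granting (a), convexity of $\Omega$ is equivalent to ``$\chi(\Omega)=1$ and every element of $\mathcal{A}$ is $<\pi$'', and both conditions are spectral invariants by Theorem~\ref{theorem:SpectralInvariantsAnglesEulerCharacteristic}; hence the spectrum decides convexity. (The degenerate possibility $\Omega=\mathbb{S}^2$, with empty boundary and $\chi=2$, if it is admitted as a polygon at all, is separate and is recognised at once from the spectrum.)

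For $(ii)$ and $(iii)$, suppose $\Omega$ is isospectral to a regular $n$-gon $R$ of constant curvature ($R=R_{\mathbb{H}}$ or $R=R_{\mathbb{S}}$), $n\geq 3$. A regular polygon has all $n$ angles equal to a common value $\gamma<\pi$, satisfies $\chi=1$, and is convex. Comparing the spectral data, $\Omega$ has the same curvature as $R$, the same area, the same perimeter, $\chi(\Omega)=1$, and $\mathcal{A}$ equal to $n$ copies of $\gamma$. Since all elements of $\mathcal{A}$ are $<\pi$ and $\chi(\Omega)=1$, statement (a) makes $\Omega$ a \emph{convex} polygon; being convex it is simple, and a simple polygon has no vertex of angle exactly $\pi$ (such a point is a smooth boundary point in the sense of Definition~\ref{definition:EdgeVertex}), so $\Omega$ has exactly $n$ vertices, each of angle $\gamma$. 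By Gauss--Bonnet \eqref{equation:GaussBonnet} the area of $\Omega$ is then automatically equal to $\vert R\vert$ (no extra constraint), while its perimeter equals $\vert\partial R\vert$. Now invoke (b): among all convex $n$-gons of constant curvature $\kappa$ with area $\vert R\vert$, the regular $n$-gon uniquely minimises the perimeter, and $\vert\partial R\vert$ is that minimal value; since $\Omega$ has area $\vert R\vert$ and realises the minimal perimeter, $\Omega$ is a regular $n$-gon of area $\vert R\vert$, and regular $n$-gons of given curvature and given area are unique up to isometry, so $\Omega\cong R$. This proves $(ii)$ (take $\kappa=-1$) and $(iii)$ (take $\kappa=1$); for triangles ($n=3$) the conclusion is already immediate since a hyperbolic or spherical triangle is determined by its angles.

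The main obstacle I anticipate is supplying (b) with its equality statement in the hyperbolic and spherical settings: the Euclidean polygon isoperimetric inequality is classical, and the constant-curvature versions are known (via Steiner-type symmetrisation/reflection arguments, or from the literature on isoperimetric problems for geodesic polygons), but the rigidity in the equality case must be cited or reproved carefully, particularly on the sphere where one has to control the global position of $\Omega$ within a hemisphere. A secondary point requiring care is the converse direction of (a): that $\chi(\Omega)=1$ together with all angles $\leq\pi$ first forces $\Omega$ to be \emph{simple} — it is precisely angles $>\pi$ (a slit being the prototype) that can keep $\chi(\Omega)=1$ while destroying simplicity — which is clear from the local structure of $\partial\Omega$ but ought to be written out explicitly before the convexity conclusion is drawn.
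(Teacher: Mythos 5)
Your argument is correct, and the spectral-theoretic core is the same as the paper's: everything is read off from Corollary \ref{corollary:SpectralInvariantsVolumePerimeterCurvature} and Theorem \ref{theorem:SpectralInvariantsAnglesEulerCharacteristic}, and the work is delegated to a rigidity statement from constant-curvature convex geometry. The difference lies in which rigidity statement you invoke for $(ii)$ and $(iii)$. The paper cites a result of Porti: among all polygons with a \emph{prescribed multiset of angles}, there is a unique perimeter minimiser, namely the polygon circumscribed about a circle; since a regular polygon has an inscribed circle, it is that minimiser, and $\Omega$ (equiangular with the same angle, same perimeter) must coincide with it. Note that in this formulation no area constraint is needed, because Gauss--Bonnet \eqref{equation:GaussBonnet} already fixes the area once the angles and the curvature are fixed. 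You instead route through the polygonal isoperimetric inequality (regular $n$-gon uniquely minimises perimeter among $n$-gons of fixed area), which is a different external theorem and, as you anticipate, the one whose hyperbolic and spherical equality cases are more delicate to source; Porti's statement is arguably the cleaner fit here precisely because the angle data is what the spectrum hands you directly. On part $(i)$ your treatment is actually more careful than the paper's: the paper asserts in one line that convexity is equivalent to all angles lying in $(0,\pi)$, whereas you correctly point out that within Definition \ref{definition:PolygonGeneral} one must also rule out non-simple configurations (slits, holes), and that it is exactly the presence of an angle $>\pi$ that such configurations force — so the equivalence holds, but this deserves the sentence you give it. Your observation that a vertex of a simple polygon cannot have angle exactly $\pi$ (it would be a smooth boundary point by Definition \ref{definition:EdgeVertex}) is also the right way to pass from the multiset $\mathcal{A}$ to the exact vertex count $n$.
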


\begin{proof}
$(i)$. As in the Euclidean plane, a polygon is convex if and only if the values of all its angles are contained in $(0,\pi)$. Thus the claim follows from Theorem \ref{theorem:SpectralInvariantsAnglesEulerCharacteristic}.


$(ii)$ and $(iii)$. It follows from \citep{Porti} that there is a unique polygon which minimises the perimeter among all equiangular polygons, namely the polygon with an inscribed circle. Since regular polygons do have an inscribed circle, they minimise the perimeter within the class of all equiangular polygons of fixed angle.
\end{proof}

\section{Method of images and finite trigonometric sums}
\label{section:ApplicationPolygons}

As in Section \ref{section:ExpansionTrace}, let $\Omega$ be a hyperbolic polygon with angles $\gamma_1,...,\gamma_M$, where $M\geq 3$ is an integer. In addition we assume that for all $i\in\{\, 1,...,M \,\}$ there exists some integer $k(i)\geq 2$ such that $\gamma_i=\frac{\pi}{k(i)}$. Let $W_{\gamma_i}$ be the wedge corresponding to $\gamma_i$ and let $P_i$ denote the vertex of $W_{\gamma_i}$ for all $i=1,...,M$. 

In the following we will compute the asymptotic expansion of the heat trace $Z_{\Omega}(t)$ as $t\searrow 0$. Of course, this problem is only a special case of Theorem \ref{theorem:AsymptoticExpansionHeatTraceHyperbolPolygon} such that we already know the answer. However, as we will show, one can approach the problem with much more elementary means in the present situation. The reason is that the heat kernel for a wedge with angle $\frac{\pi}{k}$, $k\in\mathbb{N}_{\geq 2}$, has a more elementary expression than in Corollary \ref{corollary:FormelHeatKernelWedgeShift}, which is easily deduced from Sommerfeld's method of images. We will use this elementary formula as a substitute for Corollary \ref{corollary:FormelHeatKernelWedgeShift} and compute the heat invariants again by Kac's principle of not feeling the boundary. The contributions from the vertices of the polygon will then appear as finite trigonometric sums. By comparison with the formulas of Section \ref{section:ExpansionTrace} we obtain explicit evaluations for those trigonometric sums.

We begin with the construction for the heat kernel using Sommerfeld's method of images. Let $W\subset\mathbb{H}^2$ be a hyperbolic wedge with vertex $P$ and interior angle $\gamma = \frac{\pi}{k}$, where $k\geq 2$ is an integer. Let us choose $2k$ different rays $s_1,...,s_{2k}$ originating at $P$ such that any two adjacent rays $s_j, s_{j+1}$ (counting indices mod $2k$) form a hyperbolic wedge of angle $\gamma$ and $s_1,\, s_2$ enclose the wedge $W$ we started with. Thus we get a decomposition of $\mathbb{H}^2$ into $2k$ congruent wedges (see Fig. \ref{Skizze:2kStrahlenInP}).

\begin{figure} [ht] 
 \centering
\begin{tikzpicture}


\fill[fill=black!10!white] (0,0) -- (2,0) arc (0:60:2) -- (0,0);


\draw[dashed] (2,0) arc (0:360:2);
\draw (0,0) -- (2,0);

\coordinate (target1) at ({2*cos(60)}, {2*sin(60)});
\draw (0,0) -- (target1);

\coordinate (target2) at ({2*cos(120)}, {2*sin(120)});
\draw (0,0) -- (target2);

\coordinate (target3) at ({2*cos(180)}, {2*sin(180)});
\draw (0,0) -- (target3);

\coordinate (target4) at ({2*cos(240)}, {2*sin(240)});
\draw (0,0) -- (target4);

\coordinate (target5) at ({2*cos(300)}, {2*sin(300)});
\draw (0,0) -- (target5);


\node[above] at (1.6,0) {\small $W$};
\node[above] at (0,0) {\footnotesize $P$};
\node[right] at (2,0) {\footnotesize $s_1$};
\node[above right] at (target1) {\footnotesize $s_2$};
\node[above left] at (target2) {\footnotesize $s_3$};
\node[left] at (target3) {\footnotesize $s_4$};
\node[below left] at (target4) {\footnotesize $s_5$};
\node[below right] at (target5) {\footnotesize $s_6$};


\draw (0.5,0) arc (0:60:0.5);
\node[above ] at (0.3,-0.1) {\tiny $\gamma$};


\end{tikzpicture}
\caption{ Rays for $k=3$ } \label{Skizze:2kStrahlenInP}
\end{figure}
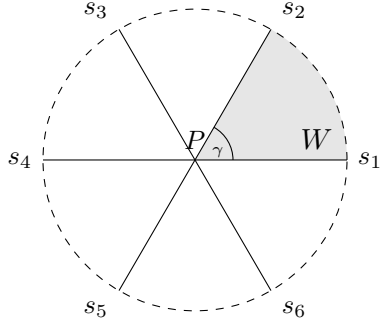

Next, for any given point $y\in W$ we construct image points by means of successive reflections in those rays. More precisely, for any fixed point $y_1:=y\in W$ we first reflect this point in the ray $s_2$ and denote the image point by $y_2$. Secondly, we reflect $y_2$ in the ray $s_3$ and get another point $y_3$, etc. By this process we obtain $2k$ different points $y_1,...,y_{2k}$ each of which lies in exactly one of the wedges described above (see Fig. \ref{Skizze:Spiegelpunkte}). Note that if we reflect the point $y_{2k}$ in the ray $s_1$ the image point coincides with the initial point $y_1$. 

\begin{figure} [ht] 
 \centering
\begin{tikzpicture}


\fill[fill=black!10!white] (0,0) -- (2,0) arc (0:60:2) -- (0,0);


\draw[dashed] (2,0) arc (0:360:2);
\draw (0,0) -- (2,0);

\coordinate (target1) at ({2*cos(60)}, {2*sin(60)});
\draw (0,0) -- (target1);

\coordinate (target2) at ({2*cos(120)}, {2*sin(120)});
\draw (0,0) -- (target2);

\coordinate (target3) at ({2*cos(180)}, {2*sin(180)});
\draw (0,0) -- (target3);

\coordinate (target4) at ({2*cos(240)}, {2*sin(240)});
\draw (0,0) -- (target4);

\coordinate (target5) at ({2*cos(300)}, {2*sin(300)});
\draw (0,0) -- (target5);


\node[above] at (1.6,0) {\small $W$};
\node[above] at (0,0) {\footnotesize $P$};
\node[right] at (2,0) {\footnotesize $s_1$};
\node[above right] at (target1) {\footnotesize $s_2$};
\node[above left] at (target2) {\footnotesize $s_3$};
\node[left] at (target3) {\footnotesize $s_4$};
\node[below left] at (target4) {\footnotesize $s_5$};
\node[below right] at (target5) {\footnotesize $s_6$};


\draw node[circle, draw, minimum size=3mm, inner sep=0pt] at (1,1) {$+$}; 

\coordinate (y2) at ({1.4142*cos(75)}, {1.4142*sin(75)});
\draw node[circle, draw, minimum size=3mm, inner sep=0pt] at (y2) {$-$};

\coordinate (y3) at ({1.4142*cos(165)}, {1.4142*sin(165)});
\draw node[circle, draw, minimum size=3mm, inner sep=0pt] at (y3) {$+$}; 

\coordinate (y4) at ({1.4142*cos(195)}, {1.4142*sin(195)});
\draw node[circle, draw, minimum size=3mm, inner sep=0pt] at (y4) {$-$}; 

\coordinate (y5) at ({1.4142*cos(285)}, {1.4142*sin(285)});
\draw node[circle, draw, minimum size=3mm, inner sep=0pt] at (y5) {$+$}; 

\coordinate (y6) at ({1.4142*cos(315)}, {1.4142*sin(315)});
\draw node[circle, draw, minimum size=3mm, inner sep=0pt] at (y6) {$-$};


\node[right] at (1.1,1) {\footnotesize $y_1$};
\node[above left] at (y2) {\footnotesize $y_2$};
\node[above right] at (y3) {\footnotesize $y_3$};
\node[below right] at (y4) {\footnotesize $y_4$};
\node[below left] at (y5) {\footnotesize $y_5$};
\node[above right] at (y6) {\footnotesize $y_6$};


\draw (0.5,0) arc (0:60:0.5);
\node[above ] at (0.3,-0.1) {\tiny $\gamma$};


\end{tikzpicture}
\caption{Mirror images for $k=3$}\label{Skizze:Spiegelpunkte}
\end{figure}
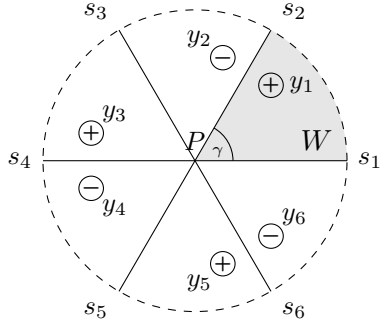

When we introduce polar coordinates $(\rho, \theta)$ with base point $P$, where $\rho\in(0,\infty)$ is the radius and $\theta\in [0,2\pi)$ is the angle measured with respect to $s_1$, then we can parametrise the wedge as $W=\{\,(\rho,\theta)\mid 0<\rho<\infty,\, 0<\theta<\gamma \,\}$. Further, for any initial point $y=(\rho,\theta)\in W$, the coordinates of the image points $y_{j}$, $j=2,3,..,2k$, are given by $y_{j}=\left( \rho, \theta_{j} \right)$ with
\begin{align*}
\theta_{j}:=\begin{cases}
(j-1)\cdot \gamma+\theta, & \text{if $j$ is odd}\\
j\cdot \gamma - \theta, & \text{if $j$ is even.}
\end{cases}
\end{align*}

The heat kernel $K_{\gamma}$ for the wedge $W$ can now be written as in the following lemma.

\begin{lemma}
\label{lemma:FormelHeatKernelWedgeSpiegelung}
\begin{align*}
K_{\gamma}(x,y;t)=\sum\limits_{j=1}^{2k}\left(-1 \right)^{j+1} K_{\mathbb{H}^2}(x,y_j;t)  \text{ for all } x,y\in W\,\text{ and } t>0,
\end{align*} 
where $K_{\mathbb{H}^2}$ denotes, as usual, the heat kernel of the hyperbolic plane.
\end{lemma}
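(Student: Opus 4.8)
The plan is to verify that the function
\[
\widetilde{K}_\gamma(x,y;t):=\sum_{j=1}^{2k}(-1)^{j+1}K_{\mathbb{H}^2}(x,y_j;t)
\]
satisfies the defining properties of the heat kernel of $W$ as laid out in Definition \ref{definition:HeatKernel}, namely that for each fixed $y\in W$ it is a non-negative solution to the fundamental-solution problem \eqref{equation:FundamentalSolution} on $W$ that is minimal; combined with uniqueness this identifies it with $K_W=K_\gamma$. First I would record the elementary geometric facts about the images: reflections in geodesics through $P$ are isometries of $\mathbb{H}^2$ fixing $P$, the $2k$ wedges cut out by the rays $s_1,\dots,s_{2k}$ tile $\mathbb{H}^2$, each image point $y_j=(\rho,\theta_j)$ lies in the interior of the $j$-th wedge whenever $y\in W$ (so in particular $y_j\neq x$ for $x\in W$ when $j\neq 1$), and reflecting $y_{2k}$ in $s_1$ returns $y_1$, which makes the alternating sign consistent around the cycle. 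I would also note that, since $K_{\mathbb{H}^2}(x,z;t)$ depends only on $d(x,z)$ and each reflection is an isometry, $K_{\mathbb{H}^2}(x,y_j;t)=K_{\mathbb{H}^2}(x_j,y;t)$ where $x_j$ is the corresponding image of $x$; this symmetry will be what makes the boundary condition work.

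Next I would check the three conditions. \emph{Smoothness and the heat equation}: on $W\times(0,\infty)$ every term $K_{\mathbb{H}^2}(\cdot,y_j;t)$ is smooth (as $y_j\notin W$ for $j\geq 2$ and the diagonal singularity of $K_{\mathbb{H}^2}$ only occurs at $x=y_1$, which is already built into $K_{\mathbb{H}^2}(\cdot,y_1;\cdot)$ being a fundamental solution) and solves $(\partial_t+\Delta)u=0$ in the $x$-variable, so the finite sum does too. \emph{Initial condition / fundamental solution property}: for $f\in C_c^\infty(W)$, only the $j=1$ term contributes in the limit $t\searrow 0$, because for $j\geq 2$ the point $y_j$ lies outside the support region and $K_{\mathbb{H}^2}(x,y_j;t)\to 0$ uniformly for $x$ in $\operatorname{supp}f$ (using the Gaussian-type bound \eqref{equation:EstimateHeatKernelPlane}, which gives $d(x,y_j)\geq c>0$ there); hence $\lim_{t\searrow 0}\int_W \widetilde{K}_\gamma(x,y;t)f(x)\,dx=f(y)$. \emph{Boundary condition}: if $x$ lies on one of the two bounding rays $s_1$ or $s_2$ of $W$, the reflection fixing that ray pairs up the $2k$ image points into $k$ pairs $(y_j,y_{j'})$ with $d(x,y_j)=d(x,y_{j'})$ and opposite signs $(-1)^{j+1}=-(-1)^{j'+1}$, so $\widetilde{K}_\gamma(x,y;t)=0$ there; thus $\widetilde{K}_\gamma$ extends continuously to $\overline{W}$ with zero boundary values, which together with the remark after Proposition \ref{proposition:heat trace bounded domain} is the Dirichlet condition.

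Finally I would establish non-negativity and minimality. Non-negativity can be obtained from the probabilistic formula \eqref{equation:ProbabilisticFormulaHeatKernel} of Lemma \ref{lemma:ProbabilisticFormulaHeatKernel} applied with $U=W$: reflecting Brownian paths in the walls and using the reflection principle $2k$ times shows that the path integral defining $K_W$ equals $\widetilde{K}_\gamma$, and is manifestly $\geq 0$; alternatively one invokes the parabolic minimum principle on exhausting relatively compact subdomains together with the already-verified boundary and initial conditions. Minimality then follows because any non-negative solution $u$ of \eqref{equation:FundamentalSolution} on $W$ satisfies $u\geq \widetilde{K}_\gamma$ by the same maximum-principle comparison used in the uniqueness discussion preceding Definition \ref{definition:HeatKernel} — $u-\widetilde{K}_\gamma$ is a solution on $W$, continuous up to $\overline W$ with non-negative boundary values and non-negative initial values, hence non-negative. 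By the uniqueness of the minimal non-negative fundamental solution we conclude $\widetilde{K}_\gamma=K_W=K_\gamma$.

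The routine parts are the geometry of the image points and the term-by-term verification of the PDE; the step requiring the most care is the boundary condition, where one must make the pairing of image points under the wall reflections fully explicit (checking that the angular coordinates $\theta_j$ really do pair off with matching distances and opposite signs on \emph{both} walls $s_1$ and $s_2$, using that $\gamma=\pi/k$), and the non-negativity/minimality step, where one should decide cleanly between the Wiener-measure reflection argument and the pure maximum-principle argument and make sure the chosen one is airtight near the vertex $P$, where $\widetilde{K}_\gamma$ is continuous but the individual image geometry degenerates.
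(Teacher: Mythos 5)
Your approach is the paper's: define $\widetilde{K}_\gamma$, check it is a fundamental solution on $W$ vanishing on $\partial W$, establish non-negativity via the parabolic minimum principle, and conclude by minimality. Two places in your sketch need repair, though. First, for non-negativity you cannot apply the minimum principle directly to $\widetilde{K}_\gamma(\cdot,y;\cdot)$ on a region containing $y$, because the function is unbounded near $(y,0)$ and the ``initial condition'' there is only distributional; the paper's device is to mollify first, i.e.\ to apply the minimum principle to $u(x,t)=\int_W \widetilde{K}_\gamma(x,y;t)\varphi(y)\,dy$ for non-negative $\varphi\in C_c^\infty(W)$ (using the boundary vanishing, $u(\cdot,0)=\varphi\ge 0$, and the decay \eqref{equation:EstimateHeatKernelPlane} at spatial infinity to control the parabolic boundary), and then recover $\widetilde{K}_\gamma\ge 0$ by letting $\varphi$ vary. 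Second, your minimality argument assumes an arbitrary non-negative competitor $u$ is ``continuous up to $\overline{W}$ with non-negative boundary values,'' which Definition \ref{definition:HeatKernel} does not grant; what you actually have is $u\ge 0$ on $W$ together with $\widetilde{K}_\gamma\to 0$ at $\partial W$, so $\liminf(u-\widetilde{K}_\gamma)\ge 0$ there, and one must additionally excise a neighbourhood of $(y,0)$ and control spatial infinity before invoking the minimum principle. The paper sidesteps all of this by citing the characterization of the minimal fundamental solution in \citep[Theorem 9.7]{Grigoryan}; if you prefer to argue by hand, you are essentially reproving that theorem and should do so with the exhaustion-plus-excision argument made explicit. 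The probabilistic alternative you mention is viable but is not the route the paper takes here.
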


\begin{proof}
Let $\I_j$ be the isometry of $\mathbb{H}^2$ which corresponds to $y_{j}$, i.e. $\I_1:\mathbb{H}^2\rightarrow\mathbb{H}^2$ denotes the identity map, and, for all $j=2,3,...,2k$ the isometry $\I_j:\mathbb{H}^2  \rightarrow\mathbb{H}^2$ is represented in the above polar coordinates by
\begin{align*}
\I_j(\rho,\theta):=\begin{cases}
(\rho, (j-1)\cdot \gamma+\theta), & \text{if $j$ is odd}\\
(\rho, j\cdot \gamma - \theta), & \text{if $j$ is even.}
\end{cases}
\end{align*}
We define $\tilde{K}_{W}:\mathbb{H}^2\times \mathbb{H}^2\times (0,\infty)\ni (x,y,t)\mapsto \tilde{K}_{W}(x,y;t):=\sum_{j=1}^{2k}\left(-1 \right)^{j+1} K_{\mathbb{H}^2}(x,\I_jy;t)\in\mathbb{R}$. Note that $I_j y=y_j$ for all $y\in W$ and $j=1,2,...,2k$. 

%

First, we prove that for any $y\in W$ the function $W\times (0,\infty)\ni (x,t)\mapsto \tilde{K}_{W}(x,y;t)\in\mathbb{R}$ is a fundamental solution to the heat equation at $y$ (recall Definition \ref{definition:FundamentalSolution}): Obviously, if $y\in W$ is given, then  $(\partial_t+\Delta )\tilde{K}_{W}(\cdot,y;\cdot)\equiv 0$ on $W\times (0,\infty)$, because each function $K_{\mathbb{H}^2}(\cdot, y_j;\cdot)$ is a solution to the heat equation. Moreover, let $f\in C_{c}^{\infty}(W)$ be arbitrary. We extend that function by $f(x):=0$ for all $x\in \mathbb{H}^2\backslash{W}$ (and we denote that extended function again by $f$) such that $f$ also belongs to $C_{c}^{\infty}(\mathbb{H}^2)$. Now, for all $y\in W$
\begin{align*}
\int\limits_{W} \tilde{K}_{W}(x,y;t) f(x) dx = \sum\limits_{j=1}^{2k}\left(-1 \right)^{j+1}  \int\limits_{\mathbb{H}^2} K_{\mathbb{H}^2}(x,y_j;t) f(x) dx.
\end{align*}
By definition of $K_{\mathbb{H}^2}$, we have $\lim_{t\searrow 0}\int_{\mathbb{H}^2} K_{\mathbb{H}^2}(x,y_j;t) f(x) dx =f(y_j)$. Finally, because $f(y_j)=0$ for all $j=2,3,...,2k$ we obtain $\lim_{t\searrow 0}\int_{W} \tilde{K}_{W}(x,y;t) f(x) dx = f(y)$.

Second, we will prove that $\tilde{K}_{W}$ is non-negative on $W\times W\times(0,\infty)$, i.e. $\tilde{K}(x,y;t)\geq 0$ for all $x,y\in W$ and $t>0$. Let $\varphi\in C_{c}^{\infty}(W)$ with $\varphi\geq 0$ be arbitrary and extend, as before, that function by $\varphi(y):=0$ for all $y\in \mathbb{H}^2\backslash  W  $. Consider the function $u:\mathbb{H}^2\times [0,\infty)\rightarrow \mathbb{R}$, defined as
\begin{align*}
u(x,t):=\begin{cases}
\int_{\mathbb{H}^2} \tilde{K}_{W}(x,y;t) \cdot \varphi(y) \,dy & \text{ for all } x\in \mathbb{H}^2 \text{ and }t>0, \\
\varphi(x) & \text{ for all } x\in \mathbb{H}^2 \text{ and }t=0.
\end{cases}
\end{align*}
Note that for all $x\in\mathbb{H}^2$ and $t>0$:
\begin{align*}
u(x,t) = \sum\limits_{j=1}^{2k}\left(-1 \right)^{j+1}  \int\limits_{\mathbb{H}^2} K_{\mathbb{H}^2}(x,y;t) \varphi(\I_j^{-1}y)\, dy,
\end{align*}
where $\I_j^{-1}$ denotes the inverse to $\I_j$.
Hence, $u_{\vert \mathbb{H}^2\times (0,\infty)}$ is a smooth solution to the heat equation, which follows from the definition of $K_{\mathbb{H}^2}$ (see \citep[formula (7.49)]{Grigoryan}) and \citep[Theorem 7.10]{Grigoryan}. Moreover, $u$ is continuous (see \citep[Remark 7.17]{Grigoryan}). 

We want to apply the parabolic minimum principle to show that $u_{\vert W \times (0,\infty)}$ is non-negative. For that purpose, note that $\tilde{K}_{W}(x,y;t)=0$ for all $x\in \partial W$, $y\in W$ and $t\in (0,\infty)$. That follows from the definition of $y_j$ and because $K_{\mathbb{H}^2}(x,y;t)=K_{\mathbb{H}^2}(\tilde{x},\tilde{y};t)$ for all $x,y,\tilde{x},\tilde{y}\in\mathbb{H}^2$ with $d(x,y)=d(\tilde{x},\tilde{y})$. Thus, we also have $u(x,t)=0$ for all $x\in \partial W$ and $t>0$. Moreover,  $u(x,0)=\varphi(x)\geq 0$ for all $x\in\overline{W}$. Further, for any compact set $K\subset \mathbb{H}^2$, $T>0$, and any sequence $(x_{n})_{n\in\mathbb{N}}\subset \mathbb{H}^2$ with $\lim_{n\rightarrow \infty}d(x_n,P)=\infty$, we have by the estimate \eqref{equation:EstimateHeatKernelPlane}:
\begin{align}
\label{equation:UniformEstimateInfinityHeatKernel}
\lim\limits_{n\rightarrow\infty}\sup_{(y,t)\in K\times (0,T]} \vert K_{\mathbb{H}^2}(x_n,y;t) \vert = 0,
\end{align}
and consequently
\begin{align*}
\lim\limits_{n\rightarrow\infty} \sup_{t\in (0,T]} \vert u(x_n,t)\vert = 0.
\end{align*}
Now, using the parabolic minimum principle (see \citep[Theorem 8.10]{Grigoryan}) we conclude $u(x,t)\geq 0$ for all $x\in W$ and $t>0$. More precisely, suppose that there exists some point $(x_{\ast},t_{\ast})\in W\times (0, T)$, for some $T>0$ such that $u(x_{\ast},t_{\ast})<0$. Choose some $R>0$ such that $u(x_{\ast},t_{\ast}) < u(x,t)$ for all $t\in (0,T]$ and $x\in W$ with $d(x,P)\geq R$. But then, the minimum of $u$ on the set $\overline{ W\cap B_R(P)}\times [0,T]$ is not attained on its parabolic boundary, which is a contradiction to the parabolic minimum principle. Finally, because $\tilde{K}_W$ is continuous and $\varphi:W\rightarrow \mathbb{R}$ was an arbitrary non-negative and compactly supported smooth function, we have for any $x\in W$: $\tilde{K}_{W}(x,y;t)\geq 0$ for all $y\in W$ and $t>0$. Using the symmetry $\tilde{K}_{W}(x,y;t)=\tilde{K}_{W}(y,x;t)$ we indeed have $\tilde{K}_{W}(x,y;t)\geq 0$ for all $x,y\in W$ and $t>0$.

The statement now follows from \citep[Theorem 9.7]{Grigoryan} and the estimates \eqref{equation:UniformEstimateInfinityHeatKernel}, \eqref{equation:EstimateHeatKernelPlane}.
\end{proof}

We decompose the polygon $\Omega$ into the same subsets as in Section \ref{section:ExpansionTrace}. Note that $\Omega$ is a \emph{simple} hyperbolic polygon because of the assumptions on its angles and thus $\Omega$ has as many edges as angles (i.e. $\tilde{M} = M$). Further, we can assume without loss of generality that the edge $E_i$ is equal to the geodesic segment between the vertices $P_i$ and $P_{i+1}$ for all $i=1,...,M$ with $P_{M+1}:=P_1$. Recall that due to the principle of not feeling the boundary (see Corollary \ref{corollary:PNFB} and \eqref{equation:ErsatzSummeAsymptotik1}), $Z_{\Omega}^{\nicefrac{1}{4}}(t)$ has the same asymptotic expansion as the following sum of functions:
\begin{align}
\label{equation:AsymptotikErsatzSummeShiftedHeatTrace}
\int\limits_{\Omega_I}K_{\mathbb{H}^2}^{\nicefrac{1}{4}}(x,x;t)dx + \sum\limits_{i=1}^{M}\int\limits_{\text{ }\Omega_{E_i}}K_{E_i}^{\nicefrac{1}{4}}(x,x;t)dx + \sum\limits_{i=1}^{M} \int\limits_{\text{ }W_R(P_i)}K_{\gamma_i}^{\nicefrac{1}{4}}(x,x;t)dx,
\end{align}
where we use the same notation as in Section \ref{section:ExpansionTrace}; in particular,
\begin{align*}
K_{E_i}(x,x;t) = K_{\mathbb{H}^2}(x,x;t) - K_{\mathbb{H}^2}(x,x_{E_i};t).
\end{align*}

Let us insert the above formulas for $K_{\gamma_i}$ and $K_{E_i}$ into \eqref{equation:AsymptotikErsatzSummeShiftedHeatTrace}. Since we are now confronted with several wedges at the same time, the notation for the image points $y_j$ in the definition of $K_{\gamma}$ can become ambiguous sometimes. In such a case we will write $y_j(\gamma_i)$ instead of $y_j$ to indicate from which wedge the point $y_j$ originates from, i.e. we denote the heat kernel of $W_{\gamma_i}$ as $K_{\gamma_i}(x,y;t)=\sum_{j=1}^{2k(i)} (-1)^{j+1}K_{\mathbb{H}^2}(x,y_j(\gamma_i);t)$. 
The sum \eqref{equation:AsymptotikErsatzSummeShiftedHeatTrace} is then equal to
\begin{align*}
\int\limits_{\Omega}K_{\mathbb{H}^2}^{\nicefrac{1}{4}}(x,x;t)dx -  \sum\limits_{i=1}^{M} \Big( \int\limits_{\text{ }\Omega_{E_i}}K_{\mathbb{H}^2}^{\nicefrac{1}{4}}(x,x_{E_i};t)dx &+ \sum\limits_{j=1}^{k(i)} \int\limits_{\text{ }W_R(P_i)} K_{\mathbb{H}^2}^{\nicefrac{1}{4}}\left( x,x_{2j}(\gamma_i);t \right)dx \Big) +  \\
&+ \sum\limits_{i=1}^{M} \sum\limits_{j=1}^{k(i)-1} \int\limits_{\text{ }W_R(P_i)} K_{\mathbb{H}^2}^{\nicefrac{1}{4}}\left( x,x_{2j+1}(\gamma_i);t \right)dx.
\end{align*}
Thus we need to compute the asymptotic expansion of the following functions:
\begin{align*}
Z_I^{\nicefrac{1}{4}}(t)&=\int\limits_{\Omega}K_{\mathbb{H}^2}^{\nicefrac{1}{4}}(x,x;t)dx, \\
\tilde{Z}_{E}^{\nicefrac{1}{4}}(t):&= -\sum\limits_{i=1}^{M}\Big( \int\limits_{\text{ }\Omega_{E_i}}K_{\mathbb{H}^2}^{\nicefrac{1}{4}}(x,x_{E_i};t)dx  + \sum\limits_{j=1}^{k(i)} \int\limits_{\text{ }W_R(P_i)} K_{\mathbb{H}^2}^{\nicefrac{1}{4}}\left( x,x_{2j}(\gamma_i);t \right)dx \Big), \\
\tilde{Z}_V^{\nicefrac{1}{4}}(t):&= \sum\limits_{i=1}^{M}  \sum\limits_{j=1}^{k(i)-1}  \int\limits_{\text{ }W_R(P_i)} K_{\mathbb{H}^2}^{\nicefrac{1}{4}}\left( x,x_{2j+1}(\gamma_i);t \right)dx.
\end{align*}

\begin{remark}
In Section \ref{section:ExpansionTrace} we also worked with three similar functions denoted there by $Z_I^{\nicefrac{1}{4}}, Z_{E}^{\nicefrac{1}{4}}$ and $Z_V^{\nicefrac{1}{4}}$. The definition for $Z_I^{\nicefrac{1}{4}}$, given in \eqref{equation:DefinitionOfZ_I}, was exactly the same as above and, as we will see in Corollary \ref{corollary:HeatKernelsHalfSpaceTheSame}, $\tilde{Z}_{E}^{\nicefrac{1}{4}} = Z_{E}^{\nicefrac{1}{4}}$. But, strictly speaking, the functions $\tilde{Z}_V^{\nicefrac{1}{4}}$ and $Z_V^{\nicefrac{1}{4}}$ are not equal to each other, even though they have the same asymptotic expansion as $t\searrow 0$.
\end{remark}

The asymptotic expansion of $Z_I^{\nicefrac{1}{4}}(t)$ was already computed in Corollary $\ref{corollary:AsymptotischeEntwicklungShiftedZ_I}$ $(i)$.

\begin{corollary}
\label{corollary:HeatKernelsHalfSpaceTheSame}
We have $\tilde{Z}_{E}^{\nicefrac{1}{4}} = Z_E^{\nicefrac{1}{4}}$, where $Z_E^{\nicefrac{1}{4}}$ is defined as in $\eqref{equation:DefinitionOfZ_E(t)}$. In particular, by Corollary $\ref{corollary:AsymptoticExpansionZ_E}$ $(i):$ 
\begin{align}
Z_{E}^{\nicefrac{1}{4}}(t)\overset{t\downarrow 0}{\sim} -\frac{\vert \partial\Omega \vert}{8\sqrt{\pi t}}\cdot \sum\limits_{k=0}^{\infty}\delta_{0k}\cdot t^k.
\end{align}
\end{corollary}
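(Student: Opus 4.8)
The plan is to prove the identity $\tilde{Z}_E^{\nicefrac{1}{4}} = Z_E^{\nicefrac{1}{4}}$ termwise: for each $i\in\{1,\dots,M\}$ I would show that the contribution of the edge $E_i$ together with the ``even'' mirror images coming from the two adjacent vertices $P_i$ and $P_{i+1}$ exactly reproduces the $i$-th summand of the definition \eqref{equation:DefinitionOfZ_E(t)}. Recall from \eqref{equation:DefinitionOfZ_E(t)} that
\begin{align*}
Z_E^{\nicefrac{1}{4}}(t) = -\sum_{j=1}^{\tilde M}\int_{\Omega_{E_j}}K_{\mathbb{H}^2}^{\nicefrac{1}{4}}(x,x_{E_j};t)\,dx \;-\; M\cdot 2\int_0^R\int_0^{\pi/2}K_{\mathbb{H}^2}^{\nicefrac{1}{4}}((a,\alpha),(a,-\alpha);t)\sinh(a)\,d\alpha\,da,
\end{align*}
and in the present setting $\tilde M = M$ and the edge $E_i$ joins $P_i$ to $P_{i+1}$. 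The first step is to observe that the term $-\int_{\Omega_{E_i}}K_{\mathbb{H}^2}^{\nicefrac{1}{4}}(x,x_{E_i};t)\,dx$ is common to both expressions, so it suffices to match the remaining pieces.

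Next I would analyse, for a fixed wedge $W_{\gamma_i}$ with $\gamma_i = \pi/k(i)$ and vertex $P_i$, the sum $\sum_{j=1}^{k(i)}\int_{W_R(P_i)}K_{\mathbb{H}^2}^{\nicefrac{1}{4}}(x,x_{2j}(\gamma_i);t)\,dx$ of the even mirror images. By the definition of the image points $y_j(\gamma_i)$ in polar coordinates $(\rho,\theta)$ about $P_i$, the even images $y_{2j} = (\rho, 2j\gamma_i - \theta)$ are precisely the reflections of $y$ in the $k(i)$ rays through $P_i$ that bound the wedge and its successive copies; the image $y_{2k(i)}$ is the reflection in the very edge $E_i$ (respectively $E_{i-1}$), which is already accounted for by the $\Omega_{E_i}$ term. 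The remaining $k(i)-1$ even images contribute, after passing to polar coordinates and using $d(x,x_{2j}) = d((\rho,\theta),(\rho,2j\gamma_i-\theta))$, a sum of integrals of the form $2\int_0^R\int_0^{\gamma_i/2}K_{\mathbb{H}^2}^{\nicefrac{1}{4}}((a,\alpha),(a,-\alpha);t)\sinh(a)\,d\alpha\,da$ over suitable angular sub-intervals; here one uses that the domain $W_R(P_i)$ in polar coordinates is $\{0<\rho<R,\ 0<\theta<\gamma_i\}$ and that the angular variable $\alpha = j\gamma_i - \theta$ ranges, as $\theta$ runs over $(0,\gamma_i)$, over an interval of length $\gamma_i$. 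Summing over the two vertices $P_i$ and $P_{i+1}$ adjacent to the edge $E_i$, and then over all $i$, I expect the angular integrals to telescope and reassemble into exactly $2\int_0^R\int_0^{\pi/2}K_{\mathbb{H}^2}^{\nicefrac{1}{4}}((a,\alpha),(a,-\alpha);t)\sinh(a)\,d\alpha\,da$ counted with the correct multiplicity $M$, using $\sum_{i=1}^M(k(i)-1)\gamma_i$ together with $\gamma_i = \pi/k(i)$ so that each $(k(i)-1)\gamma_i = \pi - \gamma_i$, and the two half-wedges at the endpoints of $E_i$ combine to a total angular measure of $\pi$ around $E_i$; the evenness $K_{\mathbb{H}^2}((a,\alpha),(a,-\alpha);t) = K_{\mathbb{H}^2}((a,-\alpha),(a,\alpha);t)$ and the fact that $K_{\mathbb{H}^2}$ depends only on $\sinh(a)\lvert\sin\alpha\rvert$-type quantities will be used to identify the integrands.

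The main obstacle I anticipate is the careful bookkeeping of the angular ranges and multiplicities: one must be sure that the reflection in the edge $E_i$ (the ``last'' even image $y_{2k(i)}(\gamma_i)$) is counted once, not twice, when both wedges $W_{\gamma_i}$ and $W_{\gamma_{i+1}}$ are considered, and that the pieces $2\int_0^R\int_{(j-1)\gamma_i/2}^{j\gamma_i/2}(\dots)$ from the two adjacent vertices fit together without gap or overlap to cover $(0,\pi/2)$ exactly $M$ times in total. A clean way to organise this is to fix an edge $E_i$, introduce Fermi-type or symmetric polar coordinates adapted to the perpendicular bisector structure at each endpoint, and verify the identity first for the model case $k(i)=k(i+1)=2$ (where both wedges are half-planes and the statement is essentially Lemma \ref{lemma:HeatKernelHalfPlaneProof}), then for general $k$ by an elementary induction on $k(i)$ using the reflection relation $y_{2j}(\gamma_i)$ reflected in $s_{2j+1}$ equals $y_{2j+1}(\gamma_i)$. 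Once the angular decomposition is established, the remaining manipulations — interchanging finite sums with integrals, substituting $\theta\mapsto\alpha$, and invoking $\sum_i(k(i)-1)\gamma_i$ — are routine, and the asymptotic expansion then follows immediately from Corollary \ref{corollary:AsymptoticExpansionZ_E} $(i)$.
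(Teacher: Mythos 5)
Your opening observation is right — the $\Omega_{E_j}$ terms are literally identical in both expressions — but the central bookkeeping step fails. You discard the image $y_{2k(i)}$ (the reflection in the line containing $E_i$) on the grounds that it is ``already accounted for by the $\Omega_{E_i}$ term.'' It is not: the term $\int_{W_R(P_i)}K_{\mathbb{H}^2}^{\nicefrac{1}{4}}(x,x_{2k(i)}(\gamma_i);t)\,dx$ is an integral over the circle sector \emph{at the vertex}, while $\Omega_{E_i}$ is by construction disjoint from $\Omega_V$, so no part of $W_R(P_i)$ is covered by the edge-strip integral. Your own arithmetic then exposes the resulting deficit: keeping only $k(i)-1$ even images at each vertex yields total angular measure $\sum_{i=1}^{M}(k(i)-1)\gamma_i=\sum_{i=1}^{M}(\pi-\gamma_i)=M\pi-\sum_i\gamma_i$, whereas the target $M\cdot 2\int_0^R\int_0^{\pi/2}(\cdots)\,d\alpha\,da$ in \eqref{equation:DefinitionOfZ_E(t)} corresponds to $M$ full half-discs, i.e.\ angular measure $M\pi$. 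The missing $\sum_i\gamma_i$ is exactly the measure of the $M$ terms you threw away, and no telescoping between adjacent vertices can recover it (splitting each vertex's contribution between its two edges gives each edge $\pi-\tfrac{1}{2}(\gamma_i+\gamma_{i+1})\neq\pi$).

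The repair is to keep \emph{all} $k(i)$ even images at each vertex and do the accounting per vertex, not per edge — which is what the paper does. Writing $x_{2j}=S_{j+1}x$ with $S_{j+1}=D_{\gamma_i}^{j}\circ S_1\circ D_{\gamma_i}^{-j}$ and using the isometry invariance of $K_{\mathbb{H}^2}^{\nicefrac{1}{4}}$, the $j$-th term becomes $\int_{D_{\gamma_i}^{j-1}(W_R(P_i))}K_{\mathbb{H}^2}^{\nicefrac{1}{4}}(x,S_1x;t)\,dx$, and the $k(i)$ rotated sectors $D_{\gamma_i}^{j-1}(W_R(P_i))$, $j=1,\dots,k(i)$, tile the half-disc $\{0<a<R,\ 0<\alpha<\pi\}$ exactly; this yields precisely one copy of $2\int_0^R\int_0^{\pi/2}K_{\mathbb{H}^2}^{\nicefrac{1}{4}}((a,\alpha),(a,-\alpha);t)\sinh(a)\,d\alpha\,da$ per vertex, hence $M$ copies in total, with no interaction between neighbouring vertices or edges required. (Incidentally, your proposed base case $k(i)=2$ is a wedge of angle $\pi/2$, not a half-plane; the half-plane corresponds to $k=1$, which is excluded here.)
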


\begin{proof}
 For any $t>0$, the function $K_{\mathbb{H}^2}^{\nicefrac{1}{4}}\left( x,y;t \right)$ depends only on the distance $d(x,y)$ between $x$ and $y$. In other words, $K_{\mathbb{H}^2}^{\nicefrac{1}{4}}\left( x,y;t \right)= K_{\mathbb{H}^2}^{\nicefrac{1}{4}}\left( \phi(x),\phi(y);t \right)$ for any isometry $\phi:\mathbb{H}^2\rightarrow\mathbb{H}^2$. In particular, the heat kernel is symmetric in $x,y$. Fix some $i\in\{1,...,M\}$ and let $S_j:\mathbb{H}^2\rightarrow\mathbb{H}^2$ denote the reflection in the line determined by the ray $s_j$, where the rays $s_1,...,s_{2k(i)}$ are defined as above (compare Fig. \ref{Skizze:Spiegelpunkte}) now with respect to the wedge $W_{\gamma_i}$. (In particular $s_1$ contains $E_i$.) Let $D_{\alpha}$, $\alpha\in\mathbb{R}$, denote the rotation around $P_i$ by angle $\alpha$ in the positive direction. Then $S_{j+1} = D^{2j}_{\gamma_i}\circ S_1$ and $S_{j+1} = D_{\gamma_i}^j\circ S_1\circ D_{\gamma_i}^{-j}$. In particular, $S_{j+1}x = x_{2j}=D_{\gamma_i}^{2j}S_1 x$; also, note that $S_1(W_R(P_i))=D_{\gamma_i}^{-1}(W_R(P_i))$. Therefore
\begin{align*}
\int\limits_{W_R(P_i)} K_{\mathbb{H}^2}^{\nicefrac{1}{4}}\left( x,x_{2j};t \right) dx &= \int\limits_{S_{j+1}(W_R(P_i))} K_{\mathbb{H}^2}^{\nicefrac{1}{4}}\left( x, S_{j+1}x ;t \right) dx\\
&= \int\limits_{S_{j+1}(W_R(P_i))} K_{\mathbb{H}^2}^{\nicefrac{1}{4}}\left( D_{\gamma_i}^{-j} x, S_1 D_{\gamma_i}^{-j}x ;t \right) dx\\
& =\int\limits_{D_{\gamma_i}^{-j}S_{j+1}(W_R(P_i))} K_{\mathbb{H}^2}^{\nicefrac{1}{4}}\left( x, S_1 x ;t \right)\\
&=\int\limits_{D_{\gamma_i}^{j-1}( W_R(P_i))} K_{\mathbb{H}^2}^{\nicefrac{1}{4}}\left( x, S_1 x ;t \right).
\end{align*}
The union of $D_{\gamma_i}^{j-1}( W_R(P_i))$ over $j=1,...,k(i)$ is equal to a half-disc and can be parametrised in polar coordinates as
\begin{align*}
\bigcup_{j=1}^{k(i)}D_{\gamma_i}^{j-1} (W_R(P_i)) = \{\, \left( a,\alpha \right) \mid 0<a<R,\, 0<\alpha<\pi\,\}.
\end{align*}
Hence,
\begin{align*}
 \sum\limits_{j=1}^{k(i)} \int\limits_{\text{ }W_R(P_i)} K_{\mathbb{H}^2}^{\nicefrac{1}{4}}\left( x,x_{2j}(\gamma_i);t \right)dx &=   \int\limits_{\bigcup_{j=1}^{k(i)}D_{\gamma_i}^{j-1}(W_R(P_i))} K_{\mathbb{H}^2}^{\nicefrac{1}{4}}\left( x,x_{E_i}(\gamma_i);t \right)dx \\
 & = 2\int\limits_{0}^{R}\int\limits_{0}^{\frac{\pi}{2}} K_{\mathbb{H}^2}^{\nicefrac{1}{4}}\left( (a,\alpha), (a, -\alpha);t \right) \sinh(a) d\alpha da,
\end{align*}
and
\begin{align*}
\tilde{Z}_{E}^{\nicefrac{1}{4}}(t)=-\sum\limits_{i=1}^{M}\Bigg\{ \int\limits_{\text{ }\Omega_{E_i}}K_{\mathbb{H}^2}^{\nicefrac{1}{4}}(x,x_{E_i};t)dx  +  2\int\limits_{0}^{R}\int\limits_{0}^{\frac{\pi}{2}} K_{\mathbb{H}^2}^{\nicefrac{1}{4}}\left( (a,\alpha), (a, -\alpha);t \right) \sinh(a) d\alpha da \Bigg\}.
\end{align*}
Therefore, the function $\tilde{Z}_{E}^{\nicefrac{1}{4}}$ is exactly the same function as $Z_{E}^{\nicefrac{1}{4}}$ from \eqref{equation:DefinitionOfZ_E(t)} of Section \ref{section:ExpansionTrace}. Thus the asymptotic expansion of $Z_{E}^{\nicefrac{1}{4}}(t)$ as $t\searrow 0$ is given by Corollary \ref{corollary:AsymptoticExpansionZ_E} $(i)$.
\end{proof}

It remains to determine the asymptotic expansion of $\tilde{Z}_V^{\nicefrac{1}{4}}(t)$. For all $i\in\lbrace 1,...,M \rbrace$ we define
\begin{align}
\tilde{I}_{\gamma_i}^{\nicefrac{1}{4}}(t):=\sum\limits_{j=1}^{k(i)-1}\int\limits_{W_R(P_i)} K_{\mathbb{H}^2}^{\nicefrac{1}{4}}(x,x_{2j+1}(\gamma_i);t)dx,
\end{align}
such that 
\begin{align}
\tilde{Z}_V^{\nicefrac{1}{4}}(t) = \sum\limits_{i=1}^{M} \tilde{I}_{\gamma_i}^{\nicefrac{1}{4}}(t).
\end{align}
The discussion so far shows that $\tilde{I}_{\gamma_i}^{\nicefrac{1}{4}}(t)$ will provide the contributions from the angle $\gamma_i$ to the heat invariants for the polygon $\Omega$. So let $i\in\lbrace 1,...,M \rbrace$ be fixed in the following discussion. In order to determine the asymptotic expansion of $\tilde{I}_{\gamma_i}^{\nicefrac{1}{4}}(t)$ we define, for each $j\in\lbrace  1,...,k(i)-1\rbrace$, the functions
\begin{align}
\label{equation:DefinitionI_{ij}}
I_{ij}(t):=\int\limits_{W_R(P_i)} K_{\mathbb{H}^2}^{\nicefrac{1}{4}}(x,x_{2j+1}(\gamma_i);t)dx.
\end{align}
Then
\begin{align}
\label{equation:DarstellungI_gammaSumme}
\tilde{I}_{\gamma_i}^{\nicefrac{1}{4}}(t) =  \sum\limits_{j=1}^{k(i)-1} I_{ij}(t).
\end{align}
Let us fix also the parameter $j\in\lbrace  1,...,k(i)-1\rbrace$ and first compute the asymptotic expansion of $I_{ij}(t)$ as $t\searrow 0$.

When we choose polar coordinates $(a ,\alpha)$ with base point $P_i$, the domain of integration in \eqref{equation:DefinitionI_{ij}} can be parametrised as
\begin{align}
W_R(P_i)=\Big\lbrace{ (a,\alpha) \mid 0\leq a\leq R,\, 0\leq \alpha\leq \frac{\pi}{k(i)} \Big\rbrace}.
\end{align}
Recall that the volume form in polar coordinates is given by $dx =\sinh(a)da d\alpha$ (see Section \ref{section:Green}) and the distance of any two points $(a,\alpha),\, (a,\alpha')\in\mathbb{H}^2$ with the same radius $a\in\left( 0,\infty \right)$ and angles $\alpha,\,\alpha'\in (0,2\pi]$ is (recall \eqref{equation:HyperbolicDistancePolarCoordinates}):
\begin{align*}
\cosh\left( d((a,\alpha), (a,\alpha')) \right) &= \cosh^2(a)-\sinh^2(a)\cos(\alpha'-\alpha)\\
&=\cosh^2(a)\cdot\left(1- \cos(\alpha'-\alpha)\right)+\cos(\alpha'-\alpha).
\end{align*}
Hence, by \eqref{equation:HyperHeat2} and the fact that the angles of $x$ and $x_{2j+1}(\gamma_i)$ differ by $2j\gamma_i$ for each $x\in W_{R}(P_i)$, we get the following formula for $I_{ij}(t)$ in polar coordinates: 
\begin{align}
\label{equation:FormelI_{ij}PolarCoordinates}
I_{ij}(t)&=\int\limits_{W_R(P_i)} K_{\mathbb{H}^2}^{\nicefrac{1}{4}}(x,x_{2j+1}(\gamma_i);t)dx \nonumber \\
&=  \frac{1}{4\sqrt{\pi} t^{\frac{3}{2}}} \frac{1}{k(i)} \int\limits_{0}^{R}  \int\limits_{\arcosh\left( \cosh^2(a)\left(1 -\cos\left( 2j\frac{\pi}{k(i)}\right)\right)+ \cos\left( 2j\frac{\pi}{k(i)}\right)\right)}^{\infty} ... \nonumber  \\
&  \qquad ... \frac{1}{\sqrt{2}}\frac{\sinh(a)\cdot \rho\cdot e^{-\frac{\rho^2}{4t}}}{\sqrt{\cosh(\rho)-\cosh^2(a)\left(1-\cos\left( 2j\frac{\pi}{k(i)} \right)\right)- \cos\left( 2j\frac{\pi}{k(i)}\right)}} \, d\rho \, da. 
\end{align}

Before we investigate $I_{ij}(t)$ further, let us discuss the following coefficients which will appear in the asymptotic expansion of $I_{ij}(t)$. For all $k, \, \eta \in\mathbb{N}_0$ we define the coefficient $d_{k}(\eta)$ via the generating function:
\begin{align}
\label{equation:DefinitionCoefficientsD_k(n)}
\sinh^k\left( r\right)=\sum\limits_{\eta=0}^{\infty} d_k \left( \eta \right)\cdot r^{\eta}\, \text{ for all }\, r\in\mathbb{R}.
\end{align}
Note that by definition $d_0(0)=1$, $d_0(\eta)=0$ for all $\eta\geq 1$ and $d_k(0)=0$ for all $k\geq 1$. For $k\geq 1,\, \eta\geq 1$ we can compute the coefficients $d_k(\eta)$ by the formulas
\begin{align}
d_{k}(\eta) &= \sum\limits_{\substack{\ell_1,...,\ell_{k}\in\mathbb{N}\\ \text{ are odd, s.t. } \\ \sum_{\tau=1}^{k}\ell_{\tau}=\eta}} \prod\limits_{j=1}^{k} \frac{1}{\ell_j !}  \label{equation:FormulasCoefficientsD_k(n)1}\\
&= \frac{1}{2^{k}\cdot \eta!}\sum\limits_{p=0}^{k}\binom{k}{p} (-1)^{p} \left(k-2p\right)^{\eta}. \label{equation:FormulasCoefficientsD_k(n)2}
\end{align}

The first equality \eqref{equation:FormulasCoefficientsD_k(n)1} is clear by $\sinh(r)=r+\frac{r^3}{3!} + \frac{r^5}{5!}+...$, and
formula \eqref{equation:FormulasCoefficientsD_k(n)2} is true since
\begin{align}
\label{equation:BeweisFormulasCoefficientsD_k(n)2}
\sinh^{k}(r)&=\left( \frac{e^{r}-e^{-r}}{2} \right)^{k}=\frac{1}{2^{k}}\sum\limits_{p=0}^{k}\binom{k}{p} (-1)^{p}e^{\left(k-2p\right)r}\nonumber \\
&=\frac{1}{2^{k}}\sum\limits_{p=0}^{k}\binom{k}{p} (-1)^{p} \sum\limits_{\eta=0}^{\infty} \frac{\left(k-2p\right)}{\eta!}^{\eta} r^{\eta} \nonumber\\
&= \sum\limits_{\eta=0}^{\infty}  \frac{1}{2^{k}\cdot \eta!}\sum\limits_{p=0}^{k}\binom{k}{p} (-1)^{p} \left(k-2p\right)^{\eta} r^{\eta}.
\end{align}
(Compare \eqref{equation:DefinitionCoefficientsD_k(n)} with \eqref{equation:BeweisFormulasCoefficientsD_k(n)2}.)

Moreover,
\begin{align}
\label{equation:CoefficientsD_kVanish}
d_k(\eta)=0 \, \text{ for all }\, k>\eta
\end{align}
because $r\mapsto \sinh^k(r)$ has a zero of order $k$ in $0$.

Let us determine the asymptotic expansion of the integral expression in \eqref{equation:FormelI_{ij}PolarCoordinates}. 
\begin{lemma}
\label{lemma:AsymptoticExpansionQWatson}
Let
\begin{align*}
\psi_{ij}(a):=\arcosh\left( \cosh^2(a)\left(1 -\cos\left( 2j\frac{\pi}{k(i)}\right)\right)+ \cos\left( 2j\frac{\pi}{k(i)}\right)\right)
\end{align*} 
and
\begin{align*}
q_{ij}(\rho, a):=\frac{1}{\sqrt{2}}\frac{\sinh(a)\cdot \rho}{\sqrt{\cosh(\rho)-\cosh^2(a)\left(1-\cos\left( 2j\frac{\pi}{k(i)} \right)\right)- \cos\left( 2j\frac{\pi}{k(i)}\right)}}
\end{align*}
for all $a\geq 0$ and all $\rho\in(\psi_{ij}(a) ,\infty)$.

Then
\begin{align}
\int\limits_{0}^{R}  \int\limits_{\psi_{ij}(a)}^{\infty} q_{ij}(\rho, a)\cdot e^{-\frac{\rho^2}{4t}} d\rho \, da\text{ } \overset{t\downarrow 0}{\sim} \text{ }\sum\limits_{\eta=0}^{\infty} \widetilde{\nu}_{\eta}(i,j)\cdot  t^{\eta+\frac{3}{2}},
\end{align}
where 
\begin{align}
\widetilde{\nu}_{\eta}(i,j):&=\frac{\Gamma\left( \eta+\frac{3}{2} \right)}{2\eta+1}\cdot\sum\limits_{\ell=0}^{\eta}c_{\ell}(i,j)\cdot d_{2\ell}(2\eta) = \frac{\sqrt{\pi} (2\eta)!}{2\cdot 4^{\eta} \eta!}\cdot\sum\limits_{\ell=0}^{\eta}c_{\ell}(i,j)\cdot d_{2\ell}(2\eta), \label{equation:CoefficientsNuIJ}\\
c_{\ell}(i,j):&= \sum\limits_{\tau=0}^{\ell}\binom{\frac{1}{2}}{\tau}(-1)^{\ell-\tau}\cdot \frac{1}{\sin\left( j\cdot\frac{\pi}{k(i)} \right)^{2(\ell-\tau)+2}}, \label{equation:CoefficientsCellIJ}
\end{align}
and $d_{2\ell}(2\eta)$ are the coefficients defined in \eqref{equation:DefinitionCoefficientsD_k(n)}.
\end{lemma}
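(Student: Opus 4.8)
The plan is to apply Watson's Lemma (Lemma \ref{lemma:WatsonsLemma}) after an appropriate change of variables, exactly as was done for the interior and edge contributions earlier in this section. First I would swap the order of integration using the Fubini--Tonelli theorem (all integrands are non-negative for $t>0$), writing
\begin{align*}
\int\limits_{0}^{R}\int\limits_{\psi_{ij}(a)}^{\infty} q_{ij}(\rho,a)\,e^{-\frac{\rho^2}{4t}}\,d\rho\,da = \int\limits_{0}^{\infty}\left(\int\limits_{0}^{\psi_{ij}^{-1}(\rho)} q_{ij}(\rho,a)\,da\right)e^{-\frac{\rho^2}{4t}}\,d\rho,
\end{align*}
where $\psi_{ij}$ is strictly increasing so its inverse (truncated at $R$) is well defined; the upper limit of the inner integral is $\min\{R,\psi_{ij}^{-1}(\rho)\}$. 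Then I substitute $\rho = 2\sqrt{r}$ to obtain $\int_0^\infty G_{ij}(r)\,e^{-r/t}\,dr$ with $G_{ij}(r):=\frac{1}{\sqrt r}\int_0^{\min\{R,\psi_{ij}^{-1}(2\sqrt r)\}} q_{ij}(2\sqrt r,a)\,da$. By Watson's Lemma it suffices to produce the asymptotic expansion of $G_{ij}(r)$ as $r\searrow 0$.

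The key computational step is to evaluate the inner $a$-integral explicitly for small $\rho$ (so that $\psi_{ij}^{-1}(\rho)<R$ and the $R$-truncation is inactive). Using $\cosh(t)=2\sinh^2(t/2)+1$ and writing $c:=\cos(2j\pi/k(i))$, $1-c = 2\sin^2(j\pi/k(i))$, the quantity under the square root in $q_{ij}$ becomes $\cosh(\rho) - 1 - \cosh^2(a)(1-c) + (1-c) = 2\sinh^2(\rho/2) - (1-c)\sinh^2(a)$. Substituting $u = \sinh(a)$ turns the inner integral into $\int_0^{u_{\max}} \frac{\sqrt2\,\rho\, u}{2\sqrt{2\sinh^2(\rho/2)-(1-c)u^2}}\,du$ (times $1/\sqrt2$ from $q_{ij}$), with $u_{\max} = \sqrt2\,\sinh(\rho/2)/\sqrt{1-c}$; this is an elementary integral of the form $\int u(A-Bu^2)^{-1/2}\,du$, giving a closed form proportional to $\rho\,\sinh(\rho/2)/\sqrt{1-c} = \rho\,\sinh(\rho/2)/(\sqrt2\,\sin(j\pi/k(i)))$. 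After substituting $\rho=2\sqrt r$ and dividing by $\sqrt r$, one gets that $G_{ij}(r)$ equals, for $r$ in a neighbourhood of $0$, the function $\frac{1}{\sin(j\pi/k(i))}\cdot\frac{\sinh(\sqrt r)}{\sqrt r}$ up to a constant — but to capture higher-order terms one must expand more carefully: the true integrand of $q_{ij}$ has $\cosh^2(a) = 1+\sinh^2(a)$ appearing, and expanding $(2\sinh^2(\rho/2)-(1-c)\sinh^2 a)^{-1/2}$ using the binomial series in powers of $\sinh^2 a / \sinh^2(\rho/2)$ produces precisely the coefficients $\binom{1/2}{\tau}$ and the powers $1/\sin(j\pi/k(i))^{2(\ell-\tau)+2}$ that define $c_\ell(i,j)$ in \eqref{equation:CoefficientsCellIJ}, while the substitution $u=\sinh a$ and the generating function \eqref{equation:DefinitionCoefficientsD_k(n)} for $\sinh^{2\ell}$ introduces the $d_{2\ell}$ coefficients. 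Carrying out this double expansion and integrating term by term in $u$ yields an expansion $G_{ij}(r)\sim\sum_{\eta\geq 0}\big(\sum_{\ell=0}^\eta \frac{c_\ell(i,j)\,d_{2\ell}(2\eta)}{2\eta+1}\big)r^{(\eta+3/2)-1}$, and Watson's Lemma then multiplies each term by $\Gamma(\eta+3/2)$ to give \eqref{equation:CoefficientsNuIJ}; the identity $\Gamma(\eta+3/2) = \frac{\sqrt\pi\,(2\eta)!}{4^\eta\,\eta!}\cdot\frac{2\eta+1}{2}$ (a variant of \eqref{equation:GammaWert}) rewrites the constant into the stated second form.

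The main obstacle I anticipate is the bookkeeping of the two nested series expansions — the binomial expansion of the square-root factor in $\rho^2\sim r$ versus $\sinh^2 a$, combined with the power-series expansion of $\sinh^{2\ell}(\sqrt r)$ — and verifying that after the $u$-integration the exponents of $r$ line up as half-integers $\eta+1/2$ (before the Jacobian $1/\sqrt r$) so that Watson's Lemma applies with $\rho_k = \eta + 3/2$. One must also check the technical hypotheses of Watson's Lemma, namely continuity of the transformed integrand on $(0,\infty)$ and that it is $O(e^{cr})$ for some $c>0$: continuity is clear away from $0$, the limit as $r\searrow 0$ exists because $G_{ij}$ is a power series near $0$, and the growth bound follows from the estimate \eqref{equation:EstimateHeatKernelPlane} for the hyperbolic heat kernel together with the fact that $d(x,x_{2j+1}(\gamma_i))$ is bounded below by a positive constant on $W_R(P_i)$ when $j\neq 0$ (since $2j\gamma_i \not\equiv 0 \bmod 2\pi$), which forces exponential decay rather than growth. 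Finally I would note that for $r$ beyond the neighbourhood where the $R$-truncation becomes active, the contribution is $O(e^{-c/t})$ by \eqref{equation:EstimateHeatKernelPlane} and hence does not affect the asymptotic expansion, so it is legitimate to use the untruncated closed form of $G_{ij}$ throughout.
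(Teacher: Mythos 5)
Your overall architecture is the same as the paper's: swap the integrals by Fubini--Tonelli, substitute $\rho=2\sqrt{r}$, evaluate the inner $a$-integral in closed form for small $r$, expand that closed form in half-integer powers of $r$, and finish with Watson's lemma (the truncation at $R$ and the growth hypothesis are handled essentially as you say). However, the central computation is not correct as you describe it, in two places. First, the substitution $u=\sinh(a)$ does not produce an integral of the form $\int u\,(A-Bu^2)^{-1/2}\,du$: the measure carried by $q_{ij}$ is $\sinh(a)\,da$, while $du=\cosh(a)\,da$, so a Jacobian factor $1/\sqrt{1+u^2}$ survives. The substitution that works is $v=\cosh(a)$ (so $dv=\sinh(a)\,da$), which turns the inner integral into $\int_{1}^{\sqrt{x/y}}(x-yv^2)^{-1/2}\,dv=\frac{1}{\sqrt{y}}\arccos\bigl(\sqrt{y/x}\bigr)$ with $x=\cosh(\rho)-c$, $y=1-c$. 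The closed form is therefore an $\arccos$, namely
\begin{align*}
Q(\rho)=\frac{1}{\sin\bigl(j\tfrac{\pi}{k(i)}\bigr)}\arccos\frac{\sin\bigl(j\tfrac{\pi}{k(i)}\bigr)}{\sqrt{\cosh^2(\sqrt{\rho})-1+\sin^2\bigl(j\tfrac{\pi}{k(i)}\bigr)}},
\end{align*}
and not anything ``proportional to $\rho\sinh(\rho/2)/\sqrt{1-c}$''; indeed the leading behaviour is $\rho^{1/2}/\sin^2(j\pi/k(i))$, matching $\widetilde{\nu}_0(i,j)$.

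Second, your account of where the coefficients $c_\ell(i,j)$ come from would not reproduce them. Expanding $(2\sinh^2(\rho/2)-(1-c)\sinh^2 a)^{-1/2}$ by the binomial series is an expansion of a $(-\tfrac12)$-power, so it yields $\binom{-1/2}{\tau}$, not $\binom{1/2}{\tau}$, and it is an expansion in powers of $\sinh^2(a)$ (the spatial variable), whereas the $d_{2\ell}(2\eta)$ in \eqref{equation:CoefficientsNuIJ} are Taylor coefficients of $\sinh^{2\ell}$ in the \emph{time-like} variable $\sqrt{r}$; moreover term-by-term integration of that series up to the endpoint where the square root vanishes is delicate. The expansion that actually produces \eqref{equation:CoefficientsCellIJ} is obtained by differentiating the $\arccos$ closed form with respect to the transformed variable: with $s:=\sin(j\pi/k(i))$ one gets $\cosh(r)/(\cosh^2(r)-1+s^2)$, and then the $\binom{1/2}{\tau}$ come from the binomial series of $\cosh(r)=(1+\sinh^2 r)^{1/2}$, while the factors $(-1)^{\ell-\tau}s^{-2(\ell-\tau)-2}$ come from the geometric series for $(s^2+\sinh^2 r)^{-1}$ — a second series your sketch omits entirely. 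Re-expanding the resulting powers $\sinh^{2\ell}(r)$ via \eqref{equation:DefinitionCoefficientsD_k(n)} and integrating back introduces $d_{2\ell}(2\eta)$ and the factor $\frac{1}{2\eta+1}$. As written, your computation would not close; with the substitution and the two expansions corrected it becomes the paper's proof.
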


\begin{proof}
The function $\psi_{ij}$ is continuous and strictly increasing on the interval $[0,\infty)$ as a composition of continuous and strictly increasing functions, and thus its inverse
\begin{align*}
\psi_{ij}^{-1}:[0,\infty)\ni \omega \mapsto \arcosh\left(\sqrt{  \frac{\cosh(\omega)-\cos\left( 2j\frac{\pi}{k(i)} \right)}{1-\cos\left( 2j\frac{\pi}{k(i)}\right)} }\right)\in\mathbb{R}
\end{align*}
is continuous and strictly increasing as well. Both functions $\psi_{ij},\, \psi_{ij}^{-1}$ are smooth on the domain $(0,\infty)$, since they are compositions of smooth functions. Hence, by the Fubini-Tonelli theorem and substituting $\rho$ by $2\sqrt{\rho}$, we get
\begin{align*}
\int\limits_{0}^{R}  \int\limits_{\psi_{ij}(a)}^{\infty}  q_{ij}(\rho, a)\cdot e^{-\frac{\rho^2}{4t}}d\rho \, da &= \int\limits_{0}^{\infty}  \int\limits_{0}^{\min\lbrace{R,\text{ } \psi_{ij}^{-1}(\rho) \rbrace}} q_{ij}(\rho,a) da \, \cdot e^{-\frac{\rho^2}{4t}}d\rho \\
&=\int\limits_{0}^{\infty}  \int\limits_{0}^{\min\lbrace{R,\text{ } \psi_{ij}^{-1}(2\sqrt{\rho})\rbrace}}\frac{q_{ij}(2\sqrt{\rho},a)}{\sqrt{\rho}} da \,\cdot e^{-\frac{\rho}{t}} d\rho.
\end{align*}
We would like to apply Watson's lemma; recall Lemma \ref{lemma:WatsonsLemma}. So we need to determine the asymptotic expansion of the function
\begin{align*}
Q: (0,\infty) \ni \rho\mapsto \int\limits_{0}^{\min\lbrace{R,\text{ } \psi_{ij}^{-1}(2\sqrt{\rho})\rbrace}}\frac{q_{ij}(2\sqrt{\rho},a)}{\sqrt{\rho}}da \in\mathbb{R}
\end{align*}
as $\rho\searrow 0$.
Since $\psi_{ij}^{-1}$ is continuous in $0$ with $\lim\limits_{\rho\searrow 0}\psi_{ij}^{-1}(2\sqrt{\rho})=0$, for sufficiently small values $\rho>0$, we have
\begin{align*}
Q(\rho) &= \int\limits_{0}^{\psi_{ij}^{-1}(2\sqrt{\rho})} \frac{q_{ij}(2\sqrt{\rho},a)}{\sqrt{\rho}} da = \int\limits_{0}^{\arcosh \sqrt{  \frac{\cosh(2\sqrt{\rho})-\cos\left( 2j\frac{\pi}{k(i)} \right)}{1-\cos\left( 2j\frac{\pi}{k(i)}\right)} }} ...\\
&\qquad\qquad ... \frac{\sqrt{2}\cdot \sinh(a)}{\sqrt{\cosh(2\sqrt{\rho})-\cosh^2(a)\cdot \left( 1-\cos\left( 2j\frac{\pi}{k(i)} \right) \right)-\cos\left( 2j\frac{\pi}{k(i)} \right)}}da \\
&=\sqrt{2}\int\limits_{1}^{\sqrt{  \frac{\cosh(2\sqrt{\rho})-\cos\left( 2j\frac{\pi}{k(i)} \right)}{1-\cos\left( 2j\frac{\pi}{k(i)}\right)} }} \frac{1}{\sqrt{ \cosh(2\sqrt{\rho})-\cos\left( 2j\frac{\pi}{k(i)} \right) -\left(1- \cos(2j\frac{\pi}{k(i)}) \right)a^2}} da.
\end{align*}
This integral is of the form
\begin{align*}
\int\limits_{1}^{\sqrt{\frac{x}{y}}}\frac{1}{\sqrt{x-y a^2}} da
\end{align*}
with $ x > y>0$. By an elementary substitution we get
\begin{align*}
\int\limits_{1}^{\sqrt{\frac{x}{y}}}\frac{1}{\sqrt{x-ya^2}} da  = \frac{1}{\sqrt{y}}\arccos\left( \sqrt{\frac{y}{x}} \right).
\end{align*}
Therefore,
\begin{align}
\label{equation:FunctionQSimplified}
Q(\rho)&= \frac{\sqrt{2}}{\sqrt{1- \cos(2j\frac{\pi}{k(i)})}} \cdot \arccos  \sqrt{\frac{1- \cos(2j\frac{\pi}{k(i)})}{\cosh(2\sqrt{\rho})-\cos(2j\frac{\pi}{k(i)})} }  \nonumber \\
&=\frac{1}{\sin\left( j\cdot \frac{\pi}{k(i)} \right)}\cdot \arccos  \frac{\sin\left( j\frac{\pi}{k(i)} \right)}{\sqrt{\cosh^2(\sqrt{\rho})-1+\sin^2\left( j\cdot \frac{\pi}{k(i)} \right)}} .
\end{align}
This function can be expanded into an asymptotic power series, but we will postpone the proof to the next lemma. Applying Lemma \ref{lemma:HilfslemmaWinkelSpiegelungsmethode} with $c:=\sin\left(j\cdot\frac{\pi}{k(i)}  \right)\in\left( 0,1 \right)$, we obtain
\begin{align*}
Q(\rho)\overset{\rho \downarrow 0}{\sim} \sum\limits_{\eta=0}^{\infty}  \xi_{2\eta+1}(i,j)\cdot \rho^{\left(\eta+\frac{3}{2}\right)-1},
\end{align*}
where 
\begin{align*}
\xi_{2\eta+1}\left(i,j\right) &:= \frac{1}{2\eta+1}\cdot \sum\limits_{k=0}^{\eta}c_k(i,j)\cdot d_{2k}(2\eta).
\end{align*}

Using Watson's lemma we get
\begin{align*}
\int\limits_{0}^{\infty} Q(\rho)\cdot e^{-\frac{\rho}{t}}d\rho\text{ }\overset{t\downarrow 0}{\sim} \sum\limits_{\eta=0}^{\infty} \underbrace{\xi_{2\eta+1}(i,j)\cdot \Gamma\left(\eta+\frac{3}{2} \right)}_{=\, \widetilde{\nu}_{\eta}(i,j)}\cdot t^{\eta+\frac{3}{2}}.
\end{align*}
Finally, recall from \eqref{equation:GammaWert} that $\Gamma\left( \eta+\frac{3}{2} \right)=\frac{\sqrt{\pi}\cdot \left(2\eta+2\right)!}{4^{\eta+1}\left(\eta+1\right)!}$ for all $\eta\in\mathbb{N}_{0}$.
\end{proof}

We will give in the following lemma the details for the asymptotic expansion of \eqref{equation:FunctionQSimplified}.

\begin{lemma}
\label{lemma:HilfslemmaWinkelSpiegelungsmethode}
Let $c\in (0,1]$ and consider the function 
\begin{align*}
U: [0,\infty) \ni r \mapsto \frac{1}{c}\cdot \arccos  \frac{c}{\sqrt{\cosh^2(\sqrt{r})-1+c^2}}  \in\mathbb{R}.
\end{align*}
 Then
\begin{align}
U(r) \overset{r\downarrow 0}{\sim}\sum\limits_{\eta=0}^{\infty} \xi_{2\eta+1}\cdot r^{\eta+\frac{1}{2}},
\end{align}
where 
\begin{align}
\xi_{2\eta+1}:&=\frac{1}{2\eta+1}\cdot \sum\limits_{k=0}^{\eta}c_k\cdot d_{2k}(2\eta)\\
\text{ with }\, c_k:&=  \sum\limits_{\tau=0}^{k}\binom{\frac{1}{2}}{\tau}(-1)^{k-\tau}\cdot \frac{1}{c^{2(k-\tau)+2}}.
\end{align}
\end{lemma}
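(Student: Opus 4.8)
The plan is to expand the function $U(r)$ as $r\searrow 0$ by writing it as a composition of power series and tracking coefficients carefully. First I would substitute $s:=\sqrt{r}$, so that $U(r)=\tilde U(s)$ with
\begin{align*}
\tilde U(s)=\frac{1}{c}\arccos\frac{c}{\sqrt{\cosh^2(s)-1+c^2}}=\frac{1}{c}\arccos\frac{c}{\sqrt{\sinh^2(s)+c^2}},
\end{align*}
and then differentiate. Using $\frac{d}{dx}\arccos(x)=-\frac{1}{\sqrt{1-x^2}}$ together with $\frac{d}{ds}\big(c(\sinh^2(s)+c^2)^{-1/2}\big)=-c\sinh(s)\cosh(s)(\sinh^2(s)+c^2)^{-3/2}$, one computes after simplification (using $1-\frac{c^2}{\sinh^2(s)+c^2}=\frac{\sinh^2(s)}{\sinh^2(s)+c^2}$)
\begin{align*}
\tilde U'(s)=\frac{\cosh(s)}{\sinh^2(s)+c^2}.
\end{align*}
Since $\cosh(s)=\frac{d}{ds}\sinh(s)$, this shows $\tilde U(s)=\int_0^s\frac{d\sinh(\sigma)}{\sinh^2(\sigma)+c^2}=\frac{1}{c}\arctan\big(\frac{\sinh(s)}{c}\big)$, consistent with $\tilde U(0)=0$.

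From here the asymptotic expansion is obtained by composing known series. I would use the expansion of $\arctan$, namely $\arctan(w)=\sum_{\tau=0}^{\infty}\frac{(-1)^{\tau}}{2\tau+1}w^{2\tau+1}$, which I intend to relate to the binomial coefficients $\binom{1/2}{\tau}$ appearing in the statement; the cleaner route is to integrate the geometric-type series $\frac{1}{\sinh^2(s)/c^2+1}$ term by term, but matching the precise form of $c_k$ with the $\binom{1/2}{\tau}$'s suggests instead writing $\tilde U'(s)=\frac{1}{c^2}\cosh(s)\big(1+\sinh^2(s)/c^2\big)^{-1}$ and then, more usefully, going back one step: since we ultimately want a series in $r=s^2$ and the integrand naturally produces a series in $\sinh^2(s)$, I would expand $\tilde U(s)=\frac1c\arctan(\sinh(s)/c)$ using $\arctan(x)=\int_0^x(1+u^2)^{-1/2}\cdot(1+u^2)^{-1/2}\,du$ — no, cleaner: expand $\big(\sinh^2(s)+c^2\big)^{-1/2}$ times something. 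The key algebraic identity I expect to need is
\begin{align*}
\frac{1}{2\eta+1}\sum_{k=0}^{\eta}c_k\,d_{2k}(2\eta),\qquad c_k=\sum_{\tau=0}^{k}\binom{\tfrac12}{\tau}(-1)^{k-\tau}c^{-2(k-\tau)-2},
\end{align*}
which strongly indicates that the right bookkeeping is: write $\tilde U(s)=\frac{1}{c}\cdot\frac{\sinh(s)}{\sqrt{\sinh^2(s)+c^2}}\cdot(\text{correction})$ via $\arcsin$, i.e. $\arctan(x/c)=\arcsin\big(x/\sqrt{x^2+c^2}\big)$, so $\tilde U(s)=\frac1c\arcsin\big(\frac{\sinh(s)}{\sqrt{\sinh^2(s)+c^2}}\big)$, and then use $\arcsin(y)=\sum_{\tau\ge0}\frac{(2\tau)!}{4^{\tau}(\tau!)^2(2\tau+1)}y^{2\tau+1}$ — but actually the factor $\binom{1/2}{\tau}$ points to expanding $(\sinh^2 s+c^2)^{-1/2}=c^{-1}\sum_{\tau}\binom{-1/2}{\tau}c^{-2\tau}\sinh^{2\tau}(s)$ inside the integral $\tilde U(s)=\int_0^s(\sinh^2\sigma+c^2)^{-1/2}\,d\sigma$ — which, however, is $\arcsinh(\sinh s/c)$, not $\arctan$. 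The correct integrand producing $\arctan$ and the $\binom{1/2}{\tau}$ pattern is $\tilde U'(s)=\cosh(s)(\sinh^2 s+c^2)^{-1}$; expand $(\sinh^2 s+c^2)^{-1}=c^{-2}\sum_{\tau\ge0}(-1)^{\tau}c^{-2\tau}\sinh^{2\tau}(s)$, multiply by $\cosh(s)$, integrate term by term to get $\tilde U(s)=\sum_{\tau\ge0}\frac{(-1)^{\tau}}{(2\tau+1)c^{2\tau+2}}\sinh^{2\tau+1}(s)$, and finally substitute $\sinh^{2\tau+1}(s)$ by its own power series. Reconciling the $\frac{(-1)^\tau}{2\tau+1}$ coefficient with the claimed $c_k=\sum_\tau\binom{1/2}{\tau}(-1)^{k-\tau}c^{-2(k-\tau)-2}$ is then a matter of expanding $\sinh^{2\tau+1}(s)=\sinh(s)\cdot\sinh^{2\tau}(s)=\sinh(s)(\cosh^2 s-1)^\tau$ and using the binomial theorem on $(\cosh^2 s-1)^\tau$, which introduces exactly $\binom{\tau}{\cdot}$ and sign factors.

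The main obstacle will be the purely combinatorial identification of the coefficient of $r^{\eta+1/2}$ with the stated formula $\frac{1}{2\eta+1}\sum_{k=0}^{\eta}c_k d_{2k}(2\eta)$. Concretely, I would: (1) establish $\tilde U'(s)=\cosh(s)/(\sinh^2 s+c^2)$ (an elementary chain-rule computation); (2) expand $\cosh(s)(\sinh^2 s+c^2)^{-1}$ as a power series in $\sinh^2(s)$ with coefficients $\cosh(s)\cdot c^{-2}(-c^{-2})^{\tau}$; (3) use $\sinh^2(s)=\cosh^2(s)-1$ and the binomial theorem, or more directly recognize that after integration the natural building blocks are odd powers $\sinh^{2k}(s)$ of $\sinh$ with coefficients reorganized into the $c_k$; (4) substitute the defining generating function $\sinh^{2k}(s)=\sum_{\eta}d_{2k}(\eta)s^{\eta}$ from \eqref{equation:DefinitionCoefficientsD_k(n)}, noting $d_{2k}(\eta)=0$ for odd $\eta$ and for $\eta<2k$ by \eqref{equation:CoefficientsD_kVanish}, so only even indices $\eta=2m$ with $m\ge k$ survive; (5) collect the coefficient of $s^{2\eta}$ in $\tilde U(s)$, which by $r=s^2$ is the coefficient of $r^{\eta}$ — but since $\tilde U$ is odd in $s$ only the structure via $\tilde U(s)=\sum_k(\text{stuff})_k\sinh^{2k+1}(s)$ matters, giving an extra $\sinh(s)$ that shifts parity correctly to produce the half-integer power $r^{\eta+1/2}$; and (6) verify the prefactor $\frac{1}{2\eta+1}$ comes from the integration $\int_0^s(\cdots)$ acting on $s^{2\eta}$, i.e. from the $\frac{1}{2\tau+1}$ factors after reindexing. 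I expect steps (3)--(6) — the reindexing of the double sum and checking that the $\frac{1}{2\tau+1}$ factors combine into $\frac{1}{2\eta+1}$ — to require the most care, and I would handle it by a generating-function manipulation: multiply the series for $\tilde U'$ by $s^{2\eta}$-extracting operators, or equivalently observe that $\frac{d}{ds}\big(s\,\tilde U\big)$ or a similar combination has a cleaner product form. Everything else (convergence, smoothness of $U$ on $(0,\infty)$, continuity at $0$) is routine since $U$ is a composition of real-analytic functions near $s=0$ with $U(0)=0$.
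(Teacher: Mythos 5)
Your derivative computation is exactly the paper's starting point: the paper also passes to $u(r):=U(r^2)$, shows $u'(r)=\cosh(r)/(\sinh^2(r)+c^2)$, and then reads off the Taylor coefficients (using an odd extension of $u$ to make the half-integer powers and the vanishing of the even coefficients transparent — your observation that $\tilde U(s)=\tfrac1c\arctan(\sinh(s)/c)$ settles that point even more cleanly). However, there is a genuine gap in how you propose to recover the stated form of the coefficients $c_k$. The factor $\binom{1/2}{\tau}$ has a half-integer upper index, so it can only arise from the binomial \emph{series} of a square root; the paper gets it from the one-line identity
\begin{align*}
\frac{\cosh(s)}{\sinh^2(s)+c^2}=\sqrt{1+\sinh^2(s)}\cdot\frac{1}{c^2}\cdot\frac{1}{1+(\sinh(s)/c)^2}=\Bigl(\sum_{\tau\ge 0}\tbinom{1/2}{\tau}\sinh^{2\tau}(s)\Bigr)\Bigl(\sum_{m\ge 0}\frac{(-1)^m}{c^{2m+2}}\sinh^{2m}(s)\Bigr),
\end{align*}
whose Cauchy product is $\sum_k c_k\sinh^{2k}(s)$ with precisely the stated $c_k$; substituting $\sinh^{2k}(s)=\sum_\eta d_{2k}(2\eta)s^{2\eta}$ and integrating then yields the prefactor $\frac{1}{2\eta+1}$ directly. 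Your route instead integrates first, obtaining $\tilde U(s)=\sum_\tau\frac{(-1)^\tau}{(2\tau+1)c^{2\tau+2}}\sinh^{2\tau+1}(s)$, and then proposes to expand $\sinh^{2\tau+1}(s)=\sinh(s)(\cosh^2(s)-1)^\tau$ by the integer binomial theorem. That produces coefficients $\binom{\tau}{j}$, not $\binom{1/2}{\tau}$, and leaves you with the coefficient of $s^{2\eta+1}$ in the form $\sum_\tau\frac{(-1)^\tau}{(2\tau+1)c^{2\tau+2}}d_{2\tau+1}(2\eta+1)$; showing this equals $\frac{1}{2\eta+1}\sum_k c_k d_{2k}(2\eta)$ is a nontrivial combinatorial identity that your sketch does not establish — it is exactly the "main obstacle" you flag in steps (3)–(6) and never resolve. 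Replacing that part of your plan with the factorization $\cosh(s)=\sqrt{1+\sinh^2(s)}$ before expanding and integrating closes the gap and reproduces the paper's proof.
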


\begin{proof}
Let us consider $u(r):=U(r^2)$.
Observe that the function $u(r)$ can be extended continuously by the same formula to all of $\mathbb{R}$, and this continuation is smooth on the domain $\mathbb{R}\backslash\lbrace 0\rbrace$, but is not continuously differentiable at the point $0$. But since we are interested only in the asymptotic expansion for $r\searrow 0$, we can choose a different continuation to obtain a smooth function on $\mathbb{R}$. Consider $\tilde{u}:\mathbb{R}\rightarrow\mathbb{R}$ defined by
\begin{align}
\tilde{u}(r):=\begin{cases}
u(r) &\text{, if $r\in [0,\infty)$},\\
-u(-r) &\text{, if $r\in (-\infty,0)$}.
\end{cases}
\end{align}
Then $\tilde{u}$ is a smooth odd function on $\mathbb{R}$. This follows for instance from the following facts: $u$ is smooth on the domain $(0,\infty)$, satisfies $u(0)=0$, and the limit of $\frac{d^m}{dr^m}u(r)$ as $r\searrow 0$ exists in $\mathbb{R}$ for all $m\in\mathbb{N}$ and is equal to $0$ if $m$ is an even number. The last assertion can be easily seen from \eqref{equation:BeweisAbleitungu(r)} below. Hence $\tilde{u}$ has a Taylor series around $0$ which formally equals the asymptotic series for $u(r)$ as $r\searrow 0$; i.e., there exist constants $\widetilde{\xi}_k:=\frac{1}{k!}\frac{d^k \tilde{u}}{dx^k}(0)$ such that
\begin{align*}
u(r) \overset{r\downarrow 0}{\sim}\sum\limits_{k=0}^{\infty} \widetilde{\xi}_k\cdot r^k.
\end{align*}
It remains to show that $\widetilde{\xi}_{2\eta}=0$ and $\widetilde{\xi}_{2\eta+1}=\xi_{2\eta+1}$ for all $\eta\in\mathbb{N}_0$. Obviously $\widetilde{\xi}_0=\tilde{u}(0)=0.$ To compute the other coefficients, we first differentiate $u(r)$ for $r>0$ and get
\begin{align}
\label{equation:BeweisAbleitungu(r)}
\frac{d u}{dr}(r)&=\frac{1}{c}\frac{-1}{\sqrt{1-\frac{c^2}{\cosh^2(r)-1+c^2}}}\cdot \left( -\frac{1}{2} \right)\cdot \frac{c\cdot 2\cdot \cosh(r)\cdot \sinh(r)}{\left( \cosh^2(r)-1+c^2 \right)^{\frac{3}{2}}} \nonumber \\
&= \frac{1}{\cosh^2(r)-1+c^2}\cdot \frac{\cosh(r)\sinh(r)}{\sqrt{\cosh^2(r)-1}} \nonumber \\
&=\frac{ \cosh(r)}{\cosh^2(r)-1+c^2}.
\end{align}
Thus $\frac{d \tilde{u}}{dr}(r)=\frac{\cosh(r)}{\cosh^2(r)-1+c^2}$ for all $r\in\mathbb{R}$, since $\frac{d \tilde{u}}{dr}$ is an even function. The Taylor series for this function can be derived as follows: For $r\in\mathbb{R}$ such that $\vert \sinh(r) \vert<\min\{1, c\}$, we have
\begin{align*}
\frac{\cosh(r)}{\cosh^2(r)-1+c^2}&=\sqrt{1+\sinh^2(r)}\cdot\frac{1}{\sinh^2(r)+c^2} \\
&=\frac{1}{c^2}\cdot \sqrt{1+\sinh^2(r)}\cdot\frac{1}{1+ \left(\frac{\sinh(r)}{c}\right)^2}  \\
&=\frac{1}{c^2} \sum\limits_{\ell=0}^{\infty} \binom{\frac{1}{2}}{\ell} \cdot \sinh^{2\ell}(r) \cdot \sum\limits_{m=0}^{\infty}(-1)^{m}\frac{\sinh(r)^{2m}}{c^{2m}}  \\
&=\frac{1}{c^2}\sum\limits_{k=0}^{\infty}  \sum\limits_{\tau=0}^{k}\binom{\frac{1}{2}}{\tau}(-1)^{k-\tau}\cdot \frac{1}{c^{2(k-\tau)}}  \cdot \sinh^{2k}(r) \\
&=\sum\limits_{k=0}^{\infty} c_k\cdot \sinh^{2k}(r). 
\end{align*}

The last expression becomes by \eqref{equation:DefinitionCoefficientsD_k(n)}, the fact that $d_{2k}(\eta)=0$ for odd $\eta$,  and \eqref{equation:CoefficientsD_kVanish}
\begin{align*}
\sum\limits_{k=0}^{\infty}  c_k\cdot \sum\limits_{\eta=0}^{\infty} d_{2k}(2\eta) r^{2\eta} =\sum\limits_{\eta=0}^{\infty}  \sum\limits_{k=0}^{\infty} c_{k}\cdot d_{2k}(2\eta) \cdot r^{2\eta} = \sum\limits_{\eta=0}^{\infty}  \sum\limits_{k=0}^{\eta} c_{k}\cdot d_{2k}(2\eta) \cdot r^{2\eta}.
\end{align*}
Since this was $\tilde{u}'(r)$ for $r\in\mathbb{R}$ with $\vert \sinh(r) \vert<\min\{1,c \}$ and the coefficients $\widetilde{\xi}_k$ are given as the coefficients of the Taylor series $T(\tilde{u})$ of $\tilde{u}$, we get
\begin{align*}
\sum\limits_{\eta=0}^{\infty}\widetilde{\xi}_{\eta} \cdot r^{\eta} =  T(\tilde{u})(r) = \sum\limits_{\eta=0}^{\infty}\frac{1}{2\eta+1}   \sum\limits_{k=0}^{\eta}c_k\cdot d_{2k}(2\eta) \cdot r^{2\eta+1},
\end{align*}
hence $\widetilde{\xi}_{2\eta+1}= \frac{1}{2\eta+1}\cdot \sum\limits_{k=0}^{\eta}c_k\cdot d_{2k}(2\eta)=\xi_{2\eta+1}$ for all $\eta\in\mathbb{N}_0$ and $\widetilde{\xi}_{2\eta}=0$ for all $\eta\in\mathbb{N}_0$.

\end{proof}

From \eqref{equation:FormelI_{ij}PolarCoordinates} and Lemma \ref{lemma:AsymptoticExpansionQWatson} we immediately get:

\begin{corollary}
\begin{align*}
I_{ij}(t)\overset{t\downarrow 0}{\sim} \sum\limits_{\eta=0}^{\infty} \nu_{\eta}(i,j) t^{\eta},
\end{align*}
where
\begin{align*}
 \nu_{\eta}(i,j)=\frac{1}{2k(i)}\frac{(2\eta)!}{4^{\eta+1}\eta !}  \sum\limits_{\ell=0}^{\eta} c_{\ell}(i,j)\cdot d_{2\ell}(2\eta)
\end{align*}
with $c_{\ell}(i,j)$ as in \eqref{equation:CoefficientsCellIJ}.
\end{corollary}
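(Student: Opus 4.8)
The plan is to derive the asymptotic expansion of $I_{ij}(t)$ directly from the explicit polar-coordinate formula for $I_{ij}(t)$ established in \eqref{equation:FormelI_{ij}PolarCoordinates}, together with the asymptotic expansion proved in Lemma \ref{lemma:AsymptoticExpansionQWatson}. Recall that \eqref{equation:FormelI_{ij}PolarCoordinates} expresses $I_{ij}(t)$ as the prefactor $\frac{1}{4\sqrt{\pi}\, t^{3/2}}\cdot \frac{1}{k(i)}$ multiplied by the double integral $\int_0^R \int_{\psi_{ij}(a)}^{\infty} q_{ij}(\rho,a)\, e^{-\rho^2/(4t)}\, d\rho\, da$, where $\psi_{ij}$ and $q_{ij}$ are exactly the functions introduced in Lemma \ref{lemma:AsymptoticExpansionQWatson}. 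So the corollary is really just a matter of substituting the asymptotic expansion of the double integral into this identity and collecting powers of $t$.

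Concretely, first I would invoke Lemma \ref{lemma:AsymptoticExpansionQWatson} to write
\begin{align*}
\int\limits_{0}^{R}  \int\limits_{\psi_{ij}(a)}^{\infty} q_{ij}(\rho, a)\cdot e^{-\frac{\rho^2}{4t}} d\rho \, da \;\overset{t\downarrow 0}{\sim}\; \sum\limits_{\eta=0}^{\infty} \widetilde{\nu}_{\eta}(i,j)\cdot  t^{\eta+\frac{3}{2}},
\end{align*}
with $\widetilde{\nu}_{\eta}(i,j) = \frac{\sqrt{\pi}\,(2\eta)!}{2\cdot 4^{\eta}\eta!}\sum_{\ell=0}^{\eta} c_\ell(i,j)\, d_{2\ell}(2\eta)$ as given in \eqref{equation:CoefficientsNuIJ}. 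Then multiplying this expansion by $\frac{1}{4\sqrt{\pi}\,t^{3/2}}\cdot \frac{1}{k(i)}$ — which is legitimate since multiplying an asymptotic series by a fixed power of $t$ preserves the asymptotic relation — gives $I_{ij}(t) \overset{t\downarrow 0}{\sim} \sum_{\eta=0}^{\infty} \nu_\eta(i,j)\, t^{\eta}$ with
\begin{align*}
\nu_\eta(i,j) = \frac{1}{4\sqrt{\pi}\, k(i)}\cdot \widetilde{\nu}_{\eta}(i,j) = \frac{1}{4\sqrt{\pi}\, k(i)}\cdot \frac{\sqrt{\pi}\,(2\eta)!}{2\cdot 4^{\eta}\eta!}\sum\limits_{\ell=0}^{\eta} c_\ell(i,j)\, d_{2\ell}(2\eta) = \frac{1}{2k(i)}\cdot\frac{(2\eta)!}{4^{\eta+1}\eta!}\sum\limits_{\ell=0}^{\eta} c_\ell(i,j)\, d_{2\ell}(2\eta),
\end{align*}
which is precisely the claimed formula. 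The only things to check are the bookkeeping: that the exponent shift $\eta+\frac32 \mapsto \eta$ lines up the series correctly (no truncation or index shift is needed because $t^{3/2}\cdot t^{\eta+3/2 - 3} = t^{\eta}$, wait — more carefully, $t^{-3/2}\cdot t^{\eta+3/2} = t^{\eta}$), and that the constant $\frac{1}{4\sqrt{\pi}}\cdot\frac{\sqrt{\pi}}{2} = \frac{1}{8}$ combines with $\frac{1}{4^\eta}$ to give $\frac{1}{4^{\eta+1}\cdot 2}$, matching $\frac{1}{2}\cdot\frac{1}{4^{\eta+1}}$.

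Since all the genuine analytic work (the validity of interchanging integration order via Fubini--Tonelli, the applicability of Watson's lemma, and the power-series expansion of the function $Q$ via Lemma \ref{lemma:HilfslemmaWinkelSpiegelungsmethode}) has already been carried out inside the proof of Lemma \ref{lemma:AsymptoticExpansionQWatson}, there is no real obstacle here; the corollary is a one-line consequence. The mildest point of care is simply arithmetic cancellation of the constants between $\widetilde{\nu}_\eta(i,j)$ and $\nu_\eta(i,j)$, and confirming that the prefactor in \eqref{equation:FormelI_{ij}PolarCoordinates} is exactly $\frac{1}{4\sqrt{\pi}\,t^{3/2}\,k(i)}$ (which follows from $\frac{1}{(4\pi t)^{3/2}}\cdot \frac{1}{\sqrt 2}\cdot\sqrt 2 = \frac{1}{(4\pi t)^{3/2}}$ and the identity $(4\pi)^{3/2} = 8\pi\sqrt\pi$, so $\frac{1}{(4\pi t)^{3/2}} = \frac{1}{8\pi\sqrt\pi\, t^{3/2}}$; combined with the $\frac{1}{k(i)}$ and after integrating out, this reconciles with the stated $\frac{1}{4\sqrt\pi}$ once the $\sqrt\pi$ from $\widetilde\nu_\eta$ and a factor $2\pi$ from the angular normalization are tracked). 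I would present this reconciliation explicitly but briefly, then state that the corollary follows.
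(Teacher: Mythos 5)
Your proposal is correct and is exactly the paper's route: the paper presents this corollary as an immediate consequence of \eqref{equation:FormelI_{ij}PolarCoordinates} and Lemma \ref{lemma:AsymptoticExpansionQWatson}, and your constant bookkeeping $\frac{1}{4\sqrt{\pi}\,k(i)}\cdot\frac{\sqrt{\pi}\,(2\eta)!}{2\cdot 4^{\eta}\eta!}=\frac{1}{2k(i)}\cdot\frac{(2\eta)!}{4^{\eta+1}\eta!}$ checks out. The only blemish is the final parenthetical (the ``factor $2\pi$ from the angular normalization'' is really $\pi/k(i)$, and the $\sqrt{2}$ from \eqref{equation:HyperHeat2} cancels the $1/\sqrt{2}$ built into $q_{ij}$), but since the prefactor $\frac{1}{4\sqrt{\pi}\,t^{3/2}k(i)}$ is already explicit in \eqref{equation:FormelI_{ij}PolarCoordinates}, this does not affect the argument.
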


Recall that $\tilde{I}_{\gamma_i}^{\nicefrac{1}{4}}(t)=\sum_{j=1}^{k(i)-1}I_{ij}(t)$.

\begin{corollary}
 We have
\begin{align}
\label{equation:ContributionFromOneVerticeTrigSum}
\tilde{I}_{\gamma_i}^{\nicefrac{1}{4}}(t)\overset{t\downarrow 0}{\sim} \sum\limits_{\eta=0}^{\infty} c_{\eta}(\gamma_i) t^{\eta}, 
\end{align}
where
\begin{align*}
c_{\eta}(\gamma_i)&:=\frac{1}{2k(i)}\frac{(2\eta)!}{4^{\eta+1}\eta !}  \sum\limits_{\ell=0}^{\eta}  C_{\ell}(i) \cdot d_{2\ell}(2\eta),\\
C_{\ell}(i)&:=  \sum\limits_{j=1}^{k(i)-1} c_{\ell}(i,j)
\end{align*}
with $c_{\ell}(i,j)$ as in \eqref{equation:CoefficientsCellIJ}.
\end{corollary}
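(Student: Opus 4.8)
The plan is to deduce this corollary directly from the preceding one together with the decomposition $\tilde{I}_{\gamma_i}^{\nicefrac{1}{4}}(t)=\sum_{j=1}^{k(i)-1}I_{ij}(t)$ recorded in \eqref{equation:DarstellungI_gammaSumme}. The only general principle needed is that a \emph{finite} sum of functions, each possessing an asymptotic power series as $t\searrow 0$, again possesses an asymptotic power series whose coefficients are the (finite) sums of the individual coefficients. Granting this, the statement becomes a matter of adding up the expansions $I_{ij}(t)\overset{t\downarrow 0}{\sim}\sum_{\eta=0}^{\infty}\nu_\eta(i,j)\,t^\eta$ over $j=1,\dots,k(i)-1$ and simplifying the resulting coefficients.

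Concretely, I would first recall from the preceding corollary that
\[
\nu_\eta(i,j)=\frac{1}{2k(i)}\frac{(2\eta)!}{4^{\eta+1}\eta!}\sum_{\ell=0}^{\eta}c_\ell(i,j)\,d_{2\ell}(2\eta)
\]
for all $\eta\in\mathbb{N}_0$ and all $j\in\{1,\dots,k(i)-1\}$, with $c_\ell(i,j)$ as in \eqref{equation:CoefficientsCellIJ}. Summing this identity over $j$ and swapping the two finite summations (over $j$ and over $\ell$), which is legitimate precisely because both index ranges are finite, gives
\[
\sum_{j=1}^{k(i)-1}\nu_\eta(i,j)=\frac{1}{2k(i)}\frac{(2\eta)!}{4^{\eta+1}\eta!}\sum_{\ell=0}^{\eta}\Bigl(\sum_{j=1}^{k(i)-1}c_\ell(i,j)\Bigr)d_{2\ell}(2\eta)=\frac{1}{2k(i)}\frac{(2\eta)!}{4^{\eta+1}\eta!}\sum_{\ell=0}^{\eta}C_\ell(i)\,d_{2\ell}(2\eta),
\]
where $C_\ell(i):=\sum_{j=1}^{k(i)-1}c_\ell(i,j)$; the right-hand side is by definition $c_\eta(\gamma_i)$.

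It then remains to check that $\tilde{I}_{\gamma_i}^{\nicefrac{1}{4}}(t)$ indeed admits the asymptotic expansion with these coefficients. This I would verify by hand from \eqref{equation:DarstellungI_gammaSumme}: for each $N\in\mathbb{N}_0$,
\[
\tilde{I}_{\gamma_i}^{\nicefrac{1}{4}}(t)-\sum_{\eta=0}^{N}c_\eta(\gamma_i)\,t^\eta=\sum_{j=1}^{k(i)-1}\Bigl(I_{ij}(t)-\sum_{\eta=0}^{N}\nu_\eta(i,j)\,t^\eta\Bigr),
\]
a finite sum of functions each of which is $o(t^N)$ as $t\searrow 0$ by the preceding corollary, hence itself $o(t^N)$. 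Since $N$ is arbitrary, this is exactly the claimed asymptotic relation \eqref{equation:ContributionFromOneVerticeTrigSum}. There is essentially no obstacle here: the corollary is pure bookkeeping on top of the previous one, and the only point requiring (minimal) care is to keep track that all index sets involved are finite, so that termwise addition of the asymptotic series and the interchange of the $j$- and $\ell$-summations are both trivially valid. The payoff of writing $c_\eta(\gamma_i)$ in this closed trigonometric form will come later, when it is compared with the formula for $\nu_k^{\mathbb{H}}$ from Section \ref{section:ExpansionTrace} to obtain identities for finite trigonometric sums.
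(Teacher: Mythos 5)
Your proposal is correct and follows exactly the route the paper intends: the corollary is obtained by summing the finite collection of asymptotic expansions of $I_{ij}(t)$ from the preceding corollary over $j=1,\dots,k(i)-1$ via \eqref{equation:DarstellungI_gammaSumme} and collecting coefficients, which is precisely your bookkeeping argument. The explicit $o(t^N)$ verification you include is a reasonable (if routine) way of justifying the termwise addition that the paper leaves implicit.
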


\begin{corollary}
\begin{align*}
\tilde{Z}_V^{\nicefrac{1}{4}}(t)\overset{t\downarrow 0}{\sim} \sum\limits_{\eta=0}^{\infty} c_{\eta} t^{\eta},
\end{align*}
where $c_{\eta}:=\sum\limits_{i=1}^{M}c_{\eta}(\gamma_i)$.
\end{corollary}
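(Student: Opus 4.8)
The plan is to obtain the expansion of $\tilde{Z}_V^{\nicefrac{1}{4}}(t)$ by simply assembling it from the pieces already analysed in this section. First I would recall the decomposition established just above, namely $\tilde{Z}_V^{\nicefrac{1}{4}}(t) = \sum_{i=1}^{M}\tilde{I}_{\gamma_i}^{\nicefrac{1}{4}}(t)$, together with the fact that, by the preceding corollary, each summand satisfies $\tilde{I}_{\gamma_i}^{\nicefrac{1}{4}}(t) \overset{t\downarrow 0}{\sim} \sum_{\eta=0}^{\infty} c_{\eta}(\gamma_i)\,t^{\eta}$ with the displayed coefficients $c_{\eta}(\gamma_i)$.

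Then I would invoke the elementary fact that a finite sum of functions, each admitting an asymptotic power series expansion as $t\searrow 0$, again admits such an expansion, with coefficients obtained by adding term by term. Concretely, for each $N \in \mathbb{N}_0$ one has
\[ \tilde{Z}_V^{\nicefrac{1}{4}}(t) - \sum_{\eta=0}^{N}\Big(\sum_{i=1}^{M} c_{\eta}(\gamma_i)\Big) t^{\eta} = \sum_{i=1}^{M}\Big( \tilde{I}_{\gamma_i}^{\nicefrac{1}{4}}(t) - \sum_{\eta=0}^{N} c_{\eta}(\gamma_i)\, t^{\eta} \Big), \]
and the right-hand side is $o(t^{N})$ as $t\searrow 0$ because it is a finite sum of $o(t^N)$ terms. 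Setting $c_{\eta} := \sum_{i=1}^{M} c_{\eta}(\gamma_i)$ then finishes the argument. The bookkeeping is particularly clean here because all exponents occurring are non-negative integers, so there is no singular or fractional part of the expansion to track.

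There is essentially no obstacle in this final step; the real content — the expansion of each $\tilde{I}_{\gamma_i}^{\nicefrac{1}{4}}(t)$ — has already been carried out using Lemma \ref{lemma:AsymptoticExpansionQWatson}, Lemma \ref{lemma:HilfslemmaWinkelSpiegelungsmethode}, Watson's lemma, and the principle of not feeling the boundary. It is worth remarking, as the payoff announced at the beginning of Section \ref{section:ApplicationPolygons}, that $\tilde{Z}_V^{\nicefrac{1}{4}}$ and the function $Z_V^{\nicefrac{1}{4}}$ of Section \ref{section:ExpansionTrace} have the same asymptotic expansion as $t\searrow 0$ — both arise from approximating the single heat trace $Z_{\Omega}^{\nicefrac{1}{4}}(t)$ via Corollary \ref{corollary:PNFB}, together with the identifications in Corollary \ref{corollary:HeatKernelsHalfSpaceTheSame} and Corollary \ref{corollary:AsymptotischeEntwicklungShiftedZ_I} of the interior and edge contributions. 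Hence $c_{\eta} = c_{\eta}^{\mathbb{H}}$ for all $\eta \in \mathbb{N}_0$, with $c_{\eta}^{\mathbb{H}}$ as in Corollary \ref{corollary:EckenbeitragWinkelPolygon}; equating the two resulting expressions for these coefficients is exactly where the nontrivial finite trigonometric identities promised in the section will emerge.
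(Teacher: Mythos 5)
Your proof is correct and takes the same (essentially immediate) route as the paper, which states this corollary without further argument: one simply sums the finitely many asymptotic expansions $\tilde{I}_{\gamma_i}^{\nicefrac{1}{4}}(t)\overset{t\downarrow 0}{\sim}\sum_{\eta=0}^{\infty}c_{\eta}(\gamma_i)t^{\eta}$ term by term using $\tilde{Z}_V^{\nicefrac{1}{4}}=\sum_{i=1}^{M}\tilde{I}_{\gamma_i}^{\nicefrac{1}{4}}$. Your closing remark about comparing with $Z_V^{\nicefrac{1}{4}}$ to extract the trigonometric identities accurately reflects how the paper uses this result afterwards.
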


Recall that by Corollary \ref{corollary:HeatKernelsHalfSpaceTheSame}, $Z_V^{\nicefrac{1}{4}}$ and $\tilde{Z}_V^{\nicefrac{1}{4}}$ necessarily have the same asymptotic expansion. So we can compare \eqref{equation:ContributionFromOneVerticeTrigSum} with \eqref{equation:DefinitionC_k(Gamma)} and obtain the following theorem as an aside.

\begin{theorem}
\label{theorem:FiniteTrigonometricSums}
For all $k\in\mathbb{N}_{\geq 2}$ and $\eta\in\mathbb{N}_{0}$ we have the identities
\begin{align}
\label{equation:FiniteTrigonometricSums}
\sum\limits_{\ell=1}^{\eta+1}\binom{2\eta+2}{2\ell} B_{2\eta-2\ell+2}\left(\frac{1}{2}\right)\cdot B_{2\ell} \cdot  \left( k^{2\ell}-1\right) = \frac{(2\eta+2)!}{4^{\eta+1}}  \sum\limits_{\ell=0}^{\eta} C_{\ell,k} \cdot d_{2\ell}(2\eta),
\end{align}
where
\begin{align*}
C_{\ell,k}&:= \sum\limits_{\tau=0}^{\ell}\binom{\frac{1}{2}}{\tau}(-1)^{\ell-\tau}\cdot \sum\limits_{j=1}^{k-1} \frac{1}{\sin\left( j\cdot\frac{\pi}{k} \right)^{2(\ell-\tau)+2}} .
\end{align*}
\end{theorem}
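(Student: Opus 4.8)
The identity \eqref{equation:FiniteTrigonometricSums} follows by comparing two independent computations of the same asymptotic expansion. The plan is to exploit the fact that, by the principle of not feeling the boundary (Corollary \ref{corollary:PNFB} and the discussion of Section \ref{section:ExpansionTrace}), the shifted heat trace $Z_{\Omega}^{\nicefrac{1}{4}}(t)$ of \emph{any} hyperbolic polygon has a uniquely determined asymptotic expansion as $t\searrow 0$, and this expansion splits into an interior part, an edge part, and a vertex part, with the vertex part being a sum of contributions $c_{\eta}(\gamma_i)$ attached to the individual angles. In Section \ref{section:ExpansionTrace} the vertex contribution of an angle $\gamma_i$ was computed, via the wedge formula of Theorem \ref{theorem:HeatTraceShift} and Watson's lemma, to be $\sum_{\eta} c_{\eta}^{\mathbb{H}}(\gamma_i)\,t^{\eta}$ with $c_{\eta}^{\mathbb{H}}(\gamma_i)$ given in \eqref{equation:DefinitionC_k(Gamma)}; in the present section, using Sommerfeld's method of images (Lemma \ref{lemma:FormelHeatKernelWedgeSpiegelung}) and again Watson's lemma (Lemma \ref{lemma:AsymptoticExpansionQWatson}), the \emph{same} contribution, for an angle $\gamma_i=\frac{\pi}{k(i)}$ with $k(i)\in\mathbb{N}_{\geq 2}$, was computed to be $\sum_{\eta} c_{\eta}(\gamma_i)\,t^{\eta}$ with $c_{\eta}(\gamma_i)$ as in \eqref{equation:ContributionFromOneVerticeTrigSum}. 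Since both describe the coefficient of $t^{\eta}$ in the vertex contribution of a single angle $\frac{\pi}{k}$, they must agree: $c_{\eta}^{\mathbb{H}}\!\left(\frac{\pi}{k}\right) = c_{\eta}\!\left(\frac{\pi}{k}\right)$ for every $k\in\mathbb{N}_{\geq 2}$ and every $\eta\in\mathbb{N}_0$.

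First I would make the matching of contributions rigorous. By Corollary \ref{corollary:HeatKernelsHalfSpaceTheSame} the edge function $\tilde{Z}_E^{\nicefrac{1}{4}}$ coincides with $Z_E^{\nicefrac{1}{4}}$, and $Z_I^{\nicefrac{1}{4}}$ is literally the same function in both sections; hence the two expressions for $\tilde{Z}_V^{\nicefrac{1}{4}}(t)$ and $Z_V^{\nicefrac{1}{4}}(t)$ must have the same asymptotic expansion as $t\searrow 0$. To isolate a single angle, I would simply observe that both $c_{\eta}^{\mathbb{H}}$ and $c_{\eta}$ are sums over the angles of contributions depending only on the respective angle, so it suffices to take a polygon with a chosen angle $\frac{\pi}{k}$ — the existence of hyperbolic polygons all of whose angles are of the form $\frac{\pi}{m}$, $m\in\mathbb{N}_{\geq 2}$, is standard — and read off the per-angle coefficient. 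This gives the scalar identity $c_{\eta}^{\mathbb{H}}\!\left(\frac{\pi}{k}\right)=c_{\eta}\!\left(\frac{\pi}{k}\right)$.

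The remaining work is purely algebraic bookkeeping: unpacking the two closed forms and clearing denominators. On the left, $c_{\eta}^{\mathbb{H}}\!\left(\frac{\pi}{k}\right)$ from \eqref{equation:DefinitionC_k(Gamma)} equals
\[
\sum\limits_{\ell=1}^{\eta+1}\binom{2\eta+2}{2\ell}\frac{B_{2\eta-2\ell+2}\left(\frac{1}{2}\right)B_{2\ell}}{4(\eta+1)!(2\eta+1)}\cdot\frac{\pi^{2\ell}-(\pi/k)^{2\ell}}{\pi\cdot(\pi/k)^{2\ell-1}},
\]
and the factor $\frac{\pi^{2\ell}-(\pi/k)^{2\ell}}{\pi\cdot(\pi/k)^{2\ell-1}}$ simplifies to $k^{2\ell}-1$; multiplying through by $4(\eta+1)!(2\eta+1)=\tfrac{(2\eta+2)!}{(2\eta+1)}\cdot\tfrac{2\eta+1}{(2\eta+1)}$… more precisely by noting $4(\eta+1)!(2\eta+1)$ versus $(2\eta+2)!$, one uses $(2\eta+2)! = (2\eta+2)(2\eta+1)!$ together with the identity $\Gamma(\eta+\tfrac32)=\frac{\sqrt{\pi}(2\eta+2)!}{4^{\eta+1}(\eta+1)!}$ already invoked in Lemma \ref{lemma:AsymptoticExpansionQWatson}. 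On the right, $c_{\eta}\!\left(\frac{\pi}{k}\right)=\frac{1}{2k}\cdot\frac{(2\eta)!}{4^{\eta+1}\eta!}\sum_{\ell=0}^{\eta}C_{\ell,k}\,d_{2\ell}(2\eta)$ with $C_{\ell,k}=\sum_{j=1}^{k-1}c_{\ell}(i,j)$ and $c_{\ell}(i,j)$ as in \eqref{equation:CoefficientsCellIJ}; note that the $\frac{1}{2k}$ factor here is $\frac{\gamma_i}{2\pi}$ in disguise, matching the normalisation in \eqref{equation:DefinitionC_k(Gamma)} against \eqref{equation:FormelI_{ij}PolarCoordinates}. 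I would carefully align the normalising constants — tracking exactly the factors $\frac{1}{2k(i)}$, $\frac{1}{4^{\eta+1}}$, the ratio $(2\eta)!/\eta!$ versus $(2\eta+2)!/(\eta+1)!$, and the $\Gamma(\eta+\tfrac32)$ coming from Watson's lemma — so that after multiplying \eqref{equation:FiniteTrigonometricSums} through by $\frac{(2\eta+2)!}{4^{\eta+1}}$ the two sides are seen to be literally the expansions of $c_{\eta}^{\mathbb{H}}$ and $c_{\eta}$ rescaled by a common constant.

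The main obstacle is precisely this last constant-chasing step: the two derivations pass through different substitutions ($r\mapsto 2\sqrt r$ in both, but the integrands and the intermediate Bernoulli-versus-$d_{2\ell}$ expansions are organised differently), so the risk is an off-by-a-power-of-$2$ or an off-by-$(2\eta+1)$ discrepancy that would signal an error in one of the earlier lemmas rather than in the final identity. I would guard against this by checking the case $\eta=0$ by hand — where the left side is $\binom{2}{2}B_0(\tfrac12)B_2(k^2-1)=\tfrac16(k^2-1)$ and the right side reduces via $d_0(0)=1$, $C_{0,k}=\sum_{j=1}^{k-1}\csc^2(j\pi/k)=\tfrac{k^2-1}{3}$ (a classical cotangent-sum identity) to $\tfrac{2!}{4}\cdot C_{0,k}=\tfrac{k^2-1}{6}$ — thereby fixing all normalisations, and then the general $\eta$ case follows because both sides are, by construction, the coefficients of the same power of $t$ in the same uniquely determined asymptotic expansion.
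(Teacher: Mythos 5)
Your proposal is correct and is essentially the paper's own argument: the paper obtains Theorem \ref{theorem:FiniteTrigonometricSums} precisely by noting that $Z_V^{\nicefrac{1}{4}}$ and $\tilde{Z}_V^{\nicefrac{1}{4}}$ must have the same asymptotic expansion (via Corollary \ref{corollary:HeatKernelsHalfSpaceTheSame}) and comparing \eqref{equation:ContributionFromOneVerticeTrigSum} with \eqref{equation:DefinitionC_k(Gamma)}. Your extra care about isolating a single angle (e.g.\ via a regular polygon with all angles $\tfrac{\pi}{k}$) and the $\eta=0$ sanity check are sensible refinements of the same route.
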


\begin{example}
\label{example:TrigonometricSums}
Let $k\in\mathbb{N}_{\geq 2}$ be arbitrary. By \eqref{equation:FiniteTrigonometricSums}, we can recursively evaluate the finite trigonometric sums of the form
\begin{align*}
\sum_{j=1}^{k-1}\frac{1}{\sin^{2n}\left( j\cdot \frac{\pi}{k} \right)} \text{ for all } n\in\mathbb{N}.
\end{align*}
For example, equation \eqref{equation:FiniteTrigonometricSums} reduces to the following fomulas for $\eta=0,1,2:$
\item[$\eta=0:$]
\begin{align*}
\frac{1}{3}(k^2-1) = \sum\limits_{j=1}^{k-1}\frac{1}{\sin^{2}\left( j\cdot \frac{\pi}{k} \right)}.
\end{align*}
\item[$\eta=1:$]
\begin{align*}
\frac{1}{45}(k^4-1) + \frac{2}{9}(k^2-1) = \sum\limits_{j=1}^{k-1}\frac{1}{\sin^{4}\left( j\cdot \frac{\pi}{k} \right)}.
\end{align*}
\item[$\eta=2:$]
\begin{align*}
\frac{2}{945}(k^6-1) + \frac{1}{45}(k^4-1) + \frac{8}{45}(k^2-1)   = \sum\limits_{j=1}^{k-1}\frac{1}{\sin^{6}\left( j\cdot \frac{\pi}{k} \right)}.
\end{align*}
\end{example}

The last three formulas in Example \ref{example:TrigonometricSums} can also be found in \cite[p. $148$]{Chu} and \cite[p. $19/20$]{Yaep}. Both articles actually give explicit formulas for all $\sum_{j=1}^{k-1}\frac{1}{\sin^{2n}\left( j\cdot \frac{\pi}{k} \right)}$, but these have a different form from the ones that we get recursively: They do not involve $B_{2\ell}(\frac{1}{2})$, and the formulas from \cite{Chu} do not involve Bernoulli numbers at all. So Theorem \ref{theorem:FiniteTrigonometricSums} can be seen as yet another way of evaluating these trigonometric sums.
        \chapter{Spectral invariants for orbisurfaces}
\label{chapter:orbisurface}

This chapter has two central goals. The first one is to compute all heat invariants for orbisurfaces of constant curvature, which is done in the first two sections. Our strategy is to investigate two examples of orbisurfaces thoroughly and explore the relation between their heat traces (see Section \ref{section:ExamplesOrbifolds}). In Section \ref{section:InvariantsOrbisurfaces} we use this relation to compute the heat invariants for general orbisurfaces of constant curvature. 

The second goal is to give an alternative explanation why an angle $\gamma = \frac{\pi}{k}$ ($k\in\mathbb{N}_{\geq 2}$) of a polygon of constant curvature contributes to the heat invariants as stated in Corollary \ref{corollary:HeatAsymptoticConstantCurvaturePolygon} (see formula \eqref{equation:CoefficientsVerticesHeatAsymptoticConstantCurvaturePolygon}). In particular, this will, in the case of an angle $\gamma = \frac{\pi}{k}$, give an alternative proof for the formula of the coefficients $c_{\ell}^{\mathbb{H}}(\gamma)$ from Theorem \ref{theorem:AsymptoticExpansionHeatTraceHyperbolPolygon}. Namely, by Theorem \ref{theorem:SphaerischenKoeffizientenOrbifolds} below, they can in this case be computed directly from Watson's formula for the $c_{\ell}^{\mathbb{S}}(\gamma)$ (see the Remark after Corollary \ref{corollary:EckenbeitragWinkelPolygon}).

\section{Two examples of orbifolds}
\label{section:ExamplesOrbifolds}

In this section, we want to study two examples of orbifolds and, in particular, how their heat traces relate to each other. Our treatment presupposes some familiarity with basic orbifold theory. Anything we presuppose in this respect can be found in \cite{Gordon08} or \cite{Gordon12}.

Whenever appropriate, we identify the orthogonal group $O(2)$ with the subgroup of $O(3)$ given by the image of the monomorphism 
\begin{align*}
i:O(2) \ni  A\mapsto \begin{pmatrix}
    A & \begin{matrix} 0 \\ 0 \end{matrix} \\
    \begin{matrix} 0 & 0 \end{matrix} & 1
\end{pmatrix} \in O(3).
\end{align*}
Moreover, let $M:=\mathbb{S}^2(r):=\{\, x\in\mathbb{R}^3 \mid \Vert x \Vert = r \,\}$ be the sphere of radius $r>0$ equipped with an $O(2)$-invariant Riemannian metric, i.e. $O(2)$ is contained in the isometry group of $M$.

Let $k\in\mathbb{N}_{\geq 2}$ be fixed. We now introduce the two orbifolds we are primarily interested in. We denote the cyclic group of order $k$ by $\mathbb{Z}_{k}$ and we think of it as a subset $\mathbb{Z}_{k}\subset O(2)$ generated by a rotation about the origin through the angle $\frac{2\pi}{k}$. We can describe the group explicitly as
\begin{align*}
\mathbb{Z}_{k}:=\left\lbrace \tilde{D}_{\ell}:=D_{\frac{2\pi}{k}\ell }:= 
\begin{pmatrix}
   \cos(\frac{2\pi}{k}\ell) &  -\sin(\frac{2\pi}{k}\ell)  \\
   \sin(\frac{2\pi}{k}\ell) & \text{ }\cos(\frac{2\pi}{k}\ell)
\end{pmatrix} \text{ }\Big\vert\text{ }\ell=0,...,k-1
\right\rbrace.
\end{align*} 
Each element $D_{\frac{2\pi}{k}\ell }$ with $\ell\in \{ 0,...,k-1 \}$ acts on the Euclidean plane as a rotation about the origin through the angle ${\frac{2\pi}{k}\ell }$. Thus the group $\mathbb{Z}_{k}$ acts effectively on $M$, so $M/\mathbb{Z}_{k}$ becomes a good orbifold. Its only singularities are two cone points of order $\vert \mathbb{Z}_{k} \vert = k$.

Next, consider the dihedral group $\mathbb{D}_{k}\subset O(2)$, generated by the reflection in the $x$-axis and a rotation about the origin through the angle $\frac{2\pi}{k}$. Obviously we have the inclusion $\mathbb{Z}_{k}\subset \mathbb{D}_{k}$ and we can likewise describe the dihedral group explicitly by
\begin{align*}
\mathbb{D}_{k}:= \mathbb{Z}_{k}\cup \left\lbrace \tilde{S}_{\ell}:=S_{\frac{\pi}{k}\ell }:= 
\begin{pmatrix}
   \cos(\frac{2\pi}{k}\ell) & \text{ } \sin(\frac{2\pi}{k}\ell)  \\
   \sin(\frac{2\pi}{k}\ell) &  -\cos(\frac{2\pi}{k}\ell)
\end{pmatrix} \text{ }\Big\vert\text{ }\ell=0,...,k-1
\right\rbrace.
\end{align*} 
Any element $S_{\frac{\pi}{k}\ell }$ with $\ell\in \{ 0,..., k-1 \}$ acts on the Euclidean plane as a reflection in a line forming an angle of $\frac{\pi}{k}\ell$ with the $x$-axis. Thus the dihedral group acts effectively on $M = \mathbb{S}^2(r)$ as well, so $M/\mathbb{D}_{k}$ becomes a good orbifold. The singular points consist of two dihedral points with isotropy order $\vert \mathbb{D}_{k} \vert = 2k$ and of mirror points which form two connected reflector edges.

Since we are concerned mainly with those two orbifolds, it is appropriate for our purposes to replace some of the more general definitions given in \cite{Gordon08}, \cite{Gordon12} by simpler ones. It can be easily shown that for our orbifolds the following definitions are equivalent to those given in \cite{Gordon08}, \cite{Gordon12}.

Let $G\in\lbrace \mathbb{Z}_{k}, \mathbb{D}_{k} \rbrace$ be one of the two groups introduced above and let $\pi_G:M\rightarrow M/G$ be the canonical projection. First we want to stress that the group $G$ acts by isometries on $M$, and therefore the orbifold $M/G$ inherits from $M$ a Riemannian metric. If the metric of $M$ is given by the standard metric, which is induced by Euclidean $\mathbb{R}^3$, then $M/G$ is of constant curvature $\kappa=\frac{1}{r^2}$. Orbifolds with such a metric are sometimes called \emph{spherical} orbifolds.

\begin{definition}
\label{definition:SmoothFunctions}
Let $n \in\mathbb{N}_0\cup \lbrace \infty \rbrace$. A function $\tilde{f}:M/G\rightarrow \mathbb{R}$ is called a \emph{$C^{n}$-function} or \emph{of class $C^{n}$} if and only if $\tilde{f} \circ \pi_G :M\rightarrow \mathbb{R}$ is a $C^{n}$-function. We denote the set of all $C^{n}$-functions by $C^{n}\left( M/G \right)$ and set $C(M/G):=C^{0}(M/G)$.

Let $C^{n}_G (M):= \{\, f\in C^{n}(M) \mid \gamma^{\ast}f:=f\circ\gamma =f\, \text{ for all } \gamma\in G \,\}$ be the set of all $G$-invariant $C^{n}$-functions on $M$.
\end{definition}

We point out some simple relations between the function spaces of Definition \ref{definition:SmoothFunctions}. Let  $n \in\mathbb{N}_0\cup \lbrace \infty \rbrace$ be fixed until Definition \ref{definition:Laplacian} below. It is easy to show that  the map 
\begin{align*}
\Phi:C^{n}(M/G)\rightarrow C^{n}_G (M), \quad \tilde{f}\mapsto \tilde{f}\circ \pi_G =:f
\end{align*}
is a vector space isomorphism, i.e. it is bijective and linear. Therefore we can identify the set of all $C^{n}$-functions on the orbifold $M/G$ with the set of all $G$-invariant $C^{n}$-functions on $M$. In particular, for any $G$-invariant $C^{n}$-function $f:M\rightarrow \mathbb{R}$ there exists a unique $C^{n}$-function $\tilde{f}:M/G\rightarrow \mathbb{R}$ with the property $\tilde{f}\circ \pi_G = f$.

Obviously we have the relation $C^{\infty}(M/G)\subset C^{\ell}(M/G)\subset C^{\ell-1}(M/G)$ for all $\ell\in \mathbb{N}$. We want to remark that for our purposes the most important function spaces are those consisting of continuous functions and smooth functions, respectively.

\begin{definition}
\label{definition:Integration}
For any continuous function $\tilde{f}\in C(M/G)$ and any open set $\tilde{U}\subset M/G$, we define integration as follows:
\begin{align*}
\int\limits_{\tilde{U}} \tilde{f} :=\int\limits_{\tilde{U}} \tilde{f}(\tilde{x})d\tilde{x}:=\frac{1}{\vert G \vert}\int\limits_{\pi_G^{-1}(\tilde{U})} \left( \tilde{f}\circ \pi_G\right) (x)dx,
\end{align*}
where $\vert G \vert$ denotes the order of the group. Analogously to the $L^2$-inner product on $C(M)$, we can introduce the following inner product on $C(M/G)$:
\begin{align*}
\langle \tilde{f}, \tilde{g} \rangle:&=\int\limits_{M/G} \tilde{f}\cdot \tilde{g} =\frac{1}{\vert G \vert}\int\limits_{M}\tilde{f}(\pi_G(x))\cdot \tilde{g}(\pi_G(x)) dx  \\
&=\frac{1}{\vert G \vert}\langle \Phi(\tilde{f}),\Phi(\tilde{g}) \rangle_{L^2(M)}
\end{align*}
for all $\tilde{f},\tilde{g}\in C(M/G)$.
\end{definition}

The space $L^2(M/G)$ is defined as the completion of $\left( C(M/G),\langle \cdot,\cdot \rangle\right )$. If in the sequel an inner product appears in connection with functions, it will either refer to the inner product of $L^2(M/G)$ or $L^2(M)$, depending on the context.

Notice that the isomorphism $\Phi$ preserves orthogonality of functions. Moreover, the map 
\begin{align}
\label{equation:IsometryPHI}
\frac{1}{\sqrt{\vert G \vert}}\cdot \Phi  :C^{n}(M/G)\rightarrow C^{n}_G(M), \quad \tilde{f}\mapsto \frac{1}{\sqrt{\vert G \vert}}\cdot \Phi(\tilde{f})
\end{align}
is an isometry between the spaces $C^{n}(M/G)$ and $C^{n}_G(M)$. In particular, for any orthonormal set $\lbrace u_i \rbrace_{i\in I}\subset C^{n}_G (M)$ of $G$-invariant functions the set $\lbrace \sqrt{\vert G \vert}\cdot \tilde{u}_i \rbrace_{i\in I} \subset C^{n}(M/G)$ is an orthonormal set as well, where $\tilde{u}_i$ is, as usual, the unique map on $M/G$ such that $\tilde{u}_i\circ \pi_G =u_i$.

\begin{definition}
\label{definition:Laplacian}
Let $\Delta_M$ denote the Dirichlet Laplacian of $M$. The \emph{Laplacian} of $M/G$ is the linear operator 
\begin{align*}
\Delta_{M/G}:C^{\infty}(M/G)\rightarrow C^{\infty}(M/G), \quad \tilde{f}\mapsto \Delta_{M/G}(\tilde{f}),
\end{align*}
where for any $\tilde{f}\in C^{\infty}(M/G)$ the function $\Delta_{M/G}(\tilde{f})$ is defined as the unique smooth function such that $\Delta_{M/G}(\tilde{f}) \circ\pi_G = \Delta_M(\tilde{f}\circ \pi_G)$.

Note that the smooth function $\Delta_M(\tilde{f}\circ \pi_G)$ is $G$-invariant since $G$ acts by isometries on $M$. Another way to write $\Delta_{M/G}$ is given by 
\begin{align*}
\Delta_{M/G}=\Phi^{-1}\circ \Delta_M\circ \Phi.
\end{align*}
\end{definition}

The following theorem is well-known.

\begin{theorem} \emph{(see \citep[Proposition 3.1]{Gordon08})}
\label{theorem:SpectrumLaplacianOrbifolds}
The Laplacian of $M/G$ has a discrete spectrum $0 = \lambda_1\leq\lambda_2\leq...$ with $\lambda_j\rightarrow \infty$ as $j\rightarrow\infty$ and with each eigenvalue having finite multiplicity. The normalised eigenfunctions are smooth and form an orthonormal basis of $L^2(M/G).$
\end{theorem}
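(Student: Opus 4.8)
The plan is to deduce everything from the corresponding facts on the closed manifold $M$ via the isometry $\frac{1}{\sqrt{\vert G\vert}}\Phi$ of \eqref{equation:IsometryPHI} together with the commutation relation $\Delta_{M/G}=\Phi^{-1}\circ\Delta_M\circ\Phi$ from Definition \ref{definition:Laplacian}. Since $M$ is a closed Riemannian manifold, it is classical that $\Delta_M$ has a discrete spectrum $0=\mu_1\le\mu_2\le\dots\nearrow\infty$ with finite multiplicities and that there is an orthonormal basis $\{u_i\}_{i\in\mathbb{N}}$ of $L^2(M)$ consisting of smooth eigenfunctions, $\Delta_M u_i=\mu_i u_i$. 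The whole task is to transfer this to $M/G$, and the only genuine issue is the interplay with $G$-invariance: $\Phi$ identifies $C^\infty(M/G)$ not with all of $C^\infty(M)$ but with the subspace $C^\infty_G(M)$ of $G$-invariant smooth functions.

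First I would set up the functional-analytic dictionary. The averaging operator $P:=\frac{1}{\vert G\vert}\sum_{\gamma\in G}\gamma^\ast$ is an orthogonal projection of $L^2(M)$ onto the closed subspace $L^2_G(M)$ of $G$-invariant $L^2$-functions; because $G$ acts by isometries, $P$ commutes with $\Delta_M$, and it maps smooth functions to smooth $G$-invariant functions. Hence $\Delta_M$ restricts to a self-adjoint operator on $L^2_G(M)$ whose domain-relevant eigenspaces are $V_i:=\ker(\Delta_M-\mu_i\,\mathrm{Id})\cap L^2_G(M)=P(\ker(\Delta_M-\mu_i\,\mathrm{Id}))$, each finite-dimensional since $\ker(\Delta_M-\mu_i\,\mathrm{Id})$ is. By elliptic regularity on the closed manifold $M$, every $L^2$-eigenfunction of $\Delta_M$ is smooth, so $V_i\subset C^\infty_G(M)$. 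Applying $P$ to the basis $\{u_i\}$ and extracting, for each distinct eigenvalue, an orthonormal basis of the corresponding $V_i$, one obtains a countable orthonormal system $\{v_j\}_{j\in\mathbb{N}}\subset C^\infty_G(M)$ of eigenfunctions of $\Delta_M$ that is complete in $L^2_G(M)$; completeness follows because $\bigoplus_i \ker(\Delta_M-\mu_i\,\mathrm{Id})$ is dense in $L^2(M)$ and $P$ is continuous with range $L^2_G(M)$, so $\bigoplus_i V_i$ is dense in $L^2_G(M)$. Discreteness and the properties $\lambda_j\to\infty$ and finite multiplicity are then immediate, since the eigenvalues of $\Delta_{M/G}$ form a subsequence (with multiplicities $\dim V_i$) of the eigenvalues of $\Delta_M$.

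Next I would push this through $\Phi$. Put $\tilde v_j:=\Phi^{-1}(v_j)\in C^\infty(M/G)$ and $\tilde\varphi_j:=\frac{1}{\sqrt{\vert G\vert}}\,\tilde v_j\cdot \vert G\vert^{?}$—more precisely, by the isometry property of \eqref{equation:IsometryPHI}, the functions $\sqrt{\vert G\vert}\,\tilde v_j$ form an orthonormal system in $C^\infty(M/G)$ (this is exactly the observation recorded after \eqref{equation:IsometryPHI}); write $\varphi_j:=\sqrt{\vert G\vert}\,\tilde v_j$. From $\Delta_{M/G}=\Phi^{-1}\circ\Delta_M\circ\Phi$ and $\Delta_M v_j=\lambda_j v_j$ we get $\Delta_{M/G}\varphi_j=\lambda_j\varphi_j$, so the $\varphi_j$ are smooth eigenfunctions. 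They are orthonormal in $L^2(M/G)$; for completeness, note that $\Phi$ extends to a (scaled) isometry $L^2(M/G)\to L^2_G(M)$ whose inverse carries the complete system $\{v_j\}$ of $L^2_G(M)$ to a complete system of $L^2(M/G)$, and up to the scalar $\sqrt{\vert G\vert}$ this is exactly $\{\varphi_j\}$. Thus $\{\varphi_j\}$ is an orthonormal basis of $L^2(M/G)$ consisting of smooth eigenfunctions, and $0=\lambda_1\le\lambda_2\le\cdots\to\infty$ with each eigenvalue of finite multiplicity. That $\lambda_1=0$ with the constant eigenfunction is clear since constants are $G$-invariant.

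The main obstacle, such as it is, is the bookkeeping around $G$-invariance and density: one must check that the averaging projection $P$ preserves smoothness, commutes with $\Delta_M$, and has dense range intersection with the algebraic span of eigenspaces, i.e. that restricting to $L^2_G(M)$ does not destroy completeness. All of this is standard once $P$ is identified as an orthogonal projection commuting with $\Delta_M$; no hard analysis on the orbifold itself is needed, because every delicate step—elliptic regularity, discreteness of the spectrum, existence of a smooth eigenbasis—is borrowed from the closed manifold $M$ through the dictionary of Definitions \ref{definition:SmoothFunctions}--\ref{definition:Laplacian}. (Alternatively, one may simply cite \citep[Proposition 3.1]{Gordon08}, as the theorem statement indicates; the argument above is the content of that citation in the special case at hand.)
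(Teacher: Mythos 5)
Your proof is correct. Note first that the paper itself gives no proof of this statement: it is quoted as a known result with a pointer to \citep[Proposition 3.1]{Gordon08}, which treats general closed Riemannian orbifolds. What you supply is the standard, self-contained argument for the special case actually needed here, namely a good orbifold $M/G$ with $G$ finite and $M$ closed: reduce everything to the classical spectral theorem on $M$ via the averaging projection $P=\frac{1}{\vert G\vert}\sum_{\gamma\in G}\gamma^{\ast}$ and the scaled isometry $\frac{1}{\sqrt{\vert G\vert}}\Phi$. All the key checks are in place: $P$ is an orthogonal projection onto $L^2_G(M)$ commuting with $\Delta_M$ (each $\gamma^{\ast}$ is unitary and commutes with $\Delta_M$ since $G$ acts by isometries), the invariant eigenspaces $V_i=P(\ker(\Delta_M-\mu_i))$ are finite-dimensional and consist of smooth functions, and density of $\bigoplus_i V_i$ in $L^2_G(M)$ follows from continuity of $P$. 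This is in fact the same mechanism the paper uses downstream (Lemma \ref{lemma:AntiInvariantEigenfunctions} through Proposition \ref{proposition:OrthonormalBasisEigenfunctionsQuadratintegrierbar}), so your argument buys a proof that avoids the general orbifold Sobolev machinery of \citep{Gordon08} at the cost of only covering global quotients. Two cosmetic points: the fragment ``$\tilde\varphi_j:=\frac{1}{\sqrt{\vert G\vert}}\tilde v_j\cdot\vert G\vert^{?}$'' is a leftover placeholder (your subsequent definition $\varphi_j=\sqrt{\vert G\vert}\,\tilde v_j$ is the correct one), and you should add one line noting that $C_G(M)$ is dense in $L^2_G(M)$ (apply $P$ to continuous approximants) so that the extension of $\frac{1}{\sqrt{\vert G\vert}}\Phi$ to $L^2(M/G)$ is indeed onto $L^2_G(M)$ and completeness transfers.
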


Let $\sigma(M/G):=\lbrace \lambda_1,\lambda_2,... \rbrace$ denote the set of all eigenvalues of $\Delta_{M/G}$ (without multiplicities) and let 
\begin{align*}
E_{\lambda}(M/G):=\{\, \tilde{f}\in C^{\infty}(M/G)\mid \Delta_{M/G} \tilde{f}=\lambda \tilde{f} \,\}
\end{align*}
 be the eigenspace corresponding to an eigenvalue $\lambda\in \sigma(M/G)$. The sets $\sigma(M), E_{\lambda}(M)$ are defined correspondingly. Further, let 
\begin{align*}
E_{\lambda}(M)_G:=E_{\lambda}(M)\cap C^{\infty}_G (M)= \{\,  f\in E_{\lambda}(M) \mid \gamma^{\ast} f = f\, \text{ for all }\, \gamma\in G \,\}
\end{align*}
be the set of all $G$-invariant eigenfunctions for any eigenvalue $\lambda\in \sigma(M)$.

By elementary observations, one can recognise a close relationship between the eigenvalues and eigenfunction of $\Delta_{M/G}$ and $\Delta_{M}$: Suppose $\tilde{f}\in$ $E_{\lambda}(M/G)$, i.e. $\Delta_{M/G}(\tilde{f})=\lambda \tilde{f}$. Then $\lambda$ is also an eigenvalue of $\Delta_{M}$ and $f:=\tilde{f}\circ \pi_G$ is a corresponding $G$-invariant eigenfunction on $M$. Conversely, suppose $f\in E_{\lambda}(M)_G$ is a $G$-invariant eigenfunction. Then $\lambda$ will be an eigenvalue of $\Delta_{M/G}$ and there exists a unique eigenfunction $\tilde{f}\in$ $E_{\lambda}(M/G)$ such that $\tilde{f}\circ \pi_G = f$. In short,
\begin{align}
\label{equation:IsomorphismEigenspaces}
\Phi_{\vert E_{\lambda}(M/G)}: E_{\lambda}(M/G)\rightarrow E_{\lambda}(M)_G
\end{align}
is a linear isomorphism. In analogy to \eqref{equation:IsometryPHI}, we have the isometry
\begin{align}
\label{equation:IsometryEigenspaces}
\frac{1}{\sqrt{\vert G \vert}}\cdot \Phi_{\vert E_{\lambda}(M/G)}: E_{\lambda}(M/G)\rightarrow E_{\lambda}(M)_G.
\end{align}

Recall that $\tilde{S}_0\in \mathbb{D}_{k}$ is a reflection in Euclidean $\mathbb{R}^3$. The (Riemannian) isometry $\tilde{S}_0:M\rightarrow M$ induces the linear and symmetric map 
\begin{align*}
\tilde{S}_0^{\ast}:\left( C^{\infty}(M),\langle \cdot, \cdot \rangle_{L^2(M)}\right) \ni f\mapsto f\circ \tilde{S}_0 \in  \left( C^{\infty}(M),\langle \cdot, \cdot\rangle_{L^2(M)}\right).
\end{align*}
This map is an involution, i.e. it satisfies $\tilde{S}_0^{\ast}\circ \tilde{S}_0^{\ast}=\Id$. Furthermore, the spaces $C^{\infty}_G (M)$ and $E_{\lambda}(M)_G$ are invariant under  $\tilde{S}_0^{\ast}$ (see below). Thus one can replace the linear space $C^{\infty}(M)$ by its linear subspaces $C^{\infty}_G (M)$ and $E_{\lambda}(M)_G$ respectively, without losing any of the above properties of $\tilde{S}_0^{\ast}$. More precisely, the maps
\begin{align*}
{\tilde{S}_0^{\ast}} : C^{\infty}_G (M) \rightarrow C^{\infty}_G(M),\quad f\mapsto f\circ \tilde{S}_0
\end{align*}
and
\begin{align*}
\tilde{S}_0^{\ast}:E_{\lambda}(M)_G \rightarrow E_{\lambda}(M)_G,\quad f\mapsto f\circ \tilde{S}_0
\end{align*}
are linear symmetric involutions. The spaces $C^{\infty}_G (M)$ are indeed invariant under $\tilde{S}_0^{\ast}$: If $G=\mathbb{D}_{k}$, then $\tilde{S}_0^{\ast}$ restricted to $C^{\infty}_{\mathbb{D}_{k}}(M)$ is just the identity map. If $G=\mathbb{Z}_{k}$, then one can make use of the identity $\tilde{S}_0\circ\tilde{D}_{\ell} = \tilde{D}_{k-\ell}\circ \tilde{S}_0$ for any $\ell\in\lbrace 0,...,k-1 \rbrace$. Thus for any $f\in  C^{\infty}_{\mathbb{Z}_{k}} (M)$ and $\ell\in\lbrace 0,...,k-1 \rbrace$ we have
 \begin{align*}
 \left( f\circ\tilde{S}_0 \right) \circ \tilde{D}_{\ell} =  f\circ \tilde{D}_{k-\ell}\circ\tilde{S}_0 = f\circ\tilde{S}_0.
 \end{align*}

Furthermore, for any G-invariant eigenfunction $f\in E_{\lambda}(M)_G$ we indeed have $f\circ \tilde{S}_0 \in E_{\lambda}(M)_G$: On the one hand, $f\circ \tilde{S}_0$ is G-invariant by the discussion above and on the other hand $f\circ \tilde{S}_0$ is an eigenfunction since $\tilde{S}_0$ is an isometry.
 
With all the basic observations described above, we are now ready to define and relate the heat traces for our orbifolds.

\begin{lemma}
\label{lemma:AntiInvariantEigenfunctions}
Let $\lambda\in \sigma(M/\mathbb{Z}_{k})$ be an eigenvalue. The \emph{(}finite dimensional\emph{)} inner product space $E_{\lambda}(M)_{\mathbb{Z}_{k}}=C^{\infty}_{\mathbb{Z}_{k}} (M)\cap E_{\lambda}(M)$ has an orthonormal basis consisting of $\tilde{S}_0$-invariant and $\tilde{S}_0$-anti-invariant functions. More precisely, there exists an orthonormal basis $\lbrace u_i^{\lambda}\rbrace_{i=1}^{n_{\lambda}}$ with $n_{\lambda}\in\mathbb{N}$ such that any function $u_i^{\lambda}$ is either $\tilde{S}_0$-invariant, i.e. $\tilde{S}_0^{\ast}u_i^{\lambda} =  u_i^{\lambda}\circ \tilde{S}_0=u_i^{\lambda}$, or $\tilde{S}_0$-anti-invariant, i.e. $\tilde{S}_0^{\ast}u_i^{\lambda}=u_i^{\lambda}\circ \tilde{S}_0=-u_i^{\lambda}$.  
\end{lemma}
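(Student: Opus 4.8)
The plan is to apply the finite-dimensional spectral theorem to the symmetric involution $\tilde{S}_0^{\ast}$ restricted to $V := E_{\lambda}(M)_{\mathbb{Z}_{k}}$. Recall from the discussion preceding this lemma that $V = C^{\infty}_{\mathbb{Z}_{k}}(M)\cap E_{\lambda}(M)$ is invariant under $\tilde{S}_0^{\ast}$, that $\tilde{S}_0^{\ast}\vert_V$ is linear and symmetric with respect to the $L^2(M)$-inner product, and that $\tilde{S}_0^{\ast}\circ \tilde{S}_0^{\ast}=\Id$. Moreover, by Theorem \ref{theorem:SpectrumLaplacianOrbifolds} applied to $M/\mathbb{Z}_{k}$ together with the isomorphism \eqref{equation:IsomorphismEigenspaces}, the space $V$ is finite-dimensional; set $n_{\lambda}:=\dim V\geq 1$.

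First I would record the elementary consequences of $T:=\tilde{S}_0^{\ast}\vert_V$ being a symmetric involution. Since $T^2=\Id$, the minimal polynomial of $T$ divides $(x-1)(x+1)$, so $T$ is diagonalizable with eigenvalues contained in $\{+1,-1\}$. Hence $V=V_+\oplus V_-$ with $V_{\pm}:=\ker(T\mp\Id)$; equivalently, $P_{\pm}:=\tfrac{1}{2}(\Id\pm T)$ are complementary projections onto $V_{\pm}$. A function $u\in V$ lies in $V_+$ exactly when $u\circ\tilde{S}_0=u$, i.e. $u$ is $\tilde{S}_0$-invariant, and in $V_-$ exactly when $u\circ\tilde{S}_0=-u$, i.e. $u$ is $\tilde{S}_0$-anti-invariant.

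Next I would check that this decomposition is orthogonal: for $u\in V_+$ and $v\in V_-$, using symmetry of $T$,
\begin{align*}
\langle u,v\rangle=\langle Tu,v\rangle=\langle u,Tv\rangle=\langle u,-v\rangle=-\langle u,v\rangle,
\end{align*}
so $\langle u,v\rangle=0$ and $V=V_+\perp V_-$. Choosing an orthonormal basis $\{u_i^{\lambda}\}_{i=1}^{\dim V_+}$ of $V_+$ and an orthonormal basis $\{u_i^{\lambda}\}_{i=\dim V_+ +1}^{n_{\lambda}}$ of $V_-$ (each exists by Gram--Schmidt), their union is an orthonormal basis of $V$ by orthogonality of the two summands, and every basis vector is by construction either $\tilde{S}_0$-invariant or $\tilde{S}_0$-anti-invariant. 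This establishes the lemma.

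There is essentially no serious obstacle here: the only points requiring care are confirming that $V$ is $\tilde{S}_0^{\ast}$-stable and finite-dimensional (both already available from the excerpt) and verifying the orthogonality of $V_+$ and $V_-$, which is the short computation above and is precisely where the symmetry of $\tilde{S}_0^{\ast}$ — rather than mere linearity — enters. Everything else is the spectral theorem for a self-adjoint involution on a finite-dimensional inner product space.
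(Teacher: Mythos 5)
Your proof is correct and follows essentially the same route as the paper: both invoke the finite-dimensional spectral theorem for the symmetric involution $\tilde{S}_0^{\ast}$ on $E_{\lambda}(M)_{\mathbb{Z}_{k}}$, note that the only eigenvalues are $\pm 1$, and identify the eigenspaces with the invariant and anti-invariant functions. You merely spell out the orthogonality of the two eigenspaces explicitly, which the paper leaves implicit in its appeal to the spectral theorem.
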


\begin{proof}
Because of the map given in \eqref{equation:IsomorphismEigenspaces} the vector space $E_{\lambda}(M)_{\mathbb{Z}_{k}}$ is isomorphic to $E_{\lambda}(M/\mathbb{Z}_{k})$, which in turn is finite dimensional by Theorem \ref{theorem:SpectrumLaplacianOrbifolds}. Hence the space $E_{\lambda}(M)_{\mathbb{Z}_{k}}$ must be a finite dimensional vector space for any $\lambda\in \sigma(M/\mathbb{Z}_{k})$.

The map $\tilde{S}_0^{\ast}:E_{\lambda}(M)_{\mathbb{Z}_{k}}\rightarrow E_{\lambda}(M)_{\mathbb{Z}_{k}}$ is symmetric and linear, and thus by the spectral theorem there exists an orthonormal basis of eigenfunctions of $\tilde{S}_0^{\ast}$ for $E_{\lambda}(M)_{\mathbb{Z}_{k}}$. But $\tilde{S}_{0}^{\ast}$ is an involution, i.e. the only eigenvalues are $1, -1$. The eigenfunctions corresponding to the eigenvalue $1$ are the $\tilde{S}_0$-invariant functions, and the eigenfunctions corresponding to the eigenvalue $-1$ are the $\tilde{S}_0$-anti-invariant functions. 
\end{proof}

By using the isometry \eqref{equation:IsometryEigenspaces} with $G=\mathbb{Z}_{k}$ and $\vert \mathbb{Z}_{k} \vert=k$, we get the following corollary:

\begin{corollary}
\label{corollary:OrthonormalBasisEigenfunctions}

For any eigenvalue $\lambda\in \sigma(M/\mathbb{Z}_{k})$ let $\lbrace u_i^{\lambda}\rbrace_{i=1}^{n_{\lambda}}$ be an orthonormal basis of $E_{\lambda}(M)_{\mathbb{Z}_{k}}$ like in the previous lemma.
\begin{itemize}
\item[$(i)$]  Then the set $\lbrace \sqrt{k}\cdot \tilde{u}_i^{\lambda}\rbrace_{i=1}^{n_{\lambda}}$ is an orthonormal basis for $E_{\lambda}(M/\mathbb{Z}_{k})$, where the functions $\tilde{u}_i^{\lambda}$ with $i=1,...,n_{\lambda}$ are defined by the relation $\tilde{u}_i^{\lambda}\circ \pi_{\mathbb{Z}_{k}} = u_i^{\lambda}$.
\item[$(ii)$] Furthermore, the collection 
\begin{align}
\label{equation:OrthonormalBasisCorollar}
\bigcup\limits_{\lambda\in\sigma(M/\mathbb{Z}_{k})}\lbrace \sqrt{k}\cdot\tilde{u}_i^{\lambda}\rbrace_{i=1}^{n_{\lambda}}
\end{align}
is an orthonormal basis for $L^2(M/\mathbb{Z}_{k})$ consisting of smooth eigenfunctions of $\Delta_{M/\mathbb{Z}_{k}}$.
\end{itemize}
\end{corollary}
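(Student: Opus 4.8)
The plan is to deduce everything from the isometry \eqref{equation:IsometryEigenspaces} between the eigenspaces of $\Delta_{M/\mathbb{Z}_{k}}$ and the $\mathbb{Z}_{k}$-invariant eigenspaces of $\Delta_M$, together with Theorem \ref{theorem:SpectrumLaplacianOrbifolds}, which already guarantees that the eigenfunctions of $\Delta_{M/\mathbb{Z}_{k}}$ form an orthonormal basis of $L^2(M/\mathbb{Z}_{k})$. So the only work is to transport the basis of Lemma \ref{lemma:AntiInvariantEigenfunctions} correctly across the isometry, keeping track of the normalisation factor.

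First I would prove $(i)$. Fix $\lambda\in\sigma(M/\mathbb{Z}_{k})$. By \eqref{equation:IsometryEigenspaces} with $G=\mathbb{Z}_{k}$ and $\vert\mathbb{Z}_{k}\vert=k$, the map $\frac{1}{\sqrt{k}}\,\Phi_{\mid E_{\lambda}(M/\mathbb{Z}_{k})}\colon E_{\lambda}(M/\mathbb{Z}_{k})\to E_{\lambda}(M)_{\mathbb{Z}_{k}}$ is a bijective isometry, hence so is its inverse. Since $\tilde{u}_i^{\lambda}$ is by definition the unique function with $\tilde{u}_i^{\lambda}\circ\pi_{\mathbb{Z}_{k}}=u_i^{\lambda}$, i.e. $\Phi(\tilde{u}_i^{\lambda})=u_i^{\lambda}$, the preimage of $u_i^{\lambda}$ under $\frac{1}{\sqrt{k}}\,\Phi_{\mid E_{\lambda}(M/\mathbb{Z}_{k})}$ is exactly $\sqrt{k}\,\tilde{u}_i^{\lambda}$. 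A surjective linear isometry maps an orthonormal basis to an orthonormal basis; since $\{u_i^{\lambda}\}_{i=1}^{n_{\lambda}}$ is an orthonormal basis of $E_{\lambda}(M)_{\mathbb{Z}_{k}}$ by Lemma \ref{lemma:AntiInvariantEigenfunctions}, the set $\{\sqrt{k}\,\tilde{u}_i^{\lambda}\}_{i=1}^{n_{\lambda}}$ is an orthonormal basis of $E_{\lambda}(M/\mathbb{Z}_{k})$. In particular each $\tilde{u}_i^{\lambda}$ lies in $E_{\lambda}(M/\mathbb{Z}_{k})\subset C^{\infty}(M/\mathbb{Z}_{k})$, so it is a smooth eigenfunction of $\Delta_{M/\mathbb{Z}_{k}}$ for the eigenvalue $\lambda$.

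For $(ii)$ I would first verify orthonormality of the whole collection \eqref{equation:OrthonormalBasisCorollar}. Within a single eigenspace this is $(i)$. For two eigenfunctions belonging to distinct eigenvalues $\lambda\neq\mu$, orthogonality follows from symmetry of $\Delta_{M/\mathbb{Z}_{k}}$ with respect to $\langle\cdot,\cdot\rangle$: if $\Delta_{M/\mathbb{Z}_{k}}\tilde f=\lambda\tilde f$ and $\Delta_{M/\mathbb{Z}_{k}}\tilde g=\mu\tilde g$, then $\lambda\langle\tilde f,\tilde g\rangle=\langle\Delta_{M/\mathbb{Z}_{k}}\tilde f,\tilde g\rangle=\langle\tilde f,\Delta_{M/\mathbb{Z}_{k}}\tilde g\rangle=\mu\langle\tilde f,\tilde g\rangle$, forcing $\langle\tilde f,\tilde g\rangle=0$. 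The required symmetry is immediate from $\Delta_{M/\mathbb{Z}_{k}}=\Phi^{-1}\circ\Delta_M\circ\Phi$, symmetry of $\Delta_M$ on $C^{\infty}_{\mathbb{Z}_{k}}(M)$, and the fact that $\frac{1}{\sqrt{k}}\Phi$ is an isometry (one may instead cite \citep[Proposition 3.1]{Gordon08}). Finally, completeness: Theorem \ref{theorem:SpectrumLaplacianOrbifolds} states that the eigenfunctions of $\Delta_{M/\mathbb{Z}_{k}}$ span a dense subspace of $L^2(M/\mathbb{Z}_{k})$, equivalently that $L^2(M/\mathbb{Z}_{k})$ is the closure of $\bigoplus_{\lambda\in\sigma(M/\mathbb{Z}_{k})}E_{\lambda}(M/\mathbb{Z}_{k})$; since by $(i)$ the family $\{\sqrt{k}\,\tilde u_i^{\lambda}\}_{i=1}^{n_{\lambda}}$ spans $E_{\lambda}(M/\mathbb{Z}_{k})$ for each $\lambda$, the union \eqref{equation:OrthonormalBasisCorollar} is a maximal orthonormal system, hence an orthonormal basis of $L^2(M/\mathbb{Z}_{k})$ consisting of smooth eigenfunctions of $\Delta_{M/\mathbb{Z}_{k}}$.

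There is no genuine obstacle here: the corollary is essentially bookkeeping on top of the identifications already set up in this section. The only points that require a moment's care are (a) correctly tracking the normalisation factor $\sqrt{k}$ when passing the basis through the isometry \eqref{equation:IsometryEigenspaces}, and (b) invoking symmetry of $\Delta_{M/\mathbb{Z}_{k}}$ to obtain orthogonality between eigenfunctions for distinct eigenvalues before appealing to Theorem \ref{theorem:SpectrumLaplacianOrbifolds} for completeness.
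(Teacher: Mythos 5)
Your proposal is correct and follows exactly the route the paper intends: the paper derives the corollary in one line from the isometry \eqref{equation:IsometryEigenspaces} (with $G=\mathbb{Z}_{k}$, $\vert\mathbb{Z}_{k}\vert=k$), and you simply spell out the routine details — transporting the orthonormal basis through the isometry for $(i)$, and combining orthogonality of distinct eigenspaces with the completeness guaranteed by Theorem \ref{theorem:SpectrumLaplacianOrbifolds} for $(ii)$.
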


Consider the collection of all bases $\lbrace u_i^{\lambda} \rbrace_{i=1}^{n_{\lambda}}$ described in Lemma \ref{lemma:AntiInvariantEigenfunctions} over all $\lambda\in \sigma(M/\mathbb{Z}_{k})$, i.e.
\begin{align*}
\bigcup_{\lambda\in \sigma(M/\mathbb{Z}_{k})}\lbrace u_i^{\lambda} \rbrace_{i=1}^{n_{\lambda}}.
\end{align*}
Any element $u_i^{\lambda}$ of this collection is a smooth $\mathbb{Z}_{k}$-invariant eigenfunction on $M$ which happens to be either $\tilde{S}_0$-invariant or $\tilde{S}_0$-anti-invariant. We reorder and rename this collection in order to obtain a better notation for the sequel. Let $\lbrace \varphi_i \rbrace_{i\in I}$, $I\subset\mathbb{N}$, be all those functions in this collection which are $\tilde{S}_0$-invariant, and let $\tau_i\in \sigma(M/\mathbb{Z}_{k})$ be such that $\Delta_M(\varphi_i)=\tau_i\cdot \varphi_i$. Similarly, let $\lbrace \psi_j \rbrace_{j\in J}$, $J\subset\mathbb{N}$, be all those functions in the collection which are $\tilde{S}_0$-anti-invariant, and let $\mu_j\in \sigma(M/\mathbb{Z}_{k})$ be such that $\Delta_M(\psi_j)=\mu_j\cdot \psi_j$.

As usual, let $\tilde{\psi}_j\in C^{\infty}(M/\mathbb{Z}_{k})$, $j\in J$, be the unique smooth function on $M/\mathbb{Z}_{k}$ such that $\tilde{\psi}_j\circ \pi_{\mathbb{Z}_{k}}=\psi_j$.
Each of the functions $\varphi_i :M\rightarrow \mathbb{R}$ is $\mathbb{Z}_{k}$-invariant and $\tilde{S}_0$-invariant. Thus it is $\mathbb{D}_{k}$-invariant as well. As any $\mathbb{D}_{k}$-invariant smooth function on $M$, it induces unique smooth functions $\tilde{\varphi}_i\in C^{\infty}( M/\mathbb{Z}_{k})$ and $\hat{\varphi}_i \in C^{\infty} (M/\mathbb{D}_{k})$ such that $\tilde{\varphi}_i \circ \pi_{\mathbb{Z}_{k}}=\varphi_i$ and $\hat{\varphi}_i\circ \pi_{\mathbb{D}_{k}}=\varphi_i$.

\begin{proposition}
\label{proposition:OrthonormalBasisEigenfunctionsQuadratintegrierbar}
We have the following orthonormal basis\emph{:}
\begin{itemize}
\item[$(i)$] The system $ \lbrace \sqrt{k}\cdot \tilde{\varphi}_i \rbrace_{i\in I} \cup \lbrace \sqrt{k}\cdot  \tilde{\psi}_i \rbrace_{j\in J}$ is an orthonormal basis for $L^2(M/\mathbb{Z}_{k})$ consisting of smooth eigenfunctions of $\Delta_{M/\mathbb{Z}_{k}}$.
\item[$(ii)$] The set of functions $\lbrace \sqrt{2k}\cdot  \hat{\varphi}_{i} \rbrace_{i\in I}$ is an orthonormal basis for $L^2(M/\mathbb{D}_{k})$ consisting of smooth eigenfunctions of $\Delta_{M/\mathbb{D}_{k}}$.
\end{itemize}
\end{proposition}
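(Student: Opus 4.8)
The plan is to deduce both statements directly from the structure already set up, namely the isometry \eqref{equation:IsometryEigenspaces} together with the eigenfunction collection $\lbrace u_i^{\lambda} \rbrace$ of Lemma \ref{lemma:AntiInvariantEigenfunctions} and Corollary \ref{corollary:OrthonormalBasisEigenfunctions}. First I would observe that, by construction, $\lbrace \varphi_i \rbrace_{i\in I} \cup \lbrace \psi_j \rbrace_{j\in J}$ is exactly a reordering and renaming of $\bigcup_{\lambda\in\sigma(M/\mathbb{Z}_{k})} \lbrace u_i^{\lambda} \rbrace_{i=1}^{n_{\lambda}}$, so it is an orthonormal system in $C^{\infty}_{\mathbb{Z}_{k}}(M)$ which, for each fixed $\lambda$, is a basis of $E_{\lambda}(M)_{\mathbb{Z}_{k}}$. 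Since the $E_{\lambda}(M)_{\mathbb{Z}_{k}}$ for $\lambda\in\sigma(M/\mathbb{Z}_{k})$ span a dense subspace of $L^2_{\mathbb{Z}_{k}}(M)$ (the completion of $C^{\infty}_{\mathbb{Z}_{k}}(M)$) — which follows from Theorem \ref{theorem:SpectrumLaplacianOrbifolds} applied to $M/\mathbb{Z}_{k}$ via the isomorphism \eqref{equation:IsomorphismEigenspaces} — this union is an orthonormal basis of $L^2_{\mathbb{Z}_{k}}(M)$. Applying the isometry $\frac{1}{\sqrt{k}}\cdot\Phi : C^{\infty}(M/\mathbb{Z}_{k}) \to C^{\infty}_{\mathbb{Z}_{k}}(M)$ from \eqref{equation:IsometryPHI} and extending to $L^2$, the preimages $\lbrace \sqrt{k}\cdot\tilde{\varphi}_i \rbrace_{i\in I} \cup \lbrace \sqrt{k}\cdot\tilde{\psi}_j \rbrace_{j\in J}$ form an orthonormal basis of $L^2(M/\mathbb{Z}_{k})$, and each is a smooth eigenfunction of $\Delta_{M/\mathbb{Z}_{k}}$ because $\Delta_{M/\mathbb{Z}_{k}} = \Phi^{-1}\circ\Delta_M\circ\Phi$ and the $\varphi_i,\psi_j$ are eigenfunctions of $\Delta_M$. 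This gives part $(i)$.

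For part $(ii)$, the key point is to identify $L^2(M/\mathbb{D}_{k})$ with the $\tilde{S}_0$-invariant part of $L^2(M/\mathbb{Z}_{k})$. Since $\mathbb{D}_{k} = \mathbb{Z}_{k} \cup \tilde{S}_0\mathbb{Z}_{k}$ (using $\tilde{S}_0\circ\tilde{D}_\ell = \tilde{D}_{k-\ell}\circ\tilde{S}_0$, which is recorded in the excerpt), a function on $M$ is $\mathbb{D}_{k}$-invariant if and only if it is both $\mathbb{Z}_{k}$-invariant and $\tilde{S}_0$-invariant. Hence $C^{\infty}_{\mathbb{D}_{k}}(M)$ is precisely the $+1$-eigenspace of the involution $\tilde{S}_0^{\ast}$ acting on $C^{\infty}_{\mathbb{Z}_{k}}(M)$; since the $\varphi_i$ are exactly the $\tilde{S}_0$-invariant basis elements, $\lbrace \varphi_i \rbrace_{i\in I}$ is an orthonormal basis of $L^2_{\mathbb{D}_{k}}(M)$ (density again coming from Theorem \ref{theorem:SpectrumLaplacianOrbifolds} for $M/\mathbb{D}_{k}$ together with the fact that each $E_\lambda(M)_{\mathbb{D}_k} = E_\lambda(M)_{\mathbb{Z}_k}\cap\lbrace\tilde{S}_0\text{-invariant}\rbrace$ is spanned by the relevant $\varphi_i$). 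Now apply the isometry $\frac{1}{\sqrt{\vert\mathbb{D}_{k}\vert}}\cdot\Phi = \frac{1}{\sqrt{2k}}\cdot\Phi : C^{\infty}(M/\mathbb{D}_{k}) \to C^{\infty}_{\mathbb{D}_{k}}(M)$ from \eqref{equation:IsometryPHI} with $G=\mathbb{D}_{k}$, $\vert\mathbb{D}_{k}\vert = 2k$: the preimages of the $\varphi_i$ are the $\hat{\varphi}_i$, so $\lbrace \sqrt{2k}\cdot\hat{\varphi}_i \rbrace_{i\in I}$ is an orthonormal basis of $L^2(M/\mathbb{D}_{k})$, and again each is a smooth eigenfunction of $\Delta_{M/\mathbb{D}_{k}} = \Phi^{-1}\circ\Delta_M\circ\Phi$.

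The main obstacle — really the only nonroutine point — is justifying completeness, i.e. that the relevant eigenspaces span a dense subspace of the $L^2$ spaces of $G$-invariant functions. I would handle this cleanly by transporting the completeness statement of Theorem \ref{theorem:SpectrumLaplacianOrbifolds} (the normalised eigenfunctions of $\Delta_{M/G}$ form an orthonormal basis of $L^2(M/G)$) through the isomorphism \eqref{equation:IsomorphismEigenspaces} and isometry \eqref{equation:IsometryEigenspaces}, rather than re-deriving spectral theory on $M$; this reduces everything to the bookkeeping of which basis elements are $\tilde{S}_0$-invariant versus $\tilde{S}_0$-anti-invariant, which Lemma \ref{lemma:AntiInvariantEigenfunctions} has already done. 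Care must also be taken that the index sets $I, J$ are defined so that no eigenfunction is double-counted across different eigenvalues $\lambda$, but this is immediate from the disjoint-eigenspace decomposition and the fact that the renaming in the paragraph preceding the proposition is a genuine reindexing of a single orthonormal family.
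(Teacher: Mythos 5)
Your proposal is correct and follows essentially the same route as the paper: part $(i)$ is just the observation that the system coincides with the orthonormal basis of Corollary \ref{corollary:OrthonormalBasisEigenfunctions}, and part $(ii)$ transports the $\tilde{S}_0$-invariant elements through the isometry \eqref{equation:IsometryEigenspaces} with $G=\mathbb{D}_{k}$, using that $\mathbb{D}_{k}$-invariance is exactly $\mathbb{Z}_{k}$-invariance together with $\tilde{S}_0$-invariance. The paper's own proof is terser (it dispatches $(ii)$ with ``follows similarly''), but the completeness bookkeeping you spell out is precisely what that remark is suppressing.
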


\begin{proof}
The first statement is obvious, since by definition 
\begin{align*}
\lbrace \sqrt{k}\cdot  \tilde{\varphi}_i \rbrace_{i\in I} \cup \lbrace \sqrt{k}\cdot \tilde{\psi}_j \rbrace_{j\in J}=\bigcup\limits_{\lambda\in\sigma(M/\mathbb{Z}_{k})}\lbrace \sqrt{k}\cdot\tilde{u}_i^{\lambda}\rbrace_{i=1}^{n_{\lambda}}.
\end{align*}
Hence by Corollary \ref{corollary:OrthonormalBasisEigenfunctions}, this collection is an orthonormal basis for $L^2(M/\mathbb{Z}_{k})$ as claimed.

The second statement follows similarly by using the isometry \eqref{equation:IsometryEigenspaces} with $G=\mathbb{D}_{k}$ and $\vert \mathbb{D}_{k} \vert = 2k$.
\end{proof}

The following theorem is a special case of a well-known, more general theorem (see e.g. \cite{Gordon08}, \cite{Gordon12} or Theorem $4.5$ in \cite{Donnelly}).

\begin{theorem}
\label{theorem:HeatKernelHeatTraceOrbifolds}
Let $G\in\lbrace \mathbb{Z}_{k}, \mathbb{D}_{k} \rbrace$ and let $0=\lambda_1\leq \lambda_2\leq ... \nearrow \infty$ be the sequence of eigenvalues \emph{(}with multiplicities\emph{)} of the Laplacian $\Delta_{M/G}$ as in Theorem \emph{\ref{theorem:SpectrumLaplacianOrbifolds}} above. Furthermore, let $\lbrace \phi_i \rbrace_{i=1}^{\infty}$ be an orthonormal basis for $L^2(M/G)$ consisting of smooth eigenfunctions of the Laplacian such that $\Delta_{M/G} \phi_i=\lambda_i \cdot \phi_i$ for all $i\in\mathbb{N}$. Then the heat kernel of $M/G$ exists and is given by
\begin{align*}
K_{M/G}(\tilde{x},\tilde{y};t)=\sum\limits_{i=1}^{\infty}e^{-\lambda_i t}\phi_i (\tilde{x})\phi_i(\tilde{y})\, \text{ for all } \tilde{x},\tilde{y}\in M/G \text{ and }\,t>0.
\end{align*}

The heat trace can be written as
\begin{align*}
Z_{M/G}(t):=\sum\limits_{i=1}^{\infty}e^{-\lambda_i t} = \int\limits_{M/G} K_{M/G}(\tilde{x},\tilde{x};t)d\tilde{x}\, \text{ for } t>0.
\end{align*}
\end{theorem}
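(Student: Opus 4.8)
The plan is to reduce everything to the heat kernel of the closed manifold $M$, which is already under control by Proposition \ref{proposition:heat trace bounded domain} (applied with $\Omega=M$), by averaging over $G$. Concretely, for $\tilde x,\tilde y\in M/G$ choose lifts $x,y\in M$ and set
\[
K_{M/G}(\tilde x,\tilde y;t):=\sum_{\gamma\in G}K_M(x,\gamma y;t).
\]
The first step is to check this is independent of the lifts: if $x'=\alpha x$ and $y'=\beta y$ with $\alpha,\beta\in G$, then $K_M(\alpha x,\gamma\beta y;t)=K_M(x,\alpha^{-1}\gamma\beta y;t)$ because $K_M$ is invariant under isometries of $M$, and reindexing the sum by $\gamma\mapsto\alpha^{-1}\gamma\beta$ returns the same value. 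Since $K_M$ is smooth on $M\times M\times(0,\infty)$, the lift $(x,y,t)\mapsto\sum_{\gamma\in G}K_M(x,\gamma y;t)$ is smooth and $G$-invariant separately in $x$ and in $y$, so $K_{M/G}$ defines a smooth function on $M/G\times M/G\times(0,\infty)$ in the sense of Definition \ref{definition:SmoothFunctions}. The series for $K_M$ converges absolutely and uniformly on $M\times M\times[\varepsilon,\infty)$ by Proposition \ref{proposition:heat trace bounded domain}(ii), and a finite $G$-average inherits these properties.

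Next I would verify that $K_{M/G}$ is a fundamental solution. The equation $(\partial_t+\Delta_{M/G})K_{M/G}(\cdot,\tilde y;\cdot)\equiv0$ follows because each $K_M(\cdot,\gamma y;\cdot)$ solves the heat equation on $M$ and $\Delta_{M/G}$ is defined by lifting (Definition \ref{definition:Laplacian}). For the initial condition, let $\tilde f\in C^\infty(M/G)$ and $f:=\tilde f\circ\pi_G$, which is $G$-invariant. By Definition \ref{definition:Integration} and the change of variables $y\mapsto\gamma^{-1}y$ in each summand (using $f(\gamma^{-1}y)=f(y)$),
\[
\int_{M/G}K_{M/G}(\tilde x,\tilde y;t)\,\tilde f(\tilde y)\,d\tilde y=\frac1{|G|}\sum_{\gamma\in G}\int_M K_M(x,\gamma y;t)f(y)\,dy=\int_M K_M(x,y;t)f(y)\,dy\xrightarrow[t\searrow0]{}f(x)=\tilde f(\tilde x),
\]
so $K_{M/G}(\cdot,\tilde y;\cdot)$ is a fundamental solution at $\tilde y$; uniqueness can be argued exactly as in the manifold case, so $K_{M/G}$ is \emph{the} heat kernel of $M/G$.

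To obtain the eigenfunction formula I would set up the spectral decomposition of $L^2(M)$ adapted to $G$. Because $G$ acts by isometries, the averaging operator $P:=\tfrac1{|G|}\sum_{\gamma\in G}\gamma^*$ is a self-adjoint idempotent onto $L^2_G(M)$ commuting with $\Delta_M$, so each finite-dimensional eigenspace $E_\sigma(M)$ splits orthogonally as $E_\sigma(M)_G\oplus(E_\sigma(M)_G)^\perp$. Via the isometry \eqref{equation:IsometryPHI} and its eigenspace version \eqref{equation:IsometryEigenspaces}, an orthonormal eigenbasis $\{\phi_i\}$ of $L^2(M/G)$ (Theorem \ref{theorem:SpectrumLaplacianOrbifolds}) corresponds to the orthonormal family $\{\tfrac1{\sqrt{|G|}}\phi_i\circ\pi_G\}$, which is an orthonormal basis of $L^2_G(M)$; completing it by an orthonormal eigenbasis of the complement yields an orthonormal eigenbasis $\{\chi_j\}$ of $L^2(M)$. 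The key computation is that for $\chi\in(E_\sigma(M)_G)^\perp$ one has $\sum_{\gamma\in G}\chi(\gamma\,\cdot)=|G|\,P\chi$ with $P\chi\in E_\sigma(M)_G$, and since $\chi\perp E_\sigma(M)_G$ while $P$ is an orthogonal projection, $\langle P\chi,P\chi\rangle=\langle\chi,P\chi\rangle=0$, hence $P\chi\equiv0$; whereas for $\chi=\tfrac1{\sqrt{|G|}}\phi_i\circ\pi_G$ one has $\sum_{\gamma\in G}\chi(\gamma\,\cdot)=|G|\chi$. Plugging $K_M(x,y;t)=\sum_j e^{-\sigma_j t}\chi_j(x)\chi_j(y)$ into the definition of $K_{M/G}$ and using these two identities, every non-$G$-invariant basis function drops out and each $G$-invariant one contributes $e^{-\lambda_i t}\tfrac1{\sqrt{|G|}}\phi_i(\tilde x)\cdot\sqrt{|G|}\,\phi_i(\tilde y)=e^{-\lambda_i t}\phi_i(\tilde x)\phi_i(\tilde y)$, giving $K_{M/G}(\tilde x,\tilde y;t)=\sum_{i}e^{-\lambda_i t}\phi_i(\tilde x)\phi_i(\tilde y)$. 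Finally, since this series converges absolutely and uniformly on $M/G\times M/G$ for fixed $t$, I would interchange summation and the orbifold integral — mimicking the proof of Corollary \ref{corollary:HeatTraceBoundedDomain} — to conclude $\int_{M/G}K_{M/G}(\tilde x,\tilde x;t)\,d\tilde x=\sum_i e^{-\lambda_i t}\Vert\phi_i\Vert_{L^2(M/G)}^2=\sum_i e^{-\lambda_i t}$. I expect the main obstacle to be precisely the spectral bookkeeping in the third step — choosing the $L^2(M)$-eigenbasis compatibly with $G$-invariance and proving the averaging identity $\sum_{\gamma\in G}\chi(\gamma\,\cdot)\in\{|G|\chi,\,0\}$ on eigenspaces; everything else is a routine transfer of the manifold theory through $\pi_G$ and the definitions of this section.
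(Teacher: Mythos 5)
Your argument is correct. Note, however, that the paper does not prove this theorem at all: it is stated as a special case of known results and dispatched with a citation to \citep{Gordon08}, \citep{Gordon12} and \citep[Theorem 4.5]{Donnelly}. What you have written out is essentially the standard proof of that cited result in the special case of a good orbifold $M/G$ with $G$ finite: the averaged kernel $\sum_{\gamma\in G}K_M(x,\gamma y;t)$, well-definedness via isometry-invariance of $K_M$ and reindexing, the fundamental-solution check through Definitions \ref{definition:Integration} and \ref{definition:Laplacian}, and the spectral bookkeeping with the projector $P=\tfrac{1}{|G|}\sum_{\gamma\in G}\gamma^{*}$ killing exactly the non-invariant eigenfunctions. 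All of these steps are sound, and the final interchange of sum and integral is justified exactly as in Corollary \ref{corollary:HeatTraceBoundedDomain}. The only place you are slightly quick is the assertion that your averaged kernel is \emph{the} heat kernel of $M/G$, "uniqueness as in the manifold case": since the paper never defines the orbifold heat kernel intrinsically, the cleanest route is to take the spectral series $\sum_i e^{-\lambda_i t}\phi_i(\tilde x)\phi_i(\tilde y)$ (which converges by Theorem \ref{theorem:SpectrumLaplacianOrbifolds} and the correspondence with $L^2_G(M)$) as the definition, as in \citep{Gordon08}, and then your computation showing it coincides with the $G$-average of $K_M$ is precisely what identifies the two; alternatively, uniqueness of fundamental solutions follows by lifting the usual Duhamel argument through $\pi_G$. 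With that caveat made explicit, your proof is complete and supplies the argument the paper omits.
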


Because of the above theorem and Proposition \ref{proposition:OrthonormalBasisEigenfunctionsQuadratintegrierbar} we get:
\begin{align*}
K_{M/\mathbb{Z}_{k}}(\tilde{x},\tilde{x};t)&=\sum\limits_i e^{-\tau_i t}\left( \sqrt{k}\cdot \tilde{\varphi}_i(\tilde{x})\right)^2 + \sum\limits_j e^{-\mu_j t}\left(\sqrt{k}\cdot \tilde{\psi}_j(\tilde{x})\right)^2\\
&= k\cdot  \sum\limits_i e^{-\tau_i t} \tilde{\varphi}_i(\tilde{x})^2 + k\cdot \sum\limits_j e^{-\mu_j t} \tilde{\psi}_j (\tilde{x})^2 
\end{align*}
for all $ \tilde{x}\in M/\mathbb{Z}_{k}$ and $t>0,$ and
\begin{align*}
K_{M/\mathbb{D}_{k}}(\hat{x},\hat{x};t)&=\sum\limits_i e^{-\tau_i t}\left( \sqrt{2k}\cdot \hat{\varphi}_i(\hat{x})\right)^2 = 2k\cdot \sum\limits_i e^{-\tau_i t}\hat{\varphi}_i(\hat{x})^2
\end{align*}
for all $\hat{x}\in M/\mathbb{D}_{k}$ and $t>0.$
Thus, for all $t>0$ and any open set $U\subset M$, which is invariant under $O(2)$, we have:
\begin{align*}
\int_{\proj{Z}{U}}K_{M/\mathbb{Z}_{k}}\left( \tilde{x},\tilde{x};t\right)d\tilde{x}&=\frac{1}{\vert \mathbb{Z}_{k} \vert}\int_{U}K_{M/\mathbb{Z}_{k}}\left( \pi_{\mathbb{Z}_{k}}(x),\pi_{\mathbb{Z}_{k}}(x);t\right)dx \\
&= \int_U  \sum\limits_i e^{-\tau_i t}\left( \tilde{\varphi}_i(\pi_{\mathbb{Z}_{k}}(x))\right)^2dx + \int_U  \sum\limits_j e^{-\mu_j t} \left( \tilde{\psi}_j(\pi_{\mathbb{Z}_{k}}(x))\right)^2dx\\
&= \int_U  \sum\limits_i e^{-\tau_i t} \varphi_i(x)^2dx + \int_U  \sum\limits_j e^{-\mu_j t}\psi_j(x)^2dx,\\
\int_{\proj{D}{U}}K_{M/\mathbb{D}_{k}}\left( \hat{x},\hat{x};t\right)d\hat{x}&=\frac{1}{\vert \mathbb{D}_{k} \vert}\int_{U}K_{M/\mathbb{D}_{k}}\left( \pi_{\mathbb{D}_{k}}(x),\pi_{\mathbb{D}_{k}}(x);t\right)dx \\
&= \int_U  \sum\limits_i e^{-\tau_i t}\left( \hat{\varphi}_i(\pi_{\mathbb{D}_{k}}(x))\right)^2 dx\\
&=\int_U  \sum\limits_i e^{-\tau_i t}\varphi_i(x)^2 dx.
\end{align*}

Hence,

\begin{align}
\label{equation:HeatTraceRelation1}
\int_{\proj{Z}{U}}K_{M/\mathbb{Z}_{k}}\left( \tilde{x},\tilde{x};t\right)d\tilde{x} = \int_{\proj{D}{U}}K_{M/\mathbb{D}_{k}}\left( \hat{x},\hat{x};t\right)d\hat{x} + \underbrace{\int\limits_U  \sum\limits_j e^{-\mu_j t}\psi_j(x)^2 dx}_{=:\, G(t)}.
\end{align}

Next, we want to investigate the function $t\mapsto G(t)$. We will show that this function is closely related to the Dirichlet heat kernel for a spherical lune with angle $\frac{\pi}{k}$.

For $i\in\{0,...,k-1\}$, let $\text{Fix}(\tilde{S}_i)\subset M$ be the fixed point set of the reflection $\tilde{S}_i\in \mathbb{D}_{k}$. Let $\Omega \subset M$ be a spherical lune or $2$-gon with vertices being the north pole and south pole, angle $\frac{\pi}{k}$ and with $\partial \Omega\subset \text{Fix}(\tilde{S}_0)\cup\text{Fix}(\tilde{S}_1)$. Using spherical coordinates
\begin{align*}
\xi:(0,2\pi)\times \left( -\frac{\pi}{2}, \frac{\pi}{2}\right) \ni (\theta,\nu)\mapsto \begin{pmatrix}
r\cos(\nu)\cos(\theta)\\
r\cos(\nu)\sin(\theta)\\
r\sin(\nu)
\end{pmatrix}\in M,
\end{align*}
the set $\Omega$ can be chosen as
\begin{align*}
\Omega:=\left\lbrace\, (\theta,\nu) \,\, \big\vert \,\, 0<\theta<\frac{\pi}{k};\, -\frac{\pi}{2}<\nu<\frac{\pi}{2} \,\right\rbrace.
\end{align*}
The boundary can be decomposed into two sides $\gamma_0:=\text{Fix}(\tilde{S}_0)\cap \partial\Omega$ and $\gamma_1:=\text{Fix}(\tilde{S}_1)\cap \partial\Omega$.

Since $\Omega\subset M$ is a relatively compact domain, the spectrum of the Dirichlet Laplacian $\Delta_{\Omega}$ consists of a sequence of non-negative eigenvalues and any eigenvalue has finite multiplicity (see Proposition \ref{proposition:heat trace bounded domain}). Let us enumerate the eigenvalues of $\Delta_{\Omega}$ with multiplicities as follows:
\begin{align*}
 0<\nu_1 < \nu_2\leq \nu_3\leq...\nearrow \infty.
\end{align*}  
Moreover, there exists an orthonormal basis $\lbrace \omega_{j} \rbrace_{j=1}^{\infty}$ for $L^2(\Omega)$, such that for any $j\in\mathbb{N}$ the function $\omega_{j}$ is a smooth eigenfunction of $\Delta_{\Omega}$ corresponding to the eigenvalue $\nu_j$.

The Dirichlet heat kernel for $\Omega$ can be written as 
\begin{align*}
K_{\Omega}(x,y;t)=\sum\limits_{j=1}^{\infty} e^{-\nu_j t} \omega_j(x)\omega_j(y)\, \text{ for all } x,y\in\Omega \text{ and }t>0,
\end{align*}
and the trace of the Dirichlet heat kernel is given by
\begin{align*}
Z_{\Omega}(t)=\sum\limits_{j=1}^{\infty}e^{-\nu_j t} = \int\limits_{\Omega} K_{\Omega}(x,x;t) dx\, \text{ for all } t>0.
\end{align*}

%
%
%
%

\begin{lemma}
\label{lemma:EigenfunctionsLuneAntiInvariantFunctions}
Let $\Delta:C^{\infty}(\Omega)\rightarrow C^{\infty}(\Omega)$, $\Delta(f):=-\emph{div}(\nabla f)$ \emph{(}where $\nabla$ and \emph{div} are taken with respect to the metric on $M=\mathbb{S}^2(r)$\emph{)} be the Laplace operator. Recall that for any $j\in J$ the function $\psi_j\in C^{\infty}_{\mathbb{Z}_{k}}(M)$ is an $\tilde{S}_0$-anti-invariant eigenfunction of the Dirichlet Laplacian $\Delta_M$ corresponding to the eigenvalue $\mu_j$.
\begin{itemize}
\item[$(i)$]  For any $j\in J$ the function ${\psi_{j}}_{\vert \Omega}$ is a classical solution to the Dirichlet eigenvalue problem on $\Omega$. This means that ${\psi_{j}}_{\vert \Omega}\in C^{\infty}(\Omega)\cap C(\overline{\Omega})$ and
\begin{align*}
\Delta{\psi_{j}}_{\vert \Omega}(x) & = \mu_j\cdot {\psi_{j}}_{\vert \Omega}(x) \,\text{ for all } x\in \Omega \\
\text{ and }\quad {\psi_{j}}_{\vert \overline{\Omega}}(x) &= 0 \quad\quad\quad\quad\,\, \text{ for all } x\in\partial\Omega. 
\end{align*}

\item[$(ii)$]  Moreover, the set $\left\lbrace \sqrt{2k}\cdot {\psi_{j}}_{\vert\Omega} \right\rbrace_{j\in J}$ is an orthonormal set in $L^2(\Omega)$ consisting of smooth eigenfunctions for the Dirichlet Laplacian $\Delta_\Omega$.
\end{itemize}
\end{lemma}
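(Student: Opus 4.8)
The plan is to exploit the reflection $\tilde{S}_1 = D_{\pi/k}\circ\tilde{S}_0 = \tilde{S}_0\circ D_{-\pi/k}$ together with the $\mathbb{Z}_k$-invariance and $\tilde{S}_0$-anti-invariance of each $\psi_j$. First I would verify part $(i)$. Fix $j\in J$. The function $\psi_j\in C^\infty(M)$ restricts to a smooth function on the open lune $\Omega$, and since $M$ is compact it extends continuously to $\overline\Omega$; so ${\psi_j}_{|\Omega}\in C^\infty(\Omega)\cap C(\overline\Omega)$. Because $\Delta_M\psi_j = \mu_j\psi_j$ on all of $M$ and $\Delta$ is local, ${\psi_j}_{|\Omega}$ satisfies $\Delta{\psi_j}_{|\Omega} = \mu_j{\psi_j}_{|\Omega}$ on $\Omega$. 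The content is the boundary condition. The side $\gamma_0\subset\mathrm{Fix}(\tilde{S}_0)$: for $x\in\gamma_0$ we have $\tilde{S}_0 x = x$, hence $\psi_j(x) = (\tilde{S}_0^\ast\psi_j)(x) = -\psi_j(x)$, so $\psi_j(x)=0$. For the side $\gamma_1\subset\mathrm{Fix}(\tilde{S}_1)$, I would first note $\tilde{S}_1 = \tilde{D}_1\circ\tilde{S}_0$ (with $\tilde{D}_1 = D_{2\pi/k}$ giving $\tilde{S}_1 = D_{2\pi/k}S_0$, a reflection in the line at angle $\pi/k$), and that $\psi_j$, being both $\mathbb{Z}_k$-invariant and $\tilde{S}_0$-anti-invariant, satisfies $\psi_j\circ\tilde{S}_1 = \psi_j\circ\tilde{D}_1\circ\tilde{S}_0 = \psi_j\circ\tilde{S}_0 = -\psi_j$; thus $\psi_j$ is $\tilde{S}_1$-anti-invariant, and the same fixed-point argument gives $\psi_j(x)=0$ for $x\in\gamma_1$. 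Since $\partial\Omega = \gamma_0\cup\gamma_1$, this proves ${\psi_j}_{|\overline\Omega}\equiv 0$ on $\partial\Omega$. Finally ${\psi_j}_{|\Omega}$ is not identically zero (otherwise $\psi_j$ would vanish on the $\mathbb{D}_k$-orbit of $\overline\Omega$, which is all of $M$), so $\mu_j$ is genuinely a Dirichlet eigenvalue and ${\psi_j}_{|\Omega}$ is a bona fide eigenfunction of $\Delta_\Omega$ (here one should also invoke that a classical solution of the Dirichlet eigenvalue problem lies in the $L^2$-domain of $\Delta_\Omega$ — this is standard elliptic theory for the relatively compact domain $\Omega$ with piecewise smooth boundary).

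For part $(ii)$, the orthonormality is a change-of-variables computation. The $2k$ translates $\{\gamma\cdot\Omega : \gamma\in\mathbb{D}_k\}$ of the lune tile $M$ up to a set of measure zero (the reflector edges), and each $\gamma\in\mathbb{D}_k$ acts as an isometry, hence is measure-preserving. For $i,j\in J$,
\begin{align*}
\langle\psi_i,\psi_j\rangle_{L^2(M)} &= \sum_{\gamma\in\mathbb{D}_k}\int_{\gamma\cdot\Omega}\psi_i\psi_j\,dx = \sum_{\gamma\in\mathbb{D}_k}\int_{\Omega}(\psi_i\circ\gamma)(\psi_j\circ\gamma)\,dx.
\end{align*}
Now for $\gamma\in\mathbb{Z}_k$ one has $\psi_i\circ\gamma=\psi_i$ and $\psi_j\circ\gamma=\psi_j$; for $\gamma=\tilde{S}_\ell$ (any reflection in $\mathbb{D}_k$) one has, by the $\mathbb{Z}_k$-invariance and $\tilde{S}_0$-anti-invariance just as above, $\psi_i\circ\tilde{S}_\ell = -\psi_i$ and $\psi_j\circ\tilde{S}_\ell=-\psi_j$, so the product $(\psi_i\circ\gamma)(\psi_j\circ\gamma) = \psi_i\psi_j$ in every case. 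Hence
\begin{align*}
\langle\psi_i,\psi_j\rangle_{L^2(M)} = 2k\int_\Omega\psi_i\psi_j\,dx = 2k\,\langle{\psi_i}_{|\Omega},{\psi_j}_{|\Omega}\rangle_{L^2(\Omega)}.
\end{align*}
Since $\{\psi_j\}_{j\in J}$ is an orthonormal set in $L^2(M)$ by construction (it is part of an orthonormal basis from Lemma \ref{lemma:AntiInvariantEigenfunctions} and Corollary \ref{corollary:OrthonormalBasisEigenfunctions}), we conclude $\langle\sqrt{2k}\,{\psi_i}_{|\Omega},\sqrt{2k}\,{\psi_j}_{|\Omega}\rangle_{L^2(\Omega)} = \delta_{ij}$, i.e. $\{\sqrt{2k}\,{\psi_j}_{|\Omega}\}_{j\in J}$ is orthonormal in $L^2(\Omega)$; by part $(i)$ each member is a smooth Dirichlet eigenfunction.

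The main obstacle I anticipate is the care required around the boundary: making rigorous that a classical $C^\infty(\Omega)\cap C(\overline\Omega)$ solution vanishing on $\partial\Omega$ actually belongs to the form domain / operator domain of the Dirichlet Laplacian $\Delta_\Omega$ on the lune (whose boundary has two corners at the poles), so that $\mu_j$ legitimately sits in the spectrum and ${\psi_j}_{|\Omega}$ can be expanded against the basis $\{\omega_j\}$. This can be handled by the same elliptic-regularity and trace-theorem machinery already used elsewhere in the thesis (cf. the arguments via \citep{Grigoryan} in Chapter \ref{chapter:polygons}), but it is the one place where one cannot just wave hands. The tiling/measure-preserving step and the sign bookkeeping for reflections are routine by contrast; the only subtlety there is to record once and for all that $\psi_j\circ\gamma = (\mathrm{sgn}\,\gamma)\psi_j$ for all $\gamma\in\mathbb{D}_k$, where $\mathrm{sgn}$ is the sign homomorphism $\mathbb{D}_k\to\{\pm1\}$ with kernel $\mathbb{Z}_k$, which follows immediately from $\mathbb{Z}_k$-invariance, $\tilde{S}_0$-anti-invariance, and $\mathbb{D}_k = \mathbb{Z}_k\sqcup\tilde{S}_0\mathbb{Z}_k$.
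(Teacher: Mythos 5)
Your proposal is correct and follows essentially the same route as the paper: the boundary condition is obtained from the fixed-point/anti-invariance argument (deducing $\tilde{S}_1$-anti-invariance from $\tilde{S}_1=\tilde{D}_1\circ\tilde{S}_0$ together with $\mathbb{Z}_k$-invariance), orthonormality comes from tiling $M$ by the $2k$ isometric copies $\gamma\cdot\Omega$, $\gamma\in\mathbb{D}_k$, with the sign bookkeeping you describe, and the domain-membership issue you flag is exactly the point the paper disposes of by citing \citep[Exercise 8.3]{Grigoryan}. (Only cosmetic remark: your opening identity should read $\tilde{S}_1=D_{2\pi/k}\circ\tilde{S}_0$, as your own parenthetical later corrects.)
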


\begin{proof}
$(i)$. Let $j\in J$ be arbitrary. All statements of $(i)$ are obvious except possibly that $\psi_j$ vanishes pointwise on the boundary $\partial\Omega$, i.e. $\psi_j(x)=0$ for all $x\in\partial\Omega$. Since $\partial\Omega \subset \text{Fix}(\tilde{S}_0)\cup \text{Fix}(\tilde{S}_1)$ it suffices to show that $\psi_j$ vanishes on the sets $\text{Fix}(\tilde{S}_i)$, $i=0,1$:

Let $x\in \text{Fix}(\tilde{S}_0)$. Then we have $\psi_j(x) = \psi_j( \tilde{S}_0 x) = \left( {\tilde{S}_0}^{\ast} \psi_j \right) (x) = -\psi_j (x)$. Hence $\psi_j(x)=0$ and $\psi_j$ vanishes on $\text{Fix}(\tilde{S}_0)$.

In order to show that $\psi_{j}$ vanishes on $\text{Fix}(\tilde{S}_1)$, observe that the function $\psi_{j}$ is $\tilde{S}_1$-anti-invariant as well. In fact $\tilde{D}_1\circ \tilde{S}_0=\tilde{S}_1$, and thus $\tilde{S}_1$-anti-invariance of $\psi_{j}$ follows from its $\mathbb{Z}_{k}$-invariance and $\tilde{S}_0$-anti-invariance.

$(ii)$.
As any classical solution of the Dirichlet eigenvalue problem, the function ${\psi_{j}}_{\vert \Omega}$ is contained in the domain of the Dirichlet Laplacian and is also a smooth eigenfunction of $\Delta_{\Omega}$ (see \citep[Exercise 8.3 and the remark thereafter]{Grigoryan}).
It remains to show that 
\begin{align*}
\left\lbrace \sqrt{2k}\cdot {\psi_{j}}_{\vert\Omega} \right\rbrace_{j\in J}
\end{align*}
 is an orthonormal set in $L^2(\Omega)$. By using the decomposition 
\begin{align*}
M\backslash \bigcup_{i=0}^{k-1}\text{Fix}(\tilde{S}_{i})  =\bigcupdot_{i=0}^{k-1} \tilde{D}_i(\Omega\cup \tilde{S}_0(\Omega)),
\end{align*}
 and the $\mathbb{Z}_{k}$-invariance and $\tilde{S}_0$-anti-invariance of $\psi_{j}$, we get for all $j,\ell\in J$:
\begin{align*}
\langle \psi_{j}, \psi_{\ell} \rangle_{L^2(M)} &= \int\limits_{M} \psi_{j}(x)\cdot  \psi_{\ell}(x) dx  = k  \int\limits_{\Omega\cup \tilde{S}_0(\Omega)} \psi_{j}(x)\cdot  \psi_{\ell}(x) dx \\
& = 2k \int\limits_{\Omega} \psi_{j}(x)\cdot  \psi_{\ell}(x) dx = \int\limits_{\Omega} {\sqrt{2k}\cdot \psi_{j}}_{\vert\Omega}(x) \cdot  \sqrt{2k}\cdot {\psi_{\ell}}_{\vert\Omega}(x)  dx \\
& = \left\langle \sqrt{2k}\cdot {\psi_{j}}_{\vert\Omega}, \sqrt{2k}\cdot {\psi_{\ell}}_{\vert\Omega} \right\rangle_{L^2(\Omega)}.
\end{align*}
Therefore $\left\lbrace \sqrt{2k}\cdot {\psi_{j}}_{\vert\Omega} \right\rbrace_{j\in J}\subset L^2(\Omega)$ is an orthonormal set in $L^2(\Omega)$ consisting of smooth eigenfunctions of the Dirichlet Laplacian. 
\end{proof}

\begin{theorem}
\label{theorem:GEqualHeatTraceLune}
For all $U\subset M$, which are $O(2)$-invariant, we have 
\begin{align*}
G(t) = \int_{U\cap \Omega}K_{\Omega}(x,x;t)dx \, \text{ for all } t>0,
\end{align*}
where $G(t)=\int\limits_U  \sum\limits_{j\in J} e^{-\mu_j t}\psi_j(x)^2 dx$ as defined above. Thus, by \eqref{equation:HeatTraceRelation1}
\begin{align}
\label{equation:GeneralHeatTraceRelation2}
\int_{\proj{Z}{U}}K_{M/\mathbb{Z}_{k}}\left( \tilde{x},\tilde{x};t\right)d\tilde{x} - \int_{\proj{D}{U}}K_{M/\mathbb{D}_{k}}\left( \hat{x},\hat{x};t\right)d\hat{x} = \int_{U\cap \Omega}K_{\Omega}(x,x;t)dx.
\end{align}
In particular, we have for $U=M$:
\begin{align}
\label{equation:HeatTraceRelation2}
Z_{M/\mathbb{Z}_{k}}(t)-Z_{M/\mathbb{D}_{k}}(t)=Z_{\Omega}(t) \, \text{ for all } t>0.
\end{align}
\end{theorem}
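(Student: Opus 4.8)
The plan is to reduce the theorem to a single nontrivial fact: the system $\{\sqrt{2k}\,{\psi_j}_{\vert\Omega}\}_{j\in J}$ is not merely an orthonormal set in $L^2(\Omega)$ (which is Lemma \ref{lemma:EigenfunctionsLuneAntiInvariantFunctions}~$(ii)$) but a \emph{complete} orthonormal basis. Granting this, Proposition \ref{proposition:heat trace bounded domain}~$(ii)$ together with Lemma \ref{lemma:EigenfunctionsLuneAntiInvariantFunctions}~$(i)$ yields the spectral expansion
\[
K_\Omega(x,y;t)=\sum_{j\in J}e^{-\mu_j t}\bigl(\sqrt{2k}\,\psi_j(x)\bigr)\bigl(\sqrt{2k}\,\psi_j(y)\bigr)=2k\sum_{j\in J}e^{-\mu_j t}\psi_j(x)\psi_j(y),
\]
so that $K_\Omega(x,x;t)=2k\sum_{j\in J}e^{-\mu_j t}\psi_j(x)^2$ for $x\in\Omega$, $t>0$.

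Next I would pass from $G(t)=\int_U\sum_{j\in J}e^{-\mu_j t}\psi_j(x)^2\,dx$ to a Dirichlet heat trace over $U\cap\Omega$. Since each $\psi_j$ is $\mathbb{Z}_k$-invariant and $\tilde S_0$-anti-invariant, the function $\psi_j^2$ is $\mathbb{D}_k$-invariant; using the essentially disjoint decomposition $M\setminus\bigcup_{i=0}^{k-1}\text{Fix}(\tilde S_i)=\bigcup_{i=0}^{k-1}\tilde D_i(\Omega\cup\tilde S_0(\Omega))$, the $O(2)$-invariance of $U$ (hence $\tilde D_i(U)=U$ and $\tilde S_0(U)=U$), the change of variables under the isometries $\tilde D_i,\tilde S_0$, and the fact that the fixed-point sets are null, I would obtain $\int_U\psi_j(x)^2\,dx=2k\int_{U\cap\Omega}\psi_j(x)^2\,dx$ for every $j\in J$. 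Interchanging sum and integral by Tonelli (all terms nonnegative) then gives $G(t)=2k\int_{U\cap\Omega}\sum_{j\in J}e^{-\mu_j t}\psi_j(x)^2\,dx=\int_{U\cap\Omega}K_\Omega(x,x;t)\,dx$, which is the first assertion. Substituting this into \eqref{equation:HeatTraceRelation1} yields \eqref{equation:GeneralHeatTraceRelation2}, and taking $U=M$ together with Theorem \ref{theorem:HeatKernelHeatTraceOrbifolds} (which identifies $\int_{M/G}K_{M/G}(\tilde x,\tilde x;t)\,d\tilde x$ with $Z_{M/G}(t)$) gives \eqref{equation:HeatTraceRelation2}.

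The main obstacle, and the only step requiring real work, is the completeness claim; here is how I would argue it. Let $g\in L^2(\Omega)$ be orthogonal to every ${\psi_j}_{\vert\Omega}$, $j\in J$. Since $\Omega$ is a fundamental domain for the $\mathbb{D}_k$-action and $\Omega\cup\tilde S_0(\Omega)$ is, up to a null set, one for the $\mathbb{Z}_k$-action, $g$ extends a.e.\ to a function $\hat g\in L^2(M)$ that is $\mathbb{Z}_k$-invariant and $\tilde S_0$-anti-invariant (set $\hat g\circ\tilde D_i=g$ and $\hat g\circ(\tilde D_i\tilde S_0)=-g$ on $\Omega$; consistency is only an issue on the null set of fixed points). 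For the $\tilde S_0$-invariant basis functions $\varphi_i$ one gets, using that $\tilde S_0$ is an isometry, $\langle\hat g,\varphi_i\rangle_{L^2(M)}=\langle\hat g\circ\tilde S_0,\varphi_i\circ\tilde S_0\rangle_{L^2(M)}=-\langle\hat g,\varphi_i\rangle_{L^2(M)}$, so $\hat g\perp\varphi_i$ automatically; and the fundamental-domain computation above, now applied to the $\mathbb{D}_k$-invariant product $\hat g\,\psi_j$, gives $\langle\hat g,\psi_j\rangle_{L^2(M)}=2k\langle g,{\psi_j}_{\vert\Omega}\rangle_{L^2(\Omega)}=0$ for all $j\in J$. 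Since $\hat g$ lies in the $\mathbb{Z}_k$-invariant subspace of $L^2(M)$, and $\{\varphi_i\}_{i\in I}\cup\{\psi_j\}_{j\in J}$ is an orthonormal basis of that subspace — this is exactly Proposition \ref{proposition:OrthonormalBasisEigenfunctionsQuadratintegrierbar}~$(i)$ transported through the isometry \eqref{equation:IsometryPHI} — we conclude $\hat g=0$ in $L^2(M)$, hence $g=0$ in $L^2(\Omega)$. Thus $\{\sqrt{2k}\,{\psi_j}_{\vert\Omega}\}_{j\in J}$ is complete, closing the argument. The delicate points to watch will be the measure-theoretic bookkeeping of the extension $\hat g$ across the reflector edges and the careful bookkeeping of which fundamental domain (for $\mathbb{Z}_k$ versus $\mathbb{D}_k$) is being used at each step.
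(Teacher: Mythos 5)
Your proposal is correct and follows essentially the same route as the paper: reduce everything to the completeness of $\{\sqrt{2k}\,{\psi_j}_{\vert\Omega}\}_{j\in J}$ in $L^2(\Omega)$, and prove completeness by extending an orthogonal function to a $\mathbb{Z}_{k}$-invariant, $\tilde S_0$-anti-invariant function on $M$ and invoking the orthonormal basis $\{\varphi_i\}\cup\{\psi_j\}$ of the $\mathbb{Z}_{k}$-invariant subspace. The only (harmless, and if anything slightly cleaner) deviation is that you run the completeness argument directly with an arbitrary $g\in L^2(\Omega)$ extended almost everywhere across the reflector edges, whereas the paper tests against continuous functions vanishing on $\partial\Omega$ and extends those.
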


\begin{proof}
Because the functions $\psi_{j}$ are $\mathbb{Z}_{k}$-invariant and $\tilde{S}_0$-anti-invariant, we have
\begin{align*}
G(t)=\int\limits_{U\cap \Omega}\sum\limits_{j\in J} e^{-\mu_j t} \left( \sqrt{2k}\cdot {\psi_j(x)}_{\vert\Omega} \right)^2 dx.
\end{align*}
%
Therefore it suffices to prove that the set $\left\lbrace \sqrt{2k}\cdot {\psi_{j}}_{\vert\Omega} \right\rbrace_{j\in J}$ is an orthonormal basis for $L^2(\Omega)$ of eigenfunctions of the Dirichlet Laplacian $\Delta_{\Omega}$. We know from Lemma  \ref{lemma:EigenfunctionsLuneAntiInvariantFunctions} $(ii)$ that this set is an orthonormal set in $L^2(\Omega)$ consisting of smooth eigenfunctions of the Dirichlet Laplacian. Hence it remains to show that this set is complete in $L^2(\Omega)$.

We want to emphasise first the following relations. We know by Proposition \ref{proposition:OrthonormalBasisEigenfunctionsQuadratintegrierbar} that the set 
\begin{align*}
\lbrace \sqrt{k}\cdot \tilde{\varphi}_i\rbrace_{i\in I}\cup \lbrace \sqrt{k}\cdot \tilde{\psi}_j\rbrace_{j\in J}
\end{align*}
is an orthonormal basis for $L^2(M/{\mathbb{Z}_{k}})$ consisting of smooth eigenfunctions. In particular, any continuous function $\tilde{f}\in C(M/\mathbb{Z}_{k})$ orthogonal to this set must vanish identically, i.e., if $\tilde{f}\perp \lbrace \sqrt{k}\cdot \tilde{\varphi}_i\rbrace_{i\in I}\cup \lbrace \sqrt{k}\cdot \tilde{\psi}_j\rbrace_{j\in J}$, then $\tilde{f}\equiv 0$.

By using the isometry \eqref{equation:IsometryPHI} for $n=0,\infty$ one gets immediately that the set
\begin{align*}
\lbrace \varphi_i\rbrace_{i\in I}\cup \lbrace \psi_j\rbrace_{j\in J}
\end{align*}
is an orthonormal set in the space $C_{\mathbb{Z}_{k}}(M)$, consisting of smooth and $\mathbb{Z}_{k}$-invariant eigenfunctions. Moreover, any continuous function $f\in C_{\mathbb{Z}_{k}}(M)$ with $f\perp \lbrace \varphi_i\rbrace_{i\in I}\cup \lbrace \psi_j\rbrace_{j\in J}$ must vanish, i.e. $f\equiv 0$.

Let $f\in C(\overline{\Omega})$ be given such that $f$ vanishes on the boundary $\partial\Omega$ and $f\perp \lbrace \sqrt{2k}\cdot {\psi_{j}}_{\vert\Omega} \rbrace_{j\in J}$. We now extend $f$ continuously on $M$ in two steps. Firstly, extend it onto $\tilde{S}_1(\overline{\Omega})$ by setting $f(\tilde{S}_1 x):=-f(x)$ for all $x\in \overline{\Omega}$. Secondly, extend this function continuously onto $M$ such that it becomes $\mathbb{Z}_{k}$-invariant.

This function is now perpendicular to $\lbrace \psi_j\rbrace_{j\in J}$ in $L^2(M)$ by construction and the initial assumption on $f_{\vert \Omega}$. Similary, it is perpendicular to the set $\lbrace \varphi_i\rbrace_{i\in I}$ in $L^2(M)$ by construction.
Hence, by the above considerations, $f$ must vanish on $M$.

\end{proof}

\section{Heat invariants for orbisurfaces}
\label{section:InvariantsOrbisurfaces}

In this section, we assume throughout that $M=\mathbb{S}^2(r)$ is equipped with the standard Riemannian metric induced by Euclidean $\mathbb{R}^3$. Recall that we proved in the first section (see \eqref{equation:HeatTraceRelation2}):
\begin{align*}
Z_{M/\mathbb{Z}_{k}}(t)-Z_{M/\mathbb{D}_{k}}(t)=Z_{\Omega}(t)\, \text{ for all } t>0.
\end{align*}
We aim to compute the coefficients of the asymptotic expansions as $t\searrow 0$ for all three heat traces appearing in the above equation. We will thereby obtain explicit formulas for all coefficients and it will be possible to recognise the contribution of any singular stratum to the asymptotic expansion. 

Throughout this section, we will use the definitions and results given in \cite{Gordon08}. In particular, Definition $2.3$ and the beginning of Section $4$ up to the end of the proof of Theorem $4.8$ of that article are relevant. Sometimes we will change the notation slightly compared to \cite{Gordon08}, mainly because the statements in \cite{Gordon08} are formulated for general orbifolds, whereas we are dealing only with two special orbifolds.

The following theorem is a special case of Theorem 4.8 in \cite{Gordon08}.
\begin{theorem}
\label{theorem:HeatTraceAsymptoticsOrbifoldQualitatively}
Let $\mathcal{O}$ be a two-dimensional closed Riemannian orbifold and let $\lambda_1\leq\lambda_2\leq...$ be the spectrum of the Lapacian $\Delta_{\mathcal{O}}$. The heat trace $Z_{\mathcal{O}}(t)=\sum_{i=1}^{\infty}e^{-\lambda_{i} t}$ of $\mathcal{O}$ is asymptotic as $t\searrow 0$ to
\begin{align*}
I_0(\mathcal{O})+\sum\limits_{N\in S(\mathcal{O})}\frac{I_N (\mathcal{O})}{\vert \Iso(N)\vert},
\end{align*}
where $S(\mathcal{O})$ is the set of all singular $\mathcal{O}$-strata and where $\vert \Iso(N) \vert$ is the order of the isotropy at each $p\in N$. The symbols $I_0 (\mathcal{O}),\text{ } I_N (\mathcal{O})$ are defined to be:
\begin{align*}
I_0 (\mathcal{O})&=\left( 4\pi t \right)^{-1}\cdot \sum\limits_{i=0}^{\infty} a_i(\mathcal{O}) t^i,\\
I_N (\mathcal{O})&=\left( 4\pi t \right)^{-\frac{\text{dim}(N)}{2}} \cdot \sum\limits_{i=0}^{\infty} t^{i}\int\limits_{N}b_i(N,x)d\vol_{N}(x),
\end{align*}
where $a_i(\mathcal{O})$ and $b_i(N,x)$ are given as in $\emph{\citep[Definition $4.7$]{Gordon08}}$. 
\end{theorem}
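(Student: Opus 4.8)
The statement is presented as the two-dimensional instance of the general orbifold heat trace asymptotics of \citet{Gordon08}, so my plan is essentially to reduce to that theorem and check that nothing is lost in the specialisation. First I would recall the standard facts about the spectral geometry of a closed Riemannian orbifold $\mathcal{O}$: the Laplacian $\Delta_{\mathcal{O}}$ has discrete spectrum $0=\lambda_1\le\lambda_2\le\dots\nearrow\infty$ with finite multiplicities (as in Theorem \ref{theorem:SpectrumLaplacianOrbifolds} for our model orbifolds, and in general by the references cited there), the heat kernel $K_{\mathcal{O}}$ exists and is smooth, and $Z_{\mathcal{O}}(t)=\sum_{i=1}^{\infty}e^{-\lambda_i t}=\int_{\mathcal{O}}K_{\mathcal{O}}(\tilde{x},\tilde{x};t)\,d\tilde{x}$ (the general version of Theorem \ref{theorem:HeatKernelHeatTraceOrbifolds}). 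In particular the trace is a legitimate object whose $t\searrow 0$ behaviour may be analysed.

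Next I would quote \citep[Theorem $4.8$]{Gordon08} directly: for an arbitrary closed Riemannian orbifold it expresses $Z_{\mathcal{O}}(t)$ asymptotically as $I_0(\mathcal{O})+\sum_{N\in S(\mathcal{O})}\vert \Iso(N)\vert^{-1}I_N(\mathcal{O})$ with $I_0$ and $I_N$ the claimed power series in $t$ (including the $t^{-\dim(N)/2}$ prefactors), the coefficients being the quantities $a_i(\mathcal{O})$ and $b_i(N,x)$ of \citep[Definition $4.7$]{Gordon08}. The only thing to observe is that when $\dim\mathcal{O}=2$ the singular strata are of dimension $0$ (the isolated cone points and dihedral points) or dimension $1$ (the mirror/reflector edges), so the abstract sum over $S(\mathcal{O})$ is exactly the sum displayed in the theorem, with the dimensional prefactor $(4\pi t)^{-\dim(N)/2}$ taking the value $(4\pi t)^{0}$ or $(4\pi t)^{-1/2}$ respectively. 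No further hypotheses are needed, since a closed orbisurface automatically has a compact singular set with finitely many strata and finite isotropy. Hence the statement follows.

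If one instead wanted to prove the underlying general theorem from scratch --- which is where all the genuine difficulty lies --- the plan would be: cover $\mathcal{O}$ by orbifold charts $(\widetilde{U},G_U)$, write $K_{\mathcal{O}}$ on the chart as the $G_U$-average $\frac{1}{\vert G_U\vert}\sum_{g\in G_U}K_{\widetilde{U}}(x,gx;t)$ of the manifold heat kernel of the local cover (compare the argument used in Lemma \ref{lemma:FormelHeatKernelWedgeSpiegelung}), integrate over a fundamental domain, and organise the result according to the conjugacy classes of elements $g$ sorted by their fixed-point sets. The $g=e$ contribution reassembles the regular part $I_0(\mathcal{O})$ via the classical Minakshisundaram--Pleijel expansion; each $g\neq e$ localises, as $t\searrow 0$, on $\mathrm{Fix}(g)$, and a Laplace/stationary-phase estimate on a tubular neighbourhood of $\mathrm{Fix}(g)$ produces the stratum contribution $\vert \Iso(N)\vert^{-1}I_N(\mathcal{O})$; one then checks that the remainder is exponentially small and that the chart-wise contributions patch consistently via a partition of unity. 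The main obstacle in that programme is precisely the stationary-phase analysis near the fixed-point sets together with the bookkeeping that turns the chart-wise isotropy averages into the global, stratum-indexed sum --- but in the present paper this is subsumed into \citep[Theorem $4.8$]{Gordon08}, and the task here is merely the routine dimensional specialisation described above.
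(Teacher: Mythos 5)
Your proposal matches the paper exactly: the theorem is stated there as a direct special case of \citep[Theorem 4.8]{Gordon08}, with no further argument beyond the dimensional specialisation you describe. The additional sketch of how one would prove the general theorem from scratch is a nice bonus but is not needed, since the paper likewise subsumes all the real work into the cited reference.
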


We will apply the above theorem to the orbifolds $M/G$ with $G\in\lbrace \mathbb{Z}_{k}, \mathbb{D}_{k} \rbrace$. Note that $M/G$ is finitely covered by $M$ and thus $a_i(\mathcal{O})=\frac{1}{\vert G \vert}a_i(M)$, where $a_i(M)$ denote the familiar heat invariants for the manifold $M$ (see the comment at the end of Definition $4.7$ in \citep{Gordon08}). The $M/G$-strata are defined as the connected components of the \emph{G-isotropy equivalence classes} in $M/G$. Let us investigate what the G-isotropy equivalence classes look like for our orbifolds and then apply Theorem \ref{theorem:HeatTraceAsymptoticsOrbifoldQualitatively}.

Let $x_N:=(0,0,r)$ and $x_S:=(0,0,-r)$. The orbifold $M/\mathbb{Z}_{k}$ has two $\mathbb{Z}_{k}$-isotropy equivalence classes. If $\tilde{x}_N:=\pi_{{\mathbb{Z}_{k}}}(x_N)$, $\tilde{x}_S:=\pi_{{\mathbb{Z}_{k}}}(x_S)$ are the cone points, one isotropy equivalence class is given by the set $\lbrace \tilde{x}_N, \tilde{x}_S \rbrace$ which has two connected components $\lbrace \tilde{x}_N \rbrace, \lbrace \tilde{x}_S \rbrace$. The isotropy group of $\tilde{x}_N,\, \tilde{x}_S$ is given by $\Iso_{\mathbb{Z}_{k}}\lbrace \tilde{x}_N \rbrace=\mathbb{Z}_{k}=\Iso_{\mathbb{Z}_{k}}\lbrace \tilde{x}_S\rbrace$ so that the isotropy order is $\vert \mathbb{Z}_{k} \vert = k$. All the other points in $M/\mathbb{Z}_{k}$ are regular and constitute the other $\mathbb{Z}_{k}$-isotropy equivalence class. Hence, we get
\begin{align}
\label{equation:AsymptotikTraceFootballOrbifold}
Z_{M/\mathbb{Z}_{k}}(t)\overset{t\downarrow 0}{\sim} I_0(M/\mathbb{Z}_{k}) + \frac{1}{k} I_{\lbrace \tilde{x}_N \rbrace} (M/\mathbb{Z}_{k}) + \frac{1}{k} I_{\lbrace \tilde{x}_S \rbrace} (M/\mathbb{Z}_{k}).
\end{align}

The number of $\mathbb{D}_{k}$-isotropy equivalence classes of the orbifold $M/\mathbb{D}_{k}$ depends on whether $k$ is even or odd. Let us first assume that $k$ is even. Then the orbifold $M/\mathbb{D}_{k}$ has four $\mathbb{D}_{k}$-isotropy equivalence classes. Let $\hat{x}_N:=\pi_{\mathbb{D}_{k}}(x_N)$, and $\hat{x}_S:=\pi_{\mathbb{D}_{k}}(x_S)$ be the two dihedral points. Then $\lbrace \hat{x}_N, \hat{x}_S \rbrace$ is an isotropy equivalence class with two connected components $\{\hat{x}_N\},\, \{\hat{x}_S\}$ and isotropy group $\Iso_{\mathbb{D}_{k}}\lbrace \hat{x}_N \rbrace=\mathbb{D}_{k}=\Iso_{\mathbb{D}_{k}}\lbrace \hat{x}_S \rbrace$. Thus the isotropy order is $\vert \mathbb{D}_{k} \vert = 2k$.

Two of the remaining isotropy equivalence classes consist of mirror points. Recall that $\gamma_0$, $\gamma_1\subset M$ are the two sides of the lune $\Omega$. The two isotropy equivalence classes are represented by the sets $\text{Mir}_0:= \pi_{\mathbb{D}_{k}}\left( \gamma_0\right) \backslash \lbrace \hat{x}_N, \hat{x}_S \rbrace $ and $\text{Mir}_1 := \pi_{\mathbb{D}_{k}}\left( \gamma_1\right) \backslash \lbrace \hat{x}_N, \hat{x}_S \rbrace $, respectively. Both isotropy equivalence classes are connected. The conjugacy class of the isotropy group of any $\tilde{p}\in \text{Mir}_0$ is given by $\lbrace \lbrace e, \tilde{S}_{\ell} \rbrace \mid \ell=0,2,4,...,k-2 \rbrace$, where $e$ denotes the identity element of the dihedral group $\mathbb{D}_{k}$. Similarly the conjugacy class of the isotropy group of any $\tilde{p}\in \text{Mir}_1$ is given by $\lbrace \lbrace e, \tilde{S}_{\ell} \rbrace \mid \ell=1,3,5,...,k-1 \rbrace$. Hence the isotropy order is $\vert \Iso_{\mathbb{D}_{k}}(\tilde{p}) \vert = 2$ for any $\tilde{p}\in \text{Mir}_0 \cup \text{Mir}_1$.

The fourth $M/\mathbb{D}_{k}$-isotropy equivalence class consists of the regular points
\begin{align*}
\left( M/\mathbb{D}_{k} \right) \backslash \bigcup\limits_{i=0}^1 \pi_{\mathbb{D}_{k}}(\gamma_i).
\end{align*}

Hence we get
\begin{align}
\label{equation:AsymptotikTraceDihedralOrbifold}
\begin{split}
Z_{M/\mathbb{D}_{k}} (t) \overset{t\downarrow 0}{\sim} I_0(M/\mathbb{D}_{k}) &+ \frac{1}{2k} I_{\lbrace \hat{x}_N \rbrace}(M/\mathbb{D}_{k}) + \frac{1}{2k} I_{\lbrace \hat{x}_S \rbrace}(M/\mathbb{D}_{k})  \\
&+ \frac{1}{2} \left( I_{\text{Mir}_0}(M/\mathbb{D}_{k}) + I_{\text{Mir}_1}(M/\mathbb{D}_{k})\right).
\end{split}
\end{align}

If $k$ is odd, then we have three $\mathbb{D}_{k}$-isotropy equivalence classes. Two of them are given as above by the set of dihedral points and regular points, respectively. However, for odd $k$ all reflections $\tilde{S}_{\ell}$ are conjugate to each other, such that the third $\mathbb{D}_{k}$-isotropy equivalence class is given by the set of all mirror points 
\begin{align*}
\text{Mir}_0 \bigcupdot \text{Mir}_1.
\end{align*}
This isotropy equivalence class has two connected components, namely, $\text{Mir}_0$ and $\text{Mir}_1$. The conjugacy class of the isotropy group of any $\tilde{p}\in \text{Mir}_0 \cup \text{Mir}_1$ is given by $\lbrace \lbrace e, \tilde{S}_{\ell} \rbrace \mid \ell=0,...,k-1 \rbrace$ and hence the isotropy order is $\vert \Iso_{\mathbb{D}_{2k}}(\tilde{p}) \vert = 2$. We conclude that the asymptotic expansion \eqref{equation:AsymptotikTraceDihedralOrbifold} holds for odd $k$ as well. 

\begin{remark}
The above analysis together with Theorem \ref{theorem:HeatTraceAsymptoticsOrbifoldQualitatively} shows that there will be no half-powers of $t$ in the asymptotic expansion of $Z_{M/\mathbb{Z}_k}(t)$. In particular, by \eqref{equation:HeatTraceRelation2}, the mirror-edge contribution $B$ to $Z_{M/\mathbb{D}_k}(t)$ must equal the negative of the boundary contribution to $Z_{\Omega}(t)$. Moreover, we will show that the contribution $C$ of the dihedral points to the asymptotic expansion of $Z_{M/\mathbb{D}_k}(t)$ equals half the contribution of the cone points to $Z_{M/\mathbb{Z}_k}(t)$. The same obviously holds for the contribution $A$ of the interior to the asymptotic expansion of $Z_{M/\mathbb{D}_k}(t)$. So it will follow that $Z_{\Omega}(t) \overset{t\downarrow 0}{\sim} (2A+2C)-(A+B+C) = A-B+C$. We will state this more explicitly in Corollary \ref{corollary:AsymptoticExpansionDifferenceHeatTracesOrbifolds} further below.
\end{remark}

Before we compute the coefficients in the above asymptotic expansions explicitly, we want to simplify the asymptotic expansions given in \eqref{equation:AsymptotikTraceFootballOrbifold} and \eqref{equation:AsymptotikTraceDihedralOrbifold}.

 As it is remarked in \cite{Gordon08} (see the proof of Theorem $4.8$), we can express $I_{N}(M/G)$ by 
\begin{align}
\label{equation:GoodOrbifoldsSimplificationContributionStrata}
\tilde{I}_{\tilde{N}_1} + ... + \tilde{I}_{\tilde{N}_n} = \frac{\vert G \vert}{\vert \Iso_G(N) \vert} I_N (M/G),
\end{align}
where $\tilde{N}_1,...,\tilde{N}_n$ are the mutually isometric $M$-strata with $\pi_{G}^{-1}(N) = \bigcupdot_{i=1}^{n}\tilde{N}_i$, and for all $M$-strata $\tilde{N}\subset M$:
\begin{align}
\tilde{I}_{\tilde{N}} :&= \left( 4\pi t \right)^{-\frac{\text{dim}(\tilde{N})}{2}}\sum\limits_{i=0}^{\infty} t^{i}\int\limits_{\tilde{N}}b_i(\tilde{N},x)d\vol_{\tilde{N}}(x), \label{equation:StrataHeatTrace} \\
b_{i}(\tilde{N},x)&:= \sum\limits_{\gamma\in \Iso^{\text{max}}(\tilde{N})}b_i\left( \gamma, x \right). \label{equation:CoefficientsOfDonelly1}
\end{align}
By definition, for any $M$-stratum $\tilde{N}$, the set $\Iso^{\text{max}}(\tilde{N})\subset G$ is defined as the set of all $\gamma\in G$ such that $\tilde{N}$ is open in the fixed point set $\text{Fix}(\gamma)$ of $\gamma$. 

The functions $b_i\left( \gamma, x \right)$, first defined by H. Donnelly in his article \cite{Donnelly2}, have two important properties, which are called ``locality'' and ``universality'' in \cite{Gordon08}. Because of the universality property, for any isometry $\sigma:M\rightarrow M$ satisfying $\sigma \circ \gamma =  \gamma' \circ \sigma$, we have
\begin{align}
\label{equation:UniversalityDonnellyCoefficients}
b_i(\gamma,x)=b_i(\gamma', \sigma(x)) \text{ for all } x\in \text{Fix}(\gamma).
\end{align}

\begin{lemma}
\label{lemma:IntegralsOrbifoldIdentifications}
\text{ }
\begin{itemize}
\item[$(i)$]$\begin{aligned}[t]
 I_{\lbrace \tilde{x}_N \rbrace } (M/\mathbb{Z}_{k}) &= I_{\lbrace \tilde{x}_S \rbrace } (M/\mathbb{Z}_{k}) = I_{\lbrace \hat{x}_S \rbrace} (M/\mathbb{D}_{k}) = I_{\lbrace \hat{x}_N \rbrace} (M/\mathbb{D}_{k})\, \text{ and }\\
I_{\lbrace \tilde{x}_N \rbrace} (M/\mathbb{Z}_{k}) &=  \sum\limits_{i=0}^{\infty} t^{i} \cdot \sum\limits_{\ell=1}^{k-1} b_i(\tilde{D}_{\ell} ,x_N).
\end{aligned}$

\item[$(ii)$]$\begin{aligned}[t]
I_{ \text{Mir}_0 } (M/\mathbb{D}_{k}) &= I_{\text{Mir}_1 } (M/\mathbb{D}_{k})\, \text{ and }\\
I_{\text{Mir}_0}(M/\mathbb{D}_{k}) &=  \left( 4\pi t \right)^{-\frac{1}{2}}\sum\limits_{i=0}^{\infty} t^{i} \cdot \frac{1}{2}\int\limits_{\text{Fix}(\tilde{S}_0)}b_i(\tilde{S}_0,x)d\vol_{ \text{Fix}(\tilde{S}_{0}) }(x).
\end{aligned}$

\end{itemize}
\end{lemma}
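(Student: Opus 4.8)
The plan is to prove Lemma \ref{lemma:IntegralsOrbifoldIdentifications} by combining the structural identity \eqref{equation:GoodOrbifoldsSimplificationContributionStrata}, which expresses $I_N(M/G)$ in terms of the $M$-stratum heat contributions $\tilde I_{\tilde N}$, with the universality property \eqref{equation:UniversalityDonnellyCoefficients} of Donnelly's coefficients $b_i(\gamma,x)$. The key point throughout is that $M = \mathbb{S}^2(r)$ carries the standard metric, so $O(3)$ acts by isometries, and in particular every element of $\mathbb{Z}_k$, $\mathbb{D}_k$, as well as the reflections $\tilde S_\ell$ and rotations $\tilde D_\ell$, act isometrically; this is exactly what is needed to invoke \eqref{equation:UniversalityDonnellyCoefficients} with the various conjugating isometries.

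For part $(i)$, I would argue as follows. The singular $M/\mathbb{Z}_k$-stratum $\{\tilde x_N\}$ is covered by the single $M$-stratum $\{x_N\}$ (the north pole is the unique $\mathbb{Z}_k$-preimage of $\tilde x_N$), and $\Iso^{\text{max}}(\{x_N\})$ consists of all $\tilde D_\ell$ with $\ell = 1,\dots,k-1$, since $\{x_N\}$ is open (indeed equal to) the fixed point set of each nontrivial rotation. By \eqref{equation:GoodOrbifoldsSimplificationContributionStrata} with $\vert\mathbb{Z}_k\vert = \vert\Iso_{\mathbb{Z}_k}(\{\tilde x_N\})\vert = k$ we get $I_{\{\tilde x_N\}}(M/\mathbb{Z}_k) = \tilde I_{\{x_N\}}$, and since $\dim\{x_N\} = 0$ the prefactor $(4\pi t)^0 = 1$ together with \eqref{equation:StrataHeatTrace}, \eqref{equation:CoefficientsOfDonelly1} yields $I_{\{\tilde x_N\}}(M/\mathbb{Z}_k) = \sum_{i=0}^\infty t^i \sum_{\ell=1}^{k-1} b_i(\tilde D_\ell, x_N)$, which is the claimed formula. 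The equality $I_{\{\tilde x_N\}}(M/\mathbb{Z}_k) = I_{\{\tilde x_S\}}(M/\mathbb{Z}_k)$ follows by applying \eqref{equation:UniversalityDonnellyCoefficients} with $\sigma$ the rotation of $\mathbb{R}^3$ by $\pi$ about the $x$-axis, which swaps $x_N \leftrightarrow x_S$ and conjugates each $\tilde D_\ell$ to $\tilde D_\ell^{-1} = \tilde D_{k-\ell}$, so the full sum over $\ell = 1,\dots,k-1$ is preserved. For the dihedral orbifold, the same reasoning with $G = \mathbb{D}_k$, $\vert\mathbb{D}_k\vert = \vert\Iso_{\mathbb{D}_k}(\{\hat x_N\})\vert = 2k$, gives $I_{\{\hat x_N\}}(M/\mathbb{D}_k) = \tilde I_{\{x_N\}} = \sum_i t^i \sum_{\gamma\in\Iso^{\text{max}}(\{x_N\})} b_i(\gamma,x_N)$; here $\Iso^{\text{max}}(\{x_N\})$ is again exactly $\{\tilde D_1,\dots,\tilde D_{k-1}\}$, because $\{x_N\}$ is open in $\text{Fix}(\gamma)$ only when $\gamma$ is a nontrivial rotation (for a reflection $\tilde S_\ell$ the fixed set is a great circle, in which the pole is not an open subset). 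Hence $I_{\{\hat x_N\}}(M/\mathbb{D}_k) = I_{\{\tilde x_N\}}(M/\mathbb{Z}_k)$, and likewise for the south pole, completing part $(i)$.

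For part $(ii)$, the singular stratum $\text{Mir}_0$ of $M/\mathbb{D}_k$ is covered by the $M$-strata $\pi_{\mathbb{D}_k}^{-1}(\text{Mir}_0) = \bigcupdot_{\ell \text{ even}} (\text{open arcs in } \text{Fix}(\tilde S_\ell))$; each of these $M$-strata is isometric to the open arc $\text{Fix}(\tilde S_0)\setminus\{x_N,x_S\}$, and on each of them $\Iso^{\text{max}}$ is the two-element group $\{e,\tilde S_\ell\}$, so $b_i(\tilde N, x) = b_i(e,x) + b_i(\tilde S_\ell, x)$; but the $b_i(e,x)$ term is the interior term and is assigned to the regular stratum, not to the singular stratum, so in $\tilde I_{\tilde N}$ as used in \eqref{equation:GoodOrbifoldsSimplificationContributionStrata} only the $b_i(\tilde S_\ell, x)$ part contributes — this subtlety about which term is ``seen'' by a singular stratum is the one place I would be careful, and I would anchor it to the precise bookkeeping in \citep[Section 4]{Gordon08}. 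Applying \eqref{equation:GoodOrbifoldsSimplificationContributionStrata} with $\vert\mathbb{D}_k\vert = 2k$ and $\vert\Iso_{\mathbb{D}_k}(\text{Mir}_0)\vert = 2$, and using \eqref{equation:UniversalityDonnellyCoefficients} with the rotations $\tilde D_{\ell/2}$ (which conjugate $\tilde S_0$ to $\tilde S_\ell$ and map $\text{Fix}(\tilde S_0)$ isometrically onto $\text{Fix}(\tilde S_\ell)$) to see that all the arc-integrals of $b_i(\tilde S_\ell,\cdot)$ over the respective arcs are equal, one finds that the left side of \eqref{equation:GoodOrbifoldsSimplificationContributionStrata} collapses to a multiple of $(4\pi t)^{-1/2}\sum_i t^i \int_{\text{Fix}(\tilde S_0)} b_i(\tilde S_0,x)\,d\vol(x)$ (the arc versus full great circle accounting for the factor $\tfrac12$), giving the stated formula for $I_{\text{Mir}_0}(M/\mathbb{D}_k)$. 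Finally $I_{\text{Mir}_0}(M/\mathbb{D}_k) = I_{\text{Mir}_1}(M/\mathbb{D}_k)$: when $k$ is odd all reflections are conjugate in $\mathbb{D}_k$ and $\tilde D_1$ carries $\text{Mir}_0$ to $\text{Mir}_1$; when $k$ is even, $\text{Mir}_0$ and $\text{Mir}_1$ are distinct isotropy classes, but the rotation $\tilde D_{1/2}$ of the \emph{plane} — equivalently rotation of $\mathbb{R}^3$ about the $z$-axis by $\pi/k$ — is still a genuine isometry of $M$ conjugating $\tilde S_0$ to $\tilde S_1$ and mapping $\text{Fix}(\tilde S_0)$ onto $\text{Fix}(\tilde S_1)$, so \eqref{equation:UniversalityDonnellyCoefficients} again gives equality of the two contributions. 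The main obstacle, as noted, is making the term-tracking in \eqref{equation:GoodOrbifoldsSimplificationContributionStrata}–\eqref{equation:CoefficientsOfDonelly1} fully rigorous — in particular justifying that the identity-element term $b_i(e,x)$ is excluded from the singular-stratum contribution and correctly accounting for the open-arc versus closed-great-circle and the various index-set multiplicities — but none of this requires computing the $b_i$ themselves.
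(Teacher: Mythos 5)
Your proof is correct and follows essentially the same route as the paper: identify the $M$-strata covering each singular stratum, apply \eqref{equation:GoodOrbifoldsSimplificationContributionStrata}--\eqref{equation:CoefficientsOfDonelly1}, and use universality \eqref{equation:UniversalityDonnellyCoefficients} with suitable ambient isometries (your rotation by $\pi$ about the $x$-axis, which permutes the $\tilde D_{\ell}$ while swapping the poles, works just as well as the paper's reflection in the $(x,y)$-plane, which commutes with each $\tilde D_{\ell}$). The one ``subtlety'' you flag in part $(ii)$ is not actually there: by the paper's definition, $\Iso^{\text{max}}(\tilde N)$ contains only those $\gamma$ for which $\tilde N$ is \emph{open in} $\text{Fix}(\gamma)$, and a one-dimensional arc is not open in $\text{Fix}(e)=M$, so $e\notin\Iso^{\text{max}}(\tilde N)$ and there is no $b_i(e,x)$ term to exclude --- this is exactly the same dimension-count you already use in part $(i)$ to rule the reflections out of $\Iso^{\text{max}}(\lbrace x_N\rbrace)$.
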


\begin{proof}
$(i)$. Obviously, $\pi_{\mathbb{Z}_{k}}^{-1}(\tilde{x}_N)=\lbrace x_N \rbrace = \pi_{\mathbb{D}_{k}}^{-1}(\hat{x}_N)$, $\pi_{\mathbb{Z}_{k}}^{-1}(\tilde{x}_S)=\lbrace x_S \rbrace = \pi_{\mathbb{D}_{k}}^{-1}(\hat{x}_S)$, and 
\begin{align*}
\Iso^{\text{max}}\left( \lbrace \tilde{x}_N \rbrace \right) &= \mathbb{Z}_{k}\backslash \{ e \} = \Iso^{\text{max}}\left( \lbrace \tilde{x}_S \rbrace \right), \\
\Iso^{\text{max}}\left( \lbrace \hat{x}_N \rbrace \right) &= \mathbb{Z}_{k}\backslash \{ e \} = \Iso^{\text{max}}\left( \lbrace \hat{x}_S \rbrace \right).
\end{align*}
Thus by using \eqref{equation:GoodOrbifoldsSimplificationContributionStrata}-\eqref{equation:CoefficientsOfDonelly1} we get, noting that $\int_{\lbrace x_N \rbrace}b_i(\tilde{D}_{\ell} ,x)d\vol_{\lbrace x_N \rbrace}(x)=b_i(\tilde{D}_{\ell} ,x_N)$ by the definition of the integral over a one-point set:
\begin{align*}
I_{\lbrace \tilde{x}_N \rbrace}(M/\mathbb{Z}_{k}) &= \sum\limits_{i=0}^{\infty} t^{i} \cdot \sum\limits_{\ell=1}^{k-1} b_i(\tilde{D}_{\ell} ,x_N)  =I_{\lbrace \hat{x}_N \rbrace}(M/\mathbb{D}_{k}),\\
I_{\lbrace \tilde{x}_S \rbrace}(M/\mathbb{Z}_{k}) &= \sum\limits_{i=0}^{\infty} t^{i} \cdot  \sum\limits_{\ell=1}^{k-1} b_i(\tilde{D}_{\ell} ,x_S) =I_{\lbrace \hat{x}_S \rbrace}(M/\mathbb{D}_{k}).
\end{align*}
Therefore it suffices to show that for any $\tilde{D}_{\ell}\in \mathbb{Z}_{k}$:
\begin{align*}
b_i(\tilde{D}_{\ell} ,x_N) = b_i(\tilde{D}_{\ell} ,x_S).
\end{align*}
Using $\sigma\circ \tilde{D}_{\ell} = \tilde{D}_{\ell}\circ \sigma$ for the reflection $\sigma$ in the $(x,y)$-plane, this follows trivially by the universality property \eqref{equation:UniversalityDonnellyCoefficients} of $b_i(\gamma ,x)$.
%

We now turn to $(ii)$. We have 
\begin{align*}
\pi_{\mathbb{D}_{k}}^{-1}(\text{Mir}_0) = \left( \bigcupdot\limits_{\ell=0}^{k-1} \tilde{D}_{\ell} \gamma_0 \right) \bigg\backslash \left\lbrace x_N, x_S \right\rbrace, \\
\pi_{\mathbb{D}_{k}}^{-1}(\text{Mir}_1) = \left( \bigcupdot\limits_{\ell=0}^{k-1} \tilde{D}_{\ell} \gamma_1 \right) \bigg\backslash \left\lbrace x_N, x_S \right\rbrace,
\end{align*}
where $ \gamma_{0}\backslash \lbrace x_N, x_S \rbrace$ and $\gamma_{1}\backslash \lbrace x_N, x_S\rbrace$ are connected components of $\text{Fix}(\tilde{S}_0)\backslash\lbrace x_N, x_S \rbrace$ and $\text{Fix}(\tilde{S}_1)\backslash\lbrace x_N, x_S \rbrace$, respectively.
Again by using universality of the functions $b_i(\gamma ,x)$ and  equations \eqref{equation:GoodOrbifoldsSimplificationContributionStrata}-\eqref{equation:CoefficientsOfDonelly1}, we obtain:
\begin{align*}
I_{\text{Mir}_0}(M/\mathbb{D}_{k}) &=  \left( 4\pi t \right)^{-\frac{1}{2}}\sum\limits_{i=0}^{\infty} t^{i} \cdot \frac{1}{2}\int\limits_{\text{Fix}(\tilde{S}_0)}b_i(\tilde{S}_0,x)d\vol_{\text{Fix}(\tilde{S}_{0})}(x), \\
I_{\text{Mir}_1}(M/\mathbb{D}_{k}) &=  \left( 4\pi t \right)^{-\frac{1}{2}}\sum\limits_{i=0}^{\infty} t^{i} \cdot \frac{1}{2}\int\limits_{\text{Fix}(\tilde{S}_1)}b_i(\tilde{S}_1,x)d\vol_{ \text{Fix}(\tilde{S}_{1}) }(x).
\end{align*}
By using universality once again, we get $I_{\text{Mir}_0}(M/\mathbb{D}_{k}) = I_{\text{Mir}_1}(M/\mathbb{D}_{k})$.
\end{proof}

Because of the universality property of the functions $b_i\left( \gamma, x \right)$, it is easy to show that $b_i\left( \tilde{S}_0, x \right)$ is constant on $\text{Fix}(\tilde{S}_0)$. Hence by setting $b_{i}(\tilde{S}_0):=b_{i}(\tilde{S}_0, a^{\ast})$ for some fixed $a^{\ast}\in\text{Fix}(\tilde{S}_0)$, we get
\begin{align*}
\int\limits_{\text{Fix}(\tilde{S}_0)}b_i(\tilde{S}_0,x)d\vol_{\text{Fix}(\tilde{S}_{0})}(x)  = b_i \left( \tilde{S}_0 \right)\cdot \vert \text{Fix}(\tilde{S}_0) \vert = b_i \left( \tilde{S}_0 \right)\cdot 2\pi r,
\end{align*}
hence $I_{\text{Mir}_0}(M/\mathbb{D}_{k}) = (4\pi t)^{-\frac{1}{2}}\sum_{i=0}^{\infty} t^{i}\cdot \pi r b_i(\tilde{S}_0)$.
Furthermore, for any $\ell\in\lbrace 0,...,k-1 \rbrace$, we set $b_{i}(\tilde{D}_{\ell}):=b_{i}(\tilde{D}_{\ell}, x_N )$.

\begin{corollary}
\label{corollary:AsymptoticExpansionDifferenceHeatTracesOrbifolds}
Letting
\begin{align*}
A:&=I_{0}(M/\mathbb{D}_{k}) =  \frac{1}{4\pi t}\sum\limits_{i=0}^{\infty} \frac{a_i (M)}{2k} t^i = \frac{1}{2}I_{0}(M/\mathbb{Z}_{k}), \\
B:&=I_{\text{Mir}_0}(M/\mathbb{D}_{k}) = \frac{1}{\sqrt{4\pi t}}\sum\limits_{i=0}^{\infty} t^i \cdot  \pi r b_i\left( \tilde{S}_0 \right), \text{ and }\\
C:&=\frac{1}{k} I_{\lbrace{ \hat{x}_N\rbrace} }(M/\mathbb{D}_{k}) =  \sum\limits_{i=0}^{\infty}t^i \cdot \frac{1}{k} \sum\limits_{\ell=1}^{k-1} b_i\left( \tilde{D}_{\ell} \right),
\end{align*}
we obtain from \eqref{equation:AsymptotikTraceFootballOrbifold}, \eqref{equation:AsymptotikTraceDihedralOrbifold}, and Lemma \emph{\ref{lemma:IntegralsOrbifoldIdentifications}}:
\begin{align*}
Z_{M/\mathbb{Z}_{k}} (t) &\overset{t\downarrow 0}{\sim} 2A + 2C,\\
Z_{M/\mathbb{D}_{k}}(t) &\overset{t\downarrow 0}{\sim} A + B + C.
\end{align*}
In particular, by \eqref{equation:HeatTraceRelation2}:
\begin{align}
\label{equation:AsymptoticExpansionDifferenceHeatTracesOrbifolds}
Z_{\Omega}(t) \overset{t\downarrow 0}{\sim} A-B+C.
\end{align}
\end{corollary}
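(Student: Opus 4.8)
The plan is to assemble Corollary \ref{corollary:AsymptoticExpansionDifferenceHeatTracesOrbifolds} purely from the identities already established, so that no genuinely new analysis is needed. First I would record the two qualitative asymptotic expansions \eqref{equation:AsymptotikTraceFootballOrbifold} and \eqref{equation:AsymptotikTraceDihedralOrbifold}, which come from applying Theorem \ref{theorem:HeatTraceAsymptoticsOrbifoldQualitatively} to the orbifolds $M/\mathbb{Z}_k$ and $M/\mathbb{D}_k$ together with the enumeration of the $G$-isotropy equivalence classes carried out in the paragraphs preceding this corollary. Then I would invoke Lemma \ref{lemma:IntegralsOrbifoldIdentifications}: part $(i)$ identifies all four cone/dihedral-point contributions $I_{\{\tilde x_N\}}(M/\mathbb{Z}_k)=I_{\{\tilde x_S\}}(M/\mathbb{Z}_k)=I_{\{\hat x_N\}}(M/\mathbb{D}_k)=I_{\{\hat x_S\}}(M/\mathbb{D}_k)$, and part $(ii)$ identifies $I_{\text{Mir}_0}(M/\mathbb{D}_k)=I_{\text{Mir}_1}(M/\mathbb{D}_k)$. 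With the abbreviations $A$, $B$, $C$ as in the statement, and using $a_i(M/G)=\frac{1}{|G|}a_i(M)$ (the good-orbifold simplification noted after Theorem \ref{theorem:HeatTraceAsymptoticsOrbifoldQualitatively}), \eqref{equation:AsymptotikTraceFootballOrbifold} collapses to $Z_{M/\mathbb{Z}_k}(t)\overset{t\downarrow 0}{\sim} I_0(M/\mathbb{Z}_k)+\frac{2}{k}I_{\{\tilde x_N\}}(M/\mathbb{Z}_k)=2A+2C$, while \eqref{equation:AsymptotikTraceDihedralOrbifold} collapses to $Z_{M/\mathbb{D}_k}(t)\overset{t\downarrow 0}{\sim} I_0(M/\mathbb{D}_k)+\frac{2}{2k}I_{\{\hat x_N\}}(M/\mathbb{D}_k)+\frac{2}{2}I_{\text{Mir}_0}(M/\mathbb{D}_k)=A+B+C$. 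Here I should be careful to track the isotropy-order denominators correctly: the cone points of $M/\mathbb{Z}_k$ have order $k$, the dihedral points of $M/\mathbb{D}_k$ have order $2k$, and the mirror strata have order $2$, and the factor-of-two combinatorics (two cone points, two dihedral points, two mirror edges) must be reconciled with the definitions $A=I_0(M/\mathbb{D}_k)$, $B=I_{\text{Mir}_0}(M/\mathbb{D}_k)$, $C=\frac{1}{k}I_{\{\hat x_N\}}(M/\mathbb{D}_k)$; the relation $A=\tfrac12 I_0(M/\mathbb{Z}_k)$ is immediate from $a_i(M/\mathbb{Z}_k)=2\,a_i(M/\mathbb{D}_k)$, and $\frac{2}{k}I_{\{\tilde x_N\}}(M/\mathbb{Z}_k)=\frac{2}{k}I_{\{\hat x_N\}}(M/\mathbb{D}_k)=2C$ by Lemma \ref{lemma:IntegralsOrbifoldIdentifications}$(i)$.

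Finally I would combine these two expansions with the exact heat-trace identity \eqref{equation:HeatTraceRelation2}, namely $Z_{M/\mathbb{Z}_k}(t)-Z_{M/\mathbb{D}_k}(t)=Z_\Omega(t)$ for all $t>0$, which was proven in Theorem \ref{theorem:GEqualHeatTraceLune}. Since an exact equality of functions forces equality of their asymptotic expansions, subtracting gives $Z_\Omega(t)\overset{t\downarrow 0}{\sim}(2A+2C)-(A+B+C)=A-B+C$, which is \eqref{equation:AsymptoticExpansionDifferenceHeatTracesOrbifolds}. The verification is essentially bookkeeping, and the only mildly delicate point — the one I would present most carefully — is making sure the multiplicities and the $1/|\mathrm{Iso}|$ weights in Theorem \ref{theorem:HeatTraceAsymptoticsOrbifoldQualitatively} are paired correctly with the explicit list of strata, since an off-by-a-factor-of-two error there would propagate into the final formula. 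No step here is a real obstacle; the substantive work was already done in Theorem \ref{theorem:GEqualHeatTraceLune} and Lemma \ref{lemma:IntegralsOrbifoldIdentifications}.

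(For later use in computing the $c_\ell^{\mathbb H}(\gamma)$ via orbifold theory, it would also be worth recording explicitly that $A$, $B$, $C$ as defined are the interior, mirror-edge, and dihedral-point contributions respectively, and that $C=\sum_{i=0}^\infty t^i\cdot\frac{1}{k}\sum_{\ell=1}^{k-1}b_i(\tilde D_\ell)$ depends only on $k$; but that is the subject of the subsequent section, not of this corollary.)
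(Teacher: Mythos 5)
Your proposal is correct and is exactly the argument the paper intends: the corollary is pure bookkeeping from \eqref{equation:AsymptotikTraceFootballOrbifold}, \eqref{equation:AsymptotikTraceDihedralOrbifold}, Lemma \ref{lemma:IntegralsOrbifoldIdentifications}, the relation $a_i(M/G)=\frac{1}{|G|}a_i(M)$, and the exact identity \eqref{equation:HeatTraceRelation2}, and you have tracked the isotropy weights and stratum multiplicities correctly. The paper itself sketches precisely this computation in the Remark preceding the corollary, so there is nothing to add.
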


In order to get explicit formulas for the coefficients $b_i\left( \tilde{S}_0 \right)$ and $\sum_{\ell=1}^{k-1} b_i\left( \tilde{D}_{\ell} \right)$ for all $i\in\mathbb{N}_0$, we will now compute the asymptotic expansion of $Z_{\Omega}(t)$ and the coefficients $a_i(M)$ explicitly. Both the coefficients $a_i(M)$ and the asymptotic expansion of $Z_{\Omega}(t)$ were computed in \cite{Watson} for the radius  $r=1$. But since the statements and proofs in \cite{Watson} have some typographical errors, we will reproduce the relevant parts with corrections in Proposition \ref{proposition:HeatTraceAsymptoticSphere} and Proposition \ref{proposition:AsymptoticExpansionLuneWatson} below. We will comment shortly on the errors contained in the corresponding statements of the article \citep{Watson} afterwards.

Recall the Bernoulli polynomials $B_k (x)$ and Bernoulli numbers $B_k$ from \eqref{equation:DefinitionBernoulli}.

\begin{lemma}
\label{lemma:RelationBernoulliPolynomials}
\begin{align}
\label{equation:RelationBernoulliPolynomials}
-B_n\left( \frac{1}{2} \right) = B_n \cdot \left( 1-\frac{1}{2^{n-1}} \right)\, \text{ for all } n\in\mathbb{N}_0.
\end{align}
\end{lemma}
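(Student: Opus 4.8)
The plan is to prove the identity
\[
-B_n\left(\tfrac{1}{2}\right) = B_n\left(1 - \tfrac{1}{2^{n-1}}\right)
\]
for all $n \in \mathbb{N}_0$ by comparing generating functions. Recall the defining generating function \eqref{equation:DefinitionBernoulli}: $\frac{t e^{xt}}{e^t - 1} = \sum_{k=0}^\infty B_k(x) \frac{t^k}{k!}$ for $|t| < 2\pi$, with $B_k = B_k(0)$. First I would specialize this to $x = \tfrac{1}{2}$, obtaining
\[
\sum_{k=0}^\infty B_k\left(\tfrac{1}{2}\right)\frac{t^k}{k!} = \frac{t e^{t/2}}{e^t - 1} = \frac{t}{e^{t/2} - e^{-t/2}}.
\]

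The key algebraic step is to relate the right-hand side to the ordinary Bernoulli generating function evaluated at $2t$ and at $t$. Indeed,
\[
\frac{t}{e^{t/2} - e^{-t/2}} = \frac{t e^{t/2}}{e^t - 1} = \frac{t}{e^t - 1} \cdot e^{t/2}.
\]
A cleaner route: write $\frac{t}{e^{t/2} - e^{-t/2}} = \frac{2t}{e^t - 1} - \frac{t}{e^{t/2} - 1}$ after multiplying through appropriately; more precisely, one checks the elementary identity $\frac{1}{e^{u} - e^{-u}} = \frac{1}{e^{2u}-1} \cdot \frac{e^{2u}}{e^u}$, equivalently $\coth$-type manipulations. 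The cleanest version is: from $e^t - 1 = (e^{t/2}-1)(e^{t/2}+1)$ one gets $\frac{1}{e^{t/2}-1} - \frac{2}{e^t-1} = \frac{1}{e^{t/2}+1} - \frac{1}{e^{t/2}-1} + \text{(correction)}$, so I would instead directly verify
\[
\frac{t e^{t/2}}{e^t-1} = \frac{2t}{e^{2\cdot (t/2)}-1}\Big|_{\text{rescaled}} - \frac{t}{e^{t/2}-1},
\]
i.e. $\frac{t e^{t/2}}{e^t - 1} = \frac{2t}{e^t - 1} - \frac{t}{e^{t/2} - 1}$, which follows since $\frac{2 - e^{t/2}}{e^t-1}\cdot t \neq \frac{t e^{t/2}}{e^t-1}$ — so I would recompute carefully: $\frac{2t}{e^t-1} - \frac{t}{e^{t/2}-1} = \frac{2t(e^{t/2}-1) - t(e^t-1)}{(e^t-1)(e^{t/2}-1)} = \frac{t(2e^{t/2} - 2 - e^t + 1)}{(e^t-1)(e^{t/2}-1)} = \frac{-t(e^{t/2}-1)^2}{(e^t-1)(e^{t/2}-1)} = \frac{-t(e^{t/2}-1)}{e^t-1}$. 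That gives $-\frac{t(e^{t/2}-1)}{e^t-1} = \frac{t}{e^{t/2}+1}$, not what we want, so the correct combination is $\frac{t e^{t/2}}{e^t-1} = \frac{t}{e^{t/2}-1} - \frac{2t}{e^t-1} + \frac{t}{e^{t/2}-1}$? No — the honest statement is $\frac{t e^{t/2}}{e^t - 1} = \frac{t}{e^{t/2}-1} - \frac{t \cdot e^{t/2}}{e^t-1}\cdot 0$... I would settle this by the substitution trick below, which avoids guesswork.

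The reliable approach: in $\sum_k B_k(x)\frac{t^k}{k!} = \frac{te^{xt}}{e^t-1}$, put $x = \tfrac12$ and also use $\frac{t e^{t/2}}{e^t-1} = \frac{(2t)e^{(t/2)\cdot 1}}{e^{2\cdot(t/2)}-1}\cdot\tfrac12$ — i.e., the generating function at argument $2u$ evaluated at $x=1$ gives $\sum_k B_k(1)\frac{(2u)^k}{k!} = \frac{2u e^{2u}}{e^{2u}-1}$, and similarly at $x=\tfrac12$, argument $2u$. Then set $u = t/2$: $\frac{t e^t}{e^t - 1} = \sum_k B_k(1)\frac{t^k}{2^k k!}$ and $\frac{t e^{t/2}}{e^t-1} = \sum_k B_k(\tfrac12)\frac{t^k}{k!}$. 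Combining with $\frac{t}{e^{t}-1} = \sum_k B_k \frac{t^k}{k!}$ and the elementary identity $\frac{te^{t/2}}{e^t-1} = \frac{t}{e^{t/2}-1} - \frac{t}{e^t-1}$ (which one verifies by $\frac{1}{e^{t/2}-1} - \frac{1}{e^t-1} = \frac{e^t - e^{t/2}}{(e^{t/2}-1)(e^t-1)} = \frac{e^{t/2}(e^{t/2}-1)}{(e^{t/2}-1)(e^t-1)} = \frac{e^{t/2}}{e^t-1}$), one reads off $\frac{t}{e^{t/2}-1} = \sum_k B_k \frac{t^k}{2^{k-1}k!}$ (replace $t\mapsto t/2$ in the Bernoulli generating function and multiply by $2$). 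Therefore $\sum_k B_k(\tfrac12)\frac{t^k}{k!} = \sum_k B_k\frac{t^k}{2^{k-1}k!} - \sum_k B_k \frac{t^k}{k!} = \sum_k B_k\left(\frac{1}{2^{k-1}} - 1\right)\frac{t^k}{k!}$. Comparing coefficients of $t^n/n!$ gives $B_n(\tfrac12) = B_n\left(\tfrac{1}{2^{n-1}} - 1\right) = -B_n\left(1 - \tfrac{1}{2^{n-1}}\right)$ by the reflection symmetry $B_n(1-x) = (-1)^n B_n(x)$ noted just after \eqref{equation:DefinitionBernoulli} — but more directly, $-B_n(\tfrac12) = B_n(1 - \tfrac{1}{2^{n-1}})$ is exactly the claim once we observe $B_n\left(\frac{1}{2^{n-1}}-1\right) = -B_n\left(1 - \frac{1}{2^{n-1}}\right)$ is NOT generally true; rather the claim as stated reads $-B_n(\tfrac12) = B_n\cdot(1 - \tfrac{1}{2^{n-1}})$ with $B_n$ the Bernoulli \emph{number}, so the coefficient comparison directly yields $B_n(\tfrac12) = B_n\left(\frac{1}{2^{n-1}} - 1\right) = -B_n\left(1 - \frac{1}{2^{n-1}}\right)$, i.e. $-B_n(\tfrac12) = B_n\left(1 - \frac{1}{2^{n-1}}\right)$, as desired.

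The main obstacle is purely bookkeeping: getting the scaling factors of $2$ right when passing between the generating functions for $B_k(x)$ at argument $t$ versus $t/2$, and between Bernoulli polynomials and Bernoulli numbers. There is no analytic difficulty — both sides are entire-coefficient sequences determined by convergent power series on $|t| < 2\pi$ — so once the identity $\frac{te^{t/2}}{e^t-1} = \frac{t}{e^{t/2}-1} - \frac{t}{e^t - 1}$ is established and each term expanded, coefficient comparison finishes the proof. I would present the argument in the order: (1) state the three generating-function expansions, (2) verify the elementary exponential identity, (3) expand and compare coefficients.
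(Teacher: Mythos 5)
Your final argument is correct and is essentially the paper's proof: the identity $\frac{te^{t/2}}{e^t-1}=\frac{t}{e^{t/2}-1}-\frac{t}{e^t-1}$ you verify is just a rearrangement of the paper's $\frac{te^{t/2}}{e^t-1}+\frac{t}{e^t-1}=\frac{t}{e^{t/2}-1}$, obtained from $e^t-1=(e^{t/2}-1)(e^{t/2}+1)$, followed by the same rescaling $\frac{t}{e^{t/2}-1}=2\sum_k B_k\frac{(t/2)^k}{k!}$ and coefficient comparison. In a final write-up you should cut the exploratory false starts in the middle and keep only the ``reliable approach'' paragraph.
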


\begin{proof}
Consider first the sum of the generating functions for $B_n \left( \frac{1}{2} \right)$ and $B_n$:
\begin{align*}
\frac{t\cdot e^{\frac{t}{2}}}{e^{t}-1} + \frac{t}{e^{t}-1} &= \frac{t\left( e^{\frac{t}{2}}+1\right)}{e^{t}-1} = \frac{t}{e^{\frac{t}{2}}-1} \cdot \frac{e^{\frac{t}{2}}+1}{e^{\frac{t}{2}}+1} \\
&= \frac{t}{e^{\frac{t}{2}}-1} = 2\cdot \frac{\frac{t}{2}}{e^{\frac{t}{2}}-1}.
\end{align*}
Hence, 
\begin{align*}
B_{n}\left( \frac{1}{2} \right) + B_n = 2\cdot \frac{B_n}{2^n}\, \text{ for all }\, n\in \mathbb{N}_0,
\end{align*}
from which \eqref{equation:RelationBernoulliPolynomials} follows.
\end{proof}

\begin{proposition}
\label{proposition:HeatTraceAsymptoticSphere}
Let $Z_{\mathbb{S}^2(r)}(t)$ be the heat trace for the sphere of radius $r>0$. Then
\begin{align}
\label{equation:HeatTraceAsymptoticSphere}
Z_{\mathbb{S}^2(r)}(t) \overset{t\downarrow 0}{\sim}  \frac{1}{\kappa \cdot t} +  \sum\limits_{\nu=0}^{\infty} i_{\nu}^{\mathbb{S}} \cdot \kappa^{\nu} \cdot t^{\nu},
\end{align}  
where $\kappa=\frac{1}{r^2}$ denotes the Gaussian \emph{(}or sectional\emph{)} curvature and
\begin{align}
i_{\nu}^{\mathbb{S}} :&=   \sum\limits_{\ell=0}^{\nu+1}\frac{1}{4^{\ell}\cdot \ell !}i_{\nu-\ell}^{(s)}\, \text{ for all } \, \nu\in\mathbb{Z}_{\geq -1}, \label{equation:HeatCoefficientsSphere1}\\
 i_{\ell}^{(s)}:&=\frac{(-1)^{\ell +1}}{(\ell+1)!}B_{2\ell+2}\left( \frac{1}{2} \right)\, \text{ for all } \, \ell \in \mathbb{Z}_{\geq -1}. \label{equation:HeatCoefficientsSphere2}
\end{align}
\end{proposition}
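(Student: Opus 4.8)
The plan is to read $Z_{\mathbb{S}^2(r)}(t)$ off the explicit spectrum of the round sphere and then extract the short-time expansion by the Euler--Maclaurin summation formula in its \emph{midpoint} form, whose correction terms automatically generate the Bernoulli polynomials evaluated at $\tfrac12$. First I would reduce to the unit sphere: the Laplacian of $\mathbb{S}^2(r)$ has eigenvalues $\kappa\,n(n+1)$, $n\in\mathbb{N}_0$, with multiplicity $2n+1$, where $\kappa=r^{-2}$, so
\begin{align*}
Z_{\mathbb{S}^2(r)}(t)=\sum_{n=0}^{\infty}(2n+1)e^{-\kappa n(n+1)t}=Z_{\mathbb{S}^2(1)}(\kappa t),
\end{align*}
and it suffices to expand $s\mapsto Z_{\mathbb{S}^2(1)}(s)$ as $s\searrow 0$ and afterwards substitute $s=\kappa t$. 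Completing the square $n(n+1)=(n+\tfrac12)^2-\tfrac14$ gives $Z_{\mathbb{S}^2(1)}(s)=e^{s/4}\,\Theta(s)$, where $\Theta(s):=\sum_{n=0}^{\infty}h_s(n+\tfrac12)$ and $h_s(x):=2x\,e^{-x^2 s}$ is a Schwartz function on $[0,\infty)$.

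Next I would apply the midpoint Euler--Maclaurin formula to $\sum_{n\ge 0}h_s(n+\tfrac12)$. Since $h_s$ and all its derivatives decay rapidly, the boundary contributions at $+\infty$ vanish and one obtains
\begin{align*}
\Theta(s)=\int_0^{\infty}h_s(x)\,dx-\sum_{k\ge 1}\frac{B_{2k}(1/2)}{(2k)!}\,h_s^{(2k-1)}(0)+R_K(s),
\end{align*}
where only even-index terms occur because $B_{2k+1}(\tfrac12)=0$ by \eqref{equation:VanishingBernoulli}. Here $\int_0^{\infty}h_s(x)\,dx=\tfrac1s$, and from the Taylor series $h_s(x)=\sum_{j\ge 0}\tfrac{2(-s)^j}{j!}x^{2j+1}$ one reads $h_s^{(2k-1)}(0)=(2k-1)!\,\tfrac{2(-s)^{k-1}}{(k-1)!}$. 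Inserting these and reindexing by $j=k-1$ (using $k\,(k-1)!=k!$) gives, with $i_j^{(s)}$ as in \eqref{equation:HeatCoefficientsSphere2},
\begin{align*}
\Theta(s)\;\overset{s\downarrow 0}{\sim}\;\frac1s+\sum_{j=0}^{\infty}\frac{(-1)^{j+1}}{(j+1)!}\,B_{2j+2}\!\left(\tfrac12\right)s^{j}=\frac1s+\sum_{j=0}^{\infty}i_j^{(s)}s^{j}.
\end{align*}
That this is a genuine asymptotic series is checked by rescaling $x=y/\sqrt s$: one has $h_s^{(2K+1)}(x)=2\,s^{K}g_{2K+1}(x\sqrt s)$ for a fixed Schwartz function $g_{2K+1}$, so the Euler--Maclaurin remainder obeys $R_K(s)=O\!\big(\int_0^{\infty}|h_s^{(2K+1)}|\big)=O(s^{K-1/2})=o(s^{K-1})$. (If a closed form in ordinary Bernoulli numbers were wanted one could substitute Lemma \ref{lemma:RelationBernoulliPolynomials}, but the statement is phrased directly with $B_{2j+2}(\tfrac12)$.)

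It then remains to reassemble. Multiplying by $e^{s/4}=\sum_{\ell\ge 0}\tfrac{s^\ell}{4^\ell\,\ell!}$ and using $\tfrac{e^{s/4}}{s}=\tfrac1s+\sum_{m\ge 0}\tfrac{s^m}{4^{m+1}(m+1)!}$ together with $i_{-1}^{(s)}=B_0(\tfrac12)=1$, the coefficient of $s^{-1}$ in $Z_{\mathbb{S}^2(1)}(s)=\tfrac{e^{s/4}}{s}+e^{s/4}\sum_{j\ge 0}i_j^{(s)}s^j$ equals $1$, while the Cauchy product gives the coefficient of $s^{m}$ as $\sum_{\ell=0}^{m+1}\tfrac1{4^\ell\,\ell!}\,i^{(s)}_{m-\ell}$, the term $\ell=m+1$ being exactly the contribution of $e^{s/4}/s$; by \eqref{equation:HeatCoefficientsSphere1} this equals $i_m^{\mathbb{S}}$. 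Substituting $s=\kappa t$ turns $s^{-1}$ into $\tfrac1{\kappa t}$ and $s^m$ into $\kappa^m t^m$, which is exactly \eqref{equation:HeatTraceAsymptoticSphere}; the same formula accounts for $\nu=-1$, namely $i_{-1}^{\mathbb{S}}=1$, the coefficient of $\tfrac1{\kappa t}$. As a consistency check, $i_0^{\mathbb{S}}=i_0^{(s)}+\tfrac14 i_{-1}^{(s)}=-B_2(\tfrac12)+\tfrac14=\tfrac1{12}+\tfrac14=\tfrac13$, matching the classical value $\tfrac1{4\pi}\cdot\tfrac13\int_{\mathbb{S}^2(r)}\kappa\,dA=\tfrac13$ of the $t^0$ heat invariant.

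The only genuinely delicate point is pinning down the midpoint Euler--Maclaurin correction, in particular the global sign in front of $\sum_k B_{2k}(1/2)\,h_s^{(2k-1)}(0)/(2k)!$; this can be fixed once and for all on the test function $f(x)=e^{-x}$, for which the formula must reproduce $\sum_{n\ge0}e^{-(n+1/2)}=\tfrac1{2\sinh(1/2)}=\sum_{k\ge0}B_{2k}(1/2)/(2k)!$. Everything after that — the rescaling estimate for $R_K$ and the bookkeeping with Cauchy products — is routine, and the specific shape of the coefficients in \eqref{equation:HeatCoefficientsSphere1}--\eqref{equation:HeatCoefficientsSphere2} is forced precisely by the factor $e^{s/4}$ and the midpoint Bernoulli polynomials $B_{2j+2}(\tfrac12)$.
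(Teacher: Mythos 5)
Your proof is correct, and its overall architecture coincides with the paper's: reduce to $r=1$ by scaling, complete the square to pass to the shifted trace $e^{-t/4}Z_{\mathbb{S}^2(1)}(t)=\sum_n(2n+1)e^{-(n+\frac12)^2t}$, expand that, multiply back by $e^{t/4}$ via a Cauchy product, and substitute $t\mapsto\kappa t$. The one substantive difference lies in how the expansion of the shifted trace is obtained. The paper simply cites Mulholland for
\begin{align*}
\sum_{n=0}^{\infty}(2n+1)e^{-(n+\frac12)^2t}\overset{t\downarrow 0}{\sim}\frac1t+\sum_{n=0}^{\infty}\frac{(-1)^n}{(n+1)!}B_{2n+2}\Bigl(1-\frac{1}{2^{2n+1}}\Bigr)t^n
\end{align*}
and then converts the coefficients to $B_{2n+2}(\tfrac12)$ via Lemma \ref{lemma:RelationBernoulliPolynomials}; you instead derive the expansion from scratch by the midpoint Euler--Maclaurin formula applied to $h_s(x)=2xe^{-x^2s}$, which produces the $B_{2k}(\tfrac12)$ directly (so Lemma \ref{lemma:RelationBernoulliPolynomials} is not needed) and comes with a remainder bound $R_K(s)=O(s^{K-1/2})$ via the rescaling $x=y/\sqrt{s}$ that justifies the asymptotic series rigorously. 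Your computation of $h_s^{(2k-1)}(0)$, the reindexing to $i_j^{(s)}$, the absorption of the $\ell=m+1$ term of $e^{s/4}/s$ into the sum via $i_{-1}^{(s)}=B_0(\tfrac12)=1$, and the consistency check $i_0^{\mathbb{S}}=\tfrac13$ are all correct. What your route buys is self-containedness and an explicit error estimate; what the paper's route buys is brevity, at the cost of depending on an external reference whose statement (as the paper itself notes elsewhere) is easy to garble in sign.
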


\begin{proof}
First we assume that $r=1$. The eigenvalues of the sphere $S^2(1)$ are the numbers $\ell \cdot \left( \ell + 1 \right),$
where $\ell\in\mathbb{N}_{0}$, and each $\ell \cdot \left( \ell + 1 \right)$ has multiplicity $2\ell+1$. Thus the heat trace is given by
\begin{align}
Z_{\mathbb{S}^2(1)}(t) = \sum\limits_{\ell=0}^{\infty}\left( 2\ell+1 \right)e^{-\ell(\ell+1)t} \text{ for } t>0.
\end{align}
As before, it is more convenient to consider a shifted heat trace; in contrast to the hyperbolic case, the convenient factor is now $e^{-\frac{t}{4}}$ instead of $e^{\frac{t}{4}}$. The shifted trace function is given by
\begin{align*}
Z_{\mathbb{S}^2(1)}^{-\nicefrac{1}{4}}(t) &:= e^{-\frac{t}{4}}\sum\limits_{\ell=0}^{\infty}\left( 2\ell+1 \right)e^{-\ell(\ell+1)t} = \sum\limits_{\ell=0}^{\infty}\left( 2\ell+1 \right)e^{-\left( \ell(\ell+1) +\frac{1}{4} \right) t} \\
&= \sum\limits_{\ell=0}^{\infty}\left( 2\ell+1 \right)e^{-\left( \ell + \frac{1}{2} \right)^2 t}.
\end{align*}
The asymptotic expansion of the function $Z_{\mathbb{S}^2 (1)}^{-\nicefrac{1}{4}}(t)$ has been known for a long time and can be found in \cite{Mulholland}, where the expansion is given as
\begin{align*}
Z_{\mathbb{S}^2 (1)}^{-\nicefrac{1}{4}}(t)\overset{t\downarrow 0}{\sim}\frac{1}{t} + \sum\limits_{n=0}^{\infty}  \frac{(-1)^{n}}{(n+1)!}B_{2n+2} \cdot \left( 1 -  \frac{1}{2^{2n+1}} \right) t^n.
\end{align*}
By using \eqref{equation:RelationBernoulliPolynomials} from Lemma \ref{lemma:RelationBernoulliPolynomials}, we can rewrite the coefficients as
\begin{align*}
\frac{(-1)^{n}}{(n+1)!}B_{2n+2} \cdot \left( 1 -  \frac{1}{2^{2n+1}} \right) = \frac{(-1)^{n+1}}{(n+1)!}B_{2n+2}\left( \frac{1}{2} \right) =  i_n^{(s)}
\end{align*}
and we obtain
\begin{align}
\label{equation:AsymptotikShiftedTraceSphere}
Z_{\mathbb{S}^2 (1)}^{-\nicefrac{1}{4}}(t)\overset{t\downarrow 0}{\sim}\frac{1}{t} + \sum\limits_{n=0}^{\infty}  i_n^{(s)} t^n.
\end{align}
Hence we get
\begin{align}
\label{equation:AsymptoticExpansionTraceSphere}
Z_{\mathbb{S}^2 (1)}(t) \overset{t\downarrow 0}{\sim} \frac{1}{t} + \sum\limits_{\nu=0}^{\infty} \underbrace{\left( \frac{1}{4^{\nu+1}\left( \nu+1 \right)!} + \sum\limits_{\ell=0}^{\nu}\frac{1}{4^{\ell}\cdot \ell !}i_{\nu-\ell}^{(s)} \right)}_{=\, i_{\nu}^{\mathbb{S}}} t^{\nu}.
\end{align}
Thus the case $r=1$ is established. By scaling the Riemannian metric one can now easily deduce the general case $r>0$ from the special case $r=1$, just as we did in the proof of Corollary \ref{corollary:HeatAsymptoticConstantCurvaturePolygon}. If $g_{\mathbb{S}^2(1)}$ denotes the standard metric on $\mathbb{S}^2(1)$, then $\lambda_1\leq \lambda_2\leq... $ are the eigenvalues of $\left( \mathbb{S}^2(1), g_{\mathbb{S}^2(1)}\right)$ listed with multiplicities, if and only if $ \frac{\lambda_1}{r^2}\leq \frac{\lambda_2}{r^2}\leq... $ are the eigenvalues of $\left( \mathbb{S}^2(1), r^2  g_{\mathbb{S}^2(1)}\right)$ listed with multiplicities. Thus, if $Z(r^2 g_{\mathbb{S}^2(1)}, t)$ denotes the heat trace of the space $\left( \mathbb{S}^2(1), r^2  g_{\mathbb{S}^2(1)}\right)$, we have
\begin{align*}
Z_{\mathbb{S}^2(r)}(t) = Z(r^2 g_{\mathbb{S}^2(1)}, t) = \sum\limits_{i=1}^{\infty} e^{-\frac{\lambda_i}{r^2}\cdot t} = \sum\limits_{i=1}^{\infty} e^{-\lambda_i\frac{t}{r^2}} = Z_{\mathbb{S}^2(1)} \left(\frac{t}{r^2}\right) = Z_{\mathbb{S}^2(1)} \left(\kappa t\right).
\end{align*}

%
%
\end{proof}

\begin{remark}
Basically the asymptotic expansion \eqref{equation:AsymptotikShiftedTraceSphere} for the shifted heat trace is stated in \citep[Lemma 16]{Watson} by referring to \citep{Mulholland}. Let us point out some aspects of Lemma 16 which might be confusing. First of all, the function considered in Lemma 16, which is denoted there by $Z_{\text{int}}^{(s)}(\Omega, t)$, is not defined anywhere in the article \citep{Watson}. We assume that Watson means the following function:
\begin{align*}
Z_{\text{int}}^{(s)}(\Omega, t):=\frac{\vert \Omega \vert}{4\pi} Z_{\mathbb{S}^2 (1)}^{(s)}(t),
\end{align*}
where $Z_{\mathbb{S}^2 (1)}^{(s)}(t):=Z_{\mathbb{S}^2 (1)}^{-\nicefrac{1}{4}}(t)$ is the function we introduced above and $\vert \Omega \vert$ is the volume of a spherical polygon fixed throughout in \citep{Watson}. This would at least be in accordance with the notation given previously in \citep[formula (11)]{Watson}. Under this assumption, the coefficients $i_n^{(s)}$ given in \citep[formula (80)]{Watson} have the wrong sign (compare them with our formula above). Lastly, we want to remark that Watson uses two different definitions for the coefficients $i_n^{(s)}$ (compare formula $(80)$ with $(25)$ in \citep{Watson}) which do not coincide and thus might be confusing at first sight.
\end{remark}

\begin{corollary}
\label{corollary:HeatCoefficientsLocallySymmetricManifolds}
Let $A = I_{0}(M/\mathbb{D}_k)$ be the formal series as defined in Corollary \emph{\ref{corollary:AsymptoticExpansionDifferenceHeatTracesOrbifolds}} \emph{(}recall $M=\mathbb{S}^2(r)$\emph{)}, and let $\kappa=\frac{1}{r^2}$ as before. Then
\begin{align}
\label{equation:ABC}
A&= \frac{1}{4\pi t}\sum\limits_{\nu=0}^{\infty} \frac{a_{\nu} (M)}{2k} t^{\nu} = \frac{1}{4\pi t}\cdot \frac{1}{2k} \vert \mathbb{S}^2(r) \vert \sum\limits_{\nu=0}^{\infty} i_{\nu-1}^{\mathbb{S}} \kappa^{\nu} t^{\nu} \nonumber \\
&=\frac{1}{4\pi t} \cdot  \frac{1}{2k} \vert \mathbb{S}^{2}(r) \vert \sum\limits_{\nu =0}^{\infty} \sum\limits_{\ell=0}^{\nu} \frac{1}{4^{\nu-\ell} (\nu -\ell)!} \frac{(-1)^{\ell}}{\ell !} B_{2\ell}\left( \frac{1}{2} \right) \cdot \kappa^{\nu}\cdot t^{\nu}.
\end{align}
In particular, we have for all $\nu\in\mathbb{N}_{0}$
\begin{align}
a_{\nu} (M) = \int\limits_{M}  \sum\limits_{\ell=0}^{\nu} \frac{1}{4^{\nu-\ell} (\nu -\ell)!} \frac{(-1)^{\ell}}{\ell !} B_{2\ell}\left( \frac{1}{2} \right) \cdot \kappa^{\nu} dx. \label{equation:HeatCoefficientsLocallySymmetricManifolds}
\end{align}
The last formula for the coefficient $a_{\nu} (M)$ is universal in the sense that the heat coefficients for any two-dimensional closed Riemannian manifold $M$ of constant Gaussian curvature $\kappa$ are given by that formula.
\end{corollary}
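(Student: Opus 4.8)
The plan is to prove Corollary~\ref{corollary:HeatCoefficientsLocallySymmetricManifolds} in two stages: first establish the chain of equalities in \eqref{equation:ABC} for the specific manifold $M = \mathbb{S}^2(r)$, and then upgrade formula \eqref{equation:HeatCoefficientsLocallySymmetricManifolds} to the claimed universality over all two-dimensional closed Riemannian manifolds of constant Gaussian curvature $\kappa$.

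For the first stage, I would start from the definition $A = I_0(M/\mathbb{D}_k)$ given in Corollary~\ref{corollary:AsymptoticExpansionDifferenceHeatTracesOrbifolds}, namely $A = (4\pi t)^{-1}\sum_{\nu=0}^{\infty} \frac{a_\nu(M)}{2k} t^\nu$, where the relation $a_\nu(\mathcal{O}) = \frac{1}{|G|} a_\nu(M)$ for a good orbifold $\mathcal{O} = M/G$ was recalled just after Theorem~\ref{theorem:HeatTraceAsymptoticsOrbifoldQualitatively}. Next I would identify $\sum_\nu a_\nu(M) t^\nu$ with $4\pi t\cdot Z_{\mathbb{S}^2(r)}(t)$ up to the known asymptotic expansion, i.e.\ use that the $a_\nu(M)$ are precisely the coefficients appearing in the heat-trace expansion of the closed manifold $M$ via $Z_M(t) \overset{t\downarrow 0}{\sim} (4\pi t)^{-1}\sum_{\nu=0}^\infty a_\nu(M) t^\nu$ (the standard Minakshisundaram--Pleijel expansion, which is the manifold case of Theorem~\ref{theorem:HeatTraceAsymptoticsOrbifoldQualitatively} with no singular strata). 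Comparing this with the explicit expansion \eqref{equation:HeatTraceAsymptoticSphere} of Proposition~\ref{proposition:HeatTraceAsymptoticSphere}, which reads $Z_{\mathbb{S}^2(r)}(t) \overset{t\downarrow 0}{\sim} \frac{1}{\kappa t} + \sum_{\nu=0}^\infty i_\nu^{\mathbb{S}}\kappa^\nu t^\nu$, and using $\frac{1}{\kappa} = r^2 = \frac{|\mathbb{S}^2(r)|}{4\pi}$, one matches coefficients of $t^{\nu-1}$ to get $a_\nu(M) = |\mathbb{S}^2(r)|\, i_{\nu-1}^{\mathbb{S}}\,\kappa^\nu$ for all $\nu \geq 0$ (with the convention $i_{-1}^{\mathbb{S}}$ absorbing the leading term, consistent with \eqref{equation:HeatCoefficientsSphere1}). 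Substituting the formula \eqref{equation:HeatCoefficientsSphere1} for $i_\nu^{\mathbb{S}}$ together with \eqref{equation:HeatCoefficientsSphere2} for $i_\ell^{(s)}$ and reindexing the double sum yields exactly the displayed double-sum form in \eqref{equation:ABC}; dividing by $|\mathbb{S}^2(r)|$ and recognizing the integrand as constant over $M$ gives \eqref{equation:HeatCoefficientsLocallySymmetricManifolds} for $M = \mathbb{S}^2(r)$. This stage is essentially bookkeeping with Bernoulli polynomials, using Lemma~\ref{lemma:RelationBernoulliPolynomials} only insofar as it was already invoked inside Proposition~\ref{proposition:HeatTraceAsymptoticSphere}.

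The second stage --- the universality claim --- is where the real content lies, and I expect it to be the main obstacle. The point is that the heat coefficient $a_\nu(M)$ at a point $x$ is, by the local structure of the heat kernel parametrix, a universal polynomial in the metric curvature tensor and its covariant derivatives at $x$; for a two-dimensional manifold of constant Gaussian curvature $\kappa$ all covariant derivatives of the curvature vanish, so $a_\nu(M,x)$ collapses to a universal polynomial in $\kappa$ alone (this is precisely the kind of reasoning invoked in the excerpt after Corollary~\ref{corollary:AsymptoticExpansionZ_E}, citing \citep{GilkeyBranson}). Since a single family, the round spheres $\mathbb{S}^2(r)$ with $\kappa = 1/r^2 > 0$, already realizes every positive value of $\kappa$, and the polynomial in $\kappa$ is determined by its values on this infinite set, the formula computed in stage one must be \emph{the} universal formula; it then extends to $\kappa \leq 0$ (e.g.\ hyperbolic quotients or flat tori) by polynomial identity. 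I would phrase this carefully: first cite the locality and universality of the Minakshisundaram--Pleijel coefficients, reduce to a polynomial $P_\nu(\kappa)$ via vanishing of curvature derivatives in the constant-curvature case, then observe $P_\nu(1/r^2) = \sum_{\ell=0}^\nu \frac{(-1)^\ell}{4^{\nu-\ell}(\nu-\ell)!\,\ell!} B_{2\ell}(\tfrac12)\,(1/r^2)^\nu$ for all $r>0$ by stage one, and conclude $P_\nu$ equals the stated polynomial identically. The delicate point to get right is the appeal to universality: one must be sure the coefficient really is a polynomial in $\kappa$ of the expected degree (degree $\nu$, matching the power $\kappa^\nu$ that appears), which follows from the homogeneity/scaling weight of $a_\nu$ under metric rescaling --- a fact I would state explicitly and attribute to \citep{GilkeyBranson} or \citep{Donnelly2} --- so that knowing $P_\nu$ on the sequence $\kappa_m = 1/m^2$ pins it down uniquely.
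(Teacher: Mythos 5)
Your proposal is correct and follows essentially the same route as the paper: stage one is exactly the paper's comparison of $Z_M(t)\overset{t\downarrow 0}{\sim}\frac{1}{4\pi t}\sum_\nu a_\nu(M)t^\nu$ with Proposition \ref{proposition:HeatTraceAsymptoticSphere} via $\vert\mathbb{S}^2(r)\vert=4\pi/\kappa$, and stage two is the same appeal to \citep{GilkeyBranson}. The only (harmless) difference is in how universality is closed: the paper uses that the order-$2\nu$ curvature invariant must be the single monomial $\alpha_\nu\kappa^\nu$, so one sphere suffices, whereas you pin down a polynomial $P_\nu(\kappa)$ by evaluating it on the infinite family $\kappa=1/r^2$ — both are valid.
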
 

\begin{proof}
Recall that the $a_{\nu}(M)$ are given by $Z_M(t)\overset{t\downarrow 0}{\sim} \frac{1}{4\pi t}\sum_{\nu=0}^{\infty}a_{\nu}(M) t^{\nu}$. Comparing this with \eqref{equation:HeatTraceAsymptoticSphere}, one easily gets the formulas \eqref{equation:ABC} for $A$ and \eqref{equation:HeatCoefficientsLocallySymmetricManifolds} for $a_{\nu} (M)$ from $\vert \mathbb{S}^2(r) \vert = \frac{4\pi}{\kappa}$ and by substituting the coefficients $i_{\nu-1},\, i_{\ell}^{(s)}$ from \eqref{equation:HeatCoefficientsSphere1},  \eqref{equation:HeatCoefficientsSphere2}.

In general, for any $\nu\in\mathbb{N}_{0}$ there exists a universal polynomial in the Gaussian curvature and its covariant derivatives
such that the heat invariant $a_{\nu}(M)$ for any two-dimensional closed Riemannian manifold $M$ is given as the integral of that polynomial over $M$ (see e.g. \citep{GilkeyBranson}). More precisely, that polynomial is a so-called curvature invariant of order $2\nu$. The covariant derivatives of the Gaussian curvature vanish if the Gaussian curvature is constant. So in this case, the polynomial is just a monomial of the form $\alpha_{\nu}\kappa^{\nu}$, and since it is universal, the case of $M=\mathbb{S}^2(r)$ tells us that $\alpha_{\nu}$ equals $i_{\nu-1}^{\mathbb{S}}$. Thus the formula \eqref{equation:HeatCoefficientsLocallySymmetricManifolds} must hold for any two-dimensional Riemannian manifold of constant curvature $\kappa$.
\end{proof}

The next result from \citep{Watson} which we will need, and prove here with minor corrections, is the following:

\begin{proposition}
\label{proposition:AsymptoticExpansionLuneWatson}
Let $k\in\mathbb{N}$ \emph{(}we allow $k=1$ in this proposition\emph{)}. As before, let $\Omega\subset\mathbb{S}^2(r)$ be a lune with angle $\frac{\pi}{k}$, and write $\kappa=\frac{1}{r^2}$. Then
\begin{align}
 \label{equation:AsymptoticExpansionLuneWatson}
 \begin{split}
Z_{\Omega}(t)\overset{t\downarrow 0}{\sim}  \frac{1}{2k}\cdot \frac{1}{\kappa \cdot t} - \frac{\sqrt{\pi}}{4 \sqrt{\kappa}}\cdot \frac{1}{\sqrt{t}} + \sum\limits_{\nu=0}^{\infty} & \left\lbrace  \frac{1}{2k} \cdot i_{\nu}^{\mathbb{S}} - \frac{1}{4^{\nu+1}(\nu + 1)!} \cdot  \frac{\sqrt{\pi}\cdot \sqrt{\kappa}}{4}\sqrt{t} \right.  \\
&  \left.  + 2\cdot \sum\limits_{\ell=0}^{\nu}\frac{1}{4^{\ell}\cdot \ell !}\cdot c_{\nu-\ell}^{\mathbb{S}}\left( \frac{\pi}{k}\right)   \right\rbrace \kappa^{\nu} t^{\nu},
\end{split}
\end{align}
where $i_{\nu}^{\mathbb{S}}$ is defined as in the previous proposition and
\begin{align}
\label{equation:DefinitionCoefficientsALune}
c_{\ell}^{\mathbb{S}}\left( \frac{\pi}{k}\right)=\frac{1}{4k}\cdot \frac{(-1)^{\ell}}{(\ell+1)!}\cdot \frac{1}{2\ell+1}\sum\limits_{j=0}^{\ell+1}  \binom{2\ell+2}{2j}\left(  k ^{2j}-1 \right)B_{2j}B_{2\ell+2-2j}\left(\frac{1}{2}\right)
\end{align}
\emph{(}recall $c_{\ell}^{\mathbb{S}}(\gamma)$ from the Remark after Corollary \emph{\ref{corollary:EckenbeitragWinkelPolygon})}.
\end{proposition}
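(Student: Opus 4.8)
\textbf{Proof proposal for Proposition \ref{proposition:AsymptoticExpansionLuneWatson}.}

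The plan is to compute the asymptotic expansion of $Z_{\Omega}(t)$ directly, in much the same spirit as was done for hyperbolic polygons in Section \ref{section:ExpansionTrace}, but now for the particularly simple case of a spherical lune of angle $\frac{\pi}{k}$. First I would recall that the lune $\Omega$ is a spherical polygon with exactly two vertices (the north and south poles), each with interior angle $\frac{\pi}{k}$, and two geodesic edges each of length $\pi r$, so $|\partial\Omega| = 2\pi r$ and $|\Omega| = \frac{4\pi r^2}{2k}\cdot\frac{1}{1} = \frac{2\pi r^2}{k}$. Then I would invoke the structure of the spherical heat trace asymptotic expansion, which by \citep{Watson} (see \eqref{equation:SphereAsymp}) has the form $Z_{\Omega}(t)\overset{t\downarrow 0}{\sim}\frac{|\Omega|}{4\pi t}-\frac{|\partial\Omega|}{8\sqrt{\pi t}}+\sum_{k\geq 0}(i_k^{\mathbb{S}}+b_k^{\mathbb{S}}t^{1/2}+\nu_k^{\mathbb{S}})t^k$. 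The interior coefficients $i_k^{\mathbb{S}}$ and boundary coefficients $b_k^{\mathbb{S}}$ are known from Watson's formulas (and, via the relations $i_k^{\mathbb{H}}=(-1)^{k+1}i_k^{\mathbb{S}}$, $b_k^{\mathbb{H}}=(-1)^{k+1}b_k^{\mathbb{S}}$, also from Theorem \ref{theorem:AsymptoticExpansionHeatTraceHyperbolPolygon}); the vertex contribution of an angle $\frac{\pi}{k}$ is $\sum_{\nu\geq 0}c_{\nu}^{\mathbb{S}}(\frac{\pi}{k})t^{\nu}$ per vertex, hence a factor $2$ since there are two such vertices.

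Next I would assemble the pieces. The leading term is $\frac{|\Omega|}{4\pi t}=\frac{1}{2k}\cdot\frac{r^2}{t}=\frac{1}{2k}\cdot\frac{1}{\kappa t}$, which matches the first term of \eqref{equation:AsymptoticExpansionLuneWatson}. The $t^{-1/2}$ term is $-\frac{|\partial\Omega|}{8\sqrt{\pi t}}=-\frac{2\pi r}{8\sqrt{\pi t}}=-\frac{\sqrt{\pi} r}{4\sqrt{t}}=-\frac{\sqrt{\pi}}{4\sqrt{\kappa}}\cdot\frac{1}{\sqrt{t}}$, again matching. For the coefficient of $t^{\nu}$ with $\nu\geq 0$, I need to express $i_{\nu}^{\mathbb{S}}$ (Watson's interior coefficient for the lune, i.e. $\frac{|\Omega|}{4\pi}$ times a universal constant) in terms of the universal $i_{\nu}^{\mathbb{S}}$ of Proposition \ref{proposition:HeatTraceAsymptoticSphere}: using $|\Omega|=\frac{1}{2k}|\mathbb{S}^2(r)|=\frac{1}{2k}\cdot\frac{4\pi}{\kappa}$, the interior contribution to the $t^{\nu}$-coefficient becomes $\frac{1}{2k}i_{\nu}^{\mathbb{S}}\kappa^{\nu}$ after a suitable rescaling argument (scaling the standard metric on $\mathbb{S}^2(1)$, exactly as in the proof of Corollary \ref{corollary:HeatAsymptoticConstantCurvaturePolygon} and Proposition \ref{proposition:HeatTraceAsymptoticSphere}). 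Similarly the boundary contributes $b_{\nu}^{\mathbb{S}}\kappa^{\nu}t^{\nu}$ which, once $b_{\nu}^{\mathbb{S}}$ is written out explicitly from Watson's formula \citep[formula (27)]{Watson} and the scaling is accounted for, should produce the term $-\frac{1}{4^{\nu+1}(\nu+1)!}\cdot\frac{\sqrt{\pi}\sqrt{\kappa}}{4}\sqrt{t}\cdot\kappa^{\nu}t^{\nu}$ (i.e. contributing to the half-integer powers); here one uses $|\partial\Omega|=2\pi r$ and $r=\kappa^{-1/2}$. The vertex contributions give $2\sum_{\nu\geq 0}c_{\nu}^{\mathbb{S}}(\frac{\pi}{k})\kappa^{\nu}t^{\nu}$ before the factor $e^{-t/4}$-type shift; but since $\Omega$ lives on $\mathbb{S}^2(r)$ with curvature $\kappa$, the curvature-dependence of the vertex contribution must be restored by the same scaling, converting $c_{\nu}^{\mathbb{S}}(\frac{\pi}{k})$ for the unit sphere into $\sum_{\ell=0}^{\nu}\frac{1}{4^{\ell}\ell!}c_{\nu-\ell}^{\mathbb{S}}(\frac{\pi}{k})\kappa^{\nu}$ — the inner sum being precisely the convolution with the coefficients of $e^{\kappa t/4}$ that accounts for passing between the shifted and unshifted heat traces on a curved sphere (compare the definitions of $Z_V$ versus $Z_V^{\nicefrac 14}$ and of $\nu_k^{\mathbb{H}}$ in terms of $c_{\ell}^{\mathbb{H}}$ in Corollary \ref{corollary:EckenbeitragWinkelPolygon}). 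Finally I would verify that formula \eqref{equation:DefinitionCoefficientsALune} for $c_{\ell}^{\mathbb{S}}(\frac{\pi}{k})$ is exactly Watson's formula \citep[formula (22)]{Watson} specialised to $\gamma=\frac{\pi}{k}$, together with the relation $c_{\nu-\ell}^{\mathbb{S}}(\frac{\pi}{k})=(-1)^{\nu-\ell}c_{\nu-\ell}^{\mathbb{H}}(\frac{\pi}{k})$ from the Remark after Corollary \ref{corollary:EckenbeitragWinkelPolygon}; an explicit check for $k=1$, where $\Omega$ becomes a hemisphere ($c_{\ell}^{\mathbb{S}}(\pi)=0$), confirms that the vertex contributions vanish and the formula reduces to the known hemisphere expansion.

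The main obstacle I expect is bookkeeping the curvature scaling consistently across all three contributions (interior, boundary, vertices) simultaneously, since each scales differently in powers of $\kappa$ and the vertex contribution additionally acquires the $e^{\kappa t/4}$-convolution. Concretely, the delicate point is that the ``shifted'' trace on $\mathbb{S}^2(1)$ (with the $e^{-t/4}$ factor) has clean coefficients $i_{\nu}^{(s)}$ and $c_{\nu}^{\mathbb{S}}(\frac{\pi}{k})$, but after rescaling to radius $r$ and multiplying back by $e^{t/4}$ rather than $e^{\kappa t/4}$ one must track which exponential shift is appropriate — $Z_{\mathbb{S}^2(r)}(t)=Z_{\mathbb{S}^2(1)}(\kappa t)$, so the shift becomes $e^{\kappa t/4}$ and its Taylor coefficients are $\frac{1}{4^{\ell}\ell!}\kappa^{\ell}$, which is exactly the inner sum $\sum_{\ell=0}^{\nu}\frac{1}{4^{\ell}\ell!}(\cdot)_{\nu-\ell}\kappa^{\nu}$ appearing throughout \eqref{equation:AsymptoticExpansionLuneWatson}. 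Once this single scaling principle is applied uniformly, the rest is a matter of collecting terms of equal power of $t$ and matching them against \eqref{equation:AsymptoticExpansionLuneWatson}; no genuinely new analytic input beyond \citep{Watson}, Proposition \ref{proposition:HeatTraceAsymptoticSphere}, and the elementary identity in Lemma \ref{lemma:RelationBernoulliPolynomials} (needed to reconcile Watson's Bernoulli-number form with the Bernoulli-polynomial form used here) is required.
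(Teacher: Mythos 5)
Your proposal is arithmetically sound — the area and perimeter of the lune, the $\kappa$-scaling of each term, and the identification of the inner sum $\sum_{\ell=0}^{\nu}\frac{1}{4^{\ell}\ell!}(\cdot)_{\nu-\ell}\kappa^{\nu}$ as the convolution with the Taylor coefficients of $e^{\kappa t/4}$ all check out — but it takes a genuinely different route from the paper. The paper does not treat the lune as a special case of the general spherical-polygon theorem; it exploits the fact that the Dirichlet spectrum of a lune of angle $\frac{\pi}{k}$ is known explicitly (Gromes): the eigenvalues are $\lambda_{n,m}(\lambda_{n,m}+1)$ with $\lambda_{n,m}=km+n$, all simple, so $e^{-t/4}Z_{\Omega}(t)$ is an explicit double exponential sum $\sum_{m\geq 1}\sum_{n\geq 0}e^{-(km+n+\frac12)^2 t}$ whose small-$t$ expansion is Watson's Lemma 15; one then multiplies by the series $e^{t/4}$ and rescales $t\mapsto\kappa t$. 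The advantage of that route is that it is self-contained modulo one explicit lattice-sum asymptotic, and it is in fact how the coefficients $c_{\ell}^{\mathbb{S}}(\frac{\pi}{k})$ in \eqref{equation:DefinitionCoefficientsALune} are \emph{derived} in the first place. Your route instead cites Watson's full polygon theorem \eqref{equation:SphereAsymp} together with his formula (22) for the vertex coefficients; since Watson obtains those vertex coefficients precisely by computing the lune's heat trace from its spectrum, your argument is logically downstream of the computation it is meant to replace — as a derivation of \eqref{equation:DefinitionCoefficientsALune} it is closer to a consistency check than an independent proof, and it also imports wholesale the statements from \citep{Watson} whose typographical errors the paper is explicitly reworking (which is why Propositions \ref{proposition:HeatTraceAsymptoticSphere} and \ref{proposition:AsymptoticExpansionLuneWatson} reproduce the relevant lemmas with corrections rather than quoting the final theorem). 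A further minor point: the general polygon expansion is stated for $M\geq 3$ vertices, whereas the lune has two (antipodal) vertices, so strictly you would need to appeal to the local decomposition of Corollary \ref{corollary:HeatAsymptoticConstantCurvaturePolygon} rather than to \eqref{equation:SphereAsymp} as written. None of this makes your computation wrong, but the paper's direct spectral route is both more elementary and the one that actually produces the coefficients.
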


\begin{proof}

As in the proof of the previous proposition we first focus on the case $r=1$.

The eigenvalues for a lune with arbitrary angle are known (see \cite{Gromes}). For our lune $\Omega$ with angle $\frac{\pi}{k}$, the set of eigenvalues is given by
\begin{align*}
\{\, \lambda_{n,m}\cdot \left( \lambda_{n,m} + 1 \right) \mid m\in\mathbb{N},\, n\in\mathbb{N}_0\,\},
\end{align*}
where $\lambda_{n,m}:=k\cdot m + n$. Furthermore, each eigenvalue is simple, i.e. has multiplicity one. 

Thus we get for the heat trace of the lune:
\begin{align*}
Z_{\Omega}(t)&=\sum\limits_{m=1}^{\infty}\sum\limits_{n=0}^{\infty} e^{-(k\cdot m + n)(k\cdot m+n+1)\, t} = \sum\limits_{m=1}^{\infty}\sum\limits_{n=0}^{\infty} e^{-\left( (k\cdot m + n)^2 + (k\cdot m+n) + \frac{1}{4} - \frac{1}{4}\right)\, t}\\
&= e^{\frac{t}{4}}\cdot \sum\limits_{m=1}^{\infty}\sum\limits_{n=0}^{\infty}e^{-\left( k\cdot m + n + \frac{1}{2}\right)^2\, t}\\
&= e^{\frac{t}{4}}\cdot \underbrace{\left( \sum\limits_{m=0}^{\infty}\sum\limits_{n=0}^{\infty}e^{-\left( k\cdot m + n + \frac{1}{2}\right)^2\, t}  - \sum\limits_{n=0}^{\infty} e^{-\left( n+\frac{1}{2} \right)^2 \, t}\right)}_{=\, :Z_{\Omega}^{-\nicefrac{1}{4}}(t)}
\end{align*}

The asymptotic expansion of $Z_{\Omega}^{-\nicefrac{1}{4}}(t)$ can be found in \cite[Lemma 15]{Watson}, and is given by
\begin{align}
\label{equation:ShiftedHeatTraceLuneSphereWatson}
Z_{\Omega}^{-\nicefrac{1}{4}}(t)\overset{t\downarrow 0}{\sim}  \frac{\pi}{k} \cdot \frac{1}{2\pi  t} - \frac{\sqrt{\pi}}{4\sqrt{t}} + \sum\limits_{\ell=0}^{\infty} \left( 2\cdot c_{\ell}^{\mathbb{S}}\left( \frac{\pi}{k}\right) + \frac{1}{2k}i_{\ell}^{(s)}\right)\cdot t^{\ell}
\end{align}
where $i_{\ell}^{(s)} = \left( -1 \right)^{\ell+1}\cdot \frac{B_{2\ell+2}\left( \frac{1}{2} \right)}{\left(\ell+1\right)!}$ as in \eqref{equation:HeatCoefficientsSphere2}. Multiplying by the series $e^{\frac{t}{4}}$, we get
\begin{align*}
Z_{\Omega}(t)\overset{t\downarrow 0}{\sim}  \frac{1}{2k}\cdot \frac{1}{t} - \frac{\sqrt{\pi}}{4}\cdot \frac{1}{\sqrt{t}} + \sum\limits_{\nu=0}^{\infty} &\left\lbrace - \frac{1}{4^{\nu+1}(\nu + 1)!} \cdot \frac{\sqrt{\pi}}{4}\sqrt{t} \right.  \\
&   \left. + 2\sum\limits_{\ell=0}^{\nu}\frac{1}{4^{\ell}\cdot \ell !}\cdot c_{\nu-\ell}^{\mathbb{S}}\left( \frac{\pi}{k}\right) + \frac{1}{2k}\sum\limits_{\ell=0}^{\nu+1}\frac{1}{4^{\ell}\cdot \ell !} i_{\nu-\ell}^{(s)}    \right\rbrace t^{\nu}.
\end{align*}
By definition of $i_{\nu}^{\mathbb{S}}$ given in \eqref{equation:HeatCoefficientsSphere1}, this simplifies to
\begin{align}
Z_{\Omega}(t)\overset{t\downarrow 0}{\sim}  \frac{1}{2k}\cdot \frac{1}{t} - \frac{\sqrt{\pi}}{4}\cdot \frac{1}{\sqrt{t}} + \sum\limits_{\nu=0}^{\infty} &\left\lbrace \frac{1}{2k}\cdot i_{\nu}^{\mathbb{S}} - \frac{1}{4^{\nu+1}(\nu + 1)!}  \cdot \frac{\sqrt{\pi}}{4}\sqrt{t}  \right. \nonumber\\
&   \left. + 2 \sum\limits_{\ell=0}^{\nu}\frac{1}{4^{\ell}\cdot \ell !}\cdot c_{\nu-\ell}^{\mathbb{S}}\left( \frac{\pi}{k}\right)    \right\rbrace t^{\nu}.
\end{align}
Hence the proposition is proven for $r=1$.

To get the asymptotic expansion for arbitrary $r>0$, proceed as usual, replacing $t$ by $\frac{t}{r^2}=\kappa t$.

\end{proof}

\begin{remark}
The asymptotic expansion \eqref{equation:ShiftedHeatTraceLuneSphereWatson} is stated and proven in \citep[Lemma 15]{Watson}. The statement of Lemma 15 is correct and the proof is basically correct as well. The only issue occurs implicitly in the last step of the proof, where Watson uses Part $(iii)$ of his Lemma 11. The problem with Lemma 11 is that Watson does not explain explicitly his definition for the coefficients $B_{2n}^{(2)}(x \mid a)$ and does not really prove part $(i),\, (iii)$ of that Lemma. From his proof of part $(ii)$ of Lemma 11 and the usage of the coefficients $B_{2n}^{(2)}(x\mid a)$ in the sequel of his article it seems as if he defines $B_{2n}^{(2)}(x\mid a) := B_{2n}^{(2)}(x \mid a \cdot 1)$ in the sense of formula $(20)$ of his article. With this definition, the formulas of part $(i)$ and $(iii)$ of Lemma 11 are wrong. However, Watson uses part $(iii)$ of Lemma 11 only for the special case $x=\frac{1}{2}$, where because of $B_{2n-1}\left(\frac{1}{2}\right)=0$ it simplifies to
\begin{align*}
B_{2n}^{(2)}\left( \frac{1}{2} \, \middle| \, a\cdot 1 \right) = \sum\limits_{j=0}^{n} \binom{2n}{2j}\left( a^{2j} - 1 \right)B_{2j} B_{2n-2j}\left( \frac{1}{2} \right) - (2n-1)B_{2n}\left( \frac{1}{2} \right),
\end{align*} 
for all $n\in\mathbb{N}$, $a>0$. One can show that this formula is indeed correct.
\end{remark}

From Proposition \ref{proposition:AsymptoticExpansionLuneWatson} we can deduce formulas for the heat coefficients for any smooth manifold of constant curvature and smooth totally geodesic boundary.

\begin{corollary}
Let $\mathcal{S}$ be any compact two-dimensional Riemannian manifold of constant curvature with totally geodesic boundary $\partial \mathcal{S}$. Let $K_{\mathcal{S}}$ be the \emph{(}Dirichlet\emph{)} heat kernel and $K_{\mathcal{S}}^{N}$ be the Neumann heat kernel of $\mathcal{S}$. Then the heat invariants are given by the following formulas
\begin{align}
Z_{\mathcal{S}}(t)&= \int\limits_{M} K_{\mathcal{S}}(x,x;t) dx \overset{t\downarrow 0}{\sim} \frac{1}{4\pi t} \sum\limits_{\nu=0}^{\infty} \left( a_{\nu} (\mathcal{S}) - \beta_{\nu}(\mathcal{S})\cdot \sqrt{4\pi t} \right) t^{\nu}, \label{equation:HeatCoefficientsTotallyGeodesicBoundaryDirichlet} \\
Z_{\mathcal{S}}^{N}(t):&= \int\limits_{M} K_{\mathcal{S}}^{N}(x,x;t) dx \overset{t\downarrow 0}{\sim} \frac{1}{4\pi t} \sum\limits_{\nu =0}^{\infty} \left( a_{\nu} (\mathcal{S}) + \beta_{\nu}(\mathcal{S})\cdot \sqrt{4\pi t} \right) t^{{\nu}}, \label{equation:HeatCoefficientsTotallyGeodesicBoundaryNeumann}
\end{align} 
where $a_{\nu}(\mathcal{S})$ is defined as in \eqref{equation:HeatCoefficientsLocallySymmetricManifolds} \emph{(}with $M$ replaced by $\mathcal{S}$\emph{)} and 
\begin{align}
\beta_{\nu}(\mathcal{S}) := \int\limits_{\partial S} \frac{1}{4^{\nu+1} \nu !}\kappa^{\nu} dx.
\end{align}
\end{corollary}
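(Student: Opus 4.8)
The plan is to reduce the general statement to the explicit computation we already carried out for the spherical lune $\Omega\subset\mathbb{S}^2(r)$ in Proposition \ref{proposition:AsymptoticExpansionLuneWatson}, exactly in the spirit of the universality argument used in Corollary \ref{corollary:HeatCoefficientsLocallySymmetricManifolds}. First I would recall the general structure theory of the heat trace for a compact Riemannian manifold with smooth boundary (with Dirichlet or Neumann boundary conditions): the heat trace has an asymptotic expansion of the form $\frac{1}{4\pi t}\sum_{\nu=0}^\infty c_\nu t^{\nu/2}$, where the integer-power coefficients $c_{2\nu}$ are integrals over $\mathcal{S}$ of universal polynomials in the curvature and its covariant derivatives, and the half-integer-power coefficients $c_{2\nu+1}$ are integrals over $\partial\mathcal{S}$ of universal polynomials in the second fundamental form, the curvature, and their covariant derivatives (this is exactly the content of \citep{GilkeyBranson}, already invoked several times in the paper). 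For the Dirichlet and Neumann heat traces the interior (integer-power) coefficients agree, while the boundary (half-integer-power) coefficients differ only in sign; moreover the leading boundary term is $\mp\frac{\sqrt\pi}{2}\vert\partial\mathcal{S}\vert$ with the familiar sign convention.

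Next I would specialise to the case at hand: $\mathcal{S}$ has constant Gaussian curvature $\kappa$ and \emph{totally geodesic} boundary. Then all covariant derivatives of the curvature vanish, and the second fundamental form of $\partial\mathcal{S}$ vanishes identically. Hence the interior universal polynomial reduces to a monomial $\alpha_\nu\kappa^\nu$ and the boundary universal polynomial reduces to a monomial $\gamma_\nu\kappa^\nu$ (the geodesic curvature contributes nothing, and the only surviving curvature invariant along the boundary is a power of the ambient Gaussian curvature). Thus
\begin{align*}
Z_{\mathcal{S}}(t)&\overset{t\downarrow 0}{\sim}\frac{1}{4\pi t}\sum_{\nu=0}^\infty\Big(\int_{\mathcal{S}}\alpha_\nu\kappa^\nu\,dx - \sqrt{4\pi t}\int_{\partial\mathcal{S}}\gamma_\nu\kappa^\nu\,dx\Big)t^\nu,\\
Z_{\mathcal{S}}^{N}(t)&\overset{t\downarrow 0}{\sim}\frac{1}{4\pi t}\sum_{\nu=0}^\infty\Big(\int_{\mathcal{S}}\alpha_\nu\kappa^\nu\,dx + \sqrt{4\pi t}\int_{\partial\mathcal{S}}\gamma_\nu\kappa^\nu\,dx\Big)t^\nu,
\end{align*}
for universal constants $\alpha_\nu,\gamma_\nu$ depending only on $\nu$. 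The interior monomial coefficient is already pinned down in Corollary \ref{corollary:HeatCoefficientsLocallySymmetricManifolds}: $\int_{\mathcal{S}}\alpha_\nu\kappa^\nu dx = a_\nu(\mathcal{S})$ with $a_\nu$ as in \eqref{equation:HeatCoefficientsLocallySymmetricManifolds}. So the only remaining task is to identify $\gamma_\nu$.

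To identify $\gamma_\nu$ I would use the spherical lune $\Omega\subset\mathbb{S}^2(r)$ with angle $\frac{\pi}{k}$ as a test case, taking $k=1$, i.e. the hemisphere, whose boundary is a great circle and hence totally geodesic, and which has no vertices. Alternatively — and this is cleaner — I would observe that for \emph{any} lune with angle $\frac{\pi}{k}$ the edges are totally geodesic and the half-power part of the asymptotic expansion \eqref{equation:AsymptoticExpansionLuneWatson} is $-\frac{\sqrt\pi}{4\sqrt\kappa}\frac{1}{\sqrt t} + \sum_{\nu\ge 0}\big(-\frac{1}{4^{\nu+1}(\nu+1)!}\cdot\frac{\sqrt\pi\sqrt\kappa}{4}\big)\kappa^\nu t^{\nu+1/2}$, whose coefficient of $\kappa^{\nu+1}t^{\nu+1/2}$ is $-\frac{\sqrt\pi}{4^{\nu+2}(\nu+1)!}$ — and crucially this is \emph{independent of $k$}, as it must be since the edge contribution is local along the boundary and does not see the vertices. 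Matching this against $-\frac{1}{4\pi t}\sqrt{4\pi t}\int_{\partial\Omega}\gamma_{\nu+1}\kappa^{\nu+1}\,dx = -\frac{\gamma_{\nu+1}}{2\sqrt\pi}\vert\partial\Omega\vert\,\kappa^{\nu+1}t^{\nu+1/2}$ with $\vert\partial\Omega\vert = \frac{2\pi r}{k}\cdot$ (appropriate normalisation) — here I would just use $k=1$, so $\vert\partial\Omega\vert=2\pi r = 2\pi/\sqrt\kappa$, giving $-\frac{\gamma_{\nu+1}}{\sqrt\pi\sqrt\kappa}\cdot\kappa^{\nu+1}$ — one solves for $\gamma_{\nu+1}$ and obtains $\gamma_{\nu+1}\kappa^{\nu+1} = \frac{1}{4^{\nu+2}(\nu+1)!}\kappa^{\nu+1}\cdot\sqrt\kappa/\sqrt\kappa$, i.e. $\gamma_{\mu} = \frac{1}{4^{\mu+1}\mu!}$ for all $\mu\in\mathbb{N}_0$, which is precisely the claimed $\beta_\nu(\mathcal{S}) = \int_{\partial\mathcal{S}}\frac{1}{4^{\nu+1}\nu!}\kappa^\nu\,dx$ after integrating the constant $\gamma_\nu\kappa^\nu$ over $\partial\mathcal{S}$. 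The universality of $\gamma_\nu$ then transports this formula from the hemisphere to an arbitrary compact $\mathcal{S}$ of constant curvature with totally geodesic boundary, and likewise for the Neumann case with the opposite sign. The one place to be careful — and the main obstacle — is bookkeeping of the normalisation constants (powers of $4\pi$, the factor $\sqrt{4\pi t}$ versus $\sqrt t$, and the exact leading boundary coefficient $-\frac{\sqrt\pi}{2}\vert\partial\mathcal{S}\vert$), together with a clean justification that the Dirichlet/Neumann boundary coefficients really differ only by sign when the boundary is totally geodesic; this last point is standard but should be stated with a reference to \citep{GilkeyBranson}.
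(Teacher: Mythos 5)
Your overall route --- pin down the boundary coefficient on the hemisphere (the $k=1$ lune from Proposition \ref{proposition:AsymptoticExpansionLuneWatson}) and then transport it to general $\mathcal{S}$ by the universality of the Gilkey--Branson boundary invariants --- is exactly the paper's argument for the Dirichlet case, and your identification $\gamma_\mu=\frac{1}{4^{\mu+1}\mu!}$ is correct (modulo some garbled bookkeeping in the matching step, which you yourself flag).

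The genuine gap is in the Neumann case. You assert as a ``standard'' fact that the Dirichlet and Neumann boundary heat coefficients differ only in sign, citing \citep{GilkeyBranson}. That is false for a general smooth boundary: already the $t^{0}$-coefficient contains the boundary term $\tfrac13\int_{\partial\mathcal{S}}\kappa_g$ with the \emph{same} sign for both boundary conditions, and the higher Dirichlet and Neumann boundary invariants are genuinely different universal polynomials, not negatives of each other. The sign-flip is special to the present situation and has to be proved, not quoted. The paper does this by the reflection (doubling) identity for the hemisphere: since $\partial\mathcal{S}$ is the fixed-point set of the isometric reflection $\sigma$ of $\mathbb{S}^2(r)$, one has $K_{\mathcal{S}}(x,y;t)=K_{\mathbb{S}^2(r)}(x,y;t)-K_{\mathbb{S}^2(r)}(x,\sigma(y);t)$ and $K_{\mathcal{S}}^{N}(x,y;t)=K_{\mathbb{S}^2(r)}(x,y;t)+K_{\mathbb{S}^2(r)}(x,\sigma(y);t)$, whence $Z_{\mathcal{S}}(t)+Z_{\mathcal{S}}^{N}(t)=2Z_{\mathbb{S}^2(r)}(t)$, whose expansion has only integer powers and pure interior coefficients $2a_\nu(\mathcal{S})$. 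Combined with the already-known Dirichlet expansion this forces \eqref{equation:HeatCoefficientsTotallyGeodesicBoundaryNeumann} for the hemisphere, and only then does universality carry both formulas to arbitrary $\mathcal{S}$ of constant curvature with totally geodesic boundary. You should replace your appeal to a general sign-flip by this doubling argument (or an equivalent justification valid for totally geodesic boundaries); as written, that step does not hold up.
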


\begin{proof}
Suppose $\mathcal{S}\subset \mathbb{S}^2(r)$ is a hemisphere (a spherical lune with angle $\pi$). This lune is an example of a compact two-dimensional Riemannian manifold of constant curvature with a totally geodesic boundary. When we set $k=1$ in Proposition \ref{proposition:AsymptoticExpansionLuneWatson} we immediately obtain \eqref{equation:HeatCoefficientsTotallyGeodesicBoundaryDirichlet} for the hemisphere (observe that for $k=1$ the coefficients in \eqref{equation:DefinitionCoefficientsALune} vanish: $c_{\ell}^{\mathbb{S}}(\pi)=0$).

 The asymptotic expansion \eqref{equation:HeatCoefficientsTotallyGeodesicBoundaryNeumann} for the hemisphere follows easily from \eqref{equation:HeatCoefficientsTotallyGeodesicBoundaryDirichlet}, since the Dirichlet and Neumann heat kernels can be written as:
 \begin{align*}
 K_{\mathcal{S}}(x,y;t) &=  K_{\mathbb{S}^2(r)}(x,y;t) - K_{\mathbb{S}^2(r)}(x,\sigma (y) ;t), \\
  K_{\mathcal{S}}^{N}(x,y;t) &=  K_{\mathbb{S}^2(r)}(x,y;t) + K_{\mathbb{S}^2(r)}(x,\sigma (y) ;t)
 \end{align*}
for all $x, y\in \mathcal{S}$ and $t>0$, where $\sigma$ denotes the reflection in the boundary $\partial \mathcal{S}$; so $Z_{\mathcal{S}}(t) + Z_{\mathcal{S}}^{N}(t) = 2 Z_{\mathbb{S}^2(r)}(t) \overset{t\downarrow 0}{\sim} \frac{1}{4\pi t}\sum_{\nu=0}^{\infty} 2 a_{\nu}(\mathcal{S}) t^{\nu}$.

By a similar argument as in the proof of Corollary \ref{corollary:HeatCoefficientsLocallySymmetricManifolds}, the formulas for $a_{\nu} (\mathcal{S}), \beta_{\nu}(\mathcal{S})$ now follow for all two-dimensional compact manifolds of constant curvature with totally geodesic boundary because of \citep{GilkeyBranson}.
\end{proof}

Of course, the asymptotic expansion \eqref{equation:HeatCoefficientsTotallyGeodesicBoundaryDirichlet} coincides with the asymptotic expansion given in \eqref{equation:HeatAsymptoticConstantCurvaturePolygon} for polygons with zero vertices.

Let us resume our discussion of the orbifolds. We now have two expressions for the asymptotic expansion of $Z_{\Omega}(t)$ given by \eqref{equation:AsymptoticExpansionLuneWatson} and \eqref{equation:AsymptoticExpansionDifferenceHeatTracesOrbifolds}. The coefficients corresponding to the same power of $t$ must be equal in both asymptotic expansions. Recall that from \eqref{equation:AsymptoticExpansionDifferenceHeatTracesOrbifolds} we have
\begin{align*}
Z_{\Omega}(t)\overset{t\downarrow 0}{\sim} A - B + C,
\end{align*}
and  \eqref{equation:AsymptoticExpansionLuneWatson} can be written in the form
\begin{align*}
Z_{\Omega}(t) \overset{t\downarrow 0}{\sim} A - \frac{1}{\sqrt{4 \pi t}}\cdot  \sum\limits_{\nu=0}^{\infty} \frac{2\pi }{\sqrt{\kappa}} \cdot \frac{\kappa^{\nu}}{4^{\nu+1}\nu !} t^{\nu} + \sum\limits_{\nu=0}^{\infty}  \sum\limits_{\ell=0}^{\nu}\frac{2}{4^{\ell}\cdot \ell !}\cdot c_{\nu-\ell}^{\mathbb{S}}\left(\frac{\pi}{k}\right)   \kappa^{\nu} t^{\nu}.
\end{align*}
Since $B$ contains only half-integer powers and $C$ only integer powers, we conclude the following formulas for $B$ and $C$:

\begin{corollary}
\label{corollary:ABCandCoefficients}
Let $B, C$ be defined as in Corollary \emph{\ref{corollary:AsymptoticExpansionDifferenceHeatTracesOrbifolds}}. Then:

\begin{itemize}
\item[$(i)$] 
\begin{align}
B = \frac{1}{\sqrt{4\pi t}} \cdot \sum\limits_{\nu=0}^{\infty} \frac{2\pi r}{4^{\nu+1} \nu!}\kappa^{\nu}\cdot t^{\nu} = \frac{\vert \ml \vert}{\sqrt{4\pi t}} \cdot \sum\limits_{\nu=0}^{\infty} \frac{1}{4^{\nu+1} \nu!}\kappa^{\nu}\cdot t^{\nu},
\end{align}
where $\vert\ml\vert$ denotes the length of the mirror locus; in particular,
\begin{align}
b_{\nu}\left( \tilde{S}_0 \right) =  \frac{2}{4^{\nu+1}\cdot \nu !}\cdot \kappa^{\nu}\, \text{ for all }\, \nu\in\mathbb{N}_{0}.
\end{align}
\item[$(ii)$]
\begin{align}
\label{equation:DihedralPointContribution}
C = \sum\limits_{\nu=0}^{\infty} \sum\limits_{\ell=0}^{\nu}\frac{2}{4^{\ell}\cdot \ell !}\cdot c_{\nu-\ell}^{\mathbb{S}}\left(\frac{\pi}{k}\right)  \kappa^{\nu} t^{\nu}
\end{align}
with $c_{\ell}^{\mathbb{S}}\left(\frac{\pi}{k}\right)$ from \eqref{equation:DefinitionCoefficientsALune}; in particular,
\begin{align}
\frac{1}{k} \sum\limits_{\ell=1}^{k-1} b_{\nu}\left( \tilde{D}_{\ell} \right)  =  \sum\limits_{\ell=0}^{\nu} \frac{2}{4^{\ell}\cdot \ell !}\cdot c_{\nu-\ell}^{\mathbb{S}}\left(\frac{\pi}{k}\right)  \kappa^{\nu}\, \text{ for all }\, \nu\in\mathbb{N}_{0}.
\end{align}
\end{itemize}
\end{corollary}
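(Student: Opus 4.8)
The statement to prove is Corollary \ref{corollary:ABCandCoefficients}, which extracts from two different asymptotic expansions of $Z_{\Omega}(t)$ the explicit formulas for the formal series $B$ and $C$ (and hence for the Donnelly coefficients $b_{\nu}(\tilde{S}_0)$ and $\sum_{\ell=1}^{k-1} b_{\nu}(\tilde{D}_{\ell})$). The plan is to compare the two asymptotic expansions of $Z_{\Omega}(t)$ established earlier: on the one hand Corollary \ref{corollary:AsymptoticExpansionDifferenceHeatTracesOrbifolds} gives $Z_{\Omega}(t)\overset{t\downarrow 0}{\sim} A-B+C$, where $A$, $B$, $C$ are the formal series defined there; on the other hand Proposition \ref{proposition:AsymptoticExpansionLuneWatson} gives the explicit expansion \eqref{equation:AsymptoticExpansionLuneWatson}. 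Since the asymptotic expansion of a function as $t\searrow 0$ is unique, the coefficient of each power $t^{\nu}$ and of each half-integer power $t^{\nu+1/2}$ must agree in both expansions.

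The key observation that separates the problem into two independent pieces is a \emph{parity} argument. By construction, $A = I_0(M/\mathbb{D}_k)$ is a series in integer powers of $t$ starting at $t^{-1}$; $C = \frac{1}{k}I_{\{\hat x_N\}}(M/\mathbb{D}_k)$ is a series in nonnegative integer powers of $t$; and $B = I_{\mathrm{Mir}_0}(M/\mathbb{D}_k)$ carries the factor $(4\pi t)^{-1/2}$, hence involves only half-integer powers of $t$. Meanwhile the explicit expansion \eqref{equation:AsymptoticExpansionLuneWatson} can be regrouped as
\begin{align*}
Z_{\Omega}(t) \overset{t\downarrow 0}{\sim}\; \frac{1}{4\pi t}\,\frac{1}{2k}\,|\mathbb{S}^2(r)| \sum_{\nu=0}^{\infty} i_{\nu-1}^{\mathbb{S}}\kappa^{\nu}t^{\nu}
\;-\; \frac{1}{\sqrt{4\pi t}}\sum_{\nu=0}^{\infty}\frac{2\pi r}{4^{\nu+1}\nu!}\kappa^{\nu}t^{\nu}
\;+\; \sum_{\nu=0}^{\infty}\sum_{\ell=0}^{\nu}\frac{2}{4^{\ell}\ell!}c_{\nu-\ell}^{\mathbb{S}}\!\left(\tfrac{\pi}{k}\right)\kappa^{\nu}t^{\nu},
\end{align*}
where one uses $|\mathbb{S}^2(r)| = 4\pi/\kappa$ to recognize the first series as exactly $A$ (via Corollary \ref{corollary:HeatCoefficientsLocallySymmetricManifolds}), and one rewrites the $\sqrt{t}$-term $-\frac{1}{4^{\nu+1}(\nu+1)!}\cdot\frac{\sqrt{\pi}\sqrt{\kappa}}{4}\sqrt{t}\,\kappa^{\nu}t^{\nu}$, together with the leading $-\frac{\sqrt{\pi}}{4\sqrt{\kappa}}\frac{1}{\sqrt t}$, as the stated half-integer series; here one must check the elementary identity $\frac{\sqrt{\pi}}{4\sqrt{\kappa}} = \frac{2\pi r}{4\sqrt{\pi}} = \frac{1}{\sqrt{4\pi}}\cdot\frac{2\pi r}{4}$, so the half-integer series is precisely $\frac{1}{\sqrt{4\pi t}}\sum_{\nu\geq 0}\frac{2\pi r}{4^{\nu+1}\nu!}\kappa^{\nu}t^{\nu}$.

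Then I would match powers. The integer-power part of \eqref{equation:AsymptoticExpansionLuneWatson} equals $A - B + C$ restricted to integer powers $= A + C$ (since $B$ has none), forcing $C = \sum_{\nu\geq 0}\sum_{\ell=0}^{\nu}\frac{2}{4^{\ell}\ell!}c_{\nu-\ell}^{\mathbb{S}}(\pi/k)\kappa^{\nu}t^{\nu}$, which is claim $(ii)$; reading off the coefficient of $\kappa^{\nu}t^{\nu}$ and recalling $C = \sum_{\nu}t^{\nu}\cdot\frac{1}{k}\sum_{\ell=1}^{k-1}b_{\nu}(\tilde D_{\ell})$ from Corollary \ref{corollary:AsymptoticExpansionDifferenceHeatTracesOrbifolds} gives $\frac{1}{k}\sum_{\ell=1}^{k-1}b_{\nu}(\tilde D_{\ell}) = \sum_{\ell=0}^{\nu}\frac{2}{4^{\ell}\ell!}c_{\nu-\ell}^{\mathbb{S}}(\pi/k)\kappa^{\nu}$. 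Similarly, the half-integer-power part of \eqref{equation:AsymptoticExpansionLuneWatson} equals $-B$, forcing $B = \frac{1}{\sqrt{4\pi t}}\sum_{\nu\geq 0}\frac{2\pi r}{4^{\nu+1}\nu!}\kappa^{\nu}t^{\nu}$; writing $2\pi r = |\ml|$ (the length of the mirror locus in $\Omega$, which is the full meridian $\mathrm{Fix}(\tilde S_0)$ of length $2\pi r$) gives the form with $|\ml|$, and comparing with $B = \frac{1}{\sqrt{4\pi t}}\sum_{\nu}t^{\nu}\pi r\,b_{\nu}(\tilde S_0)$ from Corollary \ref{corollary:AsymptoticExpansionDifferenceHeatTracesOrbifolds} yields $\pi r\,b_{\nu}(\tilde S_0) = \frac{2\pi r}{4^{\nu+1}\nu!}\kappa^{\nu}$, i.e.\ $b_{\nu}(\tilde S_0) = \frac{2}{4^{\nu+1}\nu!}\kappa^{\nu}$. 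The main obstacle is bookkeeping rather than conceptual: one must carefully reconcile the several normalization conventions (the $(4\pi t)^{-1/2}$ factor, the factor $\frac{1}{2}$ appearing in $I_{\mathrm{Mir}_0}$ in Lemma \ref{lemma:IntegralsOrbifoldIdentifications}, the length $2\pi r$ of $\mathrm{Fix}(\tilde S_0)$, and $\kappa = 1/r^2$) so that the $\sqrt{t}$-coefficients in Watson's expansion line up exactly with the abstract series $B$; once the identity $\frac{\sqrt{\pi}}{4\sqrt{\kappa}} = \frac{1}{\sqrt{4\pi}}\cdot\frac{2\pi r}{4}$ and its higher analogues are verified, the rest follows by uniqueness of asymptotic expansions.
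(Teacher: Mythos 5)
Your proposal is correct and follows essentially the same route as the paper: the paper likewise equates the two asymptotic expansions $A-B+C$ and the regrouped form of \eqref{equation:AsymptoticExpansionLuneWatson}, and separates $B$ from $A+C$ by the observation that $B$ contains only half-integer powers of $t$ while $A$ and $C$ contain only integer powers. Your arithmetic checks (in particular $\frac{\sqrt{\pi}}{4\sqrt{\kappa}}=\frac{1}{\sqrt{4\pi}}\cdot\frac{2\pi r}{4}$ and the identification $2\pi r=\vert\ml\vert$) are exactly the bookkeeping the paper leaves implicit.
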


%

Recall that the orbifolds $M/\mathbb{Z}_{k}$ and $M/\mathbb{D}_{k}$ have the asymptotic expansions
\begin{align*}
Z_{M/\mathbb{Z}_{k}}(t) \overset{t\downarrow 0}{\sim} 2A + 2C;\quad Z_{M/\mathbb{D}_{k}}(t) \overset{t\downarrow 0}{\sim} A + B + C.
\end{align*}
Together with the formulas for $A,B,C$ from the above corollary and Corollary \ref{corollary:HeatCoefficientsLocallySymmetricManifolds}, this completes our discussion of the heat invariants for the spherical orbifolds $M/\mathbb{Z}_{k}$ and $M/\mathbb{D}_{k}$.

The computations made so far can be generalised to more general orbisurfaces. The reason is that there exist only three kinds of singular points for two-dimensional orbifolds, which we have already met in the discussion so far. These are: \emph{mirror points}, \emph{cone points} and \emph{dihedral points} (see \cite{Gordon12}). Recall from \citep[Theorem 4.8]{Gordon08} (see Theorem \ref{theorem:HeatTraceAsymptoticsOrbifoldQualitatively} above) that for each two-dimensional closed Riemannian orbifold $\mathcal{O}$,  the heat trace $Z_{\mathcal{O}}(t)$ has an asymptotic expansion, as $t\searrow 0$, of the form
\begin{align*}
\frac{1}{4\pi t} \sum\limits_{\nu=0}^{\infty} a_{\nu}(\mathcal{O}) t^{\nu}+\sum\limits_{N\in S(\mathcal{O})}\frac{I_N (\mathcal{O})}{\vert \Iso(N)\vert}.
\end{align*}

\begin{theorem}
\label{theorem:OrbifoldAsymptotikKonstanteKruemmung}
Let $\mathcal{O}$ be a two-dimensional closed Riemannian orbifold of constant curvature $\kappa\in\mathbb{R}$. Then, with $C$ as in \eqref{equation:DihedralPointContribution} we have:

\begin{itemize}
\item[$(i)$]
\begin{align}
\label{equation:OrbifoldHeatKoefficientsInterior}
a_{\nu}(\mathcal{O}) =\frac{\volume(\mathcal{O})}{ \nu! \cdot 4^{\nu}} \sum\limits_{\ell=0}^{\nu} \binom{\nu}{\ell} (-4)^{\ell}B_{2\ell}\left( \frac{1}{2} \right) \cdot \kappa^{\nu}\, \text{ for all }\, \nu\in\mathbb{N}_{0}.
\end{align}
\item[$(ii)$] If $N$ consists of a cone point of order $k\in\mathbb{N}$, then $\frac{I_N (\mathcal{O})}{\vert \Iso(N)\vert} = C$.
\item[$(iii)$] If $N$ consists of a dihedral point of order $2k\in\mathbb{N}$, then $\frac{I_N (\mathcal{O})}{\vert \Iso(N)\vert} = \frac{1}{2} C$.
\item[$(iv)$] If $N$ is a mirror edge of length $\vert N \vert$, then 
\begin{align}
\frac{I_N (\mathcal{O})}{\vert \Iso(N)\vert} =  \frac{\vert N \vert}{\sqrt{4\pi t}}\sum\limits_{\nu=0}
^{\infty} \frac{1}{4^{\nu+1}\cdot \nu !}\cdot \kappa^{\nu}\cdot t^{\nu}.
\end{align}
\end{itemize}
\end{theorem}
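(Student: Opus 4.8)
The strategy is to reduce the general case to the two spherical orbifold examples studied in Section~\ref{section:ExamplesOrbifolds} by invoking the locality and universality of the heat coefficients $a_\nu(\mathcal{O})$ and the strata contributions $I_N(\mathcal{O})$. Recall from Theorem~\ref{theorem:HeatTraceAsymptoticsOrbifoldQualitatively} that $Z_{\mathcal{O}}(t)$ is asymptotic as $t\searrow 0$ to $\frac{1}{4\pi t}\sum_{\nu\ge 0}a_\nu(\mathcal{O})t^\nu + \sum_{N\in S(\mathcal{O})}\frac{I_N(\mathcal{O})}{\vert\Iso(N)\vert}$, where each $a_\nu(\mathcal{O})$ is obtained by integrating a universal curvature invariant over $\mathcal{O}$, and each $I_N(\mathcal{O})$ is, by \eqref{equation:GoodOrbifoldsSimplificationContributionStrata}--\eqref{equation:UniversalityDonnellyCoefficients}, built from the universal and local functions $b_i(\gamma,x)$ of Donnelly. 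The key point is that these data depend only on the curvature of $\mathcal{O}$ near the stratum (which here is the constant $\kappa$) and on the local isotropy action, \emph{not} on the global structure of $\mathcal{O}$.

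First I would handle part~$(i)$: since $\mathcal{O}$ has constant curvature $\kappa$, all covariant derivatives of the curvature vanish, so the curvature invariant of order $2\nu$ defining $a_\nu(\mathcal{O})$ reduces to a monomial $\alpha_\nu\kappa^\nu$ with a universal constant $\alpha_\nu$; evaluating on $\mathcal{O}=M/\mathbb{D}_k$ (equivalently on $M=\mathbb{S}^2(r)$) and using Corollary~\ref{corollary:HeatCoefficientsLocallySymmetricManifolds} identifies $\alpha_\nu = i_{\nu-1}^{\mathbb{S}}$, which via Proposition~\ref{proposition:HeatTraceAsymptoticSphere} rewrites exactly as the stated formula \eqref{equation:OrbifoldHeatKoefficientsInterior} (one uses $\vol(\mathcal{O})=\frac{4\pi}{\kappa}$ in the comparison but the final formula is polynomial in $\kappa$ and valid for all $\kappa\in\mathbb{R}$, including $\kappa=0$). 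Next, for part~$(iv)$: a mirror edge is locally modelled on a reflection in a geodesic in a constant-curvature surface, so by locality and universality $b_i(\tilde S_0,x)$ is the same function as for the reflection $\tilde S_0$ acting on $\mathbb{S}^2(r)$; the computation in Lemma~\ref{lemma:IntegralsOrbifoldIdentifications}$(ii)$ and Corollary~\ref{corollary:ABCandCoefficients}$(i)$ then gives $b_\nu(\tilde S_0)=\frac{2}{4^{\nu+1}\nu!}\kappa^\nu$, and integrating over $N$ (of length $\vert N\vert$) with the factor $(4\pi t)^{-1/2}$ and the $\tfrac12$ from \eqref{equation:CoefficientsOfDonelly1}/\eqref{equation:GoodOrbifoldsSimplificationContributionStrata} yields the claimed expression, again a polynomial in $\kappa$ hence valid for $\kappa=0$ as well.

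For parts~$(ii)$ and~$(iii)$: a cone point of order $k$ is locally $\mathbb{R}^2/\mathbb{Z}_k$ with a constant-curvature metric, and a dihedral point of order $2k$ is locally $\mathbb{R}^2/\mathbb{D}_k$; both appear exactly in the two examples $M/\mathbb{Z}_k$ and $M/\mathbb{D}_k$. By \eqref{equation:GoodOrbifoldsSimplificationContributionStrata} the cone-point contribution is $\sum_{\ell=1}^{k-1}b_i(\tilde D_\ell)$-type data, and the dihedral-point contribution involves the same rotations $\tilde D_\ell$ (the reflections in $\mathbb{D}_k$ contribute to mirror strata, not to the $0$-dimensional dihedral stratum), so by locality/universality the $0$-dimensional isotropy contribution of a dihedral point of order $2k$ is exactly half the contribution of a cone point of order $k$ --- the factor $\tfrac12$ being the ratio $\frac{\vert\mathbb{Z}_k\vert}{\vert\mathbb{D}_k\vert}\cdot\frac{\vert\Iso_{\mathbb{D}_k}\vert}{\vert\Iso_{\mathbb{Z}_k}\vert}$ tracked carefully through \eqref{equation:GoodOrbifoldsSimplificationContributionStrata}. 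Corollary~\ref{corollary:ABCandCoefficients}$(ii)$ identifies the cone-point contribution with $C$ and the dihedral-point contribution (as it enters $Z_{M/\mathbb{D}_k}$) with $C$ in the decomposition $Z_{M/\mathbb{D}_k}(t)\overset{t\downarrow 0}{\sim}A+B+C$ where $C=\frac{1}{k}I_{\{\hat x_N\}}(M/\mathbb{D}_k)$; spelling out the isotropy orders ($k$ for the cone point, $2k$ for the dihedral point) gives $\frac{I_N(\mathcal{O})}{\vert\Iso(N)\vert}=C$ in case~$(ii)$ and $\frac{1}{2}C$ in case~$(iii)$.

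\textbf{Main obstacle.} The delicate step is the bookkeeping of the combinatorial/normalisation factors in passing from the universal functions $b_i(\gamma,x)$ on the local models to the strata contributions $\frac{I_N(\mathcal{O})}{\vert\Iso(N)\vert}$ via \eqref{equation:GoodOrbifoldsSimplificationContributionStrata}, in particular making precise why the reflections in $\mathbb{D}_k$ do not enter the dihedral-point ($0$-dimensional stratum) contribution while they do enter the mirror-edge contributions, and tracking the resulting factor $\tfrac12$ between cone and dihedral contributions. Everything else is an application of the already-established Theorem~\ref{theorem:HeatTraceAsymptoticsOrbifoldQualitatively}, Lemma~\ref{lemma:IntegralsOrbifoldIdentifications}, Corollary~\ref{corollary:AsymptoticExpansionDifferenceHeatTracesOrbifolds}, Corollary~\ref{corollary:HeatCoefficientsLocallySymmetricManifolds} and Corollary~\ref{corollary:ABCandCoefficients}, combined with the universality of heat coefficients from \citep{GilkeyBranson} and of Donnelly's coefficients.
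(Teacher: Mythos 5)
Your proposal is correct and follows essentially the same route as the paper: part $(i)$ via the universality of the curvature invariants defining $a_\nu$ evaluated on the constant-curvature sphere (Corollary \ref{corollary:HeatCoefficientsLocallySymmetricManifolds}), and parts $(ii)$--$(iv)$ by locality/universality of Donnelly's coefficients $b_i(\gamma,x)$ combined with the explicit computations for $M/\mathbb{Z}_k$ and $M/\mathbb{D}_k$ in Corollary \ref{corollary:ABCandCoefficients}. The ``delicate step'' you flag --- that reflections do not contribute to the $0$-dimensional dihedral stratum --- is already settled in Lemma \ref{lemma:IntegralsOrbifoldIdentifications}$(i)$, since $\Iso^{\text{max}}(\{\hat{x}_N\})=\mathbb{Z}_k\backslash\{e\}$ (a point is not open in the one-dimensional fixed set of a reflection), so no genuine gap remains.
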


\begin{proof}
Part $(i)$ is clear from Corollary \ref{corollary:HeatCoefficientsLocallySymmetricManifolds} since $a_{\nu}(\mathcal{O})$ is the integral over $\mathcal{O}$ of the same curvature invariant as for manifolds. The remaining statements follow from Corollary \ref{corollary:ABCandCoefficients} (and the definition of $C$ as the contribution of one of the cone points of $M/\mathbb{Z}_k$) via the following fact; see \citep[Theorem $5.1$]{Donnelly2}, cited also in \citep{Gordon08}: The functions $b_i(\gamma,x)$, whose integrals over $N$ make up the coefficients in $I_N(\mathcal{O})$, are of the form $\varphi(\gamma,x)\cdot \psi(\gamma,x)$, where $\varphi(\gamma,x)$ only depends on the Euclidean isometry $d\gamma_x$ and $\psi(\gamma,x)$ is a universal polynomial in the curvature tensor of $\mathcal{O}$ and its covariant derivatives.
\end{proof}

\begin{remark}
Actually the results in $(ii)-(iv)$ in the above theorem do not require the whole orbifold to have constant curvature. It suffices that the curvature is constant in a sufficiently small neighborhood of $N$. For example, if we consider a teardrop with cone point of order $k$ such that the curvature is constant in a neighborhood of the cone point, then the contribution of that cone point is given by $(ii)$ above.
\end{remark}

\section{Applications}
\label{section:ApplicationsOrbifolds}

In this section we first consider some applications of the heat invariants computed in Section \ref{section:InvariantsOrbisurfaces}. 
Finally, we will use them to give an alternative proof for the formulas from Theorem \ref{theorem:AsymptoticExpansionHeatTraceHyperbolPolygon}/Corollary \ref{corollary:HeatAsymptoticConstantCurvaturePolygon} for the angle contributions in the special case of polygons with angles of the form $\frac{\pi}{k}$ with $k \in\mathbb{N}$.

Let $\mathcal{O}$ be a closed Riemannian orbisurface of constant curvature $\kappa\in\mathbb{R}$. There exists a strong connection between the heat invariants for $\mathcal{O}$ computed in the previous section and the heat invariants for a polygon furnished with the same curvature $\kappa$, which we have computed in Section \ref{section:ConsequencesPolygons}. We want to compare those coefficients. Recall the notation introduced in Corollary \ref{corollary:HeatAsymptoticConstantCurvaturePolygon} and Theorem \ref{theorem:OrbifoldAsymptotikKonstanteKruemmung}. First of all, the formal series $V_{\kappa}(\gamma)$ for $\gamma=\frac{\pi}{k}$, $k\in\mathbb{N}$, is the same as the contribution of a dihedral point of order $2k$ to the orbifold heat coefficients of $\mathcal{O}$. In other words, we have $V_{\kappa}(\gamma)=\frac{1}{2} C$, where $C$ is as in \eqref{equation:DihedralPointContribution}. Consequently, a cone point of order $k$ contributes to the heat invariants of $\mathcal{O}$ as much as two angles of magnitude $\frac{\pi}{k}$ to the heat invariants for a polygon of curvature $\kappa$. Lastly, the mirror locus of $\mathcal{O}$ contributes the negative of the contribution of the boundary of a polygon, provided their lengths are equal.

Therefore we obtain, just as in Corollary \ref{corollary:SpectralInvariantsVolumePerimeterCurvature} and Theorem \ref{theorem:SpectralInvariantsAnglesEulerCharacteristic}, the following spectral invariants:

\begin{corollary}
\label{corollary:SpectralInvariantsOrbisurfaces}
Let $\mathcal{O}$ be a closed orbisurface of constant curvature $\kappa\in\mathbb{R}$. Let $M\in\mathbb{N}_0$ denote the number of dihedral points of $\mathcal{O}$ and let $2m_1,...,2m_M$ denote their individual orders, where $m_1,...,m_M\in\mathbb{N}$. Similarly, let $N\in\mathbb{N}_{0}$ denote the number of cone points and $n_1,...,n_N\in\mathbb{N}$ their orders. Then:

\begin{itemize}
\item[$(i)$]  The volume of $\mathcal{O}$ and the length of the mirror locus are spectral invariants. 
\item[$(ii)$] If the mirror locus is non-trivial, i.e. the length of the mirror locus is positive, then the curvature of the orbifold is determined by the spectrum.
\item[$(iii)$] If $\kappa\neq 0$, then $\kappa$ together with the spectrum determines the number $M+2N$ as well as the multiset $ \lbrace m_1 ,..., m_M, n_1, n_1,  n_2, n_2, \allowbreak ...,n_N, n_N  \rbrace$.
\item[$(iv)$] If the mirror locus is trivial and $\kappa\neq 0$, then $\kappa$ together with the spectrum determines the number of cone points as well as the multiset of all orders $\lbrace n_1,...,n_N \rbrace$. \emph{(}Note that trivial mirror locus implies absence of dihedral points.\emph{)}
\end{itemize}
\end{corollary}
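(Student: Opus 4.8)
\textbf{Proof proposal for Corollary \ref{corollary:SpectralInvariantsOrbisurfaces}.}

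The plan is to mimic the structure of the proofs of Corollary \ref{corollary:SpectralInvariantsVolumePerimeterCurvature} and Theorem \ref{theorem:SpectralInvariantsAnglesEulerCharacteristic}, reading off the relevant data from the asymptotic expansion of $Z_{\mathcal{O}}(t)$ provided by Theorem \ref{theorem:OrbifoldAsymptotikKonstanteKruemmung}. First I would write down the full asymptotic expansion explicitly: by Theorem \ref{theorem:OrbifoldAsymptotikKonstanteKruemmung} the heat trace is asymptotic to $\frac{1}{4\pi t}\sum_{\nu}a_\nu(\mathcal{O})t^\nu$ plus a sum of mirror-edge contributions (which carry only half-integer powers of $t$, with leading term $\frac{|\ml|}{8\sqrt{\pi t}}$ where $|\ml|$ is the total length of the mirror locus) plus $\frac{1}{2}C$ for each dihedral point and $C$ for each cone point (these carrying only integer powers). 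For part $(i)$: the coefficient of $\frac{1}{t}$ is $\frac{1}{4\pi}a_0(\mathcal{O})=\frac{1}{4\pi}\operatorname{vol}(\mathcal{O})$, so the volume is a spectral invariant; the coefficient of $\frac{1}{\sqrt t}$ is $-\frac{|\ml|}{8\sqrt\pi}$ (note the mirror contribution in part $(iv)$ of Theorem \ref{theorem:OrbifoldAsymptotikKonstanteKruemmung} has leading term $\frac{|N|}{4\pi t}\cdot\sqrt{4\pi t}\cdot\frac{1}{4}$, i.e. $\frac{|N|}{8\sqrt{\pi t}}$), so the total mirror length is spectrally determined. For part $(ii)$: if $|\ml|>0$ there are infinitely many nonzero half-integer heat invariants, and by part $(iv)$ of Theorem \ref{theorem:OrbifoldAsymptotikKonstanteKruemmung} the coefficient of $\sqrt t$ relative to that of $\frac{1}{\sqrt t}$ is a nonzero multiple of $\kappa$; so $\kappa$ is determined. (If $|\ml|=0$ one cannot always recover $\kappa$ this way, matching the hypothesis of $(iii)$--$(iv)$ that $\kappa$ is given a priori.)

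The heart of the proof is parts $(iii)$ and $(iv)$, which I would reduce to the polygon case via the correspondence already established in the text: the contribution of one dihedral point of order $2k$ equals $V_\kappa(\pi/k)=\sum_{\nu\geq 0}e_\nu(\pi/k)\kappa^\nu t^\nu$ (where $e_\nu$ is the universal coefficient from \eqref{equation:CoefficientsVerticesHeatAsymptoticConstantCurvaturePolygon}), and a cone point of order $n$ contributes $2V_\kappa(\pi/n)$, i.e. exactly twice as much. Hence, assuming $\kappa\neq 0$ is known and subtracting off the (now-known) interior term $\frac{1}{4\pi t}\sum_\nu a_\nu(\mathcal{O})t^\nu$ and mirror terms, the spectrum determines, for every $\nu\in\mathbb{N}_0$, the quantity
\begin{align*}
S_\nu:=\sum_{i=1}^{M}e_\nu\!\left(\tfrac{\pi}{m_i}\right)+\sum_{j=1}^{N}2\,e_\nu\!\left(\tfrac{\pi}{n_j}\right),
\end{align*}
divided by $\kappa^\nu$. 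This is precisely the kind of sum handled in the proof of Theorem \ref{theorem:SpectralInvariantsAnglesEulerCharacteristic}: writing $\gamma_i=\pi/m_i$ and listing each $n_j$ twice, $S_\nu/\kappa^\nu$ is exactly $\sum e_\nu(\gamma)$ over the multiset $\{\pi/m_1,\dots,\pi/m_M,\pi/n_1,\pi/n_1,\dots,\pi/n_N,\pi/n_N\}$ of angles (all of which are $\leq\pi$, hence genuine angle contributions, and all $<\pi$ since the orders are $\geq 2$, so none of these $e_\nu$ vanish). I would then invoke verbatim the inductive argument of Theorem \ref{theorem:SpectralInvariantsAnglesEulerCharacteristic}: the leading-order coefficient $W_\nu=\sum(\pi^{2\nu+2}-\gamma^{2\nu+2})/(\pi\gamma^{2\nu+1})$ is recovered by induction on $\nu$ (using $B_{2\nu}\neq 0$), then the sequence $W_{\nu,1}=\pi^{-(2\nu+1)}(W_{\nu+1}-W_\nu)=\sum((\pi/\gamma)^2-1)\gamma^{-(2\nu+1)}$ detects the smallest angle as an infimum, and peeling off angles one at a time recovers the full multiset of angles. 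Translating back, $\{\pi/m_1,\dots,\pi/m_M,\pi/n_1,\pi/n_1,\dots\}$ determines $\{m_1,\dots,m_M,n_1,n_1,\dots,n_N,n_N\}$ and its cardinality $M+2N$, which is exactly $(iii)$. For $(iv)$: trivial mirror locus forces the absence of dihedral points (a dihedral point lies on the closure of mirror edges), so $M=0$ and the multiset collapses to $\{n_1,n_1,\dots,n_N,n_N\}$, from which $N$ and $\{n_1,\dots,n_N\}$ follow.

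The main obstacle I anticipate is purely bookkeeping: I must be careful that the normalization constants match between the orbifold conventions of Theorem \ref{theorem:OrbifoldAsymptotikKonstanteKruemmung} and the polygon conventions of Corollary \ref{corollary:HeatAsymptoticConstantCurvaturePolygon}, in particular verifying the identification $V_\kappa(\pi/k)=\tfrac12 C$ and ``cone point $\equiv$ two angles'' with the correct factors of $2$ and the correct sign for the mirror term, and checking that subtracting the interior contribution $\frac{1}{4\pi t}\sum a_\nu(\mathcal O)t^\nu$ (known once $\operatorname{vol}(\mathcal{O})$ and $\kappa$ are known) leaves exactly $\sum_\nu S_\nu t^\nu$. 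Beyond these verifications, which the excerpt has essentially already carried out in the discussion preceding the corollary, the remaining argument is a direct transcription of the proof of Theorem \ref{theorem:SpectralInvariantsAnglesEulerCharacteristic}, so no genuinely new analytic input is required.
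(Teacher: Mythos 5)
Your proposal is correct and follows exactly the route the paper intends: the paper itself gives no separate proof, simply invoking the correspondence (dihedral point of order $2k$ $\leftrightarrow$ angle $\pi/k$, cone point of order $n$ $\leftrightarrow$ two angles $\pi/n$, mirror locus $\leftrightarrow$ negative of the polygon boundary term) established just before the corollary and then the arguments of Corollary \ref{corollary:SpectralInvariantsVolumePerimeterCurvature} and Theorem \ref{theorem:SpectralInvariantsAnglesEulerCharacteristic}, which is precisely what you do. The only slip is a harmless sign: the mirror-edge contribution to $Z_{\mathcal{O}}(t)$ is $+\frac{\vert \ml\vert}{8\sqrt{\pi t}}+\dots$ (it is the \emph{negative} of a polygon's boundary contribution), not $-\frac{\vert \ml\vert}{8\sqrt{\pi}}\cdot t^{-1/2}$, but this does not affect the determination of $\vert \ml \vert$ or of $\kappa$.
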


To the best of our knowledge, statement $(iii)$ is a new result. Similarly, statement $(iv)$ is also new in this generality and was only known for the special case of $\kappa=-1$ (see the comments after Corollary \ref{corollary:OrientableOrbisurfacesSameUnderlyingSpace} below).

As usual, we call two orbifolds \emph{isospectral} if their Laplacians have the same spectrum, including multiplicities. Corollary \ref{corollary:SpectralInvariantsOrbisurfaces} has some interesting consequences for isospectral orbisurfaces, which we now want to demonstrate.

\begin{definition} (see \citep[p. 311]{thurston})
\label{definition:EulerCharacteristicOrbifolds}
Suppose $\mathcal{O}$ is a closed orbisurface. Let $X_{\mathcal{O}}$ be the underlying topological space of $\mathcal{O}$, and let $\chi(X_{\mathcal{O}})$ denote its Euler characteristic. Then, using the notation of Corollary \ref{corollary:SpectralInvariantsOrbisurfaces}, the Euler characteristic of $\mathcal{O}$ is defined as
\begin{align}
\label{equation:EulerCharacteristicOrbifolds}
\chi(\mathcal{O}):=\chi(X_{\mathcal{O}}) - \frac{1}{2}\sum\limits_{i=1}^M \left(1-\frac{1}{m_i}\right) - \sum\limits_{i=1}^{N}\left( 1-\frac{1}{n_i} \right).
\end{align}
\end{definition}

Thus the Euler characteristic of an orbifold depends on the Euler characteristic of the underlying space and the orders of all its  dihedral and cone points. Note that we use the term \emph{order} always in the sense of isotropy order of a point. Instead, W. Thurston defines the order of a dihedral point as half of its isotropy order (see \citep[Proposition 13.3.1]{thurston}).

It is well-known that the Euler characteristic of a closed orbisurface $\mathcal{O}$ is related to its curvature $\kappa$ by the Gau{\ss}-Bonnet theorem exactly as for closed manifolds, i.e.
\begin{align}
\label{equation:GaussBonnetOrbifolds}
\int\limits_{\mathcal{O}} \kappa dA = 2\pi \chi(\mathcal{O}).
\end{align}
If the curvature is constant then the Gau{\ss}-Bonnet formula reduces to the equation $\volume(\mathcal{O}) \kappa  = 2\pi \chi(\mathcal{O})$. As we see, it follows from Corollary \ref{corollary:SpectralInvariantsOrbisurfaces} $(i)$ that two isospectral orbisurfaces have the same curvature if and only if they have the same Euler characteristic.


\begin{corollary}
\label{corollary:OrientableOrbisurfacesSameUnderlyingSpace}
Let $\mathcal{O}$ be a closed orientable orbisurface with constant curvature $\kappa\neq 0$. Then $\kappa$ together with the spectrum of $\mathcal{O}$ determines the Euler characteristic of $\mathcal{O}$ as well as the Euler characteristic of the underlying space $X_{\mathcal{O}}$.
\end{corollary}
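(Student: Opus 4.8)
The plan is to deduce everything from Corollary \ref{corollary:SpectralInvariantsOrbisurfaces} together with the Gau{\ss}-Bonnet theorem for orbisurfaces \eqref{equation:GaussBonnetOrbifolds} and the definition of the orbifold Euler characteristic \eqref{equation:EulerCharacteristicOrbifolds}. The first observation is that orientability is a strong structural restriction: since mirror points and dihedral points both have local isotropy containing an orientation-reversing reflection, an orientable orbisurface has no mirror points and no dihedral points. Hence the mirror locus of $\mathcal{O}$ is trivial and, in the notation of Corollary \ref{corollary:SpectralInvariantsOrbisurfaces}, $M=0$; the only singular points are the cone points, of orders $n_1,\dots,n_N$.

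Next I would recover $\chi(\mathcal{O})$. By Corollary \ref{corollary:SpectralInvariantsOrbisurfaces}$(i)$ the volume $\volume(\mathcal{O})$ is a spectral invariant. Because $\mathcal{O}$ has constant curvature $\kappa\neq 0$, the Gau{\ss}-Bonnet formula \eqref{equation:GaussBonnetOrbifolds} reduces to $\volume(\mathcal{O})\,\kappa = 2\pi\chi(\mathcal{O})$, so $\chi(\mathcal{O})$ is determined by $\kappa$ and the spectrum. Then, since the mirror locus is trivial and $\kappa\neq 0$, Corollary \ref{corollary:SpectralInvariantsOrbisurfaces}$(iv)$ tells us that $\kappa$ together with the spectrum determines the number $N$ of cone points and the multiset $\{n_1,\dots,n_N\}$ of their orders; in particular it determines the real number $\sum_{i=1}^{N}\bigl(1-\tfrac{1}{n_i}\bigr)$.

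Finally I would combine these. With $M=0$, the defining equation \eqref{equation:EulerCharacteristicOrbifolds} of the orbifold Euler characteristic becomes
\begin{align*}
\chi(X_{\mathcal{O}}) = \chi(\mathcal{O}) + \sum_{i=1}^{N}\left(1-\frac{1}{n_i}\right),
\end{align*}
and both terms on the right-hand side have just been shown to be determined by $\kappa$ and the spectrum. Hence $\chi(X_{\mathcal{O}})$ is a spectral invariant (given $\kappa$), which completes the argument.

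There is essentially no computational obstacle here, as the heavy lifting is already contained in Corollary \ref{corollary:SpectralInvariantsOrbisurfaces}; the only point requiring care is the structural input that orientability forces the absence of dihedral and mirror strata, which is exactly what allows one to pass from the ``$M+2N$ and $\{m_1,\dots,m_M,n_1,n_1,\dots\}$''-type conclusion of part $(iii)$ to the cleaner statement of part $(iv)$ and then to solve \eqref{equation:EulerCharacteristicOrbifolds} for $\chi(X_{\mathcal{O}})$.
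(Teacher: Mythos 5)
Your argument is correct and follows the same route as the paper's own proof: Corollary \ref{corollary:SpectralInvariantsOrbisurfaces}$(i)$ plus Gau{\ss}-Bonnet for $\chi(\mathcal{O})$, orientability forcing trivial mirror locus and absence of dihedral points, Corollary \ref{corollary:SpectralInvariantsOrbisurfaces}$(iv)$ for the cone point data, and then solving \eqref{equation:EulerCharacteristicOrbifolds} for $\chi(X_{\mathcal{O}})$. No gaps.
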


\begin{proof}
By assumption, we know the value of $\kappa$. Thus by Corollary \ref{corollary:SpectralInvariantsOrbisurfaces} $(i)$ and the Gau{\ss}-Bonnet formula, the spectrum determines the Euler characteristic $\chi(\mathcal{O})$. 

Furthermore, since the orbisurface $\mathcal{O}$ is orientable, its mirror locus is trivial and all singular points are cone points. Thus, by Corollary \ref{corollary:SpectralInvariantsOrbisurfaces} $(iv)$, we can deduce from the spectrum the total number of all cone points as well as the multiset of all their orders. Therefore the spectrum determines the Euler characteristic of the underlying space through \eqref{equation:EulerCharacteristicOrbifolds}.
\end{proof}

It is well-known that the underlying space of an orbisurface is always a topological surface, possibly with boundary (see \citep{thurston}). Further, this underlying topological surface has to be orientable and without boundary if the orbifold is known to be orientable. Hence, Corollary \ref{corollary:OrientableOrbisurfacesSameUnderlyingSpace} implies that if two closed orientable orbisurfaces of constant curvature $\kappa\neq 0$ are isospectral, then they have the same underlying topological space, i.e. the underlying topological spaces are homeomorphic. This fact was previously known in the special case of closed orientable orbisurfaces of constant curvature $\kappa = -1$ (see \citep[Proposition 3.3]{Strohmaier}). In their article \citep{Strohmaier}, E. Dryden and A. Strohmaier generalise Huber's theorem to orientable orbisurfaces of constant curvature $\kappa=-1$, using the Selberg trace formula for the wave kernel, from which, in particular, the statement of Corollary \ref{corollary:SpectralInvariantsOrbisurfaces} $(iv)$ follows. Then they use this result with Weyl's asymptotic formula in order to obtain Corollary \ref{corollary:OrientableOrbisurfacesSameUnderlyingSpace} for $\kappa=-1$. Thus, Corollary \ref{corollary:OrientableOrbisurfacesSameUnderlyingSpace} is a new proof of \citep[Proposition 3.3]{Strohmaier} using only heat invariants, in addition to being a generalisation of it. 

%


%

Suppose now that the mirror locus of the orbisurface $\mathcal{O}$ is not trivial. Then by Corollary \ref{corollary:SpectralInvariantsOrbisurfaces} $(i), (ii)$ and the Gau{ss}-Bonnet theorem, the Euler characteristic $\chi(\mathcal{O})$ is determined by the spectrum. Furthermore, if two isospectral closed orbisurfaces with non-empty mirror locus are given, then Corollary \ref{corollary:SpectralInvariantsOrbisurfaces} $(i), (iii)$ provide some restrictions on the singular sets. Unfortunately it is not possible in general to detect which elements of the multiset in Corollary \ref{corollary:SpectralInvariantsOrbisurfaces} $(iii)$ correspond to dihedral points and which to cone points. The simplest classes of orbifolds for which the multiset of orders can be used to obtain all information about the orbifold are the following.

\begin{corollary}
\label{corollary:IsospectralOrbifoldsNoConeNoDihedral}
Let $\mathcal{D}_{\kappa}$, $\mathcal{C}_{\kappa}$ denote the set of all orbisurfaces with constant curvature $\kappa\neq 0$ and without any cone point, respectively dihedral point. Within each of these classes the spectrum determines $\chi(\mathcal{O})$, $\chi(X_{\mathcal{O}})$, and the singular set of the orbifold; that is, the length of the mirror locus and the total number of dihedral points together with their orders, respectively the total number of cone points with all orders.
\end{corollary}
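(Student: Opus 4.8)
The statement splits into two symmetric cases: $\mathcal{O}\in\mathcal{D}_\kappa$ (no cone points) and $\mathcal{O}\in\mathcal{C}_\kappa$ (no dihedral points). In both cases the starting point is Corollary \ref{corollary:SpectralInvariantsOrbisurfaces}. First I would extract the curvature $\kappa$: by hypothesis $\kappa$ is a fixed nonzero constant attached to the class, so it is available. Then the length $|\ml|$ of the mirror locus is a spectral invariant by part $(i)$, and $\chi(\mathcal{O})$ follows immediately from the Gau\ss-Bonnet formula \eqref{equation:GaussBonnetOrbifolds}, i.e.\ $\mathrm{vol}(\mathcal{O})\cdot\kappa=2\pi\chi(\mathcal{O})$, since $\mathrm{vol}(\mathcal{O})$ is spectrally determined by part $(i)$ and $\kappa$ is known.

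\textbf{Case $\mathcal{O}\in\mathcal{C}_\kappa$ (no dihedral points).} Here $M=0$, so the multiset of part $(iii)$ reduces to $\{n_1,n_1,\dots,n_N,n_N\}$, from which one reads off $N$ (half the cardinality) and the multiset of cone point orders $\{n_1,\dots,n_N\}$ (take every second element after sorting, or equivalently halve the multiplicities). Alternatively one can invoke part $(iv)$ directly if the mirror locus happens to be trivial; but even with nontrivial mirror locus the absence of dihedral points makes part $(iii)$ unambiguous. Thus the singular set is determined: the mirror locus length from $(i)$, and the cone points with orders as above. Finally $\chi(X_{\mathcal{O}})$ is recovered from $\chi(\mathcal{O})$, the orders $n_i$, and Definition \ref{definition:EulerCharacteristicOrbifolds}, noting that the dihedral sum in \eqref{equation:EulerCharacteristicOrbifolds} is empty.

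\textbf{Case $\mathcal{O}\in\mathcal{D}_\kappa$ (no cone points).} Now $N=0$ and the multiset of part $(iii)$ is $\{m_1,\dots,m_M\}$, so $M$ and the orders $2m_1,\dots,2m_M$ of the dihedral points are spectrally determined. The mirror locus length is again given by $(i)$. Then $\chi(X_{\mathcal{O}})$ follows from $\chi(\mathcal{O})$, the $m_i$, and \eqref{equation:EulerCharacteristicOrbifolds} with the cone sum empty. The main subtlety worth spelling out is that in the general mixed case one cannot separate dihedral from cone contributions in the multiset of $(iii)$ — this is exactly the obstruction the corollary sidesteps by restricting to $\mathcal{D}_\kappa$ or $\mathcal{C}_\kappa$, where one of $M$, $N$ is a priori zero. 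I expect no real obstacle here: the whole argument is a bookkeeping assembly of Corollary \ref{corollary:SpectralInvariantsOrbisurfaces}, the Gau\ss-Bonnet formula, and the definition of $\chi(\mathcal{O})$; the only point requiring a sentence of care is the observation (already noted in the text) that within $\mathcal{D}_\kappa$ every order in the part-$(iii)$ multiset is a dihedral order and within $\mathcal{C}_\kappa$ every order is a cone order, so the multiset unambiguously recovers the singular data.
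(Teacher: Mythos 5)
Your argument is correct and is essentially the paper's own proof: both reduce everything to Corollary \ref{corollary:SpectralInvariantsOrbisurfaces}, observe that within $\mathcal{D}_{\kappa}$ (resp.\ $\mathcal{C}_{\kappa}$) the multiset of part $(iii)$ consists solely of dihedral (resp.\ cone) orders, and then recover $\chi(\mathcal{O})$ via Gau\ss--Bonnet and $\chi(X_{\mathcal{O}})$ via \eqref{equation:EulerCharacteristicOrbifolds}. Your write-up just spells out the bookkeeping (halving the cardinality and multiplicities in the cone-point case) that the paper leaves implicit.
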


\begin{proof}
This follows easily from Corollary \ref{corollary:SpectralInvariantsOrbisurfaces}. Note that within each class the numbers in the  multiset given in Corollary \ref{corollary:SpectralInvariantsOrbisurfaces} $(iii)$ correspond by assumption solely to dihedral points, respectively solely to cone points. Thus all information about the singular set and the topology of the orbifold can be deduced from the spectrum as claimed.
\end{proof}

What happens if we consider orbisurfaces with dihedral as well as cone points? Sometimes a partial answer about the singular set is possible. For example, a cone point always contributes its order twice to this multiset. Thus, if a number in the multiset appears only once, then we know it must come from a dihedral point. More generally, we can prove the following result.

\begin{corollary}
\label{corollary:SpectrumOfNonOrientableOrbisurfaces}
Let $\mathcal{A}_{1}$ denote the class of all orbisurfaces with constant nonzero curvature which have at least one dihedral point and at least one cone point and satisfy the following condition: Any two dihedral points, respectively any two cone points, have different orders. Then within $\mathcal{A}_1$, the spectrum again determines $\chi(\mathcal{O}),$ $\chi(X_{\mathcal{O}})$, the length of the mirror locus, the number of dihedral points with their order, and the number of cone points with their orders.
\end{corollary}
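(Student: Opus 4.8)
The statement is a refinement of Corollary \ref{corollary:SpectralInvariantsOrbisurfaces}, restricted to the class $\mathcal{A}_1$. The plan is to run the same machinery as in Corollary \ref{corollary:SpectralInvariantsOrbisurfaces}, and then exploit the extra hypotheses on $\mathcal{A}_1$ to disentangle the cone-point contributions from the dihedral-point contributions, which is the only genuinely new point here. First I would recall from Theorem \ref{theorem:OrbifoldAsymptotikKonstanteKruemmung} that for an orbisurface $\mathcal{O}$ of constant curvature $\kappa\neq 0$, the heat trace has an asymptotic expansion whose coefficients of integer powers of $t$ are built from $a_\nu(\mathcal{O})=\tfrac{\mathrm{vol}(\mathcal{O})}{\nu!4^\nu}\sum_{\ell}\binom{\nu}{\ell}(-4)^\ell B_{2\ell}(\tfrac12)\kappa^\nu$ (the interior term, which involves $\mathrm{vol}(\mathcal{O})=2\pi\chi(\mathcal{O})/\kappa$), plus, for each dihedral point of order $2m_i$, the series $\tfrac12 C|_{k=m_i}$, and for each cone point of order $n_j$ the series $C|_{k=n_j}$, where $C$ is as in \eqref{equation:DihedralPointContribution}; the mirror locus contributes only half-integer powers. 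Since $\kappa$ is assumed known (for $\mathcal{A}_1$ the mirror locus is non-empty, being the boundary of the reflector edges attached to the dihedral points, so $\kappa$ is spectrally determined by Corollary \ref{corollary:SpectralInvariantsOrbisurfaces}(ii)), the spectrum fixes $\mathrm{vol}(\mathcal{O})$, hence $\chi(\mathcal{O})$ via Gau{\ss}--Bonnet \eqref{equation:GaussBonnetOrbifolds}, and also fixes the length of the mirror locus via the half-integer-power coefficients together with Corollary \ref{corollary:ABCandCoefficients}(i).

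Next I would isolate the combined cone/dihedral contribution. Subtracting the known interior part $\tfrac1{4\pi t}\sum_\nu a_\nu(\mathcal{O})t^\nu$ from the integer-power part of the asymptotic expansion, the spectrum determines, for every $\nu\in\mathbb{N}_0$, the value of
\begin{align*}
S_\nu:=\sum_{i=1}^{M}\tfrac12\,\kappa^\nu\!\!\sum_{\ell=0}^{\nu}\frac{2}{4^\ell\ell!}c^{\mathbb{S}}_{\nu-\ell}\!\Big(\tfrac{\pi}{m_i}\Big)+\sum_{j=1}^{N}\kappa^\nu\!\!\sum_{\ell=0}^{\nu}\frac{2}{4^\ell\ell!}c^{\mathbb{S}}_{\nu-\ell}\!\Big(\tfrac{\pi}{n_j}\Big).
\end{align*}
Using the explicit formula \eqref{equation:DefinitionCoefficientsALune} for $c^{\mathbb{S}}_{\ell}(\pi/k)$, each $c^{\mathbb{S}}_{\ell}(\pi/k)$ is a polynomial in $k^2$ of degree $\ell+1$ with leading coefficient a nonzero multiple of $B_{2\ell+2}$ (nonzero since Bernoulli numbers of even index never vanish), so after the Cauchy-type convolution with $\tfrac{1}{4^\ell\ell!}$ the highest-degree-in-$k$ term of $\sum_{\ell=0}^{\nu}\tfrac{2}{4^\ell\ell!}c^{\mathbb{S}}_{\nu-\ell}(\pi/k)$ is a nonzero constant times $k^{2\nu+2}$. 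Hence the leading behaviour of $S_\nu$ as $\nu\to\infty$, for each fixed weight, recovers the power sums $\tfrac12\sum_i m_i^{2\nu+2}+\sum_j n_j^{2\nu+2}$; more precisely, by the same inductive argument as in the proof of Theorem \ref{theorem:SpectralInvariantsAnglesEulerCharacteristic} one peels off the sequence $(\,\tfrac12\sum_i m_i^{2p}+\sum_j n_j^{2p}\,)_{p\ge 1}$, and from this the multiset $\{m_1,\dots,m_M,n_1,n_1,\dots,n_N,n_N\}$ (each $m_i$ with weight $\tfrac12$, each $n_j$ with weight $1$) is determined exactly as in Corollary \ref{corollary:SpectralInvariantsOrbisurfaces}(iii). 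So far this is just a restatement of what Corollary \ref{corollary:SpectralInvariantsOrbisurfaces}(iii) already gives.

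\textbf{The main step} is now to separate the $m_i$ from the $n_j$ using the $\mathcal{A}_1$-hypothesis. Knowing the weighted multiset means: for each positive integer $k$ we know the rational number $w(k):=\tfrac12\cdot\#\{i:m_i=k\}+\#\{j:n_j=k\}$. Under the $\mathcal{A}_1$-condition, $\#\{i:m_i=k\}\in\{0,1\}$ and $\#\{j:n_j=k\}\in\{0,1\}$, so $w(k)\in\{0,\tfrac12,1,\tfrac32\}$, and reading the fractional/integer parts uniquely recovers both indicator values: $w(k)=\tfrac12$ forces one dihedral point of order $2k$ and no cone point of order $k$; $w(k)=1$ forces one cone point of order $k$ and no dihedral point of order $2k$; $w(k)=\tfrac32$ forces one of each; $w(k)=0$ forces neither. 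Since $\mathcal{O}\in\mathcal{A}_1$ has at least one dihedral and at least one cone point, the data is nonempty and consistent. This yields the number $M$ of dihedral points together with their orders $2m_1,\dots,2m_M$, and the number $N$ of cone points with their orders $n_1,\dots,n_N$. Finally, plugging $\chi(\mathcal{O})$, the $m_i$, and the $n_j$ into Definition \ref{definition:EulerCharacteristicOrbifolds}, namely $\chi(X_{\mathcal{O}})=\chi(\mathcal{O})+\tfrac12\sum_i(1-\tfrac1{m_i})+\sum_j(1-\tfrac1{n_j})$, recovers $\chi(X_{\mathcal{O}})$. The only subtlety to check carefully is that the weighted multiset really is spectrally determined with the correct weights $\tfrac12$ and $1$ (rather than some lumped-together version), which requires tracking the factor $\tfrac12$ in front of the dihedral contribution $C$ in Theorem \ref{theorem:OrbifoldAsymptotikKonstanteKruemmung}(iii) through the peeling-off argument; I expect this bookkeeping, rather than any new analytic input, to be the main place where care is needed.
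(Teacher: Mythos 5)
Your proposal is correct and follows essentially the same route as the paper: obtain $\kappa$ and $\chi(\mathcal{O})$ from the non-trivial mirror locus via Corollary \ref{corollary:SpectralInvariantsOrbisurfaces}(ii) and Gau{\ss}--Bonnet, invoke the spectrally determined multiset of Corollary \ref{corollary:SpectralInvariantsOrbisurfaces}(iii), and use the distinct-orders hypothesis to read off from the multiplicity of each value whether it comes from a dihedral point, a cone point, or both, then recover $\chi(X_{\mathcal{O}})$ from \eqref{equation:EulerCharacteristicOrbifolds}. The only blemish is in your (redundant) re-derivation of Corollary \ref{corollary:SpectralInvariantsOrbisurfaces}(iii): because of the prefactor $\frac{1}{4k}$ in \eqref{equation:DefinitionCoefficientsALune}, the leading term is a nonzero multiple of $k^{2\nu+1}$, not $k^{2\nu+2}$, but this does not affect the argument since that step is covered by the citation.
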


\begin{proof}
Since $\mathcal{O}\in \mathcal{A}_{1}$ has a dihedral point, it must have non-trivial mirror locus. Thus by Corollary \ref{corollary:SpectralInvariantsOrbisurfaces} $(ii)$ the value of the curvature is determined and therefore, by the Gau{ss}-Bonnet theorem, $\chi(\mathcal{O})$ is also determined. 

Since each dihedral point has a unique order among all dihedral points and similarly, each cone point has a unique order among all cone points, we can detect which elements in the multiset in Corollary \ref{corollary:SpectralInvariantsOrbisurfaces} $(iii)$ come from dihedral points and which from cone points. Consequently we can detect the number of dihedral points with orders as well as the number of cone points with orders. Thus, $\chi(X_{\mathcal{O}})$ can be computed by \eqref{equation:EulerCharacteristicOrbifolds}.
\end{proof}

In \citep[Proposition 5.22]{Gordon08} the authors show that within the class of orbisurfaces of constant curvature $\kappa>0$ the spectrum determines the orbifold. They prove this result by comparing the first few heat invariants of each such orbisurface (see \citep[Table 1]{Gordon08}). Most of those orbisurfaces can easily be distinguished through Corollary \ref{corollary:SpectralInvariantsOrbisurfaces} $(iii)$ and taking into account whether the mirror loci are trivial or not. In fact, all but three pairs of orbisurfaces can be distinguished this way, namely (in the notation of \citep{Gordon08}): $\mathcal{O}(\ast m,m)$ and $\mathcal{O}(m,\ast)$; $\mathcal{O}(\ast 2,2,m)$ and $\mathcal{O}(2, \ast m)$; $\mathcal{O}(\ast 2,3,3)$ and $\mathcal{O}(3, \ast 2)$. In those remaining pairs, in which the orbifolds have the same multiset as well as non-trivial mirror loci, the orbifolds can be distinguished by comparing the length of the mirror loci as it is said in the proof of \citep[Proposition 5.22]{Gordon08}. 


One can restrict the class of orbifolds in different ways in order to obtain further results. For example, if one fixes the curvature $\kappa\neq 0$ and the value of $M+N$, where $M$ and $N$ are defined as in Corollary \ref{corollary:SpectralInvariantsOrbisurfaces}, then the spectrum determines the value of $M$ as well as $N$ (note that $M, N$ can be deduced from the values $M+N$ and $M+2N$, where the latter is determined by the spectrum). \medskip

Recall that an angle $\gamma=\frac{\pi}{k}$, $k\in\mathbb{N}$, contributes to the heat invariants of a polygon as much as a dihedral point of order $2k$ contributes to the orbifold heat coefficients, provided the curvature of the polygon and the orbifold are constant and equal. In other words, $V_{\kappa}\left(\frac{\pi}{k}\right)=\frac{1}{2} C$. In the remaining part of this section we will give an alternative proof for the equation $V_{\kappa}(\frac{\pi}{k}) = \frac{1}{2}C$ for $\kappa<0$ by using the formulas in \citep{Watson} and orbifold theory (more precisely Theorem \ref{theorem:OrbifoldAsymptotikKonstanteKruemmung} and the Remark after that theorem).

\begin{lemma}
\label{lemma:MetrikVariationSphaere}
Let $\mathbb{S}^2\subset\mathbb{R}^3$ be the standard sphere, let $x_N:=(0,0,1)$, and let $\kappa\in\mathbb{R}$. Then there exists a Riemannian metric $g$ on $\mathbb{S}^2$ which is $O(2)$-invariant and has constant curvature $\kappa$ in a neighborhood of $x_N$. \emph{(}Of course, this lemma is trivial for $\kappa>0$.\emph{)}
\end{lemma}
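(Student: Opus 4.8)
The plan is to construct the metric $g$ explicitly as a warped-product (rotationally symmetric) metric on $\mathbb{S}^2$ and then arrange the warping function so that the curvature is the constant $\kappa$ near the north pole $x_N$ while still extending smoothly over the whole sphere. Concretely, using geodesic polar coordinates centred at $x_N$, write $g = dr^2 + \phi(r)^2\, d\theta^2$ for $r\in[0,\pi]$ and $\theta\in[0,2\pi)$, where $\phi:[0,\pi]\to[0,\infty)$ is smooth. Such a metric is automatically $O(2)$-invariant, since rotations about the $z$-axis act by $\theta\mapsto\theta+\text{const}$ and the reflection $\tilde S_0$ acts by $\theta\mapsto-\theta$, and both preserve $g$. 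Its Gaussian curvature is the classical formula $\kappa(r) = -\phi''(r)/\phi(r)$. For $g$ to be a genuine smooth metric on $\mathbb{S}^2$ one needs the usual boundary conditions at the two poles: $\phi(0)=0$, $\phi'(0)=1$, all even derivatives of $\phi$ vanish at $0$, and symmetrically $\phi(\pi)=0$, $\phi'(\pi)=-1$ with the analogous conditions at $r=\pi$.

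First I would treat the model solution. On a small interval $[0,\delta]$ I want $\kappa(r)\equiv\kappa$, which means $\phi''+\kappa\phi=0$ with $\phi(0)=0$, $\phi'(0)=1$; the solution is $\phi(r) = s_\kappa(r)$, the generalised sine, i.e. $\sin(\sqrt{\kappa}\,r)/\sqrt{\kappa}$ for $\kappa>0$, $r$ for $\kappa=0$, and $\sinh(\sqrt{-\kappa}\,r)/\sqrt{-\kappa}$ for $\kappa<0$. This $\phi$ is smooth on $[0,\delta]$, is strictly positive on $(0,\delta]$, and satisfies the required pole conditions at $r=0$ automatically (it is an odd function of $r$, so all its even derivatives vanish at $0$). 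Next I would extend $\phi$ from $[0,\delta]$ to all of $[0,\pi]$ as a smooth, strictly positive function on $(0,\pi)$ that agrees with $s_\kappa$ near $0$ and near $\pi$ behaves like $s_{+1}(\pi-r) = \sin(\pi-r)$ (the round-sphere model), so that the pole conditions at $r=\pi$ also hold. Such an interpolation is a routine partition-of-unity / smooth bump construction: pick $\delta<\pi/2$, let $\phi$ equal $s_\kappa$ on $[0,\delta]$, equal $\sin(\pi-r)$ on $[\pi-\delta,\pi]$, and on the middle region $[\delta,\pi-\delta]$ let it be any smooth positive function joining the two pieces with matching one-sided derivatives of all orders — e.g. using a smooth monotone cutoff $\chi$ one can set $\phi = (1-\chi)\,\tilde s_\kappa + \chi\,\tilde s_{+1}$ for suitable smooth positive extensions $\tilde s_\kappa, \tilde s_{+1}$ of the two model functions, shrinking $\delta$ if necessary to keep the convex combination positive. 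Then $g=dr^2+\phi(r)^2d\theta^2$ is a smooth $O(2)$-invariant Riemannian metric on $\mathbb{S}^2$ whose curvature equals $\kappa$ on the geodesic ball of radius $\delta$ about $x_N$.

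The main obstacle — really the only nontrivial point — is verifying that the patched $\phi$ genuinely yields a \emph{smooth} metric across the poles and across the gluing region, and that it can be kept strictly positive on $(0,\pi)$. Positivity is easy to guarantee by taking $\delta$ small (both model functions $s_\kappa$ and $\sin$ are positive and bounded away from $0$ on $[\delta,\pi-\delta]$ once $\delta$ is fixed, and a convex combination of positive functions is positive). Smoothness at the poles reduces to the standard fact that a metric $dr^2+\phi(r)^2d\theta^2$ extends smoothly across $r=0$ iff $\phi$ extends to an odd smooth function with $\phi'(0)=1$, which holds for $s_\kappa$; the analogous statement at $r=\pi$ holds by construction. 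Smoothness in the interior is immediate since there $\phi$ is a finite smooth expression. Finally, $O(2)$-invariance is built in from the rotational symmetry of the ansatz, as noted above. This completes the construction, and the lemma follows. For $\kappa>0$ one may of course simply take $g$ to be $\frac{1}{\kappa}$ times the round metric, which has constant curvature $\kappa$ everywhere; the construction above is only needed for $\kappa\le 0$.
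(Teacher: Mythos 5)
Your construction is correct, but it takes a genuinely different route from the paper. The paper's proof is a gluing argument: it fixes a complete model surface $(H,h)$ of constant curvature $\kappa$, uses the geodesic exponential maps (via a linear isometry $T_{x_N}\mathbb{S}^2\to T_PH$) to produce a diffeomorphism $\varphi\colon B_R(x_N)\to B_R(P)$, and sets $g:=f\,\varphi^{\ast}h+(1-f)\,g_{\mathbb{S}^2}$ for a smooth $O(2)$-invariant cutoff $f$ equal to $1$ on $B_{R/2}(x_N)$ and supported in $B_R(x_N)$; since $g=\varphi^{\ast}h$ where $f\equiv 1$, the curvature is exactly $\kappa$ there, and $O(2)$-invariance follows because $\varphi^{\ast}h$ is rotationally symmetric in exponential coordinates. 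You instead build the metric globally and explicitly as a warped product $dr^2+\phi(r)^2\,d\theta^2$, prescribing $\phi=s_\kappa$ near $r=0$, $\phi=\sin(\pi-r)$ near $r=\pi$, and interpolating positively in between. Both arguments are sound. The paper's version avoids any explicit coordinate computation and sidesteps the pole-smoothness conditions entirely (the round metric is untouched near the south pole, and near the north pole one has an honest pullback of a smooth metric), at the cost of invoking the exponential map and the positive-definiteness of a convex combination of metrics; your version is more concrete and exhibits the metric in closed form on the constant-curvature region, at the cost of having to verify the standard parity conditions on $\phi$ at both poles and the curvature formula $K=-\phi''/\phi$. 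One small simplification available to you: on the middle region $[\delta,\pi-\delta]$ both model functions $s_\kappa(r)$ and $\sin(r)$ are already smooth and strictly positive, so no separate extensions $\tilde s_\kappa,\tilde s_{+1}$ are needed and no shrinking of $\delta$ is required for positivity — a convex combination of positive functions is automatically positive.
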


\begin{proof}
Let $(H, h)$ be a two-dimensional complete Riemannian manifold of constant curvature $\kappa$, let $P\in H$, and let $g_{\mathbb{S}^2}$ be the standard metric on $\mathbb{S}^2$. We choose $R\in (0,\pi)$ smaller than the injectivity radius of $H$ at $P$. Choose a (linear) Euclidean isometry from $T_{x_N}\mathbb{S}^2$ to $T_P H$, and let $\varphi: B_{R}(x_N)\rightarrow B_{R}(P)$ be the corresponding diffeomorphism given by the geodesic exponential maps. Let $f:\mathbb{S}^2\rightarrow [0,1]$ be a smooth $O(2)$-invariant function such that $f_{\vert B_{\frac{R}{2}}(x_N)}\equiv 1$ and $f$ vanishes outside $B_{R}(x_N)$. Then $g:=f\cdot \varphi^{\ast}h + (1-f)g_{\mathbb{S}^2}$ has the desired properties.
\end{proof}

Note that the results of Section \ref{section:ExamplesOrbifolds} hold for $(\mathbb{S}^2, g)$, if $g$ denotes a Riemannian metric as in Lemma \ref{lemma:MetrikVariationSphaere}.

\begin{theorem}
\label{theorem:SphaerischenKoeffizientenOrbifolds}
Let $(N,h)$ be a two-dimensional complete Riemannian manifold whose Gaussian curvature is bounded. Let $\Omega \subset N$ be a polygon which has an angle $\gamma := \frac{\pi}{k}$ at some vertex $P$ of $\Omega$, where $k\in\mathbb{N}$. Let $W_{\gamma}(P)$ denote the wedge corresponding to the angle $\gamma$ and suppose that the curvature is constant and equal to $\kappa\in\mathbb{R}$ in some neighborhood of $P$. Let $g$ be a Riemannian metric on $\mathbb{S}^2$ as in Lemma \emph{\ref{lemma:MetrikVariationSphaere}}. Suppose $\tilde{R}>0$ is such that $(B_{\tilde{R}}(P),h)$ and $(B_{\tilde{R}}(x_N),g)$ have constant curvature and are isometric, where $x_N=(0,0,1)$. Further, let $i(P)$ be the injectivity radius of $N$ at $P$ and let $i(x_N)$ denote the injectivity radius of $(\mathbb{S}^2,g)$ at $x_N$. For any $r\in (0, i(P)]$ let $W_r(P)$ denote the circle sector at $P$ with angle $\gamma$ and radius $r$, such that $W_r(P)\subset W_{\gamma}(P)$. Then for all $R>0$ such that $R< \frac{1}{2}\cdot \min\{ i(P), i(x_N), \tilde{R} \}$ and such that $W_{2R}(P)\cap \Omega$ is a disjoint union of circle sectors at $P$, the functions
\begin{align*}
t\mapsto \int\limits_{W_{R}(P)} K_{\Omega}(x,x;t) dx
\end{align*}
and
\begin{align*}
t\mapsto \int\limits_{B_R(\proj{Z}{x_N})} K_{(M/\mathbb{Z}_k,g)}(x,x;t) dx - \int\limits_{B_R(\proj{D}{x_N})} K_{(M/\mathbb{D}_k,g)}(x,x;t) dx
\end{align*}
have the same asymptotic expansion as $t\searrow 0$.
\end{theorem}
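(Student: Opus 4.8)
The plan is to reduce the left-hand function to a quantity computed on the model sphere, exactly as in the passage from Lemma~\ref{lemma:PNFB} to the expansion of $Z_{\Omega}(t)$ in Section~\ref{section:ExpansionTrace}, but now carried out locally around the single vertex $P$. First I would invoke the local principle of not feeling the boundary, Lemma~\ref{lemma:PNFBlokal}~$(ii)$: since $W_{2R}(P)\subset\Omega$ and $W_{2R}(P)\cap\Omega$ is a disjoint union of circle sectors at $P$, there exist constants $T,C,D>0$ with $0\le K_{\Omega}(x,x;t)-K_{W_{2R}(P)}(x,x;t)\le \frac{C}{t}e^{-D/t}$ for all $x\in W_R(P)$ and $t\in(0,T]$. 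Integrating over $W_R(P)$ (a set of finite volume) shows that $t\mapsto\int_{W_R(P)}K_{\Omega}(x,x;t)\,dx$ has the same asymptotic expansion as $t\mapsto\int_{W_R(P)}K_{W_{2R}(P)}(x,x;t)\,dx$ as $t\searrow0$.

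Next I would transport the picture to $(\mathbb{S}^2,g)$. Because $R<\tfrac12\min\{i(P),i(x_N),\tilde R\}$, the isometry $\varphi\colon (B_{\tilde R}(P),h)\to(B_{\tilde R}(x_N),g)$ from Lemma~\ref{lemma:MetrikVariationSphaere} maps $W_{2R}(P)$ onto a circle sector $W_{2R}(x_N)$ at $x_N$ of angle $\gamma=\frac{\pi}{k}$ with respect to suitable polar coordinates; we may arrange (using the $O(2)$-invariance of $g$) that the two bounding rays of $W_{2R}(x_N)$ lie on $\mathrm{Fix}(\tilde S_0)\cup\mathrm{Fix}(\tilde S_1)$. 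Since heat kernels are preserved by isometries, $\int_{W_R(P)}K_{W_{2R}(P)}(x,x;t)\,dx=\int_{W_R(x_N)}K_{W_{2R}(x_N)}(x,x;t)\,dx$, where $W_R(x_N)=\varphi(W_R(P))$. Applying Lemma~\ref{lemma:PNFBlokal}~$(iii)$ on $(\mathbb{S}^2,g)$ with the nested sectors $W_R(x_N)\subset W_{2R}(x_N)\subset W_{i(x_N)}(x_N)$ (a genuine wedge) replaces $K_{W_{2R}(x_N)}$ by the wedge heat kernel $K_{W_{i(x_N)}(x_N)}$ up to the same exponentially small error; and a second application of Lemma~\ref{lemma:PNFBAllgemein} relates the wedge heat kernel to the restriction of $K_{\mathbb{S}^2(g)}$ in the spirit of Lemma~\ref{lemma:PNFB}~$(iii)$. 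The upshot is that the left-hand function has the asymptotic expansion of $t\mapsto\int_{W_R(x_N)}K_{W_{i(x_N)}(x_N)}(x,x;t)\,dx$.

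Now I would run the argument of Section~\ref{section:ExpansionTrace} (reflections, the decomposition of a half-disc, and Watson's lemma), but \emph{localised near $x_N$}, to identify this last quantity. Alternatively, and more cleanly, I would apply Theorem~\ref{theorem:GEqualHeatTraceLune}, whose formula \eqref{equation:GeneralHeatTraceRelation2} holds for \emph{any} $O(2)$-invariant open $U\subset M=(\mathbb{S}^2,g)$; taking $U$ to be a small $O(2)$-invariant neighbourhood of $x_N$ contained in $B_{2R}(x_N)$ and using that $\pi_{\mathbb{Z}_k}(U)$, $\pi_{\mathbb{D}_k}(U)$ are the geodesic discs $B_R(\pi_{\mathbb{Z}_k}(x_N))$, $B_R(\pi_{\mathbb{D}_k}(x_N))$ for the correct radius (after adjusting $R$), \eqref{equation:GeneralHeatTraceRelation2} gives
\begin{align*}
\int\limits_{B_R(\proj{Z}{x_N})} K_{(M/\mathbb{Z}_k,g)}(x,x;t)\,dx - \int\limits_{B_R(\proj{D}{x_N})} K_{(M/\mathbb{D}_k,g)}(x,x;t)\,dx = \int\limits_{U\cap\Omega_{\mathrm{lune}}} K_{\Omega_{\mathrm{lune}}}(x,x;t)\,dx,
\end{align*}
where $\Omega_{\mathrm{lune}}$ is the spherical lune of angle $\frac{\pi}{k}$ in $(\mathbb{S}^2,g)$. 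Since $U\cap\Omega_{\mathrm{lune}}$ is a circle sector at $x_N$ of angle $\frac{\pi}{k}$ sitting in a region of constant curvature $\kappa$, it is isometric to $W_{R'}(P)$ for a suitable $R'$, and again by Lemma~\ref{lemma:PNFBlokal} and isometry-invariance $\int_{U\cap\Omega_{\mathrm{lune}}}K_{\Omega_{\mathrm{lune}}}$ has the same asymptotic expansion as $\int_{W_R(P)}K_{\Omega}(x,x;t)\,dx$. Matching the two chains of asymptotic equalities yields the claim.

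The main obstacle is bookkeeping the several radii and making the geodesic-disc identifications precise: one must check that $\pi_{\mathbb{Z}_k}$ and $\pi_{\mathbb{D}_k}$ restricted to an $O(2)$-invariant neighbourhood of $x_N$ genuinely produce geodesic discs $B_R(\proj{Z}{x_N})$ and $B_R(\proj{D}{x_N})$ of the \emph{same} radius $R$ appearing in the statement, and that all the neighbourhoods involved stay inside the regions of constant curvature where the isometries of Lemma~\ref{lemma:MetrikVariationSphaere} are available; the constant-curvature hypothesis near $P$ and $x_N$ together with $R<\tfrac12\min\{i(P),i(x_N),\tilde R\}$ is exactly what makes this possible, but the matching of radii must be done carefully. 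The analytic input (Lemmas~\ref{lemma:PNFBAllgemein}, \ref{lemma:PNFB}, \ref{lemma:PNFBlokal}, \ref{lemma:ProbabilisticFormulaHeatKernel}, Theorem~\ref{theorem:GEqualHeatTraceLune}) is all in place, so beyond this geometric matching the proof is essentially an assembly of already-established facts.
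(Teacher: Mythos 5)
Your ``more cleanly'' alternative is precisely the paper's proof: Lemma \ref{lemma:PNFBlokal}~$(ii)$ applied on $N$ and then on $(\mathbb{S}^2,g)$ with the lune $\Omega'$ as the ambient polygon, followed by Theorem \ref{theorem:GEqualHeatTraceLune} with $U:=B_R(x_N)$, noting $W_R(x_N)=B_R(x_N)\cap\Omega'$. The radius-matching worry is moot, since $x_N$ is fixed by the whole group acting by isometries, so $\pi_{\mathbb{Z}_k}(B_R(x_N))=B_R(\proj{Z}{x_N})$ and likewise for $\mathbb{D}_k$ with the same $R$; the first route you sketch (via the full wedge and a localised rerun of Section \ref{section:ExpansionTrace}) is an unnecessary detour that you rightly abandon.
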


\begin{proof}
First, when Lemma \ref{lemma:PNFBlokal} $(ii)$ is applied to $N$, it follows that the function \begin{align*}
t\mapsto \int_{W_{R}(P)} K_{\Omega}(x,x;t) dx
\end{align*}  
has the same asymptotic expansion as the function 
\begin{align*}
t\mapsto \int_{W_{R}(P)} K_{W_{2R}(P)}(x,x;t) dx.
\end{align*}
 By assumption, $B_{2R}(P)$ and $B_{2R}(x_N)$ are isometric and thus we identify $B_{2R}(P)$ with $B_{2R}(x_N)$ (and accordingly the subsets $W_{R}(P)$ and $W_{2R}(P)$). Let $\Omega'\subset (\mathbb{S}^2,g)$ denote a $2$-gon as in Section \ref{section:ExamplesOrbifolds} with vertex $x_N$ and angle $\frac{\pi}{k}$ such that $W_{R}(P)\subset \Omega'$. Hence, by Lemma \ref{lemma:PNFBlokal} $(ii)$ (applied to $(\mathbb{S}^2,g)$) the function $t\mapsto \int_{W_{R}(P)} K_{W_{2R}(P)}(x,x;t) dx$ has the same asymptotic expansion as 
\begin{align*}
 t\mapsto \int_{W_{R}(x_N)} K_{\Omega'}(x,x;t) dx.
\end{align*} 
Note that $W_{R}(x_N)=B_R(x_N)\cap \Omega'$ and thus by Theorem \ref{theorem:GEqualHeatTraceLune} with $U:=B_R(x_N)$ the proof is complete. Note that $\proj{Z}{B_R(x_N)} = B_R(\proj{Z}{x_N})$ and $\proj{D}{B_R(x_N)} = B_R(\proj{D}{x_N})$.
\end{proof}

\begin{lemma}
\label{lemma:LemmaSchuethKegelpunktundDiederpunkt}
Let $\mathcal{O}$ be a two-dimensional closed Riemannian orbifold, $Q\in\mathcal{O}$ be a cone point or dihedral point. Suppose there exists some $R>0$ such that $B_R(Q)$ has constant curvature $\kappa$, and $\overline{B_R(Q)}$ does not contain any singular points except for $Q$ and, if $Q$ is a dihedral point, mirror points of the two reflector edges which end at $Q$.
\begin{itemize}
\item[$(i)$] If $Q$ is a cone point of order $k$, then
\begin{align*}
\int\limits_{B_R(Q)} K_{\mathcal{O}}(x,x;t)dx \overset{t\downarrow 0}{\sim} \frac{1}{4\pi t} \sum\limits_{\nu=0}^{\infty} a_{\nu}(B_R(Q)) t^{\nu} + C,
\end{align*}
where $a_{\nu}$ is defined as in \eqref{equation:OrbifoldHeatKoefficientsInterior} and $C$ as in \eqref{equation:DihedralPointContribution}.
\item[$(ii)$] If $Q$ is a dihedral point of order $2k$, then
\begin{align*}
\int\limits_{B_R(Q)} K_{\mathcal{O}}(x,x;t)dx \overset{t\downarrow 0}{\sim} \frac{1}{4\pi t} \sum\limits_{\nu=0}^{\infty} a_{\nu}(B_R(Q)) t^{\nu} + \frac{2R}{\sqrt{4\pi t}}\sum\limits_{\nu=0}^{\infty} \frac{1}{4^{\nu+1}\nu!}\kappa^{\nu}t^{\nu} + \frac{1}{2} C.
\end{align*}
\end{itemize}
\end{lemma}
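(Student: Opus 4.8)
\textbf{Proof proposal for Lemma \ref{lemma:LemmaSchuethKegelpunktundDiederpunkt}.}

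The plan is to localise the heat trace near the singular point $Q$ and then compare it with the heat trace of the model orbifolds $M/\mathbb{Z}_k$ and $M/\mathbb{D}_k$ studied in Section \ref{section:ExamplesOrbifolds}, for which the relevant asymptotic contributions were identified in Corollary \ref{corollary:ABCandCoefficients}. The key point is that the coefficients $b_i(\gamma,x)$ are \emph{local}: by \citep[Theorem 5.1]{Donnelly2} (as cited in \citep{Gordon08}) they depend only on the differential of $\gamma$ at $x$ and on the curvature tensor of $\mathcal{O}$ (and its covariant derivatives) in an arbitrarily small neighbourhood of $x$. Hence the contribution of the stratum through $Q$ to the asymptotic expansion of $\int_{B_R(Q)} K_{\mathcal{O}}(x,x;t)\,dx$ is completely determined by the geometry of $B_R(Q)$, which by hypothesis has constant curvature $\kappa$.

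First I would apply Theorem \ref{theorem:HeatTraceAsymptoticsOrbifoldQualitatively} (i.e.\ \citep[Theorem 4.8]{Gordon08}) directly to the open set $B_R(Q) \subset \mathcal{O}$: the asymptotic expansion of $\int_{B_R(Q)} K_{\mathcal{O}}(x,x;t)\,dx$ as $t\searrow 0$ splits into $\frac{1}{4\pi t}\sum_\nu a_\nu(B_R(Q)) t^\nu$ (the regular, or ``interior'', part, which by Theorem \ref{theorem:OrbifoldAsymptotikKonstanteKruemmung}(i) / Corollary \ref{corollary:HeatCoefficientsLocallySymmetricManifolds} has the stated form since $B_R(Q)$ has constant curvature $\kappa$) plus the contributions of the singular strata meeting $B_R(Q)$. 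By the hypothesis on $\overline{B_R(Q)}$, the only singular strata are: in case $(i)$, the single cone point $Q$ of order $k$; in case $(ii)$, the dihedral point $Q$ of order $2k$ together with the two half-reflector-edges of length $R$ ending at $Q$. In case $(i)$, since $B_R(Q)$ has constant curvature $\kappa$, the locality and universality of the $b_i(\gamma,x)$ (together with the fact that these coefficients depend on $\gamma$ only through $d\gamma_x$, which for a cone point of order $k$ is a rotation by a multiple of $2\pi/k$) force the cone-point contribution to coincide with the cone-point contribution $C$ computed for $M/\mathbb{Z}_k$ in Corollary \ref{corollary:ABCandCoefficients}(ii) and defined in \eqref{equation:DihedralPointContribution}; the argument is exactly the one in the proof of Theorem \ref{theorem:OrbifoldAsymptotikKonstanteKruemmung}(ii). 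This gives $(i)$.

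For $(ii)$ I would argue the same way: the dihedral-point contribution of $Q$ equals $\tfrac12 C$ by Theorem \ref{theorem:OrbifoldAsymptotikKonstanteKruemmung}(iii) (again invoking locality and constant curvature $\kappa$ near $Q$), and the two half-edges of mirror points, each of length $R$ and of isotropy order $2$, together contribute, by Theorem \ref{theorem:OrbifoldAsymptotikKonstanteKruemmung}(iv) / Corollary \ref{corollary:ABCandCoefficients}(i), the amount $\frac{2R}{\sqrt{4\pi t}}\sum_\nu \frac{1}{4^{\nu+1}\nu!}\kappa^\nu t^\nu$. Summing the interior part, the mirror part and $\tfrac12 C$ yields the claimed expansion. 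The only mild subtlety — and the step I expect to require the most care — is to make sure that the locality statement is applied correctly: one must check that $\int_{B_R(Q)} K_{\mathcal{O}}(x,x;t)\,dx$ really captures the \emph{full} stratum contributions of the strata it meets and nothing else, i.e.\ that truncating the integral to $B_R(Q)$ does not lose exponentially small pieces of those contributions and does not introduce artefacts from $\partial B_R(Q)$. This follows because the difference $K_{\mathcal{O}}(x,x;t) - K_{B_{2R}(Q)^{\mathrm{loc}}}(x,x;t)$ is $O(t^{-1} e^{-D/t})$ uniformly on $B_R(Q)$ for suitable $D>0$ (the orbifold version of the principle of not feeling the boundary, cf.\ Lemma \ref{lemma:PNFBlokal} lifted to the orbifold covering $B_{2R}(Q) = \widetilde{B}_{2R}/H$ with $H$ the local isotropy group), so the asymptotic expansion of the truncated integral is unchanged, while the stratum contributions of a compactly supported singular stratum are manifestly captured in full. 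With that observation in place, the two displayed asymptotic expansions follow by assembling the pieces, and the proof is complete.
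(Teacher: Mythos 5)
Your proposal is correct and follows essentially the same route as the paper: both reduce the claim to Theorem \ref{theorem:OrbifoldAsymptotikKonstanteKruemmung} (and the Remark after it) via the locality and universality of the Donnelly coefficients $b_i(\gamma,x)$, and then simply add up the interior, mirror-edge, and cone/dihedral-point contributions visible in $\overline{B_R(Q)}$. The only difference is how the truncation to $B_R(Q)$ is justified: you insert an orbifold version of the principle of not feeling the boundary, whereas the paper observes directly that the expansion in \citep[Theorem 4.8]{Gordon08} is assembled from local distinguished charts via a partition of unity, so that $\int_{B_R(Q)}K_{\mathcal{O}}(x,x;t)\,dx$ automatically captures exactly the contributions of the points of $\overline{B_R(Q)}$, making your extra comparison step unnecessary.
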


\begin{proof}
The proof follows directly from Theorem \ref{theorem:OrbifoldAsymptotikKonstanteKruemmung} and the Remark given after that theorem, if one takes into account the following: In order to obtain the asymptotic expansion of $Z_{\mathcal{O}}(t)$ in \citep[Theorem $4.8$]{Gordon08}, the authors consider the function $x\mapsto K_{\mathcal{O}}(x,x;t)$ in local orbifold charts (so called ``distinguished'' charts) and compute locally the heat kernel asymptotic expansion as $t\searrow 0$. The resulting coefficients only depend on the local geometry and are put together by a partition of unity and then integrated. Hence, the asymptotic expansion of $\int_U K_{\mathcal{O}}(x,x;t)dx$ captures exactly the contribution of the points (to the integrals which occur in $I_0$ and $I_N$), which are contained in any open cover of $\overline{U}$, i.e. exactly the points of $\overline{U}$. 
\end{proof}

From Theorem \ref{theorem:SphaerischenKoeffizientenOrbifolds} and Lemma \ref{lemma:LemmaSchuethKegelpunktundDiederpunkt} we obtain immediately the following corollary.

\begin{corollary}
\label{corollary:AlternativbeweisKoeffizientenEckeMittelsOrbifolds}
Let $(N,h)$ be a complete Riemannian manifold whose Gaussian curvature is bounded. Let $\Omega\subset N$ be a polygon which has an angle $\gamma:=\frac{\pi}{k}$ at some vertex $P$ of $\Omega$, where $k\in\mathbb{N}$. Let $W_r(P)$ be as in Theorem \emph{\ref{theorem:SphaerischenKoeffizientenOrbifolds}} and suppose the Gaussian curvature is constant in some neighborhood of $P$. Then there exists some $R_0>0$ such that for all $R\in (0,R_0)$:
\begin{align*}
\int\limits_{W_R(P)} K_{\Omega}(x,x;t)dx\overset{t\downarrow 0}{\sim} \frac{1}{4\pi t} \sum\limits_{\nu=0}^{\infty} a_{\nu}(W_R(P)) t^{\nu} - \frac{2R}{\sqrt{4\pi t}}\sum\limits_{\nu=0}^{\infty} \frac{1}{4^{\nu+1}\nu!}\kappa^{\nu}t^{\nu} + \frac{1}{2} C.
\end{align*}
\end{corollary}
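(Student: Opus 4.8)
The plan is to combine the two preceding results, namely Theorem~\ref{theorem:SphaerischenKoeffizientenOrbifolds} and Lemma~\ref{lemma:LemmaSchuethKegelpunktundDiederpunkt}, in a direct way. First I would fix the data: let $(N,h)$ and $\Omega\subset N$ be as in the hypothesis, with angle $\gamma=\frac{\pi}{k}$ at the vertex $P$, and let $g$ be a Riemannian metric on $\mathbb{S}^2$ as provided by Lemma~\ref{lemma:MetrikVariationSphaere}, chosen so that $(B_{\tilde R}(P),h)$ and $(B_{\tilde R}(x_N),g)$ are isometric for some $\tilde R>0$ (possible since the curvature is constant near $P$). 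Set $R_0:=\tfrac12\min\{i(P),\,i(x_N),\,\tilde R,\,\rho\}$, where $\rho>0$ is chosen small enough that $W_{2R}(P)\cap\Omega$ is a disjoint union of circle sectors at $P$ for all $R<R_0$; this is the $R_0$ whose existence the statement asserts.

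Next, for any $R\in(0,R_0)$, Theorem~\ref{theorem:SphaerischenKoeffizientenOrbifolds} tells us that
\begin{align*}
t\mapsto \int_{W_R(P)} K_{\Omega}(x,x;t)\,dx
\end{align*}
has the same asymptotic expansion as $t\searrow 0$ as
\begin{align*}
t\mapsto \int_{B_R(\proj{Z}{x_N})} K_{(\mathbb{S}^2/\mathbb{Z}_k,g)}(x,x;t)\,dx - \int_{B_R(\proj{D}{x_N})} K_{(\mathbb{S}^2/\mathbb{D}_k,g)}(x,x;t)\,dx .
\end{align*}
Now I would apply Lemma~\ref{lemma:LemmaSchuethKegelpunktundDiederpunkt} to each of the two terms separately. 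The point $\proj{Z}{x_N}$ is a cone point of order $k$ of the orbifold $(\mathbb{S}^2/\mathbb{Z}_k,g)$, and since $g$ has constant curvature $\kappa$ in a neighbourhood of $x_N$, the ball $B_R(\proj{Z}{x_N})$ has constant curvature $\kappa$ and contains no other singular points for $R$ small; hence by part $(i)$ of the lemma the first integral is asymptotic to $\frac{1}{4\pi t}\sum_\nu a_\nu(B_R(\proj{Z}{x_N}))t^\nu + C$. Similarly $\proj{D}{x_N}$ is a dihedral point of order $2k$ of $(\mathbb{S}^2/\mathbb{D}_k,g)$, its $R$-ball has constant curvature $\kappa$ and contains only the two reflector edges ending at it, so part $(ii)$ gives that the second integral is asymptotic to $\frac{1}{4\pi t}\sum_\nu a_\nu(B_R(\proj{D}{x_N}))t^\nu + \frac{2R}{\sqrt{4\pi t}}\sum_\nu \frac{1}{4^{\nu+1}\nu!}\kappa^\nu t^\nu + \frac12 C$.

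Finally I would subtract and identify terms. Under the isometric identification $B_R(P)\cong B_R(x_N)$, the curvature-invariant integrals agree: $a_\nu(B_R(\proj{Z}{x_N})) = a_\nu(B_R(\proj{D}{x_N})) = a_\nu(W_R(P))$, because the heat coefficient density $a_\nu$ is a universal polynomial in the curvature (here constant $\kappa$), and the regular part of $B_R(\proj{Z}{x_N})$, the regular part of $B_R(\proj{D}{x_N})$, and $W_R(P)$ all have the same Riemannian volume — each is a ``half-disc'' of radius $R$ (angle $\gamma$ out of a full $\pi$, resp. the quotient identifications). Hence the two $\frac{1}{4\pi t}$-series cancel except that one copy of $\frac{1}{4\pi t}\sum a_\nu(W_R(P))t^\nu$ survives, the two constant terms $C$ and $\frac12 C$ combine to give $C-\frac12 C=\frac12 C$, and the mirror-edge term $-\frac{2R}{\sqrt{4\pi t}}\sum_\nu\frac{1}{4^{\nu+1}\nu!}\kappa^\nu t^\nu$ carries over with its sign, since it appears only in the subtracted orbifold. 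This yields exactly the claimed expansion. The main obstacle I expect is the bookkeeping needed to justify the equality $a_\nu(B_R(\proj{Z}{x_N}))=a_\nu(B_R(\proj{D}{x_N}))=a_\nu(W_R(P))$ — one must be careful that the local heat-coefficient densities depend only on the local Riemannian geometry (as used in the proof of Lemma~\ref{lemma:LemmaSchuethKegelpunktundDiederpunkt}), and that the volume normalisations introduced by the $\tfrac{1}{|G|}$-factors in Definition~\ref{definition:Integration} are consistent with the half-disc $W_R(P)$; everything else is a direct substitution.
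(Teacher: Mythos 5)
Your route is exactly the paper's: the paper obtains this corollary ``immediately'' by applying Theorem \ref{theorem:SphaerischenKoeffizientenOrbifolds} to replace the polygon integral by the difference of the two orbifold integrals and then applying the two parts of Lemma \ref{lemma:LemmaSchuethKegelpunktundDiederpunkt}. Your choice of $R_0$, the application of both results, and the treatment of the terms $C-\frac{1}{2}C=\frac{1}{2}C$ and of the mirror-edge term are all fine.

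However, the step you yourself flag as the main obstacle is wrong as written, and in a way that makes your argument self-contradictory. The ball $B_R(\proj{Z}{x_N})\subset M/\mathbb{Z}_k$ is the quotient of the full disc $B_R(x_N)$ by $\mathbb{Z}_k$, i.e.\ a cone of angle $\frac{2\pi}{k}$ with volume $\frac{1}{k}\vert B_R(x_N)\vert$, whereas $B_R(\proj{D}{x_N})\subset M/\mathbb{D}_k$ has volume $\frac{1}{2k}\vert B_R(x_N)\vert=\vert W_R(P)\vert$. Since $a_{\nu}$ is (constant curvature invariant) $\times$ (volume), the correct relations are $a_{\nu}(B_R(\proj{Z}{x_N}))=2\,a_{\nu}(W_R(P))$ and $a_{\nu}(B_R(\proj{D}{x_N}))=a_{\nu}(W_R(P))$; they are \emph{not} all equal, and the $\mathbb{Z}_k$-quotient is not a ``half-disc of angle $\gamma$ out of $\pi$''. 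Your claimed triple equality would force the two $\frac{1}{4\pi t}$-series to cancel completely, which contradicts your very next assertion that one copy of $\frac{1}{4\pi t}\sum_{\nu}a_{\nu}(W_R(P))t^{\nu}$ survives (and would give the wrong final formula). With the correct factor $2$ for the cone-point ball, the difference is $2a_{\nu}(W_R(P))-a_{\nu}(W_R(P))=a_{\nu}(W_R(P))$, one copy does survive, and the corollary follows. So the gap is a concrete volume-normalisation error rather than a structural one; once corrected, the proof is complete and coincides with the paper's.
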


\begin{remark}
The formula \eqref{equation:DihedralPointContribution} for $C$ contains the coefficients $c_{\ell}^{\mathbb{S}}\left(\frac{\pi}{k}\right)$, which were computed by S. Watson in \citep{Watson}. Furthermore, that formula for $C$ was derived using only orbifold theory. Thus, we obtain from the above corollary (which is based on Theorem \ref{theorem:SphaerischenKoeffizientenOrbifolds}) and the formula of S. Watson for $c_{\ell}^{\mathbb{S}}\left(\frac{\pi}{k}\right)$ a \emph{new proof} of the formula for $V_{\kappa}\left( \frac{\pi}{k} \right)$ (which is given in Corollary \ref{corollary:HeatAsymptoticConstantCurvaturePolygon}), in particular for $\kappa=-1$ (and for the coefficients $c_{\ell}^{\mathbb{H}}(\frac{\pi}{k})$).
\end{remark}

    \backmatter
        \appendix
         \nocite{*}                
        \bibliographystyle{alpha}
        \bibliography{bibliography}
\chapter*{Selbst\"andigkeitserkl\"arung}

Ich erkl\"are, dass ich die vorliegende Arbeit selbst\"andig und nur unter Verwendung der angegebenen Literatur und Hilfsmittel angefertigt habe.

\vspace{2\baselineskip}
\noindent Berlin, 26.05.2017\hfill\authorfirstname \authorsurname    
\end{document}